\newcommand\addlefttext[3][-1]{%
  \def\stacktype{L}\def\useanchorwidth{T}%
  \ifdim#1pt<0pt\relax%
    \def\stackalignment{c}%
    \stackon[0pt]{$\displaystyle#3$}{\makebox[\textwidth][l]{#2}}%
  \else%
    \def\stackalignment{r}%
    \stackon[0pt]{$\displaystyle#3$}{\makebox[#1\textwidth][l]{#2}}%
  \fi%
}
\newenvironment{customthm}[1]
  {\innercustomthm}
  {\endinnercustomthm}
\newcommand{\N}{\mathbb{N}}
\newcommand{\R}{\mathbb{R}}
\newcommand{\C}{\mathbb{C}}
\renewcommand{\Re}{\operatorname{Re}}
\newcounter{mtheorem}
\newtheorem{mtheorem}[mtheorem]{Theorem}
\newcommand{\ess}{\operatorname{ess}}
\newcommand{\dis}{\operatorname{dis}}
\newcommand{\supp}{\operatorname{supp}}
\newcommand{\p}{\partial}
\newcommand{\norm}[1]{\Vert #1 \Vert}
\newcommand{\Ric}{\operatorname{Ric}}
\newcommand{\Rm}{\operatorname{Rm}}
\providecommand{\norm}[1]{\lVert#1\rVert}
\def\tr{\operatorname{tr}}
\def\inj{\operatorname{inj}}
\def\Div{\operatorname{div}}
\def\RR{\operatorname{R}}
\newtheoremstyle{fancy}{}{}{\itshape}{}{\textbf\bgroup}{.\egroup}{ }{}
\newtheoremstyle{fancy2}{}{}{\rm}{}{\textbf\bgroup}{.\egroup}{ }{}
\theoremstyle{fancy}
\newtheorem{theorem}{Theorem}[section]
\newtheorem{lemma}[theorem]{Lemma}
\newtheorem{corollary}[theorem]{Corollary}
\newtheorem{prop}[theorem]{Proposition}
\theoremstyle{fancy2}
\newtheorem{definition}[theorem]{Definition}
\newtheorem{remark}[theorem]{Remark}
\newtheorem{claim}[theorem]{Claim}
\setlist{leftmargin=*}
\numberwithin{equation}{section}
\begin{document}
\title{Steady gradient K\"ahler-Ricci solitons on crepant resolutions of Calabi-Yau cones}

\date{\today}
\author{Ronan J.~Conlon}
\address{Department of Mathematics and Statistics, Florida International University, Miami, FL 33199, USA}
\email{rconlon@fiu.edu}
\author{Alix Deruelle}
\address{Institut de math\'ematiques de Jussieu, 4, place Jussieu, Boite Courrier 247 - 75252 Paris}
\email{alix.deruelle@imj-prg.fr}

\date{\today}

\begin{abstract}
We show that, up to the flow of the soliton vector field, there exists a unique complete
steady gradient K\"ahler-Ricci soliton in every K\"ahler class of an equivariant crepant resolution
of a Calabi-Yau cone converging at a polynomial rate to Cao's steady gradient K\"ahler-Ricci soliton on the cone.
\end{abstract}

\maketitle
\markboth{Ronan J.~Conlon and Alix Deruelle}{Steady gradient K\"ahler-Ricci solitons on crepant resolutions of Calabi-Yau cones}

\tableofcontents

\section{Introduction}

\subsection{Overview}
 A \emph{Ricci soliton} is a triple $(M,\,g,\,X)$, where $M$ is a Riemannian manifold endowed with a complete Riemannian metric $g$
and a complete vector field $X$ such that
\begin{equation}\label{soliton111}
\Ric(g)+\frac{\lambda}{2}g=\frac{1}{2}\mathcal{L}_{X}g
\end{equation}
for some $\lambda\in\{-1,\,0,\,1\}$. If $X=\nabla^{g} f$ for some smooth real-valued function $f$ on $M$,
then we say that $(M,\,g,\,X)$ is \emph{gradient}. In this case, the soliton equation \eqref{soliton111}
becomes $$\Ric(g)+\frac{\lambda}{2} g=\operatorname{Hess}(f).$$

If $g$ is complete and K\"ahler with K\"ahler form $\omega$, then we say that $(M,\,g,\,X)$ is a \emph{K\"ahler-Ricci soliton} if
the vector field $X$ is complete and real holomorphic and the pair $(g,\,X)$ satisfies the equation
 \begin{equation}\label{soliton13}
\Ric(g)+\lambda g=\frac{1}{2}\mathcal{L}_{X}g
\end{equation}
for $\lambda$ as above. If $g$ is a K\"ahler-Ricci soliton and if $X=\nabla^{g} f$ for some smooth real-valued function $f$ on $M$,
then we say that $(M,\,g,\,X)$ is \emph{gradient}. In this case, the soliton equation \eqref{soliton13} may be rewritten as
\begin{equation*}
\rho_{\omega}+\lambda\omega=i\partial\bar{\partial}f,
\end{equation*}
where $\rho_{\omega}$ is the Ricci form of $\omega$.

For Ricci and K\"ahler-Ricci solitons $(M,\,g,\,X)$, the vector field $X$ is called the
\emph{soliton vector field}. Its completeness is guaranteed by the completeness of $g$
\cite{Zhang-Com-Ricci-Sol}. If the soliton is gradient, then
the smooth real-valued function $f$ satisfying $X=\nabla^g f$ is called the \emph{soliton potential}. It is unique up to addition of a constant.
Finally, Ricci and K\"ahler-Ricci solitons are called \emph{steady} if $\lambda=0$, \emph{expanding}
if $\lambda=1$, and \emph{shrinking} if $\lambda=-1$ in \eqref{soliton111} and \eqref{soliton13} respectively.

The study of Ricci solitons and their classification is important in the context of Riemannian geometry. For example, they provide a
natural generalisation of Einstein manifolds and on certain Fano manifolds, shrinking K\"ahler-Ricci solitons are known to exist where
there are obstructions to the existence of a K\"ahler-Einstein metric \cite{zhuu}. Also, to each soliton, one may associate a self-similar solution of the Ricci flow
\cite[Lemma 2.4]{Chowchow} which are candidates for singularity models of the flow. The difference in
normalisations between \eqref{soliton111} and \eqref{soliton13} reflects the difference between the constants preceding the Ricci term
in the Ricci flow and in the K\"ahler-Ricci flow respectively when one takes this dynamic point of view.

In this article, we are concerned with the existence and uniqueness of complete steady gradient K\"ahler-Ricci solitons on crepant resolutions of Calabi-Yau cones. Such solitons which are not Ricci-flat are necessarily non-compact \cite{Ivey}. Examples include Hamilton's cigar soliton \cite{hammy}
on $\mathbb{C}$ which was generalised by Cao \cite{Cao-KR-sol} to $\mathbb{C}^{n}$ and $K_{\mathbb{P}^{n}}$. Further generalisations were
then obtained by Dancer-Wang \cite{Wang}, Yang \cite{Boo}, and more recently by Sch\"afer \cite{schafer}. All examples mentioned thus far are highly symmetric and were constructed by
solving an ODE. In \cite{olivier}, Biquard-Macbeth implement a gluing method to
construct examples of complete steady gradient K\"ahler-Ricci solitons
in small K\"ahler classes of an equivariant crepant resolution of
$\mathbb{C}^{n}/\Gamma$, where $\Gamma$ is a finite subgroup of $SU(n)$ acting freely on $\mathbb{C}^{n}\setminus\{0\}$. Our main result is
the construction of a complete steady gradient K\"ahler-Ricci soliton in every K\"ahler class of a crepant
resolution of a Calabi-Yau cone, unique up to the flow of the soliton vector field, converging
at a polynomial rate to Cao's steady gradient K\"ahler-Ricci soliton on the cone.

\subsection{Main result}\label{honeybunch}

Cao's construction of a steady gradient K\"ahler-Ricci soliton on $\mathbb{C}^{n}$
\cite{Cao-KR-sol} allows for an ansatz to construct
 a one-parameter family of incomplete steady gradient K\"ahler-Ricci
  solitons $\tilde{\omega}_{a},\,a\geq0,$ on any Ricci-flat K\"ahler (or ``Calabi-Yau'') cone $(C_{0},\,g_{0})$.
  With this in mind, our main result can be stated as follows.

\begin{mtheorem}[Existence and uniqueness for steadies]\label{main}
Let $(C_{0},\,g_{0})$ be a Calabi-Yau cone of complex dimension $n\geq2$ with complex structure $J_{0}$, Calabi-Yau cone metric $g_{0}$,
radial function $r$, and trivial canonical bundle. Let $\pi:M\to C_{0}$ be a crepant resolution of $C_{0}$ with complex structure $J$ such that
the real holomorphic torus action on $C_{0}$ generated by $J_{0}r\partial_{r}$ extends to $M$ so that the holomorphic
vector field $2r\partial_{r}$ on $C_{0}$ lifts to a real holomorphic vector field $X=\pi^{*}(2r\partial_{r})$ on $M$.
Set $t:=\log(r^{2})$ and define the K\"ahler form $\hat{\omega}:=\frac{i}{2}\partial\bar{\partial}\left(\frac{nt^{2}}{2}\right)$
on $C_{0}$.

Then in every K\"ahler class $\mathfrak{k}$ of $M$, up to the flow of $X$, there exists a unique
complete steady gradient K\"ahler-Ricci soliton $\omega\in\mathfrak{k}$ with soliton vector field $X$ and with $\mathcal{L}_{JX}\omega=0$ such that
for all $\varepsilon\in(0,\,1)$, there exist constants $C(i,\,j,\,\varepsilon)>0$ such that
\begin{equation}\label{amazing}
|\widehat{\nabla}^{i}\mathcal{L}_{X}^{(j)}(\pi_{*}\omega-\hat{\omega})|_{\hat{g}}\leq
C(i,\,j,\,\varepsilon)t^{-\varepsilon-\frac{i}{2}-j}\quad\textrm{for all $i,\,j\in\mathbb{N}_{0}$},
\end{equation}
where $\hat{g}$ denotes the K\"ahler metric associated to $\hat{\omega}$ and
$\widehat{\nabla}$ is the corresponding Levi-Civita connection. More precisely,
for all $\varepsilon\in\left(0,\,\frac{1}{2}\right)$ and for all $a\geq0$, there exist constants $C(i,\,j,\,\varepsilon,\,a)>0$ such that
\begin{equation}\label{amazing2}
|\widehat{\nabla}^{i}\mathcal{L}^{(j)}_{X}(\pi_{*}\omega-\tilde{\omega}_{a}-\hat{\zeta})|_{\hat{g}}
\leq C(i,\,j,\,\varepsilon,\,a)t^{-2+\varepsilon-\frac{i}{2}-j}\quad\textrm{for all $i,\,j\in\mathbb{N}_{0}$,}
\end{equation}
where $\hat{\zeta}$ is a real $(1,\,1)$-form uniquely determined by $\mathfrak{k}$ that is invariant
under the flow of $X$ and $JX$, and $\tilde{\omega}_{a},\,a\geq0,$ denotes Cao's family of incomplete steady gradient K\"ahler-Ricci solitons on $C_{0}$.
If $\mathfrak{k}$ is compactly supported or if $n=2$, then for all $a\geq0$, there exists a smooth real-valued function $\varphi:M\to\mathbb{R}$ and
constants $C(i,\,j,\,a)>0$ such that
\begin{equation}\label{amazing3}
\omega-\tilde{\omega}_{a}=i\partial\bar\partial\varphi,\qquad\textrm{where}\qquad
|\widehat{\nabla}^{i}\mathcal{L}_{X}^{(j)}(t^{-n+1}e^{nt}\varphi)|_{\hat{g}}\leq C(i,\,j,\,a)t^{-\frac{i}{2}-j}\qquad\textrm{for all $i,\,j\in\mathbb{N}_{0}$}.
\end{equation}
\end{mtheorem}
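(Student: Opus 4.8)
\emph{Step 1: overall strategy and reduction to a drift Monge--Amp\`ere equation.} The plan is to recast the soliton equation as a scalar complex Monge--Amp\`ere equation carrying a first-order drift term, to develop a linear Fredholm theory for the associated drift Laplacian in weighted spaces adapted to the ``paraboloid'' geometry of Cao's soliton on the cone, and then to run a continuity method in the K\"ahler class alongside a maximum-principle argument for uniqueness. For the reduction: using the torus action, fix a background K\"ahler form $\omega_{0}\in\mathfrak{k}$, invariant under the flows of $X$ and $JX$, which outside a compact subset of $M$ equals $\tilde\omega_{a}+\hat\zeta$, where $\hat\zeta$ is obtained by cutting off against $\tilde\omega_{a}$ a torus-invariant closed representative of $\mathfrak{k}$; the invariance and decay of $\hat\zeta$ claimed in the theorem are arranged here. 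Writing a candidate soliton as $\omega=\omega_{0}+i\partial\bar\partial u$ with $u$ invariant under $JX$, the steady gradient condition $\rho_{\omega}=\tfrac12\mathcal{L}_{X}\omega$ together with a normalisation of the soliton potential is, after a $\partial\bar\partial$-lemma step, equivalent to
\[
\log\frac{(\omega_{0}+i\partial\bar\partial u)^{n}}{\omega_{0}^{n}} \;=\; X(u)+h_{0},
\]
where $h_{0}$ --- the obstruction to $\omega_{0}$ being a soliton --- is compactly supported plus a term inheriting the polynomial-in-$t$ decay carried by $\hat\zeta$. The linearisation at a solution is the drift Laplacian $\Delta_{\omega}+X = e^{-f}\operatorname{div}_{\omega}(e^{f}\nabla^{\omega}\,\cdot\,)$, $X=\nabla^{\omega}f$, which carries no zeroth-order term and is formally self-adjoint with respect to the weighted measure $e^{f}\,dV_{\omega}$.

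\emph{Step 2: weighted linear theory (the crux).} On the end, $\omega_{0}$ is asymptotic to $\hat\omega$; in the coordinate $t=\log(r^{2})$ one has $X(t)$ constant and $\Delta_{\hat g}t\to0$, so the model operator is essentially first order in $t$, its radial indicial solutions being the constant and an exponentially small mode separated from it by a spectral gap, with the link Laplacian eigenvalues perturbing these into the remaining indicial weights. I would introduce weighted H\"older spaces built on $t$ (allowing slow, logarithmic or $t^{\varepsilon}$, growth of the potential while forcing decay of $i\partial\bar\partial$), show by separation of variables that off a discrete set of weights $\Delta_{\hat g}+X$ is an isomorphism between the natural weighted spaces --- with a resonant logarithmic correction at the borderline weight where $\hat\zeta$ contributes --- and transplant this to $M$ by a parametrix patching argument; the self-adjointness above supplies the a priori estimates. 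Making these mapping properties \emph{uniform} enough to drive the nonlinear argument below is the step I expect to be the main obstacle.

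\emph{Step 3: existence in an arbitrary K\"ahler class.} To remove any smallness hypothesis on $\mathfrak{k}$, deform the class, e.g.\ along $\mathfrak{k}_{s}=(1-s)\mathfrak{k}_{0}+s\,\mathfrak{k}$ with $\mathfrak{k}_{0}$ small or compactly supported, so that a soliton in $\mathfrak{k}_{0}$ is available (a perturbation of $\tilde\omega_{a}$ when $\mathfrak{k}_{0}$ is small). Openness of the set of $s$ for which a complete steady gradient K\"ahler-Ricci soliton $\omega_{s}\in\mathfrak{k}_{s}$ with vector field $X$ and $\mathcal{L}_{JX}\omega_{s}=0$ exists follows from Step~2 via the implicit function theorem. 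Closedness requires \emph{uniform} a priori estimates: a $C^{0}$ bound combining the drift maximum principle for the Monge--Amp\`ere equation with a Moser iteration adapted to $\Delta_{\omega_{s}}+X$, followed by the standard second-order, Evans--Krylov and Schauder chain, together with uniform control of the geometry of $\omega_{s}$ near the exceptional set as the class --- hence the size of the ``bubble'' there --- varies.

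\emph{Step 4: sharp asymptotics and uniqueness.} Given $\omega\in\mathfrak{k}$, feeding its decay back into the Monge--Amp\`ere equation and iterating against the indicial structure --- where the drift converts a $t^{-\delta}$ datum into a $t^{-\delta+1}$ potential, hence a faster-decaying correction of the metric, and $\hat\zeta$ appears as the unique borderline indicial mode compatible with $\mathfrak{k}$ --- yields \eqref{amazing}, and then \eqref{amazing2} after sharpening $\omega_{0}$ by one further order. For \eqref{amazing3}: when $\mathfrak{k}$ is compactly supported one may take $\hat\zeta$ compactly supported, so the polynomial part of the error disappears and the spectral gap of Step~2 places the remaining potential in the exponential regime of rate $\sim e^{-f}\sim t^{n-1}e^{-nt}$; the case $n=2$ is similar since the pertinent cohomological obstruction on the end vanishes there. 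In either case a weighted $\partial\bar\partial$-lemma on $M$ produces the global potential $\varphi$, and solving the induced linear drift equation at the sharp exponential weight gives the stated bound. Finally, for uniqueness up to the flow of $X$: if $\omega_{1},\omega_{2}\in\mathfrak{k}$ are two such solitons, then after matching the soliton-potential normalisations --- equivalently the flow parameters --- we may write $\omega_{2}=\omega_{1}+i\partial\bar\partial v$ with $v\to0$; subtracting the two Monge--Amp\`ere equations gives $\tilde\Delta v - X(v)=0$ for the elliptic operator $\tilde\Delta=\int_{0}^{1}\operatorname{tr}_{\omega_{1}+s\,i\partial\bar\partial v}(i\partial\bar\partial\,\cdot\,)\,ds$, which has no zeroth-order term, so $v\to0$ forces $v\equiv0$ by the maximum principle and $\omega_{1}=\omega_{2}$.
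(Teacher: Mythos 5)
Your overall architecture (drift Monge--Amp\`ere reduction, weighted linear theory for the drift Laplacian, continuity method, maximum principle at the end) matches the paper's, but there are two genuine gaps at precisely the points where the real work lies. First, the a priori $C^{0}$ estimate. You propose a ``drift maximum principle combined with a Moser iteration adapted to $\Delta_{\omega_{s}}+X$'', but a \emph{global} Moser iteration is unavailable here: the cigar--paraboloid geometry has volume growth $O(R^{\frac{1}{2}\dim_{\mathbb{R}}M})$, so there is no global Sobolev inequality, and the standard Calabi--Yau energy identity $\int\psi(\tau^{n}-\tau_{\psi}^{n})=\sum_{i}\int i\partial\psi\wedge\bar\partial\psi\wedge\tau^{i}\wedge\tau_{\psi}^{n-1-i}$ fails once the volume forms are replaced by their weighted analogues $e^{f}\tau^{n}$, $e^{f_{\psi}}\tau_{\psi}^{n}$ (the unknown sits in the weight). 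The paper's route is to first reduce to compactly supported data, obtain the initial weighted $L^{2}$ bound from the Aubin--Tian--Zhu functionals $I_{\tau,X},J_{\tau,X}$ evaluated along two different paths (which requires proving path-independence of $J_{\tau,X}$ in this non-compact, $X\neq0$ setting) together with a Poincar\'e inequality that exists only because a steady gradient Ricci soliton sits at infinity, and then to get $\sup\psi$ by a \emph{local} Nash--Moser iteration near $\operatorname{supp}F$ and $\inf\psi$ by B\l{}ocki's $L^{\infty}$-stability argument for the complex Monge--Amp\`ere operator. None of these mechanisms is present in your sketch, and your continuity path in the K\"ahler class (rather than in the data $tF$ at fixed class) compounds the problem: it requires as input a soliton in a small or compactly supported class, which for a general Calabi--Yau cone is not available off the shelf (Biquard--Macbeth covers only $\mathbb{C}^{n}/\Gamma$), whereas the paper's path starts at the trivial solution $\psi_{0}=0$.

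Second, the uniqueness step begs the question of the flow ambiguity. You write $\omega_{2}=\omega_{1}+i\partial\bar\partial v$ with $v\to0$ ``after matching the soliton-potential normalisations'', but the content of ``unique up to the flow of $X$'' is exactly that two solitons in $\mathfrak{k}$ with the asymptotics \eqref{amazing} may differ by $\psi_{c}^{*}\omega$, whose potential relative to $\omega$ is $cf+O(t^{-\delta})\sim cnt$ and does not tend to zero; nothing in the hypotheses lets you normalise this away a priori, since \eqref{amazing} only controls $i\partial\bar\partial$ of the potential at rate $t^{-\varepsilon}$ and hence permits the potential difference to grow linearly. The paper must first prove that the difference of potentials is $cf+\phi$ with $\phi$ genuinely decaying --- this uses the surjectivity of the drift Laplacian onto polynomially weighted spaces (a separation-of-variables argument on the link, Theorem \ref{surjectivity-drift-Laplacian}) together with a Liouville theorem for sublinearly growing drift-harmonic functions --- and then identify the $cf$ mode with $\psi_{c}^{*}\omega-\omega$ via the backward Ricci flow expansion $\omega(s)=\omega+s\rho_{\omega}+O(t^{-2})$ and the lower bound $\RR_{\tilde g}\geq c/t$, before the maximum principle can be applied to the residual decaying part. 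Only after these steps does your final observation (no zeroth-order term, hence $v\equiv\mathrm{const}$) close the argument.
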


A resolution for which the torus action on the cone extends to the resolution is called \emph{equivariant}.
Such a resolution of a complex cone always exists (cf.~\cite[Proposition 3.9.1]{kollar}). However, for Calabi-Yau cones,
it may not necessarily be crepant. On the other hand, if a crepant resolution of a Calabi-Yau cone is unique, then it is necessarily equivariant; apply the proof of \cite[Lemma 2.13]{conlondez} to see this. Moreover, the steady solitons of Theorem \ref{main} display so-called ``cigar-paraboloid'' asymptotics. Most notably, the volume of a ball of radius $R$ in $M$ grows at rate $O(R^{\frac{1}{2}\dim_{\mathbb{R}}M})$ and the curvature decays linearly.
Finally, Cao's steady gradient K\"ahler-Ricci soliton $\tilde{\omega}_{a}$ on $C_{0}$ also converges to $\hat{\omega}$ at infinity.
The precise asymptotics may be found in Proposition \ref{coro-asy-Cao-met}. Together with
\eqref{amazing2}, they yield the following more refined asymptotics:
\begin{equation*}
|\widehat{\nabla}^{i}\mathcal{L}_{X}^{(j)}(\pi_{*}\omega-\hat{\omega})|_{\hat{g}}\leq
\begin{cases}
C(i,\,j)t^{-1-\frac{i}{2}-j}\log(t) & \textrm{if $j=0$},\\
C(i,\,j)t^{-1-\frac{i}{2}-j} & \text{if $j\geq1$}.
\end{cases}
\end{equation*}

\subsection{The $C^{0}$-estimate}
The problem of constructing a Calabi-Yau metric on a complete K\"ahler manifold $(M,\,\tau)$ of complex dimension $n$ with trivial canonical bundle and K\"ahler form $\tau$ can be reformulated in terms of solving the complex Monge-Amp\`ere equation
\begin{equation}\label{beginn}
\tau_{\psi}^{n}=e^{F}\tau^{n},
\end{equation}
where $\psi$ is an unknown smooth real-valued function, $F$ is a smooth real-valued function (the given data), and
$\tau_{\psi}:=\tau+i\partial\bar{\partial}\psi$ is positive-definite. In the compact case, Yau \cite{Calabiconj} successfully
implemented the continuity method to obtain a solution of this equation. The same strategy has also borne much fruit in the
non-compact case; see for instance \cite{Hein, Nordstrom, Joyce, Tian3, Tian}.

A common feature shared by the proofs of the aforementioned results is the need to establish
a uniform $C^{0}$-bound along the continuity path, where the data $F$ in \eqref{beginn} has either compact support
or, when the underlying complex manifold $M$ is non-compact,
decays sufficiently fast at infinity. This bound has been achieved by
implementing a Nash-Moser iteration. It works as follows. One first establishes an a priori $L^{2}$-bound by considering the difference between the volume forms $\tau^{n}$ and $\tau_{\psi}^{n}$ in the following way:
\begin{equation}\label{eeend}
\int_{M}(1-e^F)\psi\,\tau^n=\int_M\psi\left(\tau^n-\tau_{\psi}^n\right)=\sum_{i\,=\,0}^{n-1}\int_Mi\partial\psi\wedge\bar{\partial}\psi\wedge\tau^i\wedge \tau_{\psi}^{n-1-i}\geq \int_Mi\partial\psi\wedge\bar{\partial}\psi\wedge\tau^{n-1}.
\end{equation}
The first equality uses \eqref{beginn}, the second is an integration by parts (which must be justified when
$M$ is non-compact), and the inequality is obtained by dropping all of the terms in the sum apart from the last
which does not depend on the unknown K\"ahler form $\tau_{\psi}$. A Poincar\'e-type inequality (in the compact case)
or a Sobolev inequality (in the case of maximal volume growth for example) then allows for uniform $L^{2}$-control on $\psi$ (or, if
$F$ is only decaying polynomially at infinity, $L^{p}$-control for $p$ sufficiently large). From this,
the a priori $C^{0}$-bound follows by applying \eqref{eeend} with $\psi|\psi|^{p-2},\,p\geq2,$
in place of $\psi$ and letting $p$ tend to $+\infty$.

Analogously, the problem of constructing a steady gradient K\"ahler-Ricci soliton on $M$ can be reformulated in terms of solving the complex Monge-Amp\`ere equation
\begin{equation}\label{beginnn}
\tau_{\psi}^{n}=e^{F-\frac{X}{2}\cdot\psi}\tau^{n},
\end{equation}
where $\tau,\,\tau_{\psi}$, and $F$ are as before, and $X$ is a given real holomorphic vector field on $M$. In our situation, the data $F$ is
polynomially decaying. As above, our approach to solve this PDE is to implement the continuity method. As such, we also require a uniform $C^{0}$-bound along the continuity path.
In contrast to \eqref{beginn} however, the added difficulty in this case arises from the fact that the unknown function $\psi$ appears on both sides of \eqref{beginnn}.
Nevertheless, to obtain an initial energy estimate we proceed as above, but rather than considering the difference between the volume forms $\tau^{n}$ and $\tau_{\psi}^{n}$, we consider the difference between the \emph{weighted} volume forms $e^{f}\tau^{n}$ and $e^{f_{\psi}}\tau_{\psi}^{n}$, where $-\tau\lrcorner JX=df$ and similarly for $f_{\psi}$, the reason being that the framework of metric measure spaces is the correct one to take to account for the existence of the vector field $X$ that was missing from \eqref{beginn}. Unsurprisingly though, the approach taken in the Calabi-Yau case no longer suffices; not only does the first equality of \eqref{eeend} fail if one replaces $\tau^{n}$ and $\tau_{\psi}^{n}$ with their respective weighted analogues, but we no longer have a global Sobolev inequality that would be needed in order to implement a Nash-Moser iteration.

To overcome these difficulties, we first observe that it is enough to solve \eqref{beginnn} with data $F$ compactly supported, the reason being
that via an application of the implicit function theorem, we are able to reduce to this case when $F$ is polynomially decaying. Once we have reduced to the compactly supported case, we apply the continuity method working in the space $\mathcal{M}^{\infty}_{X,\,\exp}(M)$ of smooth real-valued functions that decay exponentially with derivatives at infinity. Then to obtain an initial a priori energy bound along the continuity path of solutions, we introduce, in line with Tian-Zhu \cite{Tian-Zhu-I} and their work on the uniqueness of shrinking gradient K\"ahler-Ricci solitons on compact K\"ahler manifolds, the following functionals defined on $\mathcal{M}^{\infty}_{X,\,\exp}(M)$:
\begin{equation*}
I_{\tau,\,X}(\varphi):=\int_M\varphi\left(e^f\tau^n-e^{f_{\varphi}}\tau_{\varphi}^n\right)
\qquad\textrm{and}\qquad J_{\tau,\,X}(\varphi):=\int_0^1\int_M\dot{\varphi_s}\left(e^f\tau^n-e^{f_{\varphi_s}}\tau_{\varphi_s}^n\right)\wedge ds.
\end{equation*}
Here, $(\varphi_{t})_{0\,\leq\,t\,\leq\,1}$ is a $C^{1}$-path in $\mathcal{M}^{\infty}_{X,\,\exp}(M)$
from $\varphi_{0}=0$ to $\varphi_{1}=\varphi$,
$\tau_{\varphi_{s}}:=\tau+i\partial\bar{\partial}\varphi_{s}$ is positive-definite,
$-\tau_{\varphi_{s}}\lrcorner X=df_{\varphi_{s}}$, $\tau_{\varphi}:=\tau_{\varphi_{1}}$, and $f_{\varphi}:=f_{\varphi_{1}}$.
The exponential decay guarantees that both of these integrals converge; polynomial decay is not sufficient for this to be the case. The fact that $J_{\tau,\,X}$ is independent of the choice of path and so does indeed define a functional is due to Zhu \cite{Zhu-KRS-C1} in the compact case; we modify his proof accordingly to prove this fact for our situation. We also remark that Aubin \cite{Aub-red-Cas-Pos}, Bando-Mabuchi \cite{Ban-Mab-Uni} and Tian \cite[Chapter $6$]{Tian-Can-Met-Boo} have used these functionals (with $X=0$) to successfully study K\"ahler-Einstein Fano manifolds. By considering separately the continuity path of solutions $(\psi_{t})_{0\,\leq\,t\,\leq\,1}$ of \eqref{beginnn}
in $\mathcal{M}^{\infty}_{X,\,\exp}(M)$ and the linear path $(t\psi_{1})_{0\,\leq\,t\,\leq\,1}$, and making use of a suitable Poincar\'e inequality, we obtain an a priori weighted $L^{2}$-bound for \eqref{beginnn}. Our ability to use a Poincar\'e inequality is crucial for this part of the
argument to work and the existence of such an inequality follows from the existence of a steady gradient Ricci soliton at infinity that we have thanks to Cao's ansatz \cite{Cao-KR-sol} on the Calabi-Yau cone.

The next step involves improving the initial a priori weighted energy estimate to an actual a priori $C^{0}$-estimate.
Since the data $F$ in \eqref{beginnn} is assumed to be compactly supported,
and since $\psi$ in \eqref{beginnn} is a subsolution of the drift Laplacian, i.e.,
$\Delta_{h}\psi+X\cdot\psi\geq F$, where $\Delta_{h}$ is the Laplacian with respect to the K\"ahler metric
$h$ associated to $\tau$, we can assume without loss of generality that along the continuity path
of solutions $(\psi_{t})_{0\,\leq\,t\,\leq\,1}$, $\sup_{M}\psi_{t}$ is contained within the (compact) support of $F$ for each $t$.
A \emph{local} Nash-Moser iteration in a tubular neighbourhood of the support of $F$, using the already-established a priori weighted energy bound, then allows
for a uniform upper bound for $\sup_{M}\psi_{t}$. As for obtaining a uniform lower bound for $\inf_{M}\psi_{t}$,
this is much more delicate. To achieve such a bound, we adapt the proof of B\l{ocki} \cite{Blo-Uni-CY},
whose result comprises an alternative proof of Yau's a priori $C^{0}$-estimate for solutions of \eqref{beginn}
on a compact K\"ahler manifold with vanishing first Chern class \cite{Calabiconj}, making use of the weighted energy estimate in the process.
B\l{ocki}'s proof exploits the $L^{\infty}$-stability of the complex Monge-Amp\`ere operator and has its roots in the pluri-potential theory
developed by Bedford and Taylor \cite{Bed-Tay-Dir}. As B\l{ocki} explains in \cite{Blo-Uni-CY}, the estimate that he utilises is simpler than the finer estimates of Ko\l{odziej} \cite{Kol-Cx-MA}. All that is required is the maximum principle of Alexandrov \cite[Chapter $2$]{Han-Lin} for real Monge-Amp\`ere equations.

As is evident from the above discussion, the fact that $F$ is compactly supported plays a key role in passing from global energy estimates to
pointwise estimates. Assuming rapid decay of $F$ at infinity would not have been sufficient to reach the same conclusions.

\subsection{Outline of paper}
We begin in Section \ref{prelim} by recalling the basics of K\"ahler and Calabi-Yau cones, the relevant aspects of Sasakian geometry that we require,
as well as the definition of an equivariant resolution and a metric measure space. We also define and make some important notes on steady gradient Ricci and K\"ahler-Ricci solitons and introduce the Cao ansatz for the construction of a steady gradient K\"ahler-Ricci soliton on a Calabi-Yau cone. In Section \ref{section-Cao-soliton}, we analyse more precisely the asymptotics of Cao's steady gradient K\"ahler-Ricci soliton,
cumulating in the statement of Proposition \ref{coro-asy-Cao-met}. We follow this up in Section \ref{Section-App-Met} with the construction of a background metric in each K\"ahler class in Proposition \ref{lemma5.7} which we then use in Proposition \ref{equationsetup} to reformulate the problem of existence in terms of solving a scalar PDE, namely the complex Monge-Amp\`ere equation \eqref{beginnn}. Our background metric is asymptotic to Cao's steady gradient K\"ahler-Ricci soliton on the cone and hence serves as an approximate steady gradient K\"ahler-Ricci soliton.

From Section \ref{poincare} onwards, the content takes on a more analytic flavour. In Section \ref{poincare}, we show that the spectrum of the drift Laplacian of a Riemannian metric uniformly equivalent to a steady gradient Ricci soliton at infinity and with comparable potentials for the soliton vector field has a strictly positive lower bound. This observation, comprising Corollary \ref{tarea}, is essential in deriving the a priori weighted energy estimate for \eqref{beginnn} with compactly supported data. In Section \ref{section-small-def-exp}, we study the properties of the drift Laplacian of our background metric acting on exponentially weighted function spaces. More precisely, in Section \ref{section-fct-spa-exp}, we introduce exponentially weighted function spaces and in Section \ref{sec-Fredo-prop-exp} we show that the drift Laplacian of our background metric is an isomorphism between such spaces. This latter result is the content of Theorem \ref{iso-sch-Laplacian-exp}. Using it, we then prove Theorem \ref{Imp-Def-Kah-Ste-exp} that serves as the openness part of the continuity method.
The continuity method itself is outlined at the beginning of Section \ref{apriori} and is the approach that we take in order to solve \eqref{beginnn} with the caveat being however that the data of the PDE is compactly supported. As in \cite{siepmann}, the exponentially weighted function spaces introduced in Section \ref{section-fct-spa-exp} cater specifically for
the compactness of the support of the data. The closedness part of the continuity method involves a priori estimates and these make up the remainder of Section \ref{apriori}.

Our strategy for solving the complex Monge-Amp\`ere equation \eqref{beginnn} for polynomially decaying data involves an application of the implicit function theorem
to reduce to the (previously solved) case of compactly supported data. To achieve this simplification, we work in the space of
polynomially decaying functions. These are introduced in Section \ref{section-fct-spa-pol}. The invertibility of the drift Laplacian
 of our background metric between such spaces is demonstrated in Section \ref{linearpoly}, namely in Theorem \ref{iso-sch-Laplacian-pol}.
Via the implicit function theorem, this invertibility allows for local invertibility
of the complex Monge-Amp\`ere operator at a polynomially decaying solution. This forms the statement of Theorem \ref{Imp-Def-Kah-Ste} in Section \ref{invert-poly}. We also show
in Theorem \ref{surjectivity-drift-Laplacian} that the drift Laplacian is surjective onto the space of polynomially decaying functions.
This result, which forms the bulk of Section \ref{surject}, is used in the proof of the uniqueness part of Theorem \ref{main}.

In Section \ref{prooof}, we complete the proof of Theorem \ref{main}. The existence part is taken care of in Section \ref{existencee} with the
key step allowing us to reduce everything to compactly supported data the content of Proposition \ref{prop-sec-app-poly-dec-comp-supp-inf}.
The proof of this proposition requires Theorem \ref{Imp-Def-Kah-Ste} regarding the local invertibility of the complex Monge-Amp\`ere operator. The uniqueness part of Theorem \ref{main} is then proved in Section \ref{finished}. Finally, Appendix \ref{appendixa} gathers together
the various estimates with respect to $\hat{g}$, the asymptotic model metric of Cao's steady gradient K\"ahler-Ricci soliton on the cone,
that we use throughout.

\subsection{Acknowledgements}
The authors wish to thank Richard Bamler, Aziz El Kacimi-Alaoui, and Song Sun for useful discussions. Part of this work was carried out while the authors were visiting the Institut Henri Poincar\'e as part of the Research in Paris program in July 2019. They wish to thank the institute for their hospitality
and for the excellent working conditions provided.

The first author is supported by NSF grant DMS-1906466 and the second author is supported
by grant ANR-17-CE40-0034 of the French National Research Agency ANR (Project CCEM) and Fondation Louis D., Project ``Jeunes G\'eom\`etres''.

\section{Preliminaries}\label{prelim}

\subsection{Cones}

\subsubsection{Riemannian cones} For us, {the definition of a Riemannian cone will take the following form}.

\begin{definition}\label{cone}
Let $(S,\,g)$ be a compact connected Riemannian manifold. The \emph{Riemannian cone} $C_{0}$
 with link $S$ is defined to be $\R_{+} \times S$ with metric $g_{0} = dr^2 \oplus r^2 g_{S}$ up to isometry. The radius function $r$ is then characterized intrinsically as the distance from the apex in the metric completion.
\end{definition}

\subsubsection{K{\"a}hler cones}  Boyer-Galicki \cite{book:Boyer} is a comprehensive reference here.

\begin{definition}
A \emph{K{\"a}hler cone} $(C_{0},\,g_{0},\,J_{0})$ is a Riemannian cone $(C_{0},\,g_{0})$ such that $g_{0}$ is K{\"a}hler,
together with a choice of $g_{0}$-parallel complex structure $J_0$. This will in fact often be unique up to sign. We then have a K{\"a}hler
 form $\omega_0(X,Y) = g_{0}(J_0X,Y)$, and $\omega_0 = \frac{i}{2}\p\bar{\p} r^2$ with respect to $J_0$.
\end{definition}

The vector field $r\partial_{r}$ is real holomorphic and $\xi:=J_{0}r\partial_r$ is real holomorphic and Killing \cite[Appendix A]{Yau22}. This latter vector field is known as the \emph{Reeb vector field}. The closure of its flow in the isometry
group of the link of the cone generates the holomorphic isometric action of a real torus on the cone that
fixes the apex.

Every K\"ahler cone is affine algebraic.
\begin{theorem}\label{t:affine}
For every K{\"a}hler cone $(C_{0},g_0,J_0)$, the complex manifold $(C_{0},J_0)$ is isomorphic to the smooth part of a normal algebraic variety $V \subset \C^N$ with one singular point. In addition, $V$ can be taken to be invariant under a $\C^*$-action $(t, z_1,\ldots,z_N) \mapsto (t^{w_1}z_1,\ldots,t^{w_N}z_N)$ such that all of the weights $w_i $ are positive
integers.
\end{theorem}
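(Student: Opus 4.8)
The plan is to realise $(C_{0},J_{0})$ as the smooth locus of an affine cone over a polarised projective orbifold, after first reducing to the case of a quasi-regular Reeb field. Since $\xi=J_{0}r\partial_{r}$ is holomorphic and Killing, the closure of its flow in the group of holomorphic isometries of $C_{0}$ is a compact torus $T$ with Lie algebra $\mathfrak{t}\ni\xi$, acting holomorphically on $(C_{0},J_{0})$ and fixing the apex. Holding the complex structure $J_{0}$ fixed, the elements of $\mathfrak{t}$ that arise as the Reeb field of a K\"ahler cone metric on $(C_{0},J_{0})$ — obtained by deforming only the transverse K\"ahler structure and rescaling the radial function — form an open cone containing $\xi$, inside which the quasi-regular directions (those generating a circle action, equivalently the rational points of $\mathfrak{t}$) are dense. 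As the conclusion of the theorem concerns only $(C_{0},J_{0})$, I would from now on fix a quasi-regular Reeb field $\xi$.

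With $\xi$ quasi-regular, the circle it generates together with the scaling $\R_{>0}$ generated by $r\partial_{r}$ assemble into a holomorphic $\C^{\ast}$-action on $C_{0}$ whose quotient $Z:=C_{0}/\C^{\ast}$ is a compact complex orbifold carrying a K\"ahler orbifold metric (the transverse K\"ahler structure), and $C_{0}$ is $\C^{\ast}$-equivariantly biholomorphic to the complement of the zero section in the total space of an orbifold line bundle $L^{-1}\to Z$, where $c_{1}(L)$ is a positive multiple of the transverse K\"ahler class, so that $L$ is a positive orbifold line bundle. Under this identification, the holomorphic functions on $C_{0}$ homogeneous of weight $k$ for the $\C^{\ast}$-action — equivalently the $\xi$-eigenfunctions of eigenvalue $ik$, which satisfy $r\partial_{r}f=kf$ and hence have polynomial growth exactly when $k\geq0$ — are the orbifold sections $H^{0}(Z,L^{\otimes k})$. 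By Baily's Kodaira embedding theorem for compact K\"ahler orbifolds, the positivity of $L$ makes $R:=\bigoplus_{k\geq0}H^{0}(Z,L^{\otimes k})$ a finitely generated graded $\C$-algebra whose elements separate points and tangent vectors of $C_{0}$, both along the $\C^{\ast}$-orbits and transverse to them. Choosing homogeneous algebra generators $z_{0},\dots,z_{N}$ of $R$, evaluation gives a holomorphic embedding of $C_{0}$ into $\C^{N+1}$; letting $V\subseteq\C^{N+1}$ be the closure of the image, one has $V=\operatorname{Spec}R$ and the embedding identifies $C_{0}$, $\C^{\ast}$-equivariantly, with $V\setminus\{0\}$, where $0$ is the vertex.

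It remains to record that $V$ has the stated structure. Passing to the normalisation of $V$ if necessary — which changes nothing away from the vertex, since $C_{0}$ is already smooth and dense — $V$ is a normal affine variety: off the vertex it is the smooth manifold $C_{0}$, so it is regular in codimension one, and $S_{2}$ follows from the standard local cohomology computation for section rings of ample line bundles, so Serre's criterion applies. The vertex $0$ is the only point of $V$ not in the image of $C_{0}$ and, being the unique fixed point of the $\C^{\ast}$-action, the only point where $V$ can be singular. Finally, each generator $z_{i}$ is homogeneous of some positive $\C^{\ast}$-weight $w_{i}$, since it vanishes at the vertex; rescaling the generator of $\C^{\ast}$ we may take every $w_{i}$ to be a positive integer, and then $V$ is invariant under $(t,z_{0},\dots,z_{N})\mapsto(t^{w_{0}}z_{0},\dots,t^{w_{N}}z_{N})$, which is the remaining assertion.

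The main obstacle is the reduction in the first paragraph together with the embedding step. One must verify that perturbing $\xi$ within $\mathfrak{t}$, with $J_{0}$ held fixed, genuinely yields K\"ahler cone metrics on $(C_{0},J_{0})$ — so that the quasi-regular case is representative — and that the quasi-regular directions are dense, which is a rationality argument in $\mathfrak{t}$. Then, in the quasi-regular case, the essential input is Baily's orbifold Kodaira embedding theorem combined with the identification of $H^{0}(Z,L^{\otimes k})$ with the weight-$k$ polynomial-growth holomorphic functions on $C_{0}$: together these guarantee that such functions separate points and tangent directions, hence that $C_{0}$ embeds as claimed. Granting this, the normality of $V$, the location of its singular set, and the existence of the weighted $\C^{\ast}$-action are routine consequences of Serre's criterion and the $\C^{\ast}$-equivariance of the embedding.
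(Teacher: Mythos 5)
Your proposal is correct and follows essentially the same route as the argument the paper cites (van Coevering, \cite[Section 3.1]{vanC4}): deform the Reeb field within the torus to a quasi-regular one using the openness of the Sasaki cone, quotient to a polarised compact Kähler orbifold, and apply Baily's orbifold embedding theorem to identify $C_{0}$ with the punctured affine cone $\operatorname{Spec}\bigl(\bigoplus_{k\geq0}H^{0}(Z,L^{\otimes k})\bigr)\setminus\{0\}$, from which normality, the single singular point, and the positively weighted $\C^{*}$-action follow as you describe. The two points you flag — the Sasaki cone being open in $\mathfrak{t}$ with dense quasi-regular directions, and the identification of weight-$k$ polynomial-growth holomorphic functions with $H^{0}(Z,L^{\otimes k})$ — are indeed the substantive inputs, and both are standard.
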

\noindent This can be deduced from arguments written down by van Coevering in \cite[Section 3.1]{vanC4}.

We will frequently make use of the fact that every real-valued pluriharmonic function on a K\"ahler cone that is invariant under the
flow of the Reeb vector field is constant.
\begin{lemma}\label{pluri}
Let $(C_{0},\,g_{0},\,J_{0})$ be a K\"ahler cone with Reeb vector field $\xi$, let $\pi:M\to C_{0}$ be a resolution of $C_{0}$ with exceptional set $E$, and
let $K\subset M$ be a compact subset of $M$ containing $E$ such that $M\setminus K$ is connected.
\begin{enumerate}[label=\textnormal{(\roman{*})}, ref=(\roman{*})]
\item If $u:C_{0}\setminus\pi(K)\to\mathbb{R}$ is a smooth real-valued function defined on $C_{0}\setminus\pi(K)$
that is pluriharmonic (meaning that $\partial\bar{\partial}u=0$) and
 invariant under the flow of $\xi$, then $u$ is constant.
\item If $u:M\to\mathbb{R}$ is a smooth real-valued function defined on $M$ that is pluriharmonic on $M$ and
 invariant under the flow of $(d\pi)^{-1}(\xi)$ on $M\setminus K$, then $u$ is constant.
    \end{enumerate}
\end{lemma}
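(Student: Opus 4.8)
The plan is to exploit the cone structure to expand $u$ in eigenfunctions of the Laplacian on the link $S$, and then use the Reeb-invariance together with pluriharmonicity to kill all the nonconstant modes. For part (i), work on the cone end $C_0\setminus\pi(K)$, which contains a set of the form $(R_0,\infty)\times S$ for $R_0$ large. A real-valued pluriharmonic function is in particular harmonic for $g_0$, so $\Delta_{g_0}u=0$ there. Writing $g_0=dr^2+r^2g_S$, the Laplacian separates as $\Delta_{g_0}=\partial_r^2+\frac{2n-1}{r}\partial_r+\frac{1}{r^2}\Delta_{g_S}$, and decomposing $u(r,\cdot)=\sum_k u_k(r)\phi_k$ into $\Delta_{g_S}$-eigenfunctions $\phi_k$ with eigenvalue $\mu_k\geq0$ gives the Euler equation $u_k''+\frac{2n-1}{r}u_k'-\frac{\mu_k}{r^2}u_k=0$, whose solutions are $r^{\alpha_k^{\pm}}$ (or $1,\log r$ when $\mu_k=0$) for the two roots $\alpha_k^{\pm}$ of $\alpha(\alpha+2n-2)=\mu_k$. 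The key point is that the Reeb field $\xi=J_0r\partial_r$ is tangent to the links $\{r=\text{const}\}$, is Killing, and commutes with $r\partial_r$; invariance of $u$ under its flow means each $u_k(r)\phi_k$ is separately $\xi$-invariant, so we may restrict attention to the subspace of $\xi$-invariant functions on $S$. Within this subspace the only pluriharmonic homogeneous solutions are the constants: a homogeneous degree-$\alpha$ harmonic function that is $\xi$-invariant and, together with its conjugate mode, pluriharmonic, must correspond to $\alpha=0$ with $\phi_k$ constant, because a nonconstant $\xi$-invariant eigenfunction would produce a nonconstant pluriharmonic homogeneous function on the cone, contradicting that such a function is (real part of) a holomorphic function homogeneous under the $\C^*$-action with positive weights combined with $\xi$-invariance — such holomorphic functions are constant. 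Concretely: $i\partial\bar\partial u=0$ and $\mathcal L_\xi u=0$ force $\mathcal L_{r\partial_r}u$ to be pluriharmonic and $\xi$-invariant as well, so one reduces to the $r\partial_r$-homogeneous case and then argues that a homogeneous pluriharmonic $\xi$-invariant function is the real part of a $\C^*$-homogeneous holomorphic function, hence $0$ unless the degree is $0$.

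Alternatively, and more cleanly, I would argue as follows for the homogeneous reduction. Since $u$ is pluriharmonic on a cone end, locally $u=\Re h$ for a holomorphic $h$; the obstruction to a global such $h$ lies in $H^1(C_0\setminus\pi(K),\mathcal O)$-type cohomology, but for the homogeneous components this is not needed: decompose $u=\sum_{d}u_d$ into $r\partial_r$-homogeneous pieces (eigenfunctions of $\mathcal L_{r\partial_r}$, which commutes with $\Delta_{g_0}$, $\partial\bar\partial$, and $\mathcal L_\xi$), each $u_d$ pluriharmonic and $\xi$-invariant. On the end, a nonzero homogeneous pluriharmonic function of degree $d\neq0$ is, by ellipticity and the $\C^*$-action, the real part of a holomorphic function homogeneous of positive weight; but a holomorphic function on a punctured-cone end homogeneous under a $\C^*$-action with all weights positive extends to the affine variety $V$ and is a polynomial in the $z_i$, hence determined by finitely many monomials, none of which are $\xi$-invariant unless they are constant. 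The $d=0$ piece $u_0$ is $r\partial_r$-invariant and $\xi$-invariant, hence invariant under the whole torus action generated by the closure of the Reeb flow together with $r\partial_r$; being pluriharmonic and depending only on the link coordinates through a $\Delta_{g_S}$-harmonic function, it is constant on $S$, hence globally constant. Thus $u$ is constant on the end; since $C_0\setminus\pi(K)$ is connected, $u$ is constant.

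For part (ii), let $u:M\to\R$ be pluriharmonic on all of $M$ and invariant under $(d\pi)^{-1}(\xi)$ on $M\setminus K$. Push forward to the cone: on $C_0\setminus\pi(K)$, $\pi_*u$ is pluriharmonic (since $\pi$ is a biholomorphism away from the exceptional set) and $\xi$-invariant, so part (i) applies and $\pi_*u\equiv c$ for some constant $c$. Hence $u\equiv c$ on $M\setminus K$, which is a nonempty open set. Now $u-c$ is pluriharmonic on the connected complex manifold $M$ and vanishes on the open set $M\setminus K$; by real-analyticity of pluriharmonic functions (they are harmonic for a real-analytic metric, or directly real parts of holomorphic functions locally), $u-c$ vanishes identically, so $u$ is constant.

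The main obstacle is the homogeneous-reduction step in part (i): making rigorous the claim that a $\xi$-invariant, $r\partial_r$-homogeneous, pluriharmonic function on a cone end must be constant. The cleanest route is to reduce to holomorphic functions via the $\C^*$-action of Theorem \ref{t:affine} and the fact that $\xi$ is the imaginary part of $r\partial_r$ as a holomorphic vector field, so that $\xi$-invariance of $\Re h$ together with $\C^*$-homogeneity forces $h$ to be a homogeneous holomorphic function annihilated by $\xi$, i.e.\ a constant; one must be slightly careful that $u$ need not globally be $\Re h$, but the homogeneous Fourier decomposition in the $r\partial_r$-direction circumvents this since each homogeneous mode is, on the punctured affine cone, a genuine (polynomial) function by Hartogs-type extension across the isolated singularity and the positive-weight condition.
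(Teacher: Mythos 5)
Your overall strategy --- separate variables on the cone end, reduce to $r\partial_r$-homogeneous modes, and kill the nonconstant modes using the $\C^*$-action and $\xi$-invariance --- can be made to work, but it is a genuinely different and much heavier route than the paper's, and as written it has one real hole: the modes of \emph{negative} homogeneity. On an end $\{r>R_0\}$ a harmonic function carries both growing and decaying radial modes (the indicial roots are $\alpha_k^{\pm}$ with $\alpha_k^{-}<0$; for $\mu_k=0$ these are $1$ and $r^{2-2n}$, not $1$ and $\log r$ --- the $\log r$ solution occurs only when the roots coincide, i.e.\ $n=1$). Your mechanism for killing a nonconstant mode is ``extends to the affine variety $V$, is a polynomial in the $z_i$, and no nonconstant monomial is $\xi$-invariant''; this relies on positivity of the weight and simply does not apply to a mode of negative degree, which blows up at the apex. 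The fix stays inside your framework but is a different argument: for $d\neq0$ the closed form $d^{c}u_d$ is exact by Cartan's formula, so $u_d=\Re h_d$ with $h_d$ holomorphic and (after normalising) $r\partial_r h_d=d\,h_d$; since $h_d$ is holomorphic, $\xi h_d=(J_0r\partial_r)h_d=i\,r\partial_rh_d=id\,h_d$, while $\xi u_d=0$ gives $\Re(\xi h_d)=0$, i.e.\ $d\,\Im h_d=0$, so $h_d$ is a real-valued holomorphic function, hence constant, hence $u_d=0$. You also need to justify that each homogeneous component of $u$ is separately pluriharmonic; this follows because $\partial\bar{\partial}$ commutes with $\mathcal{L}_{r\partial_r}$ and the homogeneous decomposition of $\partial\bar{\partial}u$ is unique, but it is a step, not a triviality, and the convergence of the eigenfunction expansion has to be invoked.

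You should also know that the paper's proof of (i) is five lines and avoids spectral theory entirely: since $r\partial_r-i\xi$ is holomorphic and $\mathcal{L}_{\xi}u=0$, one has $\bar{\partial}(r\partial_{r}u)=\partial\bar{\partial}u\lrcorner(r\partial_{r}-i\xi)=0$, so $r\partial_{r}u$ is a real-valued holomorphic function, hence a constant $c_0$; integrating gives $u=c_0\log r+c_1(x)$ with $c_1$ a function on the link, and then $\Delta_{g_0}u=0$ reads $(2n-2)c_0+\Delta_{g_S}c_1=0$, which upon integration over the compact link forces $c_0=0$ and then $c_1$ constant. Your argument for (ii) (the identity theorem applied to the real-analytic function $u-c$) is correct and is a perfectly good substitute for the paper's appeal to the maximum principle.
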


\begin{proof}
\begin{enumerate}[label=\textnormal{(\roman{*})}, ref=(\roman{*})]
\item Let $r$ denote the radial function of $g_{0}$. Then $r\partial_{r}$ is real holomorphic, and since
$\mathcal{L}_{J_{0}r\partial_{r}}u=\mathcal{L}_{\xi}u=0$, we see that
$$\bar{\partial}(r\partial_{r}u)=\partial\bar{\partial}u\lrcorner(r\partial_{r}-i\xi)=0,$$
that is, $r\partial_{r}u$ is holomorphic. As a real-valued holomorphic function, $r\partial_{r}u$ must be equal to a constant, $c_{0}$ say. Thus,
$$u=c_{0}\log r+c_{1}(x),$$ where $c_{1}(x)$ is a function that depends only on the link $(S,\,g_{S})$ of the cone
$(C_{0},\,g_{0})$. Now, $u$ being pluriharmonic implies that $\Delta_{g_{0}}u=0$, i.e.,
$$\frac{(2n - 2)}{r^{2}}c_{0}+\frac{1}{r^{2}}\Delta_{g_{S}}c_{1}(x)=0,$$
where $n$ is the complex dimension of $C_{0}$. Integrating this equation over $S$ then
shows that $c_{0}=0$ so that $u$ is constant, as claimed.
\item By part (i), we know that $u$ is constant on $M\setminus K$. The result now follows from the maximum principle.
\end{enumerate}
\end{proof}

\subsubsection{Sasaki manifolds and basic cohomology}\label{primitive}
A closed Riemannian manifold $(S,\,g_{S})$ of real dimension $2n-1$ is called \emph{Sasaki} if and only if its Riemannian cone $(C_{0},\,g_{0})$ is a K\"ahler
cone \cite{book:Boyer}, in which case we identify $(S,\,g_{S})$ with the level set $\{r= 1\}$ of $C_{0}$, $r$ here denoting the radial function of $g_{0}$. The restriction of the Reeb vector field to this
level set induces a non-zero
vector field $\xi\equiv J_{0}r\partial_{r}|_{\{r\,=\,1\}}$ on $S$, where
$J_{0}$ denotes the complex structure on the K\"ahler cone associated to $S$.
Let $\eta$ denote the $g_{S}$-dual one-form of
$\xi$ on $S$. This is a contact form and may be written
in terms of $r$ as $\eta=d^{c}\log(r)$ with $d^{c}:=i(\bar{\partial}-\partial)$. Moreover, $\eta$ induces a $g_{S}$-orthogonal decomposition $TS=\mathcal{D}
\oplus\langle\xi\rangle$, where $\mathcal{D}$ is the kernel of $\eta$ and
$\langle\xi\rangle$ is the $\mathbb{R}$-span of $\xi$ in $TS$, and correspondingly a decomposition of the metric $g_{S}$ as $g_{S}=\eta\otimes\eta+g^{T}$ with $g^{T}:=g_{S}|_{\mathcal{D}}$. The metric $g^{T}$ on $\mathcal{D}$
is invariant under the flow of $\xi$ and hence induces a K\"ahler metric
on the local leaf space of the \emph{Reeb foliation}, that is, the foliation of $S$ induced by the flow of $\xi$. We call $g^{T}$
the \emph{transverse metric}. Associated to $g^{T}$ are the
\emph{transverse K\"ahler form} $\omega^{T}$ and the \emph{transverse Ricci curvature} $\operatorname{Ric}(g^{T})$ defined on the local leaf space
in a natural way. The transverse K\"ahler form may be written as $\omega^{T}=\frac{1}{2}d\eta=\frac{1}{2}dd^{c}\log(r)$
which yields the following expression for the K\"ahler form $\omega_{0}$ of the cone metric $g_{0}$:
$$\omega_{0}=\frac{i}{2}\partial\bar{\partial}r^{2}=rdr\wedge\eta+r^{2}\omega^{T}.$$

Differential forms on $S$ that are invariant under the flow of $\xi$ and for which
the contraction with $\xi$ is zero are called \emph{basic}, as seen in the following definition.
\begin{definition}\label{defn:basic}
A $p$-form $\alpha$ on $S$ is called \emph{basic} if
\begin{equation*}
\xi\lrcorner\alpha=0\qquad\textrm{and}\qquad\mathcal{L}_{\xi}\alpha=0.
\end{equation*}
\end{definition}
\noindent We will denote the sheaf of sections of smooth basic $p$-forms and the sheaf of smooth basic functions on $M$ by $\Lambda_{B}^{p}$
and $C_{B}^{\infty}$ respectively. By considering a local foliated chart on $S$, one can always find a local basic orthonormal coframe $\{\theta_{i}\}_{i\,=\,1}^{2n-2}$ of $g^{T}$ such that $\theta_{i}\circ J_{0}=-\theta_{i+1}$.
With respect to this coframe, we may write $$g^{T}=\sum_{i\,=\,1}^{2n-2}\theta_{i}^{2}.$$
If $\alpha$ is a basic form, then one can check that $d\alpha$ is also basic. The
exterior derivative $d$ therefore restricts to a map $d_{B}:\Lambda_{B}^{p}\longrightarrow
\Lambda^{p+1}_{B}$ and we obtain a complex of sheaves
\begin{equation*}
0\longrightarrow C_{B}^{\infty}\stackrel{d_{B}}\longrightarrow \Lambda_{B}^{1}
\stackrel{d_{B}}\longrightarrow\Lambda_{B}^{2}\stackrel{d_{B}}\longrightarrow\ldots
\end{equation*}
Taking the cohomology of this complex, we get the \emph{basic de Rham cohomology groups} $H_{B}^{*}(S)$ of the Reeb foliation. Explicitly, they are given by
\begin{equation*}
H_{B}^{p}(S):=\frac{\ker(d_{B}:\Lambda_{B}^{p}(S)\longrightarrow\Lambda_{B}^{p+1}(S))}
{\operatorname{Im}(d_{B}:\Lambda_{B}^{p-1}(S)\longrightarrow\Lambda_{B}^{p}(S))}.
\end{equation*}
We write $[\alpha]_{B}$ for the cohomology class of a closed basic form $\alpha$. It is a result of El-Kacimi Alaoui et al. \cite{Hector:1985} that the basic de Rham cohomology groups are finite-dimensional.

Naturally associated to the transverse K\"ahler metric $g^{T}$ is the \emph{basic Hodge star operator}
$\bar{\star}:\Lambda_{B}^{r}(S)\longrightarrow\Lambda_{B}^{2n-r-2}(S)$ defined in terms of the Hodge star operator $\star$ of $g_{S}$ by
\begin{equation*}
\bar{\star}\alpha=\star(\eta\wedge\alpha)=(-1)^{r}\xi\lrcorner\star\alpha.
\end{equation*}
Notice that $\bar{\star}^{2}=(-1)^{r^{2}}\operatorname{Id}$
on $\Lambda_{B}^{r}(S)$. We also have a non-degenerate inner product $\langle\cdot\,,\,\cdot\rangle$ on $\Lambda^{r}(S)$ defined by
\begin{equation*}
\langle\cdot\,,\,\cdot\rangle:\Lambda^{r}(S)\times\Lambda^{r}(S)\longrightarrow\R,\qquad
\langle\alpha,\,\beta\rangle:=\int_{M}g_{S}(\alpha,\,\beta)\,{d\mu}_{g_{S}}=\int_{M}\alpha\wedge\star\beta,
\end{equation*}
where ${d\mu}_{g_{S}}$ denotes the volume form of $g_{S}$. This inner product restricts on basic forms to the expression
\begin{equation*}
\langle\cdot\,,\,\cdot\rangle_{B}:\Lambda_{B}^{r}(S)\times\Lambda_{B}^{r}(S)\longrightarrow\R,\qquad
\langle\alpha,\,\beta\rangle_{B}:=\int_{M}g(\alpha,\,\beta)\,\operatorname{dvol}_{g_{S}}=\int_{M}\alpha\wedge\bar{\star}\beta\wedge\eta,
\end{equation*}
and defines a non-degenerate inner product on $\Lambda^{r}_{B}(S)$. We define $L^{2}_{B}(S)$ to be the Hilbert space completion of $C^{\infty}_{B}(S)$ with respect to $\langle\cdot\,,\,\cdot\rangle_{B}$. With respect to this inner product, it is straightforward to check that the adjoint $\delta_{B}:\Lambda_{B}^{r}(S)\longrightarrow
\Lambda_{B}^{r-1}$ of $d_{B}:\Lambda_{B}^{r-1}(S)\longrightarrow\Lambda^{r}_{B}(S)$ is given by $\delta_{B}=-\bar{\star}\circ d_{B}\circ\bar{\star}$. We
then define the \emph{basic Laplacian} $\Delta_{B}$ acting on $\Lambda^{*}_{B}(S)$ by
\begin{equation*}
\Delta_{B}:=d_{B}\delta_{B}+\delta_{B}d_{B}.
\end{equation*}
This differential operator is self-adjoint with respect to $\langle\cdot\,,\,\cdot\rangle_{B}$ and the
kernel of its action on the space of basic $r$-forms is defined to be the space of \emph{basic harmonic $r$-forms}.
In analogy with the Hodge theorem on compact manifolds, there exists a transverse Hodge theorem \cite{hector:1986} for
Sasaki manifolds, which in particular states that each basic cohomology class has a unique basic harmonic representative.

Next, let $p_{S}:C_{0}\simeq\mathbb{R}_{+}\times S\to\{r=1\}\simeq S$ denote the natural projection. Then
we say that a complex-valued basic differential form $\alpha$ on $S$ is of \emph{type $(p,\,q)$} if and only if $p_{S}^{*}\alpha$
is a differential form of type $(p,\,q)$ on $C_{0}$. The sheaf of sections of
such forms on $S$ we denote by $\Lambda_{B}^{p,\,q}$. As in the complex case, there is a splitting
\begin{equation*}
\Lambda_{B}^{r}\otimes\C=\bigoplus_{p+q\,=\,r}\Lambda_{B}^{p,\,q}.
\end{equation*}
We define operators
\begin{equation*}
\p_{B}:\Lambda_{B}^{p,\,q}\longrightarrow\Lambda_{B}^{p+1,\,q}\qquad\textrm{and}\qquad
\bar{\p}_{B}:\Lambda_{B}^{p,\,q}\longrightarrow\Lambda_{B}^{p,\,q+1}
\end{equation*}
by $\p_{B}=\Pi^{p+1,\,q}\circ d_{B}$ and $\bar{\p}_{B}=\Pi^{p,\,q+1}\circ d_{B}$ respectively, where $$\Pi^{r,\,s}:\Lambda_{B}^{r+s}\otimes\C\longrightarrow\Lambda_{B}^{r,\,s}$$
denotes the projection map and where we consider the complex linear extension of $d_{B}$, i.e., $$d_{B}:\Lambda_{B}^{r}\otimes\C\longrightarrow\Lambda_{B}^{r+1}\otimes\C.$$
In analogy with the complex world, we have the following \emph{basic Dolbeault complex}
\begin{equation*}
0\longrightarrow\Lambda_{B}^{p,\,0}\stackrel{\bar{\p}_{B}}\longrightarrow\Lambda_{B}^{p,\,1}\stackrel{\bar{\p}_{B}}\longrightarrow\ldots
\stackrel{\bar{\p}_{B}}\longrightarrow\Lambda^{p,\,n}_{B}\longrightarrow 0,
\end{equation*}
together with the \emph{basic Dolbeault cohomology groups}
\begin{equation*}
H_{B}^{p,\,q}(S):=\frac{\ker(\bar{\p}_{B}:\Lambda_{B}^{p,\,q}(S)\longrightarrow\Lambda_{B}^{p,\,q+1}(S))}{\operatorname{Im}
(\bar{\p}_{B}:\Lambda_{B}^{p,\,q-1}(S)\longrightarrow\Lambda_{B}^{p,\,q}(S))}.
\end{equation*}
These are finite-dimensional \cite{ElKacimi}, and so we define the \emph{basic Hodge numbers} $h^{p,\,q}_{B}(S)$ by
\begin{equation*}
h^{p,\,q}_{B}(S):=\operatorname{dim}H_{B}^{p,\,q}(S).
\end{equation*}

For $k\leq n-1$, we say that a basic $k$-form $\alpha$ is \emph{primitive} if
it lies in the kernel of the adjoint $\Lambda$ of
the map $L:\alpha\mapsto\alpha\wedge d\eta$ with respect to $\langle\cdot\,,\,\cdot\rangle_{B}$. This is equivalent to saying that $\alpha\wedge(d\eta)^{n-k}=0$. The notion of a primitive basic form works equally well for basic $(p,\,q)$-forms. Indeed, we extend $\Lambda$ complex linearly to $$\Lambda_{B}^{*}(S)\otimes\C=\bigoplus_{p\,\geq\,0}\,
\bigoplus_{r\,+\,s\,=\,p}\Lambda_{B}^{r,\,s}S$$ and define a basic $(p,\,q)$-form to be primitive if and only if it lies in the kernel of this linear extension. Using the fact that the commutator $[\Lambda,\,\Delta_{B}]$ vanishes \cite[Lemma 7.2.7]{book:Boyer},
we see that $\Lambda$ maps basic harmonic forms to basic harmonic forms. In particular, from the basic harmonic representation theory of $H^{r}_{B}(S)$,
we find that the map $\Lambda$ descends to a well-defined map on these spaces.
We then define the \emph{$r$th-basic primitive cohomology group} $H_{B}^{r}(S)_{\operatorname{p}}$
as the kernel of the induced map
\begin{equation*}
\begin{split}
\Lambda:H^{r}_{B}(S)\longrightarrow H^{r-2}_{B}(S),\qquad[\beta]_{B}&\longmapsto[\Lambda\beta]_{B},
\end{split}
\end{equation*}
which may equivalently be realised as $$H_{B}^{r}(S)_{\operatorname{p}}=\{[\alpha]_{B}\in H_{B}^{r}(S)\,|\,[L^{n-r}\alpha]_{B}=0\}\quad\textrm{for $r\leq n-1$}.$$
In a similar manner, $\Lambda$ induces a map
\begin{equation*}
\Lambda:H^{p,\,q}_{B}(S)\longrightarrow H^{p-1,\,q-1}_{B}(S),\qquad[\beta]_{B}\longmapsto[\Lambda\beta]_{B},
\end{equation*}
the kernel of which we define as the \emph{$(p,\,q)$th-basic
primitive Dolbeault cohomology group} $H_{B}^{p,\,q}(S)_{\operatorname{p}}$. \cite[Corollary 7.2.10]{book:Boyer} then asserts that
\begin{equation}\label{poof}
H^{r}_{B}(S)_{\operatorname{p}}\otimes\C
=\bigoplus_{p+q\,=\,r}H_{B}^{p,\,q}(S)_{\operatorname{p}}.
\end{equation}
Notice that, as a subset of $H^{r}_{B}(S)$, each element of $H^{r}_{B}(S)_{\operatorname{p}}$
admits a unique basic harmonic representative. Since $[\Lambda,\,\Delta_{B}]=0$,
this representative is necessarily primitive at every point of $S$. By the next proposition, it is therefore harmonic.
This allows us to identify the de Rham cohomology groups with the basic primitive cohomology groups of a Sasaki manifold in a natural way.
\begin{prop}[{\cite[Proposition 7.4.13]{book:Boyer}}]\label{fgh}
Let $(S,\,g_{S})$ be a compact Sasaki manifold of dimension $2n-1$ and let $p$ be an integer satisfying $1\leq p\leq n-1$. Then a $p$-form is harmonic if and only if it is primitive and basic harmonic. Thus, for each $r\geq 0$, the basic primitive cohomology group $H^{r}_{B}(S)_{\operatorname{p}}$ can be naturally identified with the de Rham cohomology group $H^{r}(S,\,\R)$.
\end{prop}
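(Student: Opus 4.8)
The plan is to follow the proof of \cite[Proposition 7.4.13]{book:Boyer}, which rests on the transverse Hodge theory recalled above together with the transverse Kähler identities and transverse hard Lefschetz for the transverse Kähler structure $(\mathcal{D},J_{0},g^{T})$ of complex dimension $n-1$. Write $L$ for the operator $\alpha\mapsto\alpha\wedge d\eta$ and $\Lambda$ for its adjoint with respect to $\langle\cdot\,,\cdot\rangle_{B}$, let $\Delta_{g_{S}}$ be the Hodge Laplacian of $g_{S}$, and set $d^{c}_{B}=i(\bar\partial_{B}-\partial_{B})$ with formal adjoint $\delta^{c}_{B}$. First I would record two elementary facts on the compact Sasaki manifold $S$: since $\xi$ is Killing, $\mathcal{L}_{\xi}$ commutes with $\Delta_{g_{S}}$; and since $\{d,\eta\wedge(\cdot)\}=L$, taking adjoints gives $\{\delta_{g_{S}},\xi\lrcorner\}=\Lambda$. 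Moreover, for a basic form $\beta$ one has $d\beta=d_{B}\beta$ because $d$ preserves basicity, and a short computation using $\bar\star\beta=\star(\eta\wedge\beta)$ from Section~\ref{primitive}, the relation $\{\delta_{g_{S}},\xi\lrcorner\}=\Lambda$, and $\xi\lrcorner(\xi\lrcorner\,\cdot\,)=0$ yields
\[
\delta_{g_{S}}\beta=\delta_{B}\beta+\eta\wedge\Lambda\beta ,\qquad \delta_{g_{S}}(\eta\wedge\beta)=-\eta\wedge\delta_{B}\beta .
\]
With these in hand the easy implication is immediate: if $\alpha$ is basic, pointwise primitive, and basic harmonic with $\deg\alpha=p\le n-1$, then $d_{B}\alpha=0=\delta_{B}\alpha$ on the compact manifold $S$, so $d\alpha=d_{B}\alpha=0$, while $\delta_{g_{S}}\alpha=\delta_{B}\alpha+\eta\wedge\Lambda\alpha=0$ since $\Lambda\alpha=0$; hence $\alpha$ is harmonic.

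For the converse, suppose $\alpha$ is a harmonic $p$-form with $1\le p\le n-1$. Cartan's formula and $d\alpha=0$ give $\mathcal{L}_{\xi}\alpha=d(\xi\lrcorner\alpha)$, which is exact and (since $\mathcal{L}_{\xi}$ commutes with $\Delta_{g_{S}}$) harmonic, hence zero, so $\alpha$ is $\xi$-invariant. I would then decompose $\alpha=\alpha_{H}+\eta\wedge\gamma$ with $\gamma:=\xi\lrcorner\alpha$ and $\alpha_{H}:=\alpha-\eta\wedge\gamma$ (both basic), expand $d\alpha=0$ and $\delta_{g_{S}}\alpha=0$ using the displayed formulas, and separate basic from $\eta$-components to obtain
\[
d_{B}\gamma=0 ,\qquad d_{B}\alpha_{H}=-L\gamma ,\qquad \delta_{B}\alpha_{H}=0 ,\qquad \delta_{B}\gamma=\Lambda\alpha_{H} .
\]
Since $L\gamma=-d_{B}\alpha_{H}$ is $d_{B}$-exact and $\deg\gamma=p-1\le n-2$, transverse hard Lefschetz makes $L\colon H^{p-1}_{B}(S)\to H^{p+1}_{B}(S)$ injective, so $[\gamma]_{B}=0$; write $\gamma=d_{B}\zeta$ with $\delta_{B}\zeta=0$. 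Now I would compute $\langle\Delta_{B}\gamma,\gamma\rangle_{B}$ two ways. On one hand it equals $\|d_{B}\gamma\|_{B}^{2}+\|\delta_{B}\gamma\|_{B}^{2}=\|\Lambda\alpha_{H}\|_{B}^{2}$. On the other hand $\Delta_{B}\gamma=d_{B}\delta_{B}\gamma=d_{B}\Lambda\alpha_{H}$, and the transverse Kähler identity $d_{B}\Lambda=\Lambda d_{B}+\delta^{c}_{B}$ together with $\Lambda L\gamma=4(n-p)\gamma+L\Lambda\gamma$ on basic $(p-1)$-forms gives, after pairing with $\gamma$,
\[
\langle\Delta_{B}\gamma,\gamma\rangle_{B}=-4(n-p)\|\gamma\|_{B}^{2}-\|\Lambda\gamma\|_{B}^{2}+\langle\alpha_{H},d^{c}_{B}\gamma\rangle_{B} .
\]
The last term vanishes: $d^{c}_{B}\gamma=d^{c}_{B}d_{B}\zeta=-d_{B}(d^{c}_{B}\zeta)$ is $d_{B}$-exact and $\delta_{B}\alpha_{H}=0$ (so the exact sign in the Kähler identity is irrelevant). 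Combining the two expressions yields $\|\Lambda\alpha_{H}\|_{B}^{2}+4(n-p)\|\gamma\|_{B}^{2}+\|\Lambda\gamma\|_{B}^{2}=0$; as $p\le n-1$ forces $n-p\ge1$, every term vanishes, so $\gamma=0$ and $\Lambda\alpha_{H}=0$. Hence $\alpha=\alpha_{H}$ is basic with $d_{B}\alpha=-L\gamma=0$ and $\delta_{B}\alpha=0$, i.e.\ basic harmonic, and pointwise primitive since $\Lambda\alpha=0$.

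For the cohomological identification, inclusion of basic forms induces a natural map $H^{r}_{B}(S)\to H^{r}(S,\mathbb{R})$ ($d_{B}$-exact basic forms are $d$-exact), which annihilates $L\,H^{r-2}_{B}(S)$ because $d\eta\wedge\beta=d(\eta\wedge\beta)$ whenever $d_{B}\beta=0$; hence by the transverse Lefschetz decomposition $H^{r}_{B}(S)=H^{r}_{B}(S)_{\operatorname{p}}\oplus L\,H^{r-2}_{B}(S)$ it factors through a map $H^{r}_{B}(S)_{\operatorname{p}}\to H^{r}(S,\mathbb{R})$. I would show this is an isomorphism by matching harmonic representatives: each class in $H^{r}_{B}(S)_{\operatorname{p}}$ has a unique basic harmonic representative, which is pointwise primitive because $[\Lambda,\Delta_{B}]=0$ (so $\Lambda$ of the representative is basic harmonic and basic-exact, hence $0$), and therefore harmonic on $S$ by the easy implication; conversely each de~Rham class has a unique harmonic representative, which by the converse implication is primitive and basic harmonic. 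These two assignments are mutually inverse and compatible with the map, so injectivity comes from Hodge theory on $S$ and surjectivity from the transverse Hodge theorem, while the degrees $r\ge n$ reduce to $r\le n-1$ by Poincar\'e duality on $S$.

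The step I expect to be the main obstacle is the bookkeeping underpinning all of the above: establishing the codifferential formulas on $S$ relating $\delta_{g_{S}}$ to $\delta_{B}$, $\Lambda$, and $\eta$, together with the signs in the transverse Kähler identities and the $\mathfrak{sl}_{2}$-relation $[\Lambda,L]$, and then carrying out the two-term computation of $\langle\Delta_{B}\gamma,\gamma\rangle_{B}$. This computation is precisely where the hypothesis $p\le n-1$ is used — both to invoke transverse hard Lefschetz for the injectivity of $L$ on $H^{p-1}_{B}(S)$ and to guarantee $n-p>0$ in the final identity — so it is also the place where the degree restriction cannot be dropped.
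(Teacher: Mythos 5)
The paper does not actually prove this proposition --- it is quoted directly from \cite[Proposition 7.4.13]{book:Boyer} --- so there is no in-paper argument to compare against. Your reconstruction is the standard Tachibana/Boyer--Galicki proof and its essentials are sound: the codifferential formulas $\delta_{g_{S}}\beta=\delta_{B}\beta+\eta\wedge\Lambda\beta$ and $\delta_{g_{S}}(\eta\wedge\beta)=-\eta\wedge\delta_{B}\beta$ for basic $\beta$ check out with the paper's conventions for $\bar\star$; the $\xi$-invariance of a harmonic form and the decomposition $\alpha=\alpha_{H}+\eta\wedge\gamma$ are correct; the use of transverse hard Lefschetz (valid on a compact Sasaki manifold by El~Kacimi's transverse Hodge theory) to conclude $\gamma=d_{B}\zeta$ from $L[\gamma]_{B}=-[d_{B}\alpha_{H}]_{B}=0$ is exactly where $p-1\le n-2$ enters; and the two evaluations of $\langle\Delta_{B}\gamma,\gamma\rangle_{B}$ give $\|\Lambda\alpha_{H}\|_{B}^{2}+4(n-p)\|\gamma\|_{B}^{2}+\|\Lambda\gamma\|_{B}^{2}=0$ with all terms of the same sign, since the only sign-sensitive term $\langle\alpha_{H},d^{c}_{B}\gamma\rangle_{B}$ vanishes outright. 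So the form-level statement and the identification $H^{r}_{B}(S)_{\operatorname{p}}\cong H^{r}(S,\R)$ for $1\le r\le n-1$ are correctly established.

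The one step that does not go through is your closing remark that ``the degrees $r\geq n$ reduce to $r\leq n-1$ by Poincar\'e duality.'' No such reduction is possible: for $r\geq n$ the group $H^{r}_{B}(S)_{\operatorname{p}}=\ker\left(\Lambda:H^{r}_{B}(S)\to H^{r-2}_{B}(S)\right)$ vanishes, because primitive classes are the highest-weight vectors of the transverse $\mathfrak{sl}_{2}$-action on $\bigoplus_{r}H^{r}_{B}(S)$ and these occur only in degrees $\leq n-1$ (the transverse complex dimension), whereas $H^{r}(S,\R)$ need not vanish there (e.g.\ $H^{2n-1}(S,\R)=\R$, or $H^{3}(S^{2}\times S^{3},\R)=\R$ with $n=3$). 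The ``for each $r\geq0$'' in the statement as quoted is therefore an overreach; the identification holds precisely for $0\leq r\leq n-1$, which is the full range your argument covers and the only range the paper uses (it applies the proposition with $r=2$ and $n\geq3$ in Lemma \ref{lemma5.6}). Apart from flagging that restriction, your proof is complete.
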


\subsubsection{Calabi-Yau cones}
The particular type of K\"ahler cone that concerns us is the following.
\begin{definition}\label{d:cycone}
We say that $(C_{0},\,g_{0},\,J_0,\,\Omega_0)$ is a \emph{Calabi-Yau cone} if
\begin{enumerate}[label=\textnormal{(\roman{*})}, ref=(\roman{*})]
\item $(C_{0},\,g_{0},\,J_0)$ is a Ricci-flat K\"ahler cone of complex dimension $n$,
\item the canonical bundle $K_{C_{0}}$ of $C_{0}$ with respect to $J_0$ is trivial, and
\item $\Omega_{0}$ is a nowhere vanishing section of $K_{C_{0}}$ with $\omega_0^n = i^{n^2}\Omega_0 \wedge \bar{\Omega}_0$.
\end{enumerate}
\end{definition}

The link of a Calabi-Yau cone is a \emph{Sasaki-Einstein manifold}, an example of
a ``positive'' Sasaki manifold. Such manifolds enjoy the following vanishing property.
\begin{prop}[{\cite[Lemma 5.3]{goto}}]\label{vanishing}
The basic $(p,\,0)$-Hodge numbers of a positive Sasaki manifold
vanish for $p>0$.
\end{prop}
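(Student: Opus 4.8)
The plan is to prove this as the transverse (``basic'') analogue of the classical fact that a Fano manifold carries no nonzero holomorphic $p$-forms for $p\geq1$, by the same Bochner argument transplanted to the local leaf space of the Reeb foliation. The first point to record is that $H^{p,0}_{B}(S)$ depends only on the transverse holomorphic (CR) structure and not on the transverse K\"ahler metric: it is computed from the basic Dolbeault complex, and since there is nothing of type $(p,-1)$ it is simply the space of basic holomorphic $p$-forms, i.e.\ $\ker(\bar{\p}_{B})$ on $\Lambda_{B}^{p,0}$. Hence we are free to replace the transverse K\"ahler metric by any convenient one sharing the same Reeb foliation and transverse complex structure.

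With that flexibility in hand, the steps are as follows. First, arrange that the transverse Ricci form is positive definite: since $S$ is positive, $c_{1}^{B}(S)$ contains a positive basic $(1,1)$-form, so by El Kacimi-Alaoui's solution of the transverse Calabi conjecture (cf.\ \cite{book:Boyer}, \cite{ElKacimi}), after deforming the transverse K\"ahler metric within a suitable basic K\"ahler class we may assume $\Ric(g^{T})>0$. (When $S$ is Sasaki-Einstein — the only case needed for links of Calabi-Yau cones here — no deformation is necessary, since then $\Ric(g^{T})=2n\,g^{T}$.) Second, by the transverse Hodge theorem \cite{hector:1986} a class in $H^{p,0}_{B}(S)$ has a unique basic harmonic representative $\alpha\in\Lambda_{B}^{p,0}$; being $\Delta_{B}$-harmonic of pure type $(p,0)$, the transverse K\"ahler identities force $\bar{\p}_{B}\alpha=\bar{\p}_{B}^{*}\alpha=\p_{B}\alpha=\p_{B}^{*}\alpha=0$, so in particular $d_{B}\alpha=0$. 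Third, apply the transverse Bochner--Weitzenb\"ock identity $\Delta_{B}=(\nabla^{T})^{*}\nabla^{T}+\mathcal{R}^{T}$ to $\alpha$ and pair with $\alpha$ over the compact manifold $S$; since $\Delta_{B}\alpha=0$ this gives
\[
0=\langle\Delta_{B}\alpha,\alpha\rangle_{B}=\norm{\nabla^{T}\alpha}_{B}^{2}+\langle\mathcal{R}^{T}\alpha,\alpha\rangle_{B},
\]
where $\mathcal{R}^{T}$ is the transverse Weitzenb\"ock curvature operator, which on basic $(p,0)$-forms acts by a sum of transverse Ricci eigenvalues and is therefore positive definite for $p\geq1$ once $\Ric(g^{T})>0$. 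Both terms on the right being nonnegative, they both vanish, and positivity of $\mathcal{R}^{T}$ then forces $\alpha\equiv0$; thus $h^{p,0}_{B}(S)=0$ for all $p>0$. The integrations by parts used here are legitimate because $S$ is compact and all forms in sight are basic, so everything reduces to a computation on the compact local leaf-space model.

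The substantive inputs — and hence the potential obstacles — are the two analytic facts borrowed from foliated geometry: the transverse Calabi conjecture, needed to produce a transverse K\"ahler metric of positive transverse Ricci curvature, and the transverse Bochner--Weitzenb\"ock formula with the correct curvature term. Both amount to transcribing the K\"ahler identities and elliptic Hodge theory into the transverse setting, and both are available in the literature (\cite{book:Boyer}, \cite{hector:1986}, \cite{ElKacimi}); if one prefers to avoid the transverse Monge--Amp\`ere equation altogether, one may instead invoke a transverse Kodaira--Nakano vanishing theorem applied to the transversally negative transverse canonical bundle, but the Bochner route above is the most transparent. As a consistency check, the endpoint $p=n-1$ of the statement says that $S$ carries no transverse holomorphic volume form, exactly what one expects when $c_{1}^{B}(S)>0$.
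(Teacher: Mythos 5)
The paper does not prove this statement---it is quoted from Goto [Lemma 5.3] without proof---so there is no internal argument to compare against. Your proof is correct and is essentially the standard argument from the cited literature: first use El Kacimi-Alaoui's transverse Calabi theorem to pass to a transverse K\"ahler metric with $\Ric(g^{T})>0$ (a type II deformation, which fixes the Reeb foliation and the transverse holomorphic structure and hence leaves the basic Dolbeault complex, and so $H^{p,0}_{B}$, unchanged), then kill the basic holomorphic $p$-forms by the transverse Bochner--Weitzenb\"ock formula, the integration by parts being legitimate because the Reeb foliation of a compact Sasaki manifold is taut. Your remark that no deformation is needed in the Sasaki--Einstein case actually used in this paper, where $\Ric(g^{T})=2n\,g^{T}$ already, is a correct simplification.
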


\subsubsection{Crepant resolutions}
A Calabi-Yau cone may be desingularised by a crepant resolution (if one exists).
\begin{definition}
Let $C_{0}$ be a complex space with an isolated normal singularity $o\in C_{0}$ and suppose that the complex manifold $C_{0}\setminus\{o\}$ has trivial canonical bundle. A \emph{crepant resolution} of $C_{0}$ is a pair $(M,\,\pi)$ comprising a smooth complex manifold $M$ with trivial canonical bundle
together with a proper map $\pi:M\longrightarrow C_{0}$ such that the restriction map
$$\pi|_{M\setminus\pi^{-1}(\{o\})}:M\setminus\pi^{-1}(\{o\})\longrightarrow C_{0}\setminus\{o\}$$ is a biholomorphism.
We call the set $E:=\pi^{-1}(\{o\})$ the \emph{exceptional set} of the resolution.
\end{definition}

We have the following vanishing result for such a resolution.
\begin{lemma}[{\cite[Lemma 5.5]{goto}}]\label{321}
Let $C_{0}$ be an affine variety of complex dimension $n\geq2$ with a normal
isolated singularity $o$ and with $K_{C_{0}\setminus\{o\}}$ trivial and let $\pi:M\to C_{0}$
be a crepant resolution of $C_{0}$. Then
\begin{equation}\label{eqn:vanishing}
H^{1}(M,\,\mathcal{O}_{M})=0.
\end{equation}
Furthermore, let $M_{0}$ denote the complement $M\setminus E$, where $E$ is the exceptional set of the resolution. If $n\geq 3$, then it also holds true that
\begin{equation}\label{lenny}
H^{1}(M_{0},\,\mathcal{O}_{M_{0}})=0.
\end{equation}
\end{lemma}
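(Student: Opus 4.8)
The plan is to obtain both identities from the Grauert--Riemenschneider vanishing theorem, exploiting that $K_M$ is trivial and that $C_0$, being affine, is Stein.

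For \eqref{eqn:vanishing} I would argue as follows. Since $\pi\colon M\to C_0$ is a proper bimeromorphic morphism with $M$ smooth, Grauert--Riemenschneider gives $R^q\pi_*\omega_M=0$ for all $q\geq1$; as $K_M$ is trivial by definition of a crepant resolution, $\omega_M\cong\mathcal{O}_M$, and hence $R^q\pi_*\mathcal{O}_M=0$ for all $q\geq1$. Because $\pi$ is proper and birational onto the normal variety $C_0$, Zariski's main theorem (equivalently, Stein factorisation) yields $\pi_*\mathcal{O}_M=\mathcal{O}_{C_0}$. Substituting into the Leray spectral sequence
\begin{equation*}
H^p(C_0,\,R^q\pi_*\mathcal{O}_M)\Longrightarrow H^{p+q}(M,\,\mathcal{O}_M)
\end{equation*}
collapses it to the row $q=0$, so $H^1(M,\,\mathcal{O}_M)\cong H^1(C_0,\,\mathcal{O}_{C_0})=0$ since $C_0$ is affine. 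As a by-product, $o\in C_0$ is a rational singularity, hence $\mathcal{O}_{C_0,\,o}$ is Cohen--Macaulay of dimension $n$; this is the input I would use for the second part.

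For \eqref{lenny}, the point is that $\pi$ restricts to a biholomorphism $M_0\to C_0\setminus\{o\}$, so it suffices to show $H^1(C_0\setminus\{o\},\,\mathcal{O})=0$ when $n\geq3$. I would run the long exact sequence of local cohomology with support at the closed point $o$,
\begin{equation*}
\cdots\to H^i(C_0,\,\mathcal{O}_{C_0})\to H^i(C_0\setminus\{o\},\,\mathcal{O})\to H^{i+1}_{\{o\}}(C_0,\,\mathcal{O}_{C_0})\to H^{i+1}(C_0,\,\mathcal{O}_{C_0})\to\cdots.
\end{equation*}
As $C_0$ is affine, $H^i(C_0,\,\mathcal{O}_{C_0})=0$ for $i\geq1$, so $H^1(C_0\setminus\{o\},\,\mathcal{O})\cong H^2_{\{o\}}(C_0,\,\mathcal{O}_{C_0})\cong H^2_{\mathfrak{m}}(\mathcal{O}_{C_0,\,o})$, the local cohomology of the local ring at $o$ with respect to its maximal ideal $\mathfrak{m}$. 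By the first part $\mathcal{O}_{C_0,\,o}$ is Cohen--Macaulay of dimension $n$, so $H^j_{\mathfrak{m}}(\mathcal{O}_{C_0,\,o})=0$ for all $j<n$; taking $j=2$ and using $n\geq3$ finishes the proof.

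The only non-formal ingredient, and hence the crux, is the chain ``crepant resolution $\Rightarrow$ rational singularity $\Rightarrow$ Cohen--Macaulay'': the first link is exactly the vanishing $R^{\geq1}\pi_*\mathcal{O}_M=0$ obtained above from Grauert--Riemenschneider together with the triviality of $\omega_M$, and the second is the classical theorem that rational singularities are Cohen--Macaulay. I expect the subtle bookkeeping to be (i) observing that normality by itself only gives Serre's condition $S_2$, i.e.\ $\operatorname{depth}\mathcal{O}_{C_0,\,o}\geq2$, which kills $H^1_{\mathfrak{m}}$ but not $H^2_{\mathfrak{m}}$ --- this is precisely why $n\geq3$ is needed and why \eqref{eqn:vanishing} genuinely enters the proof of \eqref{lenny} --- and (ii) keeping the algebraic and analytic pictures aligned, $C_0$ being affine algebraic while Grauert--Riemenschneider and the Leray sequence are applied analytically and the depth statement is a purely local assertion about $\mathcal{O}_{C_0,\,o}$, the two being reconciled via GAGA.
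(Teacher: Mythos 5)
Your proof is correct. The first half is essentially the paper's argument: Grauert--Riemenschneider (the paper cites Takegoshi's version) plus triviality of $K_{M}$ gives $R^{q}\pi_{*}\mathcal{O}_{M}=0$ for $q>0$, and then the Leray spectral sequence over the Stein space $C_{0}$ kills $H^{1}(M,\,\mathcal{O}_{M})$; whether one invokes $\pi_{*}\mathcal{O}_{M}=\mathcal{O}_{C_{0}}$ or just Cartan B for the coherent sheaf $\pi_{*}\mathcal{O}_{M}$ is immaterial.

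For \eqref{lenny} you take a genuinely different route. The paper stays upstairs on $M$: it runs the long exact sequence of cohomology with supports in $E$ and converts $R^{q}\pi_{*}K_{M}=0$ into $H^{i}_{E}(M,\,\mathcal{O}_{M})=0$ for $i<n$ via Hartshorne's formal duality theorem (this is where normality of $C_{0}$ enters), concluding $H^{1}(M_{0},\,\mathcal{O}_{M_{0}})\cong H^{1}(M,\,\mathcal{O}_{M})=0$ for $n\geq3$. You instead work downstairs on $C_{0}$, observe that the first part exhibits $o$ as a rational singularity, quote the classical theorem that rational singularities are Cohen--Macaulay, and kill $H^{1}(C_{0}\setminus\{o\},\,\mathcal{O})\cong H^{2}_{\mathfrak{m}}(\mathcal{O}_{C_{0},\,o})$ by the depth bound. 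The two arguments are Grothendieck-dual to one another --- the standard proof that rational singularities are Cohen--Macaulay runs through precisely the duality the paper applies directly --- so neither is more elementary at bottom, but your packaging makes the role of $n\geq3$ (depth must exceed $2$) especially transparent, while the paper's yields the stronger conclusion $H^{i}_{E}(M,\,\mathcal{O}_{M})=0$ for all $i<n$ and hence restriction isomorphisms in a whole range of degrees. The only point you should nail down rather than wave at is the comparison between the analytic local cohomology $H^{2}_{\{o\}}(C_{0},\,\mathcal{O}_{C_{0}})$ and the algebraic $H^{2}_{\mathfrak{m}}(\mathcal{O}_{C_{0},\,o})$; this is standard (local cohomology at a point depends only on the completed stalk, and depth is insensitive to completion), but it is a citation you owe, not quite a GAGA statement.
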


\begin{proof}
First observe from Takegoshi's generalisation of the Grauert-Riemenschneider vanishing theorem \cite[Theorem I]{Grauertriemannschneider} that
\begin{equation*}
R^{q}\pi_{*}K_{M}=0\quad\textrm{for $q>0$}.
\end{equation*}
Since $K_{M}$ is trivial, it follows that $R^{q}\pi_{*}\mathcal{O}_{M}=0$ for $q>0$ as well.

As for the cohomology of the sheaves $R^{q}\pi_{*}\mathcal{O}_{M}$, we know from Oka's coherence theorem and Grauert's direct image theorem that
for $q\geq0$ they are coherent analytic sheaves on $C_{0}$. We also know that $C_{0}$, as a closed analytic subspace of the Stein manifold $\C^{n}$, is itself an example of a Stein space. As a result, Cartan's Theorem B applies, from which we deduce that
$$H^{p}(C_{0},\,R^{q}\pi_{*}\mathcal{O}_{M})=0 \qquad\textrm{for all $p\geq 1$ and $q\geq 0$}.$$

Consider next the Leray spectral sequence \cite[Theorem 4.17.1, p.201]{Godement}
\begin{equation*}
E_{2}^{p,\,q}:=H^{p}(C_{0},\,R^{q}\pi_{*}\mathcal{O}_{M})\Rightarrow H^{p+q}(M,\,\mathcal{O}_{M})
\end{equation*}
and form its exact sequence of terms of low degree \cite[Theorem 4.5.1, p.82]{Godement}
\begin{equation*}
\begin{split}
0\longrightarrow H^{1}(C_{0},\,\pi_{*}\mathcal{O}_{M})\longrightarrow H^{1}(M,\,\mathcal{O}_{M}&)
\longrightarrow H^{0}(C_{0},\,R^{1}\pi_{*}\mathcal{O}_{M})\longrightarrow H^{2}(C_{0},\,\pi_{*}\mathcal{O}_{M})\longrightarrow H^{2}(M,\,\mathcal{O}_{M}).
\end{split}
\end{equation*}
Since $R^{1}\pi_{*}\mathcal{O}_{M}=0$ and $H^{1}(C_{0},\,\pi_{*}\mathcal{O}_{M})=0$, we see that $H^{1}(M,\,\mathcal{O}_{M})=0$.
The vanishing \eqref{eqn:vanishing} now follows.

Finally, let $H_{E}^{i}(M,\,\mathcal{O}_{M})$ denote the cohomology groups with supports in $E$ and coefficients in the structure sheaf $\mathcal{O}_{M}$.
Then we have a long exact sequence of cohomology with supports
\begin{equation}\label{les:supports}
\cdots\longrightarrow H_{E}^{i}(M,\,\mathcal{O}_{M})\longrightarrow H^{i}(M,\,\mathcal{O}_{M})
\longrightarrow H^{i}(M_{0},\,\mathcal{O}_{M}|_{M_{0}})\longrightarrow H_{E}^{i+1}(M,\,\mathcal{O}_{M})\longrightarrow\cdots
\end{equation}
In order to compute the cohomology groups $H_{E}^{i}(M,\,\mathcal{O}_{M})$, we utilise a version of Hartshorne's formal duality theorem \cite[Theorem A.1]{Greb:10}. This requires the hypothesis in the lemma that $C_{0}$ is normal. As $R^{q}\pi_{*}\mathcal{O}_{M}=0$ for $q>0$, the duality theorem
asserts that
\begin{equation*}
H_{E}^{n-q}(M,\,\mathcal{O}^{*}_{M}\otimes K_{M})=0\qquad\textrm{for $q>0$},
\end{equation*}
where $\mathcal{O}_{M}^{*}$ denotes the sheaf dual to $\mathcal{O}_{M}$. Triviality of $K_{M}$ then implies that this vanishing is equivalent to the vanishing of $H_{E}^{i}(M,\,\mathcal{O}_{M})$ for $i<n$,
and so from \eqref{les:supports} we deduce that
\begin{equation*}
H^{1}(M,\,\mathcal{O}_{M})
\cong H^{1}(M_{0},\,\mathcal{O}_{M}|_{M_{0}})\qquad\textrm{if $n\geq 3$.}
\end{equation*}
From the first part of the lemma we already know that $H^{1}(M,\,\mathcal{O}_{M})=0$. Hence for $n\geq 3$, we have that
\begin{equation*}
H^{1}(M_{0},\,\mathcal{O}_{M_{0}})=H^{1}(M_{0},\,\mathcal{O}_{M}|_{M_{0}})=0.
\end{equation*}
\end{proof}

As a result of the vanishing \eqref{eqn:vanishing}, we have a $\partial\bar{\partial}$-lemma on $M$.
\begin{lemma}[$\partial\bar{\partial}$-lemma]\label{deldelbar}
Let $C_{0}$ be an affine variety of complex dimension $n\geq2$ with a normal
isolated singularity $o$ and with $K_{C_{0}\setminus\{o\}}$ trivial, let $\pi:M\to C_{0}$
be a crepant resolution of $C_{0}$, and let $\alpha$ be an exact real $(1,\,1)$-form on $M$. Then there exists a smooth real-valued function $u$ on $M$ such that $\alpha=i\partial\bar{\partial}u$.
\end{lemma}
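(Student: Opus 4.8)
\emph{Proposal.} The plan is to run the classical ``closed $(1,1)$-form $\Rightarrow$ $i\partial\bar\partial$-potential'' argument; the only input beyond formal manipulations is the vanishing $H^{1}(M,\mathcal{O}_{M})=0$ supplied by Lemma \ref{321}, which is precisely what makes the relevant $\bar\partial$-equation solvable.

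First I would pass to a real primitive. Since $\alpha$ is exact, write $\alpha=d\beta$ for a $1$-form $\beta$; replacing $\beta$ by $\operatorname{Re}\beta$ (harmless since $\alpha$ is real, because $d\operatorname{Re}\beta=\operatorname{Re}d\beta=\operatorname{Re}\alpha=\alpha$) we may assume $\beta$ is a real $1$-form. Decompose $\beta=\beta^{1,0}+\beta^{0,1}$ into $(1,0)$- and $(0,1)$-parts; reality of $\beta$ forces $\beta^{0,1}=\overline{\beta^{1,0}}$. Expanding $d=\partial+\bar\partial$ and comparing bidegrees in $\alpha=d\beta$, the $(2,0)$-, $(1,1)$-, and $(0,2)$-components give
\begin{equation*}
\partial\beta^{1,0}=0,\qquad \alpha=\bar\partial\beta^{1,0}+\partial\beta^{0,1},\qquad \bar\partial\beta^{0,1}=0.
\end{equation*}

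Next I would solve a $\bar\partial$-equation. The identity $\bar\partial\beta^{0,1}=0$ exhibits $\beta^{0,1}$ as a representative of a class in the Dolbeault cohomology group $H^{0,1}_{\bar\partial}(M)$. By the Dolbeault isomorphism, valid on any complex manifold, $H^{0,1}_{\bar\partial}(M)\cong H^{1}(M,\mathcal{O}_{M})$, and the right-hand side vanishes by Lemma \ref{321}. Hence there exists a smooth complex-valued function $g$ on $M$ with $\bar\partial g=\beta^{0,1}$ (smoothness of $g$ is automatic, the data being smooth and $\bar\partial$ elliptic in the overdetermined sense). Conjugating and using reality, $\beta^{1,0}=\overline{\beta^{0,1}}=\overline{\bar\partial g}=\partial\bar g$.

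Finally I would assemble the potential. Substituting into $\alpha=\bar\partial\beta^{1,0}+\partial\beta^{0,1}$ and using $\bar\partial\partial=-\partial\bar\partial$,
\begin{equation*}
\alpha=\bar\partial\partial\bar g+\partial\bar\partial g=-\partial\bar\partial\bar g+\partial\bar\partial g=\partial\bar\partial(g-\bar g)=i\,\partial\bar\partial u,
\end{equation*}
where $u:=2\operatorname{Im}(g)$ is a smooth real-valued function on $M$ since $g-\bar g=2i\operatorname{Im}(g)$. This is the asserted identity $\alpha=i\partial\bar\partial u$. There is no serious obstacle here beyond correctly invoking the Dolbeault isomorphism on the (non-compact) resolution $M$ together with the cohomology vanishing of Lemma \ref{321}; the type decomposition and the reality bookkeeping are routine.
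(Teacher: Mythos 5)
Your proposal is correct and follows essentially the same route as the paper: decompose a real primitive $\beta$ into bidegree components, use the vanishing $H^{0,1}(M)\cong H^{1}(M,\mathcal{O}_{M})=0$ from Lemma \ref{321} to solve $\bar{\partial}g=\beta^{0,1}$, and assemble the potential as twice the imaginary part of $g$. The only cosmetic difference is that you make the reality reduction and the Dolbeault isomorphism explicit, which the paper leaves implicit.
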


\begin{proof}
As a real exact two-form, there exists a real one-form $\beta$ on $M$ such that $\alpha=d\beta$. Write $\beta=\beta^{1,\,0}+\beta^{0,\,1}$ for some $\beta^{1,\,0}\in\Lambda^{1,\,0}M$ and $\beta^{0,\,1}\in\Lambda^{0,\,1}M$.
Then, as $\alpha=d\beta$ is real of type $(1,\,1)$, $\beta^{1,\,0}$ and $\beta^{0,\,1}$ must satisfy
$$d\beta=\bar{\p}\beta^{1,\,0}+\p\beta^{0,\,1},\qquad\p\beta^{1,\,0}=0,\qquad\textrm{and}\quad\bar{\p}\beta^{0,\,1}=0.$$
Since $C_{0}$ is a normal affine variety by
Theorem \ref{t:affine} and since $M$ has trivial canonical bundle,
the vanishing $H^{0,\,1}(M)=H^{1}(M,\,\mathcal{O}_{M})=0$ given by \eqref{eqn:vanishing}
together with the $\bar{\p}$-closedness of $\beta^{0,\,1}$ implies that
$\beta^{0,\,1}=\bar{\p}\phi$ for some smooth complex-valued function $\phi$ on $M$.
This yields a simplification of the above expression for $d\beta$, namely
\begin{equation*}
\begin{split}
d\beta=\bar{\p}\beta^{1,\,0}+\p\beta^{0,\,1}
=\overline{\p\beta^{0,\,1}}+\p\beta^{0,\,1}
=\bar{\p}\p\bar{\phi}+\p\bar{\p}\phi=\p\bar{\p}(\phi-\bar{\phi})=i\p\bar{\p}u,
\end{split}
\end{equation*}
where $u:=i(\bar{\phi}-\phi)$ is twice the imaginary part of $\phi$. Thus, $\alpha=d\beta=i\partial\bar{\partial}u$ with $u:M\to\mathbb{R}$ real-valued and smooth, as required.
\end{proof}

Crepant resolutions of Calabi-Yau cones have finite fundamental group.
\begin{lemma}\label{finite}
Let $\pi:M\to C_{0}$ be a crepant resolution of a Calabi-Yau cone $C_{0}$. Then $\pi_{1}(M)$ is finite. In particular, $H^{1}(M,\,\mathbb{R})=0$.
\end{lemma}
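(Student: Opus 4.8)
The plan is to reduce the computation of $\pi_1(M)$ to that of the fundamental group of the link $S$ of $C_0$ and then invoke the positivity of the Ricci curvature of $S$. Since $S$ is the link of a Calabi-Yau cone, it is a compact Sasaki-Einstein manifold of real dimension $2n-1$; concretely, Ricci-flatness of $g_0=dr^2+r^2g_S$ forces $\operatorname{Ric}(g_S)=(2n-2)g_S$, which is strictly positive because $n\geq2$. By the Bonnet-Myers theorem, $\pi_1(S)$ is therefore finite.

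Next I would transport this to $M$. As smooth manifolds $C_0\setminus\{o\}\cong\mathbb{R}_+\times S$, which deformation retracts onto the level set $\{r=1\}\cong S$, so $\pi_1(C_0\setminus\{o\})\cong\pi_1(S)$ is finite. Because $\pi$ restricts to a biholomorphism $M\setminus E\to C_0\setminus\{o\}$, we get $\pi_1(M\setminus E)\cong\pi_1(S)$, again finite. It then remains to compare $\pi_1(M\setminus E)$ with $\pi_1(M)$. The exceptional set $E=\pi^{-1}(o)$ is a compact complex analytic subset of $M$, and $M$ is connected since $M\setminus E\cong C_0\setminus\{o\}$ is connected and dense; moreover $E\subsetneq M$, since $\pi$ is surjective (it is proper with image containing the dense set $C_0\setminus\{o\}$) while $\pi(E)=\{o\}$. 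Hence each component of $E$ has complex codimension at least $1$, i.e.\ real codimension at least $2$. Stratifying $E$ into smooth pieces and applying general position, every loop in $M$ can be pushed off $E$ by a homotopy in $M$, so the inclusion $M\setminus E\hookrightarrow M$ induces a surjection $\pi_1(M\setminus E)\twoheadrightarrow\pi_1(M)$. Thus $\pi_1(M)$ is a quotient of a finite group and is itself finite.

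For the ``in particular'' statement: since $\pi_1(M)$ is finite, $H_1(M,\mathbb{Z})\cong\pi_1(M)^{\mathrm{ab}}$ is a finite abelian group, and so by the universal coefficient theorem $H^1(M,\mathbb{R})\cong\operatorname{Hom}(H_1(M,\mathbb{Z}),\mathbb{R})=0$.

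I expect the only genuinely delicate point to be the surjectivity $\pi_1(M\setminus E)\twoheadrightarrow\pi_1(M)$: this is a standard consequence of $E$ having real codimension $\geq2$, but because $E$ is in general singular it must be argued via a Whitney stratification and a transversality/general-position argument rather than directly. Everything else --- the Sasaki-Einstein normalization of $g_S$, the retraction of the punctured cone onto its link, and the universal-coefficient step --- is routine.
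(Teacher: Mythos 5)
Your proof is correct, but it is a genuinely different argument from the one in the paper. The paper's proof is a two-line citation: by Goto/van Coevering, a crepant resolution of a Calabi--Yau cone carries an asymptotically conical Calabi--Yau metric, hence a complete Ricci-flat metric of Euclidean volume growth, and manifolds admitting such metrics have finite fundamental group by Anderson/Li. Your route replaces this heavy machinery with elementary Riemannian geometry and topology: Ricci-flatness of $g_{0}=dr^{2}+r^{2}g_{S}$ forces $\operatorname{Ric}(g_{S})=(2n-2)g_{S}>0$, so Bonnet--Myers gives finiteness of $\pi_{1}(S)$; the punctured cone retracts onto $S$ and $\pi$ is a biholomorphism off $E$, so $\pi_{1}(M\setminus E)$ is finite; and the surjection $\pi_{1}(M\setminus E)\twoheadrightarrow\pi_{1}(M)$ follows because the analytic set $E$ has real codimension at least $2$. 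The last step is, as you say, the only delicate point, but it is a standard fact for complex-analytic subsets (via a Whitney stratification and general position), so the argument is complete. Your approach has the advantage of being self-contained and of not using crepancy at all --- it works for \emph{any} resolution of \emph{any} cone whose link has positive Ricci curvature --- whereas the paper's proof trades elementarity for brevity by leaning on the existence theorem for asymptotically conical Calabi--Yau metrics and on the Anderson/Li structure theory. One small point to tidy: your assertion that $M$ is connected because $M\setminus E$ is ``connected and dense'' implicitly uses that no component of $M$ lies inside $E$; this is the standard convention for a resolution (the exceptional set is nowhere dense), but it is worth stating rather than assuming.
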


\begin{proof}
By \cite{goto, vanC2}, $M$ admits an asymptotically conical Calabi-Yau metric, in particular a complete Ricci-flat Riemannian metric of Euclidean volume growth.
Such a manifold has finite fundamental group by \cite{anderson, lii}.
\end{proof}

\subsubsection{Equivariant resolutions}
The real holomorphic torus action on a K\"ahler cone leads to the notion of an \emph{equivariant resolution}.
\begin{definition}\label{equivariantt}
Let $C_0$ be a K\"ahler cone with complex structure $J_{0}$, let $\pi:M\to C_0$ be a resolution of $C_0$, and let
$G$ be a Lie subgroup of the automorphism group of $(C_{0},\,J_{0})$ fixing the apex of $C_{0}$.
We say that $\pi:M\to C_0$ is an \emph{equivariant resolution with respect to $G$} if the action of $G$ on $C_{0}$ extends to a holomorphic action
on $M$ in such a way that $\pi(g\cdot x)=g\cdot\pi(x)$ for all $x\in M$ and $g\in G$.
\end{definition}

Such a resolution of a K\"ahler cone always exists; see \cite[Proposition 3.9.1]{kollar}.
If a Calabi-Yau cone admits a unique crepant resolution, then the crepant resolution is necessarily equivariant with respect to the real
holomorphic torus action on the cone induced by the Reeb vector field. This follows from
the proof of \cite[Lemma 2.13]{conlondez}.

\subsection{Steady Ricci solitons}
\subsubsection{Definition and properties}
The specific metrics that we are interested in are the following.
\begin{definition}
A \emph{steady Ricci soliton} is a triple $(M,\,g,\,X)$, where $M$ is a Riemannian manifold with a complete Riemannian metric $g$
and a complete vector field $X$ satisfying the equation
\begin{equation}\label{soliton2}
\Ric(g)=\frac{1}{2}\mathcal{L}_{X}g.
\end{equation}
If moreover $X=\nabla^{g} f$ for some smooth real-valued function $f$ on $M$, then we say that
the steady Ricci soliton $(M,\,g,\,X)$ is \emph{gradient}.
In this case, \eqref{soliton2} reduces to
\begin{equation*}
\Ric(g)=\operatorname{Hess}_{g}(f),
\end{equation*}
where $\operatorname{Hess}_{g}$ denotes the Hessian with respect to $g$.

A \emph{steady K\"ahler-Ricci soliton} is a triple $(M,\,g,\,X)$, where $M$
is a K\"ahler manifold, $X$ is a complete real holomorphic vector field on $M$, and $g$ is a complete K\"ahler metric on $M$ whose K\"ahler form $\omega$ satisfies
\begin{equation}\label{soliton}
\rho_{\omega}=\frac{1}{2}\mathcal{L}_{X}\omega,
\end{equation}
with $\rho_{\omega}$
denoting the Ricci form of $\omega$.
If moreover $X=\nabla^{g}f$ for some smooth real-valued function $f$ on $M$,
then we say that the steady K\"ahler-Ricci soliton $(M,\,g,\,X)$ is \emph{gradient}. In this case,
the defining equation of the soliton \eqref{soliton} may be rewritten as
\begin{equation*}
\rho_{\omega}=i\partial\bar{\partial}f.
\end{equation*}

For steady Ricci and K\"ahler-Ricci solitons $(M,\,g,\,X)$, the vector field $X$ is called the \emph{soliton vector field}.
When such solitons are gradient, the smooth real-valued function $f$ on $M$
satisfying $X=\nabla^{g}f$ is called the \emph{soliton potential}. \end{definition}

Two steady K\"ahler-Ricci solitons with the same soliton vector field that differ by $i\partial\bar{\partial}$ of a function
satisfy the following.
\begin{lemma}\label{cma}
Let $\omega_{1}$ and $\omega_{2}$ be two steady K\"ahler-Ricci solitons with the same soliton vector field $X$ on a complex manifold $M$ such that
$\omega_{2}=\omega_{1}+i\partial\bar{\partial}u$ for some smooth real-valued function $u$. Then
$$i\partial\bar{\partial}\left(\log\left(\frac{(\omega_{1}+i\partial\bar{\partial}u)^{n}}{\omega_{1}^{n}}\right)
+\frac{X}{2}\cdot u\right)=0.$$
\end{lemma}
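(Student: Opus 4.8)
The plan is to subtract the two soliton equations and rewrite both sides using standard identities. Recall that for any K\"ahler metric the Ricci form is $\rho_\omega=-i\partial\bar\partial\log(\omega^n)$ in a local holomorphic frame (where $\omega^n$ stands for the local volume density $\det(g_{j\bar k})$); consequently the \emph{difference} $\rho_{\omega_2}-\rho_{\omega_1}=-i\partial\bar\partial\log\!\left(\frac{\omega_2^n}{\omega_1^n}\right)$ is a globally well-defined identity of real $(1,1)$-forms, the frame-dependence cancelling in the quotient $\omega_2^n/\omega_1^n$, which is a genuine positive function on $M$ since $\omega_2=\omega_1+i\partial\bar\partial u>0$.

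First I would use the defining equation $\rho_{\omega_j}=\tfrac12\mathcal{L}_X\omega_j$ for $j=1,2$ and subtract: since $\omega_2-\omega_1=i\partial\bar\partial u$,
\[
-i\partial\bar\partial\log\!\left(\frac{(\omega_1+i\partial\bar\partial u)^n}{\omega_1^n}\right)=\rho_{\omega_2}-\rho_{\omega_1}=\tfrac12\mathcal{L}_X\bigl(i\partial\bar\partial u\bigr).
\]
Next I would invoke that $X$ is real holomorphic — this is part of the very definition of a steady K\"ahler--Ricci soliton — so its (local) flow consists of biholomorphisms and hence $\mathcal{L}_X$ preserves the bidegree of differential forms. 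Combined with the general fact $[\mathcal{L}_X,d]=0$ and the decomposition $d=\partial+\bar\partial$, this forces $\mathcal{L}_X$ to commute with $\partial$ and with $\bar\partial$ separately. Therefore $\tfrac12\mathcal{L}_X(i\partial\bar\partial u)=i\partial\bar\partial\bigl(\tfrac12\mathcal{L}_X u\bigr)=i\partial\bar\partial\bigl(\tfrac{X}{2}\cdot u\bigr)$, using $\mathcal{L}_X u=X\cdot u$ for the function $u$.

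Equating the two expressions for $\rho_{\omega_2}-\rho_{\omega_1}$ and transposing gives precisely
\[
i\partial\bar\partial\!\left(\log\!\left(\frac{(\omega_1+i\partial\bar\partial u)^n}{\omega_1^n}\right)+\frac{X}{2}\cdot u\right)=0,
\]
which is the assertion. There is no real obstacle here; the only step deserving care is the commutation of $\mathcal{L}_X$ with $\partial\bar\partial$, and it is exactly the real-holomorphicity of the common soliton vector field $X$ that makes it go through. I note that neither completeness of the metrics nor the gradient hypothesis (the existence of soliton potentials) is needed for this computation.
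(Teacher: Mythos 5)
Your proof is correct and follows essentially the same route as the paper: both subtract the two soliton equations, identify $\rho_{\omega_2}-\rho_{\omega_1}=-i\partial\bar{\partial}\log(\omega_2^n/\omega_1^n)$, and use the real-holomorphicity of $X$ to write $\tfrac12\mathcal{L}_X(i\partial\bar{\partial}u)=i\partial\bar{\partial}(\tfrac{X}{2}\cdot u)$. Your explicit justification of the commutation of $\mathcal{L}_X$ with $\partial\bar{\partial}$ is a welcome extra detail that the paper leaves implicit.
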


\begin{proof}
With $\rho_{\omega_{i}}$ denoting the Ricci form of $\omega_{i}$, we have that
\begin{equation*}
\begin{split}
0&=\rho_{\omega_{2}}-\frac{1}{2}\mathcal{L}_{X}\omega_{2}\\
&=\rho_{\omega_{2}}-\rho_{\omega_{1}}+\rho_{\omega_{1}}-\frac{1}{2}\mathcal{L}_{X}\omega_{2}\\
&=-i\partial\bar{\partial}\log\left(\frac{\omega_{2}^{n}}{\omega_{1}^{n}}\right)+\rho_{\omega_{1}}-\frac{1}{2}\mathcal{L}_{X}\omega_{2}\\
&=-i\partial\bar{\partial}\log\left(\frac{(\omega_{1}+i\partial\bar{\partial}u)^{n}}{\omega_{1}^{n}}\right)
+\rho_{\omega_{1}}-\frac{1}{2}\mathcal{L}_{X}(\omega_{1}+i\partial\bar{\partial}u)\\
&=-i\partial\bar{\partial}\left(\log\left(\frac{(\omega_{1}+i\partial\bar{\partial}u)^{n}}{\omega_{1}^{n}}\right)
+\frac{X}{2}\cdot u\right)+\underbrace{\rho_{\omega_{1}}-\frac{1}{2}\mathcal{L}_{X}\omega_{1}}_{=\,0}\\
&=-i\partial\bar{\partial}\left(\log\left(\frac{(\omega_{1}+i\partial\bar{\partial}u)^{n}}{\omega_{1}^{n}}\right)
+\frac{X}{2}\cdot u\right).
\end{split}
\end{equation*}
\end{proof}

The next lemma collects together some well-known Ricci soliton identities concerning steady gradient K\"ahler-Ricci solitons that we require.
\begin{lemma}[Ricci soliton identities]\label{solitonid}
Let $(M,\,g,\,X)$ be a connected steady gradient K\"ahler-Ricci soliton with soliton vector field $X=\nabla^{g}f$ for a smooth real-valued function $f:M\to\mathbb{R}$. Then the trace and first order soliton identities are:
\begin{equation*}
\begin{split}
&\Delta_{\omega}f = \frac{\RR_{g}}{2},\\
&\nabla^g \RR_{g}+2\Ric(g)(X)=0, \\
&|\nabla^g f|_g^2+\RR_{g}=c(g),\\
\end{split}
\end{equation*}
where $\RR_{g}$ denotes the scalar curvature of $g$ and $|\nabla^g f|_g^2:=g^{ij}\partial_if\partial_{j}f$.
Here, $c(g)$ is a positive constant and represents the ``charge'' of the soliton at infinity.
\end{lemma}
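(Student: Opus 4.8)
The plan is to reduce all three identities to the single Riemannian equation $\Ric(g)=\operatorname{Hess}_{g}(f)$ and then run the standard Ricci-soliton computations. The reduction is legitimate: since $X=\nabla^{g}f$ is real holomorphic, the Hessian endomorphism $Y\mapsto\nabla^{g}_{Y}\nabla^{g}f$ commutes with $J$ (recall $\nabla^{g}J=0$), so $\operatorname{Hess}_{g}(f)$ is $J$-invariant as a symmetric $2$-tensor; hence the gradient steady K\"ahler-Ricci soliton equation $\rho_{\omega}=i\partial\bar{\partial}f$ is equivalent to $\Ric(g)=\operatorname{Hess}_{g}(f)$, i.e.\ $(M,g,X)$ is in particular a steady gradient Ricci soliton. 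All subsequent manipulations are carried out on the Riemannian side.

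For the \emph{trace identity}, I would take the $g$-trace of $\Ric(g)=\operatorname{Hess}_{g}(f)$: the left-hand side becomes the scalar curvature $\RR_{g}$ and $\tr_{g}\operatorname{Hess}_{g}(f)=\Delta_{g}f$ (the Riemannian Laplacian), so $\Delta_{g}f=\RR_{g}$; since on functions the complex Laplacian of a K\"ahler metric is half the Riemannian one, $\Delta_{\omega}=\tfrac12\Delta_{g}$, this is precisely $\Delta_{\omega}f=\tfrac12\RR_{g}$. For the \emph{first-order identity}, I would apply $\Div_{g}$ to both sides of $\Ric(g)=\operatorname{Hess}_{g}(f)$. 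On the left the twice-contracted second Bianchi identity gives $\Div_{g}\Ric(g)=\tfrac12\,d\RR_{g}$; on the right, commuting covariant derivatives (the usual Bochner-type formula for the divergence of a Hessian) gives $\Div_{g}\!\big(\operatorname{Hess}_{g}(f)\big)=d(\Delta_{g}f)+\Ric(g)(\nabla^{g}f)$. Substituting $\Delta_{g}f=\RR_{g}$ from the trace identity and $X=\nabla^{g}f$, and cancelling the common $d\RR_{g}$ contribution, leaves $\nabla^{g}\RR_{g}+2\Ric(g)(X)=0$.

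For the last identity, I would differentiate $|\nabla^{g}f|_{g}^{2}$: using the symmetry of $\operatorname{Hess}_{g}(f)$ and the soliton equation, $\nabla^{g}|\nabla^{g}f|_{g}^{2}=2\operatorname{Hess}_{g}(f)(\nabla^{g}f)=2\Ric(g)(X)$, which equals $-\nabla^{g}\RR_{g}$ by the first-order identity. Hence $\nabla^{g}\big(|\nabla^{g}f|_{g}^{2}+\RR_{g}\big)\equiv 0$, so on the connected manifold $M$ this quantity equals a constant $c(g)$. For strict positivity: a complete steady gradient Ricci soliton has $\RR_{g}\geq 0$, and the strong maximum principle applied to the standard elliptic equation $\Delta_{g}\RR_{g}+\langle\nabla^{g}f,\nabla^{g}\RR_{g}\rangle=-2|\Ric(g)|_{g}^{2}\leq 0$ satisfied by $\RR_{g}$ shows that vanishing of $\RR_{g}$ at a single point would force $\RR_{g}\equiv 0$, hence $\Ric(g)\equiv 0$; ruling out this trivial (Ricci-flat) case, $\RR_{g}(p)>0$ for some $p$ and therefore $c(g)\geq\RR_{g}(p)>0$.

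The computations themselves are entirely mechanical, so the only points demanding care are the bookkeeping of conventions — the factor $\tfrac12$ relating $\Delta_{\omega}$ to $\Delta_{g}$, and the signs in the Bianchi and Hessian-commutation formulas — together with the strict positivity of the charge $c(g)$, which is the sole place where the non-triviality (non-Ricci-flatness) of the soliton is used.
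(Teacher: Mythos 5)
Your proof is correct and is the standard argument; the paper itself does not spell out a proof but simply defers to the proof of Lemma 2.23 of \cite{conlondez}, which runs along exactly the lines you describe (trace the Riemannian soliton equation, apply the contracted Bianchi identity together with the commutation formula for the divergence of a Hessian, then integrate). The only point worth noting is that strict positivity of $c(g)$ does require, as you observe, invoking $\RR_{g}\geq 0$ for complete steady solitons and discarding the Ricci-flat case with $X=0$; in the Ricci-flat case with $X\not\equiv 0$ one still gets $c(g)=|\nabla^{g}f|_{g}^{2}>0$ since $\operatorname{Hess}_{g}(f)=0$ forces $|\nabla^{g}f|_{g}$ to be a nonzero constant.
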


\begin{proof}
These are proved as in \cite[Proof of Lemma 2.23]{conlondez}.
\end{proof}

Finally, to each complete steady gradient K\"ahler-Ricci soliton, one can associate an eternal solution of the
K\"ahler-Ricci flow that evolves via diffeomorphism. Indeed, if $(M,\,g,\,X)$ is a complete steady gradient K\"ahler-Ricci soliton
with K\"ahler form $\omega$ and soliton potential $f$, set $$\omega(s):=\varphi_{s}^{*}\omega,\qquad s\in(-\infty,\,\infty),$$
where $\varphi_{s}$ is the family of diffeomorphisms generated by the vector field
$-\frac{X}{2}$ with $\varphi_{0}=\operatorname{Id}$, i.e.,
\begin{equation*}
\frac{\partial\varphi_{s}}{\partial s}(x)=-\frac{\nabla^g f(\varphi_{s}(x))}{2},\qquad\varphi_{0}=\operatorname{Id}.
\end{equation*}
Then $\partial_s \omega(s)=-\rho_{\omega(s)}$ for $s\in(-\infty,\,\infty)$ with $\omega(0)=\omega$,
where $\rho_{\omega(s)}$ denotes the Ricci form of $\omega(s)$.

By pulling back the steady gradient K\"ahler-Ricci soliton $g$ by the family of diffeomorphisms generated by
the vector field $-X$ rather than $-\frac{X}{2}$, one obtains an eternal solution of the Ricci flow, i.e.,
a one-parameter family of Riemannian metrics $g(s),\,s\in(-\infty,\,\infty),$ with $g(0)=g$ and $\partial_{s}g(s)=-2\operatorname{Ric}(g(s))$.

\subsubsection{Steady Ricci solitons as metric measure spaces}\label{metric-measure}
A metric measure space is a Riemannian manifold endowed with a weighted volume form.
\begin{definition}
A \emph{metric measure space} is a triple $(M,\,g,\,e^{f}d\mu_{g})$, where $(M,\,g)$ is a complete Riemannian manifold with Riemannian metric $g$,
$d\mu_{g}$ is the volume form associated to $g$, and \linebreak $f:M\to\mathbb{R}$ is a real-valued $C^{1}$-function called the \emph{potential}.
\end{definition}
On such a space, we define the drift Laplacian $\Delta_{f}$ by
$$\Delta_{f}u:=\Delta u+g(\nabla^g f,\,\nabla^g u)$$
on smooth real-valued functions $u\in C^{\infty}(M)$. There is a natural $L^{2}$-inner product
$\langle\cdot\,,\,\cdot\rangle_{L^{2}(e^{f}d\mu_{g})}$ on the space $L^{2}(e^{f}d\mu_{g})$ of square-integrable smooth real-valued functions on $M$
with respect to the measure $e^{f}d\mu_g$ defined by $$\langle u,\,v\rangle_{L^{2}(e^{f}d\mu_{g})}:=\int_{M}uv\,e^{f}d\mu_{g},\qquad u,\,v\in L^{2}(e^{f}d\mu_{g}).$$
The operator $\Delta_{f}$ is symmetric with respect to $\langle\cdot\,,\,\cdot\rangle_{L^{2}(e^{f}d\mu_{g})}$. Indeed, simply observe that
$$\int_{M}(\Delta_{f}u)v\,e^{f}d\mu_{g}=-\int_{M}g(\nabla^{g}u,\,\nabla^{g}v)\,e^{f}d\mu_{g}=\int_{M}u(\Delta_{f}v)\,e^{f}d\mu_{g},\qquad u,\,v\in C^{\infty}_{0}(M).$$

A steady gradient Ricci soliton $(M,\,g,\,X)$ with $X=\nabla^g f$ for $f:M\to\mathbb{R}$ smooth naturally defines a metric measure space $(M,\,g,\,e^{f}d\mu_{g})$.

\subsubsection{Cao's steady gradient K\"ahler-Ricci soliton on a Calabi-Yau cone}
Given a Calabi-Yau
cone of complex dimension $n$, the Calabi-Yau cone metric induces a one-parameter family of incomplete steady gradient K\"ahler-Ricci solitons on the underlying complex space of the cone. This is seen by implementing Cao's ansatz \cite{Cao-KR-sol} which itself involves solving an ODE.
This we now explain. These solitons provide the model for the complete steady gradient K\"ahler-Ricci solitons that we construct.
\begin{prop}\label{caosoliton}
Let $(C_{0},\,g_{0},\,J_0,\,\Omega_{0})$ be a Calabi-Yau cone with radial function $r$ and set $r^{2}=e^{t}$.
Then for all $a\geq0$, there exists a steady gradient K\"ahler-Ricci soliton $\tilde{g}_{a}$ on $C_{0}$
with soliton vector field $2r\partial_{r}=4\partial_{t}$
whose K\"ahler form is given by $\tilde{\omega}_{a}=\frac{i}{2}\partial\bar{\partial}\Phi_{a}(t)$
for a smooth real-valued function $\Phi_{a}(t)$ on $C_{0}$ characterised by the fact that $0\leq a=\lim_{t\to-\infty}\Phi_{a}'(t)$
and whose soliton potential is given by $\varphi_{a}(t):=\Phi'_{a}(t)$.
\end{prop}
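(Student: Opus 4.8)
The plan is to follow Cao's original ansatz \cite{Cao-KR-sol}, exploiting the $U(1)$-symmetry along the Reeb direction to reduce the soliton equation to an ODE in the single variable $t=\log(r^{2})$, and then to analyse that ODE. First I would make the rotationally symmetric ansatz $\tilde\omega_{a}=\frac{i}{2}\partial\bar\partial\Phi(t)$ for a function $\Phi$ of $t$ alone; since $\omega_{0}=\frac{i}{2}\partial\bar\partial r^{2}=\frac{i}{2}\partial\bar\partial e^{t}$ and $\rho_{\omega_{0}}=0$ (the cone is Ricci-flat with $\omega_{0}^{n}=i^{n^{2}}\Omega_{0}\wedge\bar\Omega_{0}$), one computes that $\tilde\omega_{a}^{n}=(\Phi')^{n-1}\Phi''\,\omega_{0}^{n}\cdot(\text{const})$ in the appropriate trivialisation, so the Ricci form of $\tilde\omega_{a}$ is $\rho_{\tilde\omega_{a}}=-i\partial\bar\partial\log\big((\Phi')^{n-1}\Phi''\big)$. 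Writing $\varphi:=\Phi'$, the steady gradient K\"ahler-Ricci soliton equation $\rho_{\tilde\omega_{a}}=i\partial\bar\partial f$ with $f=\varphi$ (which is the natural candidate potential, since $\nabla f$ should be the radial soliton field $4\partial_{t}$) becomes, after one application of $\partial\bar\partial$, the first-order ODE
\begin{equation*}
\frac{d}{dt}\log\big(\varphi^{n-1}\varphi'\big)=-\varphi,
\end{equation*}
equivalently $\varphi'=\psi$, $\psi'=-\psi\big(\varphi+(n-1)\psi/\varphi\big)$ after setting $\psi=\varphi'$; more cleanly, integrating once gives $\varphi^{n-1}\varphi'=c\,e^{-\int\varphi}$, which after a further substitution reduces to a single autonomous first-order equation for $\varphi$ as a function of $t$.

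Next I would solve this ODE with the initial condition dictated by the parameter $a$. The requirement that $\tilde\omega_{a}$ be a genuine smooth K\"ahler form on $C_{0}$ forces $\varphi>0$ and $\varphi'>0$ everywhere (positivity of the metric in the radial and transverse directions), and the condition $a=\lim_{t\to-\infty}\varphi(t)=\lim_{t\to-\infty}\Phi'(t)$ fixes the solution uniquely: one prescribes $\varphi(-\infty)=a\geq0$ as the boundary value at the cone tip $r\to0$. For $a>0$ this is a regular singular point of the ODE and standard ODE theory (or an explicit fixed-point/power-series argument in the variable $e^{t}$) gives a unique solution with $\varphi\to a$, $\varphi'\to 0$ exponentially as $t\to-\infty$; for $a=0$ one recovers precisely Cao's solution, and a separate short analysis near $t=-\infty$ (the genuinely degenerate case, analogous to the cigar on $\mathbb{C}$) shows the solution still exists and extends smoothly across $r=0$. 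I would then check global existence for all $t\in(-\infty,\infty)$: from $\varphi^{n-1}\varphi'=c\,e^{-\int\varphi}>0$ one sees $\varphi$ is increasing, and a monotonicity/a priori-bound argument (using that $\varphi'$ cannot blow up in finite time since the right-hand side is controlled by $\varphi$ itself) prevents finite-time breakdown, so the solution is defined on all of $\mathbb{R}$ and $\Phi(t)=\int^{t}\varphi$ is the desired K\"ahler potential.

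Finally I would verify that the resulting $(C_{0},\tilde g_{a},2r\partial_{r})$ is indeed a steady gradient K\"ahler-Ricci soliton with the stated soliton vector field and potential: by construction $\partial\bar\partial(\log((\varphi^{n-1}\varphi')/\text{ref})+ \varphi\cdot\text{suitable factor})=0$, and since this pluriharmonic function is a function of $t$ alone, hence invariant under the Reeb flow, Lemma \ref{pluri}(i) (applied on the cone) forces it to be constant, which is exactly the soliton equation $\rho_{\tilde\omega_{a}}=i\partial\bar\partial\varphi_{a}$ with $\varphi_{a}=\Phi'_{a}$; one also checks directly that $\nabla^{\tilde g_{a}}\varphi_{a}=4\partial_{t}=2r\partial_{r}$ using $\tilde g_{a}=\varphi'dt^{2}+\dots$ and $d\varphi_{a}=\varphi'dt$. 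The main obstacle is the analysis at the cone vertex $r\to 0$, i.e.\ at $t\to-\infty$: one must show that the ODE solution with limit $a$ not only exists but yields a form $\tilde\omega_{a}$ that, while only \emph{incomplete} (it does not close up smoothly at $r=0$ in general — this is why the solitons are called incomplete), is nonetheless a bona fide smooth K\"ahler form on the cone $C_{0}=\mathbb{R}_{+}\times S$ with the correct positivity, and to pin down the uniqueness via the normalisation $a=\lim_{t\to-\infty}\Phi'_{a}(t)$; the large-$t$ behaviour, by contrast, is comparatively soft since there $\varphi\to\infty$ and the equation is asymptotically the one solved by $\Phi\sim nt^{2}/2$, matching $\hat\omega$.
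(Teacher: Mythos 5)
Your overall strategy (Cao's rotationally symmetric ansatz, reduction to an ODE in $t$, analysis of that ODE) is the same as the paper's, but the ODE you derive is wrong, and the error is not cosmetic. The volume-form ratio is $\tilde{\omega}^{n}/\omega_{0}^{n}=\varphi^{n-1}\varphi'e^{-nt}$, not $\varphi^{n-1}\varphi'$ times a constant: since $\omega_{0}^{n}=ne^{nt}\frac{dt}{2}\wedge\eta\wedge(\omega^{T})^{n-1}$ while $\tilde{\omega}^{n}=n\varphi^{n-1}\varphi'\frac{dt}{2}\wedge\eta\wedge(\omega^{T})^{n-1}$, the factor $e^{-nt}$ survives, and it is \emph{not} pluriharmonic on the cone ($i\partial\bar{\partial}t=2\omega^{T}\neq0$), so it contributes $n\,i\partial\bar{\partial}t$ to the Ricci form and cannot be absorbed into your ``const''. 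You also conflate the soliton potential $\varphi=\Phi'$ with $\Phi$ when you integrate: the equation $\rho_{\tilde{\omega}}=i\partial\bar{\partial}\varphi$ together with Lemma \ref{pluri}(i) gives $\log(\varphi^{n-1}\varphi'e^{-nt})+\varphi=\mathrm{const}$, i.e.
\begin{equation*}
\varphi^{n-1}\varphi'e^{\varphi}=Ce^{nt},
\end{equation*}
whereas your $\frac{d}{dt}\log(\varphi^{n-1}\varphi')=-\varphi$ leads to $\varphi^{n-1}\varphi'=ce^{-\Phi}$. These are genuinely different equations with different solution behaviour: yours forces $\varphi^{n}$ to be increasing and bounded (so $\varphi$ converges to a finite limit as $t\to+\infty$), which is incompatible with the required asymptotics $\varphi\sim nt$ matching $\hat{\omega}$, so everything downstream of this point would fail.

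The correct equation also makes your proposed local analysis near $t=-\infty$ unnecessary. After normalising $C=1$, separating variables and integrating gives the explicit first integral $F(\varphi)e^{\varphi}=\frac{e^{nt}}{n}+F(a)e^{a}$, where $F(s)=\sum_{k=0}^{n-1}(-1)^{n-k-1}\frac{(n-1)!}{k!}s^{k}$ satisfies $(F(s)e^{s})'=s^{n-1}e^{s}$. Since $F(s)e^{s}$ is strictly increasing on $(0,\infty)$, this defines $\varphi_{a}$ implicitly and globally for all $t\in\mathbb{R}$ and all $a\geq0$ simultaneously, yields the characterisation $a=\lim_{t\to-\infty}\varphi_{a}(t)$ by taking $t\to-\infty$, and gives $\varphi_{a}>a\geq0$ and $0<\varphi_{a}'<n$ directly from elementary inequalities for $F(s)e^{s}$ --- no singular-point analysis, no separate case $a=0$, and no finite-time blow-up argument is needed. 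Your final verification that the resulting form is a gradient soliton with potential $\varphi_{a}$ and vector field $4\partial_{t}$ is correct in outline, and your remark about incompleteness at the apex is apt, but the core reduction must be fixed before any of that applies.
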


\begin{proof}
Recall that $d^{c}=i(\bar{\partial}-\partial)$ so that $dd^{c}=2i\partial\bar{\partial}$
and that $\eta=d^{c}\log(r)$ defines a contact form on the link of the cone $C_{0}$
with $\omega^{T}=\frac{1}{2}dd^{c}\log(r)$
the corresponding transverse K\"ahler metric. We assume an ansatz metric of the form
$\tilde{\omega}=\frac{i}{2}\partial\bar{\partial}\Phi(t)$ for $\Phi:C_{0}\to\mathbb{R}$ a (yet to be determined) smooth real-valued function on $C_{0}$ depending
only on $t$. We compute:
\begin{equation}\label{metric}
\begin{split}
\tilde{\omega}&=\frac{i}{2}\partial\bar{\partial}\Phi(t)=\frac{1}{4}dd^{c}\Phi(t)=\Phi'(t)\frac{1}{4}dd^{c}t+\Phi''(t)
\frac{dt}{2}\wedge\frac{d^{c}t}{2}\\
&=\varphi(t)\omega^{T}+\varphi'(t)\frac{dt}{2}\wedge\eta,
\end{split}
\end{equation}
where $\varphi(t):=\Phi'(t)$. Thus, in order for $\tilde{\omega}$ to define a K\"ahler metric, we require that
both $\varphi,\,\varphi'>0$. With $\tilde{\omega}$ written as above, it is easy to see that
$$\tilde{\omega}^{n}=n\varphi^{n-1}\varphi'\frac{dt}{2}\wedge\eta\wedge(\omega^{T})^{n-1}.$$
As for the K\"ahler form $\omega_{0}$ of $g_{0}$, we have that
$$\omega_{0}^{n}=(rdr\wedge\eta+r^{2}\omega^{T})^{n}=nr^{2n-2}(\omega^{T})^{n-1}\wedge rdr\wedge\eta=
ne^{nt}\frac{dt}{2}\wedge\eta\wedge(\omega^{T})^{n-1}.$$
These last two expressions allow us to write the Ricci form $\rho_{\tilde{\omega}}$ of $\tilde{\omega}$ as
\begin{equation}\label{finito}
\rho_{\tilde{\omega}}=-i\partial\bar{\partial}\log\left(\frac{\tilde{\omega}^{n}}{i^{n^{2}}\Omega_{0}\wedge\bar{\Omega}_{0}}\right)
=-i\partial\bar{\partial}\log\left(\frac{\tilde{\omega}^{n}}{\omega_{0}^{n}}\right)=-i\partial\bar{\partial}\log
\left(\varphi^{n-1}\varphi'e^{-nt}\right),
\end{equation}
where we have made use of the Calabi-Yau condition on the cone in the second equality.

Next, let $X=2r\partial_{r}=4\partial_{t}$. Then the pair $(\tilde{\omega},\,X)$ defines a steady K\"ahler-Ricci soliton with soliton vector field $X$
if and only if $$\rho_{\tilde{\omega}}=\frac{1}{2}\mathcal{L}_{X}\tilde{\omega},$$ that is, if
\begin{equation*}
-i\partial\bar{\partial}\log\left(\varphi^{n-1}\varphi'e^{-nt}\right)=i\partial\bar{\partial}\varphi.
\end{equation*}
Therefore it suffices that $$\varphi^{n-1}\varphi'e^{\varphi}=Ce^{nt}$$
for some constant $C>0$. By a translation in $t$ (which corresponds to a scaling in $r$), we may assume that $C=1$. Thus, we consider the ODE
\begin{equation}\label{ode}
\varphi^{n-1}\varphi'e^{\varphi}=e^{nt}.
\end{equation}

First separating variables in $\varphi$ and $t$ yields
\begin{equation*}
\varphi^{n-1}e^{\varphi}d\varphi=e^{nt}dt.
\end{equation*}
Next, after integrating both sides, we find that
\begin{equation}\label{soliton3}
F(\varphi)e^{\varphi}=\frac{e^{nt}}{n}+C
\end{equation}
for another constant $C$, where
\begin{equation}\label{ronando}
F(s):=\sum_{k=0}^{n-1}(-1)^{n-k-1}\frac{(n-1)!}{k!}s^{k}.
\end{equation}
One can verify that
$$(F(s)e^{s})'=s^{n-1}e^{s}.$$
Consequently, $F(s)e^{s}$ is strictly increasing for $s>0$, hence \eqref{soliton3} implicity defines $\varphi$
so long as $C\geq F(0)e^{0}$. Notice that for $s>0$, $$(F(s)e^{s}-s^{n-1}e^s)'=\underbrace{(F(s)e^{s})'-s^{n-1}e^{s}}_{=\,0}-(n-1)s^{n-2}e^{s}<0$$
so that $$F(s)e^{s}-s^{n-1}e^s<F(0)e^{0}=F(0)\qquad\textrm{for all $s>0$.}$$ As a result, we find that for all $s>s_{0}\geq0$,
\begin{equation*}
0<\int_{s_{0}}^{s}(F(x)e^{x})'\,dx=F(s)e^{s}-F(s_{0})e^{s_{0}}\leq F(s)e^{s}-F(0)e^{0}<s^{n-1}e^{s},
\end{equation*}
which leads to the inequality
\begin{equation}\label{bound}
0<F(s)e^{s}-F(s_{0})e^{s_{0}}<s^{n-1}e^{s}\qquad\textrm{for all $s>s_{0}\geq0$}.
\end{equation}
Finally, taking the limit in \eqref{soliton3} as $t\to-\infty$, we arrive at the fact that
\begin{equation*}
F\left(\lim_{t\to-\infty}\varphi(t)\right)e^{\lim_{t\to-\infty}\varphi(t)}=C.
\end{equation*}

Specify $$0\leq a:=\lim_{t\,\to\,-\infty}\varphi(t)$$ and
accordingly add a subscript $a$ to $\varphi$.
Set $C:=F(a)e^{a}$. Then $\varphi_{a}$ is defined implicitly by the equation
\begin{equation}\label{definition}
F(\varphi_{a}(t))e^{\varphi_{a}(t)}=\frac{e^{nt}}{n}+F(a)e^{a}.
\end{equation}
Since $F(s)e^{s}$ is strictly increasing for $s>0$, its inverse on $(0,\,\infty)$ is also strictly increasing, and so
we deduce from \eqref{definition} that $\varphi_{a}(t)$ is strictly increasing.
Consequently, $\varphi_{a}'(t)>0$ and
$\varphi_{a}(t)>\lim_{t\to-\infty}\varphi_{a}(t)=a\geq0$ for all $t$. Using this latter inequality, we see from
\eqref{bound} that $$\varphi_{a}^{n-1}e^{\varphi_{a}}>F(\varphi_{a})e^{\varphi_{a}}
-F(a)e^{a}=\frac{e^{nt}}{n},$$ hence \eqref{ode} implies that
\begin{equation}\label{proofread}
0<\varphi_{a}'(t)<n.
\end{equation}
It follows that both $\varphi_{a}$ and $\varphi_{a}'$ are strictly positive. We therefore conclude that for $a\geq0$,
$\Phi_{a}(t):=\int\varphi_{a}(t)\,dt$ defines the K\"ahler potential of a steady K\"ahler-Ricci soliton on $C_{0}$ with soliton vector field
$X=2\partial_{r}=4\partial_{t}$. To see that $\tilde{\omega}_{a}$ is gradient with $\varphi_{a}(t)$ serving as a soliton potential, just note that
$\rho_{\tilde{\omega}_{a}}=i\partial\bar{\partial}\varphi_{a}(t)$ which follows by combining \eqref{finito} and \eqref{ode},
and $\tilde{\omega}_{a}\lrcorner X=-d\varphi_{a}$ which is clear from \eqref{metric}.
\end{proof}

\begin{remark}
For clarity, we henceforth drop the subscript $a$ from $\tilde{\omega}_{a},\,\varphi_{a}(t),$ and $\Phi_{a}(t)$ in statements where, for each particular choice of $a\geq0$, the statement
holds true with $\tilde{\omega}$ replaced by $\tilde{\omega}_{a}$, etc.
\end{remark}

\section{Asymptotics of Cao's steady gradient K\"ahler-Ricci soliton}\label{section-Cao-soliton}
Let $(C_{0},\,g_{0})$ be a Calabi-Yau cone of complex dimension $n$ with radial function $r$. Then, as we have just seen, there exists a
steady gradient K\"ahler-Ricci soliton $\tilde{\omega}$ on $C_{0}$ of the form $\tilde{\omega}=\frac{i}{2}\partial\bar{\partial}\Phi(t)$,
where $r^{2}=e^{t}$. In this section, we study in depth the asymptotics of $\tilde{\omega}$. We begin first with
an analysis of the asymptotics of $\varphi(t):=\Phi'(t)$.
\begin{prop}\label{potential}
As $t\to+\infty$, we have
\begin{equation*}
\begin{split}
\varphi(t)&=nt-(n-1)\log t-n\log n+\frac{(n-1)^2}{n}\frac{\log t}{t}+\left(\frac{(n-1)}{n}+(n-1)\log n\right)\frac{1}{t}+O\left(\frac{(\log t)^{2}}{t^{2}}\right),\\
\varphi'(t)&=n-\frac{(n-1)}{t}+O\left(\frac{(\log t)^{2}}{t^2}\right),\qquad\qquad\varphi''(t)=O\left(\frac{(\log t)^{2}}{t^2}\right),
\qquad\qquad\varphi^{(3)}(t)=O\left(\frac{(\log t)^{2}}{t^2}\right).
\end{split}
\end{equation*}
\end{prop}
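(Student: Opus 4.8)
The plan is to extract the asymptotics of $\varphi$ directly from the implicit relation \eqref{definition}, namely $F(\varphi)e^{\varphi}=\tfrac{1}{n}e^{nt}+F(a)e^{a}$, by an iterative bootstrapping argument. First I would establish the leading behaviour: since $F(s)=s^{n-1}+O(s^{n-2})$ as $s\to+\infty$, taking logarithms in \eqref{definition} gives $\varphi+\log F(\varphi)=nt-\log n+O(e^{-nt})$, and since $\varphi\to+\infty$ as $t\to+\infty$ (by \eqref{proofread}, $\varphi$ is increasing with $\varphi'<n$, so $\varphi$ grows at most linearly, and it clearly must grow since the right side does), we get $\varphi+(n-1)\log\varphi+O(\varphi^{-1})=nt-\log n+O(e^{-nt})$. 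This already yields $\varphi\sim nt$, hence $\log\varphi=\log(nt)+o(1)=\log t+\log n+o(1)$, and feeding this back gives the first two terms $\varphi=nt-(n-1)\log t-n\log n+o(\log t)$.

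The second stage is to iterate this substitution to the required precision. The clean way to organise this is to write $\varphi=nt-(n-1)\log t-n\log n+\psi(t)$ where $\psi$ is the correction term, known so far to be $o(\log t)$, and substitute into $\varphi+\log F(\varphi)=nt-\log n+O(e^{-nt})$. One expands $\log F(\varphi)=(n-1)\log\varphi+\log\bigl(1+c_{n-2}\varphi^{-1}+\cdots\bigr)=(n-1)\log\varphi+O(\varphi^{-1})$, and then $\log\varphi=\log(nt)+\log\bigl(1-\tfrac{n-1}{nt}\log t-\log n\cdot\tfrac{1}{t}\cdot\tfrac{n}{n}+\cdots\bigr)$; Taylor-expanding the logarithm produces exactly the $\tfrac{\log t}{t}$ and $\tfrac1t$ terms. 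Collecting the coefficients carefully — this is the one genuinely computational step — yields $\psi(t)=\tfrac{(n-1)^2}{n}\tfrac{\log t}{t}+\bigl(\tfrac{n-1}{n}+(n-1)\log n\bigr)\tfrac1t+O\bigl(\tfrac{(\log t)^2}{t^2}\bigr)$, matching the claimed expansion. Throughout, the $O(e^{-nt})$ coming from the constant $F(a)e^{a}$ is negligible against all these polynomial-in-$1/t$ errors, so the expansion is in fact independent of $a$ to this order.

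For the derivatives, I would differentiate the ODE \eqref{ode} rather than the expansion. From $\varphi^{n-1}\varphi'e^{\varphi}=e^{nt}$, logarithmic differentiation gives $(n-1)\tfrac{\varphi'}{\varphi}+\tfrac{\varphi''}{\varphi'}+\varphi'=n$, i.e. $\varphi'=n-(n-1)\tfrac{\varphi'}{\varphi}-\tfrac{\varphi''}{\varphi'}$. Since $\varphi'$ is bounded (by \eqref{proofread}) and $\varphi\sim nt$, the term $(n-1)\varphi'/\varphi=\tfrac{n-1}{t}+O\bigl(\tfrac{\log t}{t^2}\bigr)$ once the leading expansion of $\varphi$ is inserted; one then needs an a priori bound $\varphi''=O\bigl(\tfrac{(\log t)^2}{t^2}\bigr)$ to close the estimate for $\varphi'$. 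This bound for $\varphi''$ (and then $\varphi^{(3)}$) is obtained by differentiating the relation $(n-1)\tfrac{\varphi'}{\varphi}+\tfrac{\varphi''}{\varphi'}+\varphi'=n$ once more to get an expression for $\varphi''$ (resp. $\varphi^{(3)}$) in terms of lower-order quantities already controlled, and bootstrapping: schematically $\varphi''/\varphi'=-(n-1)(\varphi'/\varphi)'-\varphi''$ rearranges to $\varphi''\bigl(1+\tfrac1{\varphi'}\bigr)=(n-1)\bigl(\tfrac{\varphi'}{\varphi}\bigr)\bigl(\tfrac{\varphi'}{\varphi}-\tfrac{\varphi''}{\varphi'}\bigr)$, and since $\varphi'/\varphi=O(1/t)$ this forces $\varphi''=O(1/t^2)$, which is even slightly better than claimed; the same scheme handles $\varphi^{(3)}$.

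**Main obstacle.** The conceptually trivial but error-prone part is the bookkeeping in the second stage: correctly tracking which terms in the nested logarithm expansions contribute at order $\tfrac{\log t}{t}$ versus $\tfrac1t$ versus the $O\bigl(\tfrac{(\log t)^2}{t^2}\bigr)$ remainder, and pinning down the exact rational coefficients $\tfrac{(n-1)^2}{n}$ and $\tfrac{n-1}{n}+(n-1)\log n$. Everything else — the leading order, the a priori boundedness of $\varphi'$, and the derivative estimates — is soft and follows from \eqref{definition}, \eqref{ode}, and \eqref{proofread} with only routine manipulation.
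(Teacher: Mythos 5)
Your overall strategy is the same as the paper's: bootstrap the implicit relation \eqref{definition} to get the expansion of $\varphi$, then read off the derivatives from the ODE \eqref{ode}. Two points in your write-up, however, do not survive scrutiny as stated.

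First, in your second stage you collapse $\log F(\varphi)=(n-1)\log\varphi+\log\bigl(1+c_{n-2}\varphi^{-1}+\cdots\bigr)$ to $(n-1)\log\varphi+O(\varphi^{-1})$. Since $\varphi^{-1}\sim\frac{1}{nt}$, that $O(\varphi^{-1})$ is exactly of the order $\frac1t$ whose coefficient you are trying to determine, so the claimed constant $\frac{n-1}{n}+(n-1)\log n$ cannot be extracted from the displayed estimate. This is precisely the subtlety the paper flags: one must keep $F(\varphi)=\varphi^{n-1}\bigl(1-(n-1)\varphi^{-1}+O(\varphi^{-2})\bigr)$ (from \eqref{ronando}), so that $-\log F(\varphi)$ contributes $+(n-1)\varphi^{-1}+O(\varphi^{-2})$, and then insert $\varphi^{-1}=\frac{1}{nt}+O\bigl(\frac{\log t}{t^{2}}\bigr)$; this is where the $\frac{n-1}{n}\cdot\frac1t$ piece of the coefficient comes from, the remaining $(n-1)\log n\cdot\frac1t$ coming from $-(n-1)\log\bigl(\varphi/(nt)\bigr)$. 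Your "collect the coefficients carefully" gestures at this, but the simplification you actually wrote discards the needed term.

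Second, your scheme for the derivatives is circular and partially garbled. To get $(n-1)\varphi'/\varphi=\frac{n-1}{t}+O\bigl(\frac{\log t}{t^{2}}\bigr)$ you need $\varphi'\to n$ with a rate, not just $0<\varphi'<n$ from \eqref{proofread}; and the identity you propose for bounding $\varphi''$ is not what differentiating $(n-1)\frac{\varphi'}{\varphi}+\frac{\varphi''}{\varphi'}+\varphi'=n$ yields (that produces $\bigl(\frac{\varphi''}{\varphi'}\bigr)'$, hence $\varphi'''$, which you suppress). The clean order of operations, which is the paper's, avoids all of this: first obtain $\varphi'$ by exponentiating $\log\varphi'=nt-\varphi-(n-1)\log\varphi$ using the already-established expansion of $\varphi$ (no a priori bound on $\varphi''$ is needed), then read off $\varphi''=\varphi'\bigl(n-\varphi'-\frac{n-1}{\varphi}\varphi'\bigr)$ and similarly $\varphi^{(3)}$ from the undifferentiated and once-differentiated relation. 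Note also that this route only gives $\varphi''=O\bigl(\frac{(\log t)^{2}}{t^{2}}\bigr)$, since the error in $\varphi'$ is of that size; your claim of $O(t^{-2})$ is not delivered by the sketched argument (harmless here, since the proposition asserts only the weaker bound).
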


The asymptotics stated here on the second and third derivative of $\varphi(t)$ are not optimal, but nevertheless suffice for our purposes.

\begin{proof}[Proof of Proposition \ref{potential}]
Recalling from \eqref{proofread} that $\varphi$ is strictly increasing, we derive from \eqref{definition} that for all $t\gg0$,
\begin{equation}
\begin{split}\label{dev-phi-infinity}
\varphi-nt&=-\log(F(\varphi))-\log(n)+\log(1+O(e^{-nt}))\\
&=-\log(\varphi^{n-1}(1+O(\varphi^{-1})))-\log(n)+\log(1+O(e^{-nt}))\\
&=-(n-1)\log(\varphi)-\log(n)+O(\varphi^{-1})+O(e^{-nt})\\
&=-(n-1)\log(nt)-(n-1)\log\left(\frac{\varphi}{nt}\right)-\log(n)+O(\varphi^{-1})+O(e^{-nt}),\\
&=-(n-1)\log(t)-n\log(n)+O(\varphi^{-1})+O(e^{-nt})-(n-1)\log\left(\frac{\varphi}{nt}\right),\\
\end{split}
\end{equation}
where we have used \eqref{ronando} in the second equality. Again, since $\varphi(t)$ is increasing, we see from the third line above that
 $\varphi(t)-nt<0$ for $t$ sufficiently large. Thus, as $\varphi(t)>a$ for all $t$ where $0\leq a:=\lim_{t\to-\infty}\varphi(t)$, we have that
\begin{equation}\label{teeth}
0\leq\frac{a}{nt}<\frac{\varphi(t)}{nt}<1.
\end{equation}
Moreover, from \eqref{dev-phi-infinity}, we see that for $t$ sufficiently large,
\begin{equation*}
\frac{(n-1)}{nt}\log\left(\frac{\varphi}{nt}\right)+\frac{\varphi}{nt}-1=-\frac{(n-1)}{n}\frac{\log(t)}{t}-\frac{\log(n)}{t}+O(t^{-1}\varphi^{-1})
+O(t^{-1}e^{-nt})\geq-C\left(\frac{\log(t)}{t}\right)
\end{equation*}
for some $C>0$. Observing that $\log(x)\leq x$ for $x>0$ then yields the lower bound
$$-C\left(\frac{\log(t)}{t}\right)\leq\frac{(n-1)}{nt}\log\left(\frac{\varphi}{nt}\right)+\frac{\varphi}{nt}-1\leq
\frac{(n-1)}{nt}\left(\frac{\varphi}{nt}\right)+\frac{\varphi}{nt}-1=
\frac{\varphi}{nt}\left(1+\frac{(n-1)}{nt}\right)-1$$
so that
\begin{equation}\label{tooth}
\frac{\varphi}{nt}\geq 1-C\left(\frac{\log(t)}{t}\right)
\end{equation}
for $t$ sufficiently large. Combining \eqref{teeth} and \eqref{tooth}, we arrive at the fact that
\begin{equation*}
-C\left(\frac{\log(t)}{t}\right)\leq\frac{\varphi}{nt}-1<0
\end{equation*}
for $t$ sufficiently large, i.e.,
\begin{equation}\label{lollypop}
\frac{\varphi}{nt}=1+O\left(\frac{\log(t)}{t}\right).
\end{equation}
This in turn implies that $\varphi^{-1}=O(t^{-1})$. Using this and
plugging \eqref{lollypop} back into \eqref{dev-phi-infinity}, we then find that
\begin{equation}\label{dev-phi-order-3}
\varphi=nt-(n-1)\log(t)-n\log(n)+O\left(\frac{\log(t)}{t}\right).
\end{equation}
In particular, we deduce that
\begin{equation}\label{rocky}
\varphi^{-1}=\frac{1}{nt}+O\left(\frac{\log(t)}{t^{2}}\right).
\end{equation}

One can develop further the asymptotic expansion of $\varphi$ by plugging \eqref{dev-phi-order-3} into the last line of \eqref{dev-phi-infinity}. This results in
the expansion
\begin{eqnarray*}
\varphi&=&nt-(n-1)\log t-n\log n-(n-1)\log\left(1-\frac{(n-1)}{n}\frac{\log t}{t}-\frac{\log n}{t}+O\left(\frac{\log t}{t^2}\right)\right)+O(t^{-1})\\
&=&nt -(n-1)\log t-n\log n+\frac{(n-1)^2}{n}\frac{\log t}{t}+O(t^{-1}).
\end{eqnarray*}
Unfortunately this does not suffice to obtain a sharp first order term in the expansion of $\varphi'(t)$. We need to analyse the expansion of $O(\varphi^{-1})$ more carefully.
To this end, recall the definition of $F(\varphi)$ from \eqref{ronando}. We have that
\begin{equation*}
F(s)=s^{n-1}-(n-1)s^{n-2}+O(s^{n-3}),
\end{equation*}
 so that
 \begin{equation*}
 F(\varphi)=\varphi^{n-1}-(n-1)\varphi^{n-2}+O(\varphi^{n-3})=\varphi^{n-1}(1-(n-1)\varphi^{-1}+O(\varphi^{-2})).
 \end{equation*}
Plugging this into the first line of $\eqref{dev-phi-infinity}$ then leads to the expansion
\begin{equation*}
\begin{split}
\varphi-nt&=-\log(F(\varphi))-\log(n)+\log(1+O(e^{-nt}))\\
&=-(n-1)\log \varphi-\log(n)+(n-1)\varphi^{-1}+O(\varphi^{-2})+O(e^{-nt})\\
&=-(n-1)\log\left(\frac{\varphi}{nt}\right)-(n-1)\log(nt)-\log(n)+(n-1)\varphi^{-1}+O(\varphi^{-2})+O(e^{-nt})\\
&=-(n-1)\log\left(\frac{\varphi}{nt}\right)-(n-1)\log t-n\log n+(n-1)\varphi^{-1}+O(\varphi^{-2})+O(e^{-nt})\\
&=-(n-1)\log t-n\log(n)-(n-1)\log\left(1-\frac{(n-1)}{n}\frac{\log(t)}{t}-\frac{\log(n)}{t}+O\left(\frac{\log t}{t^{2}}\right)\right)+\\
&\qquad+(n-1)\varphi^{-1}+O(\varphi^{-2})+O(e^{-nt})\\
&=-(n-1)\log t-n\log(n)+\frac{(n-1)^2}{n}\frac{\log t}{t}+\frac{(n-1)\log(n)}{t}+O\left(\frac{(\log t)^{2}}{t^{2}}\right)+\\
&\qquad+(n-1)\varphi^{-1}+O(\varphi^{-2})+O(e^{-nt})\\
&=-(n-1)\log t-n\log n+\frac{(n-1)^2}{n}\frac{\log t}{t}+\left(\frac{(n-1)}{n}+(n-1)\log n\right)\frac{1}{t}+O\left(\frac{(\log t)^{2}}{t^{2}}\right),
\end{split}
\end{equation*}
where we have used \eqref{dev-phi-order-3} in the fifth equality and \eqref{rocky} in the final equality.
This yields the desired expansion of $\varphi$.

As for $\varphi'(t)$, making use of the above expansion of $\varphi$, we see from \eqref{ode} that
\begin{equation*}
\begin{split}
\log\varphi'&=nt-\varphi-(n-1)\log\varphi\\
&=(n-1)\log(t)+n\log(n)-\frac{(n-1)^2}{n}\frac{\log t}{t}-\left(\frac{(n-1)}{n}+(n-1)\log n\right)\frac{1}{t}-(n-1)\log\left(\varphi\right)\\
&\qquad+O\left(\frac{(\log t)^{2}}{t^{2}}\right)\\
&=(n-1)\log(t)+n\log(n)-\frac{(n-1)^2}{n}\frac{\log t}{t}-\left(\frac{(n-1)}{n}+(n-1)\log n\right)\frac{1}{t}-(n-1)\log\left(\frac{\varphi}{nt}\right)\\
&\qquad-(n-1)\log(nt)+O\left(\frac{(\log t)^{2}}{t^{2}}\right)\\
&=(n-1)\log(t)+n\log(n)-\frac{(n-1)^2}{n}\frac{\log t}{t}-\left(\frac{(n-1)}{n}+(n-1)\log n\right)\frac{1}{t}\\
&\qquad-(n-1)\log\left(1-\frac{(n-1)}{n}\frac{\log(t)}{t}-\frac{\log(n)}{t}+O\left(\frac{\log t}{t^{2}}\right)\right)-(n-1)\log(nt)+O\left(\frac{(\log t)^{2}}{t^{2}}\right)\\
&=\log(n)-\frac{(n-1)^2}{n}\frac{\log t}{t}-\left(\frac{(n-1)}{n}+(n-1)\log n\right)\frac{1}{t}\\
&\qquad-(n-1)\log\left(1-\frac{(n-1)}{n}\frac{\log(t)}{t}-\frac{\log(n)}{t}+O\left(\frac{\log t}{t^{2}}\right)\right)+O\left(\frac{(\log t)^{2}}{t^{2}}\right)\\
&=\log(n)-\frac{(n-1)^2}{n}\frac{\log t}{t}-\left(\frac{(n-1)}{n}+(n-1)\log n\right)\frac{1}{t}+\frac{(n-1)^{2}}{n}\frac{\log(t)}{t}+\frac{(n-1)\log(n)}{t}\\
&\qquad+O\left(\frac{(\log t)^{2}}{t^{2}}\right)\\
&=\log n-\frac{(n-1)}{n}\frac{1}{t}+O\left(\frac{(\log t)^{2}}{t^2}\right)
\end{split}
\end{equation*}
so that
\begin{equation*}
\varphi'(t)=n-\frac{(n-1)}{t}+O\left(\frac{(\log t)^{2}}{t^2}\right),
\end{equation*}
as claimed.

For $\varphi''(t)$, we deduce from \eqref{ode} and our previous expansions of $\varphi(t)$ and $\varphi'(t)$ that
\begin{equation*}
\begin{split}
\varphi''(t)&=\varphi'(t)\left(n-\varphi'(t)-\frac{(n-1)}{\varphi(t)}\varphi'(t)\right)\\
&=O\left(\frac{(\log t)^{2}}{t^2}\right)
\end{split}
\end{equation*}
as desired.

Finally, for $\varphi^{(3)}(t)$, we have that
\begin{equation*}
\begin{split}
\varphi^{(3)}&=\varphi''\left(n-\varphi'-\frac{(n-1)}{\varphi}\varphi'\right)-\varphi'\left(\varphi''+(n-1)\left(\frac{\varphi''}{\varphi}-\left(\frac{\varphi'}{\varphi}\right)^{2}\right)\right)\\
&=O\left(\frac{(\log t)^{2}}{t^2}\right).
\end{split}
\end{equation*}
\end{proof}

The previous proposition allows us to derive the following precise asymptotics of Cao's steady gradient K\"ahler-Ricci soliton.
\begin{prop}\label{coro-asy-Cao-met}
Let $(C_{0},\,g_{0})$ be a Calabi-Yau cone of complex dimension $n\geq2$ with complex structure $J_{0}$, radial function $r$, and transverse metric $g^{T}$,
and set $\eta=d^{c}\log(r)$ and  $r^{2}=e^{t}$. Let $\tilde{g}$ denote Cao's steady gradient K\"ahler-Ricci soliton on $C_{0}$ and set
$\hat{g}:=n\left(\frac{1}{4}dt^{2}+\eta^{2}+tg^{T}\right)$.
Then
\begin{equation}\label{beautiful}
\begin{split}
|\widehat{\nabla}^{i}(\tilde{g}-\hat{g})|_{\hat{g}}&=O\left(t^{-1-\frac{i}{2}}\log(t)\right)\qquad\textrm{for all $i\geq0$},
\end{split}
\end{equation}
and
\begin{equation}\label{even-more-beautiful}
\begin{split}
|\widehat{\nabla}^{i}\mathcal{L}_X^{(j)}(\tilde{g}-\hat{g})|_{\hat{g}}=O(t^{-1-\frac{i}{2}-j})
\qquad\textrm{for all $i\geq0$ and $j\geq1$}.\\
\end{split}
\end{equation}
In particular, for all $\varepsilon\in(0,\,1)$, there exist constants $C(i,\,j,\,\varepsilon)>0$ such that
\begin{equation*}
\begin{split}
|\widehat{\nabla}^{i}\mathcal{L}_X^{(j)}(\tilde{g}-\hat{g})|_{\hat{g}}\leq C(i,\,j,\,\varepsilon)t^{-\varepsilon-\frac{i}{2}-j}
\qquad\textrm{for all $i,\,j\geq0$}.\\
\end{split}
\end{equation*}
\end{prop}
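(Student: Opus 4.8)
The plan is to reduce the estimate to the explicit form of the two metrics given by Cao's ansatz and then read off all decay rates from Proposition \ref{potential}. \emph{Set-up.} Let $\{\theta_a\}_{a=1}^{2n-2}$ be a local $g^T$-orthonormal basic coframe. By \eqref{metric} we have $\tilde\omega=\varphi\,\omega^T+\varphi'\,\tfrac{dt}{2}\wedge\eta$, so the metric of $\tilde\omega$ is $\tilde g=\varphi'\bigl(\tfrac14 dt^2+\eta^2\bigr)+\varphi\,g^T$; repeating the computation of \eqref{metric} with $\Phi(t)=\tfrac{nt^2}{2}$ (so that the associated function is $\varphi_{\hat g}(t)=nt$) gives precisely the model metric $\hat g=n\bigl(\tfrac14 dt^2+\eta^2\bigr)+nt\,g^T$ named in the statement. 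Hence
\[
\tilde g-\hat g=\bigl(\varphi'(t)-n\bigr)\bigl(\tfrac14 dt^2+\eta^2\bigr)+\bigl(\varphi(t)-nt\bigr)\,g^T ,
\]
a combination of two fixed tensors built from the $t$-independent data $(dt,\eta,g^T)$ of the link, with coefficients depending only on $t$. Since $X=4\partial_{t}$ annihilates $dt$, $\eta$, $g^T$ and acts on functions of $t$ by $\mathcal{L}_{X}^{(j)}=4^{j}\,d^{j}/dt^{j}$, the tensor $\mathcal{L}_{X}^{(j)}(\tilde g-\hat g)$ is again of this shape, with $g^T$-coefficient equal to $4(\varphi'-n)$ when $j=1$ and to $4^{j}\varphi^{(j)}$ when $j\ge 2$ (and with $\tfrac14 dt^2+\eta^2$-coefficient $4^{j}\varphi^{(j+1)}$ for $j\ge1$).

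\emph{The background geometry.} The second input is the behaviour of $\widehat\nabla$ on this adapted coframe, which is the type of estimate assembled in Appendix \ref{appendixa}. In the $\hat g$-orthonormal coframe $e^{0}=\tfrac{\sqrt n}{2}dt$, $e^{*}=\sqrt n\,\eta$, $e^{a}=\sqrt{nt}\,\theta_{a}$, the $t$-curves are geodesics up to constant rescaling, and the Levi-Civita connection $1$-forms of $\hat g$ are $O(t^{-1/2})$, those not lying in the transverse block being $O(t^{-1})$; indeed $d\eta=2\omega^T$, $d\theta_{a}$ is basic, and differentiating the warping factor $\sqrt{nt}$ produces a $t^{-1/2}$. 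Together with $|dt|_{\hat g},|\eta|_{\hat g}=O(1)$ and $|g^T|_{\hat g}=O(t^{-1})$, this yields the bookkeeping rule that on any tensor of the form (a function of $t$) $\cdot$ (a fixed expression in $dt,\eta,g^T$) each application of $\widehat\nabla$ multiplies the $\hat g$-norm by at most $O(t^{-1/2})$; in particular $|\widehat\nabla^{i}(\tfrac14 dt^2+\eta^2)|_{\hat g}=O(t^{-(i+1)/2})$ for $i\ge1$ and $|\widehat\nabla^{i}g^T|_{\hat g}=O(t^{-1-i/2})$ for all $i\ge0$.

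\emph{Combining.} From Proposition \ref{potential} one has $\varphi-nt=O(\log t)$ and $\varphi'-n=O(t^{-1})$, and a routine induction on the ODE \eqref{ode} (as in the proof of that proposition) gives the sharper bounds $\varphi^{(j)}=O(t^{-j})$ for all $j\ge2$, which the expansion in Proposition \ref{potential} displays only non-optimally. Expanding $\widehat\nabla^{i}\mathcal{L}_{X}^{(j)}(\tilde g-\hat g)$ by Leibniz, the largest term is the one placing all $i$ covariant derivatives on the $g^T$-factor, of $\hat g$-norm comparable to $|(\text{its }g^T\text{-coefficient})|\cdot t^{-1-i/2}$; every other term (covariant derivatives landing on $\tfrac14 dt^2+\eta^2$, or on the $t$-coefficients) is strictly lower order. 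For $j=0$ the $g^T$-coefficient is $\varphi-nt=O(\log t)$, which gives \eqref{beautiful}; for $j\ge1$ the $g^T$-coefficient is $O(t^{-j})$, which gives \eqref{even-more-beautiful}. The final ``in particular'' is then immediate, since $t^{-1}\log t\le C_{\varepsilon}\,t^{-\varepsilon}$ for every $\varepsilon\in(0,1)$. The only steps needing real care are the decay of the connection coefficients of $\hat g$ in the adapted frame — the content of Appendix \ref{appendixa} — and verifying that each $t$-derivative of $\varphi$ past the first gains a full power of $t$, which is where the ODE \eqref{ode} is used.
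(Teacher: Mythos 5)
Your overall architecture --- writing $\tilde g-\hat g=(\varphi'-n)\bigl(\tfrac14 dt^2+\eta^2\bigr)+(\varphi-nt)\,g^{T}$, observing that $\mathcal{L}_{X}^{(j)}$ acts as $4^{j}d^{j}/dt^{j}$ on the coefficients, importing the frame estimates of Appendix \ref{appendixa}, and combining by Leibniz --- is exactly the paper's, and the $j=0$ case \eqref{beautiful} goes through as you describe. The gap is in the one sentence you label ``routine'': the claim that induction on the ODE \eqref{ode} yields $\varphi^{(j)}=O(t^{-j})$ for $j\ge2$. The proof of Proposition \ref{potential} gives only $\varphi'',\varphi^{(3)}=O\bigl((\log t)^{2}/t^{2}\bigr)$, and the paper explicitly flags these bounds as non-optimal. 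The logarithm is not an artifact you can discard by redoing the same computation at the same order: from the ODE, $\varphi''/\varphi'=n-\varphi'-(n-1)\varphi'/\varphi$, and both $n-\varphi'$ and $(n-1)\varphi'/\varphi$ carry genuine $\frac{\log t}{t^{2}}$ terms (traceable to the $\frac{(n-1)^{2}}{n}\frac{\log t}{t}$ term in the expansion of $\varphi$) whose mutual cancellation you would have to verify by pushing the asymptotic expansions of $\varphi$ and $\varphi'$ at least two orders beyond what Proposition \ref{potential} provides --- and then again at each stage of the induction for higher $j$. Without the log-free bounds, \eqref{even-more-beautiful} already fails at $i=0$, $j=1$: the $\tfrac14 dt^{2}+\eta^{2}$-coefficient of $\mathcal{L}_{X}(\tilde g-\hat g)$ is $4\varphi''$, and you would only obtain $O\bigl((\log t)^{2}t^{-2}\bigr)$ instead of the required $O(t^{-2})$.

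The paper obtains the sharp bounds by an entirely different mechanism, and this is the idea your proposal is missing. One first shows $|\Rm(\tilde g)|_{\tilde g}=O(t^{-1})$ from the Cartan structure equations, upgrades this to $|\widetilde\nabla^{k}\Rm(\tilde g)|_{\tilde g}=O(t^{-1-k/2})$ via Shi's derivative estimates applied to the self-similar Ricci flow generated by the soliton, and then to $|\mathcal{L}_{X}^{(k)}\Rm(\tilde g)|_{\tilde g}=O(t^{-1-k})$ using the evolution equation $\partial_{s}\Rm=\Delta\Rm+\Rm\ast\Rm$. The soliton identity $|X|^{2}_{\tilde g}+\RR_{\tilde g}=4n$ then gives $4\varphi'=4n-\RR_{\tilde g}$, so $(\varphi'-n)^{(k)}$ is controlled by $\mathcal{L}_{X}^{(k)}\RR_{\tilde g}$ and the logarithms disappear. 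To repair your argument you must either carry out the finer ODE expansion honestly or supply some other source for $\varphi^{(j)}=O(t^{-j})$; as written, the decisive step is asserted rather than proved.
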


\begin{proof}
We prove this proposition through several claims. We begin with the following initial estimate.
\begin{claim}\label{asymptotics}
$|\tilde{g}-\hat{g}|_{\hat{g}}=O(t^{-1}\log(t)).$
\end{claim}

\begin{proof}
We see from \eqref{metric} that
$\tilde{g}=\varphi(t)g^{T}+\varphi'(t)(\frac{1}{4}dt^{2}+\eta^{2})$. Thus, we can write
\begin{equation}\label{difference}
\tilde{g}-\hat{g}=(\varphi(t)-nt)g^{T}+(\varphi'(t)-n)\left(\frac{1}{4}dt^{2}+\eta^{2}\right).
\end{equation}
In light of Proposition \ref{potential}, we then have that
\begin{equation*}
\begin{split}
|\tilde{g}-\hat{g}|_{\hat{g}}&\leq |\varphi-nt||g^{T}|_{\hat{g}}
+|\varphi'-n|\left|\frac{1}{4}dt^{2}+\eta^{2}\right|_{\hat{g}}\\
&\leq C\left(t^{-1}\log(t)+t^{-1}\right)\\
&\leq Ct^{-1}\log(t),
\end{split}
\end{equation*}
as claimed.
\end{proof}

We next estimate the norm of the curvature tensor $\operatorname{Rm}(\tilde{g})$ of $\tilde{g}$.

\begin{claim}\label{lemma-est-cao-metr-curv}
$|\operatorname{Rm}(\tilde{g})|_{\tilde{g}}=O(t^{-1})$.
\end{claim}

\begin{proof}
As above, we can write $\tilde{g}=\varphi(t)g^{T}+\varphi'(t)\left(\frac{dt^{2}}{4}+\eta^{2}\right)$.
Let $\theta_{1},\ldots,\theta_{2n-2}$ be a local basic orthonormal coframe
for $g^{T}$ with $\theta_{i}\circ J_{0}=-\theta_{i+1}$ for $i$ odd, and let
$(\omega_{ij})_{1\,\leq\, i,\,j\,\leq\,2n-2}$ be the matrix of connection one-forms
of $g^{T}$. Then $(\omega_{ij})$ solves the Cartan structure equations
\begin{equation*}
\left\{ \begin{array}{ll}
d{\theta}_{i}=\sum_{j\,=\,1}^{2n-2}{\omega}_{ji}\wedge\theta_{j}\\
{\omega}_{ij}+{\omega}_{ji}=0.
\end{array} \right.
\end{equation*}
The one-forms
$$\tilde{\theta}_{i}:=\sqrt{\varphi(t)}\theta_{i},\qquad\tilde{\theta}_{2n-1}:=\frac{\sqrt{\varphi'(t)}}{2}dt,\qquad
\tilde{\theta}_{2n}:=\eta\sqrt{\varphi'(t)},$$
where $1\leq i\leq 2n-2$, serve as a local orthonormal coframe of $\tilde{g}$. We first compute the matrix
of connection one-forms $(\tilde{\omega}_{ij})_{1\,\leq\, i,\,j\,\leq\, 2n}$ of $\tilde{g}$ with respect to this coframe. We have that
\begin{equation*}
\begin{split}
d\tilde{\theta}_{i}&=\sum_{j\,=\,1}^{2n-2}\omega_{ji}\wedge\tilde{\theta}_{j}+\frac{\sqrt{\varphi'(t)}}{\varphi(t)}\tilde{\theta}_{2n-1}\wedge\tilde{\theta}_{i},\qquad1\leq i\leq 2n-2,\\
d\tilde{\theta}_{2n-1}&=0,\qquad d\tilde{\theta}_{2n}=\frac{\sqrt{\varphi'(t)}}{\varphi(t)}\sum_{i=1 \atop \textrm{$i$ odd}}^{2n-2}(\tilde{\theta}_{i}\wedge\tilde{\theta}_{i+1}-\tilde{\theta}_{i+1}\wedge\tilde{\theta}_{i})
+\frac{\varphi''(t)}{(\varphi'(t))^{\frac{3}{2}}}\tilde{\theta}_{2n-1}\wedge\tilde{\theta}_{2n}.
\end{split}
\end{equation*}
The matrix $(\tilde{\omega}_{ij})$ is then given by
\begin{equation*}
\begin{split}
\tilde{\omega}_{ji}&=\left\{ \begin{array}{ll}
\omega_{ji}+\delta_{j,\,i+1}\frac{\varphi'(t)}{\varphi(t)}\eta,\qquad\textrm{$1\leq i\leq 2n-2$ odd,\qquad$1\leq j\leq 2n-2$},\\
\omega_{ji}-\delta_{j,\,i-1}\frac{\varphi'(t)}{\varphi(t)}\eta,\qquad\textrm{$1\leq i\leq 2n-2$ even,\qquad$1\leq j\leq 2n-2$},\\
\end{array} \right.\\
\tilde{\omega}_{2n-1,\,i}&=-\sqrt{\frac{\varphi'(t)}{\varphi(t)}}\theta_{i},\qquad 1\leq i\leq 2n-2,\\
\tilde{\omega}_{2n-1,\,2n}&=-\frac{\varphi''(t)}{\varphi'(t)}\eta,\\
\tilde{\omega}_{2n,\,i}&=\left\{ \begin{array}{ll}
\sqrt{\frac{\varphi'(t)}{\varphi(t)}}\theta_{i+1},\qquad\textrm{$1\leq i\leq 2n-2$ odd},\\
-\sqrt{\frac{\varphi'(t)}{\varphi(t)}}\theta_{i-1},\qquad\textrm{$1\leq i\leq 2n-2$ even}.\\
\end{array} \right.\\
\end{split}
\end{equation*}
Next, from Proposition \ref{potential} we derive that
\begin{equation*}
\frac{\varphi'(t)}{\varphi(t)}=O(t^{-1}),\qquad\left(\frac{\varphi'(t)}{\varphi(t)}\right)'=O(t^{-2}),
\qquad\frac{\varphi''(t)}{\varphi'(t)}=O(t^{-\frac{3}{2}}),\qquad\left(\frac{\varphi''(t)}{\varphi'(t)}\right)'=O(t^{-\frac{3}{2}}).\\
\end{equation*}
Thus, with respect to the metric $\hat{g}$, we have the asymptotics
\begin{equation*}
\begin{split}
\tilde{\omega}_{ji}&=O(t^{-\frac{1}{2}}),\qquad d\tilde{\omega}_{ji}=O(t^{-1}),\qquad 1\leq i,\,j\leq 2n-2,\\
\tilde{\omega}_{2n-1,\,i}&=O(t^{-1}),\qquad d\tilde{\omega}_{2n-1,\,i}=O(t^{-\frac{3}{2}}),\qquad 1\leq i\leq 2n-2,\\
\tilde{\omega}_{2n-1,\,2n}&=O(t^{-\frac{3}{2}}),\qquad d\tilde{\omega}_{2n-1,\,2n}=O(t^{-\frac{3}{2}}),\\
\tilde{\omega}_{2n,\,i}&=O(t^{-1}),\qquad d\tilde{\omega}_{2n,\,i}=O(t^{-\frac{3}{2}}),\qquad 1\leq i\leq 2n-2.\\
\end{split}
\end{equation*}
From the Cartan structure equations, it is clear that $|\operatorname{Rm}(\tilde{g})|_{\hat{g}}=O(t^{-1})$. Since $\hat{g}$
and $\tilde{g}$ are equivalent at infinity as a consequence of Claim \ref{asymptotics}, the assertion follows.
\end{proof}

Next employing Shi's derivative estimates, we estimate the norm of the derivatives of $\operatorname{Rm}(\tilde{g})$
with respect to $\tilde{g}$ and its Levi-Civita connection $\widetilde{\nabla}$.
\begin{claim}\label{claim-decay-rm-lie-der}
$|\widetilde{\nabla}^{k}\operatorname{Rm}(\tilde{g})|_{\tilde{g}}=O(t^{-1-\frac{k}{2}})$
for all $k\geq0$.
\end{claim}

\begin{proof}
Claim \ref{lemma-est-cao-metr-curv} asserts the existence of a positive constant $C$
such that $|\operatorname{Rm}(\tilde{g})|_{\tilde{g}}\leq Ct^{-1}$. Set $\lambda:=\frac{1}{2}\left(1+\sqrt{1+\frac{4}{C}}\right)>1$. We will prove the following statement.
$$(\star)\quad\textrm{For all $m\in\mathbb{N}$, $|\widetilde{\nabla}^{k}\operatorname{Rm}(\tilde{g})|_{\tilde{g}}
\leq \frac{C_{k}}{(\lambda^{m})^{1+\frac{k}{2}}}$ on $\{\lambda^{m}\leq t\leq\lambda^{m+1}\}$ for all $k\geq0$.}$$
Here $C_{k}$ is a positive constant independent of $m$.

To this end, we know that $|\operatorname{Rm}(\tilde{g})|_{\tilde{g}}\leq\frac{C}{\lambda^{m}}$
on the region $\{\lambda^{m}\leq t\leq\lambda^{m+2}\}$. Let $(\tilde{g}(s))_{s\,\in\,\mathbb{R}}$ denote the Ricci flow associated to
$\tilde{g}$ with $\tilde{g}(0)=\tilde{g}$. Then by Shi's derivative estimates \cite{shi} (see \cite[Theorem 5.3.2]{zhang} for the precise statement that
we use), there exist constants $\tilde{C}_{k}$ such that
$$|(\nabla^{\tilde{g}(s)})^{k}\operatorname{Rm}(\tilde{g}(s))|_{\tilde{g}(s)}\leq\frac{C\tilde{C}_{k}}{\lambda^{m}s^{\frac{k}{2}}}\qquad\textrm{
on $\{\lambda^{m}\leq t\leq\lambda^{m+2}\}$ for all $s\in\left(0,\,\frac{\lambda^{m}}{C}\right]$ and for all $k\geq0$}.$$ In particular, for $s_{0}=\frac{\lambda^{m}}{C}$, we find that
$$|(\nabla^{\tilde{g}(s_{0})})^{k}\operatorname{Rm}(\tilde{g}(s_{0}))|_{\tilde{g}(s_{0})}\leq\frac{C^{1+\frac{k}{2}}\tilde{C}_{k}}{(\lambda^{m})^{1+\frac{k}{2}}}
\qquad\textrm{on $\{\lambda^{m}\leq t\leq\lambda^{m+2}\}$ for all $k\geq0$}.$$ But since $\tilde{g}(s)$ is obtained from $\tilde{g}$
by flowing along the vector field $-X=-4\partial_{t}$ for time $s$, this last statement is equivalent to
$$|\widetilde{\nabla}^{k}\operatorname{Rm}(\tilde{g})|_{\tilde{g}}\leq\frac{C^{1+\frac{k}{2}}\tilde{C}_{k}}{(\lambda^{m})^{1+\frac{k}{2}}}
\qquad\textrm{on $\Big\{\lambda^{m}-\frac{4\lambda^{m}}{C}\leq t\leq\underbrace{\lambda^{m+2}-\frac{4\lambda^{m}}{C}}_{=\,\lambda^{m+1}}\Big\}$ for all $k\geq0$},$$
so that in particular,
$$|\widetilde{\nabla}^{k}\operatorname{Rm}(\tilde{g})|_{\tilde{g}}\leq\frac{C^{1+\frac{k}{2}}\tilde{C}_{k}}{(\lambda^{m})^{1+\frac{k}{2}}}
\qquad\textrm{on $\left\{\lambda^{m}\leq t\leq\lambda^{m+1}\right\}$ for all $k\geq0$}.$$
This establishes $(\star)$ with $C_{k}:=C^{1+\frac{k}{2}}\tilde{C}_{k}$.

Now for any $x\in M$ with $t(x)\geq\lambda$, there exists $N\in\mathbb{N}$ such that $\lambda^{N}\leq t(x)\leq \lambda^{N+1}$.
Then since $(\star)$ holds true, we see that
$$|\widetilde{\nabla}^{k}\operatorname{Rm}(\tilde{g})|_{\tilde{g}}(x)\leq \frac{C_{k}}{(\lambda^{N})^{1+{\frac{k}{2}}}}
\leq\frac{\lambda^{1+\frac{k}{2}}C_{k}}{(t(x))^{1+\frac{k}{2}}}\qquad\textrm{for all $k\geq0$,}$$
as desired.
\end{proof}

We also have the following estimates on the Lie derivatives of $\operatorname{Rm}(\tilde{g})$ along the soliton vector field $X$.

\begin{claim}\label{claim-decay-rm-lie-der2}
$|\mathcal{L}_X^{(k)}(\Rm(\tilde{g}))|_{\tilde{g}}=O(t^{-1-k})$ for all $k\geq0$.
\end{claim}

\begin{proof}
In order to show that $|\mathcal{L}_X^{(k)}(\operatorname{Rm}(\tilde{g}))|_{\tilde{g}}
=O(t^{-1-k})$ for all $k\geq0$, recall that the curvature operator $\Rm(\tilde{g}(s))$ satisfies the following evolution equation along the
Ricci flow $(\tilde{g}(s))_{s\,\in\,\R},\,\tilde{g}(0)=\tilde{g},$ associated to $\tilde{g}$:
\begin{equation}\label{bonita}
\partial_s\Rm(\tilde{g}(s))=\Delta_{\tilde{g}(s)}\Rm(\tilde{g}(s))+\Rm(\tilde{g}(s))\ast \Rm(\tilde{g}(s)).
\end{equation}
Since $\tilde{g}(s)$ is obtained from $\tilde{g}$ by flowing along the vector field $-X$, we know that
$\mathcal{L}_{X}\Rm(\tilde{g})=-\partial_s\Rm(\tilde{g}(s))|_{s\,=\,0}$.
Using Claim \ref{claim-decay-rm-lie-der} together with \eqref{bonita},
this yields the expected result for $k=1$, namely $|\mathcal{L}_X\Rm(\tilde{g})|_{\tilde{g}}
=O(t^{-2})$.

Next, note the following commutation formula for any tensor $T$ on $M$:
\begin{equation}
\begin{split}\label{comm-time-der-lap}
\left.\left([\partial_s,\Delta_{\tilde{g}(s)}]T\right)\right|_{s\,=\,0}&=\sum_{i\,=\,0}^2\widetilde{\nabla}^{2-i}\Ric(\tilde{g})\ast\widetilde{\nabla}^{i}T.
\end{split}
\end{equation}
This formula can be derived from \cite[Lemma 2.27]{Cho-Lu-Ni-I} using the definition of the rough Laplacian.
In particular, by \eqref{comm-time-der-lap}, if $k=2$, then
\begin{equation*}
\begin{split}
\mathcal{L}_{X}^{(2)}(\Rm(\tilde{g}))&=\mathcal{L}_{X}(-\Delta_{\tilde{g}}\Rm(\tilde{g}))+\mathcal{L}_{X}(\Rm(\tilde{g})\ast\Rm(\tilde{g}))\\
&=\partial_{s}(-\Delta_{\tilde{g}(s)}\Rm(\tilde{g}(s)))|_{s\,=\,0}+\mathcal{L}_{X}(\Rm(\tilde{g})\ast\Rm(\tilde{g}))\\
&=-\Delta_{\tilde{g}}(\mathcal{L}_{X}(\Rm(\tilde{g})))+\sum_{i\,=\,0}^2\widetilde{\nabla}^{2-i}\Rm(\tilde{g})\ast\widetilde{\nabla}^{i}\Rm(\tilde{g})+\Rm(\tilde{g})\ast\Rm(\tilde{g})\ast\Rm(\tilde{g})\\
&=\Delta^2_{\tilde{g}}\Rm(\tilde{g})+\Delta_{\tilde{g}}(\Rm(\tilde{g})\ast\Rm(\tilde{g}))+\sum_{i\,=\,0}^2\widetilde{\nabla}^{2-i}\Rm(\tilde{g})\ast\widetilde{\nabla}^{i}\Rm(\tilde{g})\\
&\qquad+\Rm(\tilde{g})\ast\Rm(\tilde{g})\ast\Rm(\tilde{g})\\
&=\Delta^2_{\tilde{g}}\Rm(\tilde{g})+\sum_{i\,=\,0}^2\widetilde{\nabla}^{2-i}\Rm(\tilde{g})\ast\widetilde{\nabla}^{i}\Rm(\tilde{g})+\Rm(\tilde{g})\ast\Rm(\tilde{g})\ast\Rm(\tilde{g}).
\end{split}
\end{equation*}
This implies that $|\mathcal{L}_X^{(2)}\Rm(\tilde{g})|_{\tilde{g}}
=O(t^{-3})$. The cases $k\geq 3$ can be proved similarly by induction.
\end{proof}

Using this, we can now estimate all of the derivatives of $\varphi$.
\begin{claim}\label{helloo}
\begin{equation*}
\begin{split}
\varphi(t)&=O(t),\\
\left(\varphi'(t)-n\right)^{(k)}&=O(t^{-1-k})\qquad\textrm{for all $k\geq 0$},\\
\varphi^{(k)}(t)&=O(t^{-k})\qquad\textrm{for all $k\geq2$}.\\
\end{split}
\end{equation*}
\end{claim}

\begin{proof}
The first estimate follows immediately from Proposition \ref{potential}.

As for the other estimates, we read from the third soliton identity (Lemma \ref{solitonid}) that for Cao's steady gradient K\"ahler-Ricci soliton $\tilde{g}$,
$$|X|_{\tilde{g}}^{2}+\RR_{\tilde{g}}=c(\tilde{g})$$
for some positive constant $c(\tilde{g})$. Since $\RR_{\tilde{g}}=O(t^{-1})$ as a consequence of Claim \ref{lemma-est-cao-metr-curv} and
since $X=4\partial_{t}$ so that $|X|_{\tilde{g}}^{2}=4n+O(t^{-1}\log(t))$ by Claim \ref{asymptotics}, we deduce
that $c(\tilde{g})=4n$. Also observe from \eqref{metric} that $|X|_{\tilde{g}}^{2}=4\varphi'(t)$. Thus, we may write
\begin{equation}\label{scalar}
4\varphi'(t)=4n-\RR_{\tilde{g}}.
\end{equation}
As a result, we see from Claim \ref{claim-decay-rm-lie-der2} that for all $k\geq0$,
$$|(\varphi'(t)-n)^{(k)}|\leq C|\mathcal{L}^{(k)}_{X}(\varphi'(t)-n)|\leq
C|\mathcal{L}_X^{(k)}\RR_{\tilde{g}}|\leq Ct^{-1-k}$$
and that for all $k\geq2$,
$$|\varphi^{(k)}(t)|\leq C|\mathcal{L}_X^{(k-1)}\RR_{\tilde{g}}|
\leq Ct^{-k}.$$
This yields the second and third estimates of the claim respectively.
\end{proof}

We next estimate the covariant derivatives of $\varphi^{(q)}(t)$ for all $q\geq1$.

\begin{claim}\label{virus}
\begin{equation*}
\begin{split}
|\widehat{\nabla}\varphi^{(q)}(t)|_{\hat{g}}&=O\left(t^{-q-1}\right)\qquad\textrm{for all $q\geq1$},\\
|\widehat{\nabla}^{l}\varphi^{(q)}(t)|_{\hat{g}}&=O\left(t^{-q-1-\frac{l}{2}}\right)\qquad\textrm{for all $l\geq2$ and $q\geq1$}.
\end{split}
\end{equation*}
\end{claim}

\begin{proof}
Recall from Proposition \ref{basis} that with the orthonormal coframe
$$\hat{\theta}_{i}:=\sqrt{nt}\theta_{i}\qquad\textrm{for $i=1,\ldots,2n-2$},\qquad\hat{\theta}_{2n-1}:=\frac{\sqrt{n}}{2}dt,\qquad\textrm{and}\qquad
\hat{\theta}_{2n}:=\eta\sqrt{n}$$
of $\hat{g}$, we have the estimates
\begin{equation*}
\begin{split}
|\widehat{\nabla}^{k}\hat{\theta}_{i}|_{\hat{g}}&=O\left(t^{-\frac{k}{2}}\right)\qquad\textrm{for $1\leq i\leq 2n-2$ and for all $k\geq0$},\\
|\widehat{\nabla}^{k}\hat{\theta}_{2n-1}|_{\hat{g}}&=O\left(t^{-1-\frac{(k-1)}{2}}\right)\qquad\textrm{for all $k\geq1$},\\
|\widehat{\nabla}^{k}\hat{\theta}_{2n}|_{\hat{g}}&=O\left(t^{-1-\frac{(k-1)}{2}}\right)\qquad\textrm{for all $k\geq1$}.
\end{split}
\end{equation*}
Using these facts together with Claim \ref{helloo}, we deduce that for all $q\geq1$,
$$|\widehat{\nabla}\varphi^{(q)}(t)|_{\hat{g}}\leq C|(\varphi'(t)-n)^{(q)}||\hat{\theta}_{2n-1}|_{\hat{g}}
\leq Ct^{-1-q},$$
which yields the first estimate of the claim. For all $q\geq1$ and $l\geq2$, we then derive that
\begin{equation*}
\begin{split}
|\widehat{\nabla}^{l}\varphi^{(q)}(t)|_{\hat{g}}
&\leq C\sum_{i\,=\,1}^{l}\Biggl(|\varphi^{(i+q)}(t)|\sum_{\substack{s_{p}\,\geq\,0 \\ \sum_{p\,=\,0}^{l-i} (p+1)s_{p}\,=\,l \\
\sum_{p\,=\,0}^{l-i}ps_{p}\,=\,l-i}}\prod_{0\,\leq\,p\,\leq\,l-i}|\nabla^{p}\hat{\theta}_{2n-1}|^{s_{p}}_{\hat{g}}\Biggl)\\
&\leq C\sum_{i\,=\,1}^{l}\Biggl(t^{-(i+q)}\sum_{\substack{s_{p}\,\geq\,0 \\ \sum_{p\,=\,0}^{l-i} (p+1)s_{p}\,=\,l \\
\sum_{p\,=\,0}^{l-i}ps_{p}\,=\,l-i}}\prod_{0\,\leq\,p\,\leq\,l-i}|\nabla^{p}\hat{\theta}_{2n-1}|^{s_{p}}_{\hat{g}}\Biggl)\\
&\leq C\sum_{i\,=\,1}^{l}\Biggl(t^{-i-q}\sum_{\substack{0\,\leq\,p\,\leq\,l-i \\
s_{p}\,\geq\,0 \\ \sum_{p\,=\,0}^{l-i}(p+1)s_{p}\,=\,l \\
\sum_{p\,=\,0}^{l-i}ps_{p}\,=\,l-i}}\underbrace{
t^{\sum_{p\,=\,1}^{l-i}\left(-1-\frac{(p-1)}{2}\right)s_{p}}}_{=\,O\left(t^{-\frac{1}{2}\sum_{p\,=\,1}^{l-i}s_{p}}
t^{-\frac{1}{2}\sum_{p\,=\,1}^{l-i}ps_{p}}\right)}
\Biggl)\\
&\leq C\sum_{i\,=\,1}^{l}\Biggl(t^{-i-q}\sum_{\substack{0\,\leq\,p\,\leq\,l-i \\ s_{p}\,\geq\,0 \\ \sum_{p\,=\,0}^{l-i} (p+1)s_{p}\,=\,l \\
\sum_{p\,=\,0}^{l-i}ps_{p}\,=\,l-i}}t^{-\frac{(i-s_{0})}{2}}t^{-\frac{(l-i)}{2}}
\Biggl)\\
&\leq Ct^{-\frac{l}{2}-q}
\sum_{i\,=\,1}^{l}\Biggl(t^{-i}\sum_{\substack{0\,\leq\,p\,\leq\,l-i \\ s_{p}\,\geq\,0 \\ \sum_{p\,=\,0}^{l-i} (p+1)s_{p}\,=\,l \\
\sum_{p\,=\,0}^{l-i}ps_{p}\,=\,l-i}}t^{\frac{s_{0}}{2}}\Biggl)\\
&\leq Ct^{-\frac{l}{2}-q}\Biggl(t^{-l}t^{\frac{l}{2}}+\sum_{i\,=\,1}^{l-1}\underbrace{t^{\frac{s_{0}}{2}-i}}_{=\,O\left(t^{\frac{(i-1)}{2}-i}\right)}\Biggl)\\
&\leq Ct^{-\frac{l}{2}-q}\Biggl(t^{-\frac{l}{2}}+
\sum_{i\,=\,1}^{l-1}t^{-\frac{i}{2}-\frac{1}{2}}\Biggl)\\
&\leq Ct^{-\frac{l}{2}-q}(t^{-\frac{l}{2}}+t^{-1})\\
&\leq Ct^{-q-\frac{l}{2}-1},
\end{split}
\end{equation*}
which is the second estimate of the claim.
\end{proof}

As for $\varphi(t)$ and its covariant derivatives, we have:
\begin{claim}
\begin{equation*}
\begin{split}
|\varphi(t)-nt|&=O\left(\log(t)\right),\\
|\widehat{\nabla}(\varphi(t)-nt)|_{\hat{g}}&=O\left(t^{-1}\right),\\
|\widehat{\nabla}^{l}(\varphi(t)-nt)|_{\hat{g}}&=O\left(t^{-1-\frac{l}{2}}\right)\qquad\textrm{for all $l\geq2$}.
\end{split}
\end{equation*}
\end{claim}

\begin{proof}
The first estimate follows immediately from Proposition \ref{potential}. As for the remaining estimates, we compute using Claim \ref{helloo} that
\begin{equation*}
\begin{split}
|\widehat{\nabla}(\varphi(t)-nt)|_{\hat{g}}&\leq C|\varphi'(t)-n||\hat{\theta}_{2n-1}|_{\hat{g}}\leq Ct^{-1},\\
|\widehat{\nabla}^{2}(\varphi(t)-nt)|_{\hat{g}}&\leq C\left(|\varphi''(t)|+|\varphi'(t)-n||\widehat{\nabla}\hat{\theta}_{2n-1}|_{\hat{g}}\right)\leq
Ct^{-2},\\
|\widehat{\nabla}^{3}(\varphi(t)-nt)|_{\hat{g}}&\leq C\left(|\varphi^{(3)}(t)|+|\varphi''(t)||\widehat{\nabla}\hat{\theta}_{2n-1}|_{\hat{g}}
+|\varphi'(t)-n||\widehat{\nabla}^{2}\hat{\theta}_{2n-1}|_{\hat{g}}
\right)\leq Ct^{-\frac{5}{2}},\\
\end{split}
\end{equation*}
and from Claims \ref{helloo} and \ref{virus} that for all $l\geq4$,
\begin{equation*}
\begin{split}
|\widehat{\nabla}^{l}(\varphi(t)-nt)|_{\hat{g}}&\leq C|\widehat{\nabla}^{l-1}((\varphi'(t)-n)\hat{\theta}_{2n-1})|_{\hat{g}}\\
&\leq C\sum_{k\,=\,0}^{l-1}|\widehat{\nabla}^{k}(\varphi'(t)-n)|_{\hat{g}}|\widehat{\nabla}^{l-1-k}\hat{\theta}_{2n-1}|_{\hat{g}}\\
&\leq C\Biggl(|\varphi'(t)-n||\widehat{\nabla}^{l-1}\hat{\theta}_{2n-1}|_{\hat{g}}
+|\widehat{\nabla}(\varphi'(t)-n)|_{\hat{g}}|\widehat{\nabla}^{l-2}\hat{\theta}_{2n-1}|_{\hat{g}}
+|\widehat{\nabla}^{l-1}(\varphi'(t)-n)|_{\hat{g}}\\
&\qquad+\sum_{k\,=\,2}^{l-2}|\widehat{\nabla}^{k}(\varphi'(t)-n)|_{\hat{g}}|\widehat{\nabla}^{l-1-k}\hat{\theta}_{2n-1}|_{\hat{g}}\Biggr)\\
&\leq C\Biggl(t^{-1}t^{-1-\frac{(l-2)}{2}}+t^{-2}t^{-1-\frac{(l-3)}{2}}+t^{-2-\frac{(l-1)}{2}}
+\sum_{k\,=\,2}^{l-2}t^{-2-\frac{k}{2}}t^{-1-\frac{(l-2-k)}{2}}\Biggl)\\
&\leq C\Biggl(t^{-1-\frac{l}{2}}+t^{-\frac{3}{2}-\frac{l}{2}}+t^{-2-\frac{l}{2}}\Biggl)\\
&\leq Ct^{-1-\frac{l}{2}}.
\end{split}
\end{equation*}
The claim now follows.
\end{proof}

We have already seen in Claim \ref{asymptotics} that \eqref{beautiful} holds true for $k=0$. We now show that it in fact holds
true for all $k\geq1$.
\begin{claim}
\begin{equation*}
\begin{split}
|\widehat{\nabla}^{k}(\tilde{g}-\hat{g})|_{\hat{g}}&=O\left(t^{-1-\frac{k}{2}}\log(t)\right)\qquad\textrm{for all $k\geq1$}.
\end{split}
\end{equation*}
\end{claim}

\begin{proof}
Recall the estimates on the local orthonormal coframe $\{\hat{\theta}_{1},\ldots,\hat{\theta}_{2n-2}\}$
of $ntg^{T}$ from Proposition \ref{basis}. From there, we also read that
\begin{equation*}
\begin{split}
|\widehat{\nabla}^{k}(t^{-1})|_{\hat{g}}&=O\left(t^{-1-k}\right)\qquad\textrm{for $k=0,\,1,\,2,$}\\
|\widehat{\nabla}^{k}(t^{-1})|_{\hat{g}}&=O\left(t^{-2-\frac{k}{2}}\right)\qquad\textrm{for all $k\geq2$}.
\end{split}
\end{equation*}
Using these estimates, we derive that
\begin{equation*}
\begin{split}
\left|\widehat{\nabla}^{l}g^{T}\right|_{\hat{g}}&\leq
C\sum_{i\,=\,1}^{2n-2}\left|\widehat{\nabla}^{l}(t^{-1}\hat{\theta}_{i}\otimes\hat{\theta}_{i})\right|_{\hat{g}}\\
&\leq C\sum_{\substack{p+q+r\,=\,l \\ 1\,\leq\,i\,\leq\,2n-2}}|\widehat{\nabla}^{p}(t^{-1})|_{\hat{g}}
\underbrace{|\widehat{\nabla}^{q}\hat{\theta}_{i}||_{\hat{g}}|\widehat{\nabla}^{r}\hat{\theta}_{i}|_{\hat{g}}}_{=\,O\left(t^{-\frac{(q+r)}{2}}\right)}\\
&\leq C\sum_{k\,=\,0}^{l}\sum_{p\,=\,l-k}t^{-\frac{k}{2}}|\widehat{\nabla}^{p}(t^{-1})|_{\hat{g}}\\
&\leq C\sum_{k\,=\,0}^{l}t^{-\frac{k}{2}}|\widehat{\nabla}^{l-k}(t^{-1})|_{\hat{g}}\\
&\leq Ct^{-\frac{l}{2}}\sum_{k\,=\,0}^{l}t^{\frac{k}{2}}|\widehat{\nabla}^{k}(t^{-1})|_{\hat{g}}\\
&\leq Ct^{-\frac{l}{2}}\Biggl(t^{-1}+t^{-\frac{3}{2}}+\sum_{k\,=\,2}^{l}\underbrace{t^{\frac{k}{2}}|\widehat{\nabla}^{k}(t^{-1})|_{\hat{g}}}_{=\,O\left(t^{-2}\right)}\Biggl)\\
&\leq Ct^{-1-\frac{l}{2}}.
\end{split}
\end{equation*}
Similarly, one can verify that
$$\left|\frac{1}{4}dt^{2}+\eta^{2}\right|_{\hat{g}}=O\left(1\right)$$
and that
$$\left|\widehat{\nabla}^{l}\left(\frac{1}{4}dt^{2}+\eta^{2}\right)\right|_{\hat{g}}=
O\left(t^{-1-\frac{(l-1)}{2}}\right)\qquad\textrm{for all $l\geq1$}.$$
Recalling \eqref{difference}, the above estimates then imply that
\begin{equation*}
\begin{split}
|\widehat{\nabla}(\tilde{g}-\hat{g})|_{\hat{g}}
&\leq C\Biggl(\left|\widehat{\nabla}(\varphi(t)-nt)\right|_{\hat{g}}
|g^{T}|_{\hat{g}}+|\varphi(t)-nt|
|\widehat{\nabla}g^{T}|_{\hat{g}}
+|\widehat{\nabla}(\varphi'(t)-n)|_{\hat{g}}\left|\frac{1}{4}dt^{2}+\eta^{2}\right|_{\hat{g}}\\
&\qquad+|\varphi'(t)-n|\left|\widehat{\nabla}\left(\frac{1}{4}dt^{2}+\eta^{2}\right)\right|_{\hat{g}}\Biggl)\\
&\leq C\left(t^{-\frac{3}{2}}\log(t)+t^{-2}\right)\\
&\leq Ct^{-\frac{3}{2}}\log(t)=Ct^{-1-\frac{1}{2}}\log(t),
\end{split}
\end{equation*}
\begin{equation*}
\begin{split}
&|\widehat{\nabla}^{2}(\tilde{g}-\hat{g})|_{\hat{g}}\\
&\leq \sum_{m\,=\,0}^{2}\left|\widehat{\nabla}^{m}(\varphi(t)-nt)\right|_{\hat{g}}
\underbrace{\left|\widehat{\nabla}^{2-m}g^{T}\right|_{\hat{g}}}_{=\,O\left(t^{-1-\frac{(2-m)}{2}}\right)}
+\sum_{m\,=\,0}^{2}\left|\widehat{\nabla}^{m}(\varphi'(t)-n)\right|\left|\widehat{\nabla}^{2-m}\left(\frac{1}{4}dt^{2}+\eta^{2}\right)\right|_{\hat{g}}\\
&\leq C\Biggl(|\varphi(t)-nt|t^{-2}+
|\widehat{\nabla}(\varphi(t)-nt)|_{\hat{g}}
t^{-\frac{3}{2}}+|\widehat{\nabla}^{2}(\varphi(t)-nt)|_{\hat{g}}t^{-1}+|\varphi'(t)-n|\left|\widehat{\nabla}^{2}\left(\frac{1}{4}dt^{2}+\eta^{2}\right)\right|_{\hat{g}}\\
&\qquad+|\widehat{\nabla}(\varphi'(t)-n)|\left|\widehat{\nabla}\left(\frac{1}{4}dt^{2}+\eta^{2}\right)\right|_{\hat{g}}
+|\widehat{\nabla}^{2}(\varphi'(t)-n)|_{\hat{g}}\left|\frac{1}{4}dt^{2}+\eta^{2}\right|_{\hat{g}}\Biggl)\\
&\leq C\left(t^{-2}\log(t)+t^{-\frac{5}{2}}+t^{-3}\right)\\
&\leq Ct^{-2}\log(t)=Ct^{-1-\frac{2}{2}}\log(t),
\end{split}
\end{equation*}
and that for $k\geq3$,
\begin{equation*}
\begin{split}
&|\widehat{\nabla}^{k}(\tilde{g}-\hat{g})|_{\hat{g}}\\
&\leq \sum_{m\,=\,0}^{k}\left|\widehat{\nabla}^{m}(\varphi(t)-nt)\right|_{\hat{g}}
\underbrace{\left|\widehat{\nabla}^{k-m}g^{T}\right|_{\hat{g}}}_{=\,O\left(t^{-1-\frac{(k-m)}{2}}\right)}
+\sum_{m\,=\,0}^{k}\left|\widehat{\nabla}^{m}(\varphi'(t)-n)\right|\left|\widehat{\nabla}^{k-m}\left(\frac{1}{4}dt^{2}+\eta^{2}\right)\right|_{\hat{g}}\\
&\leq C\Biggl(|\varphi(t)-nt|t^{-1-\frac{k}{2}}+
|\widehat{\nabla}(\varphi(t)-nt)|_{\hat{g}}
t^{-1-\frac{(k-1)}{2}}+\sum_{m\,=\,2}^{k}t^{-1-\frac{(k-m)}{2}}\underbrace{|\widehat{\nabla}^{m}(\varphi(t)-nt)|_{\hat{g}}}
_{=\,O\left(t^{-1-\frac{m}{2}}\right)}\\
&\qquad+|\varphi'(t)-n|\left|\widehat{\nabla}^{k}\left(\frac{1}{4}dt^{2}+\eta^{2}\right)\right|_{\hat{g}}
+\left|\widehat{\nabla}(\varphi'(t)-n)\right|_{\hat{g}}\left|\widehat{\nabla}^{k-1}\left(\frac{1}{4}dt^{2}+\eta^{2}\right)\right|_{\hat{g}}\\
&\qquad+\left|\widehat{\nabla}^{k}(\varphi'(t)-n)\right|_{\hat{g}}+\sum_{m\,=\,2}^{k-1}
\underbrace{\left|\widehat{\nabla}^{m}(\varphi'(t)-n)\right|_{\hat{g}}}_{=\,O\left(t^{-2-\frac{m}{2}}\right)}
\underbrace{\left|\widehat{\nabla}^{k-m}\left(\frac{1}{4}dt^{2}+\eta^{2}\right)\right|_{\hat{g}}}_{=\,O\left(t^{-1-\frac{(k-m-1)}{2}}\right)}\Biggl)\\
&\leq C\Biggl(t^{-1-\frac{k}{2}}\log(t)+t^{-1-\frac{(k-1)}{2}}t^{-1}+t^{-2-\frac{k}{2}}
+t^{-1}t^{-1-\frac{(k-1)}{2}}+t^{-2}t^{-1-\frac{(k-2)}{2}}+t^{-2-\frac{k}{2}}+t^{-\frac{5}{2}-\frac{k}{2}}\Biggl)\\
&\leq C\left(t^{-1-\frac{k}{2}}\log(t)+t^{-\frac{3}{2}-\frac{k}{2}}+t^{-2-\frac{k}{2}}+t^{-\frac{5}{2}-\frac{k}{2}}\right)\\
&\leq Ct^{-1-\frac{k}{2}}\log(t).
\end{split}
\end{equation*}
\end{proof}

Finally, we show that \eqref{even-more-beautiful} holds true, the last step in the proof of the proposition.

\begin{claim}
\begin{equation*}
\begin{split}
|\widehat{\nabla}^{i}\mathcal{L}_X^{(j)}(\tilde{g}-\hat{g})|_{\hat{g}}=O\left(t^{-1-\frac{i}{2}-j}\right)
\qquad\textrm{for all $i\geq0$ and $j\geq1$}.\\
\end{split}
\end{equation*}
\end{claim}

\begin{proof}
Since
\begin{equation*}
\begin{split}
\mathcal{L}_X^{(j)}(\tilde{g}-\hat{g})&=\left(\mathcal{L}_X^{(j)}(\varphi'-n)\right)\cdot
\left(\frac{1}{4}dt^2+\eta\right)+\left(\mathcal{L}_X^{(j)}(\varphi-nt)\right)\cdot g^T\\
&=\left(\mathcal{L}_X^{(j)}(\varphi'-n)\right)\cdot\left(\frac{1}{4}dt^2+\eta\right)+4\left(\mathcal{L}_X^{(j-1)}(\varphi'-n)\right)\cdot g^T,
\end{split}
\end{equation*}
we deduce from Claim \ref{helloo} that
$$|\mathcal{L}_X^{(j)}(\tilde{g}-\hat{g})|_{\hat{g}}=O\left(t^{-1-j}\right)\qquad\textrm{for all $j\geq1$}.$$
Next, for all $j\geq1$, we see that
\begin{equation*}
\begin{split}
|\widehat{\nabla}\mathcal{L}_X^{(j)}(\tilde{g}-\hat{g})|_{\hat{g}}&\leq C\Biggl(\sum_{k\,=\,0}^{1}|\widehat{\nabla}^{k}\varphi^{(j+1)}|_{\hat{g}}\left|\widehat{\nabla}^{1-k}\left(\frac{1}{4}dt^2+\eta\right)\right|_{\hat{g}}+
\sum_{k\,=\,0}^{1}\left|\widehat{\nabla}^{k}\mathcal{L}_X^{(j-1)}(\varphi'-n)\right|_{\hat{g}}\underbrace{|\widehat{\nabla}^{1-k}
g^T|_{\hat{g}}}_{=\,O\left(t^{-1-\frac{(1-k)}{2}}\right)}\Biggl)\\
&\leq C\Biggl(\left|\varphi^{(j+1)}\right|\left|\widehat{\nabla}\left(\frac{1}{4}dt^2+\eta\right)\right|_{\hat{g}}+
|\widehat{\nabla}\varphi^{(j+1)}|_{\hat{g}}+
t^{-\frac{3}{2}}\left|(\varphi'-n)^{(j-1)}\right|
+t^{-1}\left|\widehat{\nabla}\varphi^{(j)}\right|_{\hat{g}}\Biggl)\\
&\leq C\left(t^{-\frac{3}{2}-j}+t^{-2-j}\right)\\
&\leq Ct^{-\frac{3}{2}-j}=Ct^{-1-\frac{1}{2}-j}.
\end{split}
\end{equation*}
For $j\geq1$, we also find that
\begin{equation*}
\begin{split}
|\widehat{\nabla}^{2}\mathcal{L}_X^{(j)}(\tilde{g}-\hat{g})|_{\hat{g}}&\leq C\Biggl(\sum_{k\,=\,0}^{2}|\widehat{\nabla}^{k}\varphi^{(j+1)}|_{\hat{g}}\left|\widehat{\nabla}^{2-k}\left(\frac{1}{4}dt^2+\eta\right)\right|_{\hat{g}}+
\sum_{k\,=\,0}^{2}\left|\widehat{\nabla}^{k}\mathcal{L}_X^{(j-1)}(\varphi'-n)\right|_{\hat{g}}\underbrace{|\widehat{\nabla}^{2-k}g^T|_{\hat{g}}}_{=\,
O(t^{-1-\frac{(2-k)}{2}})}\Biggl)\\
&\leq C\Biggl(
\left|\varphi^{(j+1)}\right|\left|\widehat{\nabla}^{2}\left(\frac{1}{4}dt^2+\eta\right)\right|_{\hat{g}}
+|\widehat{\nabla}\varphi^{(j+1)}|_{\hat{g}}\left|\widehat{\nabla}\left(\frac{1}{4}dt^2+\eta\right)\right|_{\hat{g}}
+|\widehat{\nabla}^{2}\varphi^{(j+1)}|_{\hat{g}}\\
&\qquad+
t^{-2}|(\varphi'-n)^{(j-1)}|+t^{-\frac{3}{2}}\left|\widehat{\nabla}\varphi^{(j)}\right|_{\hat{g}}
+t^{-1}\left|\widehat{\nabla}^{2}\varphi^{(j)}\right|_{\hat{g}}\Biggl)\\
&\leq C\left(t^{-2-j}+t^{-\frac{5}{2}-j}+t^{-3-j}\right)\\
&\leq Ct^{-2-j}=Ct^{-1-\frac{2}{2}-j}.
\end{split}
\end{equation*}
Finally, for all $i\geq3$ and $j\geq1$, we have that
\begin{equation*}
\begin{split}
|\widehat{\nabla}^{i}\mathcal{L}_X^{(j)}(\tilde{g}-\hat{g})|_{\hat{g}}&\leq C\Biggl(\sum_{k\,=\,0}^{i}|\widehat{\nabla}^{k}\varphi^{(j+1)}|_{\hat{g}}\left|\widehat{\nabla}^{i-k}\left(\frac{1}{4}dt^2+\eta\right)\right|_{\hat{g}}+
\sum_{k\,=\,0}^{i}\left|\widehat{\nabla}^{k}\mathcal{L}_X^{(j-1)}(\varphi'-n)\right|_{\hat{g}}\underbrace{|\widehat{\nabla}^{i-k}g^T|_{\hat{g}}}_{=\,O\left(t^{-1-\frac{(i-k)}{2}}\right)}\Biggl)\\
&\leq C\Biggl(\underbrace{\left|\varphi^{(j+1)}\right|\left|\widehat{\nabla}^{i}\left(\frac{1}{4}dt^2+\eta\right)\right|_{\hat{g}}}_{=\,O\left(t^{-\frac{3}{2}-\frac{i}{2}-j}\right)}
+\underbrace{|\widehat{\nabla}\varphi^{(j+1)}|\left|\widehat{\nabla}^{i-1}\left(\frac{1}{4}dt^2+\eta\right)\right|_{\hat{g}}}
_{=\,O\left(t^{-2-\frac{i}{2}-j}\right)}+\underbrace{|\widehat{\nabla}^{i}\varphi^{(j+1)}|}_{=\,O\left(t^{-2-\frac{i}{2}-j}\right)}\\
&\qquad+\sum_{k\,=\,2}^{i-1}\underbrace{|\widehat{\nabla}^{k}\varphi^{(j+1)}|_{\hat{g}}}_{=\,O\left(t^{-j-\frac{k}{2}-2}\right)}
\underbrace{\left|\widehat{\nabla}^{i-k}\left(\frac{1}{4}dt^2+\eta\right)\right|_{\hat{g}}}_{=\,O\left(t^{-1-\frac{(i-k-1)}{2}}\right)}
+t^{-1-\frac{i}{2}}\sum_{k\,=\,0}^{i}t^{\frac{k}{2}}\left|\widehat{\nabla}^{k}\mathcal{L}_{X}^{(j-1)}(\varphi'-n)\right|_{\hat{g}}\Biggl)\\
&\leq C\Biggl(t^{-\frac{3}{2}-\frac{i}{2}-j}+t^{-1-\frac{i}{2}}\Biggl(\underbrace{|(\varphi'-n)^{(j-1)}|}_{=\,O\left(t^{-j}\right)}+
\underbrace{t^{\frac{1}{2}}\left|\widehat{\nabla}\varphi^{(j)}\right|_{\hat{g}}}_{=\,O\left(t^{-\frac{1}{2}-j}\right)}+
\underbrace{\sum_{k\,=\,2}^{i}t^{\frac{k}{2}}\left|\widehat{\nabla}^{k}\varphi^{(j)}\right|_{\hat{g}}}_{=\,O\left(t^{-1-j}\right)}\Biggl)\Biggl)\\
&\leq Ct^{-1-\frac{i}{2}-j}.
\end{split}
\end{equation*}
\end{proof}

\end{proof}

The following lower bound on the scalar curvature of
Cao's steady gradient K\"ahler-Ricci soliton along the end of the cone will prove useful for later.

\begin{lemma}\label{low-pos-bd-scal}
$\RR_{\tilde{g}}\geq \frac{c}{t}$ along the end of $C_{0}$ for some constant $c>0$.
\end{lemma}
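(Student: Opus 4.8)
The plan is to read this off directly from the identity \eqref{scalar} together with the asymptotic expansion of $\varphi'$ in Proposition \ref{potential}. Recall that $X=4\partial_{t}$ and, from \eqref{metric}, $\tilde{g}=\varphi(t)g^{T}+\varphi'(t)\left(\tfrac14 dt^{2}+\eta^{2}\right)$, so $|X|_{\tilde{g}}^{2}=4\varphi'(t)$; combining this with the third soliton identity of Lemma \ref{solitonid} and the fact, established in the proof of Claim \ref{helloo}, that the charge of Cao's soliton equals $4n$, one obtains precisely \eqref{scalar}, i.e.
$$\RR_{\tilde{g}}=4n-4\varphi'(t)=4\bigl(n-\varphi'(t)\bigr).$$
Thus the scalar curvature is, up to the factor $4$, exactly the deficit $n-\varphi'(t)$, and the whole statement reduces to bounding this quantity from below along the end.

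Next I would invoke Proposition \ref{potential}, which gives $\varphi'(t)=n-\tfrac{n-1}{t}+O\!\left(\tfrac{(\log t)^{2}}{t^{2}}\right)$ as $t\to+\infty$, hence $n-\varphi'(t)=\tfrac{n-1}{t}+O\!\left(\tfrac{(\log t)^{2}}{t^{2}}\right)$. Substituting into the displayed formula yields $\RR_{\tilde{g}}=\tfrac{4(n-1)}{t}+O\!\left(\tfrac{(\log t)^{2}}{t^{2}}\right)$. Since $n\geq2$, the leading coefficient $4(n-1)$ is strictly positive, and since $\tfrac{(\log t)^{2}}{t^{2}}=o(t^{-1})$, the error term is negligible compared to the main term for $t$ large. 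Therefore there exists $T>0$ such that $\RR_{\tilde{g}}\geq\tfrac{2(n-1)}{t}$ whenever $t\geq T$; taking $c:=2(n-1)>0$ (or any constant strictly smaller than $4(n-1)$) gives the asserted bound on the region $\{r\gg1\}$, which is the end of $C_{0}$.

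There is essentially no obstacle: the analytic work is already contained in Proposition \ref{potential} and in the derivation of \eqref{scalar}. The only point requiring a little care is that the inequality is only claimed along the end, i.e. for $t$ (equivalently $r$) sufficiently large, which is exactly the regime in which the expansion of Proposition \ref{potential} is valid, so no global statement is needed. If desired, one may additionally remark that the pointwise monotonicity $0<\varphi'(t)<n$ from \eqref{proofread} already shows $\RR_{\tilde{g}}>0$ everywhere on $C_{0}$, and that only the decay rate $\tfrac{c}{t}$ at infinity genuinely uses the refined expansion.
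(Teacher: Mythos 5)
Your proof is correct and follows exactly the paper's argument: substitute the expansion of $\varphi'(t)$ from Proposition \ref{potential} into the identity $\RR_{\tilde{g}}=4n-4\varphi'(t)$ of \eqref{scalar} and absorb the $O\!\left(\tfrac{(\log t)^{2}}{t^{2}}\right)$ error for $t$ large. No gaps.
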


\begin{proof}
From \eqref{scalar}, we read that $$\RR_{\tilde{g}}=4n-4\varphi'(t).$$
The asymptotics of $\varphi'(t)$ as dictated by Proposition \ref{potential} then imply that
\begin{equation*}
\begin{split}
\RR_{\tilde{g}}&=4n-4\varphi'(t)\\
&=4n-4\left(n-\frac{(n-1)}{t}+O\left(\frac{(\log t)^{2}}{t^{2}}\right)\right)\\
&=\frac{4}{t}\left(n-1+O\left(\frac{(\log t)^{2}}{t}\right)\right)\\
&\geq\frac{4(n-1-\varepsilon)}{t}
\end{split}
\end{equation*}
for any $\varepsilon\in(0,\,1)$ for $t$ sufficiently large.
\end{proof}

\section{Constructing a background metric and the equation set-up}\label{Section-App-Met}

\subsection{Construction of an approximate soliton}\label{subsection-existence-sol}
In this section, we consider a Calabi-Yau cone $(C_{0 },\,g_{0})$ of
complex dimension $n\geq2$ with complex structure $J_{0}$ and radius function $r$ and an equivariant crepant resolution
$\pi:M\to C_{0}$ of $C_{0}$ with exceptional set $E$ so that $M$ has trivial canonical bundle and the real holomorphic torus action
induced by the flow of the holomorphic vector
field $J_{0}r\partial_{r}$ on $C_{0}$ extends to $M$. We set $r^2=:e^t$ and write $X$ for
the lift of the holomorphic vector field $2r\partial_{r}=4\partial_{t}$ on $C_{0}$ to $M$. We have a transverse K\"ahler
form $\omega^{T}=\frac{1}{2}dd^{c}\log(r)$ on $C_{0}$ as well as a contact form $\eta=d^{c}\log(r)$
on the link $(S,\,g_{S})$ of $C_{0}$ which we identify with the level set
$\{r=1\}$. We also have a natural
projection $p_{S}:C_{0}\simeq\mathbb{R}_{+}\times S\to\{r\,=\,1\}\simeq S$.
Let $J$ denote the complex structure on $M$ and
let $\Phi(t)$ denote the K\"ahler potential
of Cao's steady gradient K\"ahler-Ricci soliton $\tilde{\omega}$ on $C_{0}$ (as in Proposition \ref{caosoliton}),
the asymptotic model of which is the K\"ahler form $\hat{\omega}$ on $C_{0}$ defined by
$$\hat{\omega}:=\frac{i}{2}\partial\bar{\partial}\left(\frac{nt^{2}}{2}\right)
=n\left(\frac{dt}{2}\wedge\eta+t\omega^{T}\right)$$
with associated K\"ahler metric $\hat{g}$. Throughout this section, we identify the complement of the vertex $o$ of $C_{0}$ with $M\setminus E$ via $\pi$. In this way, we treat $r,\,t,$ and $\Phi(t)$ not only as smooth functions on $C_{0}\setminus\{o\}$, but also as smooth functions
on $M\setminus E$.

We begin with the following preliminary lemma.

\begin{lemma}[{\cite[Lemma 5.6]{goto}}]\label{lemma5.6}
Let $\kappa$ be an arbitrary K\"ahler form on $M$ with K\"ahler class $[\kappa]\in H^{2}(M,\,\R)$. Assume that $n=\dim_{\mathbb{C}}C_{0}\geq3$. Then for every $T>1$, there exists a smooth real $(1,\,1)$-form $\tilde{\kappa}_{T}$ on $M$ depending on $T$ with the following properties.
\begin{enumerate}[label=\textnormal{(\roman{*})}, ref=(\roman{*})]
\item $[\tilde{\kappa}_{T}]=[\kappa]\in H^{2}(M,\,\R)$.
\item $\tilde{\kappa}_{T}=\kappa$ on $E\cup\{x\in M:t(x)<T\}$ and the restriction of $\tilde{\kappa}_{T}$ to the subset $\{x\in M:t(x)\,>\,2T\}$ is given by the pullback of a closed, primitive basic $(1,\,1)$-form $\zeta$ on $S$ that is independent of $T$ and determined uniquely by the cohomology class $[\kappa|_{C_{0}}]\in H^{2}(C_{0},\,\R)$. In other words, $$\tilde{\kappa}_{T}|_{\{x\,\in \,M\,|\,t(x)\,>\,2T\}}=p_{S}^{*}(\zeta)\quad\textrm{for every $T>1$}.$$
\end{enumerate}
\end{lemma}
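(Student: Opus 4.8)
The plan is to realise the cohomology class $[\kappa]$ by a representative that is explicitly ``conical'' outside a large ball, and then interpolate. First, restrict $\kappa$ to $C_0\setminus\{o\}\cong M\setminus E$ and pull it back to the link: since $n\geq 3$, Lemma \ref{321} gives $H^1(M_0,\mathcal{O}_{M_0})=0$, which (together with the transverse Hodge theory from Section \ref{primitive} and the identification $H^2(S,\mathbb{R})\cong H^2_B(S)_{\mathrm{p}}$ from Proposition \ref{fgh}) lets one produce a closed primitive basic $(1,1)$-form $\zeta$ on $S$ whose pullback $p_S^*\zeta$ to $C_0$ is cohomologous to $\kappa|_{C_0}$; the class of $\zeta$ depends only on $[\kappa|_{C_0}]\in H^2(C_0,\mathbb{R})$. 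The key point is that a conical two-form of the shape $p_S^*\zeta$ is automatically $d$-closed (as $\zeta$ is basic and closed), so it is a genuine candidate model at infinity.

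Next, I would compare the two closed $(1,1)$-forms $\kappa|_{C_0}$ and $p_S^*\zeta$ on $C_0\setminus\{o\}$. They are cohomologous there, so their difference is exact; applying the $\partial\bar\partial$-lemma (Lemma \ref{deldelbar}) — or rather its local/annular version on $M_0$, which again uses the vanishing \eqref{lenny} valid for $n\geq 3$ — yields a smooth real-valued function $u$ on $M_0$ with $\kappa|_{M_0}-p_S^*\zeta = i\partial\bar\partial u$. Now choose a cutoff: fix a smooth function $\chi_T:M\to[0,1]$ with $\chi_T\equiv 1$ on $E\cup\{t<T\}$ and $\chi_T\equiv 0$ on $\{t>2T\}$, and set
\begin{equation*}
\tilde{\kappa}_T := \kappa - i\partial\bar\partial\big((1-\chi_T)\,u\big).
\end{equation*}
Then $\tilde{\kappa}_T$ is a globally defined smooth real $(1,1)$-form on $M$ (the correction term is supported in $\{t>T\}\subset M_0$, where $u$ is defined), it is exact minus $\kappa$ so $[\tilde{\kappa}_T]=[\kappa]$, it agrees with $\kappa$ on $E\cup\{t<T\}$, and on $\{t>2T\}$ it equals $\kappa - i\partial\bar\partial u = p_S^*\zeta$. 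This gives properties (i) and (ii).

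The main obstacle is making the ``annular $\partial\bar\partial$-lemma'' rigorous: one needs that an exact real $(1,1)$-form on the end $M_0 = M\setminus E$ (not on all of $M$) is $i\partial\bar\partial$ of a function \emph{on all of $M_0$}, which is exactly where the hypothesis $n\geq3$ and the vanishing \eqref{lenny} of $H^1(M_0,\mathcal{O}_{M_0})$ enter — this is why the lemma excludes $n=2$. A secondary technical point is the bookkeeping to ensure $\zeta$ is primitive and depends only on the class $[\kappa|_{C_0}]$: here one invokes that a real basic harmonic primitive $(1,1)$-form is the unique harmonic representative of its class (Proposition \ref{fgh} and the discussion preceding it), so the assignment $[\kappa|_{C_0}]\mapsto[\zeta]_B\mapsto\zeta$ is well-defined and $T$-independent. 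Everything else — the cutoff construction, checking the cohomology class, verifying the two regions — is routine once these two inputs are in place.
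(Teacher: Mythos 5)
Your proposal is correct and follows essentially the same route as the paper: identify $H^{2}(C_{0},\,\R)$ with $H^{1,\,1}_{B}(S)_{\operatorname{p}}$ via the harmonic representative $\zeta$, write the exact difference $\kappa|_{C_{0}}-p_{S}^{*}(\zeta)$ as $i\partial\bar{\partial}$ of a function on the end using the vanishing of $H^{1}$ in degree one for $n\geq3$, and interpolate with a cutoff supported in $\{T<t<2T\}$. The only detail worth making explicit is that passing from $H^{2}_{B}(S)_{\operatorname{p}}$ to forms of type $(1,\,1)$ uses the vanishing $h^{2,\,0}_{B}(S)=h^{0,\,2}_{B}(S)=0$ of Proposition \ref{vanishing} for the positive Sasaki manifold $S$, which your appeal to ``transverse Hodge theory'' leaves implicit.
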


\begin{proof}
By Proposition \ref{fgh}, we know that the vector spaces $H^{2}(S,\,\R)$ and $H^{2}_{B}(S)_{\operatorname{p}}$ coincide. Furthermore, since $S$
is a Sasaki-Einstein manifold which is necessarily a positive Sasaki manifold, we have the vanishing $h^{2,\,0}_{B}(S)=h_{B}^{0,\,2}(S)=0$ from Proposition \ref{vanishing}. Together with \eqref{poof}, these two statements imply that $H^{2}(S,\,\C)=H^{2}_{B}(S)_{\operatorname{p}}\otimes\C=H^{1,\,1}_{B}(S)_{\operatorname{p}}$, and so we have an isomorphism
\begin{equation*}
H^{2}(C_{0},\,\C)\cong H^{2}(S,\,\C)=H^{1,\,1}_{B}(S)_{\operatorname{p}}
\end{equation*}
given by the pullback $p^{*}_{S}:H^{2}(S,\,\C)\longrightarrow H^{2}(C_{0},\,\C)$. Since $\kappa|_{C_{0}}$ defines a cohomology class
in $H^{2}(C_{0},\,\R)$, we can therefore assert that
$$\kappa|_{C_{0}}=p_{S}^{*}(\zeta)+d\theta$$ for some real one-form $\theta$ on $C_{0}$
and for some real primitive basic $(1,\,1)$-form $\zeta$ on $S$
that is basic harmonic and determined uniquely by $[\kappa|_{C_{0}}]$.
Now, $n=\dim_{\C}M\geq 3$ so that $H^{1}(C_{0},\,\mathcal{O}_{C_{0}})=0$ by
Lemma \ref{321}, and $d\theta$ is a real $(1,\,1)$-form. Therefore by arguing as in the proof of
Lemma \ref{deldelbar} with $C_{0}$ and $d\theta$ in place of $M$ and $\alpha$ respectively, we deduce
that $d\theta=i\partial\bar{\partial}\phi$ for some smooth real-valued function $\phi:C_{0}\to\mathbb{R}$.

Next fix $T>1$ and choose a smooth cut-off function $\rho_{T}:M\longrightarrow\R$ satisfying $|\rho_{T}(x)|\leq 1$ for all $x\in M$ and
\begin{displaymath}
\rho_{T}(x) = \left\{ \begin{array}{ll}
0 & \textrm{if $x\in E\cup\{y\in M\,|\,t(y)<T\}$,}\\
1 & \textrm{if $x\in\{y\in M\,|\,t(y)>2T\}$}.\\
\end{array} \right.
\end{displaymath}
We define $\tilde{\kappa}_{T}$ by $$\tilde{\kappa}_{T}=\kappa-i\p\bar{\p}(\rho_{T}.\phi).$$
Then this is a closed real $(1,\,1)$-form on $M$ lying in the same cohomology class as $\kappa$ that interpolates between
$\kappa$ on the set $E\cup\{x\in M\,|\,t(x)<T\}$ and $p_{S}^{*}(\zeta)$ on the set $\{x\in M\,|\,t(x)\,>\,2T\}$, i.e., it
satisfies properties (i) and (ii) of the lemma, as desired.
\end{proof}

The next ingredient we need for the construction of our background metric is the following lemma, akin to \cite[Lemma 2.15]{Conlon}.
\begin{lemma}\label{1convex}
For all $\alpha>\frac{1}{2}$, there exists a smooth plurisubharmonic function $h_{\alpha}$ on $M$ which is strictly plurisubharmonic and equal to
$\frac{1}{2}(\Phi(t))^{\alpha}$ outside a compact subset $K_{\alpha}$ of $M$ containing $E$.
\end{lemma}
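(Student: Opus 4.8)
The plan is to first verify that $\tfrac{1}{2}(\Phi(t))^{\alpha}$ is strictly plurisubharmonic on the region $\{t>T_{\alpha}\}$ for $T_{\alpha}$ sufficiently large, and then to graft this function onto a function that is plurisubharmonic on all of $M$ by a routine one-dimensional interpolation that flattens it out near $E$.

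For the first step, I would argue as in the computation \eqref{metric}: for any smooth function $g$ of the variable $t$ alone one has $\tfrac{i}{2}\partial\bar{\partial}g(t)=g'(t)\,\omega^{T}+g''(t)\,\tfrac{dt}{2}\wedge\eta$. Since $g^{T}$ is a transverse K\"ahler metric and $r\,dr\wedge\eta=e^{t}\,\tfrac{dt}{2}\wedge\eta$ is the (nonnegative) $\langle r\partial_{r},\,\xi\rangle$-component of the positive $(1,1)$-form $\omega_{0}=r\,dr\wedge\eta+r^{2}\omega^{T}$, the form $g'(t)\,\omega^{T}+g''(t)\,\tfrac{dt}{2}\wedge\eta$ is $\geq 0$ when $g',g''\geq 0$ and is a K\"ahler form when $g',g''>0$. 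Applying this with $g=G:=\tfrac{1}{2}\Phi^{\alpha}$ and using $\Phi'=\varphi$, I would compute $G'=\tfrac{\alpha}{2}\Phi^{\alpha-1}\varphi$ and $G''=\tfrac{\alpha}{2}\Phi^{\alpha-2}\big((\alpha-1)\varphi^{2}+\Phi\varphi'\big)$. By Proposition \ref{potential}, $\varphi(t)=nt+O(\log t)$, $\Phi(t)=\tfrac{n}{2}t^{2}+O(t\log t)$, and $\varphi'(t)\to n$ as $t\to+\infty$; in particular $\Phi,\varphi>0$ for $t$ large, so $G'>0$, and
\begin{equation*}
(\alpha-1)\varphi^{2}+\Phi\varphi'=\varphi^{2}\Big(\alpha-1+\tfrac{\Phi\varphi'}{\varphi^{2}}\Big),\qquad \tfrac{\Phi\varphi'}{\varphi^{2}}\longrightarrow\tfrac{1}{2}\ \text{ as }t\to+\infty,
\end{equation*}
so $G''>0$ for $t$ large precisely because $\alpha>\tfrac{1}{2}$. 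Hence there is $T_{\alpha}$ with $G',G''>0$ on $[T_{\alpha},\,\infty)$, and $\tfrac{i}{2}\partial\bar{\partial}G$ is a K\"ahler form on $\{t>T_{\alpha}\}$.

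For the second step, since $G$ is smooth, strictly increasing and strictly convex on $[T_{\alpha},\,\infty)$, an elementary one-dimensional construction produces a smooth, non-decreasing, convex function $\bar{G}:\mathbb{R}\to\mathbb{R}$ with $\bar{G}=G$ on $[T_{2},\,\infty)$ for some $T_{2}>T_{\alpha}$ and $\bar{G}$ equal to a constant $c$ on $(-\infty,\,T_{1}]$ for some $T_{1}<T_{2}$ (for instance, round off the corner of the convex function that equals $G$ on $[T_{2},\,\infty)$ and equals $G(T_{2})$ on $(-\infty,\,T_{2}]$). I would then define $h_{\alpha}$ on $M$ by setting $h_{\alpha}:=\bar{G}(t)$ on $M\setminus E$ and $h_{\alpha}:=c$ on the open neighbourhood $\{t<T_{1}\}\cup E$ of $E$; these definitions agree on $\{t<T_{1}\}$, so $h_{\alpha}$ is well-defined and smooth on $M$. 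On $M\setminus E$ one has $\tfrac{i}{2}\partial\bar{\partial}h_{\alpha}=\bar{G}'(t)\,\omega^{T}+\bar{G}''(t)\,\tfrac{dt}{2}\wedge\eta\geq 0$, and $h_{\alpha}$ is locally constant near $E$, so $h_{\alpha}$ is plurisubharmonic on $M$; on $\{t>T_{2}\}$ one has $h_{\alpha}=\tfrac{1}{2}\Phi^{\alpha}$ with $\bar{G}'=G'>0$ and $\bar{G}''=G''>0$, so $h_{\alpha}$ is strictly plurisubharmonic there. Taking $K_{\alpha}:=E\cup\{x\in M\setminus E:t(x)\leq T_{2}\}$, which is compact (it is the $\pi$-preimage of the compact set $\{r\leq e^{T_{2}/2}\}\subset C_{0}$) and contains $E$, then yields the claim.

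The only real content is the first step, and specifically the sign of $(\alpha-1)\varphi^{2}+\Phi\varphi'$: the two terms have leading parts $(\alpha-1)n^{2}t^{2}$ and $\tfrac{1}{2}n^{2}t^{2}$, which cancel exactly when $\alpha=\tfrac{1}{2}$, so the hypothesis $\alpha>\tfrac{1}{2}$ is precisely what makes $G$ convex near infinity; the gluing step is standard and presents no difficulty.
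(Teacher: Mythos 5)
Your proof is correct and follows essentially the same route as the paper: the heart of both arguments is the computation showing that $\frac{i}{2}\partial\bar{\partial}(\Phi^{\alpha})$ has $\frac{dt}{2}\wedge\eta$-coefficient proportional to $(\alpha-1)\varphi^{2}+\Phi\varphi'\sim(\alpha-\tfrac{1}{2})n^{2}t^{2}$, which is positive precisely when $\alpha>\tfrac{1}{2}$ by Proposition \ref{potential}. The only (cosmetic) difference is in the gluing: the paper precomposes $(\Phi(t))^{\alpha}$ with a convex non-decreasing cut-off $\psi$ that is constant for small values, using $i\partial\bar{\partial}(\psi\circ u)=\psi''\,i\partial u\wedge\bar{\partial}u+\psi'\,i\partial\bar{\partial}u\geq0$, whereas you build the convex non-decreasing interpolation directly in the variable $t$; both are the same standard trick and both yield a function that is constant near $E$ and equal to $\tfrac{1}{2}\Phi^{\alpha}$ at infinity.
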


\begin{proof}
Let $\psi:\R_{+}\to\R_{+}$ be smooth with $\psi',\psi''\geq 0$ and
$$
\psi(s) = \begin{cases}
T+2 & \textrm{if}\;\,s<T + 1,\\
s & \textrm{if}\;\,s>T+3,
\end{cases}
$$
for some $T>1$ to be specified later. Then $h_{\alpha}(t):=\frac{1}{2}\psi \circ(\Phi(t))^{\alpha}: M \to \R_{+}$ satisfies
$$
i\partial\bar{\partial}h_{\alpha} =
\begin{cases}
0 &\textrm{on}\;E\cup\{x\,|\,\Phi(t(x)) \leq(T+1)^{\frac{1}{\alpha}}\},\\
\psi''\frac{i}{2}\p (\Phi(t))^{\alpha}\wedge\bar{\p}(\Phi(t))^{\alpha}+\psi'\frac{i}{2}\p\bar{\p}(\Phi(t))^{\alpha} &\textrm{on}\;\{x\,|\,\Phi(t(x)) > T^\frac{1}{\alpha}\}.
\end{cases}
$$
Since $\Phi(t)$, as an antiderivative of $\varphi(t)$, tends to $+\infty$ as $t\to+\infty$ by Proposition \ref{potential} so is proper,
we see that $E\cup\{x\,|\,\Phi(t(x))\leq\lambda\}$ is compact for every $\lambda\gg0$ and that on $\{x\,|\,\Phi(t(x)) > T^{\frac{1}{\alpha}}\}$,
$$\frac{i}{2}\partial\bar{\partial}(\Phi(t))^{\alpha}
=\alpha\Phi(t)^{\alpha-2}\biggl(\underbrace{\left((\alpha-1)\varphi(t)^{2}+\varphi'(t)\Phi(t)\right)}_{\sim\,(\alpha-\frac{1}{2})n^{2}t^{2}}\frac{dt}{2}\wedge\eta
+\underbrace{\varphi(t)\Phi(t)}_{>\,0}\omega^{T}\biggr)
 > 0$$
so long as $\alpha>\frac{1}{2}$ and $T \gg 1$, again by virtue of Proposition \ref{potential}. Moreover, notice that $i\p u \wedge\bar{\p}u \geq 0$ for any smooth real-valued function $u$.
Together, these observations imply that $h_{\alpha}$ has the desired properties.
\end{proof}

We can now construct our background metric on $M$ via a construction reminiscent of that in the asymptotically conical
Calabi-Yau case \cite{Conlon, goto, vanC2}.
\begin{prop}[Construction of a background metric]\label{lemma5.7}
Let $\kappa$ be an arbitrary K\"ahler form on $M$ and let $\tilde{\omega}$ denote the K\"ahler form of Cao's steady gradient K\"ahler-Ricci soliton on $C_{0}$. Then there exists a K\"ahler form $\sigma$ on $M$ with the following properties.
\begin{enumerate}[label=\textnormal{(\roman{*})}, ref=(\roman{*})]
\item $[\sigma]=[\kappa]\in H^{2}(M,\,\R)$.
    \item $\mathcal{L}_{JX}\sigma=0$.
\item There exists a compact subset $K\subset M$ containing the exceptional set $E$ of $\pi:M\to C_{0}$ such that on $M\setminus K$,
\begin{equation*}
\pi_{\ast}\sigma=
\begin{cases}
\tilde{\omega} & \textrm{if $n=2$ or $[\kappa]$ is compactly supported},\\
\tilde{\omega}+p^{*}_{S}(\zeta) & \text{if $n\geq3$},
\end{cases}
\end{equation*}
where $\zeta$ is as in Lemma \ref{lemma5.6}.
\end{enumerate}
\end{prop}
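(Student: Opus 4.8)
The plan is to superpose the closed $(1,1)$-form produced by Lemma \ref{lemma5.6} with $\tfrac{i}{2}\partial\bar{\partial}$ of a radial potential built from Cao's K\"ahler potential $\Phi(t)$, the latter chosen so as to swamp any negativity of the former. First I would replace $\kappa$ by its average over the compact real torus $\mathbb{T}$ acting on $M$ (the one extending the flow of $\xi=\tfrac12 JX$ on $C_{0}$): this average is again K\"ahler, lies in $[\kappa]$ since $\mathbb{T}$ is connected, and is $\mathbb{T}$-invariant, so from now on $\mathcal{L}_{JX}\kappa=0$. For $n\geq3$, Lemma \ref{lemma5.6} then produces a closed real $(1,1)$-form $\tilde{\kappa}_{T}$ on $M$ with $[\tilde{\kappa}_{T}]=[\kappa]$, with $\tilde{\kappa}_{T}=\kappa$ on $E\cup\{t\leq T\}$, and with $\tilde{\kappa}_{T}=p_{S}^{*}(\zeta)$ on $\{t\geq 2T\}$; averaging over $\mathbb{T}$ the function $\phi$ used in its proof (legitimate since $i\partial\bar{\partial}\phi$ is already $\mathbb{T}$-invariant, as is $\zeta$ by uniqueness of basic harmonic representatives) and noting that the cut-off there depends only on $t$, one may take $\tilde{\kappa}_{T}$ to be $\mathbb{T}$-invariant. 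When $[\kappa]$ is compactly supported one has $\zeta=0$ and the same applies; when $n=2$ one has $\zeta=0$ as well (since $H^{2}(S,\mathbb{R})=0$), and a $\mathbb{T}$-invariant $\tilde{\kappa}_{T}$ equal to $\kappa$ near $E$ and to $0$ near infinity is obtained by the analogous, classical, ALE-surface argument, run with the transverse Hodge theory on the link together with $H^{1}(M,\mathcal{O}_{M})=0$ from Lemma \ref{321}.

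The heart of the matter is the choice of the radial function $g$. Recall from \eqref{metric} that for any smooth $g\colon\mathbb{R}\to\mathbb{R}$ one has $\tfrac{i}{2}\partial\bar{\partial}g(t)=g'(t)\,\omega^{T}+g''(t)\,\tfrac{dt}{2}\wedge\eta$ on $M\setminus E$, and that $\omega^{T}$ and $\tfrac{dt}{2}\wedge\eta=\tfrac{i}{2}\partial t\wedge\bar{\partial}t$ are semi-positive $(1,1)$-forms with transversal kernels, so $\omega^{T}+\tfrac{dt}{2}\wedge\eta$ is positive definite and $\tfrac{i}{2}\partial\bar{\partial}g(t)\geq0$ as soon as $g',g''\geq0$. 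I would pick a convex, non-decreasing $g\in C^{\infty}(\mathbb{R})$ with $g\equiv0$ on $(-\infty,T_{1}]$ for some fixed $T_{1}<T$; with $\min(g',g'')\geq M_{0}$ on $[T,2T]$, where $M_{0}>0$ is chosen large enough that $\tilde{\kappa}_{T}\geq-\tfrac12 M_{0}(\omega^{T}+\tfrac{dt}{2}\wedge\eta)$ on the compact shell $\{T\leq t\leq 2T\}$ and also $M_{0}>\sup_{S}|\zeta|_{g^{T}}$; and with $g\equiv\Phi$ on $[T_{2},\infty)$, $T_{2}>2T$ being chosen last. Such a $g$ exists because, by Propositions \ref{caosoliton} and \ref{potential}, $\Phi$ is proper with $\Phi'=\varphi>0$, $\Phi''=\varphi'>0$ and $\varphi(t)\sim nt\to+\infty$, so the monotonicity and convexity constraints can be met once $T_{2}$ is taken large enough that $\Phi(T_{2})\leq\varphi(T_{2})(T_{2}-T_{1})$ and $\varphi(2T)>\sup_{S}|\zeta|_{g^{T}}$; moreover $g(t)$ extends to a smooth $\mathbb{T}$-invariant function on all of $M$ since it vanishes near $E$.

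Now set $\sigma:=\tilde{\kappa}_{T}+\tfrac{i}{2}\partial\bar{\partial}g(t)$. As $g(t)$ is a global smooth function, $[\sigma]=[\tilde{\kappa}_{T}]=[\kappa]$, which is (i); both summands are $\mathbb{T}$-invariant, so $\mathcal{L}_{JX}\sigma=0$, which is (ii). Put $K:=\pi^{-1}(\{r^{2}\leq e^{T_{2}}\})$, compact since $\pi$ is proper. On $M\setminus K$ we have $\tilde{\kappa}_{T}=p_{S}^{*}(\zeta)$ and $g=\Phi$, so $\sigma=p_{S}^{*}(\zeta)+\tilde{\omega}$ (which equals $\tilde{\omega}$ when $\zeta=0$), pushing forward to the form asserted in (iii); it is positive for $t$ large because $\tilde{\omega}=\varphi\,\omega^{T}+\varphi'\,\tfrac{dt}{2}\wedge\eta$ with $\varphi\to+\infty$ dominates the bounded purely transverse form $p_{S}^{*}(\zeta)$, and $\varphi'>0$. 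Positivity of $\sigma$ on $K$ is then checked shell by shell: on $E\cup\{t\leq T_{1}\}$, $\sigma=\kappa>0$; on $\{T_{1}\leq t\leq T\}$, $\sigma=\kappa+\tfrac{i}{2}\partial\bar{\partial}g>0$ since the second term is $\geq0$; on $\{T\leq t\leq 2T\}$, $\tfrac{i}{2}\partial\bar{\partial}g\geq M_{0}(\omega^{T}+\tfrac{dt}{2}\wedge\eta)$ gives $\sigma\geq\tfrac12 M_{0}(\omega^{T}+\tfrac{dt}{2}\wedge\eta)>0$; and on $\{2T\leq t\leq T_{2}\}$, $\tilde{\kappa}_{T}=p_{S}^{*}(\zeta)$ whereas the transverse part of $\tfrac{i}{2}\partial\bar{\partial}g$ is $g'\omega^{T}$ with $g'\geq M_{0}>\sup_{S}|\zeta|_{g^{T}}$ and its complementary part is $g''\,\tfrac{dt}{2}\wedge\eta$ with $g''\geq M_{0}>0$, so again $\sigma>0$. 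After enlarging $K$ slightly, if necessary, so that the ``$t$ large'' estimate holds on $M\setminus K$, this exhibits $\sigma$ as a K\"ahler form with properties (i)--(iii).

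The main obstacle is exactly this positivity step: $\tilde{\kappa}_{T}$ is genuinely indefinite on the shell $\{T\leq t\leq 2T\}$ where it interpolates between the K\"ahler form $\kappa$ and the degenerate form $p_{S}^{*}(\zeta)$, and the whole design of $g$ — flat near $E$, equal to Cao's potential near infinity, convex and increasing throughout, with an explicit uniform lower bound on $g'$ and $g''$ in the shell — is engineered to overwhelm that indefiniteness while disturbing neither the cohomology class nor the prescribed asymptotics. The only other point requiring care is supplying the substitute for Lemma \ref{lemma5.6} in dimension $n=2$, where $\partial\bar{\partial}$-exactness on $C_{0}\setminus\{o\}$ fails and must be replaced by the ALE-surface cohomology argument.
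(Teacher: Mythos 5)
Your construction is correct in outline but genuinely different from the paper's, and it contains one concrete slip in the positivity check. The paper does not design a single convex interpolant $g$: it writes $\hat{\sigma}=\tilde{\kappa}_{T}+Ci\partial\bar{\partial}((1-\chi_{\lambda})h_{\alpha})+i\partial\bar{\partial}h_{1}$, where $h_{1}=\tfrac12\Phi(t)$ and $h_{\alpha}=\tfrac12(\Phi(t))^{\alpha}$ with $\alpha\in(\tfrac12,1)$ are made globally plurisubharmonic as in Lemma \ref{1convex}. Positivity on the inner shell $\{T\leq t\leq 2T\}$ comes from taking $C$ large, and positivity on the outer cut-off shell $\{\lambda T\leq t\leq 2\lambda T\}$ comes from the decay $|i\partial\bar{\partial}((1-\chi_{\lambda})h_{\alpha})|_{\tilde{g}}=O(\lambda^{2\alpha-2})=o(1)$, which is exactly why the exponent $\alpha<1$ is needed. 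Your single-potential design replaces that decay estimate by the elementary existence of a convex $g$, which is arguably cleaner but shifts the burden onto verifying that the interpolant exists; your stated sufficient condition $\Phi(T_{2})\leq\varphi(T_{2})(T_{2}-T_{1})$ is not the binding one — the real constraints are $\varphi(T_{2})\gtrsim M_{0}T$ (slope) and the corresponding area/first-moment compatibility, all satisfied for $T_{2}\gg M_{0}T/n$. The paper also handles the compactly supported case (and hence $n=2$, via $H^{2}_{c}(M,\mathbb{R})=H^{2}(M,\mathbb{R})$) by Poincar\'e duality against the exceptional divisors and the $\partial\bar{\partial}$-lemma on $M$ rather than on $C_{0}$; your one-line appeal to an ``ALE-surface argument'' points in the right direction but is the part of your write-up that would need to be fleshed out.

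The slip: on $\{2T\leq t\leq T_{2}\}$ you assert $g''\geq M_{0}>0$, but your hypotheses only impose $g''\geq M_{0}$ on $[T,2T]$ and $g''\geq0$ elsewhere. Worse, $g''\geq M_{0}$ on all of $[2T,T_{2}]$ is actually \emph{incompatible} with your other requirements when $M_{0}$ is large: it would force $g'(T_{2})\geq M_{0}(T_{2}-2T)$, whereas matching $\Phi$ at $T_{2}$ requires $g'(T_{2})=\varphi(T_{2})\sim nT_{2}$. Since $p_{S}^{*}(\zeta)$ and $\omega^{T}$ both annihilate $r\partial_{r}$ and $\xi$, any zero of $g''$ in $(2T,T_{2})$ makes $\sigma$ degenerate on the span of $r\partial_{r}$ and $\xi$ there. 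What you actually need (and can arrange without disturbing the compatibility analysis) is merely $g''\geq c>0$ on $[T,T_{2}]$ for some small fixed $c$, in addition to $g''\geq M_{0}$ on $[T,2T]$. With that repair your argument goes through.
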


\begin{proof}
Fix $\alpha\in\left(\frac{1}{2},\,1\right)$ once and for all. Throughout the proof we assume that $T>1$ is chosen sufficiently large so that
$\tilde{\omega}>0$ as well as $h_{\alpha}= \frac{1}{2}(\Phi(t))^{\alpha}$ and $h_{1}=\frac{1}{2}\Phi(t)$ on $\{t \geq T\}$, and that both of these latter functions
are strictly plurisubharmonic on this region.

We first deal with the case $n=\dim_{\mathbb{C}}C_{0}\geq3$. Let $\zeta$ be the basic $(1,\,1)$-form on the link $S$ of $C_{0}$ associated to the class $[\kappa]$ given by Lemma \ref{lemma5.6}. Since $|p_{S}^{*}(\zeta)|_{\hat{g}}=O(t^{-1})$ (cf.~Proposition \ref{basis}) and since $\hat{g}$ and $\tilde{g}$
are equivalent at infinity as a consequence of Proposition \ref{coro-asy-Cao-met}, by choosing $T$ larger if necessary,
we can assume that $|p_{S}^{*}(\zeta)|_{\tilde{g}}<1$ on $\{t>T\}$. Then on this region, we have that $\tilde{\omega}+p_{S}^{*}(\zeta)>0$. For this choice of $T$, let $\tilde{\kappa}_{T}\in[\kappa]$ be as in Lemma \ref{lemma5.6}.
Then $\tilde{\kappa}_{T}$ is equal to $\kappa$ on $E\cup\{x\in M\,|\,t(x)<T\}$ and $p_{S}^{*}(\zeta)$ on $\{x\in M\,|\,t(x)\,>\,2T\}$. We fix a cut-off function $\chi:M\to\mathbb{R}$ with
$$
\chi(x) =
\begin{cases}
0 &\textrm{if $x\in E\cup\{y\in M\,|\,t(y)<T\}$},\\
1 &\textrm{if $t(x)\,>\,2T$},
\end{cases}
$$ and define $\chi_{\lambda}(x):=\chi(x/\lambda)$ in the obvious way for $\lambda>2$. We then construct a K\"ahler metric $\hat{\sigma}$ in $[\kappa]$
by $$\hat{\sigma}:=\tilde{\kappa}_{T}+Ci\partial\bar{\partial}((1-\chi_{\lambda})h_{\alpha})+i\partial\bar{\partial}h_{1},$$
where $C>0$ and $\lambda>2$ are both to be determined. First observe that
$\hat{\sigma}=\kappa+Ci\partial\bar{\partial}h_{\alpha}+i\partial\bar{\partial}h_{1}\geq \kappa > 0$ on $E \cup \{t< T\}$ because $h_{\alpha}$ and $h_1$ are plurisubharmonic;
$\hat{\sigma} =\tilde{\omega}+p^{*}_{S}(\zeta)+Ci\p\bar{\p} h_\alpha  > 0$ on $\{2T < t < \lambda T\}$ because $|\zeta|_{\tilde{g}}<1$ on this region by choice of $T$; $\hat{\sigma}=\tilde{\omega}+p^{*}_{S}(\zeta)>0$
on $\{t>2\lambda T \}$ since $|p_{S}^{*}(\zeta)|_{\tilde{g}}<1$ on this region, again by choice of $T$; $\hat{\sigma} > 0$ on
$\{T \leq t \leq 2T\}$ by compactness if $C$ is made large enough; and finally,
$\hat{\sigma} > 0$ on $\{\lambda T \leq t \leq 2\lambda T\}$ if $\lambda \gg 1$ depending on all previous choices because
from the equivalence of $\hat{g}$ and $\tilde{g}$, we have that
$$|i\partial\bar{\partial}((1-\chi_{\lambda})h_{\alpha})|_{\tilde{g}}=O\left(\lambda^{2\alpha-2}\right)=o(1)$$
by choice of $\alpha$. In conclusion, $\hat{\sigma}$ is a genuine K\"ahler form on $M$ with $$\hat{\sigma} = \tilde{\omega}+p^{*}_{S}(\zeta)$$
at infinity.

Next assume that $[\kappa]$ is compactly supported and that $n\geq2$. Then the vanishing \eqref{lenny} may no longer hold true and so
we proceed as in \cite{vanC2}. Let $\{E_{i}\}$ be the prime divisors in the exceptional set $E$
of the resolution $\pi:M\to C_{0}$. Since $H_{2n-2}(M)$ is generated by the fundamental classes of the $E_{i}$,
$[\kappa]$ is Poincar\'e dual to $\sum_{i}a_{i}E_{i}$ for some $a_{i}\in\mathbb{R}$.
Thus, there exists a compactly supported closed $(1,\,1)$-form $\beta$ Poincar\'e dual to
$\sum_{i}a_{i}E_{i}$ with $[\beta]=[\kappa]$. Let $\alpha$ be a smooth one-form
with $d\alpha=\kappa-\beta$. Then by Lemma \ref{deldelbar}, there exists a smooth real-valued function $\phi$ on $M$
such that $d\alpha=i\partial\bar{\partial}\phi.$ By choosing $T$ larger if necessary, we can assume that
$\operatorname{supp}(\beta)$, the support of $\beta$, is contained within $\{t< T\}$. Choose a smooth cut-off function $\rho_{T}:M\longrightarrow\R$ satisfying $|\rho_{T}(x)|\leq 1$ for all $x\in M$ and
\begin{displaymath}
\rho_{T}(x) = \left\{ \begin{array}{ll}
0 & \textrm{if $x\in E\cup\{y\in M\,|\,t(y)<T\}$}\\
1 & \textrm{if $x\in\{y\in M\,|\,t(y)>2T\}$}.\\
\end{array} \right.
\end{displaymath}
Then we define $\hat{\sigma}$ in this case
by $$\hat{\sigma}:=\beta+i\partial\bar{\partial}((1-\rho_{T})\cdot\phi)+Ci\partial\bar{\partial}((1-\chi_{\lambda})h_{\alpha})+i\partial\bar{\partial}h_{1},$$
where $C>0$ and $\lambda>2$ are yet to be determined. Observe that
$\hat{\sigma}=\kappa+Ci\partial\bar{\partial}h_{\alpha}+i\partial\bar{\partial}h_{1}\geq \kappa > 0$ on $E \cup \{t< T\}$ because $h_{\alpha}$ and $h_1$ are plurisubharmonic;
$\hat{\sigma} =\tilde{\omega}+Ci\p\bar{\p} h_\alpha  > 0$ on $\{2T < t < \lambda T\}$ because $\operatorname{supp}(\beta)\subset\{t< T\}$ by choice of $T$; $\hat{\sigma}=\tilde{\omega}>0$ on $\{t>2\lambda T \}$ again because
$\operatorname{supp}(\beta)\subset\{t<T\}$; $\hat{\sigma} > 0$ on
$\{T \leq t \leq 2T\}$ by compactness if $C$ is made large enough; and finally,
$\hat{\sigma} > 0$ on $\{\lambda T \leq t \leq 2\lambda T\}$ if $\lambda \gg 1$ depending on all previous choices because
from the equivalence of $\hat{g}$ and $\tilde{g}$, we have that
\begin{equation*}
|i\partial\bar{\partial}((1-\chi_{\lambda})h_{\alpha})|_{\tilde{g}}=O\left(\lambda^{2\alpha-2}\right)=o(1).
\end{equation*}
In conclusion, $\hat{\sigma}$ is a genuine K\"ahler form on $M$ with $$\hat{\sigma} = \tilde{\omega}$$
at infinity.

We now average $\hat{\sigma}$ as given over the action of the real torus $T^k$ on $M$ induced by the flow of the holomorphic vector field $J_{0}r\partial_{r}$ on $C_{0}$ by setting
$$\sigma:=\frac{1}{|T^k|}\int_{T^k}\psi_{g}^{*}\hat{\sigma}\,d\mu(g),$$
where $\psi_{g}:M\to M$ is the automorphism of $M$ induced by $g\in T^k$ and $d\mu$ is the Haar measure on $T^{k}$. Since there is a path in $T^k$ connecting $g$ to the identity,
we have that $\psi_{g}^{*}[\sigma]=[\psi_{g}^{*}\sigma]=[\sigma]$, from which it follows that $[\sigma]=[\hat{\omega}]=[\kappa]$.
Moreover, it is clear that $\mathcal{L}_{JX}\sigma=0$. Finally, since the action of $T^{k}$ preserves $r$ and hence $t$, we have
that $\psi_{g}^{*}\tilde{\omega}=\tilde{\omega}$ and for $n\geq3$, $\psi_{g}^{*}p_{S}^{*}(\zeta)=p_{S}^{*}(\zeta)$ for every $g\in T^{k}$. Thus, $\sigma$ has the desired properties at infinity.

If $n=2$, then we have a long exact sequence of cohomology
\begin{equation}\label{les}
H^1(S,\,\mathbb{R}) \to H^2_c(M,\,\mathbb{R}) \to H^2(M,\,\mathbb{R}) \to H^2(S,\,\mathbb{R}) \to H^3_c(M,\,\mathbb{R}),
\end{equation}
where recall that $S$ is the link of the cone $C_{0}$. In this dimension, $C_{0}=\mathbb{C}^{2}/\Gamma$ for $\Gamma\subset SU(2)$ a finite subgroup acting freely on $\mathbb{C}^{2}\setminus\{0\}$ so that $S=S^{3}$ or some
finite quotient thereof \cite{hamilton}. In particular, $H^1(S,\,\mathbb{R})=H^2(S,\,\mathbb{R})=0$ and so we deduce from \eqref{les} that $H^2_c(M,\,\mathbb{R})=H^2(M,\,\mathbb{R})$.
Hence for $n=2$, every K\"ahler class is compactly supported, a case that has already been dealt with in two dimensions. This completes the proof of the proposition.
\end{proof}

The metric $\sigma$ of Proposition \ref{lemma5.7} enjoys the following asymptotics measured with respect to $\hat{g}$
and its Levi-Civita connection $\widehat{\nabla}$.
\begin{lemma}\label{budwiser}
Let $\sigma$ be the K\"ahler form from Proposition \ref{lemma5.7}. Then
\begin{equation}\label{bubbles}
|\widehat{\nabla}^{i}\mathcal{L}^{(j)}_X\left(\pi_{*}\sigma-\tilde{\omega}\right)|_{\hat{g}}=O\left(t^{-1-\frac{i}{2}-j}\right)\qquad\textrm{for all $i,\,j\geq 0$}.
\end{equation}
\end{lemma}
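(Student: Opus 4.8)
The estimate \eqref{bubbles} only concerns the behaviour as $t\to+\infty$, and there Proposition \ref{lemma5.7}(iii) makes $\pi_{*}\sigma-\tilde{\omega}$ completely explicit, so the plan is to reduce \eqref{bubbles} to a bookkeeping computation of exactly the same flavour as the claims in the proof of Proposition \ref{coro-asy-Cao-met}. First I would invoke Proposition \ref{lemma5.7}(iii): there is a compact set $K\subset M$ containing $E$ such that on $M\setminus K$ one has $\pi_{*}\sigma-\tilde{\omega}=0$ when $n=2$ or $[\kappa]$ is compactly supported, and $\pi_{*}\sigma-\tilde{\omega}=p_{S}^{*}(\zeta)$ when $n\geq3$. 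In the first case the left-hand side of \eqref{bubbles} vanishes identically on $M\setminus K$, so \eqref{bubbles} holds trivially; thus one may assume $n\geq3$, and it remains to prove that $|\widehat{\nabla}^{i}\mathcal{L}_{X}^{(j)}(p_{S}^{*}(\zeta))|_{\hat{g}}=O(t^{-1-\frac{i}{2}-j})$ on $M\setminus K$.

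Next I would observe that $p_{S}^{*}(\zeta)$ is invariant under the flow of $X$: since $\zeta$ is basic on $S$, the form $p_{S}^{*}(\zeta)$ is independent of the radial coordinate $r$, equivalently of $t$, and because $X=4\partial_{t}$ on $C_{0}\setminus\{o\}$ we get $\mathcal{L}_{X}(p_{S}^{*}(\zeta))=4\mathcal{L}_{\partial_{t}}(p_{S}^{*}(\zeta))=0$. Hence for every $j\geq1$ the left-hand side of \eqref{bubbles} again vanishes on $M\setminus K$, and only the case $j=0$ requires work.

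For $j=0$ I would argue exactly as in the proof of Proposition \ref{coro-asy-Cao-met}. Writing $\zeta=\sum_{a<b}\zeta_{ab}\,\theta_{a}\wedge\theta_{b}$ in a basic orthonormal coframe $\{\theta_{a}\}$ of $g^{T}$, the coefficients $\zeta_{ab}$ are bounded basic functions all of whose $g^{T}$-covariant derivatives are bounded (because $S$ is compact). Using $\hat{\theta}_{a}=\sqrt{nt}\,\theta_{a}$ from Proposition \ref{basis}, this gives $p_{S}^{*}(\zeta)=\frac{1}{nt}\sum_{a<b}(p_{S}^{*}\zeta_{ab})\,\hat{\theta}_{a}\wedge\hat{\theta}_{b}$. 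The Leibniz rule, together with the estimates $|\widehat{\nabla}^{k}\hat{\theta}_{a}|_{\hat{g}}=O(t^{-k/2})$ and $|\widehat{\nabla}^{k}(t^{-1})|_{\hat{g}}=O(t^{-1-k/2})$ from Proposition \ref{basis}, and with $|\widehat{\nabla}^{k}(p_{S}^{*}\zeta_{ab})|_{\hat{g}}=O(t^{-k/2})$ (which holds since $\zeta_{ab}$ is pulled back from $S$, so $d(p_{S}^{*}\zeta_{ab})$ lies in the span of the $\theta_{a}$'s, and the $g^{T}$-part of $\hat{g}$ carries the factor $t$), then yields $|\widehat{\nabla}^{i}(p_{S}^{*}(\zeta))|_{\hat{g}}=O(t^{-1-\frac{i}{2}})$: the factor $t^{-1}$ comes from the prefactor $\frac{1}{nt}$ and each of the $i$ covariant derivatives costs at worst $t^{-1/2}$.

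I do not expect any genuine obstacle here; once Proposition \ref{lemma5.7}(iii) is unwound, the statement is a routine computation parallel to the claims in the proof of Proposition \ref{coro-asy-Cao-met}. The only points that need care are correctly reading off $\pi_{*}\sigma-\tilde{\omega}$ at infinity in each of the two cases, using the $X$-invariance of $p_{S}^{*}(\zeta)$ to dispose of all $j\geq1$, and organizing the Leibniz expansion of $\widehat{\nabla}^{i}$ so that the single overall factor $t^{-1}$ is retained rather than absorbed into the $O$.
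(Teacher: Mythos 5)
Your proposal is correct and follows essentially the same route as the paper: the case split via Proposition \ref{lemma5.7}(iii), the observation that $\mathcal{L}_{X}(p_{S}^{*}(\zeta))=0$ disposes of all $j\geq1$, and the decay $|\widehat{\nabla}^{i}(p_{S}^{*}(\zeta))|_{\hat{g}}=O(t^{-1-\frac{i}{2}})$ for a basic two-form obtained from the coframe estimates of Proposition \ref{basis}. The paper merely cites this last decay as provable "by induction as demonstrated in Proposition \ref{basis} for basic one-forms," whereas you spell out the Leibniz bookkeeping; the content is the same.
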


\begin{proof}
If $n=2$ or $[\sigma]$ is compactly supported, then this is clear. Otherwise for $n\geq3$,
we have by construction that $\pi_{*}\sigma-\tilde{\omega}=p_S^{*}(\zeta)$
for $\zeta$ a basic two-form on the link of the cone. As a pullback, it is clear that
$\mathcal{L}_{X}(p_S^{*}(\zeta))=0$, and it is easy to see that
$\mathcal{L}_{X}(i\partial\bar{\partial}t)=0$. Consequently, \eqref{bubbles} holds true for all $i\geq0$ and
$j\geq1$. If $i\geq0$ and $j=0$, then observe from
Proposition \ref{basis} that
$|\widehat{\nabla}^{k}(i\partial\bar{\partial}t)|_{\hat{g}}=O\left(t^{-1-\frac{k}{2}}\right)$ for all $k\geq0$
with respect to the metric $\hat{g}$, and that
$|\widehat{\nabla}^{k}\beta|_{\hat{g}}=O\left(t^{-1-\frac{k}{2}}\right)$ for all $k\geq0$
for any basic two-form $\beta$ on $C_{0}$, a fact that itself may be proved by induction as demonstrated in
Proposition \ref{basis} for basic one-forms. These asymptotics imply that
\begin{equation*}
|\widehat{\nabla}^{i}\left(\pi_{*}\sigma-\tilde{\omega}\right)|_{\hat{g}}=
|\widehat{\nabla}^{i}(p^{*}_{S}(\zeta))|_{\hat{g}}=O\left(t^{-1-\frac{i}{2}}\right)\qquad\textrm{for all $i\geq0$},
\end{equation*}
from which the lemma follows.
\end{proof}

\subsection{Set-up of the complex Monge-Amp\`ere equation}

We next set up the complex Monge-Amp\`ere equation on the crepant resolution $\pi:M\to C_{0}$ that we will solve in order to construct our steady gradient K\"ahler-Ricci solitons.
\begin{prop}\label{equationsetup}
Let $\sigma$ be the K\"ahler form of Proposition \ref{lemma5.7} with Ricci form $\rho_{\sigma}$,
let $X$ be the lift of the holomorphic vector field $2r\partial_{r}=4\partial_{t}$ on $C_{0}$ to $M$ via $\pi$, and let $J$ denote the complex structure on $M$.
Furthermore, let $\psi\in C^{\infty}(M)$ be such that $\sigma_{\psi}:=\sigma+i\partial\bar{\partial}\psi>0$ and
$\mathcal{L}_{JX}\psi=0$, and consider the equations
\begin{equation}\label{e:soliton}
\log\left(\frac{(\sigma+i\partial\bar{\partial}\psi)^{n}}{\sigma^{n}}\right)+\frac{X}{2}\cdot\psi=F,
\end{equation}
where $F\in C^{\infty}(M)$ and $\mathcal{L}_{JX}F=0$, and
\begin{equation}\label{sexysoliton}
\rho_{\sigma_{\psi}}=\frac{1}{2}\mathcal{L}_{X}\sigma_{\psi},
\end{equation}
where $\rho_{\sigma_{\psi}}$ denotes the Ricci form of $\sigma_{\psi}$. Then:
\begin{enumerate}[label=\textnormal{(\roman{*})}, ref=(\roman{*})]
\item If $\psi$ satisfies \eqref{e:soliton} and $i\partial\bar{\partial}F=\rho_{\sigma}-\frac{1}{2}\mathcal{L}_{X}\sigma$,
then $\sigma_{\psi}$ satisfies \eqref{sexysoliton}.
\item Conversely, if $\sigma_{\psi}$ satisfies \eqref{sexysoliton}, then $\psi$ satisfies \eqref{e:soliton}
for a function $F$ with $i\partial\bar{\partial}F=\rho_{\sigma}-\frac{1}{2}\mathcal{L}_{X}\sigma$
that outside a compact subset of $M$ is given by
\begin{equation*}
F = \left\{
\begin{array}{ll}
0 & \text{if $n=2$ or if $[\sigma]$ is compactly supported},\\
-\log\left(\frac{(\tilde{\omega}+p_S^{*}(\zeta))^{n}}{\tilde{\omega}^{n}}\right) & \text{otherwise}.\\
\end{array} \right.
\end{equation*}
\end{enumerate}
\end{prop}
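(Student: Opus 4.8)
The plan is to prove both implications by direct manipulation of Ricci forms, using two standard facts. Since $X$ is real holomorphic, so is $JX$, hence $\mathcal{L}_X$ and $\mathcal{L}_{JX}$ preserve bidegree and commute with $i\partial\bar{\partial}$; in particular $\mathcal{L}_X(i\partial\bar{\partial}\psi)=i\partial\bar{\partial}(X\cdot\psi)$ and $\mathcal{L}_{JX}(i\partial\bar{\partial}\psi)=i\partial\bar{\partial}(\mathcal{L}_{JX}\psi)$. Under $\sigma\mapsto\sigma_{\psi}=\sigma+i\partial\bar{\partial}\psi$ the Ricci form transforms by $\rho_{\sigma_{\psi}}-\rho_{\sigma}=-i\partial\bar{\partial}\log\big(\sigma_{\psi}^{n}/\sigma^{n}\big)$. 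I also record that $[X,JX]=0$ (because $\mathcal{L}_{X}J=0$), so that $\mathcal{L}_{JX}(X\cdot\psi)=\mathcal{L}_{X}(\mathcal{L}_{JX}\psi)$, and that $\mathcal{L}_{X}(p_{S}^{*}\zeta)=0$ since the projection $p_{S}$ is invariant under the flow of $r\partial_{r}$.

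For part (i), I would apply $i\partial\bar{\partial}$ to \eqref{e:soliton}. The left-hand side becomes $(\rho_{\sigma}-\rho_{\sigma_{\psi}})+\tfrac12\mathcal{L}_{X}(i\partial\bar{\partial}\psi)=\rho_{\sigma}-\rho_{\sigma_{\psi}}+\tfrac12(\mathcal{L}_{X}\sigma_{\psi}-\mathcal{L}_{X}\sigma)$, while the right-hand side equals $i\partial\bar{\partial}F=\rho_{\sigma}-\tfrac12\mathcal{L}_{X}\sigma$ by hypothesis. Cancelling $\rho_{\sigma}-\tfrac12\mathcal{L}_{X}\sigma$ from both sides leaves exactly $\rho_{\sigma_{\psi}}=\tfrac12\mathcal{L}_{X}\sigma_{\psi}$, i.e.\ \eqref{sexysoliton}.

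For the first assertion of part (ii), I would simply set $F:=\log\big(\sigma_{\psi}^{n}/\sigma^{n}\big)+\tfrac{X}{2}\cdot\psi$, so that \eqref{e:soliton} holds by construction; running the computation of part (i) backwards and inserting \eqref{sexysoliton} gives $i\partial\bar{\partial}F=\rho_{\sigma}-\rho_{\sigma_{\psi}}+\tfrac12(\mathcal{L}_{X}\sigma_{\psi}-\mathcal{L}_{X}\sigma)=\rho_{\sigma}-\tfrac12\mathcal{L}_{X}\sigma$. That $\mathcal{L}_{JX}F=0$ follows because $\mathcal{L}_{JX}\sigma=0$ (Proposition \ref{lemma5.7}) and $\mathcal{L}_{JX}\psi=0$ force $\mathcal{L}_{JX}\sigma_{\psi}=0$, hence $\mathcal{L}_{JX}\log(\sigma_{\psi}^{n}/\sigma^{n})=0$, while $\mathcal{L}_{JX}(X\cdot\psi)=\mathcal{L}_{X}(\mathcal{L}_{JX}\psi)=0$.

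It remains to identify $F$ off a compact set. Take $K$ as in Proposition \ref{lemma5.7}, enlarged so that $M\setminus K$ is connected, and identify $M\setminus K$ with $C_{0}\setminus\pi(K)$. If $n=2$ or $[\sigma]$ is compactly supported, then $\pi_{*}\sigma=\tilde{\omega}$ there, so $\rho_{\sigma}-\tfrac12\mathcal{L}_{X}\sigma=\rho_{\tilde{\omega}}-\tfrac12\mathcal{L}_{X}\tilde{\omega}=0$ since $\tilde{\omega}$ is Cao's steady gradient K\"ahler-Ricci soliton; thus $i\partial\bar{\partial}F=0$ on $M\setminus K$, and as $F$ is invariant under the flow of $(d\pi)^{-1}(\xi)=\tfrac12JX$, Lemma \ref{pluri}(i) forces $F$ to be constant there. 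If $n\geq3$, then $\pi_{*}\sigma=\tilde{\omega}+p_{S}^{*}\zeta$, and with $G:=-\log\big((\tilde{\omega}+p_{S}^{*}\zeta)^{n}/\tilde{\omega}^{n}\big)$ one computes $i\partial\bar{\partial}G=\rho_{\sigma}-\rho_{\tilde{\omega}}=\rho_{\sigma}-\tfrac12\mathcal{L}_{X}\tilde{\omega}=\rho_{\sigma}-\tfrac12\mathcal{L}_{X}\sigma$, using the soliton equation for $\tilde{\omega}$ and $\mathcal{L}_{X}(p_{S}^{*}\zeta)=0$; hence $F-G$ is pluriharmonic and $(d\pi)^{-1}(\xi)$-invariant on $M\setminus K$, so constant by Lemma \ref{pluri}(i). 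In both cases $F$ agrees with the stated model expression up to an additive constant on $M\setminus K$, which is normalised away once $\psi$ is restricted to the decaying functions used in the continuity argument (for which both $\log(\sigma_{\psi}^{n}/\sigma^{n})+\tfrac{X}{2}\cdot\psi$ and the model expression tend to $0$ at infinity). The one point that needs care is precisely this last step — pinning down the additive constant and checking that $i\partial\bar{\partial}$ of each model expression really equals $\rho_{\sigma}-\tfrac12\mathcal{L}_{X}\sigma$; everything else is a mechanical application of the Ricci-form transformation law together with Lemma \ref{pluri}.
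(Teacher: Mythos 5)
Your argument is correct, and part (i) coincides with the paper's. Part (ii), however, is organised differently, and the difference is worth recording. The paper first manufactures a global, $\psi$-independent potential for $\rho_{\sigma}-\tfrac12\mathcal{L}_{X}\sigma$: since $\mathcal{L}_{JX}\sigma=0$, the field $JX$ is Killing, so $X$ admits a Hamiltonian $\theta_{X}$ with $\mathcal{L}_{X}\sigma=i\partial\bar{\partial}\theta_{X}$ (this uses $H^{1}(M,\mathbb{R})=0$ from Lemma \ref{finite}), and triviality of $K_{M}$ gives $\rho_{\sigma}=i\partial\bar{\partial}\tilde{v}$; after averaging, $F:=\tilde{v}-\theta_{X}$ is $JX$-invariant with $i\partial\bar{\partial}F=\rho_{\sigma}-\tfrac12\mathcal{L}_{X}\sigma$, and then Lemma \ref{pluri}(ii) is invoked to identify $F$ with $\log(\sigma_{\psi}^{n}/\sigma^{n})+\tfrac{X}{2}\cdot\psi$ up to a constant. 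You instead \emph{define} $F$ as that last expression, so \eqref{e:soliton} is tautological and the identity $i\partial\bar{\partial}F=\rho_{\sigma}-\tfrac12\mathcal{L}_{X}\sigma$ drops out of \eqref{sexysoliton}; this bypasses Lemma \ref{finite}, the construction of $\theta_{X}$ and $\tilde{v}$, and Lemma \ref{pluri}(ii) altogether, and is the leaner route for proving the proposition as stated. What it does not produce is the $\psi$-independent potential $\tilde{v}-\theta_{X}$, which is what the paper actually feeds into part (i) when it sets up the existence problem with the model data $F$; so the paper's detour is doing double duty. Finally, the additive-constant issue you flag at the end is real but is equally present in the paper: Lemma \ref{pluri} only gives constancy of $F-G$ (respectively of $\log(\sigma_{\psi}^{n}/\sigma^{n})+\tfrac{X}{2}\cdot\psi-F$), and the paper absorbs the constant by renormalising $F$ and by killing the analogous constant via decay in the later applications (see the argument surrounding \eqref{soccers}), exactly as you propose.
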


\begin{proof}
\begin{enumerate}[label=\textnormal{(\roman{*})}, ref=(\roman{*})]
\item If $\psi$ satisfies \eqref{e:soliton} with $F$ as prescribed, then by taking $i\partial\bar{\partial}$ of this equation, we see that
$\sigma_{\psi}$ satisfies \eqref{sexysoliton}.

\item As for the converse, suppose that $\sigma_{\psi}$ satisfies \eqref{sexysoliton}. Then
\begin{equation*}
\begin{split}
0&=\rho_{\sigma_{\psi}}-\frac{1}{2}\mathcal{L}_{X}\sigma_{\psi}\\
&=\rho_{\sigma_{\psi}}-\rho_{\sigma}+\rho_{\sigma}-\frac{1}{2}\mathcal{L}_{X}\sigma_{\psi}\\
&=-i\partial\bar{\p}\log\left(\frac{(\sigma+i\partial\bar{\partial}\psi)^{n}}{\sigma^{n}}\right)+
\rho_{\sigma}-\frac{1}{2}\mathcal{L}_{X}\sigma_{\psi}\\
&=-i\partial\bar{\p}\log\left(\frac{(\sigma+i\partial\bar{\partial}\psi)^{n}}{\sigma^{n}}\right)-\frac{1}{2}i\partial\bar{\partial}\left(X\cdot\psi\right)
+(\rho_{\sigma}-\frac{1}{2}\mathcal{L}_{X}\sigma),\\
\end{split}
\end{equation*}
so that
\begin{equation*}
\begin{split}
i\partial\bar{\p}\left(\log\left(\frac{(\sigma+i\partial\bar{\partial}\psi)^{n}}{\sigma^{n}}\right)+\frac{X}{2}\cdot\psi\right)
&=\rho_{\sigma}-\frac{1}{2}\mathcal{L}_{X}\sigma.\\
\end{split}
\end{equation*}
Now, since $\mathcal{L}_{JX}\sigma=0$, $JX$ is Killing, and so by \cite[Lemma A.6]{con-der}, the $g_{\sigma}$-dual one-form $\eta_{X}:=g_{\sigma}(X,\,\cdot)$ of $X$ is closed, $g_{\sigma}$ denoting the K\"ahler metric associated to $\sigma$. The fact that $H^{1}(M,\,\mathbb{R})=0$ by Lemma \ref{finite} then implies that there is
a smooth real-valued function $\theta_{X}$ on $M$ such that $\eta_{X}=d\theta_{X}$, or equivalently, such that $X=\nabla^{g_{\sigma}}\theta_{X}$, where $\nabla^{g_{\sigma}}$
denotes the Levi-Civita connection of $g_{\sigma}$. It follows that $\sigma\lrcorner X=d\theta_{X}\circ J$, which allows us to write $$\mathcal{L}_{X}\sigma=d(\sigma\lrcorner X)=i\partial\bar{\partial}\theta_{X}.$$
Since $[X,\,JX]=0$ and $\mathcal{L}_{JX}\sigma=0$, we know that $\mathcal{L}_{JX}\mathcal{L}_{X}\sigma=\mathcal{L}_{X}\mathcal{L}_{JX}\sigma
+\mathcal{L}_{[JX,\,X]}\sigma=0$. Hence, by averaging over the real torus action on $M$ induced by that on $C_{0}$,
we may assume that $\mathcal{L}_{JX}\theta_{X}=0.$
Furthermore, as $K_{M}$ is trivial, we may write $$\rho_{\sigma}=i\partial\bar{\partial}v$$ for some smooth real-valued function $v\in C^{\infty}(M)$.
Averaging this equation over the real torus action on $M$, we may then also assume that
$$\rho_{\sigma}=i\partial\bar{\partial}\tilde{v}$$ for some $\tilde{v}\in C^{\infty}(M)$ satisfying $\mathcal{L}_{JX}\tilde{v}=0$. Here we have used the fact that $JX$ is real holomorphic and
$\mathcal{L}_{JX}\sigma=0$ so that $\mathcal{L}_{JX}\rho_{\sigma}=0$. In summary, we can now write
\begin{equation}\label{sexier}
\begin{split}
\rho_{\sigma}-\frac{1}{2}\mathcal{L}_{X}\sigma&=i\partial\bar{\partial}\tilde{v}-i\partial\bar{\partial}\theta_{X}\\
&=i\partial\bar{\partial}F,
\end{split}
\end{equation}
where $F:=\tilde{v}-\theta_{X}\in C^{\infty}(M)$. In particular, notice that $\mathcal{L}_{JX}F=0$.

With $\tilde{\omega}$ as usual denoting Cao's steady gradient K\"ahler-Ricci soliton on $C_{0}$ and $\rho_{\tilde{\omega}}$ its Ricci form, next observe that at infinity we have that
\begin{equation}\label{hoola}
\begin{split}
\rho_{\sigma}-\frac{1}{2}\mathcal{L}_{X}\sigma&=\rho_{\sigma}-\rho_{\tilde{\omega}}+\rho_{\tilde{\omega}}
-\frac{1}{2}\mathcal{L}_{X}(\tilde{\omega}+p_S^{*}(\zeta))\\
&=-i\partial\bar{\p}\log\left(\frac{\sigma^{n}}{\tilde{\omega}^{n}}\right)+\underbrace{\rho_{\tilde{\omega}}
-\frac{1}{2}\mathcal{L}_{X}\tilde{\omega}}_{=\,0}-
\underbrace{\frac{1}{2}\mathcal{L}_{X}(p_S^{*}(\zeta))}_{=\,0}\\
&=i\partial\bar{\p}G(\tilde{\omega}),\\
\end{split}
\end{equation}
where
\begin{equation*}
G=G(\tilde{\omega}) =-\log\left(\frac{\sigma^{n}}{\tilde{\omega}^{n}}\right)= \left\{
\begin{array}{ll}
0 & \text{if $n=2$ or if $[\sigma]$ is compactly supported},\\
-\log\left(\frac{(\tilde{\omega}+p_S^{*}(\zeta))^{n}}{\tilde{\omega}^{n}}\right) & \text{otherwise}.\\
\end{array} \right.
\end{equation*}
Notice that $\mathcal{L}_{JX}G=0$. On subtracting \eqref{hoola} from \eqref{sexier}, we see that at infinity
\begin{equation*}
i\partial\bar{\partial}(F-G)=0.
\end{equation*}
Since $\mathcal{L}_{JX}(F-G)=0$, it then follows from Lemma \ref{pluri}(i)
that $F-G=C$ on the complement of a compact subset of $M$ for some constant $C$.
Therefore, by subtracting a constant from $F$ in \eqref{sexier} if necessary, we may assume that
$$i\partial\bar{\p}\left(\log\left(\frac{(\sigma+i\partial\bar{\partial}\psi)^{n}}{\sigma^{n}}\right)+\frac{X}{2}\cdot\psi-F\right)=0,$$
where
\begin{equation*}
F = \left\{
\begin{array}{ll}
0 & \text{if $n=2$ or if $[\sigma]$ is compactly supported},\\
-\log\left(\frac{(\tilde{\omega}+p_S^{*}(\zeta))^{n}}{\tilde{\omega}^{n}}\right)=o(1) & \text{otherwise}.\\
\end{array} \right.,
\end{equation*}
the asymptotics in the latter case a result of Proposition \ref{basis} and
the fact that $\tilde{g}$ and $\hat{g}$ are equivalent at infinity by Proposition \ref{coro-asy-Cao-met}.
Lemma \ref{pluri}(ii) now asserts that $$\log\left(\frac{(\sigma+i\partial\bar{\partial}\psi)^{n}}{\sigma^{n}}\right)+\frac{X}{2}\cdot\psi=F.$$
Recalling \eqref{sexier} and the fact that $\mathcal{L}_{JX}F=0$, this completes the proof of part (ii) of the proposition.
\end{enumerate}
\end{proof}

\section{Poincar\'e inequality for steady gradient Ricci solitons}\label{poincare}

In this section, we establish a lower bound on the spectrum of the drift
Laplacian of a non-trivial steady gradient Ricci soliton; cf.~Section \ref{metric-measure}.
This allows for a Poincar\'e inequality that we will use in Proposition \ref{prop-a-priori-ene-est}
to establish an a priori weighted $L^{2}$-estimate along the continuity path of solutions to \eqref{e:soliton},
the first step in the derivation of an a priori $C^{0}$-estimate.

We begin with a preliminary result that gives a lower bound on the
spectrum of the drift Laplacian on a Riemannian manifold as soon as a positive eigenfunction exists.
We in fact provide sufficient conditions ensuring the existence of a Hardy inequality. The precise statement is as follows.

\begin{lemma}\label{bd-spec-pos}
Let $(M,\,g,\,e^{\rho}d\mu_{g})$ be a metric measure space endowed with the volume form $d\mu_{g}$ of $g$
and a $C^1$ potential function $\rho$ on $M$ such that
$\int_{M}e^{\rho}d\mu_g=+\infty$. Assume that there exists a positive $C^2$-function $\phi_0$ on $M$
such that $\Delta_{\rho}\phi_0\leq-\lambda_0 \phi_0$ outside a compact subset $K\subset M$ for $\lambda_0$ a positive constant.
Then there exists a positive constant $\lambda_1\leq\lambda_0$ such that the following global Poincar\'e inequality holds true:
\begin{eqnarray*}
\lambda_1\int_M\phi^2\,e^{\rho}d\mu_g\leq\int_M\arrowvert\nabla^g\phi\arrowvert^2_g\,e^{\rho}d\mu_g
\quad\textrm{for any smooth compactly supported function $\phi$ on $M$.}
\end{eqnarray*}
Equivalently, $\inf\sigma(-\Delta_{\rho})\geq\lambda_1>0$, where $\sigma(-\Delta_{\rho})$ denotes the $L^2(e^{\rho}d\mu_g)$-spectrum of the operator $-\Delta_{\rho}$.
\end{lemma}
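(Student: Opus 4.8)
{plan for the proof of Lemma \ref{bd-spec-pos}}

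The plan is to derive the Poincar\'e inequality from a Hardy-type argument using $\phi_0$ as a barrier, treating the compact exceptional set $K$ separately. First I would establish the inequality on $M\setminus K'$ for a slightly enlarged compact set $K'\Supset K$ by writing an arbitrary test function $\phi$ with $\supp\phi\subset M\setminus K'$ in the form $\phi=\phi_0\,w$, so that $w=\phi/\phi_0$ is again smooth and compactly supported away from $K$. The standard computation gives
\begin{equation*}
\int_M|\nabla^g\phi|_g^2\,e^{\rho}d\mu_g=\int_M\phi_0^2|\nabla^g w|_g^2\,e^{\rho}d\mu_g-\int_M\frac{\Delta_{\rho}\phi_0}{\phi_0}\,\phi^2\,e^{\rho}d\mu_g,
\end{equation*}
which one checks by expanding $|\nabla^g\phi|_g^2=\phi_0^2|\nabla^g w|_g^2+2\phi_0 w\,g(\nabla^g\phi_0,\nabla^g w)+w^2|\nabla^g\phi_0|_g^2$ and integrating the cross term by parts against the weighted measure $e^{\rho}d\mu_g$ (legitimate since $w$ has compact support, so no boundary term appears and the drift Laplacian is symmetric as recorded in Section \ref{metric-measure}). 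Since $\Delta_{\rho}\phi_0\leq-\lambda_0\phi_0$ on $M\setminus K$ and $\phi_0>0$, the second term is bounded below by $\lambda_0\int_M\phi^2\,e^{\rho}d\mu_g$, and the first term is nonnegative; this yields the Poincar\'e inequality with constant $\lambda_0$ for all test functions supported in $M\setminus K$.

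The remaining issue is to upgrade this to all compactly supported $\phi$ on $M$, i.e.\ to handle the compact region $K$. Here I would use a cutoff argument: pick $\chi\in C_c^\infty(M)$ with $\chi\equiv1$ on $K$ and write $\phi=\chi\phi+(1-\chi)\phi$, the second piece being supported away from $K$ so the above applies to it. For the localized piece one needs a Poincar\'e-type control on the compact set, which is where the hypothesis $\int_M e^{\rho}d\mu_g=+\infty$ enters: it guarantees that there is no $L^2(e^{\rho}d\mu_g)$ kernel coming from constants, so the bottom of the spectrum of $-\Delta_{\rho}$ on $M$ is strictly positive once one knows it is bounded below at infinity. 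Concretely, I would argue by contradiction — if $\inf\sigma(-\Delta_\rho)=0$ then there is a sequence of compactly supported $\phi_k$ with $\int\phi_k^2 e^{\rho}d\mu_g=1$ and $\int|\nabla^g\phi_k|_g^2 e^{\rho}d\mu_g\to0$; the decomposition above forces the mass of $\phi_k$ to concentrate in $K$ (otherwise the $M\setminus K$ estimate gives a definite lower bound), and then local elliptic theory on a neighbourhood of $K$ produces a nonzero limit $\phi_\infty$ with $\nabla^g\phi_\infty=0$, hence locally constant, which combined with the infinite total weighted volume and the fact that $\phi_\infty\in L^2(e^{\rho}d\mu_g)$ forces $\phi_\infty=0$, a contradiction. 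This gives some $\lambda_1>0$, and by taking $\lambda_1\leq\lambda_0$ (shrinking if necessary) the stated inequality holds globally.

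The main obstacle I expect is the passage from the ``at infinity'' estimate to the global one, i.e.\ correctly exploiting $\int_M e^{\rho}d\mu_g=+\infty$ to rule out a low-lying eigenvalue supported near $K$ — the interior compactness/concentration argument must be set up carefully so that the cross terms created by the cutoff $\chi$ (which involve $|\nabla^g\chi|_g$ and are supported in a fixed compact annulus) are absorbed, using that on that annulus $\phi_k\to\phi_\infty$ strongly in $L^2$ by Rellich. The first, Hardy-type step is essentially a routine integration by parts. The equivalence of the Poincar\'e inequality with $\inf\sigma(-\Delta_\rho)\geq\lambda_1$ is then immediate from the variational characterization of the bottom of the spectrum of the Friedrichs extension of $-\Delta_\rho$ on $C_c^\infty(M)$.
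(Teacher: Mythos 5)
Your proof is correct, and its first half is essentially the paper's: the substitution $\phi=\phi_0 w$ and the integration by parts against $e^{\rho}d\mu_g$ is exactly the computation in the paper (there organised as $\int_M|\nabla^g(\phi_0\psi)|^2_g\,e^{\rho}d\mu_g\geq-\int_M(\Delta_{\rho}\phi_0)\phi_0\psi^2\,e^{\rho}d\mu_g\geq\lambda_0\int_M(\phi_0\psi)^2e^{\rho}d\mu_g$ for $\psi$ supported in $M\setminus K$). Where you genuinely diverge is the passage from the exterior estimate to the global one. The paper splits the spectrum: it deduces $\inf\sigma_{\ess}(-\Delta_{\rho})\geq\lambda_0$ from the exterior estimate via an Agmon-type localization argument, and then excludes a low-lying discrete eigenvalue by noting that $\inf\sigma_{\dis}=0$ would produce a nonzero $L^2(e^{\rho}d\mu_g)$ function in the kernel of $\Delta_{\rho}$, contradicting a weighted Yau Liouville theorem together with $\int_Me^{\rho}d\mu_g=+\infty$. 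You instead run a direct concentration-compactness argument on a normalized minimizing sequence. Both routes rest on the same three ingredients (exterior barrier, interior compactness, infinite weighted volume); yours is more self-contained in that it avoids citing Agmon and the Liouville theorem, at the price of tracking the $|\nabla^g\chi|^2\phi_k^2$ cross-terms on the fixed annulus and setting up the endgame as a genuine dichotomy: the concentration step shows the Rellich limit $\phi_\infty$ is nonzero on a compact set, while the facts that $\phi_\infty$ is (by connectedness of $M$) a global constant lying in $L^2(e^{\rho}d\mu_g)$ and that the weighted volume is infinite force $\phi_\infty=0$ --- it is the clash of these two conclusions, not a single chain of implications, that gives the contradiction, so that sentence of your sketch should be rephrased accordingly when written out. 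No elliptic theory is actually needed for the limit: weak $H^1_{\operatorname{loc}}$ plus strong $L^2_{\operatorname{loc}}$ convergence already gives $\nabla^g\phi_\infty=0$ distributionally.
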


\noindent The statement and proof of this lemma are straightforward adaptations of \cite{carron}.

\begin{proof}[Proof of Lemma \ref{bd-spec-pos}]
We first prove that
 \begin{equation}
 \inf\sigma_{\ess}(\Delta_{\rho})\geq\lambda_0>0,\label{low-bd-ess-spec}
 \end{equation}
where $\sigma_{\ess}(-\Delta_{\rho})$ denotes the essential $L^2(e^{\rho}d\mu_g)$-spectrum of the operator $-\Delta_{\rho}$.

To this end, let $\psi$ be a smooth function on $M$ with compact support contained in $M\setminus K$ and let $\phi_0$ be as in the statement of the lemma.
Then writing $e^{\rho}d\mu_g=:d\mu_{\rho}$, we have that
 \begin{equation*}
 \begin{split}
\int_M\arrowvert\nabla^g(\phi_0\psi)\arrowvert_g^2\,d\mu_{\rho}&=-\int_M \Delta_{\rho}(\phi_0\psi)(\phi_0\psi)\,d\mu_{\rho}\\
&=-\int_M(\Delta_{\rho}\phi_0)(\phi_0\psi^2)\,d\mu_{\rho}-2\int_M g(\nabla^g\phi_{0},\nabla^g\psi)\phi_0\psi\, d\mu_{\rho}-\int_M(\Delta_{\rho}\psi)(\psi\phi_0^2)\,d\mu_{\rho}\\
&=-\int_M(\Delta_{\rho}\phi_0)(\phi_0\psi^2)\,d\mu_{\rho}-\frac{1}{2}\int_M g(\nabla^g(\psi^2),\nabla^g(\phi_0^2))\,d\mu_{\rho}\\
&\qquad+\int_M g(\nabla^g\psi,\nabla^g(\psi\phi_0^2))\, d\mu_{\rho}\\
&=-\int_M(\Delta_{\rho}\phi_0)(\phi_0\psi^2)\,d\mu_{\rho}+\int_M\arrowvert\nabla^g\psi\arrowvert_g^2\phi_0^2\,d\mu_{\rho}\\
&\geq-\int_M(\Delta_{\rho}\phi_0)(\phi_0\psi^2)\,d\mu_{\rho}\\
&\geq\lambda_0\int_M(\phi_0\psi)^2\,d\mu_{\rho}.
\end{split}
\end{equation*}
Since $\phi_0$ is positive, the previous estimate implies that
\begin{equation*}
\inf\left\{\left.\frac{\|\nabla^g\phi\|^2_{L^2(d\mu_{\rho})}}{\|\phi\|^2_{L^2(d\mu_{\rho})}}\,\right|\,
\phi\in L^2(d\mu_{\rho})\setminus\{0\}\quad\textrm{and}\quad\supp(\phi)\subset M\setminus K\right\}\geq\lambda_0.
\end{equation*}
A straightforward adaptation of \cite[Chapter 2]{Agm-Boo} then yields
the expected lower bound \eqref{low-bd-ess-spec} on $\sigma_{\ess}(\Delta_{\rho})$.

Now, the operator $-\Delta_{\rho}$, being non-negative,
has spectrum $\sigma(-\Delta_{\rho})\subset[0,+\infty)$.
As a result of \eqref{low-bd-ess-spec}, proving $\inf\sigma(-\Delta_{\rho})>0$
is therefore equivalent to showing that $\inf\sigma_{\dis}(-\Delta_{\rho})>0$, where
$\sigma_{\dis}(-\Delta_{\rho})$ denotes the discrete $L^2(e^{\rho}d\mu_g)$-spectrum of $-\Delta_{\rho}$.
Suppose, for sake of a contradiction, that $\inf\sigma_{\dis}(-\Delta_{\rho})=0$.
Then there exists a non-zero function $\phi\in L^2(d\mu_{\rho})$ such that $\Delta_{\rho}\phi=0$.
By a straightforward adaptation of Yau's Liouville theorem \cite[Lemma 7.1]{Li-Pet-Boo}, one
 arrives at a contradiction with the fact that $\int_M d\mu_{\rho}=+\infty$.
\end{proof}

From this, we obtain a lower bound on the spectrum of the drift Laplacian of a Riemannian metric equal to a steady gradient Ricci soliton at infinity.
\begin{prop}\label{Mun-Wan-Har-Sgs}
Let $(M,\,\tilde{g})$ be a one-ended complete Riemannian manifold with infinite volume and
with $\lim_{x\rightarrow+\infty}\RR_{\tilde{g}}=0$
endowed with a smooth proper positive function $\tilde{f}:M\to\mathbb{R}$
such that $\lim_{x\rightarrow+\infty}|\nabla^{\tilde{g}}\tilde{f}|^2_{\tilde{g}}=c(\tilde{g})>0$ for some positive constant $c(\tilde{g})$.
Assume that there exists a compact subset $K\subset M$ and a vector field $X$
on $M\setminus K$ such that $\Ric(\tilde{g})=\frac{1}{2}\mathcal{L}_{\nabla^{\tilde{g}}\tilde{f}}\tilde{g}$ on $M\setminus K$,
i.e.,\linebreak $(M\setminus K,\,\tilde{g},\,X=\nabla^{\tilde{g}}\tilde{f})$ is a steady gradient Ricci soliton. Then
\begin{equation*}
\inf\sigma\left(-\Delta_{\tilde{f}-\alpha\log \tilde{f}}\right)>0
\end{equation*}
for any $\alpha\in\R$.
\end{prop}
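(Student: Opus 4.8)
The plan is to apply Lemma \ref{bd-spec-pos} with the metric measure space $(M,\,\tilde g,\,e^{\tilde f - \alpha\log\tilde f}\,d\mu_{\tilde g})$, i.e.\ with potential $\rho := \tilde f - \alpha\log\tilde f$. Two hypotheses of that lemma must be verified: first, that the total weighted volume is infinite, and second, that there is a positive $C^2$-function $\phi_0$ on $M$ with $\Delta_\rho\phi_0 \le -\lambda_0\phi_0$ outside a compact set for some $\lambda_0>0$. Since Lemma \ref{bd-spec-pos} then yields exactly $\inf\sigma(-\Delta_\rho)\ge\lambda_1>0$, the proposition follows once these two points are established.

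For the first point, I would use the soliton identity at infinity. Because $(M\setminus K,\,\tilde g,\,\nabla^{\tilde g}\tilde f)$ is a steady gradient Ricci soliton, Lemma \ref{solitonid} gives $|\nabla^{\tilde g}\tilde f|^2_{\tilde g} + \RR_{\tilde g} = c(\tilde g)$ on $M\setminus K$, so the growth of $\tilde f$ is essentially linear in the distance $d$ from a fixed basepoint: $\tilde f \sim \sqrt{c(\tilde g)}\,d$ along the end (here properness and positivity of $\tilde f$ together with $\RR_{\tilde g}\to 0$ are used). Integrating $e^{\tilde f - \alpha\log\tilde f}$ over a large geodesic annulus and using $e^{\tilde f}\gtrsim e^{c_1 d}$ with a volume lower bound (the soliton has at least linear volume growth, which suffices) shows $\int_M e^\rho\,d\mu_{\tilde g} = +\infty$; in fact the weighted volume of each end is infinite, which is what the hypothesis ``infinite volume'' plus one-endedness is there to guarantee after the reweighting.

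For the second point — which I expect to be the crux — the natural candidate is $\phi_0 := e^{-\tilde f/2}\tilde f^{\beta}$ for a suitable exponent $\beta$ (one expects $\beta$ of order $\alpha$ plus a correction). The point is that on a steady gradient Ricci soliton one has the well-known fact that $e^{-\tilde f/2}$ is, up to lower-order terms, an eigenfunction of the drift Laplacian $\Delta_{\tilde f}$ with eigenvalue $-c(\tilde g)/4$: indeed $\Delta_{\tilde f}(e^{-\tilde f/2}) = e^{-\tilde f/2}\bigl(-\tfrac12\Delta_{\tilde f}\tilde f - \tfrac14|\nabla^{\tilde g}\tilde f|^2 + \tfrac14|\nabla^{\tilde g}\tilde f|^2\bigr)$-type computation, and using $\Delta_{\tilde g}\tilde f = \RR_{\tilde g}/2 \to 0$ (the trace soliton identity, which holds on $M\setminus K$) together with $|\nabla^{\tilde g}\tilde f|^2 \to c(\tilde g)$ one gets $\Delta_{\tilde f}(e^{-\tilde f/2}) = \bigl(-\tfrac{c(\tilde g)}{4} + o(1)\bigr)e^{-\tilde f/2}$ near infinity. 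Now I must pass from the potential $\tilde f$ to $\rho = \tilde f - \alpha\log\tilde f$; writing $\Delta_\rho = \Delta_{\tilde g} + \langle\nabla^{\tilde g}\rho,\nabla^{\tilde g}\cdot\rangle = \Delta_{\tilde f} - \alpha\,\tilde f^{-1}\langle\nabla^{\tilde g}\tilde f,\nabla^{\tilde g}\cdot\rangle$, the extra term is $O(\tilde f^{-1})$ when applied to $\phi_0$ (since $\nabla^{\tilde g}\phi_0$ is controlled using $|\nabla^{\tilde g}\tilde f|$ bounded) and hence absorbed into the $o(1)$. The polynomial factor $\tilde f^{\beta}$ contributes only terms of order $\tilde f^{-1}$ and $\tilde f^{-2}$ after computing $\Delta_\rho(e^{-\tilde f/2}\tilde f^\beta)$, again absorbed; choosing $\beta$ appropriately (or simply taking $\beta=0$ and letting the cross term be $o(1)$) one arrives at
\begin{equation*}
\Delta_\rho\phi_0 = \Bigl(-\tfrac{c(\tilde g)}{4} + o(1)\Bigr)\phi_0 \le -\tfrac{c(\tilde g)}{8}\,\phi_0 \qquad\text{outside a large compact set.}
\end{equation*}
Thus the hypothesis of Lemma \ref{bd-spec-pos} holds with $\lambda_0 = c(\tilde g)/8 > 0$, and the conclusion $\inf\sigma(-\Delta_{\tilde f - \alpha\log\tilde f}) > 0$ follows.

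The main obstacle is the bookkeeping in the last step: one needs uniform control of $\Delta_{\tilde g}\tilde f$, $|\nabla^{\tilde g}\tilde f|^2$, and $\tilde f^{-1}$ and their interplay near infinity, all of which follow from the soliton identities of Lemma \ref{solitonid} on $M\setminus K$ plus the hypotheses $\RR_{\tilde g}\to 0$ and $|\nabla^{\tilde g}\tilde f|^2\to c(\tilde g)$; the one subtlety worth care is that these identities are only assumed on $M\setminus K$, so $\phi_0$ need only be modified on $K$ to be globally positive and $C^2$, which does not affect the estimate outside a (possibly larger) compact set. No global geometry beyond one-endedness and infinite volume is needed, because Lemma \ref{bd-spec-pos} already packages the Liouville-type argument that upgrades the essential-spectrum bound to a bound on the whole spectrum.
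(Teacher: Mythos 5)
Your proposal is correct and follows essentially the same route as the paper: both apply Lemma \ref{bd-spec-pos} to the potential $\rho=\tilde f-\alpha\log\tilde f$, derive the first-order soliton identity $\RR_{\tilde g}+|\nabla^{\tilde g}\tilde f|^2_{\tilde g}=c(\tilde g)$ on $M\setminus K$, and use the barrier $\phi_0=e^{-\beta\tilde f}$ (the paper takes $\beta=1/2$, matching your $\beta=0$ polynomial correction) to get $\Delta_\rho\phi_0\leq-\tfrac{c(\tilde g)}{8}\phi_0$ at infinity. The only differences are cosmetic: you explicitly check the infinite weighted volume hypothesis (which in fact follows immediately since $e^{\tilde f}\tilde f^{-\alpha}\geq 1$ outside a compact set), and your factor in the trace identity should be $\Delta_{\tilde g}\tilde f=\RR_{\tilde g}$ in the Riemannian normalisation, though this is harmless since the term tends to zero either way.
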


Recall that the constant $c(\tilde{g})$ can be interpreted as the ``charge'' at infinity of the (incomplete) steady gradient Ricci
soliton $(M\setminus K,\,\tilde{g},\,X)$; cf.~Lemma \ref{solitonid}.

\begin{remark}
The assumptions made in Proposition \ref{Mun-Wan-Har-Sgs} on the scalar curvature $\RR_{\tilde{g}}$ and $|\nabla^{\tilde{g}}\tilde{f}|^2_{\tilde{g}}$ are not optimal.
One would arrive at the same conclusion by assuming that
$\liminf_{x\rightarrow+\infty}\RR_{\tilde{g}}\geq 0$ and $\lim_{x\rightarrow+\infty}\left(\RR_{\tilde{g}}+|\nabla^{\tilde{g}}\tilde{f}|^2_{\tilde{g}}\right)
=c(\tilde{g})>0$.
\end{remark}

\begin{proof}[Proof of Proposition \ref{Mun-Wan-Har-Sgs}]
Applying Lemma \ref{bd-spec-pos} to $\rho:=\tilde{f}-\alpha\log \tilde{f}$, it suffices
to find a positive smooth function $\phi_0$ with the property that
$\Delta_{\tilde{f}-\alpha \log \tilde{f}}\phi_0\leq -\lambda_0\phi_0$ on $M\setminus K$ for some $\lambda_0>0$ and some compact subset $K\subset M$.
To this end, first observe from the trace version of the Bianchi identity that
\begin{equation*}
2\Div_{\tilde{g}}\Ric(\tilde{g})=\nabla^{\tilde{g}}\RR_{\tilde{g}}.
\end{equation*}
On the other hand, since the steady Ricci soliton equation holds on $M\setminus K$, we have that
\begin{equation*}
\begin{split}
2\Div_{\tilde{g}}\Ric(\tilde{g})&=\Div_{\tilde{g}}\mathcal{L}_{\nabla^{\tilde{g}} \tilde{f}}(\tilde{g})\\
&=\frac{\nabla^{\tilde{g}}\tr_{\tilde{g}}\mathcal{L}_{\nabla^{\tilde{g}} \tilde{f}}(\tilde{g})}{2}+\Delta_{\tilde{g}}\nabla^{\tilde{g}}\tilde{f}+\Ric(\tilde{g})(\nabla^{\tilde{g}}\tilde{f})\\
&=\nabla^{\tilde{g}}\Delta_{\tilde{g}}\tilde{f}+\Delta_{\tilde{g}}\nabla^{\tilde{g}}\tilde{f}+\Ric(\tilde{g})(\nabla^{\tilde{g}}\tilde{f})\\
&=2\nabla^{\tilde{g}}\Delta_{\tilde{g}}\tilde{f}+2\Ric(\tilde{g})(\nabla^{\tilde{g}}\tilde{f})\\
&=2\nabla^{\tilde{g}}\RR_{\tilde{g}}+2\Ric(\tilde{g})(\nabla^{\tilde{g}}\tilde{f})\\
&=2\nabla^{\tilde{g}}\RR_{\tilde{g}}+(\mathcal{L}_{\nabla^{\tilde{g}}\tilde{f}}\tilde{g})(\nabla^{\tilde{g}}\tilde{f})\\
&=\nabla^{\tilde{g}}(2\RR_{\tilde{g}}+|\nabla^{\tilde{g}}\tilde{f}|^2_{\tilde{g}}).
\end{split}
\end{equation*}
Here we have used the soliton identity $\RR_{\tilde{g}}=\Delta_{\tilde{g}}\tilde{f}$ in the fifth line obtained by tracing
the steady Ricci soliton equation, together with the Bochner formula in the third line. It follows that
$\nabla^{\tilde{g}}\left(\RR_{\tilde{g}}+|\nabla^{\tilde{g}}\tilde{f}|^{2}_{\tilde{g}}\right)=0$ on $M\setminus K$ so that by connectedness of this set,
$\RR_{\tilde{g}}+|\nabla^{\tilde{g}}\tilde{f}|^{2}_{\tilde{g}}$ is constant on $M\setminus K$. By assumption, we then find that
\begin{equation}
\begin{split}\label{first-order-sol-steady}
\RR_{\tilde{g}}+|\nabla^{\tilde{g}}\tilde{f}|^{2}_{\tilde{g}}=\lim_{x\rightarrow+\infty}\left(\RR_{\tilde{g}}+|\nabla^{\tilde{g}}\tilde{f}|^{2}_{\tilde{g}}\right)=c(\tilde{g})>0\qquad \text{on $M\setminus K$.}
\end{split}
\end{equation}

We define the function $\phi_0:=e^{-\beta \tilde{f}}$ for $\beta\in(0,\,1)$. Then, making use of \eqref{first-order-sol-steady}, we see that
$\phi_0$ is a positive smooth function satisfying
\begin{equation*}
\begin{split}
\Delta_{\tilde{f}-\alpha\log \tilde{f}}\phi_0&=-\beta\left(\Delta_{\tilde{g}}\tilde{f}+\left(1-\beta-\frac{\alpha}{\tilde{f}}\right)\arrowvert\nabla^{\tilde{g}} \tilde{f}\arrowvert^2_{\tilde{g}}\right)e^{-\beta \tilde{f}}\\
&=-\beta\left(\left(\beta+\frac{\alpha}{\tilde{f}}\right) \RR_{\tilde{g}}+\left(1-\beta-\frac{\alpha}{\tilde{f}}\right)c(\tilde{g})\right)\phi_0.
\end{split}
\end{equation*}
Since $\RR_{\tilde{g}}$ tends to $0$ at infinity, we deduce that
\begin{equation*}
\begin{split}
\Delta_{\tilde{f}-\alpha\log \tilde{f}}\phi_0\leq -\frac{\beta(1-\beta) c(\tilde{g})}{2}\phi_0
\end{split}
\end{equation*}
outside a compact subset of $M$. An application of Lemma \ref{bd-spec-pos} to $\phi_0:=e^{-\frac{\tilde{f}}{2}}$ and $\lambda_0:=\frac{c(\tilde{g})}{8}>0$ now yields the result.
\end{proof}

\begin{remark}
For a complete steady gradient Ricci soliton $(M,\,\tilde{g},\,X=\nabla^{\tilde{g}}\tilde{f})$,
one can show that $\inf\sigma(-\Delta_{\tilde{f}})\geq\frac{c(\tilde{g})}{4}$ in the notation of Proposition \ref{Mun-Wan-Har-Sgs}; see \cite{Mun-Wan-Non-Neg} for a proof. The proof of this proposition can also be refined to show that actually
$\inf\sigma_{\ess}\left(-\Delta_{\tilde{f}-\alpha\log\tilde{f}}\right)\geq\frac{c(\tilde{g})}{4}$ for any $\alpha\in\mathbb{R}$. We do not require this fact here.
\end{remark}

From Proposition \ref{Mun-Wan-Har-Sgs}, we obtain the following corollary that will prove useful in
establishing an a priori weighted energy estimate for the complex Monge-Amp\`ere equation \eqref{e:soliton}.

\begin{corollary}\label{tarea}
Let $(M,\,\tilde{g},\,\tilde{f})$ be as in Proposition \ref{Mun-Wan-Har-Sgs} and let $h$ be a complete Riemannian metric on $M$ uniformly equivalent to $\tilde{g}$
such that $X=\nabla^hf$ for some smooth proper positive function $f:M\to\mathbb{R}$ with $|f-\tilde{f}|=O(1)$. Then the bottom of the $L^2(f^{-\alpha}e^fd\mu_h)$-spectrum of the operator $-\Delta_{f-\alpha\log f}$ corresponding to $h$ is positive.
\end{corollary}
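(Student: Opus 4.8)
The plan is to transfer the spectral gap of Proposition \ref{Mun-Wan-Har-Sgs} from $(M,\tilde{g})$ to $(M,h)$ by comparing Rayleigh quotients. On a complete manifold the operator $-\Delta_{f-\alpha\log f}$ (the drift Laplacian of $h$ with weight $f-\alpha\log f$) is non-negative and essentially self-adjoint on $C^\infty_0(M)\subset L^2(f^{-\alpha}e^fd\mu_h)$, so the bottom of its spectrum is
\begin{equation*}
\inf\sigma(-\Delta_{f-\alpha\log f})=\inf_{\substack{\phi\in C^\infty_0(M)\\\phi\not\equiv0}}\frac{\int_M|\nabla^h\phi|_h^2\,f^{-\alpha}e^fd\mu_h}{\int_M\phi^2\,f^{-\alpha}e^fd\mu_h},
\end{equation*}
and similarly for $-\Delta_{\tilde{f}-\alpha\log\tilde{f}}$ with $\tilde{g}$ in place of $h$. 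Proposition \ref{Mun-Wan-Har-Sgs} says the latter infimum equals some $\lambda_1>0$; equivalently, via Lemma \ref{bd-spec-pos}, the Poincar\'e inequality $\lambda_1\int_M\phi^2\,\tilde{f}^{-\alpha}e^{\tilde{f}}d\mu_{\tilde{g}}\le\int_M|\nabla^{\tilde{g}}\phi|_{\tilde{g}}^2\,\tilde{f}^{-\alpha}e^{\tilde{f}}d\mu_{\tilde{g}}$ holds for all $\phi\in C^\infty_0(M)$. The goal is to deduce the analogous inequality for $h$, with a possibly smaller positive constant.

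The first step is to record the two comparisons involved. Since $h$ is uniformly equivalent to $\tilde{g}$, there is a constant $C\ge1$ with $C^{-1}\tilde{g}\le h\le C\tilde{g}$, whence the gradient norms $|\nabla^h\phi|_h^2$ and $|\nabla^{\tilde{g}}\phi|_{\tilde{g}}^2$ are comparable and so are the volume forms $d\mu_h$ and $d\mu_{\tilde{g}}$. Next, the hypothesis $|f-\tilde{f}|=O(1)$ gives that $e^{f-\tilde{f}}$ is bounded above and below by positive constants, while propriety and positivity of $f$ and $\tilde{f}$ together with $|f-\tilde{f}|=O(1)$ force the ratio $f/\tilde{f}$ to be bounded above and below by positive constants (it tends to $1$ at infinity by propriety, and is continuous and positive on the remaining compact region), so that $f^{-\alpha}$ and $\tilde{f}^{-\alpha}$ are comparable for any fixed $\alpha\in\R$. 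Combining these facts, there are constants $c,C'>0$ such that, as measures on $M$,
\begin{equation*}
c\,\tilde{f}^{-\alpha}e^{\tilde{f}}d\mu_{\tilde{g}}\le f^{-\alpha}e^fd\mu_h\le C'\,\tilde{f}^{-\alpha}e^{\tilde{f}}d\mu_{\tilde{g}}.
\end{equation*}

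It then remains to chain the inequalities: for any $\phi\in C^\infty_0(M)$,
\begin{equation*}
\begin{split}
\int_M|\nabla^h\phi|_h^2\,f^{-\alpha}e^fd\mu_h
&\ge c\int_M|\nabla^h\phi|_h^2\,\tilde{f}^{-\alpha}e^{\tilde{f}}d\mu_{\tilde{g}}
\ge \frac{c}{C}\int_M|\nabla^{\tilde{g}}\phi|_{\tilde{g}}^2\,\tilde{f}^{-\alpha}e^{\tilde{f}}d\mu_{\tilde{g}}\\
&\ge \frac{c\lambda_1}{C}\int_M\phi^2\,\tilde{f}^{-\alpha}e^{\tilde{f}}d\mu_{\tilde{g}}
\ge \frac{c\lambda_1}{CC'}\int_M\phi^2\,f^{-\alpha}e^fd\mu_h,
\end{split}
\end{equation*}
so the Poincar\'e inequality holds for $h$ with positive constant $\lambda_1':=c\lambda_1/(CC')$; by the variational characterisation above (note also $\int_Mf^{-\alpha}e^fd\mu_h\ge c\int_M\tilde{f}^{-\alpha}e^{\tilde{f}}d\mu_{\tilde{g}}=+\infty$, so the framework of Lemma \ref{bd-spec-pos} applies) this yields $\inf\sigma(-\Delta_{f-\alpha\log f})\ge\lambda_1'>0$. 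The only point requiring genuine care is the comparison of $f^{-\alpha}$ with $\tilde{f}^{-\alpha}$, where one must use that $f$ and $\tilde{f}$ are proper and positive and not merely that their difference is bounded; everything else is bookkeeping of uniform constants. The hypothesis $X=\nabla^hf$, though indispensable when this corollary is invoked later, plays no role in the estimate itself.
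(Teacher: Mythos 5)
Your proof is correct and follows essentially the same route as the paper: both arguments compare the Rayleigh quotients for $h$ and $\tilde{g}$ using the uniform equivalence of the metrics together with the uniform comparability of the weighted measures $f^{-\alpha}e^{f}d\mu_{h}$ and $\tilde{f}^{-\alpha}e^{\tilde{f}}d\mu_{\tilde{g}}$, and then invoke Proposition \ref{Mun-Wan-Har-Sgs}. Your additional care in justifying the comparability of $f^{-\alpha}$ with $\tilde{f}^{-\alpha}$ (via properness and positivity, not merely boundedness of $f-\tilde{f}$) is a detail the paper leaves implicit, but the substance of the argument is identical.
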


The requirement for the difference between the potentials $f$ and $\tilde{f}$ of $X$ to be bounded is so that the respective weighted volume forms are uniformly comparable.

\begin{proof}[Proof of Corollary \ref{tarea}]
Let $d\mu_{\tilde{g}}$ and $d\mu_{h}$ denote the volume forms of $\tilde{g}$ and $h$ respectively. Then by assumption, there exists a positive constant $C$ such that $C^{-1}\tilde{g}\leq h\leq C\tilde{g}$ and such that
\begin{equation*}
C^{-1}\tilde{f}^{-\alpha}e^{\tilde{f}}\leq f^{-\alpha}e^f\left(\frac{d\mu_{h}}{d\mu_{\tilde{g}}}\right)\leq C\tilde{f}^{-\alpha}e^{\tilde{f}}\qquad\textrm{on $M$}.
\end{equation*}
In particular, we have that
\begin{equation*}
\begin{split}
\inf\Bigg\{\Bigg.&\frac{\|\nabla^h\phi\|^2_{L^2(f^{-\alpha}e^fd\mu_{h})}}{\|\phi\|^2_{L^2(f^{-\alpha}e^fd\mu_{h})}}\,\Bigg|\,\phi\in L^2(f^{-\alpha}e^fd\mu_{h})
\setminus\{0\}\Bigg\}\\
&\geq C^{-3}\inf\left\{
\left.\frac{\|\nabla^{\tilde{g}}\phi\|^2_{L^2(\tilde{f}^{-\alpha}e^{\tilde{f}}d\mu_{\tilde{g}})}}{\|\phi\|^2_{L^2(\tilde{f}^{-\alpha}e^{\tilde{f}}d\mu_{\tilde{g}})}}\,\right|\,\phi\in L^2(\tilde{f}^{-\alpha}e^{\tilde{f}}d\mu_{\tilde{g}})\setminus\{0\}\right\}.
\end{split}
\end{equation*}
The result now follows from Proposition \ref{Mun-Wan-Har-Sgs}.
\end{proof}

\section{Invertibility of the drift Laplacian: exponential case}\label{section-small-def-exp}

In this section, we introduce the exponentially weighted function spaces in which we shall work in order to solve
the complex Monge-Amp\`ere equation \eqref{e:soliton} with compactly supported data. We also analyse various properties
of the drift Laplacian acting on such spaces. We begin by recalling the set-up.

\subsection{Main setting}\label{main-set-sec-pol}
Let $(C_0,\,g_{0},\,J_0,\,\Omega_0)$ be a Calabi-Yau cone of complex dimension $n\geq2$ with radial function $r$.
Set $r^2=:e^t$ and let $\pi:M\to C_{0}$ be an equivariant crepant resolution of $C_{0}$
with respect to the real holomorphic torus action on $C_{0}$ generated by $J_{0}r\partial_{r}$
so that the holomorphic vector field $2r\partial_{r}=4\partial_{t}$ on $C_{0}$ lifts to a real holomorphic vector field $X=\pi^{*}(2r\partial_{r})$
on $M$. Denote by $E$ the exceptional set of the resolution $\pi:M\to C_{0}$ and let $J$ denote the complex structure on $M$.
Throughout, using $\pi$, we identify $M$ and $C_{0}$ on the complement of compact subsets of each containing $E$ and the apex of the cone respectively.

We define a K\"ahler form $\hat{\omega}$ on $C_{0}$ by
$$\hat{\omega}:=\frac{i}{2}\partial\bar{\partial}\left(\frac{nt^{2}}{2}\right)
=n\left(\frac{dt}{2}\wedge\eta+t\omega^{T}\right),$$
where $\omega^{T}$ is the transverse K\"ahler form on $(C_{0},\,g_{0})$. Then the K\"ahler metric $\hat{g}$ associated to $\hat{\omega}$ takes the form
$$\hat{g}=n\left(\frac{1}{4}dt^{2}+\eta^{2}+tg^{T}\right),$$
where $g^{T}$ is the transverse K\"ahler metric associated to $\omega^{T}$ and $\eta=d^{c}\log(r)$ is
a contact form on the link of the cone. Recall that
$$\omega^{T}=\frac{1}{2}d\eta=\frac{1}{2}dd^{c}\log(r)=\frac{1}{4}dd^{c}t=\frac{i}{2}\partial\bar{\partial}t$$
and observe that $-\hat{\omega}\lrcorner JX=d(nt)$. We extend $nt$ to a smooth real-valued function
$\hat{f}:M\to\mathbb{R}$ on $M$ with $\hat{f}\geq1$. Then by definition, $-\hat{\omega}\lrcorner JX=d\hat{f}$ along the end of $C_{0}$.
We also have the following expression for the Riemannian Laplacian with respect to $\hat{g}$ acting on $u\in C^2_{\operatorname{loc}}(\{t>0\})$:
\begin{equation}\label{covid20}
\Delta_{\hat{g}}u=2\Delta_{\hat{\omega}}u=\frac{4}{n}\frac{\partial^{2}u}{\partial t^{2}}+\frac{4(n-1)}{nt}\frac{\partial u}{\partial t}
+\frac{\xi(\xi u)}{n}+\frac{1}{nt}\Delta_{B}u,
\end{equation}
where $\Delta_{B}$ denotes the basic Laplacian on the link of $(C_{0},\,g_{0})$.

Thanks to the Ricci-flatness of $g_{0}$, we have, via the Cao ansatz, a steady gradient K\"ahler-Ricci soliton $\tilde{\omega}$ on
$C_{0}$ with soliton potential $\varphi(t)$ and with $\mathcal{L}_{JX}\tilde{\omega}=0$ that satisfies
\begin{equation*}
|\widehat{\nabla}^{i}\mathcal{L}^{(j)}_X\left(\tilde{\omega}-\hat{\omega}\right)|
_{\hat{g}}=O\left(t^{-\varepsilon-\frac{i}{2}-j}\right)\qquad\textrm{for all $\varepsilon\in(0,\,1)$ and $i,\,j\geq 0$,}
\end{equation*}
where $\widehat{\nabla}$ denotes the Levi-Civita connection of $\hat{g}$. These asymptotics are contained in the statement of
Proposition \ref{coro-asy-Cao-met}. Let $\tau$ be any K\"ahler form on $M$ with $\mathcal{L}_{JX}\tau=0$
such that for some $\varepsilon\in(0,\,1)$,
\begin{equation}\label{covid19}
|\widehat{\nabla}^{i}\mathcal{L}^{(j)}_X\left(\pi_{*}\tau-\hat{\omega}\right)|_{\hat{g}}=O\left(t^{-\varepsilon-\frac{i}{2}-j}\right)\qquad\textrm{for all $i,\,j\geq 0$}.
\end{equation}
We denote by $h$ the K\"ahler metric associated to $\tau$ and by $\nabla^{h}$ its Levi-Civita connection.
Moreover, for any smooth real-valued function $\phi\in C^{\infty}(M)$
such that $\tau+i\partial\bar{\partial}\phi>0$, we
write $\tau_{\phi}:=\tau+i\partial\bar{\partial}\phi$ and denote by $h_{\phi}$ the K\"ahler metric associated to $\tau_{\phi}$.
Since $\hat{g}$ and $h$ are asymptotic with derivatives, one can verify that
measuring the asymptotics of a tensor using either metric is equivalent--that is to say, along the end of $M$,
there exist constants $C_{i,\,j}>0$ such that for every tensor $T$ on $M$,
$$C_{i,\,j}^{-1}|\widehat{\nabla}^{i}\mathcal{L}_{X}^{(j)}T|_{\hat{g}}\leq
|(\nabla^{h})^{i}\mathcal{L}_{X}^{(j)}T|_{h}\leq C_{i,\,j}|\widehat{\nabla}^{i}\mathcal{L}_{X}^{(j)}T|_{\hat{g}}
\qquad\textrm{for all $i,\,j\geq0$}.$$
In what follows, we shall use this fact without further reference.

We first note that $X$ is gradient with respect to $h$.
\begin{lemma}\label{easy}
There exists a smooth proper real-valued function $f:M\to\mathbb{R}$ bounded from below such that $X=\nabla^{h}f$.
\end{lemma}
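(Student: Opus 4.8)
The plan is to obtain $f$ from a cohomological argument and then control it at infinity using the asymptotics \eqref{covid19}, exactly as in the proof of Proposition \ref{equationsetup}.

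First I would produce the potential. Since $X$ is real holomorphic and $J$ is integrable, $JX$ is real holomorphic too, so $\mathcal{L}_{JX}\tau=0$ together with $d\tau=0$ gives $d(\iota_{JX}\tau)=\mathcal{L}_{JX}\tau-\iota_{JX}d\tau=0$; equivalently, by \cite[Lemma A.6]{con-der}, the $h$-dual one-form $h(X,\cdot)=-\iota_{JX}\tau$ of $X$ is closed. Lemma \ref{finite} gives $H^{1}(M,\,\mathbb{R})=0$, so $h(X,\cdot)$ is exact: there is a smooth $f:M\to\mathbb{R}$, unique up to an additive constant, with $df=h(X,\cdot)$, i.e.\ $X=\nabla^{h}f$. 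Smoothness of $f$ is immediate.

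Next I would pin down the growth of $f$ against the model potential $\hat f=nt$. Along the end of $C_0$ the main setting records $-\hat{\omega}\lrcorner JX=d\hat f$, which, since $\hat{\omega}(JX,\cdot)=-\hat g(X,\cdot)$, reads $\hat g(X,\cdot)=d\hat f$; hence there $d(f-\hat f)=\big(h-\hat g\big)(X,\cdot)$. By \eqref{covid19} with $i=j=0$, together with the equivalence of $\hat g$ and $h$, we have $|h-\hat g|_{\hat g}=O(t^{-\varepsilon})$, while $|X|_{\hat g}^{2}=|\nabla^{\hat g}\hat f|_{\hat g}^{2}\equiv 4n$, so $|d(f-\hat f)|_{\hat g}=O(t^{-\varepsilon})$. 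Integrating this along the integral curves of $X=4\partial_t$, and bounding the oscillation of $f$ on a level set $\{t=T\}$ (of $\hat g$-diameter $O(\sqrt T)$) by $O(\sqrt T)\cdot O(T^{-\varepsilon})$, one obtains $f-\hat f=O(t^{1-\varepsilon})$ as $t\to\infty$; since $\varepsilon\in(0,1)$ this is $o(t)$, so $f=nt\big(1+o(1)\big)\to+\infty$ along the unique end of $M$.

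Finally, properness and the lower bound are formal: a continuous function on the one-ended manifold $M$ that tends to $+\infty$ at the end has every $f^{-1}([a,b])$ closed and disjoint from a neighbourhood of the end, hence compact, so $f$ is proper; and $f$, being continuous and $\to+\infty$ at infinity, attains a finite infimum on $M$. I expect the only delicate point to be the middle step — reconciling the two gradient descriptions of $X$ and upgrading the $O(t^{-\varepsilon})$ decay of $h-\hat g$ to the sublinear growth $f-\hat f=o(\hat f)$ needed to conclude $f\to+\infty$; everything else uses only results already established in the excerpt.
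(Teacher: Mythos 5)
Your proposal is correct and follows essentially the same route as the paper: closedness of $h(X,\cdot)=-\tau\lrcorner JX$ from $\mathcal{L}_{JX}\tau=0$, exactness via $H^{1}(M,\mathbb{R})=0$ from Lemma \ref{finite}, and properness from the fact that $f$ grows like $nt$ along the end because $X\cdot f=|X|^{2}_{h}$ is asymptotic to $4n=|X|^{2}_{\hat g}$. Your middle step comparing $f$ with $\hat f=nt$ via $d(f-\hat f)=(h-\hat g)(X,\cdot)$ and integrating along flow lines is just a more quantitative version of the paper's one-line observation (and is essentially the computation the paper carries out separately in Lemma \ref{tuxedo}).
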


\begin{proof}
It suffices to show that $-\tau\lrcorner JX=df$ for a smooth real-valued function $f:M\to\mathbb{R}$ with the desired properties.
To this end, observe that
$JX$ is Killing for $h$ and holomorphic so that $0=\mathcal{L}_{JX}\tau=d(\tau\lrcorner JX)$.
A smooth function $f:M\to\mathbb{R}$ with $-\tau\lrcorner JX=df$ therefore exists by Lemma \ref{finite}. To see that $f$ is proper and bounded from below, just note that since  $\hat{g}$ and $h$ are asymptotic along the end of $C_{0}$ and $|X|^{2}_{\hat{g}}=4n$, $|X|^{2}_{h}$ is asymptotic to $4n$ so that $f(x)\to+\infty$ as $x\to+\infty$.
\end{proof}

\begin{remark}
The function $f$ from Lemma \ref{easy} is defined up to a constant. We henceforth fix this constant so that $f\geq1$ on $M$.
\end{remark}

As the next lemma shows, both $f$ and $\hat{f}$ are comparable.

\begin{lemma}\label{tuxedo}
There exists a positive constant $C$ such that $C^{-1}\hat{f}\leq f\leq C\hat{f}$ on $M$. In particular,
$f\sim nt$.
\end{lemma}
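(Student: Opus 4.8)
The plan is to show that both $f$ and $\hat f$ behave like $nt$ along the end of $M$, and then patch up the comparison on any compact region using positivity of both functions. The key input is that $f$ is the potential of $X = \nabla^h f$ and that $h$ is asymptotic to $\hat g$ with all derivatives, so that $|X|_h^2 \to 4n$; similarly $\hat f = nt$ along the end with $-\hat\omega\lrcorner JX = d\hat f$, hence $|X|^2_{\hat g} = 4n$ exactly along the end. Thus $|\nabla^h f|^2_h = |X|^2_h \sim 4n \sim |X|^2_{\hat g} = |\nabla^{\hat g}\hat f|^2_{\hat g}$ along the end.

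First I would nail down the asymptotics of $f$. Along the end of $C_0$ we have $-\tau\lrcorner JX = df$ and $-\hat\omega\lrcorner JX = d\hat f = d(nt)$. Writing $f = nt + u$ along the end, the one-form $du = -(\tau - \hat\omega)\lrcorner JX$ satisfies, by \eqref{covid19} with $j=0,1$ and the fact that $|JX|_{\hat g} = O(t^{1/2})$ away from where $g^T$ degenerates (more precisely $|X|_{\hat g}^2 = 4n$, so $|JX|_{\hat g}$ is bounded), the estimate $|du|_{\hat g} = O(t^{-\varepsilon})$ for every $\varepsilon \in (0,1)$. Integrating along the $t$-direction (the integral curves of $\partial_t$, along which $dt$ pairs with $du$), this only gives $|u| = O(t^{1-\varepsilon})$ which is \emph{not} good enough for $|f - \hat f| = O(1)$ — but it \emph{is} enough for $f \sim nt$, since $u = o(t)$. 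Actually the cleaner route is via the soliton-type identity: since $X = \nabla^h f$ and, from Lemma \ref{solitonid} applied to the asymptotic geometry, $|\nabla^h f|^2_h + \RR_h$ is essentially constant along the end equal to $c > 0$; combined with $\RR_h \to 0$ (which follows from $h$ being asymptotic to $\hat g$, whose curvature decays — or directly from Proposition \ref{coro-asy-Cao-met} together with \eqref{covid19}) we get $|\nabla^h f|^2_h \to c$. Since $|\nabla^h f|^2_h = g^{ij}\partial_i f\,\partial_j f$ and $h \sim \hat g$, and since $\hat f = nt$ has $|\nabla^{\hat g}\hat f|^2_{\hat g} = n^2 \cdot \frac{4}{n} = 4n$ (reading off from $\hat g = n(\tfrac14 dt^2 + \eta^2 + tg^T)$, so that $|dt|^2_{\hat g} = 4/n$), we conclude $c = 4n$ and hence $|\nabla^h f|_h \sim |\nabla^{\hat g} (nt)|_{\hat g}$; feeding this back, $f$ grows linearly in $t$, i.e. $f \sim nt \sim \hat f$ along the end of $M$.

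Then I would conclude: along the end of $M$ we have just shown $f/\hat f \to 1$, so in particular there is a compact set $K \supset E$ and a constant $C_0 > 1$ with $C_0^{-1}\hat f \le f \le C_0 \hat f$ on $M\setminus K$. On the compact set $K$, both $f$ and $\hat f$ are continuous and strictly positive (by the normalizations $f \ge 1$ and $\hat f \ge 1$), hence bounded above and below by positive constants, so a comparison $C_1^{-1}\hat f \le f \le C_1\hat f$ holds on $K$ for some $C_1$. Taking $C := \max(C_0, C_1)$ gives $C^{-1}\hat f \le f \le C\hat f$ on all of $M$, and the refined statement $f \sim nt$ is exactly the asymptotic statement proved along the end together with $\hat f = nt$ there. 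The main obstacle is getting the linear lower and upper bounds on $f$ cleanly; the honest way is the soliton identity route, which requires knowing $\RR_h \to 0$ and $|\nabla^h f|^2_h + \RR_h = \mathrm{const} + o(1)$ along the end — both of which follow from the asymptotic equivalence of $h$ with $\hat g$ (resp. $\tilde g$) established in Proposition \ref{coro-asy-Cao-met} and the hypothesis \eqref{covid19}, so no genuinely new estimate is needed, only careful bookkeeping.
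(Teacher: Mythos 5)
Your first route is correct and is essentially the paper's own proof: the paper integrates $X\cdot(f-\hat f)=|X|^2_h-|X|^2_{\hat g}=O(t^{-\varepsilon})$ along the flow lines of $X$ (along which $t$ grows linearly in the flow parameter) to obtain $|f-\hat f|=O(t^{1-\varepsilon})=o(t)$, hence $f\sim nt\sim\hat f$ at infinity, and then compares the two positive continuous functions on a compact set exactly as you do. The soliton-identity route you call ``cleaner'' is an unnecessary detour and mildly circular: Lemma \ref{solitonid} applies to genuine steady solitons rather than to $h$, its approximate analogue (Lemma \ref{lemm-app-sol-id}) is established later in the paper using the present lemma, and in any case knowing $|X|^2_h\to 4n$ (which already follows directly from \eqref{covid19} and $|X|^2_{\hat g}=4n$, with no curvature identity needed) still forces you back to the same integration of $X\cdot f=|X|^2_h$ along flow lines, so nothing is gained.
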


\begin{proof}
Let $x\in M\setminus E$ and let $\gamma_{x}(t)$ denote the integral curve of $X$ with $\gamma_{x}(0)=x$. Then
\begin{equation*}
\begin{split}
(f(\gamma_{x}(s))-\hat{f}(\gamma_{x}(s)))-(f(x)-\hat{f}(x))&=\int_{0}^{s}\frac{d}{du}(f(\gamma_{x}(u))-\hat{f}(\gamma_{x}(u)))\,du\\
&=\int_{0}^{s}(X\cdot(f-\hat{f}))(\gamma_{x}(u))\,du\\
&=\int_{0}^{s}(|X|^{2}_{h}-|X|_{\hat{g}}^{2})(\gamma_{x}(u))\,du.
\end{split}
\end{equation*}
Now, the fact that $\frac{d}{du}t(\gamma_{x}(u))=dt(X)=4$ implies that $t(\gamma_{x}(u))=4u+t(x)$, hence it follows from \eqref{covid19} that
\begin{equation*}
\begin{split}
|(f(\gamma_{x}(s))-\hat{f}(\gamma_{x}(s)))-(f(x)-\hat{f}(x))|&\leq C\int_{0}^{s}t(\gamma_{x}(u))^{-\varepsilon}\,du\\
&\leq C\int_{0}^{s}(4u+t(x))^{-\varepsilon}\,du\\
&\leq C\left((4s+t(x))^{1-\varepsilon}-t(x)^{1-\varepsilon}\right)\\
&\leq C(t(\gamma_{x}(s))^{1-\varepsilon}-t(x)^{1-\varepsilon}),
\end{split}
\end{equation*}
and correspondingly, that for all $x,\,y,$ lying on the same flow-line of $X$ along the end of $C_{0}$,
$$|(f(y)-\hat{f}(y))-(f(x)-\hat{f}(x))|\leq C|t(y)^{1-\varepsilon}-t(x)^{1-\varepsilon}|
\leq C|\hat{f}(y)^{1-\varepsilon}-\hat{f}(x)^{1-\varepsilon}|.$$
From this, the result is clear.
\end{proof}

As a consequence of Lemma \ref{tuxedo}, it makes no difference whether one measures \emph{polynomial} rates of growth and decay using $\hat{f}$ or $f$. However, in contrast to $\hat{f}$, $f$ is a globally defined potential for $X$ on $M$. The need for such a function becomes apparent in
the proof of Lemma \ref{lemma-inf-pot-fct}, hence we work with $f$ rather than $\hat{f}$. Note that Lemma \ref{tuxedo} does not imply that $e^{\hat{f}}$ is comparable
to $e^{f}$; in fact, this is not true, and so there is a difference in using $\hat{f}$ and $f$
when measuring \emph{exponential} rates of growth and decay. These are the rates
that we will primarily be dealing with in this section and the next. For this reason, we assume in these sections that in addition to the above,
\begin{equation}\label{add-ass}
\addlefttext{\textbf{Additional assumption:}}{|f-\varphi(t)|=O(1),}
\end{equation}
so that the exponential weights $e^{\tilde{f}}$ and $e^{f}$ are comparable. Notice that this condition does not follow automatically from \eqref{covid19}.
We will use this assumption in this section specifically in Theorem \ref{iso-sch-Laplacian-exp} in the deriviation of the estimates for the drift Laplacian acting between exponentially weighted function spaces, where we must appeal to Corollary \ref{tarea}.

We next state a crucial lemma that will enable us to build good barrier functions at infinity, thereby allowing us to obtain suitable a priori estimates.
This lemma, which mirrors Lemma \ref{solitonid}, can be proved using the estimates from Proposition \ref{basis}.
\begin{lemma}\label{lemm-app-sol-id}
In the above situation, the following asymptotics hold true:
\begin{equation*}
\begin{split}
&|(\nabla^{h})^{k}(f-t)|_{h}=O\left(f^{-\varepsilon-\frac{(k-1)}{2}}\right)\qquad\textrm{for all $k\geq 1$},\\
&\left|\widehat{\nabla}^{i}\mathcal{L}^{(j)}_X\left(\mathcal{L}_{X}\tau\right)\right|_{\hat{g}}=O\left(f^{-1-\frac{i}{2}-j}\right)\qquad\textrm{for all $i,\,j\geq0$,}\\
&\left|\widehat{\nabla}^{i}\mathcal{L}^{(j)}_X\left(\rho_{\tau}-\frac{1}{2}\mathcal{L}_{X}\tau\right)\right|_{\hat{g}}=
O(f^{-1-\varepsilon-\frac{i}{2}-j})\qquad\textrm{for all $i,\,j\geq0$,} \\
&\left|\widehat{\nabla}^{i}\mathcal{L}^{(j)}_X\left(|\nabla^{h}f|^2_h+\RR_{h}-4n\right)\right|_{\hat{g}}=O\left(f^{-\varepsilon
-\frac{i}{2}-j}\right)\qquad\textrm{for all $i,\,j\geq0$,}\\
&\left|\widehat{\nabla}^{i}\mathcal{L}^{(j)}_X\left(\Delta_hf+X\cdot f-4n\right)\right|_{\hat{g}}=O\left(f^{-\varepsilon-\frac{i}{2}-j}\right)
\qquad\textrm{for all $i,\,j\geq0$}.
\end{split}
\end{equation*}
Here, $\rho_{\tau}$ and $\RR_{h}$ denote the Ricci form of $\tau$ and scalar curvature of $h$ respectively, and
$\varepsilon\in(0,\,1)$ is as in \eqref{covid19}.
\end{lemma}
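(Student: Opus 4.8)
The plan is to establish each of the five estimates by comparing the quantity in question with its exact analogue for Cao's steady gradient K\"ahler-Ricci soliton $\tilde{\omega}$, for which the soliton equation $\rho_{\tilde{\omega}}=\tfrac12\mathcal{L}_{X}\tilde{\omega}$ and the trace and first-order soliton identities of Lemma \ref{solitonid} hold \emph{exactly}, and then bounding the resulting difference terms using the decay of $\pi_{*}\tau-\hat{\omega}$ from \eqref{covid19} together with the asymptotics of $\tilde{\omega}$ and $\hat{g}$ from Proposition \ref{coro-asy-Cao-met}. Throughout I would use, without further comment, that $f\sim nt$ by Lemma \ref{tuxedo} (so polynomial rates in $f$ and in $t$ agree), that the class of tensors $T$ on the end of $M$ with $|\widehat{\nabla}^{i}\mathcal{L}_{X}^{(j)}T|_{\hat{g}}=O(t^{-a-i/2-j})$ for all $i,j\geq0$ is closed under $\widehat{\nabla}$ (raising $i$), under $\mathcal{L}_{X}$ (raising $j$), under tensor products and contractions, and under composition with smooth functions of $\hat{g}$-bounded metric tensors (inverses, determinants, logarithms), and that one may commute $\mathcal{L}_{X}$ past $\widehat{\nabla}$ at the cost only of faster-decaying terms since $|\widehat{\nabla}X|_{\hat{g}}$ and its derivatives decay; these are the standard bookkeeping facts behind Proposition \ref{basis} and the appendix.

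For the first estimate, since $\nabla^{h}f=X$ by Lemma \ref{easy} and $\nabla^{\tilde{g}}\varphi(t)=X$ by Proposition \ref{caosoliton} (equivalently $\tilde{\omega}\lrcorner X=-d\varphi(t)$), dualising $X$ with $h$ and with $\tilde{g}$ gives $d(f-\varphi(t))=h(X,\cdot)-\tilde{g}(X,\cdot)=(h-\tilde{g})(X,\cdot)$; thus the covariant derivatives of the potential difference see only the decaying difference of the metrics and not the value of $f-\varphi(t)$ itself. Applying $(\nabla^{h})^{k-1}$ to this one-form, distributing by Leibniz, and using that $h-\tilde{g}=(\pi_{*}\tau-\hat{\omega})-(\tilde{\omega}-\hat{\omega})$ has all $\widehat{\nabla}$-derivatives $O(t^{-\varepsilon-i/2})$ while $X$ has bounded norm with decaying covariant derivatives, yields $|(\nabla^{h})^{k}(f-\varphi(t))|_{h}=O(t^{-\varepsilon-(k-1)/2})$, the dominant term being the one with all derivatives on $h-\tilde{g}$. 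For the second estimate, a direct computation on the cone gives $\mathcal{L}_{X}\hat{\omega}=4n\,\omega^{T}$ (as $\mathcal{L}_{X}dt=\mathcal{L}_{X}\eta=\mathcal{L}_{X}\omega^{T}=0$ and $X\cdot t=4$), so $\widehat{\nabla}^{i}\mathcal{L}_{X}^{(j)}(\mathcal{L}_{X}\hat{\omega})$ vanishes for $j\geq1$ and is $\widehat{\nabla}^{i}(4n\,\omega^{T})=O(t^{-1-i/2})$ for $j=0$; writing $\mathcal{L}_{X}\tau=\mathcal{L}_{X}\hat{\omega}+\mathcal{L}_{X}(\pi_{*}\tau-\hat{\omega})$ on the end and bounding $\widehat{\nabla}^{i}\mathcal{L}_{X}^{(j+1)}(\pi_{*}\tau-\hat{\omega})=O(t^{-\varepsilon-i/2-j-1})\leq O(t^{-1-i/2-j})$ by \eqref{covid19} finishes it.

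The third estimate needs slightly more care. Using the exact soliton equation for $\tilde{\omega}$,
\[
\rho_{\tau}-\tfrac12\mathcal{L}_{X}\tau=(\rho_{\tau}-\rho_{\tilde{\omega}})-\tfrac12\mathcal{L}_{X}(\tau-\tilde{\omega})=-i\partial\bar{\partial}u-\tfrac12\mathcal{L}_{X}(\tau-\tilde{\omega}),
\]
where $u:=\log(\tau^{n}/\tilde{\omega}^{n})$ satisfies $|\widehat{\nabla}^{i}\mathcal{L}_{X}^{(j)}u|_{\hat{g}}=O(t^{-\varepsilon-i/2-j})$ since $\tau-\tilde{\omega}$ does and $\tilde{\omega}^{-1}$ has bounded derivatives. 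As $\hat{g}$ is K\"ahler, $i\partial\bar{\partial}u$ is the $(1,1)$-part of $\widehat{\nabla}^{2}u$, and $X$ holomorphic gives $\mathcal{L}_{X}(i\partial\bar{\partial}u)=i\partial\bar{\partial}(X\cdot u)$; hence $|\widehat{\nabla}^{i}\mathcal{L}_{X}^{(j)}(i\partial\bar{\partial}u)|_{\hat{g}}=O(|\widehat{\nabla}^{i+2}\mathcal{L}_{X}^{(j)}u|_{\hat{g}})=O(t^{-1-\varepsilon-i/2-j})$, the extra power of $t$ coming precisely from the two additional covariant derivatives. The remaining term $-\tfrac12\widehat{\nabla}^{i}\mathcal{L}_{X}^{(j+1)}(\tau-\tilde{\omega})$ is $O(t^{-\varepsilon-i/2-j-1})=O(t^{-1-\varepsilon-i/2-j})$ by \eqref{covid19} and \eqref{even-more-beautiful}, and adding the two gives the estimate.

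For the fourth and fifth estimates I would compare with the exact trace and first-order identities of Lemma \ref{solitonid} for $\tilde{g}$, namely $|\nabla^{\tilde{g}}\varphi(t)|_{\tilde{g}}^{2}+\RR_{\tilde{g}}=4n$ and $\Delta_{\tilde{g}}\varphi(t)=\RR_{\tilde{g}}$ (the constant being $4n$ as identified in the proof of Claim \ref{helloo}). Using $|\nabla^{h}f|_{h}^{2}=X\cdot f=h(X,X)$, $|\nabla^{\tilde{g}}\varphi(t)|_{\tilde{g}}^{2}=X\cdot\varphi(t)=\tilde{g}(X,X)$, and $\Delta_{h}f-\Delta_{\tilde{g}}\varphi(t)=\Div_{h}(X)-\Div_{\tilde{g}}(X)=X\cdot\log(\tau^{n}/\tilde{\omega}^{n})=X\cdot u$ (the Riemannian volume forms being $\tau^{n}/n!$ and $\tilde{\omega}^{n}/n!$), one gets
\[
|\nabla^{h}f|_{h}^{2}+\RR_{h}-4n=(h-\tilde{g})(X,X)+(\RR_{h}-\RR_{\tilde{g}}),\qquad \Delta_{h}f+X\cdot f-4n=X\cdot u+(h-\tilde{g})(X,X).
\]
Here $(h-\tilde{g})(X,X)$ is $O(t^{-\varepsilon-i/2-j})$ in all derivatives exactly as in the first estimate (now with $X$ contracted twice, $\mathcal{L}_{X}X=0$), $X\cdot u=\mathcal{L}_{X}u$ is $O(t^{-1-\varepsilon-i/2-j})$, and $\RR_{h}-\RR_{\tilde{g}}$ is controlled through two derivatives of $h-\tilde{g}$, hence $O(t^{-1-\varepsilon-i/2-j})$; all of these are $\leq O(t^{-\varepsilon-i/2-j})=O(f^{-\varepsilon-i/2-j})$. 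I expect the main obstacle to be purely organisational: there is no single hard step, everything reducing to the decay \eqref{covid19} of $\pi_{*}\tau-\tilde{\omega}$ and the exact soliton identities for $\tilde{\omega}$, but one must carefully verify that the mixed $\widehat{\nabla}^{i}\mathcal{L}_{X}^{(j)}$-decay survives inversion, determinants, logarithms and the passage from metric coefficients to curvature, and that $\mathcal{L}_{X}$ commutes past $\widehat{\nabla}$ without essential cost — which is exactly what Proposition \ref{basis} and the appendix supply.
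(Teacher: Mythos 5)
Your proposal is correct and follows essentially the same route as the paper: the paper writes out only the second and third estimates (decomposing $\mathcal{L}_{X}\tau$ against $\mathcal{L}_{X}\hat{\omega}=4n\,\omega^{T}$, and $\rho_{\tau}-\tfrac12\mathcal{L}_{X}\tau$ against the exact soliton equation for $\tilde{\omega}$ via $i\partial\bar{\partial}\log(\tau^{n}/\tilde{\omega}^{n})$, then invoking \eqref{covid19}, Proposition \ref{coro-asy-Cao-met} and Proposition \ref{basis}), and your treatment of the remaining three by comparison with the exact trace and first-order identities for $\tilde{g}$ is precisely the omitted ``similar manner.'' The only caveat is that the first estimate as printed, with $f-t$, cannot be literally correct since $|d(f-t)|_{h}\to(1-\tfrac1n)|X|_{h}\neq0$; it should read $f-nt$ (equivalently $f-\hat{f}$ or $f-\varphi(t)$), which is exactly the version your argument via $d(f-\varphi(t))=(h-\tilde{g})(X,\cdot)$ establishes.
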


\begin{remark}
In the terminology of Section \ref{section-fct-spa-pol}, these last four estimates can equivalently be written as
\begin{equation*}
\begin{split}
&\mathcal{L}_{X}\tau \in C_{X,\,1}^{\infty}(M),\qquad \rho_{\tau}-\frac{1}{2}\mathcal{L}_{X}\tau\in C_{X,\,1+\varepsilon}^{\infty}(M),\\
&|\nabla^{h}f|^2_h+\RR_{h}-4n\in C_{X,\,\varepsilon}^{\infty}(M),\qquad\textrm{and}\qquad\Delta_hf+X\cdot f-4n\in C_{X,\,\varepsilon}^{\infty}(M),
\end{split}
\end{equation*}
respectively.
\end{remark}

\begin{proof}[Proof of Lemma \ref{lemm-app-sol-id}]
We prove only the second and third estimate. The others can be proved in a similar manner. Regarding the second estimate, using \eqref{covid19},
Lemma \ref{tuxedo}, and Proposition \ref{basis}, and with $\varepsilon\in(0,\,1)$ as in \eqref{covid19}, we have the bounds
\begin{equation*}
\begin{split}
\left|\widehat{\nabla}^{i}\mathcal{L}^{(j)}_X\left(\mathcal{L}_{X}\tau\right)\right|_{\hat{g}}&\leq \left|\widehat{\nabla}^{i}\mathcal{L}^{(j)}_X\left(\mathcal{L}_{X}\hat{\omega}\right)\right|_{\hat{g}}+\left|\widehat{\nabla}^{i}\mathcal{L}^{(j)}_X\left(\mathcal{L}_{X}(\tau-\hat{\omega})\right)\right|_{\hat{g}}\\
&\leq \left|\widehat{\nabla}^{i}\mathcal{L}^{(j)}_X\left(\mathcal{L}_{X}\hat{\omega}\right)\right|_{\hat{g}}+C(i,\,j,\,\varepsilon)f^{-\varepsilon-1-\frac{i}{2}-j}\\
&\leq C\left|\widehat{\nabla}^{i}\mathcal{L}^{(j)}_{X}\omega^{T}\right|_{\hat{g}}+C(i,\,j,\,\varepsilon)f^{-\varepsilon-1-\frac{i}{2}-j}\\
&\leq Cf^{-1-\frac{i}{2}-j},
\end{split}
\end{equation*}
as stated.

As for the third estimate, this encodes the obstruction for $\tau$ to be a steady gradient K\"ahler-Ricci soliton. The existence of such a soliton
on $C_{0}$ is crucial in order for this bound to hold true. Without this, the decay rate would be linear rather than faster than linear with only the latter being
sufficient for us to solve the complex Monge-Amp\`ere equation \eqref{e:soliton}. Regarding this bound, simply observe that for all $i,\,j\geq 0$,
\begin{equation*}
\begin{split}
\widehat{\nabla}^{i}\mathcal{L}^{(j)}_X\left(\rho_{\tau}-\frac{1}{2}\mathcal{L}_{X}\tau\right)&=\widehat{\nabla}^{i}\mathcal{L}^{(j)}_X\left(\rho_{\tau}-\rho_{\tilde{\omega}}+\frac{1}{2}\mathcal{L}_{X}(\tau-\tilde{\omega})\right)\\
&=\widehat{\nabla}^{i}\mathcal{L}^{(j)}_X\left(-i\partial\bar{\partial}\left(\log\left(\frac{\tau^n}{\tilde{\omega}^n}\right)\right)+\frac{1}{2}\mathcal{L}_{X}(\tau-\tilde{\omega})\right)\\
&=O\left(f^{-1-\varepsilon-\frac{i}{2}-j}\right)
\end{split}
\end{equation*}
outside a compact subset of $M$, where we have used Proposition \ref{coro-asy-Cao-met} together with
\eqref{covid19} in the final line
and the fact that $\tilde{\omega}$ is a steady gradient K\"ahler-Ricci soliton on $C_{0}$ in the first line.
\end{proof}

Using Lemma \ref{lemm-app-sol-id}, we derive the following properties of the drift Laplacian acting on exponential weights.
\begin{lemma}\label{lemm-barr-inf}
In the above situation, let $f:M\rightarrow\R$ be a smooth proper real-valued function satisfying $X=\nabla^{h}f$
chosen such that $f\geq1$ on $M$ (which exists by Lemma \ref{easy}).
Then for any $\delta>0$, the function $e^{-\delta f}$ is both a sub- and super-solution of the following equation:
\begin{equation*}
\begin{split}
\left(\Delta_{\tau}+\frac{X}{2}\cdot\right )e^{-\delta f}&=-\delta(1-\delta)\frac{4n}{2}e^{-\delta f}+O(f^{-1})e^{-\delta f}.
\end{split}
\end{equation*}
Moreover, the Laplacian of $f$ with respect to $\tau$ is asymptotically positive and satisfies
\begin{equation*}
\Delta_{\tau}f\geq \frac{c}{f}\qquad\textrm{outside a compact subset of $M$}
\end{equation*}
for some constant $c>0$. In particular,
\begin{equation}
\begin{split}\label{sub-sol-exp-pot-fct}
\left(\Delta_{\tau}+\frac{X}{2}\cdot\right )e^{- f}&\leq-\frac{c}{f}e^{-f}\qquad\textrm{outside a compact subset of $M$}
\end{split}
\end{equation}
for another constant $c>0$.
\end{lemma}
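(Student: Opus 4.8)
The plan is to dispatch the three assertions in turn: the identity for $e^{-\delta f}$ by a direct chain-rule computation, the lower bound $\Delta_\tau f\ge c/f$ by tracing the soliton obstruction of Lemma \ref{lemm-app-sol-id}, and finally \eqref{sub-sol-exp-pot-fct} as the special case $\delta=1$ of the first computation combined with the second.

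For the first assertion I would start from the observation that, since the Riemannian and complex Laplacians of a K\"ahler metric are related by $\Delta_h=2\Delta_\tau$ (a general K\"ahler identity, cf.\ \eqref{covid20}) and since $X=\nabla^h f$ by Lemma \ref{easy}, the operator $\Delta_\tau+\tfrac{X}{2}\cdot$ is half the drift Laplacian $\Delta_h+X\cdot$ of $(M,h,e^f d\mu_h)$. Applying the chain rule for the drift Laplacian to $e^{-\delta f}$ and using $X\cdot f=|\nabla^h f|^2_h$ then gives
\[
\Bigl(\Delta_\tau+\tfrac{X}{2}\cdot\Bigr)e^{-\delta f}=-\tfrac{\delta}{2}\,e^{-\delta f}\bigl(\Delta_h f+(1-\delta)|\nabla^h f|^2_h\bigr).
\]
Next I would substitute the asymptotic soliton identities of Lemma \ref{lemm-app-sol-id}: both $\Delta_h f+X\cdot f-4n$ and $|\nabla^h f|^2_h-4n$ decay polynomially there (for the latter one also uses $\RR_h=O(f^{-1})$, which holds because $\RR_{\tilde g}=O(t^{-1})$ by Claim \ref{lemma-est-cao-metr-curv} and $\tau$ is $C^\infty$-asymptotic to Cao's soliton $\tilde\omega$). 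This collapses the bracket to $4n(1-\delta)$ plus a decaying remainder, which is the claimed identity; $e^{-\delta f}$ then trivially satisfies that equation and is in particular a sub- and a super-solution.

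For the second assertion, the point is to identify $\Delta_\tau f$ with half the scalar curvature of $h$ up to a fast-decaying error. As in the proofs of Lemma \ref{easy} and Proposition \ref{equationsetup}, since $JX$ is $h$-Killing and $-\tau\lrcorner JX=df$, one has $\tau\lrcorner X=d^{c}f$, hence $\mathcal{L}_X\tau=dd^{c}f$ and $\Lambda_\tau\mathcal{L}_X\tau=2\Delta_\tau f$; together with the K\"ahler identity $\Lambda_\tau\rho_\tau=\tfrac12\RR_h$, contracting the third estimate of Lemma \ref{lemm-app-sol-id} — which says that the soliton obstruction $\rho_\tau-\tfrac12\mathcal{L}_X\tau$ decays like $f^{-1-\varepsilon}$ — yields
\[
\Delta_\tau f=\tfrac12\RR_h+O(f^{-1-\varepsilon}).
\]
It remains to bound $\RR_h$ from below. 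Since $\tau$ and $\tilde\omega$ are asymptotic with all derivatives (by \eqref{covid19} and Proposition \ref{coro-asy-Cao-met}), the ratio of their volume forms and hence $\rho_\tau-\rho_{\tilde\omega}$ decay polynomially, so $\RR_h=\RR_{\tilde g}+O(f^{-1-\varepsilon})$; combining this with $\RR_{\tilde g}\ge c_0/t$ along the end (Lemma \ref{low-pos-bd-scal}) and the comparison $f\sim nt$ (Lemma \ref{tuxedo}) gives $\RR_h\ge c_1/f$ outside a compact set. As the remainder $O(f^{-1-\varepsilon})$ decays strictly faster than $f^{-1}$, we conclude $\Delta_\tau f\ge c/f$ for $f$ large, which in particular shows $\Delta_\tau f>0$ near infinity.

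Finally, \eqref{sub-sol-exp-pot-fct} is the case $\delta=1$ of the first computation, where the $|\nabla^h f|^2_h$-terms cancel exactly, leaving the clean identity $\bigl(\Delta_\tau+\tfrac{X}{2}\cdot\bigr)e^{-f}=-\Delta_\tau f\,e^{-f}$; substituting the bound $\Delta_\tau f\ge c/f$ from the second step gives $\bigl(\Delta_\tau+\tfrac{X}{2}\cdot\bigr)e^{-f}\le-\tfrac{c}{f}e^{-f}$ outside a compact subset of $M$, as required. The crux — and the step most likely to be delicate — is the second one: everything hinges on the soliton obstruction $\rho_\tau-\tfrac12\mathcal{L}_X\tau$ decaying \emph{strictly faster} than $f^{-1}$, so that its contribution to the trace identity does not swamp the $c/t$ positivity coming from Cao's soliton; this is precisely where the existence of a steady gradient K\"ahler--Ricci soliton on the cone is used, and it also requires the somewhat technical comparison of the scalar curvatures of the $C^\infty$-close metrics $h$ and $\tilde g$.
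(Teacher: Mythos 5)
Your proof is correct, and the first and third assertions are handled exactly as in the paper: the same chain-rule computation giving $\bigl(\Delta_{\tau}+\tfrac{X}{2}\cdot\bigr)e^{-\delta f}=-\tfrac{\delta}{2}e^{-\delta f}\bigl(\Delta_{h}f+(1-\delta)|\nabla^{h}f|^{2}_{h}\bigr)$, the substitution of the asymptotic identities of Lemma \ref{lemm-app-sol-id}, and the observation that $\delta=1$ kills the gradient term and leaves $-\Delta_{\tau}f\cdot e^{-f}$.

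Where you genuinely diverge is the middle assertion $\Delta_{\tau}f\geq c/f$. The paper computes $\Delta_{\tau}f$ head-on against the model: it writes $\Delta_{\tau}f=\Delta_{\hat{\omega}}\hat{f}+\Delta_{\hat{\omega}}(f-\hat{f})+(\Delta_{\tau}-\Delta_{\hat{\omega}})f$, reads off the explicit leading term $\Delta_{\hat{\omega}}\hat{f}=\tfrac{4(n-1)}{t}$ from \eqref{covid20}, and controls the two error terms by \eqref{covid19} and the identity $i\partial\bar{\partial}f=\tfrac12\mathcal{L}_{X}\tau$; it never invokes the scalar curvature or Lemma \ref{low-pos-bd-scal}. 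You instead trace the soliton-obstruction estimate of Lemma \ref{lemm-app-sol-id} to get $\Delta_{\tau}f=\tfrac12\RR_{h}+O(f^{-1-\varepsilon})$ (a perturbed version of the trace identity $\Delta_{\omega}f=\RR_{g}/2$ of Lemma \ref{solitonid}), and then import the positivity $\RR_{\tilde{g}}\geq c/t$ from Lemma \ref{low-pos-bd-scal} via the comparison $\RR_{h}=\RR_{\tilde{g}}+O(t^{-1-\varepsilon})$. Both arguments are sound. Yours is more conceptual — it makes transparent that the positivity of $\Delta_{\tau}f$ is the positivity of the scalar curvature of the approximate steady soliton, and it correctly isolates the crux, namely that the obstruction $\rho_{\tau}-\tfrac12\mathcal{L}_{X}\tau$ decays strictly faster than $f^{-1}$. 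The paper's route is more elementary: it needs only the first two estimates of Lemma \ref{lemm-app-sol-id} and the explicit formula for $\Delta_{\hat{\omega}}(nt)$, and avoids the curvature comparison between $h$ and $\tilde{g}$ entirely. A minor caveat on normalisation: with the paper's conventions $\mathcal{L}_{X}\tau=i\partial\bar{\partial}f$ and $\Delta_{\tau}u=\operatorname{tr}_{\tau}\bigl(\tfrac{i}{2}\partial\bar{\partial}u\bigr)$, so $\operatorname{tr}_{\tau}\mathcal{L}_{X}\tau=2\Delta_{\tau}f$ rather than the $dd^{c}f=2i\partial\bar{\partial}f$ you quote; this shifts your constant by a factor of $2$ but affects nothing, since only the sign and the $1/f$ rate enter the conclusion.
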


\begin{proof}
We begin with the following computation:
\begin{equation*}
\begin{split}
\left(\Delta_{\tau}+\frac{X}{2}\cdot\right)e^{-\delta f}&=\left(-\delta\left(\Delta_{\tau}+\frac{X}{2}\cdot\right)f+\delta^2\frac{|X|^2_{h}}{2}\right)e^{-\delta f}\\
&=-\delta\left((1-\delta)\frac{|X|^2_{h}}{2}+\Delta_{\tau}f\right)e^{-\delta f}.
\end{split}
\end{equation*}
In particular, notice that for $\delta\in(0,\,1)$,
\begin{equation*}
\begin{split}
\left(\Delta_{\tau}+\frac{X}{2}\cdot\right)e^{-\delta f}&=-\delta\left((1-\delta)\frac{4n}{2}+O(f^{-1})+\Delta_{\tau}f\right)e^{-\delta f}\\
&=-\delta(1-\delta)\frac{4n}{2}e^{-\delta f}+O(f^{-1})e^{-\delta f}
\end{split}
\end{equation*}
by Lemma \ref{lemm-app-sol-id}. If $\delta=1$, then we have that
\begin{equation*}
\begin{split}
\left(\Delta_{\tau}+\frac{X}{2}\cdot\right)e^{- f}&=-\Delta_{\tau}f\cdot e^{-f}.
\end{split}
\end{equation*}
Finally, using equations \eqref{covid20} and \eqref{covid19} together with Lemma \ref{tuxedo}, we find that
\begin{equation*}
\begin{split}
\Delta_{\tau}f&=\Delta_{\hat{\omega}}f+(\Delta_{\tau}-\Delta_{\hat{\omega}})f\\
&=\Delta_{\hat{\omega}}(f-\hat{f})+\underbrace{\Delta_{\hat{\omega}}\hat{f}}_{=\,\frac{4(n-1)}{t}}+(\Delta_{\tau}-\Delta_{\hat{\omega}})f\\
&=\frac{4(n-1)}{t}+\Delta_{\hat{\omega}}(f-\hat{f})+(\Delta_{\tau}-\Delta_{\hat{\omega}})f\\
&=\frac{4(n-1)}{t}+\hat{\omega}\ast i\partial\bar{\partial}(f-\hat{f})+(\tau-\hat{\omega})\ast i\partial\bar{\partial}f\\
&=\frac{4(n-1)}{t}+\frac{1}{2}\underbrace{
\hat{\omega}\ast\mathcal{L}_{X}(\tau-\hat{\omega})}_{=\,O\left(t^{-1-\varepsilon}\right)}+\frac{1}{2}(\tau-\hat{\omega})\ast\mathcal{L}_{X}\tau\\
&=\frac{4(n-1)}{t}+O(t^{-1-\varepsilon})+\frac{1}{2}\underbrace{(\tau-\hat{\omega})\ast\mathcal{L}_{X}(\tau-\hat{\omega})}_{=\,O\left(t^{-1-2\varepsilon}\right)}
+\frac{1}{2}\underbrace{(\tau-\hat{\omega})\ast\mathcal{L}_{X}\hat{\omega}}_{=\,O\left(t^{-1-\varepsilon}\right)}\\
&\geq\frac{c}{t}\geq\frac{c}{f}
\end{split}
\end{equation*}
outside a compact subset of $M$.
\end{proof}

\subsection{Function spaces}\label{section-fct-spa-exp}
We make the following definitions.
\begin{itemize}
\item The \textit{drift Laplacian} (\emph{with respect to $X$}) is defined as
$$\Delta_{\tau,\,X}T:=\Delta_{\tau} T+\nabla^h_{X}T,$$
where $T$ is a tensor on $M$, $\nabla^h$ is the complex linear extension of the Levi-Civita connection of $h$, and
$\Delta_{\tau}$ denotes the Laplacian associated to $h$. In normal coordinates, $\Delta_{\tau}$
takes the form $$\Delta_{\tau}:=\frac{1}{2}\left(\nabla^h_{i}\nabla^h_{\bar{\imath}}+\nabla^h_{\bar{\imath}}\nabla^h_i\right).$$
Recall that the Laplacian acting on functions is given by
\begin{equation*}
\begin{split}
\Delta_{\tau}u&=h^{i\bar{\jmath}}\partial_i\partial_{\bar{\jmath}}u=\tr_{\tau}\left(\frac{i}{2}\partial\bar{\partial}u\right)
\end{split}
\end{equation*}
for $u\in C^{\infty}(M)$ a smooth real-valued function on $M$. Here, the trace operator $\tr_{\tau}$ on $(1,1)$-forms is defined by
\begin{equation*}
\begin{split}
\tr_{\tau}(\alpha):=\frac{n\tau^{n-1}\wedge\alpha}{\tau^n}=h^{i\bar{\jmath}}\alpha_{i\bar{\jmath}},
\end{split}
\end{equation*}
where $\alpha=\frac{i}{2}\alpha_{j\bar{k}}dz^j\wedge dz^{\bar{k}}$ is a $(1,1)$-form on $M$.
\end{itemize}
For clarity, we will omit the reference to the background K\"ahler metric $h$ or to the associated K\"ahler form $\tau$
when there is no possibility of confusion.

\begin{itemize}
\item For $\beta\in\R$ and $k$ a non-negative integer, define $C_{X}^{2k}(M)$ to be the space of $JX$-invariant continuous functions $u$ on $M$ with $2k$ continuous derivatives such that
\begin{equation*}
\norm{u}_{C^{2k}_{X}} :=\sum_{i+2j\,\leq\,2k}\sup_{M}\left|f^{\frac{i}{2}+j}(\nabla^{h})^i\left(\mathcal{L}_{X}^{(j)}u\right)\right|_{h} < \infty,
\end{equation*}
where $$\mathcal{L}_{X}^{(j)}u:=\underbrace{X\cdot...\cdot X\cdot}_{\text{$j$-times}}u.$$
Set $C_{X}^{\infty}(M):=\bigcap_{k\,\geq\,0}C_{X}^{2k}(M)$.\\

\item Let $\delta(h)$ denote the injectivity radius of $h$, write $d_h(x,\,y)$ for the distance with respect to $h$ between two points $x,\,y\in M$,
and let $\phi^{X}_{t}$ denote the flow of $X$ for time $t$. A tensor $T$ on $M$ is said to be in $C^{0,\,2\alpha}(M)$, $\alpha\in\left(0,\,\frac{1}{2}\right)$, if
 \begin{equation*}
 \begin{split}
\left[ T\right]_{C^{0,\,2\alpha}}:=&\sup_{\substack{x\,\neq\,y\,\in\,M \\d_{h}(x,y)\,<\,\delta(h)}}\left[\min(f(x),f(y))^{\alpha}\frac{\arrowvert T(x)-P_{x,\,y}T(y)\arrowvert_h}{d_h(x,y)^{2\alpha}}\right]\\
&+\sup_{\substack{x\,\in\,M \\ t\,\neq\,s\,\geq\,1}}\left[\min(t,s)^{\alpha}\frac{\arrowvert(\phi^X_t)_{\ast}T(x)-(\widehat{P}_{\phi^{X}_s(x),\,\phi^X_t(x)}
((\phi^X_s)_{\ast}T(x)))\arrowvert_h}{|t-s|^{\alpha}}\right]<+\infty,
\end{split}
\end{equation*}
where $P_{x,\,y}$ denotes parallel transport along the unique geodesic joining $x$ and $y$, and
$\widehat{P}_{\phi^X_s(x),\,\phi^X_t(x)}$ denotes parallel transport along the unique flow-line of $X$ joining $\phi^X_s(x)$ and $\phi^X_t(x)$.

\item For $k$ a non-negative integer and $\alpha\in\left(0,\,\frac{1}{2}\right)$, define the H\"older space $C_{X}^{2k,\,2\alpha}(M)$ to be the set of $u\in C_{X}^{2k}(M)$ for which the norm
\begin{equation*}
\norm{u}_{C_{X}^{2k,\,2\alpha}}:=\norm{u}_{C^{2k}_{X}} +\sum_{i+2j\,=\,2k}\left[\left(\nabla^{h}\right)^i\left(\mathcal{L}_{X}^{(j)}u\right)\right]_{C^{0,\,2\alpha}}
\end{equation*}
is finite.

Similarly, define the H\"older space $C_{X,\,\exp}^{2k,\,2\alpha}(M)$ with exponential weight $e^f$ to be the set of $u\in C_{\operatorname{loc}}^{2k}(M)$ for which the norm
\begin{equation*}
\norm{u}_{C_{X,\,\exp}^{2k,\,2\alpha}}:=\left\|e^f\cdot u\right\|_{C^{2k,\,2\alpha}_X}
\end{equation*}
is finite. It is straightforward to check that the space $C^{2k,\,2\alpha}_{X,\,\exp}(M)$ is a Banach space.
We set $C_{X,\,\exp}^{\infty}(M):=\bigcap_{k\,\geq\,0}C_{X,\,\exp}^{2k}(M).$

\item Finally, we define the spaces
\begin{equation*}
\begin{split}
\mathcal{M}^{2k+2,\,2\alpha}_{X,\,\exp}(M)&:=\left\{\psi\in C^2_{\operatorname{loc}}(M)\,|\,\tau_{\psi}:=\tau+i\partial\bar{\partial}\psi>0\right\}\cap C^{2k+2,\,2\alpha}_{X,\,\exp}(M)
\end{split}
\end{equation*}
and
\begin{equation}
\mathcal{M}^{\infty}_{X,\,\exp}(M):=\left\{\psi\in C_{\operatorname{loc}}^{\infty}(M)\,|\,\tau_{\psi}>0
\quad\textrm{and}\quad\psi\in C^{\infty}_{X,\,\exp}(M)\right\}.\label{defn-M-exp-space}
\end{equation}
\end{itemize}

We remark that in \cite{olivier}, the choice of function spaces differs from ours for the case of compactly supported data in that
they work with much larger function spaces where the functions have exponential decay $e^{-\delta f}$, $\delta\in(0,\,1)$.
Their function spaces do have an advantage over ours; mixed polynomial and exponential weights do not appear in their analysis
of the isomorphism properties of the drift Laplacian as is the case for us in Theorem \ref{iso-sch-Laplacian-exp}.

\subsection{Preliminaries and Fredholm properties of the linearised operator}\label{sec-Fredo-prop-exp}
We proceed with the same set-up as in Section \ref{section-fct-spa-exp}.

Define the following map as in \cite{siepmann}:
\begin{equation*}
\begin{split}
MA_{\tau}:\psi\in&\left\{\varphi\in C^2_{\operatorname{loc}}(M)\,|\,\tau_{\varphi}:=\tau+i\partial\bar{\partial}\varphi>0\right\}\mapsto\log\left(\frac{\tau_{\psi}^n}{\tau^n}\right)
+\frac{X}{2}\cdot\psi\in\mathbb{R}.
\end{split}
\end{equation*}
For any $\psi\in C_{\operatorname{loc}}^{2}(M)$, let
$h_{\psi}$ (respectively $h_{s\psi}$) denote the K\"ahler metric associated to the K\"ahler form $\tau_{\psi}$
(resp.~$\tau_{s\psi}$ for any $s\in[0,\,1]$).
Brute force computations show that
\begin{equation}
\begin{split}
MA_{\tau}(0)&=0,\nonumber\\
 D_{\psi}MA_{\tau}(u)&=\Delta_{\tau_{\psi}}u+\frac{X}{2}\cdot u,\quad u\in C^2_{\operatorname{loc}}(M),\nonumber\\
 \frac{d^2}{ds^2}\left(MA_{\tau}(s\psi)\right)&=\frac{d}{ds}(\Delta_{\tau_{s\psi}}\psi)=-\arrowvert  \partial\bar{\partial}\psi\arrowvert^2_{h_{s\psi}}\quad\textrm{for $s\in[0,\,1]$},\label{equ:sec-der}
 \end{split}
 \end{equation}
\begin{equation}\label{equ:taylor-exp}
 \begin{split}
 MA_{\tau}(\psi)&=MA_{\tau}(0)+\left.\frac{d}{ds}\right|_{s\,=\,0}MA_{\tau}(s\psi)+\int_0^1\int_0^{u}\frac{d^2}{ds^2}(MA_{\tau}(s\psi))\,ds\,du\\
 &=\Delta_{\tau}\psi+\frac{X}{2}\cdot\psi-\int_0^1\int_0^{u}\arrowvert \partial\bar{\partial}\psi\arrowvert^2_{h_{s\psi}}\,ds\,du.
 \end{split}
 \end{equation}

We state the first property of the drift Laplacian that we require,
namely that it is an isomorphism between exponentially weighted function spaces.

\begin{theorem}\label{iso-sch-Laplacian-exp}
Let $\alpha\in\left(0,\,\frac{1}{2}\right)$ and $k\in\mathbb{N}$. Then
\begin{equation*}
\begin{split}
\Delta_{\tau}+\frac{X}{2}\cdot:C^{2k+2,\,2\alpha}_{X,\,\exp}(M)\rightarrow f^{-1}\cdot C^{2k,\,2\alpha}_{X,\,\exp}(M)
\end{split}
\end{equation*}
is an isomorphism of Banach spaces.
\end{theorem}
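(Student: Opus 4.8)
The plan is to prove injectivity, then build a right inverse by a barrier/approximation scheme, and finally upgrade to the claimed Schauder isomorphism. For \emph{injectivity}, suppose $u\in C^{2k+2,2\alpha}_{X,\exp}(M)$ satisfies $\Delta_\tau u+\tfrac{X}{2}\cdot u=0$. Then $e^f u$ is bounded, so $u$ decays like $e^{-f}$. Using \eqref{sub-sol-exp-pot-fct} from Lemma \ref{lemm-barr-inf}, which gives $(\Delta_\tau+\tfrac X2\cdot)e^{-f}\le -\tfrac cf e^{-f}<0$ outside a compact set $K$, the function $\pm Ae^{-f}$ is a strict supersolution (resp. subsolution) for any $A>0$; comparing $u$ with $Ae^{-f}$ on $M\setminus K$ for the smallest $A$ making $|u|\le Ae^{-f}$ on $\partial K$ and using the maximum principle (the interior maximum of $u/e^{-f}$, were it attained in $M\setminus K$, would contradict the strict sign of the drift Laplacian on $e^{-f}$), we force $A=0$, hence $u\equiv0$ on $M\setminus K$, and then $u\equiv0$ by the strong maximum principle on the connected manifold $M$ (Lemma \ref{pluri}-type unique continuation is not even needed here).

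\textbf{Surjectivity.} Given $v\in f^{-1}\cdot C^{2k,2\alpha}_{X,\exp}(M)$, so that $|e^f v|\le C f^{-1}$ with all $X$-Lie and covariant derivatives controlled, I would solve the Dirichlet problem $(\Delta_\tau+\tfrac X2\cdot)u_R=v$ on the exhausting domains $\Omega_R:=\{f<R\}$ with $u_R|_{\partial\Omega_R}=0$. The key uniform estimate is a global weighted barrier bound: since $v=O(f^{-1}e^{-f})$ and, by Lemma \ref{lemm-barr-inf}, $(\Delta_\tau+\tfrac X2\cdot)(\delta^{-1}e^{-f})$ is comparable to $-\tfrac{c}{f}e^{-f}$ up to lower-order terms, the function $B e^{-f}$ is for $B$ large a supersolution of $(\Delta_\tau+\tfrac X2\cdot)w=-|v|$ outside $K$; combined with a constant barrier on $K$ this yields $|u_R|\le C e^{-f}$ uniformly in $R$. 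Local elliptic (Schauder) estimates for the uniformly elliptic operator $\Delta_\tau+\tfrac X2\cdot$ — applied in the flow coordinates of $X$ so that the weighted H\"older norms with the $f^{i/2+j}$ and $\min(f(x),f(y))^\alpha$ weights are exactly the scale-invariant local norms, exactly as in \cite{siepmann} — promote this to $\|u_R\|_{C^{2k+2,2\alpha}_{X,\exp}}\le C(\|v\|_{f^{-1}C^{2k,2\alpha}_{X,\exp}})$ with $C$ independent of $R$. A diagonal/Arzel\`a--Ascoli argument extracts a limit $u\in C^{2k+2,2\alpha}_{X,\exp}(M)$ solving $(\Delta_\tau+\tfrac X2\cdot)u=v$ on all of $M$, with the norm bound passing to the limit.

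\textbf{Assembling the isomorphism.} Boundedness of $\Delta_\tau+\tfrac X2\cdot$ from $C^{2k+2,2\alpha}_{X,\exp}(M)$ to $f^{-1}\cdot C^{2k,2\alpha}_{X,\exp}(M)$ is a direct check: differentiating $i\partial\bar\partial u$ and commuting derivatives past $\Delta_\tau$ costs curvature terms of $h$, which by Lemma \ref{lemm-app-sol-id} (the $\mathcal L_X\tau\in C^\infty_{X,1}$, $\rho_\tau-\tfrac12\mathcal L_X\tau\in C^\infty_{X,1+\varepsilon}$ estimates) decay like $f^{-1}$, producing exactly the extra $f^{-1}$ weight in the target; the exponential weight $e^f$ is untouched since $X\cdot f=|X|_h^2$ is bounded and $e^f$ commutes through in the usual way. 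Together with injectivity and surjectivity, the open mapping theorem gives that $\Delta_\tau+\tfrac X2\cdot$ is a Banach space isomorphism.

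\textbf{Main obstacle.} The delicate point is the \emph{uniformity in $R$} of the weighted H\"older estimate, and in particular the appearance of the mixed polynomial-exponential weight $f^{-1}\cdot e^{-f}$: one must verify that the constant barrier $Be^{-f}$ dominates the particular solution \emph{with room to spare in the $f^{-1}$ factor}, which uses precisely that $\Delta_\tau f\ge c/f>0$ outside a compact set (Lemma \ref{lemm-barr-inf}) rather than merely $\Delta_\tau f\ge 0$; this is where the \emph{Additional assumption} \eqref{add-ass} and Corollary \ref{tarea} (the positive bottom of the weighted spectrum for the steady-soliton metric at infinity, transported to $h$) enter, ensuring the relevant weighted $L^2$ pairings are finite and the Poincar\'e-type coercivity needed to rule out slowly-decaying tails holds. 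The bookkeeping of the higher-order weights $f^{i/2+j}$ under commutators $[\nabla^h,\Delta_\tau]$ and $[\mathcal L_X,\Delta_\tau]$ — routine given Lemma \ref{lemm-app-sol-id} but lengthy — is the other place where care is required.
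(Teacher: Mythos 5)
There is a genuine gap in your surjectivity argument: the uniform-in-$R$ bound for the Dirichlet approximations $u_R$ on the compact core. Your barrier $Be^{-f}$ is only a supersolution \emph{outside} a compact set $K$ (this is all that \eqref{sub-sol-exp-pot-fct} gives, since $\Delta_\tau f\ge c/f$ holds only at infinity and $f$ has critical points where $X=0$), so the comparison on $\Omega_R\setminus K$ requires an a priori bound on $\max_{\partial K}|u_R|$ that is uniform in $R$. Your proposed ``constant barrier on $K$'' cannot supply this: constants lie in the kernel of $\Delta_\tau+\tfrac{X}{2}\cdot$, hence are not supersolutions of $(\Delta_\tau+\tfrac{X}{2}\cdot)w\le -|v|$ on $K$ when $v\not\equiv 0$, and there is no global supersolution of the form $Be^{-\delta f}+C$ because the sign of $\Delta_\tau f$ is uncontrolled on $K$. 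The paper closes exactly this gap by first deriving a weighted $L^2$ energy estimate from the Poincar\'e inequality of Corollary \ref{tarea} (this is where the additional assumption \eqref{add-ass} enters), then running a \emph{local} Nash--Moser iteration to convert the $L^2$ bound into a $C^0$ bound on interior balls, and only then deploying the $Ae^{-f}$ barrier on $M\setminus K$ with $A$ chosen so that $\max_{\partial K}(u_R-Ae^{-f})\le 0$. You correctly name Corollary \ref{tarea} in your ``main obstacle'' paragraph, but your actual argument never uses it where it is indispensable; as written, the chain of estimates does not close.

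A secondary, more technical shortfall: your claim that plain local elliptic Schauder estimates ``in the flow coordinates of $X$'' yield the weighted $C^{2k+2,2\alpha}_{X,\exp}$ bound is too quick. The H\"older seminorm in these spaces carries a parabolic-type term along the flow of $X$ (the $|t-s|^{\alpha}$ quotient), and the transverse geometry collapses (the $tg^{T}$ factor), so the paper first conjugates by $e^{-f}$ — which flips the drift and produces a zeroth-order term $-\Delta_h f$ — and then rescales along the flow of $X$ and applies interior \emph{parabolic} Schauder estimates for transversally elliptic operators. You should also record the verification that the solution is $JX$-invariant (the paper does this by a maximum-principle comparison of $\varphi$ with $(\phi^{JX}_t)^*\varphi$), since the target space consists of $JX$-invariant functions. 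Your injectivity argument and the boundedness of the operator are fine, if slightly over-engineered (injectivity already follows from the ordinary maximum principle applied to a kernel element decaying at infinity).
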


\begin{proof}
We first prove surjectivity.
Let $F\in f^{-1}\cdot C^{2k,\,2\alpha}_{X,\,\exp}(M)$.
Then for $R>0$ sufficiently large such that the level sets $\{f\,=\,R\}$ are smooth closed hypersurfaces of $M$ (recall that $f$ is proper and bounded from below by Lemma
\ref{easy}), let $u_R:\{f\,\leq\,R\}\rightarrow\mathbb{R}$ be the solution of the following Dirichlet problem:
\begin{equation}\label{dir-pb-drift-lap}
\left\{
      \begin{aligned}
        \Delta_{\tau}u_R+\frac{X}{2}\cdot u_R&=F\qquad\textrm{on $\{f\,<\,R\}$},\\
        u_R&=0\qquad\textrm{on $\{f\,=\,R\}$}.
      \end{aligned}
    \right.
\end{equation}
Applying Corollary \ref{tarea} (with $\alpha=0$) to the measure $e^f\tau^n$ and the function $u_R$, we see that
\begin{equation}
\begin{split}\label{poincare-a-priori-lin-est}
\lambda_1\int_{\{f\,\leq\,R\}}|u_R|^2\,e^f\tau^n&\leq \int_{\{f\,\leq\,R\}}|\nabla^{h}u_R|^2_h\,e^f\tau^n\\
&= 2\int_{\{f\,\leq\,R\}}\left(-\Delta_{\tau}u_R-\frac{X}{2}\cdot u_R\right)u_R\,e^f\tau^n\\
&=2\int_{\{f\,\leq\,R\}}(-F)u_R\,e^f\tau^n
\end{split}
\end{equation}
for some constant $\lambda_1>0$ independent of $R$. Using H\"older's inequality on the right-hand side of \eqref{poincare-a-priori-lin-est}, we obtain
the following a priori energy estimate:
\begin{equation}\label{poincare-a-priori-lin-est-bis}
\int_{\{f\,\leq\,R\}}|u_R|^2\,e^f\tau^n\leq c(n,\,\tau)\int_M|F|^2\,e^f\tau^n,
\end{equation}
the right-hand side of which is finite because $F=O(f^{-1}e^{-f})$ implies that $F\in L^2(e^f\tau^n)$.

Next, let $x\in\{f\,<\,R\}$ be such that $B_{h}(x,\,r)\Subset \{f\,<\,R\}$.
We perform a local Nash-Moser iteration on \eqref{dir-pb-drift-lap} in $B_{h}(x,\,r)$.
More precisely, since $(M^{2n},\,h)$ is a Riemannian manifold with Ricci curvature bounded from below,
the results of \cite{Sal-Cos-Uni-Ell} give the following local Sobolev inequality:
\begin{equation*}
\begin{split}
\left(\frac{1}{\operatorname{vol}_{h} (B_{h}(x,\,r))}\int_{B_{h}(x,\,r)}\arrowvert\varphi\arrowvert^{\frac{2n}{n-1}}\,\tau^{n}\right)^{\frac{n-1}{n}}\leq \left(\frac{C(r_0)r^2}{\operatorname{vol}_{h}(B_{h}(x,\,r))}\int_{B_{h}(x,\,r)}\arrowvert\nabla^{h}\varphi\arrowvert_{h}^2\,\tau^{n}\right)
\end{split}
\end{equation*}
for any $\varphi\in H^1_0(B_{h}(x,\,r))$ and for all $x\in M$ and $0<r<r_0$, where $r_0$ is some fixed positive radius.
Now, as $|X|_{h}$ is bounded, the oscillation of $f$ is bounded on $B_{h}(x,\,r)$ by a constant depending only on $r_0$, and so we have the following local weighted Sobolev inequality:
\begin{equation}
\begin{split}
\left(\frac{1}{\operatorname{vol}_{f}(B_{h}(x,\,r))}\int_{B_{h}(x,\,r)}\arrowvert\varphi\arrowvert^{\frac{2n}{n-1}}\,e^f\tau^{n}\right)^{\frac{n-1}{n}}\leq \left(\frac{C(r_0,\tau,n)r^2}{\operatorname{vol}_{f}(B_{h}(x,\,r))}\int_{B_{h}(x,\,r)}\arrowvert\nabla^{h}\varphi\arrowvert_{h}^2\,e^f\tau^{n}\right)\label{sob-inequ-loc-weight}
\end{split}
\end{equation}
for any $\varphi\in H^1_0(B_{h}(x,\,r))$ and for all $x\in M$ and $0<r<r_0$, where $\operatorname{vol}_{f}(B_{h}(x,\,r)):=\int_{B_{h}(x,\,r)}e^f\tau^{n}$.

A Nash-Moser iteration proceeds in several steps. First, one multiplies \eqref{dir-pb-drift-lap}
across by\linebreak  $\eta_{s,\,s'}^2u_R|u_R|^{2(p-1)}$ with $p\geq 1$, where $\eta_{s,\,s'}$, with $0<s+s'<r$ and $s,s'>0$,
is a Lipschitz cut-off function with compact support in $B_{h}(x,\,s+s')$ equal to $1$ on $B_{h}(x,s)$ and with
$|\nabla^{h}\eta_{s,\,s'}|_{h}\leq\frac{1}{s'}$ almost everywhere. One then integrates by parts and uses
the Sobolev inequality of \eqref{sob-inequ-loc-weight} to obtain a so-called ``reversed H\" older inequality'' which, after iteration, leads to the bound
\begin{equation}
\begin{split}
\sup_{B_{h}(x,\,\frac{r}{2})}|u_R|\leq& C\left(\|u_R\|_{L^2(B_{h}(x,\,r),\,e^f\tau^{n})}+\|F\|_{L^{\infty}(B_{h}(x,\,r))}\right)\\
\leq&C\left(\|F\|_{L^2(e^f\tau^{n})}+\|F\|_{C^0}\right)\\
\leq&C\|f\cdot F\|_{C^0_{X,\,\exp}}\label{first-a-priori-c-0-est-cpct-part}
\end{split}
\end{equation}
for $r\leq r_0$, where $C=C(r_0,\tau,n)$. Here we have made use of \eqref{poincare-a-priori-lin-est-bis} in the second line.
This estimate yields an a priori $C^0$-estimate on any fixed subdomain of $\{f\,<\,R\}$.

As for the weighted a priori estimate, observe from [\eqref{sub-sol-exp-pot-fct}, Lemma \ref{lemm-barr-inf}] that
\begin{equation}
\begin{split}\label{sup-diff-inequ-exp-0}
\left(\Delta_{\tau}+\frac{X}{2}\cdot\right)e^{-f}&\leq -\frac{c}{f}e^{-f}
\end{split}
\end{equation}
outside a fixed compact subset $K\subset M$ with smooth boundary independent of $u_R$. Choose $R>0$ sufficiently large so that $K\subset\{f<R\}$.
Then on combining \eqref{dir-pb-drift-lap} and \eqref{sup-diff-inequ-exp-0}, we see that for any positive constant $A$,
\begin{equation}
\begin{split}\label{sub-diff-inequ-psi-barrier-0}
&\left(\Delta_{\tau}+\frac{X}{2}\cdot\right)\left(u_R-Ae^{-f}\right)\geq  \frac{c\cdot A}{f}e^{-f}-\|fe^fF\|_{C^0}\frac{e^{-f}}{f}
\end{split}
\end{equation}
on $\{f\,<\,R\}\setminus K$. In particular, choosing $A$ so that $A>c^{-1}\|fe^fF\|_{C^0}$,
the maximum principle applied to \eqref{sub-diff-inequ-psi-barrier-0} shows that
\begin{equation*}
\sup_{\{f\,<\,R\}\setminus K}(u_R-Ae^{-f})=\max\left\{-Ae^{-R},\,\max_{\partial K}(u_R-Ae^{-f})\right\},
\end{equation*}
since $u_R$ vanishes along the boundary component $\{f\,=\,R\}$. Now, by \eqref{first-a-priori-c-0-est-cpct-part},
$$\max_{\partial K}(u_R-Ae^{-f})\leq C-Ae^{-\max_{\partial K}f}$$ for some uniform constant $C$.
As a consequence, one can choose $A$ large enough such that $$\max_{\partial K}\left(u_R-Ae^{-f}\right)\leq 0.$$
This establishes the expected a priori weighted upper bound. Applying the same line of reasoning to $-u_R$, we obtain a similar a priori lower bound for $u_R$. Thus, we arrive at the following linear a priori estimate:
 \begin{equation}\label{a-priori-wei-c-0-lin-est}
 \left\|e^f\cdot u_R\right\|_{C^0(\{f\,\leq\,R\})}\leq c(n,\,\tau)\left\|fe^f\cdot F\right\|_{C^0}
 \end{equation}
for any $R$ sufficiently large.

To achieve a priori local estimates on higher derivatives of $u_R$, we invoke standard elliptic Schauder estimates on each ball $B_{h}(x,\,\delta)$ with $2\delta=\inj_{h}(M)>0$ compactly contained in $\{f\,<\,R\}$. This results in the bound
 \begin{equation*}
\sup_{B_{h}(x,\,\delta)\Subset \{f\,<\,R\}} e^{f(x)}\|u_R\|_{C^{2k+2,\,2\alpha}_{\operatorname{loc}}(B_{h}(x,\,\delta))}\leq C\left(n,k,\alpha,\tau,\|f\cdot F\|_{C^{2k,\,2\alpha}_{X,\,\exp}}\right),
 \end{equation*}
which, via the Arzel\`a-Ascoli theorem, gives rise to a subsequence still denoted by $(u_R)_{R\,\geq\,R_0}$
that converges to a function $u\in C^{2k+2,\,2\alpha}_{\operatorname{loc}}(M)$
in the $C^{2k+2,\,2\alpha'}_{\operatorname{loc}}$-topology for any $\alpha'\in(0,\,\alpha)$ satisfying
 \begin{equation*}
 \begin{split}
        \Delta_{\tau}u+\frac{X}{2}\cdot u&=F\qquad\text{on $M$},\\
       \sup_{x\in M} e^{f(x)}\|u\|_{C^{2k+2,\,2\alpha}_{\operatorname{loc}}(B_{h}(x,\,\delta))}&\leq C\left(n,k,\alpha,\tau,\|f\cdot F\|_{C^{2k,\,2\alpha}_{X,\,\exp}}\right).
\end{split}
\end{equation*}

Before proving a priori weighted estimates on higher derivatives of the solution $u$, we need to verify that
the operator $\Delta_{\tau}+\frac{X}{2}\cdot$ remains surjective when restricted to the set of $JX$-invariant functions.
This essentially follows from the maximum principle. Indeed, let $F\in f^{-1}\cdot C^{2k,\,2\alpha}_{X,\,\exp}(M)$,
let $\varphi\in C^{2k+2,\,2\alpha}_{X,\,\exp}(M)$ be a solution to
     \begin{equation}\label{eqn-drift-iso}
     \left(\Delta_{\tau}+\frac{X}{2}\cdot\right)\varphi=F,
     \end{equation}
and let $(\phi^{JX}_t)_t$ denote the flow generated by $JX$. Then, since $F$, $X$, and $JX$
are all $JX$-invariant, the function $\varphi_t:=(\phi^{JX}_t)^*\varphi$ also satisfies
\eqref{eqn-drift-iso}. The function $\varphi-\varphi_t$ therefore lies in the kernel of $\Delta_{\tau}+\frac{X}{2}\cdot$.
But $\varphi-\varphi_t$ also tends to $0$ at infinity. Hence,
we deduce from the maximum principle that $\varphi_t=\varphi$ for every $t\in\mathbb{R}$.
In other words, $\varphi$ is $JX$-invariant, as claimed.

In order to obtain a priori weighted estimates on higher derivatives of $u$,
we need to re-interpret the elliptic equation $\Delta_hu+X\cdot u=2F$ as a parabolic one. First, we
conjugate the operator $\Delta_{h}+X\cdot$ with the exponential weight $e^{-f}$ to obtain
\begin{equation*}
\begin{split}
(e^f\circ\left(\Delta_{h}+X\cdot\right)\circ e^{-f})(\widetilde{u})=\Delta_{h}\widetilde{u}-X\cdot \widetilde{u}-\Delta_{h}f\cdot \widetilde{u},
\end{split}
\end{equation*}
where $\widetilde{u}:=e^{f}\cdot u$. Thus, the function $\widetilde{u}$ is a solution of the equation
\begin{equation}\label{comp-conj-op-exp}
\Delta_{h}\widetilde{u}-X\cdot \widetilde{u}-\Delta_{h}f\cdot \widetilde{u}=2e^fF=:\widetilde{F}\in f^{-1}\cdot C^{2k,\,2\alpha}_{X}(M).
\end{equation}
Next, let $(\phi_t^{X})_{t\,\in\,\mathbb{R}}$ be the flow generated by the vector field $X$, a complete
flow since $X$ is complete. Then as in \cite{Bre-Rot}, let $(r_m)_{m}$ be a sequence of radii tending to $+\infty$
 and define $h_m(s):=r_m^{-1}(\phi^{X}_{s r_m})^*h$, $\widetilde{u}_m:=(\phi^{X}_{s r_m})^*u$, and $\widetilde{F}_m:=r_m(\phi^{X}_{s r_m})^*\widetilde{F}$. Then the sequence of functions
 $(\widetilde{u}_m(s))_{s\,\in\,\mathbb{R}}$ satisfies
\begin{equation}\label{sol-drift-lap-exp-rescaled}
 \begin{split}
        \partial_{s}\widetilde{u}_m-\Delta_{h_m(s)}\widetilde{u}_m+\Delta_{h_m(s)}f_m\cdot \widetilde{u}_m&=-\widetilde{F}_m\qquad\text{on $M$},\\
        f_m(s)&:=(\phi^{X}_{sr_m})^*f.
        \end{split}
\end{equation}
By Lemma \ref{lemm-app-sol-id}, the vector field $r_m^{\frac{1}{2}}X$ converges smoothly to the constant vector field $4\partial_t$ on $\mathbb{R}_+$ and the operator $\Delta_{h_m(s)}$ acting on $JX$-invariant functions is asymptotic to the operator $\frac{4}{n}\frac{\partial^{2}}{\partial t^{2}}+\frac{1}{n(1+4s)}\Delta_{B}$ as $r_m$ tends to $+\infty$ for $s\in[-\delta(n),0]$, with $\delta(n)$ positive and small enough such that
\begin{equation*}
\phi^{X}_{s r_m}\left(\left\{r_m-\sqrt{r_m}\leq f\leq r_m+\sqrt{r_m}\right\}\right)\subset \left\{\frac{r_m}{2}\leq f\leq \frac{3}{2}r_m\right\},\qquad s\in[-\delta(n),0].
\end{equation*}
Moreover, by Lemma \ref{lemm-app-sol-id}, $\Delta_{h_m(s)}f_m$ and the derivatives thereof are uniformly bounded on $\left\{\frac{r_m}{2}\leq f\leq \frac{3}{2}r_m\right\}$.

To conclude, we apply interior parabolic Schauder estimates for transversally elliptic operators to \eqref{sol-drift-lap-exp-rescaled} which leads to the estimate
\begin{equation}\label{sch-est-loc-int-resc}
\|\widetilde{u}_m\|_{C^{2k+2,\,2\alpha}(\Omega_m(0))}\leq c(n,\,\tau)\left(\|\widetilde{F}_m\|_{C^{2k,\,2\alpha}(\Omega_m(\delta(n))}+\|\widetilde{u}_m\|_{C^{0}(\Omega_m(\delta(n)))}\right),
\end{equation}
where $\Omega_m(s):=[-s,\,0]\times \left\{r_m-\sqrt{r_m}\leq f\leq r_m+\sqrt{r_m}\right\}$.
Such parabolic Schauder estimates can be derived along the same lines as elliptic Schauder estimates as in \cite[Section $3.5.6$]{ ElKacimi}.
Tracking the scaling properties of the various Schauder norms involved in \eqref{sch-est-loc-int-resc}, we reach the desired conclusion. Indeed,
we find that
\begin{equation*}
\begin{split}
\left\|e^f\cdot u\right\|_{C^{2k+2,\,2\alpha}(\Omega_m(0))}&\leq c(n,\,\tau)r_m\left\|e^f\cdot F\right\|
_{C^{2k,\,2\alpha}(\widetilde{\Omega}_m(\delta(n)))}+c(n,\,\tau)\left\|e^f\cdot u\right\|_{C^{0}(\widetilde{\Omega}_m(\delta(n)))}\\
&\leq c(n,\,\tau)\|f\cdot F\|_{C^{2k,\,2\alpha}_{X,\,\exp}},
\end{split}
\end{equation*}
where $\widetilde{\Omega}_m(\delta(n)):=\cup_{s\in[-\delta(n),\,0]}
\phi^{X}_{s r_m}\left(\left\{r_m-\sqrt{r_m}\leq f\leq r_m+\sqrt{r_m}\right\}\right)$.
Here we have made use of the $C^0$-estimate \eqref{a-priori-wei-c-0-lin-est} on $u$ (once we allow $R\to+\infty$) in the last line.
This proves that $u\in C^{2k+2,\,2\alpha}_{X,\,\exp}(M)$.
\end{proof}

\subsection{Small perturbations of steady gradient K\"ahler-Ricci solitons: exponential case}
In this section we show, using the implicit function theorem, that the
invertibility of the drift Laplacian given by Theorem \ref{iso-sch-Laplacian-exp}
allows for small perturbations in exponentially weighted function spaces
of solutions to the complex Monge-Amp\`ere
equation that we wish to solve. This forms the openness part of the continuity method as will be explained later in Section \ref{apriori}.
We have:
\begin{theorem}\label{Imp-Def-Kah-Ste-exp}
Let $F_0\in f^{-1}\cdot C^{\infty}_{X,\,\exp}(M)$ and let $\psi_0\in\mathcal{M}^{\infty}_{X,\,\exp}(M)$ be a solution of the complex Monge-Amp\`ere equation
\begin{equation*}
\log\left(\frac{\tau^n_{\psi_0}}{\tau^n}\right)+\frac{X}{2}\cdot\psi_0=F_0.
\end{equation*}
Then for all $\alpha\in\left(0,\,\frac{1}{2}\right)$, there exists a neighbourhood
$U_{F_0}\subset f^{-1}\cdot C^{2,\,2\alpha}_{X,\,\exp}(M)$
of $F$ such that for all $F\in U_{F_0}$, there exists a unique function $\psi\in\mathcal{M}^{4,\,2\alpha}_{X,\,\exp}(M)$ such that
\begin{equation*}
\log\left(\frac{\tau^n_{\psi}}{\tau^n}\right)+\frac{X}{2}\cdot\psi=F.
\end{equation*}
\end{theorem}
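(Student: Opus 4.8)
The plan is to apply the implicit function theorem in Banach spaces to the complex Monge-Amp\`ere operator $MA_{\tau}$ at the point $\psi_{0}$, inverting its linearisation by means of Theorem \ref{iso-sch-Laplacian-exp}, and then to promote the local uniqueness furnished by the implicit function theorem to uniqueness within $\mathcal{M}^{4,\,2\alpha}_{X,\,\exp}(M)$ using the maximum principle. Fix $\alpha\in\left(0,\,\frac{1}{2}\right)$. First I would check that $MA_{\tau}$ restricts to a smooth map
\begin{equation*}
MA_{\tau}:\mathcal{M}^{4,\,2\alpha}_{X,\,\exp}(M)\longrightarrow f^{-1}\cdot C^{2,\,2\alpha}_{X,\,\exp}(M),
\end{equation*}
where $\mathcal{M}^{4,\,2\alpha}_{X,\,\exp}(M)$ is an open subset of the Banach space $C^{4,\,2\alpha}_{X,\,\exp}(M)$. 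That the image lies in the stated space follows from the Taylor expansion \eqref{equ:taylor-exp}: for $\psi\in C^{4,\,2\alpha}_{X,\,\exp}(M)$, Theorem \ref{iso-sch-Laplacian-exp} gives $\Delta_{\tau}\psi+\frac{X}{2}\cdot\psi\in f^{-1}\cdot C^{2,\,2\alpha}_{X,\,\exp}(M)$, while the quadratic remainder $\int_{0}^{1}\int_{0}^{u}|\partial\bar{\partial}\psi|^{2}_{h_{s\psi}}\,ds\,du$ decays, with all of its derivatives, at rate $e^{-2f}$ up to polynomial factors and hence a fortiori lies in $f^{-1}\cdot C^{2,\,2\alpha}_{X,\,\exp}(M)$; smoothness of $MA_{\tau}$ on this open set is standard since the nonlinearity $M\mapsto\log\det M$ is real-analytic in the entries of a positive-definite Hermitian matrix. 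By the computation preceding \eqref{equ:taylor-exp}, the differential of $MA_{\tau}$ at $\psi_{0}$ is $D_{\psi_{0}}MA_{\tau}(u)=\Delta_{\tau_{\psi_{0}}}u+\frac{X}{2}\cdot u$.

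The key step is to show that $D_{\psi_{0}}MA_{\tau}:C^{4,\,2\alpha}_{X,\,\exp}(M)\to f^{-1}\cdot C^{2,\,2\alpha}_{X,\,\exp}(M)$ is an isomorphism, which I would obtain by applying Theorem \ref{iso-sch-Laplacian-exp} with $\tau_{\psi_{0}}$ in place of $\tau$. For this one must verify that $\tau_{\psi_{0}}$ lies in the admissible class of Section \ref{main-set-sec-pol}: $\mathcal{L}_{JX}\tau_{\psi_{0}}=0$ since $\mathcal{L}_{JX}\tau=0$ and $\mathcal{L}_{JX}\psi_{0}=0$; the asymptotic condition \eqref{covid19} holds for $\tau_{\psi_{0}}$ with the same $\varepsilon$, because $\pi_{*}\tau_{\psi_{0}}-\hat{\omega}=(\pi_{*}\tau-\hat{\omega})+i\partial\bar{\partial}\psi_{0}$ and $i\partial\bar{\partial}\psi_{0}$, together with all its covariant derivatives and Lie derivatives along $X$, decays exponentially by virtue of $\psi_{0}\in C^{\infty}_{X,\,\exp}(M)$; and the additional assumption \eqref{add-ass} holds for the potential $f_{\psi_{0}}$ of $X$ with respect to $h_{\psi_{0}}$, since $|X|^{2}_{h_{\psi_{0}}}-|X|^{2}_{h}$ decays exponentially, so $f_{\psi_{0}}-f=O(1)$ by the argument of Lemma \ref{tuxedo}, whence $|f_{\psi_{0}}-\varphi(t)|=O(1)$. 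Because $h_{\psi_{0}}$ and $h$ are uniformly equivalent with all their derivatives and $e^{f_{\psi_{0}}}$ is comparable to $e^{f}$, the exponentially weighted H\"older spaces built from $(h_{\psi_{0}},\,f_{\psi_{0}})$ coincide, with equivalent norms, with $C^{k,\,2\alpha}_{X,\,\exp}(M)$, so Theorem \ref{iso-sch-Laplacian-exp} indeed yields that $\Delta_{\tau_{\psi_{0}}}+\frac{X}{2}\cdot$ is an isomorphism between the relevant spaces. The implicit function theorem then produces a neighbourhood $U_{F_{0}}\subset f^{-1}\cdot C^{2,\,2\alpha}_{X,\,\exp}(M)$ of $F_{0}$ and a $C^{1}$ map $U_{F_{0}}\ni F\mapsto\psi_{F}$ with $\psi_{F_{0}}=\psi_{0}$ and $\log\left(\tau^{n}_{\psi_{F}}/\tau^{n}\right)+\frac{X}{2}\cdot\psi_{F}=F$; shrinking $U_{F_{0}}$ if necessary, continuity of $F\mapsto\psi_{F}$ ensures $\tau_{\psi_{F}}>0$, so $\psi_{F}\in\mathcal{M}^{4,\,2\alpha}_{X,\,\exp}(M)$. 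I expect this verification of the isomorphism property of the linearised operator to be the main obstacle, though it is essentially delivered by Theorem \ref{iso-sch-Laplacian-exp} once one knows $\tau_{\psi_{0}}$ is admissible.

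It then remains to upgrade this to uniqueness within $\mathcal{M}^{4,\,2\alpha}_{X,\,\exp}(M)$. If $\psi$ and $\psi'$ both solve the equation with the same right-hand side $F$, then $w:=\psi-\psi'$ satisfies
\begin{equation*}
\log\left(\frac{(\tau_{\psi'}+i\partial\bar{\partial}w)^{n}}{\tau_{\psi'}^{n}}\right)+\frac{X}{2}\cdot w=0.
\end{equation*}
Writing the first term as $\left(\int_{0}^{1}(h_{\psi'+sw})^{i\bar{\jmath}}\,ds\right)\partial_{i}\partial_{\bar{\jmath}}w$, which is legitimate since $s\tau_{\psi}+(1-s)\tau_{\psi'}=\tau_{\psi'}+s\,i\partial\bar{\partial}w>0$ for all $s\in[0,\,1]$, one sees that $w$ solves a linear, locally uniformly elliptic second order equation with no zeroth order term. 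Since $w\to0$ at infinity (both $\psi$ and $\psi'$ decay like $e^{-f}$, and $f$ is proper and bounded below), any positive supremum or negative infimum of $w$ would be attained at an interior point, and the strong maximum principle would force $w$ to be constant there, hence on all of $M$, contradicting $w\to 0$; therefore $w\equiv0$. This completes the plan.
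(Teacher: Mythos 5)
Your proposal is correct and follows essentially the same route as the paper: the implicit function theorem applied to the Monge-Amp\`ere operator, with the linearisation $\Delta_{\tau_{\psi_0}}+\frac{X}{2}\cdot$ inverted by Theorem \ref{iso-sch-Laplacian-exp} applied to the background metric $\tau_{\psi_0}$, whose admissibility (the conditions \eqref{covid19} and \eqref{add-ass}) you verify more explicitly than the paper, which merely remarks that the function spaces may be defined using either $h$ or $h_{s\psi_0}$. Your additional strong-maximum-principle argument upgrading the local uniqueness furnished by the implicit function theorem to uniqueness in all of $\mathcal{M}^{4,\,2\alpha}_{X,\,\exp}(M)$ is a worthwhile supplement to a point the paper leaves implicit.
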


\begin{remark}\label{heyjude}
This theorem does not assume any finite regularity
on the data $(\psi_0,\,F_0)$ in the corresponding function spaces.
This essentially comes from Theorem \ref{iso-sch-Laplacian-exp} where the closeness of $\tau$ to $\hat{\omega}$ in derivatives, and hence of
$\tau$ to Cao's steady gradient K\"ahler-Ricci soliton in derivatives, is assumed.
\end{remark}

\begin{proof}[Proof of Theorem \ref{Imp-Def-Kah-Ste-exp}]
In order to apply the implicit function theorem for Banach spaces, we must reformulate
the statement of Theorem \ref{Imp-Def-Kah-Ste-exp} in terms of
the map $MA_{\tau}$ introduced formally at the beginning of Section \ref{sec-Fredo-prop-exp}. To this end, consider the mapping
\begin{equation*}
\begin{split}
\widetilde{MA}_{\tau_{\psi_0}}:(\varphi,G)\in &\,\mathcal{M}^{4,\,2\alpha}_{X,\,\exp}(M)\times f^{-1}\cdot C^{2,\,2\alpha}_{X,\,\exp}(M)\\
&\mapsto \log\left(\frac{\tau_{\psi_0+\varphi}^n}{\tau^n}\right)+\frac{X}{2}\cdot (\psi_0+\varphi)-G-F_0\in f^{-1}\cdot C^{2,\,2\alpha}_{X,\,\exp}(M),\qquad \alpha\in\left(0,\,\frac{1}{2}\right).
\end{split}
\end{equation*}
Notice that the function spaces can be defined either by using the metric $h$ or $h_{s\psi_0}$ for any $s\in[0,\,1]$.
To see that $\widetilde{MA}_{\tau_{\psi_0}}$ is well-defined, apply the Taylor expansion \eqref{equ:taylor-exp} to the background metric $\tau_{\psi_0}$
to obtain
\begin{equation}\label{reform-MA-op-exp}
\begin{split}
\widetilde{MA}_{\tau_{\psi_0}}(\varphi,G)&=\log\left(\frac{\tau_{\psi_0+\varphi}^n}{\tau_{\psi_0}^n}\right)+\frac{X}{2}\cdot\varphi-G\\
&=\Delta_{\tau_{\psi_0}}\varphi+\frac{X}{2}\cdot\varphi-G-\int_0^1\int_0^{u}\arrowvert \partial\bar{\partial}\varphi\arrowvert^2_{h_{s(\psi_0+\varphi)}}\,ds\,du.
\end{split}
\end{equation}
Then note that by a computation similar to that undertaken in \eqref{comp-conj-op-exp},
the first three terms of the last line of \eqref{reform-MA-op-exp} lie in $f^{-1}\cdot C^{2,\,2\alpha}_{X,\,\exp}(M)$.

Now, if $S$ and $T$ are tensors in $C^{2k,\,2\alpha}_{\operatorname{loc}}(M)$ that decay as fast as $e^{-f}$ together with their derivatives, then observe that $S\ast T$
shares the same local regularity and decays as fast as $e^{-2f}$, where $\ast$
denotes any linear combination of contractions of tensors with respect to the metric $h$. Notice that $$\arrowvert \partial\bar{\partial}\varphi\arrowvert^2_{h_{s(\psi_0+\varphi)}}=h_{s(\psi_0+\varphi)}^{-1}
\ast (\nabla^{h})^{\,2}\varphi\ast(\nabla^{h})^{\,2}\varphi$$
and that $h_{s(\psi_0+\varphi)}^{-1}-h^{-1}\in C^{2,\,2\alpha}_{\operatorname{loc}}(M)$ decays as fast as $e^{-f}$.
Thus, applying the above reasoning twice to $S=T=(\nabla^{h})^{\,2}\varphi$ and to the
inverse $h_{s(\psi_0+\varphi)}^{-1}$, one finds that
$\arrowvert\partial\bar{\partial}\varphi\arrowvert^2_{h_{s(\psi_0+\varphi)}}\in f^{-1}\cdot C^{2,\,2\alpha}_{X,\,\exp}(M)$ for each $s\in[0,\,1]$ and that
\begin{equation*}
\left\|\int_0^1\int_0^{u}\arrowvert \partial\bar{\partial}\varphi\arrowvert^2_{h_{s(\psi_0+\varphi)}}\,ds\,du\right\|_{f^{-1}\cdot C^{2,\,2\alpha}_{X,\,\exp}}\leq C\left(\alpha,g,\|\psi_0\|_{C^{4,\,2\alpha}_{X,\,\exp}}\right)\|\varphi\|_{C^{4,\,2\alpha}_{X,\,\exp}}\\
\end{equation*}
as long as $\|\varphi\|_{C^{4,\,2\alpha}_{X,\,\exp}}\leq 1$.
Finally, the $JX$-invariance of the right-hand side of \eqref{reform-MA-op-exp} is clear.

By definition, $\widetilde{MA}_{\tau_{\psi_0}}(\varphi,F-F_0)=0$
if and only if $\psi_0+\varphi$ is a solution to $(\ref{MA-neigh-small-per-exp})$ with data $F$.
By \eqref{equ:sec-der},
$$D_{0}\widetilde{MA}_{\tau_{\psi_0}}(\psi)=\Delta_{\tau_{\psi_0}}\psi+\frac{X}{2}\cdot\psi\qquad\textrm{for
 $\psi\in C^{4,\,2\alpha}_{X,\,\exp}(M)$}.$$ Hence, by Theorem \ref{iso-sch-Laplacian-exp} applied to the background metric $\tau_{\psi_0}$
  in place of $\tau$,
  $D_{0}\widetilde{MA}_{\tau_{\psi_0}}$ is an isomorphism of Banach spaces. The result now follows by applying the implicit function theorem
  to the map $\widetilde{MA}_{\tau_{\psi_0}}$ in a neighbourhood of $(0,\,0)\in \mathcal{M}^{4,\,2\alpha}_{X,\,\exp}(M)\times f^{-1}\cdot C^{2,\,2\alpha}_{X,\,\exp}(M)$.
\end{proof}

\section{A priori estimates}\label{apriori}
In this section, $(C_0,\,g_{0},\,J_0,\,\Omega_0)$ will denote a Calabi-Yau cone of complex dimension $n\geq2$ with radial function $r$.
We set $r^2=:e^t$ and let $\pi:M\to C_{0}$ be an equivariant crepant resolution of $C_{0}$
with respect to the real holomorphic torus action on $C_{0}$ generated by $J_{0}r\partial_{r}$
so that the holomorphic vector field
$2r\partial_{r}=4\partial_{t}$ on $C_{0}$ lifts to a real holomorphic vector field $X=\pi^{*}(2r\partial_{r})$
on $M$. $J$ will denote the complex structure on $M$ and $\tau$ will again be any K\"ahler form on $M$ with $\mathcal{L}_{JX}\tau=0$
satisfying for some $\varepsilon\in(0,\,1)$ the asymptotic bounds
\begin{equation}\label{asy-cao-def-sec5}
|\widehat{\nabla}^{i}\mathcal{L}^{(j)}_X\left(\pi_{*}\tau-\hat{\omega}\right)|_{\hat{g}}=O\left(t^{-\varepsilon-\frac{i}{2}-j}\right)\qquad\textrm{for all $i,\,j\geq 0$},
\end{equation}
where $\hat{\omega}$ denotes the K\"ahler form $\frac{i}{2}\partial\bar{\partial}\left(\frac{nt^{2}}{2}\right)$ on $C_{0}$,
$\hat{g}$ denotes the corresponding K\"ahler metric, and $\widehat{\nabla}$ denotes the associated Levi-Civita connection.
The metric $h$ will denote the K\"ahler metric associated to $\tau$
and for any smooth real-valued function $\phi\in C^{\infty}(M)$
such that $\tau+i\partial\bar{\partial}\phi>0$, we write
$\tau_{\phi}:=\tau+i\partial\bar{\partial}\phi$ and let $h_{\phi}$ denote the corresponding K\"ahler metric and $\nabla^{h_{\phi}}$ the Levi-Civita connection associated to $h_{\phi}$.
By Lemma \ref{easy}, there exists a smooth proper
real-valued function $f:M\to\mathbb{R}$ that is bounded from below with $X=\nabla^{h}f$, which is chosen so that $f\geq1$ on $M$.
We also have the incomplete steady gradient K\"aher-Ricci soliton $\tilde{\omega}$ on $C_{0}$ given to us by the Cao ansatz with soliton potential $\varphi(t)$.
As we are working with exponentially weighted function spaces in this section, we assume in addition that \eqref{add-ass} holds true, i.e.,
$|f-\varphi(t)|=O(1).$ This will allow us to appeal to Corollary \ref{tarea} and in doing so, establish an a priori weighted energy estimate
in Section \ref{section-C-0-est}, the precursor to the $C^{0}$-estimate.

Our goal in this section is to solve the complex Monge-Amp\`ere equation
\begin{equation}\label{MA-cpct-supp}
\log\left(\frac{(\tau+i\partial\bar{\partial}\psi)^{n}}{\tau^{n}}\right)+\frac{X}{2}\cdot\psi=F,\quad \tau+i\partial\bar{\partial}\psi>0,\quad F\in C^{\infty}_0(M),
\end{equation}
on $M$ in the space of functions that decay exponentially at infinity. More precisely, we seek a solution $\psi$ of \eqref{MA-cpct-supp}
that lies in $\mathcal{M}^{\infty}_{X,\,\exp}(M)$, the space of admissible K\"ahler potentials defined in \eqref{defn-M-exp-space} by
\begin{equation*}
\mathcal{M}^{\infty}_{X,\,\exp}(M)=\left\{\psi\in
 C_{\operatorname{loc}}^{\infty}(M)\,|\,\tau_{\psi}>0\quad\textrm{and}\quad\psi\in C^{\infty}_{X,\,\exp}(M)\right\}.
\end{equation*}
The main result we prove here is:
\begin{theorem}\label{theo-exi-sol-cpct-supp-data}
Let $F$ be a compactly supported smooth $JX$-invariant function on $M$. Then there exists a solution $\psi\in\mathcal{M}^{\infty}_{X,\,\exp}(M)$ to \eqref{MA-cpct-supp}.
\end{theorem}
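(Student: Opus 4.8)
The approach is the classical continuity method, carried out entirely within the exponentially weighted function spaces $C^{2k,\,2\alpha}_{X,\,\exp}(M)$. For $s\in[0,\,1]$ consider the family of complex Monge–Amp\`ere equations
\begin{equation*}
\log\left(\frac{(\tau+i\partial\bar{\partial}\psi_s)^n}{\tau^n}\right)+\frac{X}{2}\cdot\psi_s=sF,\qquad \tau+i\partial\bar{\partial}\psi_s>0,\qquad\psi_s\in\mathcal{M}^{\infty}_{X,\,\exp}(M),
\end{equation*}
and let $\mathcal{S}:=\{s\in[0,\,1]\,|\,\text{a solution }\psi_s\text{ exists}\}$. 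Since $sF$ is compactly supported (hence in $f^{-1}\cdot C^{\infty}_{X,\,\exp}(M)$) and $\psi_0=0$ is an obvious solution at $s=0$, we have $0\in\mathcal{S}$ and $\mathcal{S}\neq\emptyset$. Openness of $\mathcal{S}$ is immediate from Theorem \ref{Imp-Def-Kah-Ste-exp}: given a solution at $s_0$, the implicit function theorem produces solutions for all $s$ near $s_0$ with data $sF\in f^{-1}\cdot C^{2,\,2\alpha}_{X,\,\exp}(M)$, and the bootstrapping within that theorem places the solution in $\mathcal{M}^{\infty}_{X,\,\exp}(M)$. It remains to prove closedness, i.e.\ a priori estimates uniform in $s\in[0,\,1]$.

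The a priori estimates proceed in the order sketched in the introduction. First comes the \emph{weighted $L^2$-estimate}: following Tian–Zhu, introduce the functionals $I_{\tau,\,X}$ and $J_{\tau,\,X}$ on $\mathcal{M}^{\infty}_{X,\,\exp}(M)$ (the exponential decay guarantees convergence of the defining integrals), compare the continuity path $(\psi_s)$ with the linear path $(s\psi_1)$, and use the Poincar\'e inequality for the drift Laplacian of $h$ supplied by Corollary \ref{tarea}---this is exactly where the additional assumption $|f-\varphi(t)|=O(1)$ from \eqref{add-ass} and the existence of Cao's steady soliton at infinity enter. This yields a uniform bound on $\int_M|\psi_s|^2\,e^f\tau^n$ (or an $L^p$-bound). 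Next, the \emph{$C^0$-estimate}: the upper bound for $\sup_M\psi_s$ follows from a \emph{local} Nash–Moser iteration in a tubular neighbourhood of $\operatorname{supp}F$, using that $\psi_s$ is a subsolution of the drift Laplacian so its supremum is attained inside $\operatorname{supp}F$, combined with the weighted energy bound just established; the lower bound for $\inf_M\psi_s$ is obtained by adapting B\l{}ocki's pluripotential-theoretic argument (exploiting $L^\infty$-stability of the complex Monge–Amp\`ere operator and the Alexandrov maximum principle), again fed by the weighted energy estimate. Finally, the \emph{higher-order estimates}: the second-order estimate ($C^2$-bound on $\psi_s$, equivalently uniform ellipticity) comes from the standard Aubin–Yau computation applied to $e^f$-weighted geometry together with a lower Ricci bound on $h$; the Calabi/Evans–Krylov third-order (Hölder) estimate and then Schauder bootstrapping give $C^{2k,\,2\alpha}$-bounds on every compact set. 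To upgrade the decay to the exponential weight, one argues as in the proof of Theorem \ref{iso-sch-Laplacian-exp}: away from $\operatorname{supp}F$ the equation is linear-like, and using the barrier $e^{-f}$ from \eqref{sub-sol-exp-pot-fct} in Lemma \ref{lemm-barr-inf} together with the maximum principle one shows $|\psi_s|\le Ce^{-f}$, with the higher derivatives controlled by the rescaled parabolic Schauder argument of that theorem. These estimates being uniform in $s$, the set $\mathcal{S}$ is closed, hence $\mathcal{S}=[0,\,1]$, and in particular $1\in\mathcal{S}$ gives the desired solution $\psi=\psi_1\in\mathcal{M}^{\infty}_{X,\,\exp}(M)$ of \eqref{MA-cpct-supp}.

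\textbf{The main obstacle.} The delicate point is the uniform \emph{lower} bound for $\inf_M\psi_s$. Unlike the upper bound, it cannot be obtained by a maximum-principle argument localized to $\operatorname{supp}F$, and there is no global Sobolev inequality available to run a full Nash–Moser iteration; one must instead adapt B\l{}ocki's argument to the weighted, non-compact setting, carefully checking that all the pluripotential-theoretic comparison estimates survive in a tubular neighbourhood of $\operatorname{supp}F$ and patch correctly with the exponential-decay barrier at infinity. Verifying the weighted Poincar\'e inequality via Corollary \ref{tarea} and tracking the dependence of every constant only on $n$, $\tau$, $\alpha$, and $\|F\|_{C^k}$ (never on $s$) is the bookkeeping that makes the whole scheme close.
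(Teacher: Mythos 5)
Your plan reproduces the paper's proof almost step for step: the continuity path \eqref{MA-cpct-supp-t}, non-emptiness at $s=0$, openness from Theorem \ref{Imp-Def-Kah-Ste-exp} upgraded to $\mathcal{M}^{\infty}_{X,\,\exp}(M)$ by the weighted higher-order estimates, the a priori weighted energy estimate obtained by evaluating the Tian--Zhu functionals $I_{\tau,\,X}-J_{\tau,\,X}$ along both the linear path and the continuity path and invoking the Poincar\'e inequality of Corollary \ref{tarea} (which is indeed where \eqref{add-ass} is used), the upper bound on $\sup_M\psi_s$ by a local Nash--Moser iteration in a tubular neighbourhood of $\operatorname{supp}(F)$, the lower bound by B\l{}ocki's method, and the passage to exponential decay via the barrier $e^{-f}$ of Lemma \ref{lemm-barr-inf} and the rescaled parabolic Schauder argument of Theorem \ref{iso-sch-Laplacian-exp}. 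All of this is the paper's argument.

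The one substantive omission is the a priori bound on the radial derivative $X\cdot\psi_s$. You assert that the second-order estimate follows from ``the standard Aubin--Yau computation applied to $e^f$-weighted geometry together with a lower Ricci bound on $h$'', but the unknown enters the right-hand side of the equation through the factor $e^{-\frac{X}{2}\cdot\psi_s}$: without a uniform two-sided bound on $X\cdot\psi_s$ one cannot bound $\tau_{\psi_s}^n/\tau^n$ from below (which is what gives the uniform equivalence of $h$ and $h_{\psi_s}$ in Corollary \ref{coro-equiv-metrics-0}), and the Yau computation leaves the uncontrolled term $\alpha\left(\frac{X}{2}\cdot\psi_s\right)u$. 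The paper supplies this bound in Proposition \ref{prop-bd-uni-X-psi} by a separate interpolation argument following Siepmann: one integrates along the flow lines of $X$, uses the $JX$-invariance of $\psi_s$ to write $X^{1,\,0}\cdot(X^{1,\,0}\cdot\psi_s)=2i\partial\bar{\partial}\psi_s\left(\Re(X^{1,\,0}),\,J\Re(X^{1,\,0})\right)\geq-2\left|\Re(X^{1,\,0})\right|^{2}_{h}$ from the positivity of $\tau_{\psi_s}$, and optimises over the length of the flow interval to obtain $|X\cdot\psi_s|\leq C\|\psi_s\|_{C^0}^{1/2}$. This step must be inserted between the $C^0$-estimate and the $C^2$-estimate; with it in place, the rest of your scheme closes as described (note also that the weighted lower bound at infinity is not a one-shot application of the barrier $e^{-f}$ but an iteration starting from $e^{-\delta_0 f}$ with $\delta_0<1$ and doubling the exponent until the quadratic error term decays faster than $f^{-1}e^{-f}$).
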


Our approach to solve \eqref{MA-cpct-supp} is to implement the continuity method. We consider the following
one-parameter family of complex Monge-Amp\`ere equations:
\begin{equation}\label{MA-cpct-supp-t}
\log\left(\frac{(\tau+i\partial\bar{\partial}\psi_t)^{n}}{\tau^{n}}\right)+\frac{X}{2}\cdot\psi_t=t\cdot F,\qquad F\in C^{\infty}_0(M),\qquad t\in[0,\,1],
\qquad \psi_t\in\mathcal{M}^{\infty}_{X,\,\exp}(M).
\end{equation}
When $t=0$, there is the trivial solution to \eqref{MA-cpct-supp-t}, namely $\psi_0\equiv0$.
When $t=1$, \eqref{MA-cpct-supp-t} corresponds to \eqref{MA-cpct-supp}, that is, the equation that we wish to solve. Via the a priori
estimates to follow, we will show that the set $t\in[0,\,1]$ for which
\eqref{MA-cpct-supp-t} has a solution is closed. As we have just seen, this set is non-empty. Openness of
this set follows from the isomorphism properties of the drift Laplacian given by Theorem \ref{Imp-Def-Kah-Ste-exp}.
Connectedness of $[0,\,1]$ then implies that \eqref{MA-cpct-supp-t} has a solution for $t=1$, resulting in the desired solution
of \eqref{MA-cpct-supp}.

\subsection{A priori $C^0$-estimate}\label{section-C-0-est}

We begin with the a priori estimate on the $C^0$-norm of $(\psi_t)_{0\,\leq\, t\,\leq\, 1}$
which is uniform in $t\in[0,\,1]$. For the sake of clarity, we omit the dependence on the parameter $t$ while estimating
various norms of the solutions $(\psi_t)_{0\,\leq\, t\,\leq\, 1}$. We begin with two crucial observations.
Our first is:
\begin{lemma}[Localising the supremum and infimum of a solution]\label{lemma-loc-crit-pts}
Let $\psi\in\mathcal{M}^{\infty}_{X,\,\exp}(M)$ be a solution to \eqref{MA-cpct-supp}. Then
\begin{enumerate}
\item either $\psi$ attains a maximum (respectively a minimum) at a point contained in the support $\supp(F)$ of $F$ and $\sup_M\psi=\max_{\supp(F)}\psi\geq 0$ (resp.~ $\inf_M\psi=\min_{\supp(F)}\psi\leq 0$),\\
\item or $\sup_M\psi\leq 0$ (resp.~$\inf_M\psi\geq 0$).
\end{enumerate}
\end{lemma}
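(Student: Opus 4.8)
The plan is to exploit the maximum principle for the drift Laplacian, using the fact that $F$ is compactly supported so that the equation \eqref{MA-cpct-supp} degenerates to a homogeneous one outside $\supp(F)$. First I would observe that since $\psi\in\mathcal{M}^{\infty}_{X,\,\exp}(M)$, $\psi$ decays exponentially at infinity and in particular $\psi\to 0$ at infinity; moreover $\psi$ is $C^2$, so $\sup_M\psi$ and $\inf_M\psi$ are finite and, by properness of $f$, either attained on a compact set or approached along the end where $\psi\to 0$. The key structural fact is that $\psi$ is a subsolution of the drift Laplacian in the sense that, rewriting \eqref{MA-cpct-supp} via the concavity of $\log\det$,
\begin{equation*}
\Delta_{h}\psi+X\cdot\psi=\tr_{\tau}\!\left(i\partial\bar\partial\psi\right)+X\cdot\psi\geq\log\!\left(\frac{(\tau+i\partial\bar\partial\psi)^n}{\tau^n}\right)+X\cdot\psi=F,
\end{equation*}
where the inequality is the elementary estimate $\log\det(I+A)\leq\tr A$ applied to the relative endomorphism $A=h^{-1}i\partial\bar\partial\psi$ (which is $>-I$ since $\tau_\psi>0$). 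Hence $\Delta_{h}\psi+X\cdot\psi\geq F$ everywhere, and in particular $\Delta_{h}\psi+X\cdot\psi\geq 0$ on $M\setminus\supp(F)$.

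Next I would run the maximum principle argument for the supremum. Suppose $\sup_M\psi>0$. Since $\psi\to0$ at infinity, the positive supremum is attained at some point $x_0\in M$. If $x_0\notin\supp(F)$, then on the connected component of $M\setminus\supp(F)$ containing $x_0$ we have $\Delta_h\psi+X\cdot\psi\geq 0$ with $\psi$ attaining an interior maximum; the strong maximum principle forces $\psi$ to be constant equal to $\sup_M\psi>0$ on that component. But that component reaches infinity (as $M\setminus\supp(F)$ has a single end in the relevant sense, $\supp(F)$ being compact), where $\psi\to0$, a contradiction. Therefore $x_0\in\supp(F)$, which gives $\sup_M\psi=\max_{\supp(F)}\psi>0$, and combining with the alternative $\sup_M\psi\leq0$ yields case (1) or case (2) as stated; the clause $\sup_M\psi\geq0$ in case (1) is automatic since if the maximum is attained on $\supp(F)$ and is positive we are in case (1), while if $\sup_M\psi=0$ we may place ourselves in either case. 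The argument for the infimum is identical after replacing $\psi$ by $-\psi$, noting that $-\psi$ is a \emph{super}solution but that the relevant one-sided maximum principle (now a minimum principle) still applies on $M\setminus\supp(F)$ where $\Delta_h\psi+X\cdot\psi\geq0$, i.e.\ $\Delta_h(-\psi)+X\cdot(-\psi)\leq0$, so $-\psi$ cannot have an interior negative minimum off $\supp(F)$ without being constant.

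\textbf{Main obstacle.} The only delicate point is justifying that a connected component of $M\setminus\supp(F)$ on which $\psi$ is constant must ``see infinity'' so that the decay of $\psi$ produces the contradiction. Since $M$ has one end (indeed $\pi_1(M)$ is finite by Lemma \ref{finite} and $M$ is a resolution of a cone) and $\supp(F)$ is compact, after possibly enlarging $\supp(F)$ to a compact set $K$ with connected complement — exactly the hypothesis appearing in Lemma \ref{pluri} — every component of $M\setminus K$ is unbounded, and one reduces to that case by noting $\Delta_h\psi+X\cdot\psi\geq0$ holds on all of $M\setminus\supp(F)\supset M\setminus K$. The remaining ingredients — the strong maximum principle for the elliptic operator $\Delta_h+X\cdot$ (which has no zeroth-order term) and the exponential decay of $\psi$ from membership in $\mathcal{M}^{\infty}_{X,\,\exp}(M)$ — are standard and require no further comment.
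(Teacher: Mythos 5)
Your treatment of the supremum is correct and essentially coincides with the paper's argument: the concavity inequality $\log\det(I+A)\leq\tr A$ makes $\psi$ a subsolution, $\Delta_{\tau}\psi+\frac{X}{2}\cdot\psi\geq F$, and Hopf's strong maximum principle together with the decay of $\psi$ at infinity localises any positive maximum to $\supp(F)$ (your worry about bounded components is harmless: if $\psi$ were constant equal to $\sup_M\psi$ on a bounded component, continuity would force that value to be attained on the boundary, which lies in $\supp(F)$, so one lands in case (1) anyway).

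The infimum case, however, contains a genuine gap. You claim the argument is ``identical after replacing $\psi$ by $-\psi$,'' but the symmetry fails: off $\supp(F)$ you have $\Delta_{h}(-\psi)+X\cdot(-\psi)\leq 0$, so $-\psi$ is a supersolution, and the strong minimum principle for supersolutions excludes an interior \emph{minimum} of $-\psi$ --- which is an interior \emph{maximum} of $\psi$. That is precisely the statement you already proved; it says nothing about interior minima of $\psi$. A subsolution can perfectly well attain an interior minimum (e.g.\ $|z|^{2}$ is subharmonic), so the one inequality $\Delta_{h}\psi+X\cdot\psi\geq 0$ cannot rule out a negative minimum of $\psi$ at a point of $M\setminus\supp(F)$. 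The missing ingredient is the complementary estimate obtained by applying the same concavity with the roles of $\tau$ and $\tau_{\psi}$ exchanged: from
\begin{equation*}
\log\frac{\tau^{n}}{\tau_{\psi}^{n}}\leq\tr_{\tau_{\psi}}(\tau)-n=-\Delta_{\tau_{\psi}}\psi
\end{equation*}
one gets $\Delta_{\tau_{\psi}}\psi+\frac{X}{2}\cdot\psi\leq F$, so $\psi$ is a supersolution of the drift Laplacian of the \emph{unknown} metric $h_{\psi}$. On $M\setminus\supp(F)$ this operator is elliptic with no zeroth-order term (locally uniformly so, since $\tau_{\psi}>0$ and the coefficients are smooth), hence Hopf's strong minimum principle applies to it and the infimum argument then runs exactly parallel to the supremum one. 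With this correction the lemma follows.
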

\begin{proof}
We prove the assertions of Lemma \ref{lemma-loc-crit-pts} that concern the supremum of a solution $\psi$ only. The statements
on the infimum of $\psi$ can be proved in a similar manner.

Observe that $\psi$ is a subsolution of the following differential inequality:
\begin{equation}
\Delta_{\tau}\psi+\frac{X}{2}\cdot\psi\geq F.\label{subsol-psi}
\end{equation}
If $\psi$ attains a maximum at a point in $\supp(F)$, then $\sup_M\psi=\max_{\supp(F)}\psi$ and since $\psi$ tends to $0$ at infinity, we deduce that $\max_M\psi\geq 0$. If $\psi$ attains a
maximum in $M\setminus\supp(F)$, then the strong maximum principle of Hopf \cite[Theorem 3.5]{gilbarg} applied to \eqref{subsol-psi} implies that $\psi$ is constant outside of $\supp(F)$. In this particular case, as $\psi$ tends to $0$ at infinity, we have that $\sup_M\psi=\max_{M\setminus\supp(F)}\psi=0$.

Now, if $\psi$ does not attain a maximum at a point in $M$,
then there is a sequence of radii $(R_k)_{k\,\geq\,0}$ tending to $+\infty$ such that $\psi(x)\leq\sup_{\partial B_{h}(p,\,R_k)}\psi$
for all $x\in B_{h}(p,\,R_k)$ and for all $k\geq 0$,
where $B_{h}(p,\,R_k)$ denotes the geodesic ball with respect to $h$ of radius $R_k>0$
centered at a point $p\in M$. By letting $R_k\to+\infty$ together with the fact that $\psi$ tends to $0$ at infinity, one reaches the desired conclusion.
\end{proof}

Next we have:
\begin{lemma}[A first rough lower bound on $X\cdot\psi$]\label{lemma-inf-pot-fct}
Let $\psi\in\mathcal{M}^{\infty}_{X,\,\exp}(M)$ be a solution to (\ref{MA-cpct-supp}). Then
$$\inf_M\left(f+\frac{X}{2}\cdot \psi\right)=\min_{\{X\,=\,0\}}\left(f+\frac{X}{2}\cdot \psi\right)=\min_Mf\geq 1.$$
In particular, $\frac{X}{2}\cdot\psi\geq -f$ on $M$.
\end{lemma}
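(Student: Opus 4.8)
The plan is to show that the function $g:=f+\frac{X}{2}\cdot\psi$ attains its infimum on $M$, that this infimum is attained at a zero of $X$, and that at such a point $g=f$. First I would record the key structural facts: since $\psi\in\mathcal{M}^{\infty}_{X,\,\exp}(M)$, both $\psi$ and all its derivatives decay exponentially at infinity (in particular $\frac{X}{2}\cdot\psi\to 0$ at infinity), and $f$ is proper and bounded below with $f\ge 1$ on $M$ by Lemma \ref{easy} and the normalisation convention. Hence $g=f+\frac{X}{2}\cdot\psi\to+\infty$ at infinity, so $g$ is proper and bounded below and attains its infimum at some point $x_0\in M$. The first task is to argue that $x_0$ can be taken to be a zero of $X$ (equivalently a critical point of $f$, since $X=\nabla^h f$).

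The central idea is to differentiate $g$ along the flow of $X$. Writing $u:=\frac{X}{2}\cdot\psi=\frac12\mathcal{L}_X\psi$ we have
\begin{equation*}
X\cdot g = X\cdot f + \tfrac12 X\cdot(X\cdot\psi)=|X|_h^2+\tfrac12\mathcal{L}_X^{(2)}\psi .
\end{equation*}
The point is that $X$ is gradient, $X=\nabla^h f$, so $X\cdot f=|X|_h^2=|\nabla^h f|_h^2\ge 0$, vanishing precisely where $X=0$. To control the second term, I would use that $\psi$ solves the Monge--Ampère equation \eqref{MA-cpct-supp} together with the fact that $X$ is a \emph{holomorphic} vector field: applying $\mathcal{L}_X$ to $\log\big((\tau+i\partial\bar\partial\psi)^n/\tau^n\big)+\frac{X}{2}\cdot\psi=F$ and using $\mathcal{L}_X(i\partial\bar\partial\psi)=i\partial\bar\partial(\mathcal{L}_X\psi)$ and $\mathcal{L}_X\tau$ being transverse-elliptic-controlled via Lemma \ref{lemm-app-sol-id}, one expresses $\mathcal{L}_X^{(2)}\psi$ in a form amenable to the maximum principle. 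Concretely, at an interior minimum $x_0$ of $g$ one has $X\cdot g(x_0)=0$ (and more), forcing $|X|_h^2(x_0)=-\frac12\mathcal{L}_X^{(2)}\psi(x_0)$; combined with the second-order information from \eqref{MA-cpct-supp-t} differentiated along $X$, one concludes $X(x_0)=0$. Then $g(x_0)=f(x_0)+0=f(x_0)\ge\min_M f$.

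The reverse inequality $\inf_M g\le \min_M f$ is easy: $f$ attains its minimum at some $x_1$ with $X(x_1)=\nabla^h f(x_1)=0$, whence $\frac{X}{2}\cdot\psi(x_1)=0$ and $g(x_1)=f(x_1)=\min_M f$. Combining, $\inf_M g=\min_{\{X=0\}} g=\min_M f\ge 1$, and since $g\ge\inf_M g\ge 1$ pointwise while $f\ge 1$, rearranging gives $\frac{X}{2}\cdot\psi\ge 1-f\ge -f$ on $M$, as claimed. I expect the main obstacle to be the rigorous argument that the infimum of $g$ is forced to sit at a zero of $X$ — i.e., turning the heuristic ``differentiate along the flow of $X$ and apply the maximum principle'' into a clean statement; this is where one genuinely uses that $X$ is both gradient \emph{and} holomorphic, that $JX$ is Killing with $\mathcal{L}_{JX}\psi=0$, and the exponential decay of $\psi$ at infinity (so no loss of mass in the limit along flow-lines), together with the soliton-type identity $\Delta_h f+X\cdot f\to 4n$ from Lemma \ref{lemm-app-sol-id} to rule out the infimum escaping to infinity or to a non-critical point.
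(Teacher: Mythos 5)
Your skeleton is right — properness of $g:=f+\frac{X}{2}\cdot\psi$, localisation of the infimum at a zero of $X$, and the easy reverse inequality via a minimum point of $f$ — but the one step that carries all the weight, namely that the minimum of $g$ must sit at a zero of $X$, is not actually proved, and the mechanism you propose for it would not close the argument. Knowing only that $X\cdot g(x_0)=0$ at the minimum gives $|X|_h^2(x_0)=-\tfrac12\mathcal{L}_X^{(2)}\psi(x_0)$, which by itself says nothing about $X(x_0)$; and Lie-differentiating the Monge--Amp\`ere equation along $X$ produces a second-order PDE for $X\cdot\psi$ (with terms involving $\Delta_{\tau_\psi}$ and $\mathcal{L}_X\tau$), not a pointwise algebraic constraint at $x_0$, so ``the maximum principle'' has nothing to bite on here. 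In fact the Monge--Amp\`ere equation is a red herring for this lemma: the relevant fact is purely pointwise and holds for \emph{any} $JX$-invariant $\psi$ with $\tau_\psi>0$.

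The missing observation is that $g$ is a potential for $X$ with respect to the \emph{deformed} metric $h_\psi$: using only that $X$ is real holomorphic, $-\tau\lrcorner JX=df$, and $\mathcal{L}_{JX}\psi=0$, one computes (as in \eqref{botox}) that $dg=-\tau_\psi\lrcorner JX=h_\psi(X,\cdot)$, i.e.\ $X=\nabla^{h_\psi}g$. At the interior minimum the \emph{full} differential $dg(x_0)$ vanishes, so $h_\psi(X,\cdot)(x_0)=0$ and hence $X(x_0)=0$ by positive-definiteness of $h_\psi$ — no PDE input needed. Equivalently, your computation can be completed by the pointwise identity $|X|_h^2+\tfrac12 X\cdot(X\cdot\psi)=|X|^2_{h_\psi}$ (a consequence of $X^{1,0}\cdot(X^{1,0}\cdot\psi)=2i\partial\bar\partial\psi\bigl(\Re X^{1,0},J\Re X^{1,0}\bigr)$ and $JX$-invariance, cf.\ Claim \ref{est-sec-der-vec-fiel}), which turns $X\cdot g(x_0)=0$ into $|X|^2_{h_\psi}(x_0)=0$. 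Once $X(x_0)=0$ is secured, the rest of your argument (equality with $\min_M f$, the bound $\frac{X}{2}\cdot\psi\ge 1-f\ge -f$) goes through as you wrote it.
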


\begin{proof}
A crucial observation is that the gradient of the function
$f+\frac{X}{2}\cdot\psi$ with respect to the K\"ahler metric $h_{\psi}$ induced by $\tau_{\psi}$ is $X$, i.e.,
\begin{equation}\label{crucial-rk-pot}
X=\nabla^{h_{\psi}}\left(f+\frac{X}{2}\cdot\psi\right).
\end{equation}
To see this, note that \eqref{crucial-rk-pot} is equivalent to the statement that
$\tau_{\psi}\lrcorner JX=-d\left(f+\frac{X}{2}\cdot\psi\right)$.
Then with $X^{1,\,0}=\frac{1}{2}(X-iJX)$ and keeping in mind the fact that $\mathcal{L}_{JX}\psi=0$, this latter statement follows from the imaginary part of the sequence of equalities:
\begin{equation}\label{botox}
\begin{split}
\frac{1}{2}\tau_{\psi}\lrcorner X-\frac{i}{2}\tau\lrcorner JX&=\tau_{\psi}\lrcorner X^{1,\,0}\\
&=\tau\lrcorner X^{1,\,0}+(i\partial\bar{\partial}\psi)\lrcorner X^{1,\,0}\\
&=\frac{1}{2}\left(\tau\lrcorner X-i\tau\lrcorner JX\right)+i\bar{\partial}(X^{1,\,0}\cdot\psi)\\
&=\frac{1}{2}\tau\lrcorner X+\frac{i}{2}df+\frac{i}{2}\bar{\partial}(X\cdot\psi)\\
&=\frac{1}{2}\tau\lrcorner X+\frac{i}{2}df+\frac{i}{4}\left(d(X\cdot\psi)+id(X\cdot\psi)\circ J\right)\\
&=\frac{1}{2}\left(\tau\lrcorner X-\frac{1}{2}d(X\cdot\psi)\circ J\right)+\frac{i}{2}d\left(f+\frac{X}{2}\cdot\psi\right).\\
\end{split}
\end{equation}

Now, since $\psi$ together with its derivatives
decay exponentially to $0$ at infinity, we see that
$X\cdot\psi=2\left(\frac{X}{2}\cdot\psi\right)$ decays to $0$ at infinity as the norm of $X$ is bounded.
Thus, $f+\frac{X}{2}\cdot\psi$ is a proper function bounded from below.
In particular, it attains a minimum at a point $p_{\min}\in M$, a point at which $X$ vanishes by virtue of \eqref{crucial-rk-pot}. From this last remark, the result follows.
\end{proof}

\subsubsection{Aubin-Tian-Zhu's functionals}

We now introduce two functionals that have been defined and used by
Aubin \cite{Aub-red-Cas-Pos}, Bando and Mabuchi \cite{Ban-Mab-Uni}, and Tian \cite[Chapter $6$]{Tian-Can-Met-Boo}
in the study of Fano manifolds, and by Tian and Zhu \cite{tianzhu1} in the study of shrinking gradient K\"ahler-Ricci solitons
on compact K\"ahler manifolds.
\begin{definition}
Let $(\varphi_t)_{0\,\leq\, t\,\leq\, 1}$ be a $C^1$-path in $\mathcal{M}^{\infty}_{X,\,\exp}(M)$ from $\varphi_{0}=0$ to $\varphi_{1}=\varphi$.
We define the following two generalised weighted energies:
\begin{equation*}
\begin{split}
I_{\tau,\,X}(\varphi)&:=\int_M\varphi\left(e^f\tau^n-e^{f+\frac{X}{2}\cdot\varphi}\tau_{\varphi}^n\right),\\
J_{\tau,\,X}(\varphi)&:=\int_0^1\int_M\dot{\varphi_s}\left(e^f\tau^n-e^{f+\frac{X}{2}\cdot\varphi_s}\tau_{\varphi_s}^n\right)\wedge ds.
\end{split}
\end{equation*}
\end{definition}

At first sight, these two functionals resemble relative weighted mean values of a potential $\varphi$ in
$\mathcal{M}^{\infty}_{X,\,\exp}(M)$ or of
a path $(\varphi_t)_{0\,\leq\, t\,\leq\, 1}$ in $\mathcal{M}^{\infty}_{X,\,\exp}(M)$ respectively. When $X\equiv 0$ and $(M,\,\tau)$ is a compact K\"ahler manifold,
an integration by parts together with some algebraic manipulations (see Aubin's seminal paper \cite{Aub-red-Cas-Pos} or Tian's book \cite[Chapter $6$]{Tian-Can-Met-Boo}) show that
\begin{equation}
\begin{split}\label{formulae--fct-I-J-ein}
I_{\tau,\,0}(\varphi)&=\sum_{k\,=\,0}^{n-1}\int_{M}i\partial\varphi\wedge\bar{\partial}\varphi\wedge\tau^k\wedge\tau_{\varphi}^{n-1-k},\\
J_{\tau,\,0}(\varphi)&=\sum_{k\,=\,0}^{n-1}\frac{k+1}{n+1}\int_{M}i\partial\varphi\wedge\bar{\partial}\varphi\wedge\tau^{k}\wedge\tau_{\varphi}^{n-1-k}.
\end{split}
\end{equation}
This justifies
the description of $I_{\tau,\,0}(\varphi)$ and $J_{\tau,\,0}(\varphi)$ as modified energies.
Moreover, it demonstrates that on a compact K\"ahler manifold $J_{\tau,\,0}$ is a true functional, that is to say, it does not depend on the choice of path.

Such formulae \eqref{formulae--fct-I-J-ein} for $I_{\tau,\,X}$ and $J_{\tau,\,X}$
for a non-vanishing vector field $X$ and a non-compact K\"ahler manifold $(M,\,\tau)$
do not seem to be readily available for a good reason; the exponential function is not algebraic.
However, our aim here is to prove that the essential properties shared by both
$I_{\tau,\,0}$ and $J_{\tau,\,0}$ hold true for a non-vanishing vector field $X$ in a non-compact setting.
We follow closely Tian and Zhu's work \cite{tianzhu1}, beginning with:

\begin{theorem}\label{main-thm-I-J}
$I_{\tau,\,X}(\varphi)$
and $J_{\tau,\,X}(\varphi)$ are well-defined for $\varphi\in\mathcal{M}^{\infty}_{X,\,\exp}(M)$.
Moreover, $J_{\tau,\,X}$ does not depend on the choice of path $(\varphi_t)_{0\,\leq\, t\,\leq\, 1}$ in
$\mathcal{M}^{\infty}_{X,\,\exp}(M)$ from $\varphi_{0}=0$ to $\varphi_{1}=\varphi$, hence defines
a functional on $\mathcal{M}^{\infty}_{X,\,\exp}(M)$. Finally, the first variation of the difference $(I_{\tau,\,X}-J_{\tau,\,X})$ is
given by
\begin{equation}\label{first-var-I-J}
\frac{d}{dt}\left(I_{\tau,\,X}-J_{\tau,\,X}\right)(\varphi_t)=-\int_M\varphi_t\left(\Delta_{\tau_{\varphi_t}}\dot{\varphi_t}+\frac{X}{2}\cdot\dot{\varphi_t}\right)\,e^{f_{\varphi_t}}\tau_{\varphi_t}^n,
\end{equation}
where $f_{\varphi_t}:=f+\frac{X}{2}\cdot\varphi_t$ satisfies
$X=\nabla^{\tau_{\varphi_t}}f_{\varphi_t}$
and where $(\varphi_t)_{0\,\leq\, t\,\leq\, 1}$ is any $C^1$-path in $\mathcal{M}^{\infty}_{X,\,\exp}(M)$ from $\varphi_{0}=0$ to $\varphi_{1}=\varphi$.
\end{theorem}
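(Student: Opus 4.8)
The plan is to follow the Tian--Zhu strategy \cite{tianzhu1}, adapting each step to the non-compact, weighted setting while crucially exploiting the exponential decay built into $\mathcal{M}^{\infty}_{X,\,\exp}(M)$ to justify all integrations by parts and the convergence of the integrals. First I would check that $I_{\tau,\,X}(\varphi)$ and $J_{\tau,\,X}(\varphi)$ are well-defined: for $I_{\tau,\,X}$ one writes $e^f\tau^n-e^{f+\frac{X}{2}\cdot\varphi}\tau_{\varphi}^n = e^f(\tau^n-\tau_{\varphi}^n)+e^f(1-e^{\frac{X}{2}\cdot\varphi})\tau_{\varphi}^n$; since $\varphi$ and its derivatives decay like $e^{-f}$, both $\tau^n-\tau_{\varphi}^n$ (a polynomial in $i\partial\bar\partial\varphi$ with no constant term) and $1-e^{\frac{X}{2}\cdot\varphi}$ decay like $e^{-f}$, so the integrand decays like $e^{-f}\cdot e^{-f}\cdot(\text{vol form})$ after multiplying by $\varphi$, hence is integrable against $e^f\tau^n$ (the exponential weight is exactly cancelled, leaving $O(e^{-f})$ which is integrable because $e^{f}\tau^n$ has finite measure on sublevel sets but $e^{-f}\cdot e^{f}\tau^n = \tau^n$ and one needs $\varphi$-decay — more carefully the product of $\varphi$ with the difference gives two powers of $e^{-f}$ against one power of $e^{f}$, net $e^{-f}$, integrable). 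The same bookkeeping handles $J_{\tau,\,X}$ along any $C^1$-path, since $\dot\varphi_s$ also lies in $C^\infty_{X,\,\exp}(M)$.

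Next I would prove path-independence of $J_{\tau,\,X}$. The standard argument computes, for a two-parameter family $\varphi_{s,u}$, the mixed partial $\partial_u\partial_s$ of the integrand and shows it is symmetric in $s,u$ up to an exact term (a divergence), which integrates to zero by the exponential decay. Concretely, following Zhu \cite{Zhu-KRS-C1}, one shows $\frac{\partial}{\partial u}\int_M \dot\varphi_s\,(e^f\tau^n - e^{f_{\varphi_s}}\tau_{\varphi_s}^n) = \frac{\partial}{\partial s}\int_M \dot\varphi_u\,(e^f\tau^n - e^{f_{\varphi_u}}\tau_{\varphi_u}^n)$ after an integration by parts, using $\partial_s(e^{f_{\varphi_s}}\tau_{\varphi_s}^n) = (\Delta_{\tau_{\varphi_s}}\dot\varphi_s + \frac{X}{2}\cdot\dot\varphi_s)\,e^{f_{\varphi_s}}\tau_{\varphi_s}^n$ together with the self-adjointness of $\Delta_{\tau_{\varphi_s}}+\frac{X}{2}\cdot$ with respect to the measure $e^{f_{\varphi_s}}\tau_{\varphi_s}^n$ (note $X = \nabla^{\tau_{\varphi_s}} f_{\varphi_s}$, which is exactly equation \eqref{crucial-rk-pot}). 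The boundary terms in the integration by parts vanish because all quantities decay exponentially; this is the one place where polynomial decay would not suffice, as remarked in the introduction. Closedness of the resulting one-form on $\mathcal{M}^{\infty}_{X,\,\exp}(M)$ (which is connected and simply connected as an open convex-type subset of a Banach space) then gives path-independence.

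Finally, for the first variation formula \eqref{first-var-I-J}, I would differentiate $I_{\tau,\,X}(\varphi_t) = \int_M \varphi_t(e^f\tau^n - e^{f_{\varphi_t}}\tau_{\varphi_t}^n)$ directly: one term is $\int_M \dot\varphi_t(e^f\tau^n - e^{f_{\varphi_t}}\tau_{\varphi_t}^n)$, which is precisely $\frac{d}{dt}J_{\tau,\,X}(\varphi_t)$, and the remaining term is $-\int_M \varphi_t\,\partial_t(e^{f_{\varphi_t}}\tau_{\varphi_t}^n) = -\int_M \varphi_t(\Delta_{\tau_{\varphi_t}}\dot\varphi_t + \frac{X}{2}\cdot\dot\varphi_t)\,e^{f_{\varphi_t}}\tau_{\varphi_t}^n$, using the evolution formula for the weighted volume form. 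Subtracting gives \eqref{first-var-I-J}. The main obstacle I anticipate is not any single computation but the careful uniform control of the various integrands and their $t$-derivatives so that differentiation under the integral sign and integration by parts are rigorously justified on the non-compact manifold $M$; this requires establishing locally uniform exponential bounds on $\varphi_t$, $\dot\varphi_t$, and their covariant derivatives along the path, which follows from the definition of $C^\infty_{X,\,\exp}(M)$ and the asymptotics in Lemma \ref{lemm-app-sol-id}, but must be stated precisely. Everything else is a faithful transcription of \cite{tianzhu1, Zhu-KRS-C1} with $X\neq 0$ and the measure $e^{f_\varphi}\tau_\varphi^n$ in place of $\tau_\varphi^n$.
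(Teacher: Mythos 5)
Your proposal is correct and follows essentially the same route as the paper: the same exponential-decay bookkeeping (combined with the polynomial volume growth of the level sets of $f$) for well-definedness, the same Bando--Mabuchi/Zhu two-parameter closedness argument for path-independence resting on the identity $X=\nabla^{h_{\varphi}}f_{\varphi}$, self-adjointness of the drift Laplacian for the weighted measure $e^{f_{\varphi}}\tau_{\varphi}^{n}$, and Stokes' theorem justified by exponential decay, and the same direct differentiation of $I_{\tau,\,X}$ (with the reparametrisation $\tilde{\varphi}^{t}_{s}=\varphi_{st}$ implicitly needed to compute $\frac{d}{dt}J_{\tau,\,X}$) for the first variation formula. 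The paper merely implements your abstract "closed one-form" step concretely via the family $\varphi_{t,\,\delta}=(1-\delta)\varphi_{t}$ on the square $[0,\,1]\times[0,\,1]$.
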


\begin{proof}
We begin by showing that $I_{\tau,\,X}(\varphi)$ is well-defined. By linearising
the weighted measure $e^{f_{\varphi}}\tau_{\varphi}^n$ at $\varphi=0$, one sees that
\begin{equation*}
\begin{split}
e^{f_{\varphi}}\tau_{\varphi}^n&=e^f\tau^n+e^f\left(\left(\frac{X}{2}\cdot\varphi\right)\,\tau^n+ni\partial\bar{\partial}\varphi\wedge\tau^{n-1}\right)+\int_0^1(1-s)\frac{\partial^{2}}{\partial s^2}(e^{f_{s\varphi}}\tau_{s\varphi}^n)\wedge ds\\
&=e^f\tau^n+e^f\left(\left(\frac{X}{2}\cdot\varphi\right)\,\tau^n+ni\partial\bar{\partial}\varphi\wedge\tau^{n-1}\right)+e^fQ(X\cdot\varphi,i\partial\bar{\partial}\varphi),
\end{split}
\end{equation*}
where $Q$ is a $2n$-form satisfying
\begin{equation*}
\begin{split}
\left|Q(X\cdot\varphi,i\partial\bar{\partial}\varphi)\right|\leq ce^{\frac{|X\cdot\varphi|}{2}}\left(|X\cdot\varphi|_{h}^2+|i\partial\bar{\partial}\varphi|^2_{h}\right)\tau^n
\end{split}
\end{equation*}
pointwise on $M$. Since $\varphi\in\mathcal{M}^{\infty}_{X,\,\exp}(M)$,
we know that $\varphi$ and its derivatives decay exponentially with respect to $f$.
Moreover, since $X$ is bounded, $X\cdot\varphi$ also decays exponentially,
and by construction, we know that the volume of the level sets of $f$ have polynomial growth, that is to say,
\begin{equation*}
\int_{\{f\,=\,R\}}\tau^n\lrcorner X=O\left(R^{n-1}\right)\qquad\textrm{as $R\to+\infty$.}
\end{equation*}
Using the co-area formula, these observations together imply that
\begin{equation*}
\int_{\{f\,\geq\,\lambda\}}|\varphi|\left|e^f\tau^n-e^{f_{\varphi}}\tau_{\varphi}^n\right|\leq c\int_{\lambda}^{+\infty}e^{-s}s^{n-1}ds<+\infty,
\end{equation*}
where $\lambda$ is a positive constant large enough so that $\{X\,=\,0\}\subset\left\{f\leq\frac{\lambda}{2}\right\}$. This shows that $I_{\tau,\,X}$ is well-defined.
The assertion that $J_{\tau,\,X}(\varphi)$ is well-defined for any path $(\varphi_t)_{0\,\leq\, t\,\leq\, 1}$ in
$\mathcal{M}^{\infty}_{X,\,\exp}(M)$ from $\varphi_{0}=0$ to $\varphi_{1}=\varphi$ is proved in a similar manner.

Before proving that $J_{\tau,\,X}$ is path-independent and hence defines a functional,
we establish the first variation of $I_{\tau,\,X}-J_{\tau,\,X}$. To this end,
let $(\varphi_t)_{0\,\leq\, t\,\leq\, 1}$ be a $C^1$-path in $\mathcal{M}^{\infty}_{X,\,\exp}(M)$ with $\varphi_{0}=0$ and $\varphi_{1}=\varphi$.
Then differentiating with respect to the parameter $t$ under the integral sign in the definition of $I_{\tau,\,X}(\varphi_t)$, we find that
\begin{equation}
\begin{split}\label{first-var-I}
\frac{d}{dt}I_{\tau,\,X}(\varphi_t)&=\int_M\dot{\varphi_t}\left(e^f\tau^n-e^{f_{\varphi_t}}\tau_{\varphi_t}^n\right)-
\int_M\varphi_t\cdot\,e^{f_{\varphi_t}}\left(\left(\frac{X}{2}\cdot\dot{\varphi_t}\right)\,\tau_{\varphi_t}^{n}+ni\partial\bar{\partial}\dot{\varphi_t}\wedge\tau_{\varphi_t}^{n-1}\right),
\end{split}
\end{equation}
where the dominated convergence theorem justifies passing the derivative under the integral.

Regarding the first variation of $J_{\tau,\,X}$, for each fixed $t\in[0,\,1]$, define the path $\tilde{\varphi}^t_s:=\varphi_{st}$
in $\mathcal{M}^{\infty}_{X,\,\exp}(M)$ for $s\in[0,\,1]$. Then $(\tilde{\varphi}^t_s)_{0\,\leq\,s\,\leq\,1}$
is a $C^1$-path from $\tilde{\varphi}^t_0=0$ to $\tilde{\varphi}^t_1=\varphi_t$ and
 \begin{equation}
\begin{split}\label{rewrite-J-path}
J_{\tau,\,X}(\varphi_t)&=\int_0^1\int_M\left(\frac{\partial\tilde{\varphi}^t_s}{\partial s}\right)\left(e^f\tau^n-e^{f+\frac{X}{2}\cdot\tilde{\varphi}^t_s}\tau_{\tilde{\varphi}^t_s}^n\right)\wedge ds\\
&=\int_0^1\int_Mt\dot{\varphi}_{st}\left(e^f\tau^n-e^{f+\frac{X}{2}\cdot\varphi_{st}}\tau_{\varphi_{st}}^{n}\right)\wedge ds\\
&=\int_0^t\int_M\dot{\varphi_{u}}\left(e^f\tau^n-e^{f+\frac{X}{2}\cdot\varphi_{u}}\tau_{\varphi_{u}}^n\right)\wedge du,\\
\end{split}
\end{equation}
where the change of variables $u:=st$ was performed in the third line.
One can check that the integrand (with respect to $du$) in \eqref{rewrite-J-path} is a real-valued continuous function. As such,
\begin{equation}\label{first-var-J}
\frac{d}{dt}J_{\tau,\,X}(\varphi_t)=\int_M\dot{\varphi_{t}}\left(e^f\tau^n-e^{f+\frac{X}{2}\cdot\varphi_{t}}\tau_{\varphi_{t}}^n\right).
\end{equation}
Taking the difference of \eqref{first-var-I} and \eqref{first-var-J} then yields \eqref{first-var-I-J}.

What remains to be shown is that
$J_{\tau,\,X}(\varphi)$ does not depend on the chosen path
$(\varphi_{t})_{0\,\leq\, t\,\leq\, 1}$ in $\mathcal{M}^{\infty}_{X,\,\exp}(M)$ from $0$ to $\varphi$.
We follow Bando and Mabuchi's seminal paper \cite{Ban-Mab-Uni} together with \cite[Lemma $3.1$]{Zhu-KRS-C1} to prove this.
By concatenating paths, it suffices to show
that if $(\varphi_t)_{0\,\leq\, t\,\leq\, 1}$ is a
$C^1$-path in $\mathcal{M}^{\infty}_{X,\,\exp}(M)$ from $\varphi_{0}=0$ to $\varphi_{1}=\varphi\equiv0$,
then $J_{\tau,\,X}(\varphi)=0$. To prove this, we need to enlarge the space of parameters $[0,\,1]$ to a square $[0,\,1]\times[0,\,1]$
in the following way. Define the following two-parameter path:
$$\textrm{$\varphi_{t,\,\delta}:=(1-\delta)\cdot \varphi_t$ for $(t,\,\delta)\in[0,\,1]\times[0,\,1]$}.$$
This path has the following properties:
\begin{itemize}
  \item $\varphi_{t,\,\delta}\in\mathcal{M}^{\infty}_{X,\,\exp}(M)$ for all $(t,\,\delta)\in[0,\,1]\times[0,\,1]$,
  \item $\varphi_{t,\,0}=\varphi_t$ for all $t\in[0,\,1]$,
  \item $\varphi_{t,\,1}=0$ for all $t\in[0,\,1]$,
  \item $\varphi_{0,\,\delta}=0$ for all $\delta\in[0,\,1]$,
  \item $\varphi_{1,\,\delta}=0$ for all $\delta\in[0,\,1]$.
\end{itemize}
Next, define the $2n$-forms $(\Omega_{t,\,\delta})_{(t,\,\delta)\,\in\,[0,\,1]\times[0,\,1]}$ by
\begin{equation*}
\Omega_{t,\,\delta}:=e^{f}\tau^n-e^{f_{\varphi_{t,\,\delta}}}\tau_{\varphi_{t,\,\delta}}^n\qquad\textrm{for all $(t,\,\delta)\in[0,\,1]\times[0,\,1]$}.
\end{equation*}
Then by definition
$$\Omega_{t,\,1}=\Omega_{0,\,\delta}=\Omega_{1,\,\delta}=0_{\Lambda^{2n}M}\qquad\textrm{for all $t,\,\delta\in[0,\,1]$},$$
and we can rewrite $J_{\tau,\,X}(\varphi)$ as
\begin{equation*}
J_{\tau,\,X}(\varphi)=\int_0^1\int_M\left(\frac{\partial}{\partial t}(\varphi_{t,\,0})\,\Omega_{t,\,0}\right)\wedge dt.
\end{equation*}
An important observation is the following.
\begin{claim}\label{claim-J-2-parameter}
$J_{\tau,\,X}(\varphi)$ can be computed as
\begin{equation}\label{compute-J-square}
J_{\tau,\,X}(\varphi)=\int_{0}^1\int_0^1\int_Md_{t,\,\delta}\varphi_{t,\,\delta}\wedge d_{t,\,\delta}\Omega_{t,\,\delta},
\end{equation}
where $d_{t,\,\delta}$ is the exterior derivative with respect to the parameters $t$ and $\delta$.
\end{claim}

\begin{proof}
We first compute the integrand of \eqref{compute-J-square} pointwise:
\begin{equation*}
\begin{split}
d_{t,\,\delta}\varphi_{t,\,\delta}\wedge d_{t,\,\delta}\Omega_{t,\,\delta}&=\left(\left(\frac{\partial \varphi_{t,\,\delta}}{\partial t}\right)dt+\left(\frac{\partial \varphi_{t,\,\delta}}{\partial \delta}\right)d\delta\right)\wedge
 \left(\left(\frac{\partial \Omega_{t,\,\delta}}{\partial t}\right)\wedge dt+\left(\frac{\partial \Omega_{t,\,\delta}}{\partial \delta}\right)\wedge d\delta\right)\\
&=\left(\frac{\partial \varphi_{t,\,\delta}}{\partial t}\cdot \frac{\partial \Omega_{t,\,\delta}}{\partial \delta}-\frac{\partial \varphi_{t,\,\delta}}{\partial \delta}\cdot \frac{\partial \Omega_{t,\,\delta}}{\partial t}\right)\wedge dt\wedge d\delta.
\end{split}
\end{equation*}
Next, integration by parts, first with respect to $\delta$ and then with respect to $t$, yields
\begin{equation*}
\begin{split}
\int_{0}^{1}\left(\int_{0}^{1}\frac{\partial \varphi_{t,\,\delta}}{\partial t}\cdot \frac{\partial \Omega_{t,\,\delta}}{\partial \delta}\wedge d\delta\right)\wedge dt&=\int_0^1\left[\frac{\partial \varphi_{t,\,\delta}}{\partial t}\cdot \Omega_{t,\,\delta}\right]\bigg\vert_{\delta=0}^{\delta=1}\wedge dt-\int_{0}^{1}\left(\int_{0}^{1}\frac{\partial^2\varphi_{t,\,\delta}}{\partial \delta\partial t}\,\Omega_{t,\,\delta}\wedge d\delta\right)\wedge dt\\
&=-\int_{0}^{1}\frac{\partial\varphi_{t,\,0}}{\partial t}\cdot\Omega_{t,\,0}\wedge dt-\int_{0}^{1}\left(\int_{0}^{1}\frac{\partial^2\varphi_{t,\,\delta}}{\partial \delta\partial t}\,\Omega_{t,\,\delta}\wedge d\delta\right)\wedge dt,\\
\int_{0}^{1}\left(\int_{0}^{1}\frac{\partial \varphi_{t,\,\delta}}{\partial \delta}\cdot \frac{\partial \Omega_{t,\,\delta}}{\partial t}\wedge dt\right)
\wedge d\delta&=\int_0^1\left[\frac{\partial \varphi_{t,\,\delta}}{\partial \delta}\cdot \Omega_{t,\,\delta}\right]\bigg\vert_{t=0}^{t=1}\wedge d\delta-\int_{0}^{1}\left(\int_{0}^{1}\frac{\partial^2\varphi_{t,\,\delta}}{\partial t\partial \delta}\,\Omega_{t,\,\delta}\wedge dt\right)\wedge d\delta\\
&=-\int_{0}^{1}\left(\int_{0}^{1}\frac{\partial^2\varphi_{t,\,\delta}}{\partial t\partial \delta}\,\Omega_{t,\,\delta}\wedge dt\right)\wedge d\delta.
\end{split}
\end{equation*}
The claim now follows from the fact that $\frac{\partial^2\varphi_{t,\,\delta}}{\partial \delta\partial t}=
\frac{\partial^2\varphi_{t,\,\delta}}{\partial t\partial \delta}=-\dot{\varphi}_{t}$ by definition of the path $\varphi_{t,\,\delta}$.
\end{proof}

Now, from the definition of the $2n$-forms $\Omega_{t,\,\delta}$, we have that
 \begin{equation*}
 \begin{split}
-d_{t,\,\delta}\Omega_{t,\,\delta}&=d_{t,\,\delta}\left(e^{f_{\varphi_{t,\,\delta}}}\tau_{\varphi_{t,\,\delta}}^n\right)\\
&=\left(\left(\frac{X}{2}\cdot d_{t,\,\delta}\varphi_{t,\,\delta}\right)\wedge \tau_{\varphi_{t,\,\delta}}^n+ni\partial\bar{\partial}\left(d_{t,\,\delta}\varphi_{t,\,\delta}\right)\wedge\tau^{n-1}_{\varphi_{t,\,\delta}}\right)e^{f_{\varphi_{t,\,\delta}}}\\
&=\left[\left(\frac{X}{2}\cdot\left(\frac{\partial\varphi_{t,\,\delta}}{\partial t}\right)\right)dt+
\left(\frac{X}{2}\cdot\left(\frac{\partial\varphi_{t,\,\delta}}{\partial \delta}\right)\right)d\delta\right]\wedge e^{f_{\varphi_{t,\,\delta}}} \tau_{\varphi_{t,\,\delta}}^n\\
&\qquad+n\left[i\partial\bar{\partial}\left(\frac{\partial\varphi_{t,\,\delta}}{\partial t}\right)\wedge dt+i\partial\bar{\partial}\left(\frac{\partial\varphi_{t,\,\delta}}{\partial \delta}\right)\wedge d\delta\right]\wedge e^{f_{\varphi_{t,\,\delta}}}\tau^{n-1}_{\varphi_{t,\,\delta}}.
\end{split}
\end{equation*}
This allows us to express the integrand of the right-hand side of \eqref{compute-J-square} as:
\begin{equation*}
\begin{split}
d_{t,\,\delta}\varphi_{t,\,\delta}\wedge d_{t,\,\delta}\Omega_{t,\,\delta}&=\left[\frac{\partial\varphi_{t,\,\delta}}{\partial \delta}\left(\frac{X}{2}\cdot\left(\frac{\partial\varphi_{t,\,\delta}}{\partial t}\right)\right)
-\frac{\partial\varphi_{t,\,\delta}}{\partial t}\left(\frac{X}{2}\cdot\left(\frac{\partial\varphi_{t,\,\delta}}{\partial \delta}\right)\right)\right]e^{f_{\varphi_{t,\,\delta}}}
\tau^n_{\varphi_{t,\,\delta}}\wedge dt\wedge d\delta\\
&\qquad+n\left[\frac{\partial \varphi_{t,\,\delta}}{\partial \delta}i\partial\bar{\partial}\left(\frac{\partial\varphi_{t,\,\delta}}{\partial t}\right)
-\frac{\partial\varphi_{t,\,\delta}}{\partial t}i\partial\bar{\partial}\left(\frac{\partial\varphi_{t,\,\delta}}{\partial \delta}\right)\right]\wedge e^{f_{\varphi_{t,\,\delta}}}\tau^{n-1}_{\varphi_{t,\,\delta}}\wedge dt\wedge d\delta.
\end{split}
\end{equation*}
Integration by parts with respect to the weighted measure $e^{f_{\varphi_{t,\,\delta}}}\tau^{n}_{\varphi_{t,\,\delta}}$
for some fixed parameters $t,\delta\in[0,\,1]$ then gives us
\begin{equation*}
\begin{split}
n\int_M&\left[\frac{\partial \varphi_{t,\,\delta}}{\partial \delta}i\partial\bar{\partial}\left(\frac{\partial\varphi_{t,\,\delta}}{\partial t}\right)
-\frac{\partial\varphi_{t,\,\delta}}{\partial t}i\partial\bar{\partial}\left(\frac{\partial\varphi_{t,\,\delta}}{\partial \delta}\right)\right]\wedge e^{f_{\varphi_{t,\,\delta}}}\tau^{n-1}_{\varphi_{t,\,\delta}}\\
&=-n\int_M\left[i\bar{\partial}\left(\frac{\partial\varphi_{t,\,\delta}}{\partial t}\right)\wedge\partial\left(\frac{\partial\varphi_{t,\,\delta}}{\partial \delta}\right)+i\partial\left(\frac{\partial\varphi_{t,\,\delta}}{\partial \delta}\right)\wedge\bar{\partial}\left(\frac{\partial\varphi_{t,\,\delta}}{\partial t}\right)\right]\wedge e^{f_{\varphi_{t,\,\delta}}}\tau^{n-1}_{\varphi_{t,\,\delta}}\\
&\qquad+n\int_M\left[\frac{\partial\varphi_{t,\,\delta}}{\partial t}\cdot i\partial f_{\varphi_{t,\,\delta}}\wedge \bar{\partial}\left(\frac{\partial\varphi_{t,\,\delta}}{\partial \delta}\right)-\frac{\partial\varphi_{t,\,\delta}}{\partial \delta}\cdot i\partial f_{\varphi_{t,\,\delta}}\wedge\bar{\partial}\left(\frac{\partial\varphi_{t,\,\delta}}{\partial t}\right)\right]\wedge e^{f_{\varphi_{t,\,\delta}}}\tau^{n-1}_{\varphi_{t,\,\delta}}\\
&=n\int_M\left[\frac{\partial\varphi_{t,\,\delta}}{\partial t}\cdot i\partial f_{\varphi_{t,\,\delta}}\wedge \bar{\partial}\left(\frac{\partial\varphi_{t,\,\delta}}{\partial \delta}\right)-\frac{\partial\varphi_{t,\,\delta}}{\partial \delta}\cdot i\partial f_{\varphi_{t,\,\delta}}\wedge\bar{\partial}\left(\frac{\partial\varphi_{t,\,\delta}}{\partial t}\right)\right]\wedge e^{f_{\varphi_{t,\,\delta}}}\tau^{n-1}_{\varphi_{t,\,\delta}}\\
&=\int_M\left[\frac{\partial\varphi_{t,\,\delta}}{\partial t}\frac{X}{2}\cdot\left(\frac{\partial\varphi_{t,\,\delta}}{\partial \delta}\right)
-\frac{\partial\varphi_{t,\,\delta}}{\partial \delta}\frac{X}{2}\cdot\left(\frac{\partial\varphi_{t,\,\delta}}{\partial t}\right)\right]e^{f_{\varphi_{t,\,\delta}}}\tau^{n}_{\varphi_{t,\,\delta}}.
\end{split}
\end{equation*}
Here, the exponential decay of the functions $\varphi_{t,\,\delta}$ justifies the use of
Stokes' theorem in the first equality, and in the last line we have applied the identity
\begin{equation*}
n\,i\partial f_{\varphi_{t,\,\delta}}\wedge\bar{\partial}u\wedge
\tau^{n-1}_{\varphi_{t,\,\delta}}=[(\nabla^{h_{\varphi_{t,\,\delta}}})^{0,\,1}f_{\varphi_{t,\,\delta}}\cdot u]\,\tau^{n}_{\varphi_{t,\,\delta}}
=\left(\frac{X}{2}\cdot u\right)\,\tau^{n}_{\varphi_{t,\,\delta}}\quad\textrm{for $JX$-invariant $u\in C^{\infty}(M)$}
\end{equation*}
to $\frac{\partial\varphi_{t,\,\delta}}{\partial\delta}$ and $\frac{\partial\varphi_{t,\,\delta}}{\partial t}$, both of which are $JX$-invariant by virtue of the fact
that $\varphi_{t}$ is for all $t\in[0,\,1]$.
\end{proof}

We next show how Theorem \ref{main-thm-I-J} can be applied to obtain
a priori energy estimates along a path of solutions to \eqref{MA-cpct-supp-t}
in $\mathcal{M}^{\infty}_{X,\,\exp}(M)$. Here we make use of the assumption that
$|f-\varphi(t)|$ is bounded in applying Corollary \ref{tarea}.

\begin{prop}[A priori energy estimates]\label{prop-a-priori-ene-est}
Let $(\psi_t)_{0\,\leq\, t\,\leq\, 1}$ be a path of solutions in $\mathcal{M}^{\infty}_{X,\,\exp}(M)$ to (\ref{MA-cpct-supp-t}). Then there exists a positive constant $C=C\left(n,\tau,\|f\cdot F\|_{C^0_{X,\,\exp}}\right)$ such that
\begin{equation*}
\sup_{0\,\leq\, t\,\leq\, 1}\int_M|\psi_t|^2\,\frac{e^{f}}{f^2}\tau^n\leq C.
\end{equation*}
\end{prop}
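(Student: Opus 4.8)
The plan is to bound the weighted $L^2$-norm of the solutions $\psi_t$ by exploiting the functionals $I_{\tau,\,X}$ and $J_{\tau,\,X}$ and the Poincaré inequality from Corollary \ref{tarea}. First I would fix $t\in[0,\,1]$ and consider two natural paths in $\mathcal{M}^{\infty}_{X,\,\exp}(M)$ from $0$ to $\psi_t$: the \emph{continuity path} $s\mapsto\psi_{st}$, along which $\psi_{st}$ solves \eqref{MA-cpct-supp-t} with parameter $st$, and the \emph{linear path} $s\mapsto s\psi_t$. Since $J_{\tau,\,X}$ is path-independent by Theorem \ref{main-thm-I-J}, I can compute $(I_{\tau,\,X}-J_{\tau,\,X})(\psi_t)$ by integrating the first variation \eqref{first-var-I-J} along either path. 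Along the continuity path, $\Delta_{\tau_{\psi_{st}}}\dot\psi_{st}+\frac{X}{2}\cdot\dot\psi_{st}=\frac{\partial}{\partial s}\big(\log(\tau^n_{\psi_{st}}/\tau^n)+\frac{X}{2}\cdot\psi_{st}\big)=\frac{\partial}{\partial s}(stF)$, which in the end is bounded in terms of $F$ and hence controlled on $\supp(F)$; this gives an upper bound on $(I_{\tau,\,X}-J_{\tau,\,X})(\psi_t)$.

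Next I would extract a genuine energy term. Along the linear path $s\psi_t$, one has $\dot{\widetilde\varphi}_s=\psi_t$ and $\Delta_{\tau_{s\psi_t}}\psi_t+\frac{X}{2}\cdot\psi_t$, so integrating \eqref{first-var-I-J} and integrating by parts with respect to the weighted measure $e^{f_{s\psi_t}}\tau^n_{s\psi_t}$ (legitimate by the exponential decay of $\psi_t$ and its derivatives) produces, after the standard algebraic manipulation, an expression of the shape
\begin{equation*}
(I_{\tau,\,X}-J_{\tau,\,X})(\psi_t)=\int_0^1\!\!\int_M|\nabla^{h_{s\psi_t}}\psi_t|^2_{h_{s\psi_t}}\,(1-s)\,e^{f_{s\psi_t}}\tau^n_{s\psi_t}\wedge ds,
\end{equation*}
which is manifestly non-negative. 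Comparing the two computations yields an a priori bound
\begin{equation*}
\int_0^1\!\!\int_M|\nabla^{h_{s\psi_t}}\psi_t|^2_{h_{s\psi_t}}\,(1-s)\,e^{f_{s\psi_t}}\tau^n_{s\psi_t}\wedge ds\leq C\big(n,\tau,\|f\cdot F\|_{C^0_{X,\,\exp}}\big).
\end{equation*}
Using the lower bound $f_{s\psi_t}=f+\frac{sX}{2}\cdot\psi_t\geq -f+f=$ bounded below coming from Lemma \ref{lemma-inf-pot-fct} (more precisely $\frac{X}{2}\cdot\psi_t\geq -f$, so the weight $e^{f_{s\psi_t}}$ is comparable to $e^{f}$ up to uniform constants on the relevant region, and $\tau^n_{s\psi_t}$ is comparable to $\tau^n$ via the uniform $C^0$-bound on $\psi_t$ already in hand or via a bootstrap), together with the uniform equivalence of $h_{s\psi_t}$ and $h$, I would deduce a bound on $\int_M|\nabla^h\psi_t|^2_h\,e^f\tau^n$ (taking, say, $s\in[0,\,1/2]$ to avoid the degeneracy of $(1-s)$).

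Finally, I would apply the weighted Poincaré inequality. Corollary \ref{tarea}, applied with $\alpha=2$ to the metric $h$ (which is uniformly equivalent to the steady soliton metric $\tilde g$ at infinity with comparable potentials thanks to \eqref{add-ass}), gives
\begin{equation*}
\lambda_1\int_M|\psi_t|^2\,\frac{e^f}{f^2}\tau^n\leq\int_M|\nabla^h\psi_t|^2\,\frac{e^f}{f^2}\tau^n\leq\int_M|\nabla^h\psi_t|^2\,e^f\tau^n\leq C,
\end{equation*}
for a uniform $\lambda_1>0$, since $f\geq1$. This is the claimed estimate. The main obstacle, I expect, is the bookkeeping in comparing the two path computations and controlling the weighted measure $e^{f_{s\psi_t}}\tau^n_{s\psi_t}$ uniformly in $s$ and $t$: one must ensure the volume-form ratio $\tau^n_{s\psi_t}/\tau^n$ stays uniformly bounded, which requires knowing $\psi_t$ (and the relevant derivatives) are controlled independently of $t$ — so in practice this energy estimate is bootstrapped with, or at least carefully interleaved with, the localisation lemmas (Lemma \ref{lemma-loc-crit-pts}, Lemma \ref{lemma-inf-pot-fct}) and the fact that $F$ is compactly supported, rather than standing entirely on its own.
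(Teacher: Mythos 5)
Your overall architecture — compute $(I_{\tau,\,X}-J_{\tau,\,X})(\psi_t)$ once along the linear path and once along the continuity path, then invoke Corollary \ref{tarea} — is exactly the paper's, but two concrete steps fail as written. First, the lower bound. From the linear path the paper gets
\begin{equation*}
(I_{\tau,\,X}-J_{\tau,\,X})(\psi_t)\geq n\int_0^1 s(1-s)^{n-1}\int_M i\partial\psi_t\wedge\bar{\partial}\psi_t\wedge e^{f+s\frac{X}{2}\cdot\psi_t}\tau^{n-1}\wedge ds,
\end{equation*}
where the $(1-s)^{n-1}$ comes from keeping only the $\tau^{n-1}$ term in the binomial expansion of $\tau_{s\psi_t}^{n-1}=((1-s)\tau+s\tau_{\psi_t})^{n-1}$ (this is also how one avoids ever comparing $\tau_{s\psi_t}^n$ with $\tau^n$ — your appeal to "the uniform $C^0$-bound on $\psi_t$ already in hand" is circular, since Propositions \ref{prop-bd-abo-uni-psi} and \ref{prop-bd-bel-uni-psi} are proved \emph{using} this energy estimate). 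At this stage the only control on the weight is $\frac{X}{2}\cdot\psi_t\geq -f$ from Lemma \ref{lemma-inf-pot-fct}, giving $e^{f+s\frac{X}{2}\cdot\psi_t}\geq e^{(1-s)f}$; your assertion that $e^{f_{s\psi_t}}$ is "comparable to $e^f$ up to uniform constants" has no justification here, because no uniform bound on $X\cdot\psi_t$ is yet available (that is Proposition \ref{prop-bd-uni-X-psi}, proved later). One is therefore forced to estimate $\int_0^1 s(1-s)^{n-1}e^{(1-s)f}\,ds\geq c_n e^f f^{-2}$ (the paper's Claim \ref{claim-est-bded-below}), and this is precisely where the weight $f^{-2}e^f$ in the statement comes from. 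Your restriction to $s\in[0,\,1/2]$ to extract $\int_M|\nabla^h\psi_t|^2\,e^f\tau^n$ does not work: on that range the weight is only $\geq e^{f/2}$, not $\geq ce^f$.

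Second, and more seriously, the upper bound from the continuity path is \emph{not} an absolute constant. The first variation \eqref{first-var-I-J} gives
\begin{equation*}
(I_{\tau,\,X}-J_{\tau,\,X})(\psi_t)=-\int_0^1\int_M\psi_{st}\,(tF)\,e^{f+stF}\tau^n\wedge ds,
\end{equation*}
(using the Monge--Amp\`ere equation to write $e^{f_{\psi_{st}}}\tau_{\psi_{st}}^n=e^{f+stF}\tau^n$ exactly), and this still contains the unknown $\psi_{st}$ on $\supp(F)$; Cauchy--Schwarz only yields a bound of the form $C\int_0^1\|\psi_{st}\|_{L^2(f^{-2}e^f\tau^n)}\,ds$. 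Combining with the lower bound and Corollary \ref{tarea} one obtains $\|\psi_t\|^2_{L^2(f^{-2}e^f\tau^n)}\leq \frac{C}{t}\int_0^t\|\psi_s\|_{L^2(f^{-2}e^f\tau^n)}\,ds$, and a Gr\"onwall-type integration of this differential inequality is still required to close the argument. Your proposal omits this step entirely because it treats the continuity-path bound as already uniform. With these two corrections the proof is self-contained and needs no bootstrap against the later $C^0$-estimates.
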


\begin{proof}
As a consequence of Theorem \ref{main-thm-I-J}, we can use any $C^1$-path $(\varphi_t)_{0\,\leq\, t\,\leq\, 1}$
in $\mathcal{M}^{\infty}_{X,\,\exp}(M)$ from $\varphi_{0}=0$ to $\varphi_{1}=\varphi\in\mathcal{M}^{\infty}_{X,\,\exp}(M)$
to compute $J_{\tau,\,X}(\varphi)$. As in \cite{tianzhu1}, we choose two different paths
to compute $J_{\tau,\,X}(\psi)$, the first being the linear path
defined by $\varphi_t:=t\psi$, $t\in[0,\,1]$, for $\psi\in\mathcal{M}^{\infty}_{X,\,\exp}(M)$ a solution to \eqref{MA-cpct-supp}. For this path, Theorem \ref{main-thm-I-J} asserts that
\begin{equation*}
\left(I_{\tau,\,X}-J_{\tau,\,X}\right)(\psi)=-\int_0^1\int_Mt\psi\left(\Delta_{\tau_{t\psi}}\psi+\frac{X}{2}\cdot\psi\right)\,e^{f+t\frac{X}{2}\cdot\psi}\tau_{t\psi}^n\wedge dt.
\end{equation*}
Integration by parts with respect to the weighted volume form $e^{f+t\frac{X}{2}\cdot\psi}\tau_{t\psi}^n$ then leads to
\begin{equation}
\begin{split}\label{bded-below-I-J}
\left(I_{\tau,\,X}-J_{\tau,\,X}\right)(\psi)&=n\int_0^1\int_Mt\, i\partial\psi\wedge\bar{\partial}\psi\wedge\,\left(e^{f+t\frac{X}{2}\cdot\psi}\tau_{t\psi}^{n-1}\right)\wedge dt\\
&=n\int_0^1\int_M t\,i\partial\psi\wedge\bar{\partial}\psi\wedge\,\left(e^{f+t\frac{X}{2}\cdot\psi}\left((1-t)\tau+t\tau_{\psi}\right)^{n-1}\right)\wedge dt\\
&= n\sum_{k\,=\,0}^{n-1}{{n-1}\choose{k}}\left(\int_0^1t^{k+1}(1-t)^{n-1-k}\int_M i\partial\psi\wedge\bar{\partial}\psi\wedge\left(e^{f+t\frac{X}{2}\cdot\psi}\tau^{n-1-k}
\wedge\tau_{\psi}^k\right)\right)\wedge dt\\
&\geq n \int_0^1t(1-t)^{n-1}\int_M i\partial\psi\wedge\bar{\partial}\psi\wedge\left(e^{f+t\frac{X}{2}\cdot\psi}\tau^{n-1}\right)\wedge dt\\
&\geq  n \int_0^1t(1-t)^{n-1}\int_M i\partial\psi\wedge\bar{\partial}\psi\wedge\left(e^{(1-t)f}\tau^{n-1}\right)\wedge dt\\
&=n \int_M\left(\int_0^1t(1-t)^{n-1}e^{(1-t)f}dt\right)i\partial\psi\wedge\bar{\partial}\psi\wedge\tau^{n-1},
\end{split}
\end{equation}
where we have used Lemma \ref{lemma-inf-pot-fct} to bound the weight $e^{f+t\frac{X}{2}\cdot\psi}$ from below in the penultimate line.
From this, the following claim will allow us to obtain a lower bound.

\begin{claim}\label{claim-est-bded-below}
There exists a positive constant $c_n$ such that
\begin{equation*}
\int_0^1t(1-t)^{n-1}e^{(1-t)f}dt\geq c_n\frac{e^f}{f^2}.
\end{equation*}
\end{claim}

\begin{proof}
Via the change of variables $s=1-t$, notice that for $k\geq 1$,
\begin{equation*}
\begin{split}
\int_0^1t(1-t)^{k-1}e^{(1-t)f}dt&=\int_0^1(1-s)s^{k-1}e^{sf}ds\\
&=\int_0^1s^{k-1}e^{sf}ds-\int_0^1s^{k}e^{sf}ds\\
&=c_{k-1}(f)-c_k(f),
\end{split}
\end{equation*}
where $c_k(f):=\int_0^1s^{k}e^{sf}\,ds$ for $k\in\mathbb{N}$. An induction argument using the relations
\begin{eqnarray*}
c_0(f)=\frac{e^f-1}{f},\qquad c_k(f)=\frac{e^f}{f}-\frac{k}{f}c_{k-1}(f),\qquad k\geq 1,
\end{eqnarray*}
derived using integration by parts then shows that for all $k\geq 0$, $c_k(f)$ is equivalent to $f^{-1}e^f$ as $f$ tends to $+\infty$.

Next, a computation shows that for all $k\geq 2$,
\begin{equation*}
\begin{split}
c_{k-1}(f)-c_k(f)&=\frac{e^f}{f}-\frac{(k-1)}{f}c_{k-2}(f)-\left(\frac{e^f}{f}-\frac{k}{f}c_{k-1}(f)\right)\\
&=\frac{k}{f}c_{k-1}(f)-\frac{(k-1)}{f}c_{k-2}(f)\\
&=\frac{c_{k-2}(f)}{f}-\frac{k}{f}(c_{k-2}(f)-c_{k-1}(f)).
\end{split}
\end{equation*}
Another induction argument on $k$ (the case $k=1$ can be handled easily) further yields the fact that
for all $k\geq 2$, $c_{k-1}(f)-c_k(f)$ is equivalent to $f^{-2}e^f$ as $f$ tends to $+\infty$. This in turn
implies Claim \ref{claim-est-bded-below}.
\end{proof}

Applying Claim \ref{claim-est-bded-below} to \eqref{bded-below-I-J} results in the lower bound
\begin{equation}\label{inequ-bded-bel-I-J-fin}
(I_{\tau,\,X}-J_{\tau,\,X})(\psi)\geq c\int_M\frac{e^f}{f^2}\,i\partial\psi\wedge\bar{\partial}\psi\wedge\tau^{n-1}
\end{equation}
for some positive constant $c_{n}$. We also require an upper bound on
$(I_{\tau,\,X}-J_{\tau,\,X})(\psi)$ to complete the proof of the proposition. To achieve such a bound,
we use the continuity path of solutions
$\varphi_t:=\psi_t$, $t\in[0,\,1]$, to \eqref{MA-cpct-supp-t}
to compute $(I_{\tau,\,X}-J_{\tau,\,X})(\psi)$. First observe that the
first variations $(\dot{\psi_t})_{0\,\leq\, t\,\leq\, 1}$ satisfy the following PDE
obtained from \eqref{MA-cpct-supp-t} by differentiating with respect to the parameter $t$:
\begin{equation*}
\begin{split}
\Delta_{\tau_{\psi_t}}\dot{\psi_t}+\frac{X}{2}\cdot\dot{\psi_t}=F,\qquad 0\leq t\leq 1.
\end{split}
\end{equation*}
Combined with Theorem \ref{main-thm-I-J}, this leads to the estimate:
\begin{equation}
\begin{split}\label{inequ-bded-abo-I-J-fin}
(I_{\tau,\,X}-J_{\tau,\,X})(\psi)&=-\int_0^1\int_M\psi_t\cdot F\,e^{f_{\psi_t}}\tau_{\psi_t}^n\wedge dt\\
&=-\int_0^1\int_M\psi_t\cdot F\,e^{f+tF}\tau^n\wedge dt\\
&\leq Ce^{\|F\|_{C^0}}\int_0^1\int_{\supp(F)}|F||\psi_t|\,e^f\tau^n\wedge dt\\
&\leq Ce^{\|F\|_{C^0}}\|F\|_{L^2(f^{2}e^f\tau^n)}\int_0^1\|\psi_t\|_{L^2(f^{-2}e^f\tau^n)}\,dt\\
&=:C\left(n,\|f\cdot F\|_{C^0_{X,\,\exp}}\right)\int_0^1\|\psi_t\|_{L^2(f^{-2}e^f\tau^n)}\,dt,
\end{split}
\end{equation}
where we have used \eqref{MA-cpct-supp-t} in the third line and the Cauchy-Schwarz inequality in the penultimate line.
Comparing \eqref{inequ-bded-bel-I-J-fin} with \eqref{inequ-bded-abo-I-J-fin}, we deduce that
 \begin{equation}\label{ireland}
\|\nabla^{h}\psi\|^2_{L^2(f^{-2}e^f\tau^n)}\leq C\int_0^1\|\psi_t\|_{L^2(f^{-2}e^f\tau^n)}\,dt,
\end{equation}
where $C=C\left(n,\,\|f\cdot F\|_{C^0_{X,\,\exp}}\right)$ is a
positive constant that depends only on $n$ and $F$ that may vary from line to line.
Now, an application of Corollary \ref{tarea} to $(M,\,\tau,\,f)$ and $(M\setminus K,\,\tilde{\omega},\,\varphi(t))$ for $K\subset M$ compact,
keeping in mind the fact that the difference between $f$ and the soliton potential $\varphi(t)$ of $\tilde{\omega}$ is bounded on $M\setminus K$
by assumption, shows that
\begin{equation*}
\begin{split}
\lambda(\tau)\|\psi\|^2_{L^2(f^{-2}e^f\tau^n)}\leq \|\nabla^{h}\psi\|^2_{L^2(f^{-2}e^f\tau^n)}
\end{split}
\end{equation*}
for some positive constant $\lambda(\tau)$ independent of the parameter $t\in[0,\,1]$. Concatenating this inequality with \eqref{ireland}, we therefore see that
\begin{equation*}
\begin{split}
\|\psi\|^2_{L^2(f^{-2}e^f\tau^n)}\leq C\int_0^1\|\psi_t\|_{L^2(f^{-2}e^f\tau^n)}\,dt,
\end{split}
\end{equation*}
where $C=C\left(n,\,\tau,\,\|f\cdot F\|_{C^0_{X,\,\exp}}\right).$
This last inequality applies to any truncated path of the one-parameter family of solutions $(\psi_t)_{0\,\leq\, t\,\leq\, 1}$. Thus,
 \begin{equation}
\begin{split}\label{crucial-a-priori-ineq-bis}
\|\psi_t\|^2_{L^2(f^{-2}e^f\tau^n)}&\leq C\int_0^1\|\psi_{st}\|_{L^2(f^{-2}e^f\tau^n)}\,ds\\
&=\frac{C}{t}\int_0^t\|\psi_{s}\|_{L^2(f^{-2}e^f\tau^n)}\,ds.
\end{split}
\end{equation}
This is a Gr\"onwall-type differential inequality and can be integrated as follows.
Let $$H(t):=\int_0^t\|\psi_{s}\|_{L^2(f^{-2}e^f\tau^n)}\,ds$$ and observe that \eqref{crucial-a-priori-ineq-bis} may be rewritten as
\begin{equation*}
\begin{split}
H'(t)\leq \frac{C}{\sqrt{t}}\sqrt{H(t)},\quad t\in(0,1].
\end{split}
\end{equation*}
Integrating then implies that $H(t)\leq C\left(n,\tau,\|f\cdot F\|_{C^0_{X,\,\exp}}\right)\cdot t$
for all $t\in[0,\,1]$ which, after applying \eqref{crucial-a-priori-ineq-bis} once more, yields to the desired upper bound.
\end{proof}

\subsubsection{A priori estimate on $\sup_M\psi$}
Let $\psi_t$ be a solution to \eqref{MA-cpct-supp-t} for some fixed parameter $t\in[0,\,1]$.
We next obtain an upper bound for $\sup_M\psi_t$ uniform in $t$.
To obtain such a bound, it suffices by Lemma \ref{lemma-loc-crit-pts} to only bound $\max_{\supp(F)}\psi_t$ from above.
We do this by implementing a local Nash-Moser iteration as in the proof
of Theorem \ref{iso-sch-Laplacian-exp} using the fact that $\psi_{t}$ is a super-solution of the
linearised complex Monge-Amp\`ere equation of which the drift Laplacian with respect to the known metric $\tau$
forms a part.

\begin{prop}[A priori upper bound on $\sup_M\psi$]\label{prop-bd-abo-uni-psi}
Let $(\psi_t)_{0\,\leq\, t\,\leq\, 1}$ be a path of solutions in $\mathcal{M}^{\infty}_{X,\,\exp}(M)$ to (\ref{MA-cpct-supp-t}). Then there exists a positive constant $C=C\left(n,\tau,\|f\cdot F\|_{C^0_{X,\,\exp}}\right)$ such that
\begin{equation*}
\sup_{0\,\leq\, t\,\leq\, 1}\sup_{\supp(F)}\psi_t\leq C.
\end{equation*}
\end{prop}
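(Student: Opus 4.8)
\textbf{Proof proposal for Proposition \ref{prop-bd-abo-uni-psi}.}
The plan is to combine the localisation of the supremum provided by Lemma \ref{lemma-loc-crit-pts} with the a priori weighted energy estimate of Proposition \ref{prop-a-priori-ene-est} and a local Nash--Moser iteration of the type already carried out in the proof of Theorem \ref{iso-sch-Laplacian-exp}. First I would observe that each $\psi_t$ solves \eqref{MA-cpct-supp-t} with compactly supported $JX$-invariant data $tF$, whose support is contained in $\supp(F)$; hence Lemma \ref{lemma-loc-crit-pts}, applied with $tF$ in place of $F$, shows that either $\sup_M\psi_t\leq 0$ (in which case there is nothing to prove) or $\psi_t$ attains its maximum at a point of $\supp(F)$ and $\sup_M\psi_t=\max_{\supp(F)}\psi_t$. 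So it suffices to bound $\max_{\supp(F)}\psi_t$ from above uniformly in $t\in[0,1]$ (the case $t=0$ being trivial since $\psi_0\equiv0$).

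Next, arguing exactly as for \eqref{subsol-psi} but with data $tF$, concavity of the complex Monge--Amp\`ere operator gives that $\psi_t$ is a subsolution of the drift Laplacian:
$$\Delta_{\tau}\psi_t+\frac{X}{2}\cdot\psi_t\geq tF\geq -\|F\|_{C^0}\qquad\textrm{on }M.$$
I would then fix, once and for all and independently of $t$, a relatively compact open neighbourhood $\Omega$ of $\supp(F)$ in $M$; since $f$ is proper, continuous and $\geq1$, there is a constant $C_\Omega>0$ with $1\leq f\leq C_\Omega$ on $\Omega$. Proposition \ref{prop-a-priori-ene-est} supplies a constant $C=C(n,\tau,\|f\cdot F\|_{C^0_{X,\,\exp}})$ with $\int_M|\psi_t|^2\,\frac{e^f}{f^2}\tau^n\leq C$ for all $t$; restricting to $\Omega$ and using $f\leq C_\Omega$ there yields the uniform local bound $\|\psi_t\|_{L^2(\Omega,\,e^f\tau^n)}^2\leq C_\Omega^2\,C$.

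It remains to upgrade this $L^2$-bound to the desired pointwise bound. For this I would run a local Nash--Moser iteration on the above differential inequality inside $\Omega$, mimicking the argument in the proof of Theorem \ref{iso-sch-Laplacian-exp}: multiply through by $\eta^2(\psi_t^{+})^{2p-1}$ for $p\geq1$ and a Lipschitz cut-off $\eta$ supported in a ball $B_h(x,r)\Subset\Omega$ with $\eta\equiv1$ on $B_h(x,r/2)$ and $|\nabla^h\eta|_h\leq r^{-1}$, integrate by parts against $e^f\tau^n$, apply the local weighted Sobolev inequality \eqref{sob-inequ-loc-weight}, and iterate in $p$. Since the geometry of $h$ and the oscillation of $f$ are two-sidedly bounded on $\Omega$, this produces, for $r\leq r_0$ with $r_0$ depending only on $n$, $\tau$ and $\Omega$,
$$\sup_{B_h(x,\,r/2)}\psi_t^{+}\leq C(n,\tau,r_0)\left(\|\psi_t\|_{L^2(B_h(x,\,r),\,e^f\tau^n)}+\|F\|_{C^0}\right).$$
Covering the compact set $\supp(F)$ by finitely many such balls $B_h(x_i,r/2)$ with $B_h(x_i,r)\Subset\Omega$ and combining with the uniform $L^2$-bound from the previous step gives $\max_{\supp(F)}\psi_t\leq C(n,\tau,\|f\cdot F\|_{C^0_{X,\,\exp}})$, which, together with the first step, proves the proposition.

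\textbf{Expected main obstacle.} The genuinely difficult analysis has already been done in Proposition \ref{prop-a-priori-ene-est} (which furnishes the global weighted $L^2$-control) and in Lemma \ref{lemma-loc-crit-pts} (which confines the supremum to $\supp(F)$), so no essentially new analytic input is needed here. The two points requiring care are: (a) ensuring that all constants in the local Nash--Moser iteration are independent of $t$ --- this holds because $\tau$, $X$, $f$ and $\supp(F)$ are fixed and the relevant local geometry of $h$ and the weight $e^f$ are controlled on the fixed compact set $\Omega$; and (b) transferring the weighted $L^2$-bound with weight $e^f/f^2$ to an ordinary local $L^2$-bound, which is immediate from the two-sided bound on $f$ over $\Omega$. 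I expect (a) to be the only place where one must be slightly careful about uniformity in the continuity parameter.
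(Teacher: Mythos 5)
Your proposal is correct and follows essentially the same route as the paper: localise the supremum via Lemma \ref{lemma-loc-crit-pts}, use the subsolution inequality $\Delta_{\tau}\psi_t+\frac{X}{2}\cdot\psi_t\geq -|F|$, run a local Nash--Moser iteration with the local weighted Sobolev inequality \eqref{sob-inequ-loc-weight} on a tubular neighbourhood of $\supp(F)$, and close with the weighted energy bound of Proposition \ref{prop-a-priori-ene-est}. The only cosmetic difference is that you convert the weight $f^{-2}e^{f}$ into $e^{f}$ on the fixed compact set before iterating, whereas the paper keeps the weight and absorbs the comparison into the constant.
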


\begin{proof}
Let $t\in[0,\,1]$ and set $\psi:=\psi_t$ to simplify notation. Let
$\psi_+:=\max\{\psi,\,0\}$. This is a non-negative Lipschitz function. The strategy of proof is standard and follows along the lines of
the proof of Theorem \ref{iso-sch-Laplacian-exp}; we use a Nash-Moser iteration
to obtain an a priori upper bound on $\sup_{\supp(F)}\psi_+$ in terms of the
(weighted) energy of $\psi_+$ on a tubular neighbourhood of $\supp(F)$. The result then follows by invoking Proposition \ref{prop-a-priori-ene-est}.

To this end, notice that since $\log(1+x)\leq x$ for all $x>-1$ and since $\psi$ is a solution to \eqref{MA-cpct-supp-t}, $\psi$ satisfies the differential inequality
\begin{equation}\label{sub-diff-inequ-psi}
\Delta_{\tau}\psi+\frac{X}{2}\cdot \psi\geq -|F|\qquad\text{on $M$.}
\end{equation}
As in the proof of Theorem \ref{iso-sch-Laplacian-exp}, let $x\in\{f\,<\,R\}$ be such that $B_{h}(x,\,r)\Subset \{f\,<\,R\}$ and multiply \eqref{sub-diff-inequ-psi} across
by $\eta_{s,\,s'}^2u_R|u_R|^{2(p-1)}$ with $p\geq 1$, where $\eta_{s,\,s'}$, with $0<s+s'<r$ and $s,\,s'>0$, is a Lipschitz cut-off function with compact support in $B_{h}(x,\,s+s')$ equal to $1$ on $B_{h}(x,\,s)$ and with
$|\nabla^{h}\eta_{s,\,s'}|_{h}\leq\frac{1}{s'}$ almost everywhere. Next, integrate by parts and use the Sobolev inequality \eqref{sob-inequ-loc-weight}
to obtain a reversed H\" older inequality which after iteration leads to the bound
\begin{equation*}
\begin{split}\label{first-a-priori-c-0-est-cpct-part-non-lin}
\sup_{B_{h}(x,\,\frac{r}{2})}\psi_+&\leq C(n,\tau,r)\left(\|\psi_+\|_{L^2(B_{h}(x,\,r),\,e^f\tau^{n})}^2+\|F\|^2_{C^0}\right)^{\frac{1}{2}}\\
&\leq C(n,\tau,r)\left(\int_{T_r(\supp(F))}\psi_+^2\,f^{-2}e^f\tau^n+\|F\|^2_{C^0}\right)^{\frac{1}{2}}\\
&\leq C\left(n,\tau,r,\|f\cdot F\|_{C^0_{X,\,\exp}}\right),
\end{split}
\end{equation*}
where $T_r(\supp(F)):=\{x\in M\,|\,d_h(x,\,\supp(F))\leq r\}$. Here, we have made use of Proposition \ref{prop-a-priori-ene-est} in the last line.
\end{proof}

Obtaining a lower bound on $\psi_{t}$ is more difficult. The function $\psi_{t}$ is a sub-solution of the linearised equation,
however with respect to the drift Laplacian of the unknown metric, and so an alternative approach is required. We use the weighted $L^{2}$-bound given by Proposition \ref{prop-a-priori-ene-est} together with an adaption of B\l{ocki}'s method \cite{Blo-Uni-CY} to achieve the desired estimate. The fact that the data $F$ in \eqref{MA-cpct-supp-t} is compactly supported is crucial for the proof to work.

\begin{prop}[A priori lower bound on $\inf_M\psi$]\label{prop-bd-bel-uni-psi}
Let $(\psi_t)_{0\,\leq\, t\,\leq\, 1}$ be a path of solutions in $\mathcal{M}^{\infty}_{X,\,\exp}(M)$ to \eqref{MA-cpct-supp-t}. Then there exists a positive constant $C=C\left(n,\tau,\supp(F),\|f\cdot F\|_{C^0_{X,\,\exp}}\right)$ such that
\begin{equation*}
\inf_{0\,\leq\, t\,\leq\, 1}\inf_{\supp(F)}\psi_t\geq -C.
\end{equation*}
\end{prop}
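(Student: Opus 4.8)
The plan is to adapt B\l{}ocki's pluri-potential-theoretic proof of the lower bound from the compact Calabi-Yau setting \cite{Blo-Uni-CY} to our non-compact, drift-Laplacian situation, exploiting crucially that the data $F$ in \eqref{MA-cpct-supp-t} is compactly supported. Fix $t\in[0,\,1]$ and write $\psi:=\psi_t$ and $\Omega:=\supp(F)$. By Lemma \ref{lemma-inf-pot-fct} we already have the crude bound $\frac{X}{2}\cdot\psi\geq -f$ on $M$, and by Lemma \ref{lemma-loc-crit-pts} it suffices to bound $\min_{\Omega}\psi$ from below. The first step is to reduce everything to a fixed large ball. Since $F$ is compactly supported, choose a relatively compact domain $U\Subset M$ with smooth boundary containing a tubular neighbourhood of $\Omega$ on which $\tau$, $\tau_\psi$ restrict and on which $e^{f}\tau^{n}$ is comparable (up to fixed constants depending only on $U$) to a fixed smooth volume form; the a priori upper bound of Proposition \ref{prop-bd-abo-uni-psi} controls $\sup_{U}\psi$, and the weighted energy estimate of Proposition \ref{prop-a-priori-ene-est} gives a uniform bound $\int_{U}|\psi|^{2}\,\tau^{n}\leq C$. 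On $M\setminus U$ one has $F\equiv 0$, so there $\psi$ is a \emph{solution} of the homogeneous drift complex Monge-Amp\`ere equation and the maximum principle (applied as in Lemma \ref{lemma-loc-crit-pts}, using that $\psi\to 0$ at infinity) shows that $\psi$ cannot attain an interior minimum on $M\setminus U$ below $\min_{\partial U}\psi$; hence $\inf_{M}\psi=\min_{\overline{U}}\psi$ and it is enough to bound $\psi$ from below on $\overline{U}$.

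The heart of the matter is the following localized estimate on $U$. On $U$ the equation \eqref{MA-cpct-supp-t} reads
\begin{equation*}
(\tau+i\partial\bar\partial\psi)^{n}=e^{tF-\frac{X}{2}\cdot\psi}\tau^{n},
\end{equation*}
and we regard the right-hand side as a known volume form $e^{G}\tau^{n}$ where $G:=tF-\frac{X}{2}\cdot\psi$. A priori $G$ is only controlled in $L^{1}$ (or better $L^{p}$ for any $p$, via the energy bound on $\psi$ combined with boundedness of $|X|_h$), but this is exactly the regime B\l{}ocki's argument is designed for: all one needs is the $L^{\infty}$-stability of the complex Monge-Amp\`ere operator, for which the Alexandrov maximum principle \cite[Chapter 2]{Han-Lin} for real Monge-Amp\`ere equations suffices. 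Concretely, I would follow B\l{}ocki's scheme: set $m:=-\inf_{U}\psi$ (which we may assume is large), consider the sublevel sets $U(s):=\{x\in U : \psi(x)<-m+s\}$, and apply the comparison between $\int_{U(s)}(\tau+i\partial\bar\partial\psi)^{n}$ and the Lebesgue measure of the image of the gradient map of the convex envelope of $\psi$ (in a suitable coordinate patch, after trivializing $\tau$ up to bounded factors on $\overline{U}$). Alexandrov's estimate bounds this Lebesgue measure below by a power of $s^{n}/\operatorname{diam}(U(s))^{n}$, while the equation bounds the Monge-Amp\`ere mass above by $\int_{U(s)}e^{G}\tau^{n}$, which H\"older's inequality together with the $L^{p}$-bound on $G$ and an estimate on $\vol(U(s))$ controls. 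Iterating this comparison over a sequence of levels (the De Giorgi-type iteration in B\l{}ocki's paper) yields $m\leq C(n,\tau,U,\|G\|_{L^{p}(U)})$, and since $\|G\|_{L^{p}(U)}\leq \|F\|_{C^{0}}+\||X|_h\|_{C^0(U)}\|\psi\|_{L^{p}(U)}$ is uniformly bounded by Proposition \ref{prop-a-priori-ene-est} and $\|f\cdot F\|_{C^{0}_{X,\,\exp}}$, we obtain the asserted uniform lower bound $\inf_{\Omega}\psi\geq -C\left(n,\tau,\supp(F),\|f\cdot F\|_{C^{0}_{X,\,\exp}}\right)$.

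Two technical points will require care. The first, and I expect the main obstacle, is that B\l{}ocki's argument is genuinely \emph{local} and coordinate-dependent: one must work in a fixed coordinate chart (or a fixed finite atlas) covering $\overline{U}$, replace $\tau$ by the standard K\"ahler form up to two-sided bounds, and absorb the resulting distortions into the constants — the cleanest route is to cover $\overline{U}$ by finitely many coordinate balls on each of which the comparison applies, and then patch the lower bounds using that $\psi$ is continuous and that on the ``collar'' $U\setminus U'$ (for a slightly smaller $U'\Supset$ tubular neighbourhood of $\Omega$) $\psi$ solves the homogeneous equation so that its minimum over $U\setminus U'$ is attained on $\partial(U\setminus U')$. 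One must check that the number of charts and their geometry depend only on $\supp(F)$ and $\tau$, not on $t$ or on $\psi$. The second point is justifying that $\psi$ (which lies in $\mathcal{M}^{\infty}_{X,\,\exp}(M)$, hence is smooth) and $\tau_\psi$ are genuinely admissible for the pluri-potential machinery on $U$: this is immediate since $\psi$ is smooth and $\tau_\psi>0$, so all the Bedford-Taylor theory \cite{Bed-Tay-Dir} applies to honest smooth strictly plurisubharmonic local potentials of $\tau_\psi$, and no regularization is needed. With these in place the estimate is uniform in $t\in[0,\,1]$ because every constant that enters depends only on $n$, on $\tau$ restricted to the fixed compact set $\overline{U}$, on $\supp(F)$, and on $\|f\cdot F\|_{C^{0}_{X,\,\exp}}$ through the energy bound of Proposition \ref{prop-a-priori-ene-est} and the sup bound of Proposition \ref{prop-bd-abo-uni-psi}.
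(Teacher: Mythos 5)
Your overall strategy is the same as the paper's: localize at the minimum point, which lies in $\supp(F)$ by Lemma \ref{lemma-loc-crit-pts}; run B\l{ocki}'s Alexandrov-based $L^{\infty}$-stability estimate on a fixed coordinate ball near $\supp(F)$ (the paper simply quotes \cite[Proposition 3]{Blo-Uni-CY} rather than re-running the De Giorgi iteration); and feed it the $L^{1}$-bound coming from the weighted energy estimate of Proposition \ref{prop-a-priori-ene-est} together with the sup-bound of Proposition \ref{prop-bd-abo-uni-psi}. The step that does not survive scrutiny is your control of the Monge-Amp\`ere density. Writing the right-hand side as $e^{G}\tau^{n}$ with $G=tF-\frac{X}{2}\cdot\psi$, you claim $\|G\|_{L^{p}(U)}\leq\|F\|_{C^{0}}+\||X|_{h}\|_{C^{0}(U)}\|\psi\|_{L^{p}(U)}$. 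This is false: $\frac{X}{2}\cdot\psi$ is the derivative of $\psi$ along $X$, not the product $|X|_{h}\,\psi$, so it is not controlled by $\|\psi\|_{L^{p}}$. Moreover, the estimate being invoked consumes $\left\|\tau_{\psi}^{n}/\tau^{n}\right\|_{L^{\infty}}=\|e^{G}\|_{L^{\infty}}$ on the ball (or at least an $L^{q}$-bound on $e^{G}$ for some $q>1$); an $L^{p}$-bound on $G$ itself would not give any integrability of $e^{G}$, so the H\"older/iteration step as you describe it would not close.

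The repair is an ingredient you quoted at the outset but did not deploy here: Lemma \ref{lemma-inf-pot-fct} gives the pointwise bound $\frac{X}{2}\cdot\psi\geq -f$ on all of $M$, hence
\begin{equation*}
\frac{\tau_{\psi}^{n}}{\tau^{n}}=e^{tF-\frac{X}{2}\cdot\psi}\leq e^{\|F\|_{C^{0}}+f},
\end{equation*}
and since the ball on which B\l{ocki}'s estimate is applied sits inside a fixed tubular neighbourhood of $\supp(F)$ on which $f$ is bounded, this yields a uniform $L^{\infty}$-bound on the density depending only on $n$, $\tau$, $\supp(F)$, and $\|f\cdot F\|_{C^{0}_{X,\,\exp}}$. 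This is exactly how the paper proceeds. With that substitution — and with the $L^{1}$-norm entering the stability estimate controlled via H\"older and Proposition \ref{prop-a-priori-ene-est} precisely as you indicate — your argument coincides with the paper's proof; the remaining points you flag (admissibility of the smooth $\psi$ for the pluri-potential machinery, uniformity of the chart in $t$) are handled the same way there.
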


\begin{proof}
Fix $t\in[0,\,1]$ and set $\psi:=\psi_t$ to lighten notation.
By Lemma \ref{lemma-loc-crit-pts}, we can assume that $\psi$ attains a minimum at a point $x_0\in\supp(F)$.
Following \cite{Blo-Uni-CY}, one can find a local coordinate
chart $U$ with $x_0\in U$ together with a smooth strictly plurisubharmonic function $G$ defined on $U$
with $i\partial\bar{\partial}G=\tau$. After adding a pluriharmonic function to $G$ if necessary,
one can then find two positive numbers $a$ and $r$
depending only on the local geometry of $(M,\,\tau)$ around $x_0$ such that $G<0$
on $B_{h}(x_0,\,2r)$, $G$ attains its minimum at $x_0$ on $B_{h}(x_0,\,2r)$, and $G\geq G(x_0)+a$
on the annulus $B_{h}(x_0,\,2r)\setminus B_{h}(x_0,\,r)$.

Consider the non-positive function $u$ defined on $B_{h}(x_0,\,2r)$ by
\begin{equation*}
u :=
\begin{cases}
\psi+G & \text{if $\sup_{M}\psi\leq 0$},\\
\psi-\sup_{\supp(F)}\psi+G & \text{otherwise.}
\end{cases}
\end{equation*}
We are now in a position to apply \cite[Proposition 3]{Blo-Uni-CY} which asserts that
\begin{equation}
\begin{split}\label{blocki-fund-est}
\|u\|_{L^{\infty}(B_{h}(x_0,\,2r))}\leq a+\left(c_n\cdot (2r)\cdot a^{-1}\right)^{2n}\|u\|_{L^1(B_{h}(x_0,\,2r))}\cdot\left\|\frac{\tau_{\psi}^n}{\tau^n}\right\|_{L^{\infty}(B_{h}(x_0,\,2r))}.
\end{split}
\end{equation}
In the case that $\sup_M\psi=\max_{\supp(F)}\psi\geq0$, we obtain, after rearranging (\ref{blocki-fund-est}),
the following sequence of inequalities:
\begin{equation*}
\begin{split}
-\inf_M\psi&\leq\sup_M\psi-\inf_M\psi=\sup_{\supp(F)}\psi-\psi(x_0)\\
&=G(x_{0})-u(x_{0})\\
&\leq\|u\|_{L^{\infty}(B_{h}(x_0,\,2r))}\\
&\leq C(\tau,a,r,n)\|u\|_{L^1(B_{h}(x_0,\,2r))}\cdot\left\|\frac{\tau_{\psi}^n}{\tau^n}\right\|_{L^{\infty}(B_{h}(x_0,\,2r))}+a\\
&= C(\tau,a,r,n)\|u\|_{L^1(B_{h}(x_0,\,2r))}\cdot\left\|e^{-\frac{X}{2}\cdot\psi+F}\right\|_{L^{\infty}(B_{h}(x_0,\,2r))}+a\\
&\leq C(\tau,a,r,n)\|u\|_{L^1(B_{h}(x_0,\,2r))}\cdot\left\|e^{f+F}\right\|_{L^{\infty}(B_{h}(x_0,\,2r))}+a\\
&\leq C(\tau,a,r,n,F)\|u\|_{L^1(B_{h}(x_0,\,2r))}+a\\
&\leq C(\tau,a,r,n,F)\left(\|\psi\|_{L^1(B_{h}(x_0,\,2r))}+\sup_{\supp(F)}\psi+1\right)\\
&\leq C(\tau,a,r,n,F)\left(\|\psi\|_{L^2(B_{h}(x_0,\,2r))}+1\right)\\
&\leq C(\tau,a,r,n,F)\left(\|\psi\|_{L^2(f^{-2}e^f\tau^{n})}+1\right)\\
&\leq C(\tau,a,r,n,F),
\end{split}
\end{equation*}
where $c(\tau,a,r,n)$ denotes a positive constant that may vary from line to line.
Here we have used Lemma \ref{lemma-inf-pot-fct} in the sixth line
to bound $\left|\frac{\tau_{\psi}^n}{\tau^n}\right|$ uniformly from above since $B_{h}(x_0,\,2r)$ is contained in
the tubular neighbourhood $T_{2r}(\supp(F))$ of $\supp(F)$ of radius $2r$, we use Proposition \ref{prop-bd-abo-uni-psi} to bound $\sup_{\supp(F)}\psi$ uniformly together and H\"older's inequality in the antepenultimate line, and finally, we use Proposition \ref{prop-a-priori-ene-est} in the last line to bound $\|\psi\|_{L^2(f^{-2}e^f\tau^{n})}$ uniformly from above. This concludes the proof of Proposition \ref{prop-bd-bel-uni-psi} in the case that $\sup_M\psi=\max_{\supp(F)}\psi\geq 0$.
The case $\sup_{M}\psi\leq 0$ proceeds similarly.
\end{proof}

\subsection{A priori estimates on higher derivatives}
We next derive a priori \emph{local} bounds on higher derivatives of solutions to the complex Monge-Amp\`ere equation \eqref{MA-cpct-supp}, beginning with the radial
derivative.

\newpage
\subsubsection{A priori estimate on the radial derivative}

\begin{prop}[A priori estimate on $X\cdot\psi$]\label{prop-bd-uni-X-psi}
Let $(\psi_t)_{0\,\leq\, t\,\leq\, 1}$ be a path of solutions in $\mathcal{M}^{\infty}_{X,\,\exp}(M)$ to (\ref{MA-cpct-supp-t}). Then there exists a positive constant $C=C\left(n,\tau,\supp(F),\|f\cdot F\|_{C^0_{X,\,\exp}}\right)$ such that
\begin{equation*}
\sup_{0\,\leq\, t\,\leq\, 1}\sup_M|X\cdot\psi_t|\leq C.
\end{equation*}
\end{prop}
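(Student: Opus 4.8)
The plan is to derive a uniform bound on $X\cdot\psi_t$ by combining the two-sided $C^0$-estimates already in hand with the key structural observation from Lemma \ref{lemma-inf-pot-fct}, namely that $f_{\psi_t}:=f+\frac{X}{2}\cdot\psi_t$ satisfies $X=\nabla^{h_{\psi_t}}f_{\psi_t}$, so that $X\cdot\psi_t$ is controlled by how far $f_{\psi_t}$ is from $f$ along flow-lines of $X$. Fix $t\in[0,\,1]$ and write $\psi:=\psi_t$. Lemma \ref{lemma-inf-pot-fct} already gives the lower bound $\frac{X}{2}\cdot\psi\geq -f$, but this is not uniform; the point is that one only needs a \emph{uniform} bound on $\supp(F)$ (and on a fixed neighbourhood of it), because outside $\supp(F)$ the equation \eqref{MA-cpct-supp-t} degenerates to $\log(\tau_\psi^n/\tau^n)+\frac{X}{2}\cdot\psi=0$, from which one can propagate bounds.

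First I would establish the upper bound. Since $X\cdot\psi$ decays exponentially at infinity (as $\psi\in\mathcal M^\infty_{X,\,\exp}(M)$ and $|X|_h$ is bounded), the function $f_\psi=f+\frac{X}{2}\cdot\psi$ is proper and bounded below and attains its minimum at a point $p_{\min}$ where $X=0$; combined with $f\ge 1$ this gives $f+\frac{X}{2}\cdot\psi\ge\min_M f$, which is Lemma \ref{lemma-inf-pot-fct}. For the \emph{upper} bound on $X\cdot\psi$ on $\supp(F)$, I would use the maximum principle on the function $f_\psi$ along flow-lines of $X$: the trace of \eqref{MA-cpct-supp-t} gives $\Delta_{h_\psi}\psi+\frac{X}{2}\cdot\psi\le F$ only one-sided, so instead I would argue via the identity $X\cdot f_\psi = |X|^2_{h_\psi}\ge 0$, which shows $f_\psi$ is non-decreasing along the (reparametrised) flow of $X$; hence $f_\psi$ on $\supp(F)$ is bounded above by its values at ``later'' times along the flow, i.e. where $\psi$ is already small by the exponential decay, so $f_\psi\le f\circ\phi^X_T + o(1)$ for $T$ large. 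More carefully, integrating $\frac{d}{ds}f_\psi(\gamma_x(s)) = |X|^2_{h_\psi}(\gamma_x(s))$ and comparing with the corresponding identity $\frac{d}{ds}f(\gamma_x(s))=|X|^2_h(\gamma_x(s))$, one sees $f_\psi - f$ is monotone along flow-lines in a way that, together with $\frac{X}{2}\cdot\psi\to 0$ at infinity, forces $\frac{X}{2}\cdot\psi$ to be bounded above by a constant depending on the $C^0$-bounds $\sup_{\supp(F)}\psi$ and $\inf_{\supp(F)}\psi$ from Propositions \ref{prop-bd-abo-uni-psi} and \ref{prop-bd-bel-uni-psi}. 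Indeed on $M\setminus\supp(F)$ the equation reads $e^{\frac{X}{2}\cdot\psi}\tau_\psi^n/\tau^n=1$, which bounds $\frac{X}{2}\cdot\psi$ in terms of the relative volume form, and one then only needs a barrier argument on the compact set where $F$ lives.

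Alternatively — and this is probably the cleaner route — I would mimic the proof of Proposition \ref{prop-bd-abo-uni-psi}: the quantity $X\cdot\psi$ (equivalently $f_\psi - f$) itself satisfies a differential inequality of drift-Laplacian type. Differentiating \eqref{MA-cpct-supp-t} along $X$ and using the commutation of $X$ with $i\partial\bar\partial$, one finds $\Delta_{h_\psi}(X\cdot\psi)+\frac{X}{2}\cdot(X\cdot\psi)$ equals $X\cdot(tF)$ plus lower-order terms controlled by the known metric $\tau$ and by $\rho_\tau-\frac12\mathcal L_X\tau\in C^\infty_{X,\,1+\varepsilon}(M)$ from Lemma \ref{lemm-app-sol-id}; since $X\cdot F$ is compactly supported, a local Nash--Moser iteration as in Theorem \ref{iso-sch-Laplacian-exp} and Proposition \ref{prop-bd-abo-uni-psi}, feeding in the weighted $L^2$-energy bound from Proposition \ref{prop-a-priori-ene-est} applied to $\psi$ (note $X\cdot\psi$ inherits exponential decay, hence is in the relevant weighted $L^2$-space with a bound controlled by $\|f\cdot F\|_{C^0_{X,\,\exp}}$ via the already-proven $C^0$-estimate), yields a uniform $L^\infty$-bound on a tubular neighbourhood $T_r(\supp(F))$ of $\supp(F)$. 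Outside this neighbourhood the vanishing of $F$ together with the lower bound $\frac{X}{2}\cdot\psi\ge -f$ and the upper bound just obtained on $\partial T_r(\supp(F))$ let one conclude globally via the maximum principle applied to the barriers $\pm Ae^{-\delta f}$ from Lemma \ref{lemm-barr-inf}.

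The main obstacle I anticipate is the \emph{upper} bound on $X\cdot\psi$ globally, not just on $\supp(F)$: unlike $\psi$ itself, for which Lemma \ref{lemma-loc-crit-pts} localises extrema to $\supp(F)$, it is not a priori clear that $\sup_M(X\cdot\psi)$ is attained near $\supp(F)$, so one must genuinely use the structure of the equation on $M\setminus\supp(F)$ — where $\log(\tau_\psi^n/\tau^n)=-\frac{X}{2}\cdot\psi$ forces a two-sided control of $\frac{X}{2}\cdot\psi$ by the relative volume ratio — together with a Harnack-type or barrier argument to propagate the bound from $T_r(\supp(F))$ out to infinity. Getting the constants to depend only on $n$, $\tau$, $\supp(F)$, and $\|f\cdot F\|_{C^0_{X,\,\exp}}$ (and not on $t$) requires care that every ingredient — the energy estimate, the $C^0$-estimates, the local Sobolev constant \eqref{sob-inequ-loc-weight}, and the barrier constant in Lemma \ref{lemm-barr-inf} — is uniform in $t$, which it is.
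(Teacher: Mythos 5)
Both of your routes contain a genuine gap, and the common source is circularity with the $C^2$-estimate. In your first route, the claim that ``$f_\psi-f$ is monotone along flow-lines'' is false: what is true is that $f_\psi$ and $f$ are each non-decreasing along the flow of $X$ (since $X\cdot f_\psi=|X|^2_{h_\psi}\geq 0$ and $X\cdot f=|X|^2_h\geq0$), but their difference $\tfrac{X}{2}\cdot\psi$ has derivative $|X|^2_{h_\psi}-|X|^2_h$, which is not sign-definite; chasing the monotonicity of $f_\psi$ alone only reproduces the one-sided, non-uniform bound of Lemma \ref{lemma-inf-pot-fct}. Your fallback --- that on $M\setminus\supp(F)$ the identity $e^{\frac{X}{2}\cdot\psi}\tau_\psi^n/\tau^n=1$ ``bounds $\frac{X}{2}\cdot\psi$ in terms of the relative volume form'' --- is circular: there the volume ratio \emph{is} $e^{-\frac{X}{2}\cdot\psi}$, and an independent upper bound on $\tau_\psi^n/\tau^n$ (equivalently on $\operatorname{tr}_h(h_\psi)$) is exactly the $C^2$-estimate of Proposition \ref{prop-C^2-est}, which is proved \emph{after} and \emph{using} the present proposition. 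Your second route has the same problem in a different guise: differentiating \eqref{MA-cpct-supp-t} along $X$ produces the term $\operatorname{tr}_{h_\psi}(\mathcal{L}_X\tau)$, and controlling this requires a uniform bound on $h_\psi^{-1}$, i.e.\ again the $C^2$-estimate; the Nash--Moser step likewise needs a Sobolev inequality for $h_\psi$ or uniform equivalence of $h$ and $h_\psi$, which is Corollary \ref{coro-equiv-metrics-0}, also downstream of this proposition.

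The missing idea is much more elementary and uses none of the above. Because $\psi$ is $JX$-invariant and $\tau_\psi>0$, one has (Claim \ref{est-sec-der-vec-fiel})
\begin{equation*}
\frac{X}{2}\cdot\Bigl(\frac{X}{2}\cdot\psi\Bigr)=2\,i\partial\bar{\partial}\psi\bigl(\Re(X^{1,0}),\,J\Re(X^{1,0})\bigr)\geq-2\bigl|\Re(X^{1,0})\bigr|_h^2\geq-c_n,
\end{equation*}
i.e.\ the restriction of $\psi$ to each flow-line of $\tfrac{X}{2}$ has second derivative bounded below by a constant depending only on $\sup_M|X|_h$. Combined with the uniform two-sided $C^0$-bound $\sup_M|\psi|\leq C$ (Lemma \ref{lemma-loc-crit-pts} together with Propositions \ref{prop-bd-abo-uni-psi} and \ref{prop-bd-bel-uni-psi}), a one-dimensional interpolation along the flow-line --- integrate $\psi(\phi^X_s(x))$ against $\eta''$ for a cut-off $\eta$ supported in $[0,\varepsilon]$ with $\eta(0)=1$, $\eta'(0)=0$, to get $\tfrac{X}{2}\cdot\psi(x)\leq c_n\varepsilon+c\|\psi\|_{C^0}\varepsilon^{-2}\cdot\varepsilon=c_n\varepsilon+c\|\psi\|_{C^0}\varepsilon^{-1}$, then optimise in $\varepsilon$ --- yields $\tfrac{X}{2}\cdot\psi\leq c_n\|\psi\|_{C^0}^{1/2}$ everywhere, with the lower bound obtained symmetrically on $[-\varepsilon,0]$. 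No differentiation of the equation, no maximum principle, and no localisation to $\supp(F)$ is needed; you already isolated the two ingredients (positivity of $\tau_\psi$ acting on the pair $(\Re X^{1,0},J\Re X^{1,0})$, and the uniform $C^0$-bound) but did not combine them.
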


\begin{remark}
The $JX$-invariance of $\psi$ is crucial for the proof of this proposition to go through.
\end{remark}

\begin{proof}[Proof of Proposition \ref{prop-bd-uni-X-psi}]
Our proof is based on that of Siepmann in the case of an expanding gradient K\"ahler-Ricci soliton; see \cite[Lemma 5.4.14]{siepmann}.
We adapt his proof here to our particular setting.

The proof comprises two parts. The first gives rise to an upper bound for $X\cdot\psi$, whereas the latter part
yields a lower bound for $X\cdot\psi$. Before proceeding with the first part though, we make the following claim.
\begin{claim}\label{est-sec-der-vec-fiel}
Let $X^{1,\,0}=\frac{1}{2}(X-iJX)$. Then
$$X^{1,\,0}\cdot\left(X^{1,\,0}\cdot \psi\right)=2i\partial\bar{\partial}\psi\left(\Re\left(X^{1,\,0}\right),\,J\Re\left(X^{1,\,0}\right)\right)
\geq -2\left|\Re\left(X^{1,\,0}\right)\right|_{h}^2.$$
\end{claim}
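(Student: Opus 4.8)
The plan is to treat the equality and the inequality in Claim \ref{est-sec-der-vec-fiel} separately: the equality is a pointwise computation relying on the hypothesis $\mathcal{L}_{JX}\psi=0$ together with the real holomorphicity of $X$, while the inequality follows at once from the positivity of $\tau_{\psi}$.

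For the equality, I would first record that, since $JX\cdot\psi=0$, one has $X^{1,\,0}\cdot\psi=\tfrac{1}{2}(X-iJX)\cdot\psi=\tfrac{1}{2}X\cdot\psi$, so $X^{1,\,0}\cdot\psi$ is real-valued. Next, real holomorphicity of $X$ gives $[X,\,JX]=0$, whence $JX\cdot(X\cdot\psi)=X\cdot(JX\cdot\psi)=0$; thus $X\cdot\psi$ is again $JX$-invariant. Consequently $X^{1,\,0}$ and $\overline{X^{1,\,0}}$ act in the same way on $X\cdot\psi$, and one obtains $X^{1,\,0}\cdot(X^{1,\,0}\cdot\psi)=\overline{X^{1,\,0}}\cdot(X^{1,\,0}\cdot\psi)=\tfrac{1}{4}\mathcal{L}_X^{(2)}\psi$. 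It then remains to identify this with $2i\partial\bar{\partial}\psi(\Re(X^{1,\,0}),\,J\Re(X^{1,\,0}))$. I would do this in a local holomorphic chart in which $X^{1,\,0}=a^j\partial_j$ with $a^j$ holomorphic (the analytic content of ``$X$ real holomorphic''): on the one hand $\overline{X^{1,\,0}}\cdot(X^{1,\,0}\cdot\psi)=a^j\bar{a}^{\bar{k}}\psi_{j\bar{k}}$, the terms $(\partial_{\bar{k}}a^j)\psi_j$ dropping out by holomorphicity; on the other hand, writing $V:=\Re(X^{1,\,0})=\tfrac{1}{2}X$ and using $JX^{1,\,0}=iX^{1,\,0}$, a short bilinear expansion of $\partial\bar{\partial}\psi(V,\,JV)$ — in which the evaluations on pairs of $(1,\,0)$-vectors and on pairs of $(0,\,1)$-vectors vanish because $\partial\bar{\partial}\psi$ has type $(1,\,1)$ — gives $2i\partial\bar{\partial}\psi(V,\,JV)=\partial\bar{\partial}\psi(X^{1,\,0},\,\overline{X^{1,\,0}})=a^j\bar{a}^{\bar{k}}\psi_{j\bar{k}}$. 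Comparing the two expressions yields the equality.

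For the inequality I would invoke $\tau_{\psi}=\tau+i\partial\bar{\partial}\psi>0$. For any real $(1,\,1)$-form $\alpha$ the tensor $(U,\,W)\mapsto\alpha(U,\,JW)$ is symmetric, so with the sign convention for K\"ahler forms used in the paper (namely $\tau(\cdot,\,\cdot)=h(J\cdot,\,\cdot)$, and likewise for $\tau_{\psi}$ and $h_{\psi}$), and since $J$ is an isometry for $h$ and $h_{\psi}$, one has $\tau(V,\,JV)=|V|_h^2$ and $\tau_{\psi}(V,\,JV)=|V|_{h_{\psi}}^2\geq0$. Therefore $i\partial\bar{\partial}\psi(V,\,JV)=\tau_{\psi}(V,\,JV)-\tau(V,\,JV)=|V|_{h_{\psi}}^2-|V|_h^2\geq-|V|_h^2$, which is exactly the asserted bound $2i\partial\bar{\partial}\psi(\Re(X^{1,\,0}),\,J\Re(X^{1,\,0}))\geq-2|\Re(X^{1,\,0})|_h^2$. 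There is no genuine obstacle here; the only care needed is bookkeeping of the factors of $\tfrac{1}{2}$ and $i$ when passing between $X$, $X^{1,\,0}$ and $\partial\bar{\partial}\psi$, and making sure the $J$- and $(\tau,\,h)$-conventions match those fixed earlier. It is worth emphasising that the hypothesis $\mathcal{L}_{JX}\psi=0$ is precisely what makes $X^{1,\,0}\cdot\psi$ real and what permits replacing $X^{1,\,0}$ by $\overline{X^{1,\,0}}$ in the second differentiation; without it the left-hand side would not even be real-valued.
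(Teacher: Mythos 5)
Your proof is correct and follows essentially the same route as the paper: use the $JX$-invariance of $\psi$ (together with $[X,JX]=0$) to replace the outer $X^{1,0}$ by $\overline{X^{1,0}}$, identify the result with $\partial\bar{\partial}\psi(X^{1,0},\overline{X^{1,0}})=2i\partial\bar{\partial}\psi(\Re(X^{1,0}),J\Re(X^{1,0}))$, and conclude the inequality from $\tau_{\psi}>0$. You merely make explicit the local-coordinate computation that the paper leaves as "straightforward."
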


\begin{proof}
Since $\psi$ is invariant under the flow of $JX$, we know that $$JX\cdot (X\cdot \psi)=0.$$
In particular, we have that $X^{1,\,0}\cdot\left(X^{1,\,0}\cdot\psi\right)=
\Re\left(X^{1,\,0}\right)\cdot\left(\Re\left(X^{1,\,0}\right)\cdot\psi\right)=
\overline{X^{1,\,0}}\cdot\left(X^{1,\,0}\cdot\psi\right)$. A straightforward computation then shows that
\begin{equation*}
\overline{X^{1,\,0}}\cdot\left(X^{1,\,0}\cdot \psi\right)=\partial\bar{\partial}\psi\left(X^{1,\,0},\,
\overline{X^{1,\,0}}\right)=2i\partial\bar{\partial}\psi\left(\Re\left(X^{1,\,0}\right),\,J\Re\left(X^{1,\,0}\right)\right).
\end{equation*}
The result now follows from the fact that $\tau_{\psi}> 0$ so that
\begin{equation*}
\begin{split}
i\partial\bar{\partial}\psi\left(\Re\left(X^{1,\,0}\right),\,J\Re\left(X^{1,\,0}\right)\right)&=\tau_{\psi}\left(\Re\left(X^{1,\,0}\right),\,J\Re\left(X^{1,\,0}\right)\right)-\left|\Re\left(X^{1,\,0}\right)\right|_{h}^2\\
&\geq -\left|\Re\left(X^{1,\,0}\right)\right|_{h}^2.
\end{split}
\end{equation*}
\end{proof}

To achieve an upper bound for $X\cdot \psi$, we introduce the flow
$(\phi^{X}_t)_{t\,\in\,\mathbb{R}}$ generated by the vector field $\frac{X}{2}$. This flow is complete as $X$ is complete.
Define $\psi_x(t):=\psi(\phi^{X}_t(x))$ for $(x,\,t)\in M\times\mathbb{R}.$ Then for any cut-off function $\eta:\mathbb{R}_+\rightarrow[0,\,1]$
such that $\eta(0)=1$, $\eta'(0)=0$, we have that
\begin{eqnarray*}
\int_0^{+\infty}\eta''(t)\psi_x(t)\,dt&=&-\int_0^{+\infty}\eta'(t)\psi_x'(t)\,dt\\
&=&\psi_x'(0)+\int_0^{+\infty}\eta(t)\psi_x''(t)\,dt.
\end{eqnarray*}
Hence it follows from the boundedness of the soliton vector field $X$ with respect to the norm induced by $\tau$ and Claim \ref{est-sec-der-vec-fiel} that
\begin{equation*}
\begin{split}
\frac{X}{2}\cdot\psi(x)=\psi_x'(0)&\leq -\int_{\supp(\eta)}\frac{X}{2}\cdot \left(\frac{X}{2}\cdot \psi\right)(\phi^{X}_t(x))\,dt+\sup_{t\,\in\,\supp(\eta'')}\arrowvert\psi_x(t)\arrowvert\int_{\supp(\eta'')}\arrowvert\eta''(t)\arrowvert\,dt\\
&\leq\frac{1}{2}\int_{\supp(\eta)}\arrowvert X\arrowvert^2_{h}(\phi^{X}_t(x))\,dt+\sup_{t\,\in\,\supp(\eta'')}\arrowvert\psi_x(t)\arrowvert\int_{\supp(\eta'')}\arrowvert\eta''(t)\arrowvert\,dt\\
&\leq c_n\int_{\supp(\eta)}dt+\sup_{t\,\in\,\supp(\eta'')}\arrowvert\psi_x(t)\arrowvert\int_{\supp(\eta'')}\arrowvert\eta''(t)\arrowvert\,dt.
\end{split}
\end{equation*}
Choose $\eta$ such that for some $\varepsilon>0$ to be chosen later, $\eta\equiv1$ on $[0,\frac{\varepsilon}{2}]$,
$\supp(\eta)\subset [0,\varepsilon]$, and such that
$\arrowvert\eta''\arrowvert\leq c/\varepsilon^2$ for some uniform positive constant $c$. Then for all $\varepsilon>0$,
\begin{equation}
\frac{X}{2}\cdot\psi(x)\leq c_n\varepsilon+c\|\psi\|_{C^0}\varepsilon^{-1}.\label{upper-bd-rough-eps}
\end{equation}
Minimising the right-hand side of \eqref{upper-bd-rough-eps} seen as a function of $\varepsilon>0$,
one obtains the inequality
\begin{equation*}
\frac{X}{2}\cdot\psi(x)\leq c_n\|\psi\|_{C^0}^{\frac{1}{2}},
\end{equation*}
reminiscent of an interpolation inequality with the lower bound given by Claim \ref{est-sec-der-vec-fiel} on the second derivatives of $\psi$ in the direction of $X$ succinctly contained within the constant $c_{n}$. The upper bound now follows from Propositions \ref{prop-bd-abo-uni-psi} and \ref{prop-bd-bel-uni-psi}.
The lower bound can be proven analogously by working on an interval $[-\varepsilon,\,0]$ and choosing
$\varepsilon>0$ in a manner similar to above.
\end{proof}

\subsubsection{$C^2$ a priori estimate}
The $C^{2}$-estimate is next.
\begin{prop}[A priori $C^2$-estimate]\label{prop-C^2-est}
Let $(\psi_t)_{0\,\leq\, t\,\leq\, 1}$ be a path of solutions in $\mathcal{M}^{\infty}_{X,\,\exp}(M)$ to (\ref{MA-cpct-supp-t}). Then there exists a positive constant $C=C\left(n,\tau,\supp(F),\|f\cdot F\|_{C^2_{X,\,\exp}}\right)$ such that the following $C^2$ a priori estimate holds true:
\begin{equation*}
\sup_{0\,\leq\, t\,\leq\, 1}\|i\partial\bar{\partial}\psi_t\|_{C^0(M)}\leq C.
\end{equation*}
\end{prop}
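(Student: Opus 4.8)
The plan is to establish the $C^2$-estimate via a maximum principle argument applied to a suitable Chern-Lu-type quantity, following the classical scheme of Yau and Aubin adapted to the soliton setting as in \cite{siepmann, olivier}. Since $\tau_{\psi}>0$, it suffices to bound the trace $\tr_{\tau}(\tau_{\psi}) = n + \Delta_{\tau}\psi$ from above, because then the complex Monge-Amp\`ere equation \eqref{MA-cpct-supp-t} rewritten as $\frac{(\tau+i\partial\bar{\partial}\psi_t)^n}{\tau^n} = e^{tF - \frac{X}{2}\cdot\psi_t}$ together with the already-established $C^0$-bound on $\psi_t$ (Propositions \ref{prop-bd-abo-uni-psi}, \ref{prop-bd-bel-uni-psi}) and the $C^0$-bound on $X\cdot\psi_t$ (Proposition \ref{prop-bd-uni-X-psi}) controls $\frac{\tau_{\psi}^n}{\tau^n}$ both above and below, and hence yields a two-sided bound on $i\partial\bar{\partial}\psi_t$ in terms of $\tr_{\tau}(\tau_{\psi})$ via elementary linear algebra (the eigenvalues of $\tau_{\psi}$ relative to $\tau$ are positive, their sum is bounded, and their product is bounded below).

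First I would introduce the quantity $u := e^{-A\psi_t}\,\tr_{\tau}(\tau_{\psi_t})$ for a large positive constant $A$ to be chosen, and compute $\Delta_{\tau_{\psi_t}}(\log \tr_{\tau}(\tau_{\psi_t}))$. The standard Aubin-Yau computation gives a lower bound of the form
\begin{equation*}
\Delta_{\tau_{\psi_t}}\big(\log\tr_{\tau}(\tau_{\psi_t})\big) \geq \frac{\Delta_{\tau}\big(tF - \tfrac{X}{2}\cdot\psi_t\big)}{\tr_{\tau}(\tau_{\psi_t})} - C_1\,\tr_{\tau_{\psi_t}}(\tau),
\end{equation*}
where $C_1$ depends on a lower bound for the bisectional curvature of $h$ on the region of interest. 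Here the new feature compared to the Calabi-Yau case is the term $\Delta_{\tau}(\frac{X}{2}\cdot\psi_t)$; one commutes derivatives past $X$ using that $X$ is real holomorphic, expressing $\Delta_{\tau}(X\cdot\psi_t)$ in terms of $X\cdot(\Delta_{\tau}\psi_t)$, contractions of $i\partial\bar\partial\psi_t$ with $\nabla^h X$, and curvature terms of $h$, all of which are controlled by $\tr_{\tau}(\tau_{\psi_t})$ and the bounds on $X\cdot\psi_t$. Combining this with $\Delta_{\tau_{\psi_t}}\psi_t = \tr_{\tau_{\psi_t}}(\tau_{\psi_t}) - \tr_{\tau_{\psi_t}}(\tau) = n - \tr_{\tau_{\psi_t}}(\tau)$, one gets for $A$ large enough that at an interior maximum $x_0$ of $u$,
\begin{equation*}
0 \geq \Delta_{\tau_{\psi_t}}(\log u)(x_0) \geq (A - C_1)\,\tr_{\tau_{\psi_t}}(\tau)(x_0) - C_2,
\end{equation*}
which bounds $\tr_{\tau_{\psi_t}}(\tau)(x_0)$ from above; combined again with the Monge-Amp\`ere equation this bounds $\tr_{\tau}(\tau_{\psi_t})(x_0)$, and hence $u(x_0)$, and hence $u$ everywhere, whence $\tr_{\tau}(\tau_{\psi_t})$ everywhere.

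The main obstacle is \emph{non-compactness}: the quantity $u$ need not attain its maximum on $M$, so a naive maximum principle does not apply directly. This is precisely where the exponentially weighted function spaces do their work. Since $\psi_t \in \mathcal{M}^{\infty}_{X,\,\exp}(M)$, both $\psi_t$ and $i\partial\bar{\partial}\psi_t$ decay exponentially at infinity, so $\tr_{\tau}(\tau_{\psi_t}) \to n$ and $u \to n$ at infinity; thus $u$ attains a genuine interior maximum (or $u \leq n$ everywhere and there is nothing to prove), and the argument above goes through. One must, however, be slightly careful that the constants $C_1, C_2$ are uniform in $t\in[0,\,1]$ and depend only on $n$, $\tau$, $\supp(F)$, and $\|f\cdot F\|_{C^2_{X,\,\exp}}$: $C_1$ comes from the fixed background geometry of $(M,\,h)$, which is uniform; $C_2$ absorbs $\|\Delta_{\tau}F\|_{C^0}$, $\sup_M|X\cdot\psi_t|$, $\sup_M|\psi_t|$, and lower-order soliton terms, all of which have been bounded uniformly in the preceding propositions. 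Since the maximum of $u$ may be located outside $\supp(F)$, where $F \equiv 0$, the estimate there reduces to the soliton analogue of Yau's computation with no data term, which is cleaner. A secondary technical point is justifying the interchange of $\Delta_\tau$ and $X\cdot$ and keeping track of the torsion/curvature terms of $\nabla^h$, but these are routine given that $X$ is real holomorphic with $\mathcal{L}_{JX}\tau = 0$ and that $\nabla^h X$ is controlled on $M$ by the asymptotics \eqref{asy-cao-def-sec5} and Lemma \ref{lemm-app-sol-id}.
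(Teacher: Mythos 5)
Your proposal is correct and follows essentially the same route as the paper: the paper likewise runs the Aubin--Yau computation on $u=e^{-\alpha\psi}\operatorname{tr}_{h}(h_{\psi})$, absorbs the extra term $\Delta_{\tau}\bigl(\tfrac{X}{2}\cdot\psi\bigr)$ using that $X$ is real holomorphic with bounded $\nabla^{h}X$ together with the prior bounds on $\psi$ and $X\cdot\psi$, and resolves the non-compactness by noting that $u\to n$ at infinity (exponential decay of $\psi$ and its derivatives) before applying the maximum principle. The only cosmetic difference is that the paper keeps the drift term $\tfrac{X}{2}\cdot u$ on the left and derives the global differential inequality $\Delta_{\tau_{\psi}}u+\tfrac{X}{2}\cdot u\geq -C(1+u)+Cu^{\frac{n}{n-1}}$ via the AM--GM-type inequality for $\operatorname{tr}_{h}(h_{\psi}^{-1})$, rather than evaluating at an interior maximum of $\log u$, but these are equivalent bookkeeping choices within the same argument.
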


\begin{proof}
We follow closely \cite[Proposition 6.6]{con-der} where the
approach taken is based on standard computations performed in Yau's seminal paper \cite[pp.347--351]{Calabiconj}; see \cite[Lemma 5.4.16]{siepmann}
for a modification of these computations to the setting of expanding gradient K\"ahler-Ricci solitons. Only the presence of the
vector field $X$ has to be taken into account, therefore we only outline the main steps.

For the sake of clarity, we suppress the dependence of the function $\psi_t$ on the parameter $t\in[0,\,1]$.
According  to \eqref{MA-cpct-supp-t}, $\psi$ satisfies
 \begin{eqnarray*}
\log\left(\frac{\tau_{\psi}^n}{\tau^n}\right)=F-\frac{X}{2}\cdot\psi=:F(\psi).
\end{eqnarray*}
As in \cite{Calabiconj}, we compute the Laplacian of $F(\psi)$ with respect to $\tau$ in local holomorphic coordinates around a point $x\in M$ such that at $x$, the Riemannian metrics $h$ and $h_{\psi}$ associated to $\tau$ and $\tau_{\psi}$
take the form $h_{i\bar{\jmath}}(x)=\delta_{i\bar{\jmath}}$ and $(h_{\psi})_{i\bar{\jmath}}(x)=(1+\psi_{i\bar{\imath}}(x))\delta_{i\bar{\jmath}}$ respectively. After a lengthy computation, one arrives at the fact that
\begin{equation}\label{eq-sec-der-yau}
\begin{split}
\Delta_{\tau}(F(\psi))&=\Delta_{\tau_{\psi}}(\tr_{h}(h_{\psi}))
-\frac{\psi_{i\bar{\jmath}k}\psi_{\bar{\imath}j\bar{k}}}{(1+\psi_{i\bar{\imath}})(1
+\psi_{k\bar{k}})}+\Rm(h)_{i\bar{\imath}k\bar{k}}\left(1-\frac{1}{1+\psi_{i\bar{\imath}}}
-\frac{\psi_{i\bar{\imath}}}{1+\psi_{k\bar{k}}}\right).
\end{split}
\end{equation}
Now, a standard computation shows that
\begin{equation*}
\begin{split}
-\sum_{i,\,k}\Rm(h)_{i\bar{\imath}k\bar{k}}\left(1-\frac{1}{1+\psi_{i\bar{\imath}}}-\frac{\psi_{i\bar{\imath}}}{1+\psi_{k\bar{k}}}\right)
\geq\inf_M \Rm(h)\left(\tr_h(h_{\psi}^{-1})\tr_{h}(h_{\psi})-C(n)\right),
\end{split}
\end{equation*}
where $\Rm(h)$ is the complex linear extension of the curvature operator of the metric $h$ and where $\inf_M\Rm(h):=\inf_{i\neq k}\Rm(h)_{i\bar{\imath}k\bar{k}}$.

Next we study the term $\Delta_{\tau}\left(\frac{X}{2}\cdot\psi\right)$. Let $X^{1,\,0}=\frac{1}{2}(X-iJX)$.
Then since $X$ is real holomorphic and both $\tau$ and $\psi$ are $JX$-invariant, we find that
\begin{equation*}
\begin{split}
\Delta_{\tau}\left(\frac{X}{2}\cdot\psi\right)&=\Delta_{\tau}\left(X^{1,\,0}\cdot\psi\right)\\
&=\nabla^h_i(X^{1,\,0})^k\psi_{\bar{\imath}k}+X^{1,\,0}\cdot\Delta_{\tau}\psi\\
&=\nabla^{h}(X^{1,\,0})\ast\partial\bar{\partial}\psi+\frac{X}{2}\cdot\Delta_{\tau}\psi\\
&\leq C\tr_{h}(h_{\psi})+C(n)\|\nabla^hX\|_{C^0(M)}+\frac{X}{2}\cdot\tr_{h}(h_{\psi}),
\end{split}
\end{equation*}
where we have used the fact that $0<h_{\psi}\leq (n+\Delta_{\tau}\psi)h$ together with the boundedness of $\nabla^hX$ given by Lemma \ref{lemm-app-sol-id}. To summarise, we have the following first crucial estimate:
\begin{equation}\label{crucial-est-tr-C2}
\begin{split}
\Delta_{\tau_{\psi}}\tr_{h}(h_{\psi})+\frac{X}{2}\cdot \tr_{h}(h_{\psi})&\geq \frac{\psi_{i\bar{\jmath}k}\psi_{\bar{\imath}j\bar{k}}}{(1+\psi_{i\bar{\imath}})(1+\psi_{k\bar{k}})}\\
&\qquad+\Delta_{\tau}F-C\tr_{h}(h_{\psi})\left(1+\inf_M \Rm(h)\tr_h(h_{\psi}^{-1})\right)-C(n,\,\tau).
\end{split}
\end{equation}

Now, if $u:=e^{-\alpha\psi}\tr_{h}(h_{\psi})$, where $\alpha\in\mathbb{R}$ will be specified later,
then, as in the proof of \cite[Lemma 5.4.16]{siepmann}, one estimates the Laplacian of $u$ with respect to $\tau_{\psi}$ in the following way:
 \begin{equation*}
 \begin{split}
&\Delta_{\tau_{\psi}}u\geq e^{-\alpha \psi}\left(\Delta_{\tau}F(\psi)-\inf_{M}\operatorname{Rm}(h)\tr_{h}(h_{\psi}^{-1})\tr_{h}(h_{\psi})-C(n)-\alpha \Delta_{\tau_{\psi}}\psi\tr_{h}(h_{\psi})\right).
\end{split}
\end{equation*}
Here, one has to take advantage of the non-negative term involving the third derivatives of $\psi$ on the right-hand side of (\ref{crucial-est-tr-C2}) to absorb the term $h_{\psi}(\nabla^{h_{\psi}}\psi,\nabla^{h_{\psi}}\Delta_{\tau}\psi)$.
Thus, for some positive constant $C$ independent of $\psi$, it follows that
\begin{equation*}
\begin{split}
\Delta_{\tau_{\psi}}u+\frac{X}{2}\cdot u&\geq e^{-\alpha\psi}\left(\Delta_{\tau}F-\inf_{M}\operatorname{Rm}(h)\tr_h(h_{\psi}^{-1})\tr_{h}(h_{\psi})\right)\\
&\qquad-C(n,\,\tau)e^{-\alpha \psi}-\alpha\left(\frac{X}{2}\cdot\psi\right)u-C(n,\,\tau)u-\alpha(n-\tr_{h}(h_{\psi}^{-1}))
u\\
&\geq-C\left(n,\tau,\|\psi\|_{C^0(M)},\|F\|_{C^2(M)}\right)-C\left(n,\tau,\|X\cdot\psi\|_{C^0(M)}\right)u\\
&\qquad+\tr_h(h_{\psi}^{-1})u\\
&\geq-C-Cu+\tr_h(h_{\psi}^{-1})u,
\end{split}
\end{equation*}
where we set $\alpha:=\max\{1+\inf_{M}\operatorname{Rm}(h),\,1\}$ and
$C=C\left(n,\tau,\supp(F),\|f\cdot F\|_{C^2_{X,\,\exp}}\right)$, and we have used
Propositions \ref{prop-bd-abo-uni-psi}, \ref{prop-bd-bel-uni-psi}, and \ref{prop-bd-uni-X-psi} in the last line.
Another estimate using the geometric inequality
\begin{eqnarray*}
\sum_i\frac{1}{1+\psi_{i\bar{\imath}}}\geq\left(\frac{\sum_i(1+\psi_{i\bar{\imath}})}{\Pi_i(1+\psi_{i\bar{\imath}})}\right)^{\frac{1}{n-1}},
\end{eqnarray*}
or equivalently,
\begin{eqnarray*}
\tr_h(h_{\psi}^{-1})\geq \left(\frac{\tr_h(h_{\psi})}{\det_h(h_{\psi})}\right)^{\frac{1}{n-1}},
\end{eqnarray*}
then shows that $u$ satisfies the following differential inequality:
\begin{eqnarray*}
\Delta_{\tau_{\psi}}u+\frac{X}{2}\cdot u\geq -C(1+u)+Cu^{\frac{n}{n-1}}
\end{eqnarray*}
for some positive constant $C=C\left(n,\tau,\supp(F),\|f\cdot F\|_{C^2_{X,\,\exp}}\right)$.
Since $u$ is non-negative and converges to $n$ at infinity as $\psi\in\mathcal{M}^{\infty}_{X,\,\exp}(M)$,
an application of the maximum principle to an exhausting sequence of domains of $M$ finally yields the desired upper bound on $n+\Delta_{\tau}\psi$.
\end{proof}

A useful consequence of Proposition \ref{prop-C^2-est} is that the K\"ahler metrics
induced by $\tau$ and $\tau_{\psi}$ are uniformly equivalent.

\begin{corollary}\label{coro-equiv-metrics-0}
Let $(\psi_t)_{0\,\leq\, t\,\leq\, 1}$ be a path of solutions in $\mathcal{M}^{\infty}_{X,\,\exp}(M)$ to \eqref{MA-cpct-supp-t} and
for $t\in[0,\,1]$, let $h_{\psi_t}$  be the K\"ahler metric induced by $\tau_{\psi_t}$. Then the tensors
$h^{-1}h_{\psi_t}$ and $h_{\psi_t}^{-1}h$ satisfy the following uniform estimate:
\begin{equation*}
\sup_{0\,\leq\, t\,\leq\, 1}\|h^{-1}h_{\psi_t}\|_{C^{0}}+\sup_{0\,\leq\, t\,\leq\, 1}\|h_{\psi_t}^{-1}h\|_{C^{0}}\leq C
\end{equation*}
for some positive constant $C=C\left(n,\tau,\supp(F),\|f\cdot F\|_{C^2_{X,\,\exp}}\right)$.
In particular, the metrics $h$ and $(h_{\psi_t})_{0\,\leq\, t\,\leq\, 1}$ are uniformly equivalent.
\end{corollary}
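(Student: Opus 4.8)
The plan is to argue pointwise by diagonalising $i\partial\bar\partial\psi_t$ with respect to $h$, and to feed in the three a priori estimates already in hand: the $C^{2}$-bound of Proposition \ref{prop-C^2-est}, the bound on $X\cdot\psi_t$ of Proposition \ref{prop-bd-uni-X-psi}, and the Monge-Amp\`ere equation \eqref{MA-cpct-supp-t} itself. Fix $t\in[0,\,1]$, write $\psi:=\psi_t$, and at an arbitrary point $x\in M$ choose local holomorphic coordinates in which $h_{i\bar{\jmath}}(x)=\delta_{i\bar{\jmath}}$ and $(h_{\psi})_{i\bar{\jmath}}(x)=(1+\mu_i)\delta_{i\bar{\jmath}}$, where $1+\mu_i>0$ for all $i$ since $\tau_{\psi}>0$. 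The operator norm of the positive endomorphism $h^{-1}h_{\psi}$ at $x$ is then $\max_i(1+\mu_i)\leq\sum_i(1+\mu_i)=n+\Delta_{\tau}\psi=\tr_h(h_{\psi})$, which by Proposition \ref{prop-C^2-est} is bounded above by a constant $C=C\left(n,\tau,\supp(F),\|f\cdot F\|_{C^2_{X,\,\exp}}\right)$ independent of $x$ and $t$. This already controls $\sup_{0\leq t\leq1}\|h^{-1}h_{\psi_t}\|_{C^0}$.

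For the reverse inequality the key is a uniform two-sided bound on the Monge-Amp\`ere density. By \eqref{MA-cpct-supp-t} we have $\prod_i(1+\mu_i)=\tau_{\psi}^n/\tau^n=e^{tF-\frac{X}{2}\cdot\psi}$ at $x$. Since $F\in C^{\infty}_0(M)$ is bounded and $\sup_M|X\cdot\psi_t|\leq C$ uniformly in $t$ by Proposition \ref{prop-bd-uni-X-psi}, there exist positive constants $c_1,\,c_2$, depending only on the stated data, with $c_1\leq\prod_i(1+\mu_i)\leq c_2$ at every point of $M$ and for every $t$. Consequently, for each index $j$,
\begin{equation*}
\frac{1}{1+\mu_j}=\frac{1}{\prod_i(1+\mu_i)}\prod_{i\,\neq\,j}(1+\mu_i)\leq c_1^{-1}\left(\tr_h(h_{\psi})\right)^{n-1}\leq c_1^{-1}C^{n-1},
\end{equation*}
so the operator norm of $h_{\psi}^{-1}h$ at $x$, namely $\max_j(1+\mu_j)^{-1}$, is bounded uniformly in $x$ and $t$. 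This controls $\sup_{0\leq t\leq1}\|h_{\psi_t}^{-1}h\|_{C^0}$, and summing the two bounds gives the estimate in the statement. The final claim that $h$ and $(h_{\psi_t})_{0\leq t\leq1}$ are uniformly equivalent is then immediate from the two-sided control $c_1^{1/(n-1)}C^{-1}h\leq\ldots\leq C\,h$ on the eigenvalues of $h^{-1}h_{\psi_t}$ just obtained.

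The main point requiring care — rather than a genuine obstacle — is securing the uniform lower bound $\tau_{\psi_t}^n/\tau^n\geq c_1>0$: this is precisely where the uniform estimate on $X\cdot\psi_t$ and the boundedness (indeed compact support) of $F$ are used, and without them the product $\prod_i(1+\mu_i)$ could degenerate and the bound on $\tr_h(h_{\psi_t}^{-1})$ would fail. Everything else is elementary linear algebra applied to the output of Propositions \ref{prop-C^2-est} and \ref{prop-bd-uni-X-psi}.
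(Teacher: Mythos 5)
Your argument is correct and follows essentially the same route as the paper: the upper bound on $\|h^{-1}h_{\psi_t}\|_{C^0}$ comes from the trace bound of Proposition \ref{prop-C^2-est}, and the lower bound on the eigenvalues comes from combining the uniform two-sided control of $\det(h^{-1}h_{\psi_t})=e^{tF-\frac{X}{2}\cdot\psi_t}$ (via Proposition \ref{prop-bd-uni-X-psi} and the boundedness of $F$) with the already-established upper bound on each eigenvalue. The only blemish is the constant $c_1^{1/(n-1)}C^{-1}$ in your final sentence, which should read $c_1C^{-(n-1)}$, but this does not affect the conclusion.
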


\begin{proof}
By Proposition \ref{prop-C^2-est}, we know that $$\sup_{0\,\leq\, t\,\leq\, 1}\|h^{-1}h_{\psi_t}\|_{C^{0}}\leq C\left(n,\tau,\supp(F),\|f\cdot F\|_{C^2_{X,\,\exp}}\right).$$
Moreover, by Proposition \ref{prop-bd-uni-X-psi}, $h^{-1}h_{\psi_t}$ satisfies
\begin{equation*}
\det(h^{-1}h_{\psi_t})=e^{F-\frac{X}{2}\cdot\psi_t}\geq e^{-C}
\end{equation*}
for some uniform positive constant $C=C(n,\tau,F)$.
Furthermore, each eigenvalue of $h_{\psi_t}$ is uniformly bounded from below by a positive constant.
Hence we conclude that $$\sup_{0\,\leq\, t\,\leq\, 1}\|h_{\psi_t}^{-1}h\|_{C^0}\leq C\left(n,\tau,\supp(F),\|f\cdot F\|_{C^2_{X,\,\exp}}\right).$$
\end{proof}

\subsubsection{$C^3$ a priori estimate}
We now present the $C^{3}$-estimate.
\begin{prop}[A priori $C^3$-estimate]\label{prop-C^3-est}
Let $(\psi_t)_{0\,\leq\, t\,\leq\, 1}$ be a path of solutions in $\mathcal{M}_{X,\,\exp}^{4,\,2\alpha}(M)$ to \eqref{MA-cpct-supp-t} and let
$h$ be the K\"ahler metric induced by $\tau$ with Levi-Civita connection $\nabla^{h}$. Then
\begin{equation*}
\sup_{0\,\leq\, t\,\leq\, 1}\|\nabla^{h}\partial\bar{\partial}\psi_t\|_{C^0}\leq C\left(n,\tau,\|f\cdot F\|_{C^{4,\,2\alpha}_{X,\,\exp}}\right).
\end{equation*}
\end{prop}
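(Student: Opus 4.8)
The plan is to follow the classical Calabi third-order estimate, due to Yau and Calabi, adapted to the presence of the drift term $\frac{X}{2}\cdot$, exactly as in the preceding $C^2$-estimate (Proposition \ref{prop-C^2-est}) which is modelled on \cite[Proposition 6.6]{con-der} and \cite[Lemma 5.4.16]{siepmann}. First I would suppress the parameter $t$ and write $F(\psi):=F-\frac{X}{2}\cdot\psi$, so that $\psi$ solves $\log(\tau_\psi^n/\tau^n)=F(\psi)$. By Proposition \ref{prop-C^2-est} and Corollary \ref{coro-equiv-metrics-0} the metrics $h$ and $h_\psi$ are uniformly equivalent with $C^0$-bounds depending only on $n$, $\tau$, $\supp(F)$ and $\|f\cdot F\|_{C^2_{X,\,\exp}}$; moreover Proposition \ref{prop-bd-uni-X-psi} controls $X\cdot\psi$. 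The quantity to estimate is the Calabi-type tensor
\begin{equation*}
S:=|\nabla^{h}\partial\bar\partial\psi|^2_{h_\psi}=h_\psi^{i\bar r}h_\psi^{j\bar s}h_\psi^{k\bar t}\,\psi_{i\bar s k;?}\cdots,
\end{equation*}
more precisely $S=g_\psi^{i\bar l}g_\psi^{k\bar j}g_\psi^{m\bar n}\,\nabla_i g_{\psi\,k\bar j}\,\overline{\nabla_l g_{\psi\,m\bar n}}$ in the usual notation, which is uniformly comparable to $\|\nabla^{h}\partial\bar\partial\psi\|^2_{C^0}$ by the $C^2$-estimate. The standard Calabi computation gives a differential inequality of the shape
\begin{equation*}
\Delta_{\tau_\psi}S\geq -C_1 S - C_2 + \varepsilon_0\,|\nabla^{h_\psi}\nabla^{h}\partial\bar\partial\psi|^2_{h_\psi}
\end{equation*}
where the constants depend on $\|\Rm(h)\|_{C^0}$, $\|\nabla^{h}\Rm(h)\|_{C^0}$ (finite by the asymptotics of $\tau$ via \eqref{covid19} and the compactness of $E$), the $C^2$-bound on $\psi$, and $\|F(\psi)\|_{C^3}$, the latter being controlled by $\|f\cdot F\|_{C^{3}_{X,\,\exp}}$ together with the already-established bound on $X\cdot\psi$ and its first derivatives (here one needs $\nabla^h(X\cdot\psi)$, which follows from interior Schauder bootstrapping off Proposition \ref{prop-bd-uni-X-psi}, or more simply from applying the $C^2$ argument's methods one order higher).

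Next I would incorporate the drift term. Since $X$ is real holomorphic and both $\tau$ and $\psi$ are $JX$-invariant, $\frac{X}{2}\cdot S$ produces only terms that are schematically $\nabla^h X \ast (\nabla^h\partial\bar\partial\psi)^{\ast 2}\ast h_\psi^{\ast 3}$ plus $X\ast\nabla^h(\nabla^h\partial\bar\partial\psi)\ast(\nabla^h\partial\bar\partial\psi)$; the first is bounded by $C S$ using boundedness of $\nabla^h X$ (Lemma \ref{lemm-app-sol-id}) and the $C^2$-estimate, and the second is absorbed into the good term $\varepsilon_0|\nabla^{h_\psi}\nabla^{h}\partial\bar\partial\psi|^2_{h_\psi}$ via Cauchy--Schwarz after using $|X|_{h_\psi}\leq C$ (which holds by Corollary \ref{coro-equiv-metrics-0} and $|X|_h^2=O(1)$). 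Thus one arrives at
\begin{equation*}
\left(\Delta_{\tau_\psi}+\frac{X}{2}\cdot\right)S\geq -C(1+S),\qquad C=C\left(n,\tau,\|f\cdot F\|_{C^{4,\,2\alpha}_{X,\,\exp}}\right).
\end{equation*}
As in Yau's original argument, one then considers $W:=(A+\Delta_\tau\psi)S$ for a large constant $A$ fixed by the $C^2$-estimate so that $A+\Delta_\tau\psi\geq 1$, and computes using \eqref{crucial-est-tr-C2} (the trace inequality from the $C^2$-proof) that
\begin{equation*}
\left(\Delta_{\tau_\psi}+\frac{X}{2}\cdot\right)W\geq c_0 S^2 - C(1+S)
\end{equation*}
for a positive $c_0$ depending on the same data. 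Because $S\to 0$ at infinity (as $\psi\in\mathcal{M}^{4,\,2\alpha}_{X,\,\exp}(M)$, its derivatives decay exponentially) and $W$ is bounded, $W$ attains an interior maximum; applying the maximum principle at that point — or, to be safe, to an exhausting sequence of smooth domains together with the decay of $S$ — forces $S^2\leq C(1+S)$ there, hence $S\leq C$ globally. This gives $\|\nabla^{h}\partial\bar\partial\psi\|_{C^0}\leq C(n,\tau,\|f\cdot F\|_{C^{4,\,2\alpha}_{X,\,\exp}})$, as desired.

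The main obstacle is bookkeeping the dependence of the constants on $F$: one must verify that $F(\psi)=F-\frac{X}{2}\cdot\psi$ and its covariant derivatives up to order three are bounded in terms of $\|f\cdot F\|_{C^{4,\,2\alpha}_{X,\,\exp}}$ and the lower-order a priori estimates. The term $\frac{X}{2}\cdot\psi$ is controlled in $C^0$ by Proposition \ref{prop-bd-uni-X-psi}, but its first and second covariant derivatives require a preliminary gradient estimate for $X\cdot\psi$; the cleanest route is to differentiate the linearised equation $\Delta_{\tau_\psi}(X\cdot\psi)+\frac{X}{2}\cdot(X\cdot\psi)=X\cdot F + (\text{curvature}\ast X\ast\partial\bar\partial\psi)$, which, given the uniform ellipticity from Corollary \ref{coro-equiv-metrics-0}, yields interior Schauder/gradient bounds on $X\cdot\psi$ depending only on the advertised data. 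The other potential subtlety — justifying the maximum principle on a non-compact manifold — is handled exactly as in the $C^2$-estimate, exploiting the exponential decay built into $\mathcal{M}^{4,\,2\alpha}_{X,\,\exp}(M)$; I would only sketch this, referring to \cite{Calabiconj}, \cite{siepmann}, and \cite{con-der} for the routine Calabi computation itself, and spell out only the modifications coming from the vector field $X$.
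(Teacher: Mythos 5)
Your overall strategy is the same as the paper's: a Calabi/Yau third-order estimate for $S=|\nabla^h h_\psi|^2_{h_\psi}$ adapted to the drift term, combined with the trace inequality \eqref{crucial-est-tr-C2} as a barrier and a maximum principle justified by the exponential decay built into $\mathcal{M}^{4,\,2\alpha}_{X,\,\exp}(M)$. The paper organizes the computation differently — following Phong--Sesum--Sturm, it reinterprets the soliton equation as a perturbed K\"ahler--Ricci flow evolving purely by diffeomorphism, so that $\partial_s S|_{s=0}=-\frac{X}{2}\cdot S$ appears automatically and the ``bad'' term $-\nabla^{h_\psi}\Ric(h_\psi)$ in the evolution of $\Psi=\Gamma(h_\psi)-\Gamma(h)$ cancels against the time derivative; this is exactly the bookkeeping of the third derivatives of $F-\frac{X}{2}\cdot\psi$ that you flag as the ``main obstacle'' and then handle only by an appeal to Schauder bootstrapping. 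Your direct route can be made to work, but the delicate point you gloss over is precisely that the terms $X\cdot(\text{third derivatives of }\psi)$ arising from differentiating $\frac{X}{2}\cdot\psi$ three times must be recognized as $\frac{X}{2}\cdot S$ plus absorbable remainders, not estimated crudely.

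There is, however, one concrete step that does not work as written: the auxiliary function $W:=(A+\Delta_\tau\psi)S$. Computing $\Delta_{\tau_\psi}W$ for this \emph{product} produces the cross term $2\,h_\psi(\nabla^{h_\psi}(\Delta_\tau\psi),\nabla^{h_\psi}S)$, which involves fourth derivatives of $\psi$ and is not controlled by anything you have established; consequently the claimed inequality $\bigl(\Delta_{\tau_\psi}+\frac{X}{2}\cdot\bigr)W\geq c_0S^2-C(1+S)$ does not follow (and the quadratic term $c_0S^2$ has no source here — the good term supplied by \eqref{crucial-est-tr-C2} is $C^{-1}S$, linear in $S$). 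The correct auxiliary function, used both by Yau and by the paper, is the \emph{linear combination} $\varepsilon S+\tr_\tau(\tau_\psi)$ for $\varepsilon>0$ small: combining your inequality $\bigl(\Delta_{\tau_\psi}+\frac{X}{2}\cdot\bigr)S\geq-C(1+S)$ with \eqref{crucial-est-tr-C2} gives $\bigl(\Delta_{\tau_\psi}+\frac{X}{2}\cdot\bigr)\bigl(\varepsilon S+\tr_\tau(\tau_\psi)\bigr)\geq c_0S-C'$, and the maximum principle at an interior maximum (which exists by the decay of $S$ and boundedness of $\tr_\tau(\tau_\psi)$) yields $S\leq C'/c_0$. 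With that replacement your argument closes.
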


\begin{proof}
We follow closely the proof given in \cite[Proposition $6.9$]{con-der} which itself is based on \cite{Pho-Ses-Stu}.

For the sake of clarity, we drop the dependence of the potential $\psi_t$ and the data $tF$ on the parameter $t\in[0,\,1]$.
Set
$$S(h_{\psi},h):=\arrowvert\nabla^hh_{\psi}\arrowvert^2_{h_{\psi}}.$$
Then from the definition of $S$, we see that
\begin{equation*}
\begin{split}
S(h_{\psi},h)=&h_{\psi}^{i\bar{\jmath}}h_{\psi}^{k\bar{l}}h_{\psi}^{p\bar{q}}\nabla^{h}_i{h_{\psi}}_{kp}\overline{\nabla^{h}_{j}{h_{\psi}}_{lq}}\\
=&|\Psi|_{h_{\psi}}^2,
\end{split}
\end{equation*}
where
\begin{equation*}
\begin{split}
\Psi_{ij}^k(h_{\psi},h)&:=\Gamma(h_{\psi})_{ij}^k-\Gamma(h)_{ij}^k\\
&=h_{\psi}^{k\bar{l}}\nabla^h_i(h_{\psi})_{j\bar{l}}.
\end{split}
\end{equation*}
Now, since $\psi$ solves \eqref{MA-cpct-supp-t}, $(M,\,h_{\psi},\,X)$
is an ``approximate'' steady gradient K\"ahler-Ricci soliton in the following precise sense:
if $h_{\psi}(s):=(\phi^{X}_{s})^*h_{\psi}$ and $h(s):=(\phi^{X}_{s})^*h$,
where $(\phi^{X}_{s})_{s\,\in\,\R}$ is the
one-parameter family of diffeomorphisms generated by $-\frac{X}{2}$,
then $(h_{\psi}(s))_{s\,\in\,\R}$
is a solution of the following perturbed K\"ahler-Ricci flow with initial condition $h_{\psi}$:
\begin{equation*}
\begin{split}
\partial_{s}h_{\psi}(s)&=-\Ric(h_{\psi}(s))+(\phi^{X}_{s})^*\left(-\mathcal{L}_{\frac{X}{2}}h+\Ric(h)+\nabla^{h}\bar{\nabla}^hF\right),\qquad s\in\R,\\
h_{\psi}(0)&=h_{\psi}.
 \end{split}
\end{equation*}
In particular, $\partial_{s}h_{\psi}=-\Ric(h_{\psi})
+(\phi^{X}_{\tau})^*\Lambda$, where $\Lambda:=
-\mathcal{L}_{\frac{X}{2}}h+\Ric(h)+\nabla^{h}\bar{\nabla}^hF$ has uniformly controlled $C^1$-norm
as $h$ is asymptotic to $\hat{g}$ with derivatives (cf.~\eqref{asy-cao-def-sec5})
and $F$ is compactly supported.

Define $S(s):=S(h_{\psi}(s),\,h(s))$ and correspondingly set $\Psi(s):=\Psi(h_{\psi}(s),\,h(s))$. We adapt \cite[Proposition 3.2.8]{Bou-Eys-Gue} to our setting. By a
brute force computation, we have that
\begin{equation*}
\begin{split}
\Delta_{\tau_{\psi}}S&=2\Re\left(h_{\psi}^{i\bar{\jmath}}h_{\varphi}^{p\bar{q}}{h_{\psi}}_{k\bar{l}}\left(\Delta_{\tau_{\psi},\,1/2}\Psi_{ip}^k\right)
\overline{\Psi_{jq}^l}\right)+|\nabla^{h_{\psi}} \Psi|^2_{h_{\psi}}+|\overline{\nabla}^{h_{\psi}}\Psi|_{h_{\psi}}^2\\
&\qquad+\Ric(h_{\psi})^{i\bar{\jmath}}h_{\psi}^{p\bar{q}}{h_{\psi}}_{k\bar{l}}\Psi_{ip}^k\overline{\Psi_{jq}^l}
+h_{\psi}^{i\bar{\jmath}}\Ric(h_{\psi})^{p\bar{q}}{h_{\psi}}_{k\bar{l}}\Psi_{ip}^k\overline{\Psi_{jq}^l}-h_{\psi}^{i\bar{\jmath}}h_{\psi}^{p\bar{q}}\Ric(h_{\psi})_{k\bar{l}}\Psi_{ip}^k\overline{\Psi_{jq}^l},
\end{split}
\end{equation*}
where
\begin{equation*}
\begin{split}
&\Delta_{\tau_{\psi},\,1/2}:=h_{\varphi}^{i\bar{\jmath}}\nabla^{h_{\varphi}}_i\nabla^{h_{\varphi}}_{\bar{\jmath}},\label{def-lap-half}\\
&T^{i\bar{\jmath}}:=h_{\psi}^{i\bar{k}}h_{\psi}^{l\bar{\jmath}}T_{k\bar{l}},
\end{split}
\end{equation*}
for $T_{k\bar{l}}\in\Lambda^{1,\,0}M\otimes\Lambda^{0,\,1}M$. We also have that
\begin{equation*}
\begin{split}
\partial_{u}\Psi(u)_{ip}^k|_{u\,=\,0}&=\partial_{u}|_{u\,=\,0}(\Gamma(h_{\psi}(u))-\Gamma(h(u)))_{ip}^k\\
&=\nabla^{h_{\psi}}_i(-\Ric(h_{\psi})_p^k+\Lambda_p^k)-\nabla^{h}_i(-\mathcal{L}_{\frac{X}{2}}h_{p}^{k}),\\
\partial_{u}h_{\psi}^{i\bar{\jmath}}|_{u\,=\,0}&=\Ric(h_{\psi})^{i\bar{\jmath}}-\Lambda^{i\bar{\jmath}}.
\end{split}
\end{equation*}
Finally, using the second Bianchi identity, we compute that
\begin{equation*}
\Delta_{h_{\psi},\,1/2}\Psi_{ip}^k=h_{\psi}^{a\bar{b}}\nabla_a^{h_{\psi}}\Rm(h)_{i\bar{b}p}^k-\nabla^{h_{\psi}}_i\Ric(h_{\psi})_p^k,
\end{equation*}
which in turn implies that the following evolution equation is satisfied by $\Psi$:
\begin{equation*}
{\partial_{u}\Psi_{ip}^k(u)|_{u\,=\,0}}=\Delta_{h_{\psi},\,1/2}\Psi_{ip}^k+T_{ip}^k
\end{equation*}
for a tensor $T$ of the form
\begin{equation*}
\begin{split}
T&=h_{\psi}^{-1}\ast\nabla^{h_{\psi}}\Rm(h)+\nabla^{h_{\psi}}\Lambda+\nabla^h(\mathcal{L}_{\frac{X}{2}}(h))\\
&=h_{\psi}^{-1}\ast\nabla^h\Rm(h)+h_{\psi}^{-1}\ast h_{\psi}^{-1}\ast\Rm(h)\ast\Psi+h_{\psi}^{-1}\ast\Psi\ast \Lambda+\nabla^h(\Lambda+\mathcal{L}_{\frac{X}{2}}(h)).
\end{split}
\end{equation*}
Notice the simplification here regarding the ``bad'' term $-\nabla^{h_{\psi}}\Ric(h_{\psi})$.
Since this flow is evolving only by diffeomorphism, we know that
\begin{equation*}
\begin{split}
S(s)&=(\phi^{X}_{s})^*S(h_{\psi},\,h),\\
\partial_{u}S|_{u\,=\,0}&=-\frac{X}{2}\cdot S(h_{\psi},\,h).
\end{split}
\end{equation*}
Hence Young's inequality, together with
the boundedness of $\|h_{\psi}^{-1}h\|_{C^0(M)}$ and $\|h_{\psi}h^{-1}\|_{C^0(M)}$ ensured by Corollary \ref{coro-equiv-metrics-0}
and the boundedness of the covariant derivatives of the tensors $\Rm(h)$ and $\Lambda$, imply that
\begin{eqnarray*}
\Delta_{h_{\psi}}S+\frac{X}{2}\cdot S\geq -C(S+1)
\end{eqnarray*}
for some positive uniform constant $C$.

We use as a barrier function the trace $\tr_{\tau}(\tau_{\psi})$ which, by (\ref{crucial-est-tr-C2}) and the uniform equivalence of the metrics $h$ and $h_{\psi}$ provided by Corollary \ref{coro-equiv-metrics-0}, satisfies
\begin{equation*}
\Delta_{\tau_{\psi}}\tr_{\tau}(\tau_{\psi})+\frac{X}{2}\cdot \tr_{\tau}(\tau_{\psi})\geq C^{-1}S-C,
\end{equation*}
where $C$ is a uniform positive constant that may vary from line to line. By applying the maximum principle to $\varepsilon S+\tr_{\tau}(\tau_{\psi})$ for some sufficiently small $\varepsilon>0$, one arrives at the desired a priori estimate.
\end{proof}

We next establish H\"older regularity of $h^{-1}h_{\psi_{t}}$ and $h^{-1}_{\psi_{t}}h$,
an improvement on Corollary \ref{coro-equiv-metrics-0}.

\begin{corollary}\label{coro-equiv-metrics}
Let $(\psi_t)_{0\,\leq\, t\,\leq\, 1}$ be a path of solutions in $\mathcal{M}_{X,\,\exp}^{4,\,2\alpha}(M)$ to \eqref{MA-cpct-supp-t}
and for $t\in[0,\,1]$, let $h_{\psi_t}$ be the K\"ahler metric induced by $\tau_{\psi_t}$.
Then for any $\alpha\in\left(0,\,\frac{1}{2}\right)$, the tensors $h^{-1}h_{\psi_t}$ and $h_{\psi_t}^{-1}h$ satisfy the following uniform estimate:
 \begin{equation*}
\sup_{0\,\leq\, t\,\leq\, 1}\left(\|h^{-1}h_{\psi_t}\|_{C_{\operatorname{loc}}^{0,\,2\alpha}}+\|h_{\psi_t}^{-1}h\|_{C_{\operatorname{loc}}^{0,\,2\alpha}}\right)\leq C\left(n,\alpha,\tau,\supp(F),\|f\cdot F\|_{C^{4,\,2\alpha}_{X,\,\exp}}\right).
\end{equation*}
\end{corollary}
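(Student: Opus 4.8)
The plan is to upgrade the uniform $C^{0}$-bound of Corollary \ref{coro-equiv-metrics-0} to a uniform local H\"older bound by invoking the $C^{3}$-estimate of Proposition \ref{prop-C^3-est}; the mechanism is the elementary fact that a tensor field which is uniformly bounded together with its first covariant derivative is uniformly bounded in $C^{0,\,1}_{\operatorname{loc}}$, hence in $C^{0,\,2\alpha}_{\operatorname{loc}}$ for every $\alpha\in\left(0,\,\frac{1}{2}\right)$.

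First I would note that, writing $(h_{\psi_t})_{i\bar{\jmath}}=h_{i\bar{\jmath}}+\psi_{t,\,i\bar{\jmath}}$ and using $\nabla^{h}h\equiv0$, one has $\nabla^{h}h_{\psi_t}=\nabla^{h}(i\partial\bar{\partial}\psi_t)$ and $\nabla^{h}(h^{-1}h_{\psi_t})=h^{-1}\ast\nabla^{h}(i\partial\bar{\partial}\psi_t)$, where $\ast$ denotes a contraction with respect to $h$. By Proposition \ref{prop-C^3-est}, $\sup_{0\leq t\leq1}\|\nabla^{h}\partial\bar{\partial}\psi_t\|_{C^{0}}\leq C$, and by Corollary \ref{coro-equiv-metrics-0}, $\sup_{0\leq t\leq1}\|h^{-1}h_{\psi_t}\|_{C^{0}}\leq C$; hence the $\operatorname{End}(T^{1,\,0}M)$-valued tensor $h^{-1}h_{\psi_t}$ has both its $C^{0}$-norm and its $\nabla^{h}$-derivative bounded uniformly in $t$. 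Since $h$ is uniformly equivalent to $\hat{g}$ together with all of its derivatives along the end of $M$ by \eqref{asy-cao-def-sec5} (cf.~Proposition \ref{basis}) and is smooth on the compact core, $h$ has bounded geometry: its injectivity radius is bounded below and, in $h$-geodesic normal coordinates, its Christoffel symbols are uniformly bounded. Consequently, for any $x,\,y\in M$ with $d_{h}(x,\,y)<\delta(h)$ and any $t$,
$$|h^{-1}h_{\psi_t}(x)-P_{x,\,y}(h^{-1}h_{\psi_t}(y))|_{h}\leq\|\nabla^{h}(h^{-1}h_{\psi_t})\|_{C^{0}}\,d_{h}(x,\,y),$$
where $P_{x,\,y}$ is parallel transport along the minimizing $h$-geodesic, and since $d_{h}(x,\,y)^{2\alpha}\geq d_{h}(x,\,y)$ whenever $d_{h}(x,\,y)\leq1$ and $2\alpha\leq1$, this gives the uniform bound $\sup_{0\leq t\leq1}\|h^{-1}h_{\psi_t}\|_{C^{0,\,2\alpha}_{\operatorname{loc}}}\leq C$ for every $\alpha\in\left(0,\,\frac{1}{2}\right)$.

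For the tensor $h_{\psi_t}^{-1}h$, I would observe that it is nothing but the inverse endomorphism of $h^{-1}h_{\psi_t}$. By Corollary \ref{coro-equiv-metrics-0}, the pointwise eigenvalues of the positive-definite endomorphism $h^{-1}h_{\psi_t}$ all lie in a fixed compact subinterval $[c^{-1},\,c]\subset(0,\,\infty)$, uniformly in $t$ and in the base point. The matrix-inversion map $A\mapsto A^{-1}$ is smooth, in particular Lipschitz, on the set of endomorphisms with spectrum contained in $[c^{-1},\,c]$; composing this Lipschitz map with the uniform $C^{0,\,2\alpha}_{\operatorname{loc}}$-bound for $h^{-1}h_{\psi_t}$ obtained above yields the same uniform bound for $h_{\psi_t}^{-1}h$. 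Adding the two estimates gives the assertion.

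I do not anticipate any genuine difficulty here; the only point requiring care is the passage from a bound on the covariant derivative $\nabla^{h}(h^{-1}h_{\psi_t})$ to an honest H\"older bound in the sense of the space $C^{0,\,2\alpha}_{\operatorname{loc}}$ introduced in Section \ref{section-fct-spa-exp}, which is where the uniform control of the local geometry of $h$ enters; everything else is linear algebra and the chain rule.
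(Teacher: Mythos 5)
Your proof is correct and follows essentially the same route as the paper: the uniform local H\"older bound on $h^{-1}h_{\psi_t}$ is obtained by combining the $C^0$ bound of Corollary \ref{coro-equiv-metrics-0} with the uniform bound on $\nabla^{h}\partial\bar{\partial}\psi_t$ from Proposition \ref{prop-C^3-est} (your Lipschitz-implies-H\"older step is exactly what the paper means by ``standard local interpolation inequalities''), and the bound on $h_{\psi_t}^{-1}h$ follows from the Lipschitz continuity of inversion on uniformly bounded, uniformly invertible endomorphisms, which is the paper's elementary observation $[u^{-1}]_{2\alpha}\leq[u]_{2\alpha}(\inf_M u)^{-2}$ in tensorial form. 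No gaps.
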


 \begin{proof}
As usual, we suppress the dependence of the solutions $\psi_t$ on the parameter $t\in[0,\,1]$ to lighten the notation.
The same statement applies to the data $tF$.

By standard local interpolation inequalities applied to Propositions \ref{prop-C^2-est} and \ref{prop-C^3-est}, we see that
\begin{equation*}
\|h^{-1}h_{\psi}\|_{C_{\operatorname{loc}}^{0,\,2\alpha}}\leq C\left(n,\alpha,\supp(F),\tau,\|f\cdot F\|_{C^{4,\,2\alpha}_{X,\,\exp}}\right).
\end{equation*}
 Combining the previous estimate with Corollary \ref{coro-equiv-metrics-0}, it suffices to prove a uniform bound on the local $2\alpha$-H\"older norm of $h_{\psi}^{-1}h$.
We conclude with the following observation: if $u$ is a positive function on $M$ in $C_{\operatorname{loc}}^{2\alpha}(M)$ uniformly bounded from below by a positive constant, then $[u^{-1}]_{2\alpha}\leq [u]_{2\alpha}(\inf_Mu)^{-2}$. By invoking Corollary \ref{coro-equiv-metrics-0} once more, this last remark applied to $h_{\psi}^{-1}h$ implies that
\begin{equation*}
\|h_{\psi}^{-1}h\|_{C_{\operatorname{loc}}^{0,\,2\alpha}}\leq C\left(n,\alpha,\supp(F),\tau,\|f\cdot F\|_{C^{4,\,2\alpha}_{X,\,\exp}}\right)
\end{equation*}
 as well.
\end{proof}

\subsubsection{Local bootstrapping}
We now improve the local regularity of our continuity path of solutions to \eqref{MA-cpct-supp-t}. This estimate will be used
in deriving the subsequent weighted a priori estimates.

\begin{prop}\label{prop-loc-holder-C-3}
Let $(\psi_t)_{0\,\leq\, t\,\leq\, 1}$ be a path of solutions in $\mathcal{M}_{X,\,\exp}^{4,\,2\alpha}(M)$, $\alpha\in\left(0,\,\frac{1}{2}\right)$,
to \eqref{MA-cpct-supp-t}. Then for any $\alpha\in\left(0,\,\frac{1}{2}\right)$,
\begin{equation*}
\sup_{0\,\leq\, t\,\leq\, 1}\|\psi_t\|_{C_{\operatorname{loc}}^{3,\,2\alpha}}\leq C\left(n,\alpha,\tau,\|f\cdot F\|_{C^{4,\,2\alpha}_{X,\,\exp}}\right).
\end{equation*}
\end{prop}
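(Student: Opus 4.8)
The plan is to run a standard elliptic bootstrap, but carefully keeping track of the transversally elliptic structure and the vector field $X$, exactly as in the preceding proofs in this section. The key observation is that $\psi_t$ solves the Monge-Amp\`ere equation \eqref{MA-cpct-supp-t}, which we may rewrite as $\log\det(h^{-1}h_{\psi_t})=tF-\frac{X}{2}\cdot\psi_t$, and we already control enough of the right-hand side and of the metric $h_{\psi_t}$ from the previous propositions.

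First I would fix $t\in[0,\,1]$, suppress the dependence on $t$, and fix a point $x\in M$ together with a geodesic ball $B_h(x,\,\delta)$ with $2\delta=\inj_h(M)>0$, working in local holomorphic coordinates on a slightly larger ball. By Corollary \ref{coro-equiv-metrics}, the coefficients $h_{\psi}^{i\bar\jmath}$ are uniformly bounded in $C^{0,\,2\alpha}_{\operatorname{loc}}$ with uniform ellipticity constants depending only on the stated data. The function $\psi$ satisfies the linear equation $h_{\psi}^{i\bar\jmath}\partial_i\partial_{\bar\jmath}\psi=\operatorname{tr}_{\tau_\psi}(\tau_\psi)-\operatorname{tr}_{\tau_\psi}(\tau)$, whose right-hand side is bounded in $C^{0,\,2\alpha}_{\operatorname{loc}}$ by Proposition \ref{prop-C^2-est} and Corollary \ref{coro-equiv-metrics}; hence interior Schauder estimates (applied on the shrunken ball) give a uniform bound $\|\psi\|_{C^{2,\,2\alpha}_{\operatorname{loc}}(B_h(x,\,\delta/2))}\leq C$. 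Alternatively, and more in the spirit of the rest of the section, one differentiates \eqref{MA-cpct-supp-t} once: writing $v:=\partial_k\psi$ for a coordinate direction, $v$ satisfies $h_{\psi}^{i\bar\jmath}\partial_i\partial_{\bar\jmath}v=\partial_k\big(tF-\tfrac{X}{2}\cdot\psi\big)-(\partial_kh_{\psi}^{i\bar\jmath})\partial_i\partial_{\bar\jmath}\psi$, whose right-hand side lies in $C^{0,\,2\alpha}_{\operatorname{loc}}$ with uniform norm by Propositions \ref{prop-C^2-est}, \ref{prop-C^3-est} and \ref{prop-bd-uni-X-psi} together with Corollary \ref{coro-equiv-metrics} — note that the term $\tfrac{X}{2}\cdot\partial_k\psi$ is handled using the $C^3$-bound on $\psi$ and the boundedness of $X$ and $\nabla^hX$ from Lemma \ref{lemm-app-sol-id}. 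Schauder estimates for the linear operator $h_{\psi}^{i\bar\jmath}\partial_i\partial_{\bar\jmath}$ acting on $v$ then yield a uniform bound on $\|v\|_{C^{2,\,2\alpha}_{\operatorname{loc}}}$, i.e.\ a uniform bound on $\|\psi\|_{C^{3,\,2\alpha}_{\operatorname{loc}}}$. Since $x\in M$ was arbitrary and all constants are independent of $x$ and of $t\in[0,\,1]$, taking the supremum over $x$ and $t$ gives the claim.

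The only mildly delicate point is the presence of the vector field $X$: the operator controlling $\psi$ is really the drift-type operator and not a pure Laplacian, and the term $\tfrac{X}{2}\cdot\psi$ carries one extra derivative each time we differentiate. However, since we are only asking for a local $C^{3,\,2\alpha}$-bound, this extra derivative is always absorbed by the already-established estimate one order higher: to bound $\|\psi\|_{C^{3,\,2\alpha}_{\operatorname{loc}}}$ we only need $\tfrac{X}{2}\cdot\psi\in C^{2,\,2\alpha}_{\operatorname{loc}}$ with uniform norm, which follows from $\psi\in C^{3,\,2\alpha}_{\operatorname{loc}}$ uniformly — but that is a priori what we are proving, so one must instead first bound $\|\psi\|_{C^{2,\,2\alpha}_{\operatorname{loc}}}$ uniformly (using only Proposition \ref{prop-C^2-est}, Proposition \ref{prop-bd-uni-X-psi}, and Corollary \ref{coro-equiv-metrics}, so that $\tfrac{X}{2}\cdot\psi\in C^{1,\,2\alpha}_{\operatorname{loc}}$ suffices), and then bootstrap once more to reach $C^{3,\,2\alpha}_{\operatorname{loc}}$, at which stage Proposition \ref{prop-C^3-est} guarantees that the derivative of $\tfrac{X}{2}\cdot\psi$ produced upon differentiating \eqref{MA-cpct-supp-t} is indeed controlled. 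This two-stage bootstrap is the main — though entirely routine — obstacle, and once it is set up correctly the rest is a direct application of interior elliptic Schauder theory with the uniform ellipticity and uniform coefficient bounds supplied by Corollaries \ref{coro-equiv-metrics-0} and \ref{coro-equiv-metrics}.
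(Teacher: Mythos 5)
Your two-stage bootstrap is a viable route and it is genuinely different from the paper's. The paper instead recycles the identity \eqref{eq-sec-der-yau} from the $C^{2}$-estimate to view $u:=\Delta_{\tau}\psi+\frac{X}{2}\cdot\psi$ as a solution of a linear equation $\Delta_{\tau_{\psi}}u=G$ with $G$ merely bounded in $C^{0}$ (the bound uses Propositions \ref{prop-C^2-est} and \ref{prop-C^3-est} together with \eqref{easy-obs-diff-lap}), applies Morrey--Schauder $C^{1,\,2\alpha}$-estimates for operators with $C^{0,\,2\alpha}$-coefficients and bounded data to get $u\in C^{1,\,2\alpha}_{\operatorname{loc}}$ uniformly, and then applies ordinary Schauder for the smooth-coefficient operator $\Delta_{\tau}$ to reach $\psi\in C^{3,\,2\alpha}_{\operatorname{loc}}$. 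Your route differentiates the Monge--Amp\`ere equation once and applies full Schauder to $\partial_{k}\psi$; this is the classical third-order bootstrap (essentially the paper's own Proposition \ref{prop-loc-reg} with $k=1$), and, once set up correctly, it does not even need the pointwise $C^{3}$-estimate of Proposition \ref{prop-C^3-est} as direct input --- only the $C^{2}$-estimate and the H\"older control of $h_{\psi}^{-1}$ from Corollary \ref{coro-equiv-metrics}. Your identification of the need for a preliminary $C^{2,\,2\alpha}$ stage to absorb the extra derivative coming from $\frac{X}{2}\cdot\psi$ is also correct.

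However, the differentiated equation you write down is wrong, and the error matters. Differentiating $\log\det(h_{\psi})-\log\det(h)=tF-\frac{X}{2}\cdot\psi$ in the direction $\partial_{k}$ gives
\begin{equation*}
h_{\psi}^{i\bar{\jmath}}\partial_{i}\partial_{\bar{\jmath}}(\partial_{k}\psi)
=\partial_{k}\left(tF-\frac{X}{2}\cdot\psi\right)-h_{\psi}^{i\bar{\jmath}}\partial_{k}h_{i\bar{\jmath}}+h^{i\bar{\jmath}}\partial_{k}h_{i\bar{\jmath}},
\end{equation*}
whose right-hand side is uniformly in $C^{0,\,2\alpha}_{\operatorname{loc}}$ once stage one has delivered $\psi\in C^{2,\,2\alpha}_{\operatorname{loc}}$, because the only derivatives of $\psi$ appearing are of order at most two and the remaining factors are either smooth or controlled by Corollary \ref{coro-equiv-metrics}. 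The term $(\partial_{k}h_{\psi}^{i\bar{\jmath}})\partial_{i}\partial_{\bar{\jmath}}\psi$ in your version does not occur, and it is fortunate that it does not: $\partial_{k}h_{\psi}^{i\bar{\jmath}}$ contains third derivatives of $\psi$, so placing that product in $C^{0,\,2\alpha}_{\operatorname{loc}}$ would require H\"older continuity of the third derivatives of $\psi$ --- precisely the conclusion you are trying to prove --- and none of Propositions \ref{prop-C^2-est}, \ref{prop-C^3-est} or \ref{prop-bd-uni-X-psi} supplies it. With the corrected equation your argument closes; as written, the citation of those propositions for that term is a genuine gap.
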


\begin{proof}
Again suppressing the dependence of the solutions $\psi_t$ and data $tF$
on the parameter $t\in[0,\,1]$, we see from the proof of the a priori $C^2$-estimate (cf.~Proposition \ref{prop-C^2-est}
and \eqref{eq-sec-der-yau}) that
\begin{equation}\label{weloveele}
\begin{split}
\Delta_{\tau_{\psi}}\left(\Delta_{\tau}\psi+\frac{X}{2}\cdot\psi\right) =& \Delta_{\tau}F+h_{\psi}^{-1}\ast h^{-1}\ast\Rm(h)+\Rm(h)\ast\nabla\bar{\nabla}\psi\ast h_{\psi}^{-1}\\
&+h^{-1}\ast h^{-1}\ast\Rm(h)+h^{-1}\ast h_{\psi}^{-1}\ast h_{\psi}^{-1}\ast \bar{\nabla}\nabla\bar{\nabla}\psi\ast\nabla\bar{\nabla}\nabla \psi\\
&+\left(\Delta_{\tau_{\psi}}-\Delta_{\tau}\right)\left(\frac{X\cdot \psi}{2}\right),
\end{split}
\end{equation}
where $\ast$ denotes the ordinary contraction of two tensors. Notice that
\begin{equation}
\begin{split}\label{easy-obs-diff-lap}
\left|\left(\Delta_{\tau_{\psi}}-\Delta_{\tau}\right)(X\cdot \psi)\right|&=\left|h_{\psi}^{-1}\ast\partial\bar{\partial}\psi\ast \partial\bar{\partial}(X\cdot\psi)\right|\\
&\leq \|h_{\psi}^{-1}h\|_{C^0}\cdot \|i\partial\bar{\partial}\psi\|_{C^0}\cdot \left(\|\nabla^{h}\psi\|_{C^0}+\|i\partial\bar{\partial}\psi\|_{C^0}+\|i\partial\bar{\partial}\partial\psi\|_{C^0}\right).
\end{split}
\end{equation}
Here we have used the boundedness of the derivatives of the vector field $X$ given by Lemma \ref{lemm-app-sol-id} with respect to the norm induced by $\tau$.

By Propositions \ref{prop-C^2-est} and \ref{prop-C^3-est} together with \eqref{easy-obs-diff-lap},
the $C^0$-norm of the right-hand side of \eqref{weloveele} is uniformly bounded and, thanks to Corollary \ref{coro-equiv-metrics}, so too are the coefficients of $\Delta_{\tau_{\psi}}$ in the $C^{0,\,2\alpha}_{\operatorname{loc}}$-sense. As a result, by applying the Morrey-Schauder $C^{1,\,2\alpha}$-estimates, we see that for any $x\in M$ and for $\delta<\inj_{h}(M)$,
\begin{equation*}
\left\|\Delta_{\tau}\psi+\frac{X}{2}\cdot\psi\right\|_{C^{1,\,2\alpha}(B_{h}(x,\,\delta))}\leq C\left(n,\alpha,\tau,\supp(F),\|f\cdot F\|_{C^{4,\,2\alpha}_{X,\,\exp}}\right).
\end{equation*}
Finally, applying standard interior Schauder estimates for elliptic equations once again with respect to $\Delta_{\tau}$ leads to the bound
\begin{equation*}
\begin{split}
\|\psi\|_{C^{3,\,2\alpha}(B_{h}(x,\,\frac{\delta}{2}))}&\leq C(n,\alpha,\tau)\left(\|\Delta_{\tau}\psi\|_{C^{1,\,2\alpha}(B_{h}(x,\,\delta))}+\|\psi\|_{C^{1,\,2\alpha}(B_{h}(x,\,\delta))}\right)\\
&\leq C\left(n,\alpha,\tau,\supp(F),\|f\cdot F\|_{C^{4,\,2\alpha}_{X,\,\exp}}\right).
\end{split}
\end{equation*}
\end{proof}

We next establish the following well-known local regularity result for solutions to (\ref{MA-cpct-supp}).
\begin{prop}\label{prop-loc-reg}
Let $F\in C^{k,\,\alpha}_{\operatorname{loc}}(M)$ for some $k\geq1$ and $\alpha\in(0,\,1)$ and let $\psi\in C^{3,\,\alpha}_{\operatorname{loc}}(M)$ be a solution to (\ref{MA-cpct-supp}) with data $F$. Then $\psi\in C^{k+2,\alpha}_{\operatorname{loc}}(M)$. Moreover, for all $k\geq 1$, $\alpha\in(0,\,1)$, and $x\in M$,
\begin{equation*}
\begin{split}
\|\psi\|_{C^{k+2,\alpha}(B_{h}(x,\,\frac{\delta}{2}))}\leq C(n,k,\alpha,\tau)\left(\|F\|_{C^{k,\,\alpha}(B_{h}(x,\,\delta))}+\|\psi\|_{C^{3,\,\alpha}(B_{h}(x,\,\delta))}\right),\qquad \delta<\inj_{h}(M).
\end{split}
\end{equation*}
\end{prop}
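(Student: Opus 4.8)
The plan is to bootstrap the local regularity of $\psi$ by differentiating the complex Monge-Amp\`ere equation \eqref{MA-cpct-supp} and repeatedly invoking interior Schauder estimates, working entirely locally on balls $B_{h}(x,\,\delta)$ with $\delta<\inj_{h}(M)$. The starting point is the given regularity $\psi\in C^{3,\,\alpha}_{\operatorname{loc}}(M)$, which in particular means that $h_{\psi}=h+i\partial\bar\partial\psi$ and its inverse $h_{\psi}^{-1}$ both lie in $C^{1,\,\alpha}_{\operatorname{loc}}(M)$, with local norms controlled by $\|\psi\|_{C^{3,\,\alpha}(B_{h}(x,\,\delta))}$, and (after possibly shrinking $\delta$) $h_{\psi}$ is uniformly positive-definite on $B_{h}(x,\,\delta)$. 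The equation \eqref{MA-cpct-supp} can be rewritten as the linear equation
\begin{equation*}
\Delta_{\tau_{\psi}}\psi+\frac{X}{2}\cdot\psi=n+\tr_{h}(h_{\psi})-n-\ldots
\end{equation*}
but it is cleaner to proceed as in the proof of Proposition \ref{prop-loc-holder-C-3}: view
$w:=\Delta_{\tau}\psi+\frac{X}{2}\cdot\psi$ as solving an elliptic equation whose leading operator is $\Delta_{\tau_{\psi}}$ with $C^{1,\,\alpha}_{\operatorname{loc}}$-coefficients (by the above) and whose right-hand side, by \eqref{weloveele}, involves only $F$, $\Rm(h)$, $X$ and derivatives of $\psi$ of order at most $3$; all of these are at least $C^{1,\,\alpha}_{\operatorname{loc}}$ under the hypothesis $F\in C^{k,\,\alpha}_{\operatorname{loc}}$ with $k\geq1$. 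Actually, the first induction step requires only that the right-hand side of \eqref{weloveele} lie in $C^{\alpha}_{\operatorname{loc}}$, which it does.

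The induction goes as follows. Suppose inductively that $\psi\in C^{m+2,\,\alpha}_{\operatorname{loc}}(M)$ for some $m$ with $1\leq m\leq k$ (the base case $m=1$ being the hypothesis $\psi\in C^{3,\,\alpha}_{\operatorname{loc}}$). Then $h_{\psi},\,h_{\psi}^{-1}\in C^{m,\,\alpha}_{\operatorname{loc}}(M)$, so the coefficients of $\Delta_{\tau_{\psi}}$ in \eqref{weloveele} are in $C^{m,\,\alpha}_{\operatorname{loc}}(M)$, and the right-hand side of \eqref{weloveele} — which schematically is $\Delta_{\tau}F+h_{\psi}^{-1}\ast h^{-1}\ast\Rm(h)+\Rm(h)\ast\nabla\bar\nabla\psi\ast h_{\psi}^{-1}+\ldots$ together with the term $(\Delta_{\tau_{\psi}}-\Delta_{\tau})(\tfrac{X}{2}\cdot\psi)$ — lies in $C^{\min(m,\,k-1),\,\alpha}_{\operatorname{loc}}(M)$, since $F\in C^{k,\,\alpha}_{\operatorname{loc}}$ gives $\Delta_{\tau}F\in C^{k-2,\,\alpha}_{\operatorname{loc}}$ if $k\geq2$ (and $\in C^{\alpha}_{\operatorname{loc}}$ if $k=1$, in which case one only gains once and stops), $\Rm(h)$ is smooth, $X$ is smooth, and the terms $\nabla\bar\nabla\psi$, $\bar\nabla\nabla\bar\nabla\psi\ast\nabla\bar\nabla\nabla\psi$, $\partial\bar\partial(X\cdot\psi)$ involve at most third derivatives of $\psi$, hence are in $C^{m-1,\,\alpha}_{\operatorname{loc}}$. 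Applying interior Schauder estimates (e.g.~\cite[Chapter 6]{gilbarg}) to \eqref{weloveele} then gives $w=\Delta_{\tau}\psi+\tfrac{X}{2}\cdot\psi\in C^{\min(m,\,k-1)+2,\,\alpha}_{\operatorname{loc}}(M)$ — in particular $\Delta_{\tau}\psi=w-\tfrac{X}{2}\cdot\psi\in C^{\min(m+1,\,k-1),\,\alpha}_{\operatorname{loc}}$ once one accounts for the fact that $X$ is smooth so that $\tfrac{X}{2}\cdot\psi$ is one derivative worse than $\psi$, i.e.~in $C^{m+1,\,\alpha}_{\operatorname{loc}}$. Feeding this into interior Schauder estimates for the fixed elliptic operator $\Delta_{\tau}$ (with smooth coefficients) improves $\psi$ to $C^{\min(m+1,\,k+1)+1,\,\alpha}_{\operatorname{loc}}\supseteq C^{\min(m+2,\,k+2),\,\alpha}_{\operatorname{loc}}$ after tracking indices carefully, and in fact yields $\psi\in C^{m+3,\,\alpha}_{\operatorname{loc}}(M)$ as long as $m+1\leq k$. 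Iterating this $k-1$ times upgrades $\psi$ from $C^{3,\,\alpha}_{\operatorname{loc}}$ to $C^{k+2,\,\alpha}_{\operatorname{loc}}$, which is the claim. At each stage the Schauder estimates are quantitative, and composing the resulting inequalities over the $k-1$ steps — using local interpolation to absorb lower-order norms — produces the displayed bound
\begin{equation*}
\|\psi\|_{C^{k+2,\,\alpha}(B_{h}(x,\,\delta/2))}\leq C(n,k,\alpha,\tau)\left(\|F\|_{C^{k,\,\alpha}(B_{h}(x,\,\delta))}+\|\psi\|_{C^{3,\,\alpha}(B_{h}(x,\,\delta))}\right)
\end{equation*}
with a constant depending only on $n$, $k$, $\alpha$, and the fixed background metric $\tau$ (through $\inj_h(M)$, $\Rm(h)$, $X$, and the structure constants in \eqref{weloveele}).

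The only genuinely delicate bookkeeping — and the step I expect to need the most care — is tracking precisely how many derivatives are lost in the term $(\Delta_{\tau_{\psi}}-\Delta_{\tau})(\tfrac{X}{2}\cdot\psi)$ and in the cubic term $h^{-1}\ast h_{\psi}^{-1}\ast h_{\psi}^{-1}\ast\bar\nabla\nabla\bar\nabla\psi\ast\nabla\bar\nabla\nabla\psi$ appearing on the right of \eqref{weloveele}: a naive count suggests this term involves fourth derivatives of $\psi$, which would stall the induction, so one must instead use the substitution already exploited in Proposition \ref{prop-C^3-est} (treating $S(h_\psi,h)=|\nabla^h h_\psi|^2_{h_\psi}$ or, equivalently, differentiating the equation in the form $\log\det(h_\psi)=\log\det(h)+F-\tfrac{X}{2}\cdot\psi$ once to get a linear equation for $\partial_k\psi$ with right-hand side in $C^{m-1,\,\alpha}_{\operatorname{loc}}$). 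Concretely, the cleanest route is: differentiate $\log\det(h_{i\bar\jmath}+\psi_{i\bar\jmath})=\log\det(h)+F-\tfrac{X}{2}\cdot\psi$ in a coordinate direction $\partial_k$ to obtain $h_{\psi}^{i\bar\jmath}(\psi_{i\bar\jmath k}+\partial_k h_{i\bar\jmath})=\partial_k(\log\det h)+\partial_k F-\partial_k(\tfrac{X}{2}\cdot\psi)$, i.e.~a uniformly elliptic linear equation $\Delta_{\tau_\psi}(\partial_k\psi)=(\text{terms of order}\leq m+1\text{ in }\psi,\ \text{order}\leq k-1\text{ in }F)$; inductively the right side is in $C^{\min(m-1,\,k-1),\,\alpha}_{\operatorname{loc}}$ with $C^{m,\,\alpha}_{\operatorname{loc}}$-coefficients, so Schauder gives $\partial_k\psi\in C^{\min(m+1,\,k+1),\,\alpha}_{\operatorname{loc}}$, hence $\psi\in C^{m+3,\,\alpha}_{\operatorname{loc}}$ as required. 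The $JX$-invariance of $\psi$ plays no role here; everything is standard elliptic regularity theory applied locally, and the content of the proposition is just the careful concatenation of estimates.
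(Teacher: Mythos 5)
Your final ``cleanest route'' --- differentiating $\log\det(h_{i\bar\jmath}+\psi_{i\bar\jmath})=\log\det(h)+F-\tfrac{X}{2}\cdot\psi$ in a coordinate direction to obtain a linear uniformly elliptic equation $\Delta_{\tau_{\psi}}(\partial_j\psi)=(\text{RHS})$ with $C^{m,\,\alpha}_{\operatorname{loc}}$ coefficients, then applying interior Schauder estimates and inducting --- is exactly the paper's proof, so the argument is correct and essentially identical; the long preliminary detour via \eqref{weloveele} is unnecessary and, as you yourself note, would stall (it also costs two derivatives of $F$ instead of one). The only blemish is a harmless off-by-one in your bookkeeping: the right-hand side $\partial_j F-\partial_j(\tfrac{X}{2}\cdot\psi)+h_{\psi}^{i\bar\jmath}\partial_j h_{i\bar\jmath}+\partial_j\log\det h$ lies in $C^{\min(m,\,k-1),\,\alpha}_{\operatorname{loc}}$ rather than $C^{\min(m-1,\,k-1),\,\alpha}_{\operatorname{loc}}$, and it is with this corrected count that Schauder yields $\psi\in C^{\min(m+3,\,k+2),\,\alpha}_{\operatorname{loc}}$ and the induction closes.
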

\begin{proof}
We prove this proposition by induction on $k\geq 1$. The case $k=1$ is true by assumption, so let $F\in C^{k+1,\alpha}_{\operatorname{loc}}(M)$ and let $\psi\in C^{3,\,\alpha}_{\operatorname{loc}}(M)$ be a solution of
\eqref{MA-cpct-supp}. Then by induction, $\psi\in C^{k+2,\alpha}_{\operatorname{loc}}(M)$.
Let $x\in M$ and choose local holomorphic coordinates defined on $B_{h}(x,\,\delta)$ for some $0<\delta<\inj_{h}(M)$. Then since $\psi$ satisfies
\begin{equation*}
F=\log\left(\frac{\tau_{\psi}^n}{\tau^n}\right)+\frac{X}{2}\cdot\psi,
\end{equation*}
we know that for $j=1,...,2n$, the derivative $\partial_j\psi$ satisfies
\begin{eqnarray*}
\Delta_{\tau_{\psi}}\left(\partial_j\psi\right)=\partial_j\left(F-\frac{X}{2}\cdot\psi\right)\in C^{k,\,\alpha}_{\operatorname{loc}}(M).
\end{eqnarray*}
As the coefficients of $\Delta_{\tau_{\psi}}$ are in $C^{k,\,\alpha}_{\operatorname{loc}}(M)$,
an application of the standard interior Schauder estimates for elliptic equations now gives us the
desired local regularity result, namely $\partial_j\psi\in C^{k+2,\alpha}_{\operatorname{loc}}(M)$ for all $j=1,...,2n$, or equivalently, $\psi\in C^{k+3,\alpha}_{\operatorname{loc}}(M)$ together with the expected estimate.
\end{proof}

\subsection{Weighted a priori estimates}
We next deal with the weighted a priori estimates on derivatives of solutions to
\eqref{MA-cpct-supp} along the continuity path, beginning first with the weighted $C^0$-estimate.

\subsubsection{Weighted $C^0$ a priori estimate}
\begin{prop}[Weighted $C^0$ a priori estimate]\label{prop-a-prio-wei-C0-est}
Let $(\psi_t)_{0\,\leq\, t\,\leq\, 1}$ be a path of solutions in $\mathcal{M}_{X,\,\exp}^{2k+2,\,\alpha}(M)$,
$k\geq1$, $\alpha\in\left(0,\,\frac{1}{2}\right)$, to (\ref{MA-cpct-supp-t}). Then there exists a positive constant $C$ such that
\begin{equation}
\sup_{0\,\leq\, t\,\leq\, 1}\sup_M\left|e^f\psi_t\right|\leq C\left(n,\tau,\|f\cdot F\|_{C^{4,\,2\alpha}_{X,\,\exp}}\right),\label{a-priori-wei-bd-c-0}
\end{equation}
where $C\left(n,\tau,\|f\cdot F\|_{C^{4,\,2\alpha}_{X,\,\exp}}\right)$ is bounded by a constant $C(n,\tau,\Lambda)$ depending only on an upper bound $\Lambda$ of $\|f \cdot F\|_{C^{4,\,2\alpha}_{X,\,\exp}}$.
\end{prop}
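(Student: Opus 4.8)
The plan is to mimic the a priori weighted $C^{0}$-estimate obtained in the proof of Theorem~\ref{iso-sch-Laplacian-exp}, where an analogous bound was established for the \emph{linearised} equation, and to feed in the nonlinear a priori estimates already proved in this section. The key point is that a solution $\psi$ of \eqref{MA-cpct-supp-t} satisfies the linear differential inequality $\Delta_{\tau}\psi+\tfrac{X}{2}\cdot\psi\geq -|tF|$ (from $\log(1+x)\leq x$) as well as $\Delta_{\tau}\psi+\tfrac{X}{2}\cdot\psi\leq C|tF|\,e^{C}$ outside $\supp(F)$; indeed, on $M\setminus\supp(F)$ we have exactly $\Delta_{\tau}\psi+\tfrac{X}{2}\cdot\psi=-\int_{0}^{1}\!\int_{0}^{u}|\partial\bar\partial\psi|^{2}_{h_{s\psi}}\,ds\,du$, which by Corollary~\ref{coro-equiv-metrics-0} (uniform equivalence of $h$ and $h_{\psi}$) and Proposition~\ref{prop-C^2-est} (the $C^{2}$-bound) is bounded in absolute value by a uniform constant, and which is supported in a fixed compact set by itself being $\Delta_{\tau}\psi+\tfrac{X}{2}\cdot\psi$ wherever $F$ vanishes. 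Wait---more carefully: on $M\setminus\supp(F)$ the right-hand side of \eqref{MA-cpct-supp-t} is $0$, so there $\Delta_{\tau}\psi+\tfrac{X}{2}\cdot\psi=-\int_{0}^{1}\!\int_{0}^{u}|\partial\bar\partial\psi|^{2}_{h_{s\psi}}\,ds\,du=:g$ with $g\in C^{0}(M)$ uniformly bounded and, crucially, $g\to 0$ at infinity with a rate we will have to control.

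First I would record the two-sided pointwise bound $|g|\leq C$ with $C=C(n,\tau,\supp(F),\|f\cdot F\|_{C^{2}_{X,\exp}})$, using Corollary~\ref{coro-equiv-metrics-0} and Proposition~\ref{prop-C^2-est}. Then I would run the barrier argument of Lemma~\ref{lemm-barr-inf}: from \eqref{sub-sol-exp-pot-fct}, $(\Delta_{\tau}+\tfrac{X}{2}\cdot)e^{-f}\leq -\tfrac{c}{f}e^{-f}$ outside a fixed compact set $K$, so that for $A>0$, the function $\psi-Ae^{-f}$ satisfies $(\Delta_{\tau}+\tfrac{X}{2}\cdot)(\psi-Ae^{-f})\geq -|g|+\tfrac{cA}{f}e^{-f}$ on $M\setminus K$. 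To close this we need $|g|\leq C'e^{-f}/f$ outside a compact set; this is where the decay of $g=-\int_{0}^{1}\!\int_{0}^{u}|\partial\bar\partial\psi|^{2}_{h_{s\psi}}\,ds\,du$ matters, and it should follow from $\psi\in\mathcal{M}^{\infty}_{X,\exp}(M)$ once we know $\psi$ decays like $e^{-f}$ with two derivatives---but a priori we only know $\psi$ is in this space, not with uniform bounds, which is precisely what we are trying to prove. The clean way around this is to apply the maximum principle to $\psi-Ae^{-f}$ on $\{f\leq R\}\setminus K$ directly: since $\psi\to 0$ at infinity, for $R$ large enough $\sup_{\{f=R\}}|\psi|$ is small, and on $\partial K$ we bound $\psi$ using the non-weighted estimates Propositions~\ref{prop-bd-abo-uni-psi} and \ref{prop-bd-bel-uni-psi}; choosing $A$ large (depending only on $n,\tau,\supp(F),\|f\cdot F\|_{C^{2}_{X,\exp}}$, via $\max_{\partial K}|\psi|$ and $\sup_{M\setminus K}f|g|e^{f}$) forces $\sup_{\{f\leq R\}\setminus K}(\psi-Ae^{-f})\leq\max\{-Ae^{-R}+\text{(small)},\ \max_{\partial K}(\psi-Ae^{-f})\}\leq 0$. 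Letting $R\to+\infty$ gives $\psi\leq Ae^{-f}$ on $M\setminus K$, hence $e^{f}\psi\leq A$ there, and combining with Propositions~\ref{prop-bd-abo-uni-psi}, \ref{prop-bd-bel-uni-psi} on $K$ (where $e^{f}$ is bounded) gives the upper bound in \eqref{a-priori-wei-bd-c-0}. Applying the same argument to $-\psi$ yields the lower bound.

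The main obstacle is controlling $f|g|e^{f}=f\,e^{f}\big|\int_{0}^{1}\!\int_{0}^{u}|\partial\bar\partial\psi|^{2}_{h_{s\psi}}\,ds\,du\big|$ uniformly, i.e.\ showing this quantity is bounded outside a compact set by a constant depending only on $n$, $\tau$, $\supp(F)$, and $\|f\cdot F\|_{C^{4,2\alpha}_{X,\exp}}$. Since $|\partial\bar\partial\psi|^{2}_{h_{s\psi}}$ is quadratic in $\nabla^{h}\bar\nabla^{h}\psi$, and at this stage we only have the \emph{local} $C^{2}$-estimate of Proposition~\ref{prop-C^2-est} (a uniform $L^{\infty}$ bound on $i\partial\bar\partial\psi$ with no decay), one cannot directly conclude the decay $f\,e^{f}|g|=O(1)$. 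The fix is to instead bound $g$ using the equation itself on $M\setminus\supp(F)$: there $g=\Delta_{\tau}\psi+\tfrac{X}{2}\cdot\psi$, and one bootstraps via the already-established \emph{weighted} linear estimate \eqref{a-priori-wei-c-0-lin-est}/Theorem~\ref{iso-sch-Laplacian-exp} applied to the linear operator with right-hand side $g$---but this is circular unless handled carefully. I expect the cleanest route is: first prove the weighted $C^{0}$-bound for $e^{f}\psi$ with the \emph{weaker} weight $e^{-\delta f}$, $\delta\in(0,1)$, using the non-degenerate barrier $(\Delta_{\tau}+\tfrac{X}{2}\cdot)e^{-\delta f}=-\delta(1-\delta)\tfrac{4n}{2}e^{-\delta f}+O(f^{-1})e^{-\delta f}$ of Lemma~\ref{lemm-barr-inf}, which only requires the crude bound $|g|\leq C$ on the compact set $\supp(F)^{c}\cap\{g\neq 0\}$ and $g\equiv 0$ far out; this already gives $|\psi|\leq Ce^{-\delta f}$ hence genuine decay, which then feeds Proposition~\ref{prop-loc-reg} to upgrade to a decaying $C^{2}$-bound on $i\partial\bar\partial\psi$ and thence the improved estimate $f\,e^{f}|g|=O(1)$; finally one reruns the sharp $\delta=1$ barrier argument as above. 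Throughout, the dependence of constants on $\|f\cdot F\|_{C^{4,2\alpha}_{X,\exp}}$ only through an upper bound $\Lambda$ is manifest because every constant invoked (from Propositions~\ref{prop-a-priori-ene-est}, \ref{prop-bd-abo-uni-psi}, \ref{prop-bd-bel-uni-psi}, \ref{prop-bd-uni-X-psi}, \ref{prop-C^2-est}, \ref{prop-C^3-est} and Corollaries~\ref{coro-equiv-metrics-0}, \ref{coro-equiv-metrics}) is itself of this form.
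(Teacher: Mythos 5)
There is a sign error at the outset that distorts your picture of which direction is hard: by \eqref{equ:taylor-exp}, a solution satisfies $\Delta_{\tau}\psi+\frac{X}{2}\cdot\psi=tF+\int_0^1\int_0^u|\partial\bar\partial\psi|^2_{h_{s\psi}}\,ds\,du$, so the quadratic term enters with a \emph{plus} sign, not a minus. Hence $\psi$ is a genuine subsolution with right-hand side bounded below by the compactly supported $tF$, and the upper bound $\psi\leq Ae^{-f}$ follows from the barrier \eqref{sub-sol-exp-pot-fct} together with Proposition \ref{prop-bd-abo-uni-psi} with no control whatsoever on the decay of the quadratic term; your worry about bounding $f e^{f}|g|$ in that step is unnecessary. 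The entire difficulty sits in the \emph{lower} bound, where the nonnegative quadratic term is on the unfavourable side, and "applying the same argument to $-\psi$" does not work.

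For the lower bound your proposal has a genuine gap. You assert that the initial step --- $\psi\geq -Ce^{-\delta f}$ for some $\delta\in(0,1)$ via the linear barrier of Lemma \ref{lemm-barr-inf} --- "only requires the crude bound $|g|\leq C$ on a compact set and $g\equiv 0$ far out". But $\int_0^1\int_0^u|\partial\bar\partial\psi|^2_{h_{s\psi}}\,ds\,du$ is not compactly supported, and along the continuity path it is only known to be uniformly \emph{bounded} (Proposition \ref{prop-C^2-est}); it acquires uniform decay only after one knows $\psi$ decays with uniform constants, which is precisely what is being proved. The linear comparison $(\Delta_{\tau}+\frac{X}{2}\cdot)(-Be^{-\delta f}-\psi)\geq 0$ outside a compact set would require this term to be $O(e^{-\delta f})$ there, so your step 1 is circular and the Schauder bootstrap that follows has nothing to start from. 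The paper's Claim \ref{claim-first-dec-low-bd-c-0} avoids this by not linearising around $\psi$ at all: it inserts the explicit barrier $\chi_{B,\,\delta}=-Be^{-\delta f}$ into the \emph{nonlinear} operator, where the Taylor remainder is $O((B\delta)^2e^{-2\delta f})$ because it is quadratic in $i\partial\bar\partial\chi_{B,\,\delta}$ rather than in $i\partial\bar\partial\psi$, verifies $\log(\tau_{\chi_{B,\,\delta}}^n/\tau^n)+\frac{X}{2}\cdot\chi_{B,\,\delta}\geq -F$ on $\{f\geq R\}$, and then compares with $\psi$ by applying the maximum principle to $\log\bigl((\tau_{\psi}+i\partial\bar\partial(\chi_{B,\,\delta}-\psi))^n/\tau_{\psi}^n\bigr)+\frac{X}{2}\cdot(\chi_{B,\,\delta}-\psi)\geq -F$. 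Only once this rough decay $\psi\geq -Ce^{-\delta_0 f}$ is in hand does the paper linearise and iterate with interior Schauder estimates to upgrade the weight to $e^{-f}$, much as you describe for the later stages.
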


\begin{proof}
We begin with an upper bound for $e^f\psi$. First note that $\psi$ satisfies the differential inequality
\begin{equation*}
 -\|fe^fF\|_{C^0}\frac{e^{-f}}{f}\leq tF=\log\left(\frac{\tau_{\psi}^n}{\tau^{n}}\right)+\frac{X}{2}\cdot\psi\leq \Delta_{\tau}\psi+\frac{X}{2}\cdot \psi.
\end{equation*}
Moreover, by \eqref{sub-sol-exp-pot-fct} of Lemma \ref{lemm-barr-inf}, we see that
\begin{equation*}
\begin{split}
\left(\Delta_{\tau}+\frac{X}{2}\cdot\right)e^{-f}&=-e^{-f}\Delta_{\tau} f \leq -\frac{c}{f}e^{-f}
\end{split}
\end{equation*}
outside some compact subset $K\subset M$ independent of $\psi$ for some $c>0$.
Thus, one obtains, for any positive constant $A$, the lower bound
\begin{equation}
\begin{split}\label{sub-diff-inequ-psi-barrier}
&\left(\Delta_{\tau}+\frac{X}{2}\cdot\right)\left(\psi-Ae^{-f}\right)\geq  \frac{c\cdot A}{f}e^{-f}-\|fe^fF\|_{C^0}\frac{e^{-f}}{f}
\end{split}
\end{equation}
on $M\setminus K$. In particular, choosing $A$ so that $A>c^{-1}\|fe^fF\|_{C^0}$, the maximum principle
applied to \eqref{sub-diff-inequ-psi-barrier} shows that
\begin{equation*}
\sup_{M\setminus K}(\psi-Ae^{-f})=\max\left\{0,\,\max_{\partial K}(\psi-Ae^{-f})\right\},
\end{equation*}
as both $\psi$ and $e^{-f}$ converge (exponentially) to $0$ at infinity.
Now, by Lemma \ref{lemma-loc-crit-pts} and Proposition \ref{prop-bd-abo-uni-psi}, we know that $$\max_{\partial K}(\psi-Ae^{-f})\leq C-Ae^{-\max_{\partial K}f}$$
for some uniform constant $C$. Hence one can choose $A$ large enough so that $\max_{\partial K}\left(\psi-Ae^{-f}\right)\leq 0$. This establishes the expected a priori weighted upper bound.

As for the lower bound, we proceed in two steps, the first establishing a ``rough'' lower bound on $\psi$ in the following claim.

\begin{claim}\label{claim-first-dec-low-bd-c-0}
There exists $\delta=\delta(F,\tau,n)\in(0,\,1)$ such that $\psi\geq-Ce^{-\delta f}$ for some uniform positive constant $C$.
\end{claim}

\begin{proof}
Consider the function $\chi_{B,\,\delta}:=-B e^{-\delta f}$ for some $B>0$ and $\delta\in(0,\,1)$ to be specified later.
Since $|i\partial\bar{\partial}f|_{h}=O(f^{-1})$ and $|X|_{h}$ is bounded, there exists a uniform constant $c_{1}(\tau,\,n)$
such that $|i\partial\bar{\partial}(Be^{-\delta f})|_{h}<1$ on the set where $B\delta e^{-\delta f}\leq c_{1}$.
By linearising around $\tau$ and once again using the fact that $|i\partial\bar{\partial}f|_{h}=O(f^{-1})$ and $|X|_{h}$ is bounded,
one obtains the following estimate on this set:
 \begin{equation*}
 \begin{split}
\log\left(\frac{\tau_{\chi_{B,\,\delta}}^n}{\tau^n}\right)&=-B\Delta_{\tau}e^{-\delta f}-\int_0^1\int_0^u|i\partial\bar{\partial}\chi_{B,\,\delta}|^{2}_{h_{s\chi_{B,\,\delta}}}\,ds\,du\\
&\geq -B\Delta_{\tau}e^{-\delta f}-c\left|Bi\partial\bar{\partial}e^{-\delta f}\right|^2_{h}\\
&\geq -B\Delta_{\tau}e^{-\delta f}-c_{2}(\tau,\,n)(B\delta)^2e^{-2\delta f}
\end{split}
\end{equation*}
for $c_{2}(\tau,\,n)$ a sufficiently large positive constant. As a consequence, we find using Lemma \ref{lemm-barr-inf} that
\begin{equation*}
\begin{split}
\log\left(\frac{\tau_{\chi_{B,\,\delta}}^n}{\tau^n}\right)+\frac{X}{2}\cdot\chi_{B,\,\delta}&\geq -B
\left(\Delta_{\tau}+\frac{X}{2}\cdot\right)e^{-\delta f}-c_{2}(\tau,\,n)(B\delta)^2e^{-2\delta f}\\
&\geq B\delta(1-\delta)\frac{4n}{2}e^{-\delta f}-c_{2}(\tau,\,n)(B\delta)^2e^{-2\delta f}-Bc_{3}(\tau,\,n)f^{-1}e^{-\delta f}\\
&\geq Bn\delta(1-\delta)e^{-\delta f}
\end{split}
\end{equation*}
on the set where
\begin{equation}\label{finally}
B\delta e^{-\delta f}\leq c_{1}\qquad\textrm{and}\qquad c_{2}B\delta^{2}e^{-\delta f}+c_{3}f^{-1}\leq\delta(1-\delta)n.
\end{equation}
It follows that on this set we have that
\begin{equation*}
\begin{split}
\log\left(\frac{(\tau_{\psi}+i\partial\bar{\partial}(\chi_{B,\,\delta}-\psi))^n}{\tau_{\psi}^n}\right)+\frac{X}{2}\cdot(\chi_{B,\,\delta}-\psi)
&=\log\left(\frac{\tau_{\chi_{B,\,\delta}}^n}{\tau_{\psi}^n}\right)+\frac{X}{2}\cdot(\chi_{B,\,\delta}-\psi)\\
&=\log\left(\frac{\tau_{\chi_{B,\,\delta}}^n}{\tau^n}\right)-\log\left(\frac{\tau_{\psi}^n}{\tau^n}\right)+\frac{X}{2}\cdot(\chi_{B,\,\delta}-\psi)\\
&\geq Bn\delta(1-\delta)e^{-\delta f}-F.\\
&\geq-F.
\end{split}
\end{equation*}
Thus, letting $R(B,\,\delta)>0$ be such that \eqref{finally} holds true on $\{f\geq R\}$ and
$\{f\geq R\}\subset M\setminus\supp(F)$, we find from the maximum principle that
\begin{equation*}
\max_{\{f\,\geq\, R\}}(\chi_{B,\,\delta}-\psi)=\max\left\{0,\,\max_{\{f\,=\,R\}}(\chi_{B,\,\delta}-\psi)\right\}.
\end{equation*}
Now, Lemma \ref{lemma-loc-crit-pts} and Proposition \ref{prop-bd-bel-uni-psi} imply that
\begin{eqnarray*}
\max_{\{f\,=\,R\}}(\chi_{B,\,\delta}-\psi)\leq -\inf_M\psi-B e^{-\delta R}\leq c_{4}(\tau,n,F)-B e^{-\delta R}\leq 0
\end{eqnarray*}
as soon as $B \geq c_{4}(\tau,\,n,\,F)e^{\delta R}$. Choose $B>c_{4}e^{c_{3}}$
and $\delta\in(0,\,1)$ such that
\begin{equation*}
B\delta e^{-c_{3}}\leq c_{1},\qquad c_{2}B\delta^{2}e^{-c_{3}}+\delta\leq\delta(1-\delta)n,\qquad\textrm{and}\qquad
\left\{f\geq\frac{c_{3}}{\delta}\right\}\subset M\setminus\supp(F),
\end{equation*}
and set $R:=\frac{c_{3}}{\delta}$. Then with these choices, we obtain
the desired lower bound. This completes the proof of the claim.
\end{proof}

The second step mimics the approach already taken to establish the upper bound on $e^f\psi$. Namely,
we linearise the complex Monge-Amp\`ere equation \eqref{MA-cpct-supp} using Claim \ref{claim-first-dec-low-bd-c-0} and view it as a linear PDE with data decaying as fast as $F$. We then apply the minimum principle to ameliorate the decay of $\psi$ given by Claim \ref{claim-first-dec-low-bd-c-0} before iterating the whole argument, incrementally improving the decay of $\psi$ each time. To this end, recall the Taylor expansion of order two with integral remainder that was established in \eqref{equ:taylor-exp}:
\begin{equation*}
 \begin{split}
\log\left(\frac{\tau_{\psi}^n}{\tau^n}\right)+\frac{X}{2}\cdot\psi&=0+\left.\frac{d}{ds}\right|_{s\,=\,0}\left(\log\left(\frac{\tau_{s\psi}^n}{\tau^n}\right)+\frac{X}{2}\cdot s\psi\right)\\
&\qquad+\int_0^1\int_0^{u}\frac{d^2}{ds^2}\left(\log\left(\frac{\tau_{s\psi}^n}{\tau^n}\right)+\frac{X}{2}\cdot s\psi\right)\,ds\,du\\
 &=\Delta_{\tau}\psi+\frac{X}{2}\cdot\psi-\int_0^1\int_0^{u}\arrowvert \partial\bar{\partial}\psi\arrowvert^2_{h_{s\psi}}\,ds\,du.
 \end{split}
 \end{equation*}
As a solution of \eqref{MA-cpct-supp}, $\psi$ must satisfy
 \begin{equation}\label{equ-lin-MA-cpst-supp}
 \Delta_{\tau}\psi+\frac{X}{2}\cdot\psi=F+\int_0^1\int_0^{u}\arrowvert \partial\bar{\partial}\psi\arrowvert^2_{h_{s\psi}}\,ds\,du.
 \end{equation}
We consider the right-hand side of \eqref{equ-lin-MA-cpst-supp} as the data, that is to say,
we view \eqref{equ-lin-MA-cpst-supp} as a linear equation in $\psi$. Then by Claim \ref{claim-first-dec-low-bd-c-0}
and the first part of the proof of Proposition \ref{prop-a-prio-wei-C0-est}, we know that
\begin{equation}
|\psi|\leq c(n,\tau,F)e^{-\delta_0 f}\label{rough-dec-psi-wei-a-priori-covid}
\end{equation}
 for some $\delta:=\delta_0(n,\tau,F)\in(0,\,1)$. Let $x\in M$ and choose holomorphic coordinates centered at $x$ in a ball $B_{h}(x,\,\delta)$
for some $\delta<\inj_{h}(M)$. Then the reminder form of Taylor's theorem shows that in these coordinates,
 \begin{equation*}
 \begin{split}
F&=\log\left(\frac{\tau_{\psi}^n}{\tau^n}\right)+\frac{X}{2}\cdot\psi\\
&=\left(\int_0^1 h_{s\psi}^{i\bar{\jmath}}\,ds\right)\partial_i\partial_{\bar{\jmath}}\psi+\frac{X}{2}\cdot\psi\\
&=:a^{i\bar{\jmath}}\partial_i\partial_{\bar{\jmath}}\psi+\frac{X}{2}\cdot\psi.
\end{split}
\end{equation*}
Now, by Corollary \ref{coro-equiv-metrics}, $\|a^{i\bar{\jmath}}\|_{C^{0,\,2\alpha}(B_{h}(x,\,\delta))}$ is uniformly bounded from above and $a^{i\bar{\jmath}}\geq \Lambda^{-1}\delta_{i\bar{\jmath}}$ on $B_{h}(x,\,\delta)$ for some uniform constant $\Lambda>0$. The standard interior Schauder estimates for elliptic equations therefore
apply and tell us that
\begin{equation*}
\|\psi\|_{C^{2,\,2\alpha}(B_{h}(x,\,\frac{\delta}{2}))}\leq C\left(\left\|\psi\right\|_{C^{0}(B_{h}(x,\,\delta))}+\|F\|_{C^{0,\,2\alpha}(B_{h}(x,\,\delta))}\right)
\end{equation*}
for some uniform positive constant $C=C\left(n,\alpha,\tau,\supp(F),\|f\cdot F\|_{C^{4,\,2\alpha}_{X,\,\exp}}\right)$. Using \eqref{rough-dec-psi-wei-a-priori-covid},
this leads to the estimate
\begin{equation*}
\|\psi\|_{C^{2,\,2\alpha}(B_{h}(x,\,\frac{\delta}{2}))}\leq C\left(n,k,\tau,\supp(F),\|f\cdot F\|_{{C^{4,\,2\alpha}_{X,\,\exp}}}\right)e^{-\delta_0 f(x)},\qquad x\in M,
\end{equation*}
from which we deduce that $|i\partial\bar{\partial}\psi|_{h}\leq c(n,\tau,F)e^{-\delta_0 f}$.
In light of \eqref{equ-lin-MA-cpst-supp}, it subsequently follows that
 \begin{equation*}
 \left|\Delta_{\tau}\psi+\frac{X}{2}\cdot\psi\right|\leq c(n,\tau,F)e^{-2\delta_0f}+F.
 \end{equation*}

If $2\delta_0<1$, then observe from
Lemma \ref{lemm-barr-inf} that the function $e^{-2\delta_0f}$ is a good barrier function in the sense that
 \begin{equation*}
\left(\Delta_{\tau}+\frac{X}{2}\cdot\right)e^{-2\delta_0f}\leq -2\delta_0(1-2\delta_0)\frac{4n}{2}e^{-2\delta_0f}
\end{equation*}
outside a compact subset of $M$. In particular, the function $\psi+Ce^{-2\delta_0f}$ for $C$ a positive constant to be determined satisfies
\begin{equation*}
\left(\Delta_{\tau}+\frac{X}{2}\cdot\right)\left(\psi+Ce^{-2\delta_0f}\right)< 0
\end{equation*}
outside a compact set (which itself does not depend on the solution).
By applying the minimum principle to the function $\psi+Ce^{-2\delta_0f}$
and arguing as for the supremum bound in the first part of the proof,
one obtains the lower bound $\psi\geq -C(n,\tau,F)e^{-2\delta_0f}$
for some positive constant $C=C(n,\tau,F)$ sufficiently large.
Notice that $2\delta_0>\delta_0$ so that the a priori decay of $\psi$ has improved.
Iterating this argument a finite number of times, we end up with the estimate
 \begin{equation*}
 \left|\Delta_{\tau}\psi+\frac{X}{2}\cdot\psi\right|\leq c(n,\tau,F)\frac{e^{-f}}{f}.
 \end{equation*}
Here, we treat $F$ in \eqref{equ-lin-MA-cpst-supp} as data decaying like $f^{-1}e^{-f}$.
To conclude the proof of the proposition, we argue as for the supremum bound
in the first part of this proof.
 \end{proof}

\subsubsection{$C^4$-weighted estimate}

We next obtain the weighted $C^{4}$ a priori estimate.
\begin{prop}[Weighted $C^{4}$ a priori estimate]\label{prop-C^2-wei-est}
Let $(\psi_t)_{0\,\leq\, t\,\leq\, 1}$ be a path of solutions in $\mathcal{M}_{X,\,\exp}^{4,\,2\alpha}(M)$, $\alpha\in\left(0,\,\frac{1}{2}\right)$ to (\ref{MA-cpct-supp-t}) with $F\in f^{-1}\cdot C^{4,\,2\alpha}_{X,\,\exp}(M)$. Then
\begin{equation*}
\sup_{0\,\leq\, t\,\leq\, 1}\|\psi_t\|_{C^{4,\,2\alpha}_{X,\,\exp}}\leq C\left(n,\alpha,\tau,\supp(F),\|f\cdot F\|_{C^{4,\,2\alpha}_{X,\,\exp}}\right).
\end{equation*}
\end{prop}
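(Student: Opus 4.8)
The plan is to bootstrap from the already-established weighted $C^{0}$-estimate of Proposition \ref{prop-a-prio-wei-C0-est} and the weighted pointwise estimates on $\partial\bar\partial\psi_{t}$, $\nabla^{h}\partial\bar\partial\psi_{t}$, and $X\cdot\psi_{t}$ up to suitable order, using the same parabolic rescaling trick employed in the proof of Theorem \ref{iso-sch-Laplacian-exp}. First I would suppress the parameter $t$ and write the linearised equation \eqref{equ-lin-MA-cpst-supp}, i.e.
\begin{equation*}
\Delta_{\tau}\psi+\frac{X}{2}\cdot\psi=F+\int_{0}^{1}\int_{0}^{u}|\partial\bar\partial\psi|^{2}_{h_{s\psi}}\,ds\,du=:\widetilde{F},
\end{equation*}
and observe that by Proposition \ref{prop-a-prio-wei-C0-est}, Corollary \ref{coro-equiv-metrics}, and the local higher-regularity estimate of Proposition \ref{prop-loc-reg} (applied iteratively to promote the a priori $C^{3,2\alpha}_{\operatorname{loc}}$-bound of Proposition \ref{prop-loc-holder-C-3} to a $C^{2k+2,2\alpha}_{\operatorname{loc}}$-bound once $F$ is that regular), the nonlinear term $\int_{0}^{1}\int_{0}^{u}|\partial\bar\partial\psi|^{2}_{h_{s\psi}}\,ds\,du$ decays like $e^{-2f}$ in $C^{2k,2\alpha}_{\operatorname{loc}}$, hence lies in $f^{-1}\cdot C^{2k,2\alpha}_{X,\exp}(M)$ with norm controlled by $\|f\cdot F\|_{C^{2k,2\alpha}_{X,\exp}}$. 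Thus $\widetilde{F}\in f^{-1}\cdot C^{2k,2\alpha}_{X,\exp}(M)$ with a uniform bound, and $\psi$ solves a \emph{linear} drift-Laplace equation with the known background metric $\tau$ and right-hand side $\widetilde{F}$.

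Next I would invoke the a priori estimate embedded in the proof of Theorem \ref{iso-sch-Laplacian-exp}: conjugating by $e^{-f}$ turns $\Delta_{h}+X\cdot$ into the operator $\Delta_{h}-X\cdot-\Delta_{h}f\cdot$ acting on $\widetilde{u}:=e^{f}\psi$, and reinterpreting this elliptic equation as the stationary version of a parabolic one in the direction of the flow $\phi^{X}_{s}$, one applies the interior parabolic Schauder estimates for transversally elliptic operators on the rescaled annular regions $\Omega_{m}(\delta(n))$, using Lemma \ref{lemm-app-sol-id} to control $\Delta_{h_{m}(s)}f_{m}$ and its derivatives, together with the weighted $C^{0}$-bound \eqref{a-priori-wei-bd-c-0} in place of \eqref{a-priori-wei-c-0-lin-est}. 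Tracking the scaling of the Schauder norms exactly as in Theorem \ref{iso-sch-Laplacian-exp} then yields
\begin{equation*}
\sup_{x\in M}e^{f(x)}\|\psi\|_{C^{2k+2,2\alpha}_{\operatorname{loc}}(B_{h}(x,\delta))}\leq C\left(n,\alpha,\tau,\supp(F),\|f\cdot F\|_{C^{2k,2\alpha}_{X,\exp}}\right),
\end{equation*}
and specialising to $k=1$ gives exactly the claimed bound $\sup_{0\leq t\leq 1}\|\psi_{t}\|_{C^{4,2\alpha}_{X,\exp}}\leq C$. The $JX$-invariance of $\psi$ (which persists along the continuity path) is what legitimises the use of the transversally elliptic/parabolic Schauder theory.

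The main obstacle — really the only non-bookkeeping point — is to verify rigorously that the nonlinear remainder term behaves as a genuine $f^{-1}\cdot C^{2k,2\alpha}_{X,\exp}$ datum \emph{uniformly in $t$}. This requires combining three ingredients already available: the weighted $C^{0}$ decay of Proposition \ref{prop-a-prio-wei-C0-est}, the uniform H\"older control of $h^{-1}h_{\psi_{t}}$ and $h_{\psi_{t}}^{-1}h$ from Corollary \ref{coro-equiv-metrics}, and the observation (as in the proof of Theorem \ref{Imp-Def-Kah-Ste-exp}) that a $\ast$-contraction of two tensors each decaying like $e^{-f}$ with derivatives decays like $e^{-2f}$; applying this twice — once to $S=T=(\nabla^{h})^{2}\psi$ and once to the expansion of $h_{s\psi}^{-1}$ about $h^{-1}$ — bounds $|\partial\bar\partial\psi|^{2}_{h_{s\psi}}$ and its derivatives up to order $2k$ pointwise by $C\,e^{-2f}$. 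The double integral in $s,u$ only improves matters, so $\widetilde{F}\in f^{-1}\cdot C^{2k,2\alpha}_{X,\exp}(M)$ with the stated uniform bound. Everything else is a transcription of the argument of Theorem \ref{iso-sch-Laplacian-exp} with the background metric $\tau$, so I would be brief on those steps and refer the reader there.
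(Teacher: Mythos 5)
Your proposal is correct and follows essentially the same route as the paper: establish, via the weighted $C^{0}$-bound of Proposition \ref{prop-a-prio-wei-C0-est} combined with interior Schauder estimates (whose coefficients are controlled by Propositions \ref{prop-loc-holder-C-3} and \ref{prop-loc-reg}), that $\|\psi\|_{C^{4,\,2\alpha}(B_{h}(x,\,\delta/2))}=O(e^{-f(x)})$, deduce that the quadratic remainder in \eqref{equ-lin-MA-cpst-supp} lies in $f^{-1}\cdot C^{2,\,2\alpha}_{X,\,\exp}(M)$ with uniform norm, and then conclude by the weighted linear estimate for $\Delta_{\tau}+\frac{X}{2}\cdot$. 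The only cosmetic difference is that you re-run the conjugation/parabolic-rescaling argument from the proof of Theorem \ref{iso-sch-Laplacian-exp}, whereas the paper simply cites that theorem with $k=1$.
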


\begin{proof}
Let $x\in M$ and choose holomorphic coordinates centred at $x$ in a ball $B_{h}(x,\,\delta)$
for some $\delta<\inj_{h}(M)$. Then we have that
 \begin{equation*}
 \begin{split}
F&=\log\left(\frac{\tau_{\psi}^n}{\tau^n}\right)+\frac{X}{2}\cdot\psi\\
&=\left(\int_0^1 h_{s\psi}^{i\bar{\jmath}}\,ds\right)\partial_i\partial_{\bar{\jmath}}\psi+\frac{X}{2}\cdot\psi\\
&=:a^{i\bar{\jmath}}\partial_i\partial_{\bar{\jmath}}\psi+\frac{X}{2}\cdot\psi.
\end{split}
\end{equation*}
Now, by Propositions \ref{prop-loc-holder-C-3} and \ref{prop-loc-reg}, $\|a^{i\bar{\jmath}}\|_{C^{2,\,2\alpha}(B_{h}(x,\,\delta))}$ is uniformly bounded from above and $a^{i\bar{\jmath}}\geq \Lambda^{-1}\delta_{i\bar{\jmath}}$ on $B_{h}(x,\,\delta)$ for some uniform constant $\Lambda>0$. In particular, standard interior Schauder estimates for elliptic equations imply that
\begin{equation*}
\|\psi\|_{C^{4,\,2\alpha}(B_{h}(x,\,\frac{\delta}{2}))}\leq C\left(\left\|\psi\right\|_{C^{0}(B_{h}(x,\,\delta))}+\|F\|_{C^{2,\,2\alpha}(B_{h}(x,\,\delta))}\right)
\end{equation*}
for some uniform positive constant $C=C\left(n,\alpha,\tau,\supp(F),\|f\cdot F\|_{C^{4,\,2\alpha}_{X,\,\exp}}\right)$.
From the bound [\eqref{a-priori-wei-bd-c-0}, Proposition \ref{prop-a-prio-wei-C0-est}], it then follows that
\begin{equation}
\|\psi\|_{C^{4,\,2\alpha}(B_{h}(x,\,\frac{\delta}{2}))}\leq C\left(n,k,\tau,\supp(F),\|f\cdot F\|_{{C^{4,\,2\alpha}_{X,\,\exp}}}\right)e^{-f(x)},\qquad x\in M.\label{a-priori-wei-bd-c-3-loc-rough}
\end{equation}
Rewriting \eqref{MA-cpct-supp-t} as in \eqref{equ-lin-MA-cpst-supp}, observe that the
right-hand side of \eqref{equ-lin-MA-cpst-supp} now lies in $f^{-1}\cdot C^{2,\,2\alpha}_{X,\,\exp}(M)$ by (\ref{a-priori-wei-bd-c-3-loc-rough}) and that the following uniform estimate holds true:
\begin{equation*}
\left\|f\cdot\int_0^1\int_0^{u}
\arrowvert \partial\bar{\partial}\psi\arrowvert^2_{h_{s\psi}}\,ds\,du\right\|_{C_{X,\,\exp}^{2,\,2\alpha}}\leq
C\left(n,\alpha,\tau,\supp(F),\|f\cdot F\|_{C^{4,\,2\alpha}_{X,\,\exp}}\right).
\end{equation*}
One now obtains the desired result by applying Theorem \ref{iso-sch-Laplacian-exp} with $k=1$ and $\alpha\in\left(0,\,\frac{1}{2}\right)$.
\end{proof}

\subsubsection{Bootstrapping at infinity}
We now bootstrap to obtain higher regularity on $\psi$.
\begin{prop}[Weighted a priori estimates on higher derivatives]\label{prop-a-prio-wei-Ck-est}
Let $(\psi_t)_{0\,\leq\, t\,\leq\, 1}$ be a path of solutions in $\mathcal{M}_{X,\,\exp}^{2k+2,\,\alpha}(M)$, $k\geq 1$, $\alpha\in\left(0,\,\frac{1}{2}\right)$, to (\ref{MA-cpct-supp-t}) with $F\in f^{-1}\cdot C^{\infty}_{X,\,\exp}(M)$. Then $\psi\in C^{\infty}_{X,\,\exp}(M)$. Moreover, one has the following estimate:
\begin{equation*}
\|\psi\|_{C^{2k+2,\,2\alpha}_{X,\,\exp}}\leq C\left(n,k,\alpha,\tau,\supp(F),\|f\cdot F\|_{C^{\max\{2k,\,4\},\,2\alpha}_{X,\,\exp}}\right).
\end{equation*}
\end{prop}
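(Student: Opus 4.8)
The plan is to argue by induction on $k$, the base case $k=1$ being the weighted $C^{4}$ a priori estimate of Proposition \ref{prop-C^2-wei-est}. Before running the induction I would record two preliminary facts about the continuity path $(\psi_t)_{0\le t\le 1}$. First, since $F\in C^\infty_0(M)\subset C^\infty_{\operatorname{loc}}(M)$, iterating Proposition \ref{prop-loc-reg} out of the uniform $C^{3,2\alpha}_{\operatorname{loc}}$-bound of Proposition \ref{prop-loc-holder-C-3} gives, for every $m$, a bound $\|\psi_t\|_{C^{m,2\alpha}(B_h(x,\delta/2))}\le C_m$ uniform in $x\in M$ and $t\in[0,1]$; in particular $\psi_t\in C^\infty_{\operatorname{loc}}(M)$ and the coefficients $a^{i\bar\jmath}:=\int_0^1 h_{s\psi_t}^{i\bar\jmath}\,ds$ of the linearised equation $a^{i\bar\jmath}\partial_i\partial_{\bar\jmath}\psi_t+\tfrac{X}{2}\cdot\psi_t=tF$ are uniformly bounded in $C^{m,2\alpha}_{\operatorname{loc}}$ and uniformly elliptic by Corollary \ref{coro-equiv-metrics-0}. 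Second, combining this with the weighted $C^{0}$-estimate $\sup_M|e^f\psi_t|\le C$ of Proposition \ref{prop-a-prio-wei-C0-est} and running interior Schauder estimates for this linear equation on balls $B_h(x,\delta)$ with $f(x)$ large (where $tF$ vanishes, since $F$ is compactly supported) yields the uniform exponential decay $\|\psi_t\|_{C^{m,2\alpha}(B_h(x,\delta/2))}\le C_m\,e^{-f(x)}$ of all local derivatives of $\psi_t$.

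For the inductive step, assume $\psi_t\in C^{2k,2\alpha}_{X,\exp}(M)$ with $\|\psi_t\|_{C^{2k,2\alpha}_{X,\exp}}\le C$. Rewrite \eqref{MA-cpct-supp-t} in the form \eqref{equ-lin-MA-cpst-supp}, namely
\[
\Delta_\tau\psi_t+\tfrac{X}{2}\cdot\psi_t=tF+Q(\psi_t),\qquad Q(\psi_t):=\int_0^1\!\!\int_0^u\bigl|\partial\bar\partial\psi_t\bigr|^2_{h_{s\psi_t}}\,ds\,du,
\]
and check that the right-hand side lies in $f^{-1}\cdot C^{2k,2\alpha}_{X,\exp}(M)$. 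The term $tF$ is trivially in $f^{-1}\cdot C^{\infty}_{X,\exp}(M)$ since $F$ is smooth and compactly supported, so the point is $Q(\psi_t)$. Writing $Q(\psi_t)$ schematically as an integral of $h_{s\psi_t}^{-1}\ast(\nabla^h)^2\psi_t\ast(\nabla^h)^2\psi_t$ and using, as in the proof of Theorem \ref{Imp-Def-Kah-Ste-exp}, that $h_{s\psi_t}^{-1}-h^{-1}$ lies in $C^{2k-2,2\alpha}_{X,\exp}(M)$ and decays like $e^{-f}$, the derivatives of $Q(\psi_t)$ of order at most $2k-2$ are controlled via the product rule and the inductive weighted bound on $\psi_t$ and decay overall like $e^{-2f}$, so these contributions already sit in $f^{-1}\cdot C^{2k-2,2\alpha}_{X,\exp}(M)$; the remaining derivatives of $Q(\psi_t)$ of orders $2k-1$ and $2k$, together with the matching $\mathcal{L}_X$-derivatives and $2\alpha$-Hölder seminorms, involve derivatives of $\psi_t$ of order up to $2k+2$, which by the two preliminary facts exist, are uniformly locally bounded, and decay like $e^{-f}$ --- and exponential decay dominates any polynomial weight, so these terms contribute a finite amount to the $C^{2k,2\alpha}_{X,\exp}$-norm of $f\,Q(\psi_t)$, with a bound depending only on $n,k,\alpha,\tau,\supp(F)$ and $\|f\cdot F\|_{C^{\max\{2k,4\},2\alpha}_{X,\exp}}$ (through $\|e^f\psi_t\|_{C^0}$) together with $\|\psi_t\|_{C^{2k,2\alpha}_{X,\exp}}$.

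Once $(\Delta_\tau+\tfrac{X}{2}\cdot)\psi_t=tF+Q(\psi_t)\in f^{-1}\cdot C^{2k,2\alpha}_{X,\exp}(M)$ is established, Theorem \ref{iso-sch-Laplacian-exp} with domain $C^{2k+2,2\alpha}_{X,\exp}(M)$ produces a unique $\tilde\psi_t\in C^{2k+2,2\alpha}_{X,\exp}(M)$ solving the same equation, and since $\psi_t-\tilde\psi_t$ lies in the kernel of $\Delta_\tau+\tfrac{X}{2}\cdot$ and tends to $0$ at infinity (both $\psi_t$, by the inductive hypothesis, and $\tilde\psi_t$ do), the maximum principle --- exactly as in the uniqueness argument inside the proof of Theorem \ref{iso-sch-Laplacian-exp} --- forces $\psi_t=\tilde\psi_t$. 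This upgrades $\psi_t$ to $C^{2k+2,2\alpha}_{X,\exp}(M)$, and Theorem \ref{iso-sch-Laplacian-exp} simultaneously yields $\|\psi_t\|_{C^{2k+2,2\alpha}_{X,\exp}}\le C\,\|f\,(tF+Q(\psi_t))\|_{C^{2k,2\alpha}_{X,\exp}}\le C\bigl(n,k,\alpha,\tau,\supp(F),\|f\cdot F\|_{C^{\max\{2k,4\},2\alpha}_{X,\exp}}\bigr)$, closing the induction; letting $k\to\infty$ gives $\psi_t\in C^\infty_{X,\exp}(M)$. The main obstacle is precisely the verification that $Q(\psi_t)\in f^{-1}\cdot C^{2k,2\alpha}_{X,\exp}(M)$: the inductive hypothesis only controls $2k$ derivatives of $\psi_t$ with their sharp polynomial weights, so one must exploit the extra input of this section --- that, because $F$ has compact support, $\psi_t$ and all of its local derivatives decay exponentially --- to absorb the top two orders of derivatives appearing in $Q(\psi_t)$, whose $e^{-2f}$ decay beats the weight outright.
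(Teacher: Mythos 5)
Your proposal is correct and follows essentially the same route the paper intends: the paper omits the proof precisely because it is the argument of Proposition \ref{prop-C^2-wei-est} repeated at higher order, namely local Schauder estimates combined with the weighted $C^{0}$-bound to get exponential decay of all local derivatives, so that the quadratic remainder $Q(\psi_t)$ in the linearised equation \eqref{equ-lin-MA-cpst-supp} decays like $e^{-2f}$ and the right-hand side lands in $f^{-1}\cdot C^{2k,\,2\alpha}_{X,\,\exp}(M)$, after which Theorem \ref{iso-sch-Laplacian-exp} applies. Your explicit induction on $k$ and the maximum-principle identification of $\psi_t$ with the solution produced by the isomorphism are just careful spellings-out of steps the paper leaves implicit.
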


\noindent The proof of this proposition is identical to that of Proposition \ref{prop-C^2-wei-est} and is therefore omitted.

\newpage
\subsection{Proof of Theorem \ref{theo-exi-sol-cpct-supp-data}}
We finally prove Theorem \ref{theo-exi-sol-cpct-supp-data}. Recall the statement:
\begin{customthm}{\ref{theo-exi-sol-cpct-supp-data}}
Let $F$ be a smooth compactly supported $JX$-invariant function on $M$. Then there exists a solution $\psi\in\mathcal{M}^{\infty}_{X,\,\exp}(M)$ to
\begin{equation*}
\log\left(\frac{(\tau+i\partial\bar{\partial}\psi)^{n}}{\tau^{n}}\right)+\frac{X}{2}\cdot\psi=F,\qquad
\tau+i\partial\bar{\partial}\psi>0,\qquad F\in C^{\infty}_0(M).
\end{equation*}
\end{customthm}

\begin{proof}
Given $F\in f^{-1}C^{\infty}_{X,\,\exp}(M)$, define $F_t:=tF\in f^{-1}C^{\infty}_{X,\,\exp}(M)$
for $t\in[0,\,1]$, fix $\alpha\in\left(0,\,\frac{1}{2}\right)$, and set
\begin{equation*}
\begin{split}
S&:=\left\{t\in[0,\,1]\,|\,\textrm{there exists $\psi_t\in C^{\infty}_{X,\,\exp}(M)$ satisfying (\ref{MA-cpct-supp-t}) with data $F_t\in f^{-1}C^{\infty}_{X,\,\exp}(M)$}\right\}.
\end{split}
\end{equation*}
Note that $S\neq\emptyset$ since $0\in S$ (take $\psi_{0}=0$).

We first claim that $S$ is open. Indeed, this follows from Theorem \ref{Imp-Def-Kah-Ste-exp}; if $t_0\in S$, then by Theorem \ref{Imp-Def-Kah-Ste-exp}, there exists $\epsilon_{0}>0$ such that for all $t\in(t_{0}-\epsilon_{0},\,t_{0}+\epsilon_{0})$,
there exists a solution $\varphi_{t}\in \mathcal{M}^{4,\,2\alpha}_{X,\,\exp}(M)$ to $\eqref{MA-cpct-supp-t}$ with data $tF\in f^{-1}C^{2,\,2\alpha}_{X,\,\exp}(M)$. Since the data $tF$ lies in $f^{-1}C^{\infty}_{X,\,\exp}(M)$,
Proposition \ref{prop-a-prio-wei-Ck-est} ensures that for each $t$ in this interval, $\varphi_{t}\in \mathcal{M}^{\infty}_{X,\,\exp}(M)$.
It follows that $(t_{0}-\epsilon_{0},\,t_{0}+\epsilon_{0})\cap[0,\,1]\subseteq S$.

We next claim that $S$ is closed. To see this, take a sequence $(t_k)_{k\,\geq\,0}$ in $S$ converging to some $t_{\infty}\in S$. Then for $F_k:=t_kF$, $k\geq 0$, the corresponding solutions $\psi_{t_k}=:\psi_k$, $k\geq 0$, of \eqref{MA-cpct-supp-t} satisfy
 \begin{equation}
(\tau+i\partial\bar{\partial}\psi_k)^n=e^{F_{{k}}-\frac{X}{2}\cdot\psi_k}\tau^{n},\qquad k\geq 0.\label{MA-seq}
\end{equation}
It is straightforward to check that the sequence $(F_{{k}})_{k\,\geq\,0}$ is uniformly bounded in $f^{-1}C^{4,\,2\alpha}_{X,\,\exp}(M)$. As a consequence, the sequence $(\psi_k)_{k\,\geq\,0}$ is uniformly bounded in $C^{4,\,2\alpha}_{X,\,\exp}(M)$ by Proposition \ref{prop-C^2-wei-est}. The
Arzel\`a-Ascoli theorem therefore allows us to pull out a subsequence of $(\psi_k)_{k\,\geq\,0}$ that converges to some $\psi_{\infty}\in C^{4,\,2\beta}_{\operatorname{loc}}(M)$, $\beta\in(0,\alpha)$. As $(\psi_k)_{k\,\geq\,0}$ is uniformly bounded in $C^{4,\,2\alpha}_{X,\,\exp}(M)$, $\psi_{\infty}$ will also lie in $C^{4,\,2\alpha}_{X,\,\exp}(M)$. We need to show that $(\tau+i\partial\bar{\partial}\psi_{\infty})(x)>0$
at every point $x\in M$. For this, it suffices to show that $(\tau+i\partial\bar{\partial}\psi_{\infty})^n(x)>0$ for every $x\in M$. This is seen to hold true by letting $k$ tend to $+\infty$ (up to a subsequence) in \eqref{MA-seq}. The fact that $\psi_{\infty}\in\mathcal{M}^{\infty}_{X,\,\exp}(M)$ follows from Proposition \ref{prop-a-prio-wei-Ck-est}.

Finally, as an open and closed non-empty subset of $[0,\,1]$, connectedness of $[0,\,1]$ implies that $S=[0,\,1]$. This completes the proof of the theorem.
\end{proof}

\section{Invertibility of the drift Laplacian: polynomial case}\label{section-small-def}

In this section, we introduce function spaces which, rather than being modeled on exponential
weights, are modeled on polynomial weights. We then carry out the corresponding analysis for these spaces as
was implemented in Section \ref{section-small-def-exp} for function spaces with exponential weights.
Our set-up is the same as that outlined at the beginning of Section \ref{main-set-sec-pol}. The definitions from Section \ref{section-fct-spa-exp} carry forward as well.
However, note that as we are now working with polynomial weights, the weight $f$ is comparable to $\hat{f}$ by Lemma \ref{tuxedo}, hence we do \textbf{not} need to assume that \eqref{add-ass} holds true in this section. We begin with the definition of the relevant function spaces.

\subsection{Function spaces}\label{section-fct-spa-pol}

We make the following definitions.
\begin{itemize}
\item For $\beta\in\R$ and $k$ a non-negative integer, define $C_{X,\,\beta}^{2k}(M)$ to be the space of $JX$-invariant continuous functions $u$ on $M$ with $2k$ continuous derivatives such that
\begin{equation*}
\norm{u}_{C^{2k}_{X,\,\beta}} :=\sum_{i+2j\leq2k}\sup_{M}\left|f^{\frac{i}{2}+j+\beta}(\nabla^{h})^i\left(\mathcal{L}_{X}^{(j)}u\right)\right|_h < \infty.
\end{equation*}
Define $C_{X,\,\beta}^{\infty}(M)$ to be the intersection of the spaces $C_{X,\,\beta}^{2k}(M)$ over all $k\in \N_0$.\\

\newpage
\item
 Let $\delta(h)$ denote the injectivity radius of $h$, write $d_h(x,\,y)$ for the distance with respect to $h$ between two points $x,\,y\in M$,
and let $\phi^{X}_{t}$ denote the flow of $X$ for time $t$. A tensor $T$ on $M$ is said to be in $C^{0,\,2\alpha}(M)$, $\alpha\in\left(0,\,\frac{1}{2}\right)$, if
 \begin{equation*}
 \begin{split}
\left[T\right]_{C^{0,\,2\alpha}_{\gamma}}:=&\sup_{\substack{x\,\neq\,y\,\in\,M \\d_{h}(x,y)\,<\,\delta(h)}}\left[\min(f(x),f(y))^{\gamma+\alpha}\frac{\arrowvert T(x)-P_{x,\,y}T(y)\arrowvert_h}{d_{h}(x,\,y)^{2\alpha}}\right]\\
&+\sup_{\substack{x\,\in\, M \\ t\,\neq\,s\,\geq\,1}}\left[\min(t,s)^{\gamma+\alpha}\frac{\arrowvert (\phi^X_t)_{\ast}T(x)-(\widehat{P}_{\phi^X_s(x),\,\phi^X_t(x)}((\phi^X_s)_{\ast}T(x)))\arrowvert_h}{|t-s|^{\alpha}}\right]<+\infty,
\end{split}
\end{equation*}
where $P_{x,\,y}$ denotes parallel transport along the unique geodesic joining $x$ and $y$, and
$\widehat{P}_{\phi^X_s(x),\,\phi^X_t(x)}$ denotes parallel transport along the unique flow-line of $X$ joining $\phi^X_s(x)$ and $\phi^X_t(x)$.

\item For $\beta\in\R$, $k$ a non-negative integer, and $\alpha\in\left(0,\,\frac{1}{2}\right)$, define the H\"older space $C_{X,\,\beta}^{2k,\,2\alpha}(M)$ with polynomial weight $f^{\beta}$ to be the set of $u\in C_{X,\,\beta}^{2k}(M)$ for which the norm
\begin{equation*}
\norm{u}_{C_{X,\,\beta}^{2k,\,2\alpha}}:=\norm{u}_{C^{2k}_{X,\,\beta}} +\sum_{i+2j\,=\, 2k}\left[\left(\nabla^{h}\right)^i\left(\mathcal{L}_{X}^{(j)}u\right)\right]_{C^{0,\,2\alpha}_{\beta}}
\end{equation*}
is finite. It is straightforward to check that the space $C^{2k,\,2\alpha}_{X,\,\beta}(M)$ is a Banach space.
The H\"older space $C_{X,\,\beta}^{2k,\,2\alpha}(M)$ with $\beta=0$
coincides with $C^{2k,\,2\alpha}_X(M)$ endowed with the norm
introduced in Section \ref{section-fct-spa-exp} as $\|\cdot\|_{C^{2k,\,2\alpha}_X}$. The intersection
$\bigcap_{k\,\geq\,0}C^{2k}_{X,\,\beta}(M)$ we denote by $C^{\infty}_{X,\,\beta}(M)$.

\item Finally, we define the spaces
\begin{equation*}
\begin{split}
\mathcal{M}^{2k+2,\,2\alpha}_{X,\,\beta}(M)&:=\left\{\varphi\in C^2_{\operatorname{loc}}(M)\,|\,\tau+i\partial\bar{\partial}\varphi>0\right\}\bigcap C^{2k+2,\,2\alpha}_{X,\,\beta}(M),
\end{split}
\end{equation*}
and akin to $\mathcal{M}^{\infty}_{X,\exp}(M)$ defined in \eqref{defn-M-exp-space}, one considers the following convex set of K\"ahler potentials:
\begin{equation*}
\mathcal{M}^{\infty}_{X,\,\beta}(M)=\bigcap_{k\,\geq\,0}\,\mathcal{M}^{2k+2,\,2\alpha}_{X,\,\beta}(M).
\end{equation*}
\end{itemize}

\subsection{Preliminaries and Fredholm properties of the linearised operator}\label{linearpoly}

As demonstrated for exponentially weighted function spaces in Theorem \ref{iso-sch-Laplacian-exp}, we show that the drift
Laplacian is also an isomorphism between polynomially weighted function spaces.

\begin{theorem}\label{iso-sch-Laplacian-pol}
Let $\alpha\in\left(0,\,\frac{1}{2}\right)$, $k\in\mathbb{N}$, and $\beta>0$. Then the drift Laplacian
\begin{equation*}
\begin{split}
\Delta_{\tau}+\frac{X}{2}\cdot:C^{2k+2,\,2\alpha}_{X,\,\beta}(M)\rightarrow C^{2k,\,2\alpha}_{X,\,\beta+1}(M)
\end{split}
\end{equation*}
is an isomorphism of Banach spaces.
\end{theorem}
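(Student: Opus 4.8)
The plan is to mimic the structure of the proof of Theorem \ref{iso-sch-Laplacian-exp}, replacing the exponential weight $e^{f}$ with the polynomial weight $f^{\beta}$ throughout, and replacing the Poincar\'e-type input from Corollary \ref{tarea} with a different mechanism for the a priori estimate, since for polynomial weights there is no exponential gap to exploit. Concretely, I would first prove \emph{surjectivity}: given $F\in C^{2k,\,2\alpha}_{X,\,\beta+1}(M)$, solve the Dirichlet problem $\Delta_{\tau}u_{R}+\tfrac{X}{2}\cdot u_{R}=F$ on $\{f\leq R\}$ with $u_{R}=0$ on $\{f=R\}$, and then pass to the limit $R\to\infty$. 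The key point is a uniform weighted a priori bound of the form $\|u_{R}\|_{C^{0}_{X,\,\beta}}\leq C\|f\cdot F\|_{C^{0}_{X,\,\beta+1}}=C\|F\|_{C^{0}_{X,\,\beta+1}}$; this should come from a barrier argument using $f^{-\beta}$ as a supersolution at infinity. From Lemma \ref{lemm-app-sol-id} and the expression \eqref{covid20} for $\Delta_{\hat\omega}$ one computes $\left(\Delta_{\tau}+\tfrac{X}{2}\cdot\right)f^{-\beta}=-\beta\,\tfrac{4n}{2}\,f^{-\beta-1}+O(f^{-\beta-1-\varepsilon})$, so for $\beta>0$ the leading term is strictly negative of order $f^{-\beta-1}$, and $A\,f^{-\beta}$ with $A$ large dominates $|F|\lesssim f^{-\beta-1}$ outside a compact set. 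Combined with interior elliptic estimates on unit balls (scaled appropriately by the injectivity radius, which is bounded below because $h$ is asymptotic to $\hat g$), the maximum principle applied to $u_{R}\mp A f^{-\beta}$ gives the uniform weighted $C^{0}$-bound, and Arzel\`a-Ascoli produces a limit $u$ solving the equation on $M$ with $\|u\|_{C^{0}_{X,\,\beta}}\lesssim\|F\|_{C^{0}_{X,\,\beta+1}}$. Just as in the exponential case, the $JX$-invariance of $u$ is recovered from uniqueness (via the maximum principle applied to $u-(\phi_{t}^{JX})^{*}u$, which decays at infinity).

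Next I would establish the \emph{higher-order weighted estimates} $\|u\|_{C^{2k+2,\,2\alpha}_{X,\,\beta}}\leq C\|F\|_{C^{2k,\,2\alpha}_{X,\,\beta+1}}$. As in the proof of Theorem \ref{iso-sch-Laplacian-exp}, this is done by a blow-down argument: conjugating is unnecessary here since the weight is polynomial, so instead rescale directly. Let $r_{m}\to\infty$, push forward by the flow $\phi^{X}_{sr_{m}}$ of $X$, and rescale the metric by $h_{m}:=r_{m}^{-1}(\phi^{X}_{sr_{m}})^{*}h$; by Lemma \ref{lemm-app-sol-id} the vector field $r_{m}^{1/2}X$ converges to $4\partial_{t}$ and $\Delta_{h_{m}(s)}$ converges on $JX$-invariant functions to $\tfrac{4}{n}\partial_{t}^{2}+\tfrac{1}{n(1+4s)}\Delta_{B}$. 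Reinterpreting the elliptic equation as a parabolic one along the $X$-flow and applying interior parabolic Schauder estimates for transversally elliptic operators (as in \cite{ElKacimi}), and tracking the homogeneity of the various norms under the rescaling $f\sim r_{m}$ on the relevant annulus, yields the claimed weighted bound once the weighted $C^{0}$-estimate is fed in. This is essentially a bookkeeping exercise identical to the one in Theorem \ref{iso-sch-Laplacian-exp} with $e^{f}$ replaced by $f^{\beta}$.

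Then I would prove \emph{injectivity}: suppose $u\in C^{2k+2,\,2\alpha}_{X,\,\beta}(M)$ with $\beta>0$ satisfies $\Delta_{\tau}u+\tfrac{X}{2}\cdot u=0$. Since $u\to 0$ at infinity, the maximum principle forces $u\equiv 0$; alternatively one can integrate $u$ against $u\,e^{f}\tau^{n}$ using the decay of $u$ and its derivatives to see $\nabla^{h}u=0$, hence $u$ is constant, hence zero. Combining surjectivity, the two-sided bound, and injectivity, the open mapping theorem (or directly the a priori estimate plus surjectivity) shows $\Delta_{\tau}+\tfrac{X}{2}\cdot$ is an isomorphism onto $C^{2k,\,2\alpha}_{X,\,\beta+1}(M)$.

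The main obstacle, and the place where genuine care is needed, is the barrier step in the proof of surjectivity: unlike the exponential case, where $e^{-f}$ beats all polynomial corrections, here the barrier $f^{-\beta}$ and the error term $O(f^{-\beta-1-\varepsilon})$ in $\left(\Delta_{\tau}+\tfrac{X}{2}\cdot\right)f^{-\beta}$ are of the \emph{same polynomial order} up to the factor $f^{-\varepsilon}$, so one must check that the coefficient $-\beta\cdot 2n$ of the leading term $f^{-\beta-1}$ is strictly negative and that the subleading terms (coming from Lemma \ref{lemm-app-sol-id}, the asymptotics $\Delta_{\tau}f\geq c/f$ of Lemma \ref{lemm-barr-inf}, and the behaviour of $\widehat{\nabla}^{i}f^{-\beta}$ recorded in Proposition \ref{basis}) are genuinely lower order for $\beta>0$. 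This is exactly why the hypothesis $\beta>0$ is needed (and why the statement fails for $\beta\leq 0$). A secondary technical point is the local interior estimate used to pass from the weighted $C^{0}$ control to pointwise control of $u_{R}$ and its higher derivatives: one must use that $(M,h)$ has bounded geometry with injectivity radius bounded below, which holds since $h$ is uniformly equivalent to $\hat g$ with all derivatives, and that the oscillation of $f$ on each unit ball is $O(1)$ so that the polynomial weight is essentially constant on balls of fixed radius.
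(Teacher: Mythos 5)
Your overall architecture (Dirichlet exhaustion, barrier at infinity, injectivity by the maximum principle) is reasonable, and your computation $\left(\Delta_{\tau}+\tfrac{X}{2}\cdot\right)f^{-\beta}=-2n\beta f^{-\beta-1}+O(f^{-\beta-1-\varepsilon})$ is correct and consistent with Lemmas \ref{lemm-app-sol-id} and \ref{lemm-barr-inf}. But there is a genuine gap in the surjectivity step: the barrier $Af^{-\beta}$ is only a supersolution \emph{outside a compact set} $K_{0}=\{f\leq R_{0}\}$ (inside, $X$ vanishes somewhere and neither the sign of $\Delta_{\tau}f^{-\beta}$ nor the size of the error terms is controlled), so the comparison on the annulus $\{R_{0}\leq f\leq R\}$ only yields $\sup u_{R}\leq\max\{0,\max_{\{f=R_{0}\}}u_{R}-AR_{0}^{-\beta}\}+Af^{-\beta}$. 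You still need a uniform bound on $\max_{\{f=R_{0}\}}|u_{R}|$, and since the operator $\Delta_{\tau}+\tfrac{X}{2}\cdot$ has no zeroth-order term, the maximum principle on $\{f\leq R_{0}\}$ gives no such bound; interior Schauder estimates cannot start without it; and the weighted $L^{2}$ energy estimate of Corollary \ref{tarea} that closed this loop in Theorem \ref{iso-sch-Laplacian-exp} is unavailable here because a function decaying only polynomially does not lie in $L^{2}(e^{f}\tau^{n})$. Your parenthetical "alternative" injectivity argument has the same defect: $u^{2}e^{f}\tau^{n}$ is not integrable for polynomially decaying $u$, so only the maximum-principle version of injectivity survives.

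The paper circumvents this entirely by a different mechanism: it conjugates by the weight, writing $f^{\beta+1}\circ(\Delta_{h}+X\cdot)\circ f^{-\beta}=f\cdot\left(\Delta_{h}+X\cdot-\tfrac{4n\beta}{f}\right)+K$, proves that the perturbed operator $\Delta_{\tau}+\tfrac{X}{2}\cdot-\tfrac{4n\beta}{f}$ is an isomorphism $C^{2k+2,\,2\alpha}_{X}(M)\to C^{2k,\,2\alpha}_{X,\,1}(M)$ (here the strictly negative zeroth-order term gives a \emph{global} $C^{0}$ bound $\max|U_{R}|\leq(4n\beta)^{-1}\|f\cdot F\|_{C^{0}}$ directly from the maximum principle, with no barrier, no energy estimate, and no issue on the compact core), shows $K$ is compact, and concludes that the original operator is Fredholm of index $0$, hence an isomorphism by injectivity. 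To repair your argument without conjugating, you would need to supply the missing interior bound, e.g.\ by the standard contradiction/compactness argument: if $\|u_{R_{j}}\|_{C^{0}_{X,\,\beta}}/\|F_{j}\|_{C^{0}_{X,\,\beta+1}}\to\infty$, normalise, extract a locally convergent limit lying in the kernel, invoke injectivity to kill it, and use the barrier estimate to rule out concentration at infinity. As written, that step is absent and the proof does not close.
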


\begin{proof}
Let $\beta>0$. We compute the following conjugate operator associated to $2\Delta_{\tau}+X\cdot=\Delta_{h}+X\cdot$:
\begin{equation*}
\begin{split}
\left[f^{\beta+1}\circ\left(\Delta_{h}+X\cdot\right)\circ f^{-\beta}\right]U&=f\cdot\left(\Delta_{h}+X\cdot\right)U-2\beta X\cdot U+f^{\beta+1}\cdot\left(\Delta_{h}+X\cdot\right)f^{-\beta}\cdot U\\
&=f\cdot\left(\Delta_{h}+X\cdot\right)U-2\beta X\cdot U-\beta\left(\Delta_{h}f+X\cdot f-(\beta+1)f^{-1}X\cdot f\right)\\
&=f\cdot\left(\Delta_{h}U+X\cdot U-\frac{4n\beta}{f}\cdot U\right)\\
&\qquad-2\beta X\cdot U+\beta\left(4n-\Delta_{h}f-X\cdot f+\frac{(\beta+1)}{f}X\cdot f \right)U\\
&=:f\cdot\left(\Delta_{h}U+X\cdot U-\frac{4n\beta}{f}\cdot U\right)+K(U)
\end{split}
\end{equation*}
for any function $U\in C^2_{\operatorname{loc}}(M)$. We analyse each term in this expression separately.
Our first claim asserts that the perturbed drift Laplacian is an isomorphism of Banach spaces.

\begin{claim}\label{claim-iso-conj-op}
For $\alpha\in\left(0,\,\frac{1}{2}\right)$ and $\beta>0$, the operator
\begin{equation*}
\begin{split}
\Delta_{\tau}+\frac{X}{2}\cdot-\frac{4n\beta}{f}:C^{2k+2,\,2\alpha}_{X}(M)\rightarrow C^{2k,\,2\alpha}_{X,\,1}(M)
\end{split}
\end{equation*}
is an isomorphism of Banach spaces.
\end{claim}

\begin{proof}
This operator is well-defined and continuous by definition of the relevant function spaces.
Let $F\in C^{2k,\,2\alpha}_{X,\,1}(M)$. Then for $R>0$ sufficiently large such that the level sets $\{f\,=\,R\}$ are smooth closed hypersurfaces, let $u_R:\{f\,\leq\,R\}\rightarrow\mathbb{R}$ be the solution of
the following Dirichlet problem:
\begin{equation*}
\left\{
      \begin{aligned}
        \Delta_{\tau}U_R+\frac{X}{2}\cdot U_R-\frac{4n\beta}{f}\cdot U_R&=F\qquad\text{on $\{f\,<\,R\}$},\\
        U_R&=0\qquad\text{on $\{f\,=\,R\}$}.
      \end{aligned}
    \right.
\end{equation*}
For $A\in\mathbb{R}$, observe that
\begin{equation*}
\begin{split}
\Delta_{\tau}U_R+\frac{X}{2}\cdot U_R&=\frac{4n\beta}{f}\cdot \left(U_R-\frac{A}{4n\beta}\right)+F+\frac{A}{f},\\
\end{split}
\end{equation*}
which, upon setting $A:=\|f\cdot F\|_{C^0}$, is bounded below by $\frac{4n\beta}{f}\cdot \left(U_R-\frac{A}{4n\beta}\right)$.
Thus, the maximum principle applied to $U_R$ shows that $$\max_{\{f\,\leq\,R\}}\left(U_R-(4n\beta)^{-1}\|f\cdot F\|_{C^0}\right)\leq 0.$$ The previous argument applied to $-U_R$ further yields the fact that
$$\min_{\{f\,\leq\,R\}}U_R\geq -(4n\beta)^{-1}\|f\cdot F\|_{C^0}.$$ Together, these two bounds imply that
$\max_{\{f\,\leq\,R\}}|U_R|\leq (4n\beta)^{-1}\|f\cdot F\|_{C^0}.$

Next, standard elliptic Schauder estimates on each
ball $B_{h}(x,\,\delta)$ with $2\delta=\inj_{h}(M)>0$ compactly contained in $\{f\,<\,R\}$ give the following a priori local estimates on higher derivatives of $U_R$:
 \begin{equation*}
\sup_{B_{h}(x,\,\delta)\Subset \{f\,<\,R\}}\|U_R\|_{C^{2k+2,\,2\alpha}_{\operatorname{loc}}(B_{h}(x,\,\delta))}\leq C(n,k,\alpha,\tau)\|F\|_{C^{2k,\,2\alpha}_{X,\,1}}.
 \end{equation*}
As a consequence, we may appeal to the Arzel\`a-Ascoli theorem to pass to a subsequence still denoted by $(U_R)_{R\,\geq\,R_0}$ converging to a function $U\in C^{2k+2,\,2\alpha}_{\operatorname{loc}}(M)$ in the $C^{2k+2,\,2\alpha'}_{\operatorname{loc}}$-topology for any $\alpha'\in(0,\alpha)$ satisfying
 \begin{equation}\label{sol-drift-lap-pol-conj}
        \Delta_{\tau}U+\frac{X}{2}\cdot U-\frac{4n\beta}{f}\cdot U=F\qquad\text{on $M$},
\end{equation}
and $$\sup_{x\,\in\,M} \|U\|_{C^{2k+2,\,2\alpha}_{\operatorname{loc}}(B_{h}(x,\,\delta))}\leq C(n,k,\alpha,\tau)\|F\|_{C^{2k,\,2\alpha}_{X,\,1}}.$$
We claim that this solution $U$ of \eqref{sol-drift-lap-pol-conj} is unique among all
bounded solutions in $C^2_{\operatorname{loc}}(M)$. Indeed, by subtracting $U$ from another solution $U'$ with the same data $F$, it suffices to show that the kernel of the operator defined by the left-hand side of \eqref{sol-drift-lap-pol-conj} restricted to $C^2_{\operatorname{loc}}(M)$-bounded functions is zero-dimensional.
To this end, let $V\in C^2_{\operatorname{loc}}(M)\cap C^0(M)$ be such that
 \begin{equation*}
  \begin{split}
        \Delta_{\tau}V+\frac{X}{2}\cdot V-\frac{4n\beta}{f}\cdot V&=0\qquad\text{on $M$}.
       \end{split}
\end{equation*}
Then for $\epsilon>0$, the function $V-\epsilon \log f$ satisfies
\begin{equation*}
  \begin{split}
        \Delta_{\tau}\left(V-\epsilon \log f\right)+\frac{X}{2}\cdot \left(V-\epsilon \log f\right)&\geq\frac{4n\beta}{f}\cdot \left(V-\epsilon \log f\right)+\frac{4n\epsilon\beta}{f}\log f-\frac{\epsilon}{f} \left( \Delta_{\tau}f+\frac{X}{2}\cdot f\right).
       \end{split}
\end{equation*}
Since $f$ is proper and bounded from below by $1$ (cf.~Lemma \ref{easy}), and since $V$ is bounded, the function $V-\epsilon\log f$ attains a maximum at
some point $x_0\in M$. Applying the maximum principle to $V-\epsilon\log f$ at this point yields the upper bound
\begin{equation*}
\begin{split}
 4n\beta\left(V(x_0)-\epsilon \log f(x_0)\right)&\leq \epsilon\left( \Delta_{\tau}f+\frac{X}{2}\cdot f- 4n\beta\log f(x_0)\right)\\
 &\leq \epsilon\left( \Delta_{\tau}f+\frac{X}{2}\cdot f\right)\leq c(n,\,\tau)\epsilon.
 \end{split}
\end{equation*}
To conclude, observe that for all $x\in M$ and every $\epsilon>0$, $V(x)-\epsilon \log f(x)\leq c(n,\,\tau)\epsilon$.
Letting $\epsilon$ tend to $0$, one sees that $V\leq 0$. Arguing with $-V$ in place of $V$ shows that $V\geq 0$, and
so $V=0$ and the operator $\Delta_{\tau}+\frac{X}{2}\cdot -\frac{4n\beta}{f}\cdot$ is injective. The fact that $U$ belongs to $C^{2k+2,\,2\alpha}_{X}(M)$ follows along the same lines as
for the exponential case, specifically from \eqref{comp-conj-op-exp} onwards in the proof of Theorem \ref{iso-sch-Laplacian-exp}. This completes the proof of Claim \ref{claim-iso-conj-op}.
\end{proof}

Our second claim asserts that the operator $K$ is compact.

\begin{claim}\label{claim-cpct-op}
The operator
\begin{equation*}
\begin{split}
U\in C^{2k+2,\,2\alpha}_{X}(M)\rightarrow K(U):=-2X\cdot U+\beta\left(4n-\Delta_{h}f-X\cdot f+\frac{(\beta+1)}{f}X\cdot f \right)U\in C^{2k,\,2\alpha}_{X}(M)
\end{split}
\end{equation*}
is a compact operator between Banach spaces.
\end{claim}

\begin{proof}
From the very definition of the operator $K$, it is straightforward to check that $K(U)$ is $JX$-invariant if $U$ is. Let $(U_i)_{i}$ be a sequence of functions in $C^{2k+2,\,2\alpha}_{X}(M)$ that is bounded (by $1$ say).
Then by the Arzel\`a-Ascoli theorem, there exists a subsequence still denoted by $(U_i)_i$ that converges in the $C^{2k+2,\,2\alpha'}
_{\operatorname{loc}}(M)$-topology for any $\alpha'\in(0,\alpha)$ to a function $U\in C^{2k+2,\,2\alpha}_{X}(M)$. In particular, $(K(U_i))_i$ converges in the $C^{2k+1,\,2\alpha'}_{\operatorname{loc}}(M)$-topology for any $\alpha'\in(0,\alpha)$ to $K(U)\in C^{2k,\,2\alpha}_{X}(M)$.
We will show that $(K(U_i))_i$ converges to $K(U)$ in the $C^{2k,\,2\alpha}_{X}(M)$-topology. We explain the proof in the case $k=0$ only. The proof for the cases $k\geq 1$
is similar.

Fix a cut-off function $\chi:M\to\mathbb{R}$ with $0\leq \chi\leq1$ and
$$
\chi(x) =
\begin{cases}
1 &\textrm{if $x\in E\cup\{y\in M\,|\,t(y)\leq 1\}$},\\
0 &\textrm{if $t(x)>2$},
\end{cases}
$$
define $\chi_{R}(x):=\chi(x/R)$ in the obvious way for $R>0$, and write $U_i=(1-\chi_{R})U_i+\chi_{R}U_i$.
Then for $\varepsilon\in(0,\,1)$ as in \eqref{covid19} and $\varepsilon'>0$, let $R>0$ be large enough such that for all indices $i\geq 0$,
\begin{equation*}
\begin{split}
&\left|\beta\left(4n-\Delta_{h}f-X\cdot f+\frac{(\beta+1)}{f}X\cdot f \right)\right|\cdot|(1-\chi_{R})(U_i-U)|\leq \left(\|U\|_{C^0}+\|U_i\|_{C^0}\right)\frac{c(n,\,\tau)}{R^{\varepsilon}}\leq \frac{\varepsilon'}{2},\\
&2|X\cdot((1-\chi_{R})(U-U_i))|\leq c(n,\tau)\frac{\|X\cdot(U-U_i)\|_{C^{0}_1}}{R}+2|X\cdot\chi_{R}|\left(\|U\|_{C^0}+\|U_i\|_{C^0}\right)\leq \frac{c(n,\tau)}{R}\leq \frac{\varepsilon'}{2}.
\end{split}
\end{equation*}
Here we have used Lemma \ref{lemm-app-sol-id}. Similar estimates also hold true for the corresponding $\alpha$-semi-norms by increasing $R$ if necessary.
For such an $R>0$, observe that $\lim_{i\rightarrow+\infty}\|\chi_{R}(U-U_i)\|_{C^{0,\,2\alpha}_{X}}=\lim_{i\rightarrow+\infty}\|\chi_{R}(U-U_i)\|_{C_{\operatorname{loc}}^{0,\,2\alpha}}=0$.
This concludes the case $k=0$.
\end{proof}

Claims \ref{claim-iso-conj-op} and \ref{claim-cpct-op} show that the operator
\begin{equation*}
\begin{split}
\Delta_{\tau}+\frac{X}{2}\cdot:C^{2k+2,\,2\alpha}_{X,\,\beta}(M)\rightarrow C^{2k,\,2\alpha}_{X,\,\beta+1}(M)
\end{split}
\end{equation*}
is Fredholm of index $0$. Since this operator is also injective by the maximum principle, the isomorphism property follows.
\end{proof}

\subsection{Small perturbations of steady gradient K\"ahler-Ricci solitons: polynomial case}\label{invert-poly}
In this section we show, using the implicit function theorem, that the invertibility of the drift
Laplacian allows for small perturbations in polynomially weighted function spaces of solutions to the complex Monge-Amp\`ere equation that concerns us.
The precise statement that we prove is the following.

\begin{theorem}\label{Imp-Def-Kah-Ste}
Let $F_0\in C^{\infty}_{X,\,\beta+1}(M)$ for some $\beta>0$ and let $\psi_0\in\mathcal{M}^{\infty}_{X,\,\beta}(M)$ be a solution of the complex Monge-Amp\`ere equation
\begin{equation*}
\log\left(\frac{\tau^n_{\psi_0}}{\tau^n}\right)+\frac{X}{2}\cdot\psi_0=F_0.
\end{equation*}
Then for $\alpha\in\left(0,\,\frac{1}{2}\right)$, there exists a neighbourhood $U_{F_0}\subset C^{2,\,2\alpha}_{X,\,\beta+1}(M)$ of $F_0$ such that for all $F\in U_{F_0}$, there exists a unique function $\psi\in\mathcal{M}^{4,\,2\alpha}_{X,\,\beta}(M)$ such that
\begin{equation}
\log\left(\frac{\tau^n_{\psi}}{\tau^n}\right)+\frac{X}{2}\cdot\psi=F.\label{MA-neigh-small-per-pol}
\end{equation}
\end{theorem}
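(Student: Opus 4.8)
The plan is to deduce Theorem \ref{Imp-Def-Kah-Ste} from the implicit function theorem for Banach spaces exactly as in the proof of Theorem \ref{Imp-Def-Kah-Ste-exp}, the only difference being that the exponentially weighted spaces are replaced by the polynomially weighted spaces $C^{2k,\,2\alpha}_{X,\,\beta}(M)$ and the invertibility input is now Theorem \ref{iso-sch-Laplacian-pol} rather than Theorem \ref{iso-sch-Laplacian-exp}. First I would introduce, for the fixed background data $(\psi_0,\,F_0)$, the perturbed Monge-Amp\`ere map
\begin{equation*}
\widetilde{MA}_{\tau_{\psi_0}}:(\varphi,\,G)\in\mathcal{M}^{4,\,2\alpha}_{X,\,\beta}(M)\times C^{2,\,2\alpha}_{X,\,\beta+1}(M)\mapsto
\log\left(\frac{\tau_{\psi_0+\varphi}^{n}}{\tau^{n}}\right)+\frac{X}{2}\cdot(\psi_0+\varphi)-G-F_0,
\end{equation*}
and using the Taylor expansion \eqref{equ:taylor-exp} applied to the background form $\tau_{\psi_0}$, rewrite
\begin{equation*}
\widetilde{MA}_{\tau_{\psi_0}}(\varphi,\,G)=\Delta_{\tau_{\psi_0}}\varphi+\frac{X}{2}\cdot\varphi-G-\int_0^1\int_0^{u}\left|\partial\bar{\partial}\varphi\right|^2_{h_{s(\psi_0+\varphi)}}\,ds\,du.
\end{equation*}
Then $\widetilde{MA}_{\tau_{\psi_0}}(\varphi,\,F-F_0)=0$ is exactly equation \eqref{MA-neigh-small-per-pol} for $\psi=\psi_0+\varphi$.

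Next I would check that $\widetilde{MA}_{\tau_{\psi_0}}$ is a well-defined $C^1$ map between the stated Banach spaces. The linear terms $\Delta_{\tau_{\psi_0}}\varphi+\frac{X}{2}\cdot\varphi$ land in $C^{2,\,2\alpha}_{X,\,\beta+1}(M)$ by the mapping property underlying Theorem \ref{iso-sch-Laplacian-pol} (together with the fact that the function spaces may be defined using either $h$ or $h_{s\psi_0}$, since these are asymptotic with derivatives). For the quadratic remainder, I would use the same product estimate as in the exponential case: if $S,\,T\in C^{2k,\,2\alpha}_{\operatorname{loc}}(M)$ decay like $f^{-\gamma}$ with their derivatives then $S\ast T$ decays like $f^{-2\gamma}$; applying this with $S=T=(\nabla^h)^2\varphi$ (which decays like $f^{-1-\beta}$ when $\varphi\in C^{4,\,2\alpha}_{X,\,\beta}(M)$) and with $h^{-1}_{s(\psi_0+\varphi)}-h^{-1}$, one gets $\left|\partial\bar{\partial}\varphi\right|^2_{h_{s(\psi_0+\varphi)}}\in C^{2,\,2\alpha}_{X,\,2+2\beta}(M)\subset C^{2,\,2\alpha}_{X,\,\beta+1}(M)$ (using $\beta>0$) with the quadratic estimate
\begin{equation*}
\left\|\int_0^1\int_0^u\left|\partial\bar{\partial}\varphi\right|^2_{h_{s(\psi_0+\varphi)}}\,ds\,du\right\|_{C^{2,\,2\alpha}_{X,\,\beta+1}}\leq C\left(\alpha,\,\beta,\,g,\,\|\psi_0\|_{C^{4,\,2\alpha}_{X,\,\beta}}\right)\|\varphi\|^2_{C^{4,\,2\alpha}_{X,\,\beta}}
\end{equation*}
for $\|\varphi\|_{C^{4,\,2\alpha}_{X,\,\beta}}\leq1$; in particular the remainder is $C^1$ in $\varphi$ with differential vanishing at $\varphi=0$. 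The $JX$-invariance of the image is immediate since $\tau_{\psi_0}$, $X$, $JX$, $\varphi$ and $G$ are all $JX$-invariant.

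Finally I would compute the differential at the origin. By \eqref{equ:sec-der},
\begin{equation*}
D_{0}\widetilde{MA}_{\tau_{\psi_0}}(\psi)=\Delta_{\tau_{\psi_0}}\psi+\frac{X}{2}\cdot\psi\qquad\text{for }\psi\in C^{4,\,2\alpha}_{X,\,\beta}(M),
\end{equation*}
and Theorem \ref{iso-sch-Laplacian-pol} applied with the background form $\tau_{\psi_0}$ in place of $\tau$ (legitimate since $\tau_{\psi_0}$ is again a K\"ahler form asymptotic to $\hat\omega$ with derivatives, as $\psi_0\in\mathcal{M}^{\infty}_{X,\,\beta}(M)$) shows that this map $C^{4,\,2\alpha}_{X,\,\beta}(M)\to C^{2,\,2\alpha}_{X,\,\beta+1}(M)$ is an isomorphism of Banach spaces. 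The implicit function theorem for Banach spaces then gives a neighbourhood $U_{F_0}\subset C^{2,\,2\alpha}_{X,\,\beta+1}(M)$ of $F_0$ and, for each $F\in U_{F_0}$, a unique small $\varphi\in\mathcal{M}^{4,\,2\alpha}_{X,\,\beta}(M)$ with $\widetilde{MA}_{\tau_{\psi_0}}(\varphi,\,F-F_0)=0$, i.e.\ a unique $\psi=\psi_0+\varphi\in\mathcal{M}^{4,\,2\alpha}_{X,\,\beta}(M)$ solving \eqref{MA-neigh-small-per-pol}. The only real points requiring care — and hence where I expect the bulk of the work to lie — are verifying the polynomial-weight product/remainder estimate above (tracking the decay exponent $2+2\beta$ versus the target $\beta+1$, which forces $\beta>0$, consistent with the hypothesis) and confirming that the function spaces built from $h$ and from $h_{s\psi_0}$ are genuinely equivalent so that Theorem \ref{iso-sch-Laplacian-pol} may be invoked for the perturbed metric; both are routine but must be stated.
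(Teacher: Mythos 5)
Your proposal is correct and follows essentially the same route as the paper: the same perturbed Monge--Amp\`ere map, the Taylor expansion \eqref{equ:taylor-exp} about $\tau_{\psi_0}$, the product estimate \eqref{mult-inequ-holder} placing the quadratic remainder in $C^{2,\,2\alpha}_{X,\,2\beta+2}(M)\subset C^{2,\,2\alpha}_{X,\,\beta+1}(M)$, and Theorem \ref{iso-sch-Laplacian-pol} applied with $\tau_{\psi_0}$ in place of $\tau$ to invert the linearisation before invoking the implicit function theorem. The only cosmetic remark is that the inclusion $C^{2,\,2\alpha}_{X,\,2\beta+2}(M)\subset C^{2,\,2\alpha}_{X,\,\beta+1}(M)$ holds whenever $2\beta+2\geq\beta+1$, i.e.\ $\beta\geq-1$; the hypothesis $\beta>0$ is really needed for the isomorphism in Theorem \ref{iso-sch-Laplacian-pol}, not for this inclusion.
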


\begin{remark}
Theorem \ref{Imp-Def-Kah-Ste} does not assume any finite regularity on the data $(\psi_0,\,F_0)$ in the relevant
function spaces; see Remark \ref{heyjude}.
\end{remark}

\begin{proof}[Proof of Theorem \ref{Imp-Def-Kah-Ste}]
In order to apply the implicit function theorem for Banach spaces, we must reformulate
the statement of Theorem \ref{Imp-Def-Kah-Ste} in terms of
the map $MA_{\tau}$ introduced formally at the beginning of Section \ref{sec-Fredo-prop-exp}. To this end, consider the mapping
\begin{equation*}
\begin{split}
\widetilde{MA}_{\tau_{\psi_0}}:(\varphi,G)\in &\,\mathcal{M}^{4,\,2\alpha}_{X,\,\beta}(M)\times C^{2,\,2\alpha}_{X,\,\beta+1}(M)\\
&\mapsto \log\left(\frac{\tau_{\psi_0+\varphi}^n}{\tau^n}\right)+\frac{X}{2}\cdot (\psi_0+\varphi)-G-F_0\in C^{2,\,2\alpha}_{X,\,\beta+1}(M),\quad \alpha\in\left(0,\,\frac{1}{2}\right).
\end{split}
\end{equation*}
Notice that the function spaces can be defined either by using the metric $h$ or $h_{s\psi_0}$ for any $s\in[0,\,1]$.
To see that $\widetilde{MA}_{\tau_{\psi_0}}$ is well-defined, apply the Taylor expansion \eqref{equ:taylor-exp} to the background metric $\tau_{\psi_0}$
to obtain
\begin{equation}\label{reform-MA-op}
\begin{split}
\widetilde{MA}_{\tau_{\psi_0}}(\varphi,G)&=\log\left(\frac{\tau_{\psi_0+\varphi}^n}{\tau_{\psi_0}^n}\right)+\frac{X}{2}\cdot\varphi-G\\
&=\Delta_{\tau_{\psi_0}}\varphi+\frac{X}{2}\cdot\varphi-G-\int_0^1\int_0^{u}\arrowvert \partial\bar{\partial}\varphi\arrowvert^2_{h_{s(\psi_0+\varphi)}}\,ds\,du.
\end{split}
\end{equation}
Then by the very definition of $C^{4,\,2\alpha}_{X,\,\beta}(M)$,
the first three terms of the last line of \eqref{reform-MA-op} lie in $C^{2,\,2\alpha}_{X,\,\beta+1}(M)$.

Now, if $S$ and $T$ are tensors in $C^{2k,\,2\alpha}_{X,\,\gamma_1}(M)$ and $C^{2k,\,2\alpha}_{X,\gamma_2}(M)$ respectively, with $\gamma_i>0$, $i=1,2$,
then observe that $S\ast T$ lies in $C^{2k,\,2\alpha}_{X,\,\gamma_1+\gamma_2}(M)$,
where $\ast$ denotes any linear combination of contractions of tensors with respect to the metric $h$. Moreover,
\begin{equation}\label{mult-inequ-holder}
\|S\ast T\|_{C^{2k,\,2\alpha}_{X,\,\gamma_1+\gamma_2}}\leq C(k,\,\alpha)\|S\|_{C^{2k,\,2\alpha}_{X,\,\gamma_1}}\cdot \|T\|_{C^{2k,\,2\alpha}_{X,\,\gamma_2}}.
\end{equation}
Notice that $$\arrowvert \partial\bar{\partial}\varphi\arrowvert^2_{h_{s(\psi_0+\varphi)}}=h_{s(\psi_0+\varphi)}^{-1}
\ast (\nabla^{h})^{\,2}\varphi\ast(\nabla^{h})^{\,2}\varphi$$ and that $$h_{s(\psi_0+\varphi)}^{-1}-h^{-1}\in C^{2,\,2\alpha}_{X,\,\beta+1}(M).$$
Thus, applying \eqref{mult-inequ-holder} twice to $S=T=(\nabla^{h})^{2}\varphi$ and to the inverse $h_{s(\psi_0+\varphi)}^{-1}$ with weights
$\gamma_1=\gamma_{2}=\beta+1$ and $k=1$, one finds that $\arrowvert \partial\bar{\partial}\varphi\arrowvert^2_{h_{s(\psi_0+\varphi)}}\in C^{2,\,2\alpha}_{X,\,2\beta+2}(M)\subset C^{2,\,2\alpha}_{X,\,\beta+1}(M)$ for each $s\in[0,\,1]$ and that
\begin{equation*}
\left\|\int_0^1\int_0^{u}\arrowvert \partial\bar{\partial}\varphi\arrowvert^2_{h_{s(\psi_0+\varphi)}}\,ds\,du
\right\|_{C^{2,\,2\alpha}_{X,\,\beta+1}}\leq C\left(k,\alpha,h,\|\psi_0\|_{C^{2,\,2\alpha}_{X,\,\beta}}\right)\|\varphi\|_{C^{4,\,2\alpha}_{X,\,\beta}},\\
\end{equation*}
so long as $\|\varphi\|_{C^{4,\,2\alpha}_{X,\,\beta}}\leq 1$. Finally, the $JX$-invariance of the right-hand side of \eqref{reform-MA-op} is clear.

By definition, $\widetilde{MA}_{\tau_{\psi_0}}(\varphi,F-F_0)=0$ if and only if $\psi_0+\varphi$ is a solution of $\eqref{MA-neigh-small-per-pol}$ with data $F$.
By \eqref{equ:sec-der}, we have that
$$D^1_{(0,\,0)}\widetilde{MA}_{\tau_{\psi_0}}(\psi):=D_{(0,\,0)}\widetilde{MA}_{\tau_{\psi_0}}((\psi,\,0))=\Delta_{\tau_{\varphi_0}}\psi+\frac{X}{2}\cdot\psi
\qquad\textrm{for $\psi\in C^{4,\,2\alpha}_{X,\,\beta}(M)$}.$$ Hence, after applying
Theorem \ref{iso-sch-Laplacian-pol} to the background metric $\tau_{\psi_0}$ in place of $\tau$, we conclude that
$D^1_{(0,\,0)}\widetilde{MA}_{\tau_{\psi_0}}$ is an isomorphism of Banach spaces. The result now follows by applying the implicit function theorem
to the map $\widetilde{MA}_{\tau_{\psi_0}}$ in a neighbourhood of $(0,\,0)\in\mathcal{M}^{4,\,2\alpha}_{X,\,\beta}(M)\times C^{2,\,2\alpha}_{X,\,\beta+1}(M)$.
\end{proof}

\subsection{Surjectivity of the drift Laplacian}\label{surject}

We next show that the drift Laplacian of $\tau$ is surjective on the space of functions that decay polynomially. This result is required in order to
prove uniqueness in each K\"ahler class of the steady gradient K\"ahler-Ricci solitons that we construct.
 \begin{theorem}\label{surjectivity-drift-Laplacian}
There exists a finite set $\mathcal{F}\subset\left(0,\,\frac{1}{2}\right)$ such that for all $\beta\in(0,\,1)\setminus \mathcal{F}$, the drift Laplacian
\begin{equation*}
\begin{split}
\Delta_{\tau}+\frac{X}{2}\cdot:C^{\infty}_{X,\,\beta-1}(M)\rightarrow C^{\infty}_{X,\,\beta}(M),
\end{split}
\end{equation*}
is surjective.
\end{theorem}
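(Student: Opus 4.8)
The plan is to combine the isomorphism result of Theorem \ref{iso-sch-Laplacian-pol} with a careful analysis of what happens at the ``indicial roots'' of the drift Laplacian. Theorem \ref{iso-sch-Laplacian-pol} tells us that for every $\beta > 0$ the operator $\Delta_\tau + \tfrac{X}{2}\cdot : C^{2k+2,2\alpha}_{X,\beta}(M) \to C^{2k,2\alpha}_{X,\beta+1}(M)$ is an isomorphism; in particular it is surjective onto the \emph{faster}-decaying space $C^{2k,2\alpha}_{X,\beta+1}(M)$. What we want here is to hit all of $C^\infty_{X,\beta}(M)$, i.e.\ to solve $(\Delta_\tau + \tfrac{X}{2}\cdot)u = F$ with $F$ decaying only at rate $f^{-\beta}$, losing exactly one power of $f$ in the decay of the right-hand side relative to what the isomorphism theorem for $C_{X,\beta-1}$ would give. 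First I would set up, as in the proof of Theorem \ref{iso-sch-Laplacian-pol}, the conjugated operator obtained by multiplying by $f^{\beta}$ on one side and $f^{-(\beta-1)}$ on the other; the leading term becomes $f\cdot(\Delta_h + X\cdot) - \tfrac{4n(\beta-1)}{1}(\cdot) + (\text{compact})$, and the obstruction to inverting it is governed by the model operator at infinity, namely $\tfrac{4}{n}\partial_t^2 + \tfrac{1}{nt}\Delta_B$ acting on functions with a prescribed power-of-$t$ weight (cf.\ \eqref{covid20} and the rescaling argument in the proof of Theorem \ref{iso-sch-Laplacian-exp}).

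The key point is that the drift Laplacian on the asymptotic model $\hat g$, when separated into $t$ and link variables, has a discrete set of \emph{exceptional weights} (indicial roots) at which the model operator $\tfrac{4}{n}\partial_t^2 + \tfrac{1}{nt}\Delta_B + X\cdot$ fails to be surjective between the relevant weighted spaces; away from these weights one has good mapping properties. Concretely, expanding $u$ and $F$ in eigenfunctions $\phi_\lambda$ of $\Delta_B$ on the link, the radial ODE $\tfrac{4}{n}u_\lambda'' + \tfrac{4(n-1)}{nt}u_\lambda' + \tfrac{4}{t}u_\lambda' - \tfrac{\lambda}{nt}u_\lambda = F_\lambda$ (using $X\cdot = 4\partial_t$) has a two-dimensional solution space with asymptotic behaviours $t^{\mu_\pm(\lambda)}$ for certain exponents $\mu_\pm$, and the set $\mathcal F$ is exactly the (finite, since only finitely many $\lambda$ below a fixed threshold matter, the rest being handled by the decay of $F$ and ellipticity) collection of $\beta \in (0,\tfrac12)$ for which $-\beta$ coincides with one of these exponents. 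For $\beta \notin \mathcal F$, I would solve the equation mode-by-mode on a large domain $\{f \le R\}$ with zero Dirichlet data, obtain uniform a priori estimates in the $C^{2k,2\alpha}_{X,\beta-1}$-norm using the barrier functions $f^{-\beta+1}$ (which satisfy $(\Delta_\tau + \tfrac{X}{2}\cdot)f^{-\beta+1} \sim -2n(1-\beta)\beta' f^{-\beta} < 0$ for a suitable sign, by Lemma \ref{lemm-app-sol-id}) together with the maximum principle, and then pass to the limit $R \to \infty$ via Arzel\`a–Ascoli exactly as in the proofs of Theorems \ref{iso-sch-Laplacian-exp} and \ref{iso-sch-Laplacian-pol}. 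The interior Schauder estimates and the bootstrapping-at-infinity argument already developed (Propositions \ref{prop-C^2-wei-est}, \ref{prop-a-prio-wei-Ck-est}) then upgrade the solution to $C^\infty_{X,\beta-1}(M)$.

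The main obstacle is constructing, for $\beta \notin \mathcal F$, a \emph{global} barrier that controls $u$ at the borderline decay rate $f^{-\beta+1}$ while the data decays only at rate $f^{-\beta}$: unlike in Theorem \ref{iso-sch-Laplacian-pol}, the naive barrier $f^{-\beta}$ is now the \emph{same} order as the data rather than slower, so the maximum principle does not close directly. The resolution is to split $F$ into a finite part supported on the low link-eigenvalue modes (where one must explicitly invert the radial ODE, using that $-\beta$ is not an indicial root to pick out the decaying solution and thereby excluding the ``bad'' weights that form $\mathcal F$) and a remainder on high modes (where the extra factor $\lambda^{-1} \lesssim t^{-1}$ effectively improves the decay and the $f^{-\beta+1}$ barrier does work). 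One has to verify that the finitely many exceptional $\beta$ genuinely all lie in $(0,\tfrac12)$ — this follows from the explicit form of the indicial exponents for the cone model and the fact that $\Delta_B \ge 0$ has a spectral gap, so the exponents are either $0$ or bounded away from the window $(-\tfrac12,0)$ except for a finite list. Once this decomposition and the associated uniform estimates are in place, surjectivity onto $C^\infty_{X,\beta}(M)$ follows from the limiting argument and the bootstrap, completing the proof.
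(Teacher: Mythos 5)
Your overall skeleton matches the paper's: separate variables on the link using the spectral decomposition of $\Delta_{B}$, analyse the radial ODE mode by mode, take $\mathcal{F}$ to be the finitely many resonant weights (in the paper, those $\beta$ with $1-\beta=\frac{\lambda_i^{B}}{4n}$), and finish with Theorem \ref{iso-sch-Laplacian-pol} and the regularity bootstrap. However, there is a genuine gap in your proposed execution. Your a priori estimate rests on the barrier $f^{1-\beta}$, for which you claim $(\Delta_{\tau}+\tfrac{X}{2}\cdot)f^{1-\beta}\sim -c\,f^{-\beta}<0$. The sign is wrong: by Lemma \ref{lemm-app-sol-id}, $\tfrac{X}{2}\cdot f^{1-\beta}=(1-\beta)f^{-\beta}\tfrac{X\cdot f}{2}\approx 2n(1-\beta)f^{-\beta}>0$ for $\beta\in(0,1)$, while $\Delta_{\tau}f^{1-\beta}=O(f^{-\beta-1})$, so the drift term dominates with the \emph{positive} sign and $f^{1-\beta}$ is a subsolution, not a supersolution. (This is exactly why the conjugation trick in Theorem \ref{iso-sch-Laplacian-pol} needs $\beta>0$ on the \emph{target} weight; here the target weight of the solution is $\beta-1<0$.) Your proposed rescue — splitting into low and high link-modes so that the $-\tfrac{\lambda}{nt}$ term restores the good sign on high modes — only makes sense for the model operator on $(\hat{g},\hat{\omega})$: the actual operator $\Delta_{\tau}+\tfrac{X}{2}\cdot$ does not preserve the eigenspaces of $\Delta_{B}$, so the Dirichlet-exhaustion scheme for $h$ cannot be run mode by mode, and no uniform bound at the critical rate $f^{1-\beta}$ is obtained.

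The paper avoids this entirely: it never solves an exhaustion problem at the critical weight. Instead it writes down an \emph{explicit} approximate solution $\tilde{u}=\sum_i u_i\phi_i^{B}$ for the model metric by integrating the \emph{first-order} ODE $4n\partial_t u_i-\tfrac{\lambda_i^{B}}{t}u_i=nQ_i$ (the dominant balance of the radial operator at infinity), with the two integral formulas \eqref{ODE-solve-expli-u_i} chosen according to the sign of $1-\beta-\tfrac{\lambda_i^{B}}{4n}$; the neglected $\partial_t^2$ and $\tfrac{1}{t}\partial_t$ terms then contribute an error in $C^{\infty}_{X,\,1+\beta}(M)$, and the discrepancy $h-\hat{g}=O(t^{-\varepsilon})$ contributes an error in $C^{\infty}_{X,\,\beta+\varepsilon}(M)$. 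The residual equation, having strictly faster-decaying data, is then solved by Theorem \ref{iso-sch-Laplacian-pol} when $\beta+\varepsilon>1$, and otherwise by iterating the whole construction finitely many times. Your proposal is missing both this mechanism for transferring from the model metric to $h$ and a valid a priori estimate at the borderline rate; to repair it you would essentially have to replace the exhaustion-plus-barrier step by the paper's explicit-parametrix-plus-iteration argument.
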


\begin{proof}
Let $Q\in C^{\infty}_{X,\,\beta}(M)$ be fixed once and for all. In order to find a $JX$-invariant solution $u$ to $\Delta_hu+X\cdot u=Q$ that grows at most like $f^{1-\beta}$, we first solve the corresponding equation with respect to $\hat{g}$ outside of
a compact subset of $M$ which we henceforth identify with the end of $C_{0}$ via $\pi$. To this end, we invoke the spectral decomposition of the basic Laplacian $\Delta_{B}$ acting on $L^{2}_{B}(S)$, the space of basic $L^2$-integrable functions defined on the link $S$ of the cone $C_0$, which exists by virtue of \cite[Proposition 3.1]{ken2}. Let $(\phi^{B}_i)_{i\,\geq\,0}$ denote a complete orthonormal basis of smooth eigenfunctions of $\Delta_{B}$
in $L^{2}_{B}(S)$ and let $(\lambda^{B}_i)_{i\,\geq\,0}$ denote the corresponding eigenvalues with $\lambda^{B}_{i}\geq\lambda^{B}_{j}$ for $i\geq j$.
Then we have that $-\Delta_{B}\phi_i^{B}=\lambda_i^{B}\phi_i^{B}$ for each $i\geq 0$, $\lambda_0^{B}=0$, $\lambda_{i}^{B}\to+\infty$ as $i\to+\infty$, and $\phi_0^{B}=1$, this last equality following from the fact that on Sasaki manifolds, the basic Laplacian coincides with the Laplacian acting on basic functions \cite{ken2}.
Moreover, as the cone is Ricci-flat, we can assert from \cite{ken} that $\lambda_{1}^{B}\geq 2n\left(1+\frac{1}{2n-3}\right)$.

We seek a $JX$-invariant solution of the equation $$\Delta_{\hat{g}}\tilde{u}+X\cdot \tilde{u}=Q$$ of the form $\tilde{u}(t,\cdot):=\sum_{i\,\geq\,0}u_i(t)\cdot\phi_i^{B}$ on the end $\{t\geq 1\}$ of $C_{0}$ say. When a solution of this form exists, it is
clear that it is $JX$-invariant. Thanks to \eqref{covid20}, one can decompose this PDE into an infinite system of second order linear ODEs on $\{t\geq 1\}$, namely
\begin{equation*}
\begin{split}
&4\partial_t^2u_i+\left(4n+\frac{4(n-1)}{t}\right)\partial_tu_i-\frac{\lambda_i^{B}}{t}u_i=n\cdot Q_i,\qquad u_i(t)=O\left(t^{1-\beta}\right),\\
&Q_i(t):=\int_SQ(t,\cdot)\phi_i^{B}\,d\mu_{g_S},\qquad i\geq 0
\end{split}
\end{equation*}
with $g_{S}$ the metric on $S$ and $d\mu_{g_{S}}$ the associated volume form. As one shall see, in order to solve this system, it suffices to solve the following infinite system of first order linear ODEs which have the added advantage of being more explicit in terms of the data $Q$:
\begin{equation}
\begin{split}\label{inf-seq-ODE-2}
&4n\partial_tu_i-\frac{\lambda_i^{B}}{t}u_i=n\cdot Q_i,\qquad u_i(t)=O\left(t^{1-\beta}\right),\\
&Q_i(t):=\int_{S}Q(t,\,\cdot)\phi_i^{B}\,d\mu_{g_S},\qquad i\geq 0.
\end{split}
\end{equation}
The solution of this latter system depends on the sign of $1-\beta-\frac{\lambda_i^{B}}{4n}$ and indeed, is given by
\begin{equation}
\begin{split}\label{ODE-solve-expli-u_i}
u_i(t)&:=\frac{1}{4}t^{\frac{\lambda_i^{B}}{4n}}\int_1^tQ_i(s)\cdot s^{-\frac{\lambda_i^{B}}{4n}}\,ds\qquad\text{if $1-\beta-\frac{\lambda_i^{B}}{4n}> 0$},\\
u_i(t)&:=-\frac{1}{4}t^{\frac{\lambda_i^{B}}{4n}}\int_t^{+\infty}Q_i(s)\cdot s^{-\frac{\lambda_i^{B}}{4n}}\,ds\qquad\text{if $1-\beta-\frac{\lambda_i^{B}}{4n}<0$}.
\end{split}
\end{equation}
When $1-\beta-\frac{\lambda_i^{B}}{4n}> 0$, the solutions $u_i(t)$ are defined up to a solution of the homogeneous equation
corresponding to \eqref{inf-seq-ODE-2}. By our lower bound on $\lambda_{1}^{B}$, we know that $1-\beta-\frac{\lambda_i^{B}}{4n}<\frac{1}{2}$.
Hence, the critical case $1-\beta-\frac{\lambda_i^{B}}{4n}= 0$ can be avoided by only considering
positive $\beta$ lying outside some finite subset $\mathcal{F}$ of $\left(0,\,\frac{1}{2}\right)$, namely those $\beta\in\left(0,\,\frac{1}{2}\right)$ with $\beta=1-\frac{\lambda_i^{B}}{4n}$ for some $\lambda_{i}^{B}$ of which there are only finitely many
as $\lambda_{i}^{B}\to+\infty$ as $i\to+\infty$.

Working with $\beta\in(0,\,1)\setminus \mathcal{F}$, we next investigate the common properties of the functions $u_i$, $i\geq 0$, beginning with the following estimate.

\begin{claim}\label{claim-asy-est-u_i}
For all $j,\,k\geq 0$, there exists a positive constant $C_{j,\,k}$ such that
\begin{equation}\label{covid-inequ}
|\partial_t^{j}u_i|\leq C_{j,\,k}(\lambda_i^{B})^{-k}t^{1-\beta-j}\qquad\textrm{for all $j\geq 0$ and $t\geq 1$.}
\end{equation}
\end{claim}

\begin{proof}
By induction on $j\geq 0$, it is straightforward to check that $|\partial^j_tu_i|\leq C_{j}t^{1-\beta-j}$ for all $j\geq 0$ using
\eqref{inf-seq-ODE-2} and the fact that $\mathcal{L}^{(j)}_XQ=O(t^{-\beta-j})$ for all $j\geq 0$ by assumption.
Let us show that \eqref{covid-inequ} holds true for $j=0$. For this, it suffices to prove this estimate for all indices $i$ with $1-\beta-\frac{\lambda_i^{B}}{4n}<0$. To this end, using \eqref{inf-seq-ODE-2} and the fact that $\Delta_{B}^{(k)}\phi_i^{B}=(-\lambda_i^{B})^{k}\phi_i^{B}$ for all $k\geq0$, we derive the following estimate on the Fourier coefficients of $Q$:
\begin{equation}
\begin{split}\label{est-Q_i}
|Q_i(t)|=&\left|(-\lambda_i^{B})^{-k}\int_S\Delta_{g_S}^{(k)}Q(t,\,\cdot)\phi_i^{B}\,d\mu_{g_S}\right|\\
&\leq (\lambda_i^{B})^{-k}\|\Delta_{g_S}^{(k)}Q(t,\,\cdot)\|_{C^0(S)}\|\phi_i^{B}\|_{L_{B}^2(S)}\operatorname{vol}_{g_{S}}(S)\\
&\leq C_{k}(\lambda_i^{B})^{-k}t^{-\beta},
\end{split}
\end{equation}
where we have used the fact that $\|\phi_i^{B}\|_{L^2(S)}=1$ and $\Delta_{tg_S}^{(k)}Q(t,\,\cdot)=t^{-k}\Delta_{g_S}^{(k)}Q(t,\,\cdot)=O(t^{-k-\beta})$ for all $k\geq 0$. Plugging \eqref{est-Q_i} into \eqref{ODE-solve-expli-u_i}
now yields the desired conclusion for $j=0$.

One can similarly show that $|\partial_t^{j}Q_i(t)|\leq C_{j,\,k}(\lambda_i^{B})^{-k}t^{-\beta-j}$ for all $j\geq 1$ and $k\geq 0$. The higher order estimates \eqref{covid-inequ} for $j\geq 1$ can be proved by induction by repeatedly using the ODE \eqref{inf-seq-ODE-2} satisfied by $u_i$.
\end{proof}

Claim \ref{claim-asy-est-u_i} together with a version of Weyl's law for the spectrum of $\Delta_{B}$ on
$L^{2}_{B}(S)$ \cite[Propostion 3.4]{ken2} asserting that $\lambda_i^{B}\geq C i^{\frac{2}{2n-1}}$ for some positive constant $C$ independent of $i$, imply that the function $\tilde{u}=\sum_{i\,\geq\,0}u_i\cdot \phi_i^{B}$ is a genuine $JX$-invariant smooth function defined on the complement of a compact subset $K$ of $M$ and lying in $C^{\infty}_{X,\,\beta-1}(M\setminus K)$. Moreover, by construction, each $u_i$, $i\geq 0$, satisfies
the following second order linear ODE:
\begin{equation*}
\begin{split}
\Delta_{\hat{g}}u_i+X\cdot u_i&=\frac{4}{n}\partial_t^2u_i+\left(4+\frac{4(n-1)}{nt}\right)\partial_tu_i-\frac{\lambda_i^{B}}{nt}u_i\\
&=\frac{4}{n}\partial_t^2u_i+\frac{4(n-1)}{nt}\partial_tu_i+4\partial_tu_i-\frac{\lambda_i^{B}}{nt}u_i\\
&=\frac{1}{n}\partial_t\left(\frac{\lambda_i^{B}}{nt}u_i+Q_i\right)+\frac{(n-1)}{nt}\left(\frac{\lambda_i^{B}}{nt}u_i+Q_i\right)+Q_i\\
&=:Q_i+R_i,
\end{split}
\end{equation*}
where the remainder term $R_i$ lies in $C^{\infty}_{X,\,1+\beta}(M)$. Consequently, after writing $R:=\sum_{i\,\geq\,0}R_i\cdot \phi_i^{B}$, we see that $R\in C^{\infty}_{X,\,1+\beta}(M\setminus K)$ and that $\tilde{u}$ satisfies the PDE:
\begin{equation}\label{ell-drift-lap-tilde-u}
\begin{split}
\Delta_{h}\tilde{u}+X\cdot\tilde{u}=Q+\underbrace{\left(\Delta_h-\Delta_{\hat{g}}\right)\tilde{u}+R}_{\textrm{$\in\, C^{\infty}_{X,\,\min\{1+\beta,\,\beta+\varepsilon\}}(M\setminus K)$}}\qquad\textrm{on $M\setminus K$}.
\end{split}
\end{equation}
Here we have used \eqref{covid19}, i.e., $\hat{g}-h\in C^{\infty}_{X,\,\varepsilon}(M)$ for some given $\varepsilon\in(0,\,1)$.

Now, after localizing $\tilde{u}$ with the help of a $JX$-invariant smooth function equal to $1$ outside a sufficiently large compact subset of $M$,
we end up with a function defined on $M$ which we still denote by $\tilde{u}$ satisfying
an equation of the same type as \eqref{ell-drift-lap-tilde-u}. More precisely,
$\tilde{u}$ satisfies
\begin{equation*}
\begin{split}
\Delta_{h}\tilde{u}+X\cdot\tilde{u}=Q+\tilde{R}
\end{split}
\end{equation*}
for some $\tilde{R}\in C^{\infty}_{X,\,\min\{1+\beta,\,\beta+\varepsilon\}}(M)$.

If $\beta+\varepsilon>1$, then Theorem \ref{iso-sch-Laplacian-pol} asserts that
$\Delta_{h}\tilde{v}+X\cdot\tilde{v}=-\tilde{R}$ for some function $\tilde{v}\in C^{2k+2,\,2\alpha}_{X,\,\min\{\beta,\,\beta+\varepsilon-1\}}(M)$.
Since $\tilde{R}\in C^{\infty}_{X,\,\min\{1+\beta,\,\beta+\varepsilon\}}(M)$,
injectivity of the drift Laplacian between $C^{2k+2,\,2\alpha}_{X,\,\beta}(M)$ and $C^{2k,\,2\alpha}_{X,\,\beta+1}(M)$ for $\beta>0$
implies that $\tilde{v}\in C^{2k+2,\,2\alpha}_{X,\,\min\{\beta,\,\beta+\varepsilon-1\}}(M)$ for every
$k\geq0$, and so $\tilde{v}\in C^{\infty}_{X,\,\min\{\beta,\,\beta+\varepsilon-1\}}(M)$.
The function $\tilde{u}+\tilde{v}$ therefore lies in $C^{\infty}_{X,\,\beta-1}(M)$ and solves the equation $\Delta_{h}u+X\cdot u=Q$.

In the case that $\beta+\varepsilon \leq1$, shrink $\varepsilon>0$ if necessary so that
$\beta+k\varepsilon\notin \mathcal{F}\cup\{1\}$ for all $k\in\mathbb{N}$. Then by applying the first
part of this proof with data $\tilde{R}\in C^{\infty}_{X,\,\min\{1+\beta,\,\beta+\varepsilon\}}(M)$ in place of $Q$,
one can find a function $\tilde{u}_1\in C^{\infty}_{\beta+\varepsilon-1}(M)$ with $\Delta_{h}(\tilde{u}-\tilde{u}_1)+X\cdot(\tilde{u}-\tilde{u}_1)-Q\in C^{\infty}_{X,\,\min\{1+\beta,\,\beta+2\varepsilon\}}(M)$. Iterating this argument a finite number of times reduces to the case $\beta+\varepsilon>1$ which can then be solved by invoking Theorem \ref{iso-sch-Laplacian-pol} as before.
\end{proof}

\begin{remark}
If $\tau$ in Theorem \ref{surjectivity-drift-Laplacian} is a complete steady gradient K\"ahler-Ricci soliton on $M$, then the kernel of the drift Laplacian restricted to $C^{\infty}_{X,\,\beta}(M)$ for any $\beta>-1$ comprises only constants. This is a direct consequence of \cite[Corollary 1.4]{Hua-Zha-Zha}, where one only needs to assume that the function lying in the kernel of the drift Laplacian grows sublinearly, that is, as $o(t)$.
\end{remark}

\section{Proof of Theorem \ref{main}}\label{prooof}

Let $(C_0,\,g_{0},\,J_0,\,\Omega_0)$ be a Calabi-Yau cone
of complex dimension $n\geq2$ with link $S$, radial function $r$, and transverse K\"ahler form $\omega^{T}$.
Set $r^2=:e^t$ and let $\pi:M\to C_{0}$ be a crepant resolution of $C_{0}$ with exceptional set $E$ which is
equivariant with respect to the real torus action on $C_{0}$ generated by $J_{0}r\partial_{r}$
so that the holomorphic vector field $2r\partial_{r}=4\partial_{t}$ on $C_{0}$ lifts to a real holomorphic vector field $X=\pi^{*}(2r\partial_{r})$
on $M$. Let $J$ denote the complex structure on $M$ and recall from Proposition \ref{caosoliton} that we have Cao's one-parameter family of
steady gradient K\"ahler-Ricci solitons $\tilde{\omega}_{a},\,a\geq0,$ on $C_{0}$ with respective soliton potentials $\varphi_{a}(t)$,
as well as the K\"ahler form $\hat{\omega}=\frac{i}{2}\partial\bar{\partial}\left(\frac{nt^{2}}{2}\right)$.

In this section, we use the results acquired thus far to prove Theorem \ref{main}. We begin with the existence part before moving on to uniqueness.
Throughout, we identify $M\setminus E$ with the complement of the apex of $C_{0}$ via $\pi$.

\subsection{Existence}\label{existencee}
Fix $a\geq0$ and for a given K\"ahler class $\mathfrak{k}$ on $M$, take the K\"ahler form
$\sigma$ in $\mathfrak{k}$ asymptotic to $\tilde{\omega}_{a}$ with $\mathcal{L}_{JX}\sigma=0$
given by Proposition \ref{lemma5.7}. We now add a subscript $a$ to $\sigma$ to indicate that this K\"ahler form is asymptotic at infinity to
$\tilde{\omega}_{a}$. Combining Proposition \ref{coro-asy-Cao-met} and Lemma \ref{budwiser}, one can see from
the triangle inequality that $\sigma_{a}$ satisfies \eqref{covid19}.
Therefore by Proposition \ref{equationsetup}, the problem of constructing a steady gradient K\"ahler-Ricci soliton in $\mathfrak{k}$ with
the desired properties can be reformulated in terms of solving a scalar PDE on $M$, namely the complex Monge-Amp\`ere equation \eqref{e:soliton} which we now
recall:
\begin{equation}\label{eq:soliton}
\log\left(\frac{(\sigma_{a}+i\partial\bar{\partial}\psi)^{n}}{\sigma_{a}^{n}}\right)+\frac{X}{2}\cdot\psi=F,
\end{equation}
where $\psi$ and $F$ are smooth functions invariant under the flow of $JX$ and outside a compact subset of $M$,
\begin{equation}\label{looove}
F = \left\{
\begin{array}{ll}
0 & \text{if $n=2$ or if $\mathfrak{k}$ is compactly supported},\\
-\log\left(\frac{(\tilde{\omega}_{a}+p_S^{*}(\zeta))^{n}}{\tilde{\omega}_{a}^{n}}\right) & \text{otherwise},\\
\end{array} \right.
\end{equation}
for $\zeta$ a basic primitive $(1,\,1)$-form on $S$ determined uniquely by $\mathfrak{k}$ with $p_{S}:C_{0}\to\{r=1\}\cong S$ denoting the projection.
Recall from Section \ref{primitive} that $\zeta\wedge(\omega^{T})^{n-2}=0$ and notice that, by Lemma \ref{finite}
and the $JX$-invariance of $\sigma_{a}$ and $\psi$, any steady K\"ahler-Ricci soliton resulting from the solution of \eqref{eq:soliton} is necessarily gradient. Finally, observe that the smooth proper real-valued function $f_{a}$ on $M$ defined by $-\sigma_{a}\lrcorner JX=df_{a}$ and chosen such that $f_{a}\geq1$ (which is guaranteed to exist by Lemma \ref{easy}) differs from
the soliton potential $\varphi_{a}(t)$ of $\tilde{\omega}_{a}$ by a constant. Indeed, to see this last point, just note that
$$i\partial\bar{\partial}(f_{a}-\varphi_{a})=\frac{1}{2}\mathcal{L}_{X}(\sigma_{a}-\tilde{\omega}_{a})=\frac{1}{2}\mathcal{L}_{X}p_S^{*}(\zeta)=0$$
and $\mathcal{L}_{JX}(f_{a}-\varphi_{a})=0$ and appeal to Lemma \ref{pluri}(i).

\subsubsection{Compactly supported K\"ahler classes or $n=2$}
When $\mathfrak{k}$ is a compactly supported K\"ahler class or $n=2$, we read from \eqref{looove} that the function $F$ in
\eqref{eq:soliton} is compactly supported. Since $|f_{a}-\varphi_{a}|$ is bounded along the end of $M$, Theorem \ref{theo-exi-sol-cpct-supp-data} applies and provides us
with a solution of \eqref{eq:soliton}. More precisely, Theorem \ref{theo-exi-sol-cpct-supp-data} asserts the existence of a function $\psi\in\mathcal{M}^{\infty}_{X,\,\exp}(M)$
with $\mathcal{L}_{JX}\psi=0$ such that $\omega_{a}:=\sigma_{a}+i\partial\bar{\partial}\psi$ defines a steady gradient K\"ahler-Ricci soliton in the K\"ahler class $\mathfrak{k}$ of $M$. In particular, $JX$ will be Killing for $\omega_{a}$ and from Proposition \ref{lemma5.7} we see that outside a compact subset,
$\omega_{a}:=\tilde{\omega}_{a}+i\partial\bar{\partial}\psi$.
The asymptotics \eqref{amazing3} then follow from the fact that $\psi\in\mathcal{M}^{\infty}_{X,\,\exp}(M)$
and the aforementioned fact that the function $f_{a}$, the exponential of which being the weight used in the definition of $\mathcal{M}^{\infty}_{X,\,\exp}(M)$ with respect to $\sigma_{a}$,
differs from $\varphi_{a}(t)$ by a constant. The independence of $\omega_{a}$ from the parameter $a$ will be shown in Section \ref{independent}.

\subsubsection{Non-compactly supported K\"ahler classes}
Supressing  the subscript $a$ for the moment, when $\mathfrak{k}$ is not compactly supported, we see that at infinity,
\begin{equation*}
F=-\log\left(\frac{(\tilde{\omega}+p^{*}_{S}(\zeta))^{n}}{\tilde{\omega}^{n}}\right)
=n\frac{\tilde{\omega}^{n-1}\wedge p_{S}^{*}(\zeta)}{\tilde{\omega}^{n}}
+\underbrace{\sum_{k\,=\,2}^{n}\binom{n}{k}\frac{\tilde{\omega}^{n-k}\wedge (p_{S}^{*}(\zeta))^{k}}{\tilde{\omega}^{n}}}_{\in\,C^{\infty}_{X,\,2}(M)}.
\end{equation*}
Now, $\zeta$ being a basic primitive $(1,\,1)$-form on $S$ leads to the simplification
\begin{equation*}
\begin{split}
\tilde{\omega}^{n-1}\wedge p^{*}_{S}(\zeta)&=(n-1)\varphi^{n-2}\varphi'(\omega^{T})^{n-2}\wedge\frac{dt}{2}\wedge\eta\wedge p^{*}_{S}(\zeta)\\
&=(n-1)\varphi^{n-2}\varphi'\frac{dt}{2}\wedge\eta\wedge\underbrace{(\omega^{T})^{n-2}\wedge p^{*}_{S}(\zeta)}_{=\,0},\\
&=0.
\end{split}
\end{equation*}
Consequently, the right-hand side of
\eqref{eq:soliton} lies in $C^{\infty}_{X,\,2}(M)$. We next show, via a modification of the K\"ahler form $\sigma$, that solving \eqref{eq:soliton} can be reduced to the case where the data $F$ is compactly supported. This will then allow us to appeal directly to Theorem \ref{theo-exi-sol-cpct-supp-data} to assert the existence of an exponentially decaying solution of this reduced equation, thereby resulting in
a solution of \eqref{eq:soliton}. The reduction of \eqref{eq:soliton} to an equation with data $F$ compactly supported we now present.

\begin{prop}\label{prop-sec-app-poly-dec-comp-supp-inf}
Let $\sigma$ be the K\"ahler form from Proposition \ref{lemma5.7}.
Then for all $\delta\in\left(0,\,\frac{1}{2}\right)$, there exists a constant $T=T(\delta)>0$ and a smooth $JX$-invariant function
 $\varphi_T\in C_{X,\,1-\delta}^{\infty}(M)$ such that $\sigma+i\partial\bar{\partial}\varphi_T>0$ and
\begin{equation*}
\log\left(\frac{\left(\sigma+i\partial\bar{\partial}\varphi_T\right)^n}{\sigma^n}\right)+\frac{X}{2}\cdot\varphi_T=\chi_{T}\cdot F,
\end{equation*}
where $\chi_T$ is a smooth $JX$-invariant cut-off function supported on $\{t\geq T\}$.
\end{prop}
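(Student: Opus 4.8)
The plan is to prove Proposition~\ref{prop-sec-app-poly-dec-comp-supp-inf} by an application of the implicit function theorem, using Theorem~\ref{Imp-Def-Kah-Ste} as the invertibility input, in a manner parallel to the existence argument for compactly supported data but carried out in polynomially weighted spaces. The starting observation, already recorded in the discussion preceding the statement, is that the natural data $F$ for the complex Monge-Amp\`ere equation \eqref{eq:soliton} (with $\sigma=\sigma_a$) lies in $C^{\infty}_{X,\,2}(M)$ when $\mathfrak{k}$ is not compactly supported, and that $\sigma$ satisfies the asymptotic bounds \eqref{covid19} by Proposition~\ref{coro-asy-Cao-met} and Lemma~\ref{budwiser}. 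The idea is to first solve \eqref{eq:soliton} with the \emph{full} polynomially decaying data $F$ to produce a solution $\psi_{\infty}\in\mathcal{M}^{\infty}_{X,\,\beta}(M)$ for suitable $\beta>0$, and then define $\varphi_T$ by interpolating between $0$ near $E$ and $\psi_{\infty}$ at infinity, reading off from the Monge-Amp\`ere equation what data this modified potential solves.

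First I would solve \eqref{eq:soliton} with data $F\in C^{\infty}_{X,\,2}(M)$. To do this I would run a continuity method along $F_s:=sF$, $s\in[0,\,1]$, working in the spaces $\mathcal{M}^{\infty}_{X,\,\beta}(M)$ for some $\beta\in(0,\,1)$ with $\beta<\min\{1,\,\varepsilon\}$ (so that the nonlinear remainder term, which gains a factor of $f^{-\beta}$ as in \eqref{mult-inequ-holder}, stays in the target space and the data $F\in C^{\infty}_{X,\,2}(M)\subset C^{\infty}_{X,\,\beta+1}(M)$). Openness of the set of solvable $s$ is exactly Theorem~\ref{Imp-Def-Kah-Ste} applied to the background metric $\sigma_{s\psi_s}$. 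For closedness one needs a priori estimates in the polynomially weighted norms; the local estimates (Propositions~\ref{prop-C^2-est} through \ref{prop-loc-reg} and Corollaries~\ref{coro-equiv-metrics-0},~\ref{coro-equiv-metrics}) go through verbatim since they only used \eqref{covid19} and local ellipticity, while the weighted $C^0$-estimate requires redoing the barrier argument of Proposition~\ref{prop-a-prio-wei-C0-est} with polynomial barriers: here one uses that $f^{-\gamma}$ is a supersolution of $\Delta_\tau+\frac{X}{2}\cdot$ up to terms of lower order (from Lemma~\ref{lemm-app-sol-id}, $\Delta_\tau f + X\cdot f = 4n + O(f^{-\varepsilon})$ forces $(\Delta_\tau+\frac{X}{2}\cdot)f^{-\gamma} = -2n\gamma f^{-\gamma} + O(f^{-\gamma-1})$, which actually gives \emph{decay} faster than any polynomial; one instead matches powers directly against the data using the maximum principle on $\{f\le R\}$ and Theorem~\ref{iso-sch-Laplacian-pol} to get the sharp weight $f^{\beta-1}$). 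Bootstrapping at infinity then upgrades $\psi_{\infty}\in C^{4,\,2\alpha}_{\operatorname{loc}}$ to $C^\infty_{X,\,\beta-1}(M)$, and in fact to $\mathcal{M}^{\infty}_{X,\,1-\delta}(M)$ for the given $\delta$, by choosing $\beta$ appropriately and invoking Theorem~\ref{iso-sch-Laplacian-pol} on the linearised equation as in the proof of Proposition~\ref{prop-C^2-wei-est}.

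Once $\psi_{\infty}\in\mathcal{M}^{\infty}_{X,\,1-\delta}(M)$ solving \eqref{eq:soliton} with data $F$ is in hand, I would cut off: fix a $JX$-invariant cut-off $\chi_T$ equal to $1$ on $\{t\ge 2T\}$ and $0$ on $\{t\le T\}$, and set $\varphi_T := \chi_T\,\psi_{\infty}$. Since $\psi_{\infty}$ and its derivatives decay like $f^{-(1-\delta)}$ and since $i\partial\bar\partial(\chi_T\psi_\infty) - \chi_T\, i\partial\bar\partial\psi_\infty$ is supported in the annulus $\{T\le t\le 2T\}$ and is $O(f^{-(1-\delta)})$ there, for $T$ large enough one has $\sigma + i\partial\bar\partial\varphi_T>0$ on all of $M$ (positive on $\{t\le T\}$ since $\varphi_T\equiv 0$ there, positive on $\{t\ge 2T\}$ since $\sigma+i\partial\bar\partial\psi_\infty>0$, and positive on the annulus by choosing $T$ large so the error is small relative to $\sigma$). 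Moreover $\varphi_T\in C^{\infty}_{X,\,1-\delta}(M)$ by construction. Finally, setting $\widetilde F := \log\!\left(\frac{(\sigma+i\partial\bar\partial\varphi_T)^n}{\sigma^n}\right)+\frac{X}{2}\cdot\varphi_T$, on $\{t\ge 2T\}$ one has $\varphi_T=\psi_\infty$ so $\widetilde F = F$ there, on $\{t\le T\}$ one has $\varphi_T\equiv 0$ so $\widetilde F\equiv 0$ there, and since $F$ itself agrees with the prescribed expression \eqref{looove} outside a compact set, after enlarging $T$ so that this compact set lies in $\{t<T\}$ one checks that $\widetilde F$ differs from $\chi_T\cdot F$ only by a smooth compactly supported correction; rechristening that correction (absorbing it, if necessary, by a further application of Theorem~\ref{Imp-Def-Kah-Ste} to adjust $\varphi_T$ on a compact set) gives exactly $\widetilde F=\chi_T\cdot F$.

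The main obstacle I expect is the weighted $C^0$-estimate along the continuity path: unlike the exponential case, where the Poincar\'e/Hardy inequality of Corollary~\ref{tarea} and the energy functionals of Section~\ref{apriori} did the work, here one cannot use the Aubin-Tian-Zhu functionals (the relevant integrals diverge for merely polynomially decaying potentials, as emphasised in the introduction), so the a priori control must come entirely from the maximum principle with carefully chosen polynomial barriers together with the linear isomorphism Theorem~\ref{iso-sch-Laplacian-pol} — and one must be careful that the weight $\beta$ is chosen below $\varepsilon$ and below $1$ and away from the exceptional set $\mathcal{F}$ of Theorem~\ref{surjectivity-drift-Laplacian} is \emph{not} needed here (that exceptional set only concerns surjectivity, whereas here we only use the isomorphism between $C^{2k+2,2\alpha}_{X,\beta}$ and $C^{2k,2\alpha}_{X,\beta+1}$ for $\beta>0$, valid for all such $\beta$). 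A secondary technical point is ensuring the cut-off does not destroy positivity, which is why the decay rate $1-\delta<1$ and the freedom to take $T$ large are both essential.
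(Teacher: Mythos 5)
There is a genuine gap, and it sits at the very first step of your plan. You propose to first solve the Monge--Amp\`ere equation with the \emph{full} polynomially decaying data $F\in C^{\infty}_{X,\,2}(M)$ by a continuity method in the spaces $\mathcal{M}^{\infty}_{X,\,\beta}(M)$, and you concede that the Aubin--Tian--Zhu energy functionals are unavailable here, asserting instead that the a~priori $C^0$-control "must come entirely from the maximum principle with carefully chosen polynomial barriers together with Theorem~\ref{iso-sch-Laplacian-pol}". This is precisely the step that does not work: Theorem~\ref{iso-sch-Laplacian-pol} is a statement about the \emph{linear} drift Laplacian, and to exploit it along the continuity path one first needs a uniform $C^{0}$-bound (and then $C^{2}$) on the nonlinear solutions $\psi_s$ themselves — Yau's scheme, Blocki's argument, and the localisation of $\sup\psi$ and $\inf\psi$ in Section~\ref{apriori} all hinge on the data being compactly supported, as the paper stresses explicitly. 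Producing such an a~priori bound for merely polynomially decaying data is exactly the difficulty that Proposition~\ref{prop-sec-app-poly-dec-comp-supp-inf} is designed to circumvent, so your argument is in effect circular: it uses as a lemma the hard global solvability result that the proposition is meant to reduce to the compactly supported case. The subsequent cut-off step inherits a second problem: cutting off $\psi_\infty$ produces data equal to $\chi_T\cdot F$ only up to an error supported in the annulus $\{T\le t\le 2T\}$, and absorbing that error "by a further application of Theorem~\ref{Imp-Def-Kah-Ste}" requires it to be small in $C^{2,\,2\alpha}_{X,\,\beta+1}$, which you have not established.

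The observation that makes the proposition easy — and which your proposal misses — is that one should cut off the \emph{data}, not the solution, and that the cut-off data is automatically small. Since $F\in C^{\infty}_{X,\,2}(M)$ and $\chi_T$ is supported on $\{t\ge T\}$, one computes directly that
\begin{equation*}
\|\chi_T\cdot F\|_{C^{2,\,2\alpha}_{X,\,2-\delta}}\;=\;O\left(T^{-\delta}\right)\qquad\textrm{as $T\to+\infty$,}
\end{equation*}
because each supremum of $t^{2-\delta}$ (respectively $t^{\frac{5}{2}-\delta}$, etc.) times the corresponding derivative of $\chi_T\cdot F$ is taken over $\{t\ge T\}$ where $F$ and its derivatives decay two full powers faster than the weight demands. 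Hence for $T$ large the data $\chi_T\cdot F$ lies in the implicit-function-theorem neighbourhood $\mathcal{U}_0\subset C^{2,\,2\alpha}_{X,\,2-\delta}(M)$ of $0$ furnished by Theorem~\ref{Imp-Def-Kah-Ste} applied at the trivial solution $(\psi_0,F_0)=(0,0)$, yielding $\varphi_T\in C^{4,\,2\alpha}_{X,\,1-\delta}(M)$ with $\sigma+i\partial\bar{\partial}\varphi_T>0$ solving the equation with data $\chi_T\cdot F$ on the nose — no continuity method, no nonlinear a~priori estimates, and no correction term. One then bootstraps $\varphi_T$ to $C^{\infty}_{X,\,1-\delta}(M)$ by rewriting the equation in the linearised form \eqref{equ:taylor-exp} and applying Theorem~\ref{iso-sch-Laplacian-pol} together with local interpolation, which is the only place the linear isomorphism is actually needed.
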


\begin{proof}
For $\alpha\in\left(0,\,\frac{1}{2}\right)$ fixed, Theorem \ref{Imp-Def-Kah-Ste} asserts that for all $\delta\in(0,\,1)$, there exists a neighbourhood $\mathcal{U}_{0}\subset C^{2,\,2\alpha}_{X,\,2-\delta}(M)$
of the constant function $0$ such that for all data $G\in \mathcal{U}_{0}$,
there exists a unique solution $\varphi\in C^{4,\,2\alpha}_{X,\,1-\delta}(M)$ such that $\sigma+i\partial\bar{\partial}\varphi>0$ and such that
\begin{equation*}
\log\left(\frac{\left(\sigma+i\partial\bar{\partial}\varphi\right)^n}{\sigma^n}\right)+\frac{X}{2}\cdot\varphi=G.
\end{equation*}
We fix a cut-off function $\chi:M\to\mathbb{R}$ with $|\chi|\leq1$ and
$$
\chi(x) =
\begin{cases}
0 &\textrm{if $x\in E\cup\{y\in M\,|\,t(y)\leq 1\}$},\\
1 &\textrm{if $t(x)>2$},
\end{cases}
$$
and define $\chi_{T}(x):=\chi(x/T)$ in the obvious way for $T>0$, a constant to be determined. Then
for any $\delta\in(0,\,1)$,
\begin{equation*}
\begin{split}
\sup_{M}|t^{2-\delta}\chi_{T}\cdot F|&\leq\sup_{t(x)\,\geq\,T}t^{2-\delta}|F(x)|\leq C\sup_{t(x)\,\geq\,T}t^{2-\delta}t^{-2}\leq
CT^{-\delta},\\
\sup_{M}|t^{\frac{5}{2}-\delta}\tilde{\nabla}(\chi_T\cdot F)|&\leq
C\sup_{t(x)\,\geq\,T}t^{\frac{5}{2}-\delta}\left(|\chi'|\frac{|F(x)|}{T}+|\tilde{\nabla}F|_{\tilde{g}}\right)\\
&\leq C\left(\sup_{T\,\leq\,t\,\leq 2T}t^{\frac{5}{2}-\delta}\frac{|F(x)|}{T}+\sup_{t(x)\,\geq\,T}t^{\frac{5}{2}-\delta}|\tilde{\nabla}F|_{\tilde{g}}\right)\\
&\leq C\left(\sup_{T\,\leq\,t\,\leq 2T}t^{\frac{1}{2}-\delta}\frac{1}{T}+\sup_{t(x)\,\geq\,T}t^{-\delta}\right)\\
&\leq CT^{-\delta}.
\end{split}
\end{equation*}
Continuing in this manner, one sees that
$\|\chi_T\cdot F\|_{C^{2,\,2\alpha}_{X,\,2-\delta}}=O(T^{-\delta})$ so that for
$T$ sufficiently large, one can find a unique function $\varphi_T\in C^{4,\,2\alpha}_{X,\,1-\delta}(M)$ with $\sigma+i\partial\bar{\partial}\varphi_T>0$ such that
\begin{equation}
\log\left(\frac{\left(\sigma+i\partial\bar{\partial}\varphi_{T}\right)^n}{\sigma^n}\right)+\frac{X}{2}\cdot\varphi_{T}=\chi_T\cdot F.\label{MA-ad-hoc-Fchi}
\end{equation}

In order to complete the proof of the proposition, we bootstrap the regularity of $\varphi_T$ at infinity in the following way.
Assume that $\varphi\in C^{2k+2,\,2\alpha}_{X,\,\beta}(M)$ is a solution to \eqref{MA-ad-hoc-Fchi}
for some $k\geq 0$, $\beta>0$, and $\alpha\in\left(0,\,\frac{1}{2}\right)$. Then observe that $\varphi$ is a solution of the following linear equation in disguise:
\begin{equation}\label{lin-equ-ad-hoc-G}
 \Delta_{\sigma}\varphi+\frac{X}{2}\cdot\varphi=\chi_T\cdot F-\int_0^1\int_0^{u}\arrowvert \partial\bar{\partial}\varphi\arrowvert^2_{g_{\sigma_{s\varphi}}}\,ds\,du,
 \end{equation}
where $g_{\sigma_{s\varphi}}$ is the K\"ahler metric associated to the K\"ahler form $\sigma_{s\varphi}:=\sigma+i\partial\bar{\partial}(s\varphi)$.
Applying Theorem \ref{iso-sch-Laplacian-pol} to \eqref{lin-equ-ad-hoc-G}, one sees that
$\varphi\in C^{2(k+1)+2,\,2\alpha}_{X,\,\beta}(M)$ as soon as the right-hand side of \eqref{lin-equ-ad-hoc-G} lies in $C^{2k+2,\,2\alpha}_{X,\,\beta+1}(M)$. Since $\varphi\in C^{2k+2,\,2\alpha}_{X,\,\beta}(M)$,
we deduce that
$i\partial\bar{\partial}\varphi\in C^{2k,\,2\alpha}_{X,\,\beta+1}(M) $. Arguing as in the proof of Theorem \ref{Imp-Def-Kah-Ste} using \eqref{mult-inequ-holder}, one immediately sees that the integral term on the right-hand side of \eqref{lin-equ-ad-hoc-G} lies in $C^{2k,\,2\alpha}_{X,\,2\beta+2}(M)$. Now, the local estimates provided by Proposition \ref{prop-loc-reg} imply that $\varphi$ is bounded together with all of its derivatives on $M$. Standard local interpolation inequalities then show that the
integral term on the right-hand side of \eqref{lin-equ-ad-hoc-G} lies in $C^{2k+2,\,2\alpha}_{X,\,2\beta+1-\alpha-\epsilon}(M)$ for
all $\epsilon>0$ sufficiently small. In particular, with $\beta=1-\delta$, $\delta\in(0,\frac{1}{2})$,
we obtain the desired result by choosing $0<\epsilon\leq\beta-\alpha=1-\delta-\alpha$
after recalling that $\chi_{T}\cdot F\in C^{\infty}_{X,\,2-\delta}(M)$.
\end{proof}

Proposition \ref{prop-sec-app-poly-dec-comp-supp-inf} allows us to finally solve \eqref{eq:soliton}.

\begin{theorem}\label{theo-main-sol-data-pol-dec}
For all $\delta\in\left(0,\,\frac{1}{2}\right)$, there exists a smooth $JX$-invariant solution $\psi$ to \eqref{eq:soliton} that has the following decomposition:
\begin{equation*}
\psi=\psi_1+\psi_2,
\end{equation*}
where $\psi_i$, $i=1,\,2$, are smooth $JX$-invariant functions with $\psi_1\in C^{\infty}_{X,\,1-\delta}(M)$ and $\psi_2\in C^{\infty}_{X,\,\exp}(M)$.
\end{theorem}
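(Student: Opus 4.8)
The plan is to reduce equation \eqref{eq:soliton}—whose data $F$ lies in $C^{\infty}_{X,\,2}(M)$ by the discussion preceding the statement—to an equation with \emph{compactly supported} data, by absorbing the part of $F$ living near infinity into a modification of the background K\"ahler form, and then to quote Theorem \ref{theo-exi-sol-cpct-supp-data}.

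First I would fix $\delta\in\left(0,\,\frac{1}{2}\right)$ and apply Proposition \ref{prop-sec-app-poly-dec-comp-supp-inf} to $\sigma=\sigma_{a}$, obtaining a constant $T=T(\delta)>0$, a smooth $JX$-invariant cut-off $\chi_{T}$ supported in $\{t\geq T\}$ with $\chi_{T}\equiv1$ on $\{t\geq2T\}$, and a smooth $JX$-invariant function $\varphi_{T}\in C^{\infty}_{X,\,1-\delta}(M)$ with $\sigma_{T}:=\sigma_{a}+i\partial\bar{\partial}\varphi_{T}>0$ and
\begin{equation}\label{pp:aux}
\log\left(\frac{\sigma_{T}^{n}}{\sigma_{a}^{n}}\right)+\frac{X}{2}\cdot\varphi_{T}=\chi_{T}\cdot F.
\end{equation}
Searching for a solution of \eqref{eq:soliton} in the form $\psi=\varphi_{T}+\psi_{2}$ and using additivity of the logarithm together with \eqref{pp:aux}, a direct computation shows that $\psi$ solves \eqref{eq:soliton} if and only if $\psi_{2}$ solves
\begin{equation}\label{pp:reduced}
\log\left(\frac{(\sigma_{T}+i\partial\bar{\partial}\psi_{2})^{n}}{\sigma_{T}^{n}}\right)+\frac{X}{2}\cdot\psi_{2}=(1-\chi_{T})\cdot F,
\end{equation}
whose right-hand side is smooth, $JX$-invariant, and supported in the compact set $E\cup\{t\leq2T\}$. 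Setting $\psi_{1}:=\varphi_{T}$, this yields the required decomposition once a solution $\psi_{2}\in C^{\infty}_{X,\,\exp}(M)$ of \eqref{pp:reduced} is produced.

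To produce $\psi_{2}$ I would apply Theorem \ref{theo-exi-sol-cpct-supp-data} with $\sigma_{T}$ in place of $\tau$; for this I must verify that $\sigma_{T}$ satisfies the standing hypotheses of Section \ref{apriori}. Since $\mathcal{L}_{JX}\varphi_{T}=0$ we get $\mathcal{L}_{JX}\sigma_{T}=0$, and since $\varphi_{T}\in C^{\infty}_{X,\,1-\delta}(M)$ the form $i\partial\bar{\partial}\varphi_{T}$ together with its $\widehat{\nabla}$- and $\mathcal{L}_{X}$-derivatives decays at rate at least $2-\delta>1>\varepsilon$, so $\sigma_{T}$ inherits the asymptotics \eqref{covid19} (equivalently \eqref{asy-cao-def-sec5}) from $\sigma_{a}$ with the same $\varepsilon\in(0,\,1)$. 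The hard part will be the additional assumption \eqref{add-ass}: letting $f_{T}$ be the $X$-potential of $\sigma_{T}$ with $f_{T}\geq1$ (Lemma \ref{easy}), I would compare $-\sigma_{T}\lrcorner JX=df_{T}$ with $-\sigma_{a}\lrcorner JX=df_{a}$ and use that, for the $JX$-invariant function $\varphi_{T}$, one has $(i\partial\bar{\partial}\varphi_{T})\lrcorner JX=-\tfrac12 d(X\cdot\varphi_{T})$—precisely the computation appearing in the proof of Lemma \ref{lemma-inf-pot-fct}. This gives $f_{T}-f_{a}=\tfrac12 X\cdot\varphi_{T}+c$ for a constant $c$, hence $f_{T}-f_{a}$ is bounded (in fact decaying) since $X\cdot\varphi_{T}\in C^{\infty}_{X,\,2-\delta}(M)$; combined with the observation, recorded at the start of Section \ref{existencee}, that $f_{a}$ differs from the Cao soliton potential $\varphi_{a}(t)$ by a constant, this yields $|f_{T}-\varphi_{a}(t)|=O(1)$, i.e.\ \eqref{add-ass} holds for $\sigma_{T}$.

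With these hypotheses in hand, Theorem \ref{theo-exi-sol-cpct-supp-data} provides a solution $\psi_{2}$ of \eqref{pp:reduced} lying in the space $\mathcal{M}^{\infty}_{X,\,\exp}(M)$ attached to $\sigma_{T}$; in particular $\sigma_{T}+i\partial\bar{\partial}\psi_{2}>0$ and $\psi_{2}$ is smooth and $JX$-invariant. Because the K\"ahler metrics of $\sigma_{T}$ and $\sigma_{a}$ are uniformly equivalent and the weights $e^{f_{T}}$, $e^{f_{a}}$ are uniformly comparable (as $f_{T}-f_{a}$ is bounded), the space $C^{\infty}_{X,\,\exp}(M)$ built from $\sigma_{T}$ coincides with the one built from $\sigma_{a}$, so $\psi_{2}\in C^{\infty}_{X,\,\exp}(M)$ in the required sense. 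Then $\psi:=\psi_{1}+\psi_{2}$ with $\psi_{1}=\varphi_{T}\in C^{\infty}_{X,\,1-\delta}(M)$ is a smooth $JX$-invariant solution of \eqref{eq:soliton} satisfying $\sigma_{a}+i\partial\bar{\partial}\psi=\sigma_{T}+i\partial\bar{\partial}\psi_{2}>0$, with the asserted decomposition. The genuine content is entirely contained in Proposition \ref{prop-sec-app-poly-dec-comp-supp-inf} and Theorem \ref{theo-exi-sol-cpct-supp-data}; what remains is the bookkeeping of transferring the standing hypotheses—above all \eqref{add-ass}—to the modified background $\sigma_{T}$.
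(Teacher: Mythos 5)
Your proposal is correct and follows essentially the same route as the paper: apply Proposition \ref{prop-sec-app-poly-dec-comp-supp-inf} to absorb the tail of $F$ into a polynomially decaying potential $\psi_1=\varphi_T$, rewrite \eqref{eq:soliton} as a complex Monge--Amp\`ere equation for $\psi_2$ over the modified background $\sigma_{\psi_1}$ with compactly supported data $(1-\chi_T)F$, and invoke Theorem \ref{theo-exi-sol-cpct-supp-data}. Your verification of the transferred hypotheses—in particular that the $X$-potential of $\sigma_T$ differs from $f_a$ by $\tfrac12 X\cdot\varphi_T+\mathrm{const}$, hence from $\varphi_a(t)$ by $O(1)$, so that \eqref{add-ass} holds—is exactly the check the paper performs.
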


\begin{proof}
Fix $\delta\in\left(0,\,\frac{1}{2}\right)$. Then by Proposition \ref{prop-sec-app-poly-dec-comp-supp-inf}, there exists a smooth function
$\psi_1\in C_{X,\,1-\delta}^{\infty}(M)$ such that $\sigma+i\partial\bar{\partial}\psi_1>0$, $\mathcal{L}_{JX}\psi_1=0$, and such that
\begin{equation}\label{MA-proof-main-thm-2}
\log\left(\frac{\left(\sigma+i\partial\bar{\partial}\psi_1\right)^n}{\sigma^n}\right)+\frac{X}{2}\cdot \psi_1=\chi_TF,
\end{equation}
where $\chi_T$ is a smooth $JX$-invariant cut-off function supported on $\{t\geq T\}$. Setting $\sigma_{\psi}:=\sigma+i\partial\bar{\partial}\psi$ and
using \eqref{MA-proof-main-thm-2}, we rewrite \eqref{eq:soliton} as
\begin{equation*}
\begin{split}
\log\left(\frac{\sigma_{\psi}^n}{\sigma^n}\right)+\frac{X}{2}\cdot \psi&=\chi_T\cdot F+(1-\chi_T)\cdot F\\
&=\log\left(\frac{\left(\sigma+i\partial\bar{\partial}\psi_1\right)^n}{\sigma^n}\right)+\frac{X}{2}\cdot \psi_1+(1-\chi_T)\cdot F,
\end{split}
\end{equation*}
which leads to the reduced equation
\begin{equation}
\begin{split}\label{MA-proof-main-thm-3}
\log\left(\frac{\left(\sigma_{\psi_1}+i\partial\bar{\partial}\varphi\right)^n}{\sigma_{\psi_1}^n}
\right)+\frac{X}{2}\cdot \varphi&=\underbrace{(1-\chi_T)\cdot F}_{\text{compactly supported}},\qquad\varphi:=\psi-\psi_1,
\end{split}
\end{equation}
where $\sigma_{\psi_{1}}:=\sigma+i\partial\bar{\partial}\psi_{1}$. Now, since $\psi_{1}\in C^{\infty}_{X,\,1-\delta}(M)$, the
asymptotics \eqref{asy-cao-def-sec5} hold true, and akin to \eqref{botox}, we have that
$-\sigma_{\psi_{1}}\lrcorner JX=d\left(f_{a}+\frac{X}{2}\cdot\psi_{1}\right)$. After noting in addition that
$f_{a}+\frac{X}{2}\cdot\psi_{1}=\varphi_{a}(t)+O(1)$, we see that Theorem \ref{theo-exi-sol-cpct-supp-data} applies with $\tau=\sigma_{\psi_1}$
and data $(1-\chi_T)\cdot F$. This yields a solution $\psi_2\in C^{\infty}_{X,\,\exp}(M)$
of \eqref{MA-proof-main-thm-3}. With this, we conclude the proof of Theorem \ref{theo-main-sol-data-pol-dec}.
\end{proof}

Restoring now the subscript $a$ to $\sigma$, let $\delta\in\left(0,\,\frac{1}{2}\right)$
and for this particular choice of $\delta$, let $\psi_{\delta}$ denote the solution of
\eqref{eq:soliton} given by Theorem \ref{theo-main-sol-data-pol-dec} for $\sigma_{a}$.
Write $\omega_{\delta,\,a}:=\sigma_{a}+i\partial\bar{\partial}\psi_{\delta}$. Then
$\omega_{\delta,\,a}$ is a steady gradient K\"ahler-Ricci soliton in $\mathfrak{k}$ with soliton vector field $X$
which, by virtue of Proposition \ref{lemma5.7}(iii) and the fact that $\psi_{\delta}\in C^{\infty}_{X,\,1-\delta}(M)$,
satisfies \eqref{amazing2} with $\varepsilon=\delta$, and correspondingly \eqref{amazing} by Proposition \ref{coro-asy-Cao-met}.
Moreover, the $JX$-invariance of $\psi_{\delta}$ implies that $\mathcal{L}_{JX}\omega_{\delta,\,a}=0$.

\subsubsection{Independence of the parameters}\label{independent}
All that remains to show is that $\omega_{\delta,\,a}\in\mathfrak{k}$ (respectively $\omega_{a}\in\mathfrak{k}$)
is independent of $\delta$ and $a$ (resp.~$a$). This will then allow us to set
$\omega:=\omega_{\delta,\,a}$ (resp.~$\omega:=\omega_{a}$),
resulting in the steady gradient K\"ahler-Ricci soliton $\omega$ of Theorem \ref{main} with the desired properties. We prove this for
$\omega_{\delta,\,a}$ only. The proof for $\omega_{a}$ is similar.

To this end, consider the K\"ahler forms $\omega_{1}:=\omega_{\delta_{1},\,a_{1}}$ and $\omega_{2}:=\omega_{\delta_{2},\,a_{2}}$ in the same
K\"ahler class $\mathfrak{k}$ of $M$, where $0\leq a_{1}\neq a_{2}$ and without loss of generality we assume that
$0<\delta_{1}<\delta_{2}<\frac{1}{2}$. Then by Lemma \ref{deldelbar}, there exists a smooth real-valued function $u:M\to\mathbb{R}$
such that
\begin{equation}\label{stanco2}
\omega_{2}-\omega_{1}=i\partial\bar{\partial}u.
\end{equation}
By averaging $u$ over the real torus action on $M$ induced by the torus action on $C_{0}$
generated by $J_{0}r\partial_{r}$, we may assume that $\mathcal{L}_{JX}u=0$. Now, we know from the asymptotics that
\begin{equation}\label{tired2}
\omega_{2}-\omega_{1}=\tilde{\omega}_{a_{2}}-\tilde{\omega}_{a_{1}}+i\partial\bar{\partial}\phi
=i\partial\bar{\partial}(\Phi_{a_{2}}-\Phi_{a_{1}}+\phi)
\end{equation}
for some $\phi\in C^{\infty}_{X,\,1-\delta_{1}}(M)$, where $\Phi_{a}(t)$ is the K\"ahler potential of
$\tilde{\omega}_{a}$ as in Proposition \ref{caosoliton}. On subtracting \eqref{stanco2} from \eqref{tired2}, we see that at infinity
\begin{equation*}
i\partial\bar{\partial}(\Phi_{a_{2}}-\Phi_{a_{1}}+\phi-u)=0.
\end{equation*}
Set $G:=\Phi_{a_{2}}-\Phi_{a_{1}}+\phi-u$. Then $G$ is a smooth real-valued pluriharmonic function on the end of $M$ with
$\mathcal{L}_{JX}G=0$ and so by Lemma \ref{pluri}(i) must be equal to a constant. Therefore by subtracting a constant from
$u$, we may assume that
\begin{equation}\label{star}
u=\Phi_{a_{2}}-\Phi_{a_{1}}+\phi
\end{equation}
outside a compact subset of $M$.

Returning now to \eqref{stanco2}, we deduce from Lemma \ref{cma} that
\begin{equation*}
i\partial\bar{\partial}\left(\log\left(\frac{(\omega_{1}+i\partial\bar{\partial}u)^{n}}{\omega_{1}^{n}}\right)
+\frac{X}{2}\cdot u\right)=0.
\end{equation*}
Since $\mathcal{L}_{JX}\left(\log\left(\frac{(\omega_{1}+i\partial\bar{\partial}\varphi)^{n}}{\omega_{1}^{n}}\right)
+\frac{X}{2}\cdot\varphi\right)=0$, it subsequently follows from Lemma \ref{pluri}(ii) that
\begin{equation}\label{soccers}
\log\left(\frac{(\omega_{1}+i\partial\bar{\partial}u)^{n}}{\omega_{1}^{n}}\right)
+\frac{X}{2}\cdot u=C
\end{equation}
for some constant $C$. Next recalling \eqref{star}, we see that at infinity,
\begin{equation*}
\begin{split}
X\cdot u &=4\Phi'_{a_{2}}(t)-4\Phi'_{a_{1}}(t)+X\cdot\phi\\
&=4\varphi_{a_{2}}(t)-4\varphi_{a_{1}}(t)+X\cdot\phi\\
&=O\left(\frac{(\log t)^{2}}{t^{2}}\right)+O(t^{-2+\delta_{1}})\\
&=o(1),
\end{split}
\end{equation*}
where we have used Proposition \ref{potential} and the fact that $\phi\in C^{\infty}_{X,\,1-\delta_{1}}(M)$ in the penultimate line.
Moreover, we also find that at infinity,
\begin{equation*}
\begin{split}
i\partial\bar{\partial}u &=\omega_{2}-\omega_{1}\\
&=\tilde{\omega}_{a_{2}}-\tilde{\omega}_{a_{1}}+\underbrace{i\partial\bar{\partial}\psi_{\delta_{2}}}_{=\,O\left(t^{-2+\delta_{2}}\right)}
-\underbrace{i\partial\bar{\partial}\psi_{\delta_{1}}}_{=\,O\left(t^{-2+\delta_{1}}\right)}\\
&=\underbrace{(\tilde{\omega}_{a_{2}}-\hat{\omega})}_{=\,O\left(t^{-1}\log(t)\right)}+\underbrace{(\hat{\omega}-\tilde{\omega}_{a_{1}})}_{=\,O\left(t^{-1}\log(t)\right)}+O(t^{-2+\delta_{1}})\\
&=O\left(t^{-1}\log(t)\right)+O\left(t^{-2+\delta_{1}}\right)\\
&=o(1),
\end{split}
\end{equation*}
where this time we have used Proposition \ref{coro-asy-Cao-met} in the penultimate line. These last two observations imply that
$C=0$ in \eqref{soccers} and so
\begin{equation*}
\log\left(\frac{(\omega_{1}+i\partial\bar{\partial}u)^{n}}{\omega_{1}^{n}}\right)
+\frac{X}{2}\cdot u=0.
\end{equation*}
The strong maximum principle of Hopf applied to this equation (as it was for instance in \cite[p.13]{olivier}) now implies that
$u$ is a constant. Hence $\omega_{1}=\omega_{2}$, as required.

\subsection{Uniqueness}\label{finished}

In the setting of Theorem \ref{main}, suppose that for some $\varepsilon>0$, $\nu$ is a complete steady gradient K\"ahler-Ricci soliton in
the K\"ahler class $\mathfrak{k}$ of $M$ with $\mathcal{L}_{JX}\nu=0$ satisfying
\begin{equation*}
\left|\widehat{\nabla}^{i}\left(\mathcal{L}_{X}^{(j)}(\pi_{*}\nu-\hat{\omega})\right)
\right|_{\hat{g}}\leq C(i,\,j)t^{-\varepsilon-\frac{i}{2}-j}\quad\textrm{for all $i,\,j\in\mathbb{N}_{0}$}.
\end{equation*}
With $\omega$ denoting the steady gradient K\"ahler-Ricci soliton in $\mathfrak{k}$ from
Theorem \ref{main}, write $\rho_{\nu}$ and $\rho_{\omega}$ for the Ricci form of $\nu$ and $\omega$ respectively.
Throughout, we identify $M$ and $C_{0}$ at infinity using the resolution map. We begin with the following auxiliary claim
 which essentially asserts that the difference between $\nu$ and $\omega$ must be of order $O(t^{-1})$.

\begin{claim}\label{hotwire}
There exists $c\in\mathbb{R}$ and a function $\phi\in C^{\infty}_{X,\,\delta}(M)$
for some $\delta>0$ such that
$$\nu-\omega-ci\partial\bar{\partial}f=i\partial\bar{\partial}\phi.$$
\end{claim}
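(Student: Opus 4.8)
The plan is to reduce the statement to an invertibility question for the drift Laplacian of $\omega$. Since $[\nu]=[\omega]=\mathfrak{k}$, the real $(1,1)$-form $\nu-\omega$ is exact, so Lemma \ref{deldelbar} provides a smooth real-valued $u$ on $M$ with $\nu-\omega=i\partial\bar{\partial}u$; averaging $u$ over the torus action generated by $J_{0}r\partial_{r}$ we may assume $\mathcal{L}_{JX}u=0$. Applying Lemma \ref{cma} to the two steady gradient K\"ahler--Ricci solitons $\omega$ and $\nu$, which share the soliton vector field $X$, gives $i\partial\bar{\partial}\big(\log\frac{\nu^{n}}{\omega^{n}}+\tfrac{X}{2}\cdot u\big)=0$, and since the bracketed function is $JX$-invariant, Lemma \ref{pluri}(ii) forces $\log\frac{\nu^{n}}{\omega^{n}}+\tfrac{X}{2}\cdot u=C$ for some constant $C\in\mathbb{R}$. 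Taylor-expanding the Monge--Amp\`ere operator of $\omega$ as in \eqref{equ:taylor-exp} then turns this into $\big(\Delta_{\omega}+\tfrac{X}{2}\cdot\big)u=C+Q$, where $Q:=\int_{0}^{1}\!\int_{0}^{s}|\partial\bar{\partial}u|^{2}_{h_{\lambda}}\,d\lambda\,ds\ge 0$ and $h_{\lambda}$ is the K\"ahler metric of $(1-\lambda)\omega+\lambda\nu$.

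Next I would absorb the constant $C$ into a multiple of $i\partial\bar{\partial}f$. Using the Ricci soliton identities of Lemma \ref{solitonid} for $\omega$, one checks that $\big(\Delta_{\omega}+\tfrac{X}{2}\cdot\big)f$ equals a positive constant $b$ plus a term decaying like $t^{-\varepsilon}$ with derivatives (using that $\omega$ satisfies \eqref{covid19} and is itself a soliton; if $f$ is the soliton potential of $\omega$ then $\big(\Delta_{\omega}+\tfrac{X}{2}\cdot\big)f\equiv\tfrac{c(\omega)}{2}$ exactly and $i\partial\bar{\partial}f=\rho_{\omega}$). Choosing $c$ so that $\big(\Delta_{\omega}+\tfrac{X}{2}\cdot\big)(cf)=C$ modulo a fast-decaying term and setting $w:=u-cf$, we obtain $\big(\Delta_{\omega}+\tfrac{X}{2}\cdot\big)w=Q+(\text{fast decaying})$. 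The key preliminary estimate is that $w$ has \emph{sublinear} growth: from $\tfrac{X}{2}\cdot u=C-\log\frac{\nu^{n}}{\omega^{n}}$ together with $|\nu-\omega|_{\hat{g}}=O(t^{-\varepsilon})$ and the uniform equivalence of $\nu,\omega,\hat{g}$ at infinity one gets $X\cdot u=2C+O(t^{-\varepsilon})$, and similarly $X\cdot f$ is constant up to $O(t^{-\varepsilon})$; matching the constants via the choice of $c$ gives $X\cdot w=O(t^{-\varepsilon})$, and integrating along the flow lines of $X$, which sweep out the end of $M$, yields $w=O(t^{\,1-\varepsilon})=o(t)$.

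It then remains to upgrade $w$ from sublinear growth to genuine polynomial decay. The form $\nu-\omega$ and all of its $\widehat{\nabla}^{i}\mathcal{L}_{X}^{(j)}$-derivatives decay like $t^{-\varepsilon-i/2-j}$ (the hypothesis on $\nu$ together with Proposition \ref{coro-asy-Cao-met}), so $Q$, being quadratic in $i\partial\bar{\partial}u$ with metric coefficients bounded with derivatives, lies in $C^{\infty}_{X,2\varepsilon}(M)$. Applying Theorem \ref{surjectivity-drift-Laplacian} with $\tau=\omega$, which satisfies \eqref{covid19}, one solves $\big(\Delta_{\omega}+\tfrac{X}{2}\cdot\big)\tilde{w}=Q+(\text{fast decaying})$ with $\tilde{w}\in C^{\infty}_{X,2\varepsilon-1}(M)$, in particular $\tilde{w}=o(t)$. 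Then $w-\tilde{w}$ is a $JX$-invariant, sublinearly growing element of the kernel of $\Delta_{\omega}+\tfrac{X}{2}\cdot$; since $\omega$ is a complete steady gradient K\"ahler--Ricci soliton, the Liouville-type result recorded in the remark following Theorem \ref{surjectivity-drift-Laplacian} forces $w-\tilde{w}$ to be constant. Hence $u=cf+\tilde{w}+\mathrm{const}$, i.e. $\nu-\omega-c\,i\partial\bar{\partial}f=i\partial\bar{\partial}\tilde{w}$. If $2\varepsilon-1>0$ this is the claim with $\phi=\tilde{w}$ and $\delta=2\varepsilon-1$; otherwise one feeds the improved bound $\nu-\omega-c\,i\partial\bar{\partial}f=i\partial\bar{\partial}\tilde{w}=O(t^{-\min(1,2\varepsilon)})$ back into the estimate for $Q$, whose decay rate then at least doubles, and iterates — after finitely many steps the rate caps and one obtains $\phi\in C^{\infty}_{X,\delta}(M)$ for some $\delta>0$.

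The main obstacle is precisely this last step: a single inversion of the drift Laplacian via Theorem \ref{surjectivity-drift-Laplacian} only produces a solution in $C^{\infty}_{X,2\varepsilon-1}(M)$, which may still grow slowly when $\varepsilon$ is small, so the claim cannot be read off directly. What makes it work is the interplay between the rigidity of the drift Laplacian on sublinearly growing functions (kernel consisting only of constants) and the fact that the nonlinearity $Q$ is quadratic, so that each improvement of the asymptotics of $\nu-\omega$ is squared on the right-hand side; once the decay of $Q$ exceeds $t^{-1}$ one may finish via Theorem \ref{iso-sch-Laplacian-pol} instead. Some care is also needed to verify that $Q$ and all its $\widehat{\nabla}^{i}\mathcal{L}_{X}^{(j)}$-derivatives decay at the square rate, so that $Q$ genuinely lies in the weighted H\"older spaces to which Theorems \ref{iso-sch-Laplacian-pol} and \ref{surjectivity-drift-Laplacian} apply.
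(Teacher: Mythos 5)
Your proposal is correct and follows essentially the same route as the paper: reduce via Lemma \ref{deldelbar}, Lemma \ref{cma} and Lemma \ref{pluri}(ii) to the equation $\log(\nu^{n}/\omega^{n})+\tfrac{X}{2}\cdot u=2nc$, absorb the constant using $\Delta_{\omega}f+\tfrac{X}{2}\cdot f=2n$, solve away the quadratic remainder with Theorem \ref{surjectivity-drift-Laplacian}, kill the sublinearly growing kernel element by the Liouville theorem of Huang--Zhang--Zhang, and iterate until the decay rate becomes positive. The only cosmetic differences are that you obtain sublinear growth by integrating $X\cdot(u-cf)$ directly along flow lines rather than tracking $\varphi$ and $f$ separately, and you explicitly switch to Theorem \ref{iso-sch-Laplacian-pol} once the quadratic term decays faster than $t^{-1}$, which if anything is slightly more careful than the paper at that point.
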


\begin{proof}
By Lemma \ref{deldelbar}, there exists a smooth real-valued function $\varphi$ on $M$ such that
\begin{equation*}
\nu=\omega+i\partial\bar{\partial}\varphi
\end{equation*}
that necessarily satisfies
\begin{equation*}
\left|\widehat{\nabla}^k\left(\mathcal{L}_{X}^{(j)}(\pi_{*}(i\partial\bar{\partial}\varphi))\right)\right|_{\hat{g}}\leq C_{k}t^{-\varepsilon-\frac{k}{2}-j}\quad\textrm{for all $j,k\in\mathbb{N}_{0}$}.
\end{equation*}
Since $\mathcal{L}_{JX}\omega=\mathcal{L}_{JX}\nu=0$, by averaging $\varphi$ over the real torus action on $M$ induced by the torus action on $C_{0}$
generated by $J_{0}r\partial_{r}$, we can assume that $\mathcal{L}_{JX}\varphi=0$. Then from Lemma \ref{cma} we
see that
\begin{equation*}
0=i\partial\bar{\partial}\left(\log\left(\frac{(\omega+i\partial\bar{\partial}\varphi)^{n}}{\omega^{n}}\right)
+\frac{X}{2}\cdot\varphi\right).
\end{equation*}
Now, $\mathcal{L}_{JX}\left(\log\left(\frac{(\omega+i\partial\bar{\partial}\varphi)^{n}}{\omega^{n}}\right)
+\frac{X}{2}\cdot\varphi\right)=0$. Lemma \ref{pluri}(ii) therefore asserts that
\begin{equation*}
\log\left(\frac{(\omega+i\partial\bar{\partial}\varphi)^{n}}{\omega^{n}}\right)
+\frac{X}{2}\cdot\varphi=2nc
\end{equation*}
for some constant $c\in\mathbb{R}$. In particular, we deduce that $X\cdot\varphi=O(1)$ so that $\varphi$ grows at most linearly. Indeed,
$X\cdot\varphi=4nc+O(t^{-\varepsilon})$ and so writing $\gamma_{x}(t)$ for the integral curve of $X$ with $\gamma_{x}(0)=x$, we find that
\begin{equation}\label{novel}
\varphi(\gamma_{x}(t))=\varphi(x)+4nct+O(t^{1-\varepsilon}).
\end{equation}

By Lemma \ref{solitonid} and the asymptotics of $\omega$, we know that
\begin{equation*}
|X|^{2}_{g}+\RR_{g}=4n,
\end{equation*}
where $g$ is the K\"ahler metric associated to $\omega$.
In particular, we have that
\begin{equation*}
\frac{d}{dt}(f(\gamma_{x}(t)))=|X|_{g}^{2}(\gamma_{x}(t))=4n-\RR_{g}(\gamma_{x}(t))
\end{equation*}
so that
\begin{equation*}
\begin{split}
f(\gamma_{x}(t))-f(x)&=\int_{0}^{t}\frac{d}{ds}(f(\gamma_{x}(s)))\,ds\\
&=\int_{0}^{t}(4n-\RR_{g}(\gamma_{x}(s)))\,ds\\
&=4nt-\int_{0}^{t}\RR_{g}(\gamma_{x}(s))\,ds.
\end{split}
\end{equation*}
Solving for $t$ and plugging into \eqref{novel} then yields:
\begin{equation}\label{miami}
\varphi(\gamma_{x}(t))-cf(\gamma_{x}(t))=\varphi(x)-cf(x)+c\int_{0}^{t}\RR_{g}(\gamma_{x}(s))\,ds+O(t^{1-\varepsilon}).
\end{equation}

Next consider the equation
\begin{equation}\label{complex}
\Delta_{\omega}\phi_{0}+\frac{X}{2}\cdot\phi_{0}=\log\left(\frac{(\omega+i\partial\bar{\partial}\varphi)^{n}}{\omega^{n}}\right)
-\Delta_{\omega}\varphi.
\end{equation}
At infinity, the right-hand side of this PDE takes the form
\begin{equation*}
\begin{split}
\log\left(\frac{(\omega+i\partial\bar{\partial}\varphi)^{n}}{\omega^{n}}\right)
-\Delta_{\omega}\varphi&=\Delta_{\omega}\varphi+\sum_{k\,=\,2}^{n}{n\choose k}\frac{\omega^{n-k}\wedge(i\partial\bar{\partial}\varphi)^{k}}{\omega^{n}}-\Delta_{\omega}\varphi\\
&=\sum_{k\,=\,2}^{n}{n\choose k}\frac{\omega^{n-k}\wedge(i\partial\bar{\partial}\varphi)^{k}}{\omega^{n}}\\
&=\frac{(i\partial\bar{\partial}\varphi)^{2}\wedge \Psi}{\omega^n},
\end{split}
\end{equation*}
where $\Psi$ denotes a bounded $(n-2,\,n-2)$-form together with its derivatives.
In particular, since $i\partial\bar{\partial}\varphi\in C^{\infty}_{X,\,\delta_{0}}(M)$
for every $\delta_{0}\in(0,\,\varepsilon]$, we see that
\begin{equation*}
\log\left(\frac{(\omega+i\partial\bar{\partial}\varphi)^{n}}{\omega^{n}}\right)-\Delta_{\omega}\varphi\in C^{\infty}_{X,\,2\delta_{0}}(M)
\end{equation*}
for every $\delta_{0}\in(0,\,\varepsilon]$. Consequently, Theorem \ref{surjectivity-drift-Laplacian} applies and tells us that there exists a
function $\phi_{0}\in C^{\infty}_{X,\,-1+2\delta_{0}}(M)$ solving \eqref{complex}
for every $\delta_{0}\in(0,\,\min\{\varepsilon,\,1\})\setminus \mathcal{F}$ for some $\mathcal{F}\subset\left(0,\,\frac{1}{2}\right)$ a fixed finite subset. It follows that
\begin{equation}\label{nor-uni-thm}
\begin{split}
\Delta_{\omega}(\varphi+\phi_{0})+\frac{X}{2}\cdot(\varphi+\phi_{0})&=
\Delta_{\omega}\varphi+\frac{X}{2}\cdot\varphi+\log\left(\frac{(\omega+i\partial\bar{\partial}\varphi)^{n}}{\omega^{n}}\right)
-\Delta_{\omega}\varphi\\
&=\log\left(\frac{(\omega+i\partial\bar{\partial}\varphi)^{n}}{\omega^{n}}\right)+\frac{X}{2}\cdot\varphi\\
&=2nc.
\end{split}
\end{equation}
Since $f$ satisfies $\Delta_{\omega}f+\frac{X}{2}\cdot f=2n$ by Lemma \ref{solitonid}, we deduce from
\eqref{nor-uni-thm} that
$$\left(\Delta_{\omega}+\frac{X}{2}\cdot\right)\left(\varphi-cf+\phi_{0}\right)=0.$$

Recalling now \eqref{miami}, we have that
\begin{equation*}
\left(\varphi-cf+\phi_{0}\right)(\gamma_{x}(t))=\varphi(x)-cf(x)+\phi_{0}(\gamma_{x}(t))+c\int_{0}^{t}\RR_{g}(\gamma_{x}(s))\,ds+O(t^{1-\varepsilon}).
\end{equation*}
Since the fixed point set of $X$ is compact so that the flow-lines of $X$ flow into a compact set (see \cite[Proposition 2.28]{conlondez}),
and since $\RR_{g}=O(t^{-1})$, we know that $\varphi-cf+\phi_{0}$ grows sublinearly.
As a sublinearly growing function lying in the kernel of $\Delta_{\omega}+\frac{X}{2}$,
the Liouville theorem \cite[Corollary 1.4]{Hua-Zha-Zha} asserts that $\varphi-cf+\phi_{0}$ must be equal to a constant.
Thus, modifying $\phi_{0}$ by this constant and setting $\phi_{1}:=cf-\phi_{0}$, we arrive at the fact that
$$\nu=\omega+i\partial\bar{\partial}\phi_{1},$$
where this time
\begin{equation*}
\left|\widehat{\nabla}^k\left(\mathcal{L}_{X}^{(j)}(\pi_{*}(i\partial\bar{\partial}\phi_{1}))\right)\right|_{\hat{g}}
\leq C_{k}t^{-\delta_{1}-\frac{k}{2}-j}\quad\textrm{for all $j,k\in\mathbb{N}_{0}$}
\end{equation*}
and $cf-\phi_{1}\in C^{\infty}_{X,\,-1+\delta_{1}}(M)$
for every $\delta_{1}:=2\delta_0\in(0,\,\min\{2\varepsilon,\,1\})\setminus \mathcal{F}$. Iterating the above argument with $\delta_{1}$
in place of $\varepsilon$, we can find a function $\phi_{2}$ with
$$\nu=\omega+i\partial\bar{\partial}\phi_{2},$$
where
\begin{equation*}
\left|\widehat{\nabla}^k\left(\mathcal{L}_{X}^{(j)}(\pi_{*}(i\partial\bar{\partial}\phi_{2}))\right)\right|_{\hat{g}}
\leq C_{k}t^{-\delta_{2}-\frac{k}{2}-j}\quad\textrm{for all $j,k\in\mathbb{N}_{0}$}
\end{equation*}
and $cf-\phi_{2}\in C^{\infty}_{X,\,-1+2\delta_{2}}(M)$
for every $\delta_{2}\in (0,\,\min\{2\delta_1,\,1\})\setminus \mathcal{F}=(0,\,\min\{2^2\varepsilon,\,1\})\setminus \mathcal{F}$. Continuing in this manner,
we can find a function $\phi_{l}$ with
$$\nu=\omega+i\partial\bar{\partial}\phi_{l},$$
where
\begin{equation*}
\left|\widehat{\nabla}^k\left(\mathcal{L}_{X}^{(j)}(\pi_{*}(i\partial\bar{\partial}\phi_{l}))\right)\right|_{\hat{g}}
\leq C_{k}t^{-\delta_{l}-\frac{k}{2}-j}\quad\textrm{for all $j,k\in\mathbb{N}_{0}$}
\end{equation*}
and where $cf-\phi_{l}\in C^{\infty}_{X,\,-1+2\delta_{l}}(M)$
for every $\delta_{l}\in(0,\,\min\{2^{l}\varepsilon,\,1\})\setminus \mathcal{F}$.
Choosing $l$ large enough such that $2^{-l-1}<\varepsilon\leq 2^{-l}$, we can then write
$$\nu=\omega+i\partial\bar{\partial}\phi$$
for a smooth function $\phi$ satisfying
$cf-\phi\in C^{\infty}_{X,\,-1+2\delta_{l}}(M)$
for every $\delta_{l}\in\left(\frac{1}{2},\,2^l\varepsilon\right)\setminus \mathcal{F}$. In particular,
$cf-\phi\in C^{\infty}_{X,\,\delta}(M)$ for some $\delta>0$, as desired.
\end{proof}

Next, let $\psi_{s}$ denote the family of diffeomorphisms generated by the vector field $\frac{X}{2}$ with $\psi_{0}=\operatorname{Id}$
i.e.,
\begin{equation*}
\frac{\partial\psi_{s}}{\partial s}(x)=\frac{X(\psi_{s}(x))}{2},\qquad\psi_{0}=\operatorname{Id}.
\end{equation*}
Then $\omega(s):=\psi^{*}_{s}\omega,\,s\in\mathbb{R},$ defines a backward K\"ahler-Ricci flow on $M$ with $\omega(0)=\omega$
so that $\partial_{s}\omega(s)=\rho_{\omega(s)}$, where $\rho_{\omega(s)}$ denotes the Ricci form of $\omega(s)$.
Our next observation, contained in the following claim, concerns the asymptotics of $\omega(s)$.

\begin{claim}
For all $s\in\R$,
$$\omega(s)-\omega\in C_{X,\,1}^{\infty}(M).$$
\end{claim}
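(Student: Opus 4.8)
The plan is to bound the difference $\omega(s) - \omega = \psi_s^*\omega - \omega$ by integrating the flow equation $\partial_s\omega(s) = \rho_{\omega(s)}$ along $s$, combined with the soliton identity $\rho_\omega = \frac12\mathcal{L}_X\omega$ and the asymptotics of $\omega$ already recorded in Theorem \ref{main} (namely $\pi_*\omega - \hat\omega \in C^\infty_{X,\varepsilon}$ for every $\varepsilon\in(0,1)$, together with the sharper estimates \eqref{even-more-beautiful}-type bounds for Lie derivatives coming from Proposition \ref{coro-asy-Cao-met} and Lemma \ref{budwiser}). The key point is that $\omega$ is a steady gradient K\"ahler-Ricci soliton, so along the flow $\omega(s)$ is \emph{itself} essentially the same soliton transported by diffeomorphisms, and $\partial_s \omega(s) = \psi_s^*\rho_\omega = \tfrac12\psi_s^*(\mathcal{L}_X\omega) = \tfrac12\mathcal{L}_X(\psi_s^*\omega) = \tfrac12\mathcal{L}_X\omega(s)$, using that $\psi_s$ commutes with the flow of $X$. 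Thus $\partial_s(\omega(s)-\omega) = \tfrac12\mathcal{L}_X\omega(s)$, and one needs to control $\mathcal{L}_X\omega(s)$ uniformly for $s$ in compact intervals.

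First I would note that $\mathcal{L}_X\omega = 2\rho_\omega$, and since $\pi_*\omega - \hat\omega$ decays with all derivatives and Lie derivatives at the rates given in the refined asymptotics following Theorem \ref{main} (in particular $|\widehat\nabla^i\mathcal{L}_X^{(j)}(\pi_*\omega - \hat\omega)|_{\hat g} = O(t^{-1-i/2-j})$ for $j\geq 1$), we have $\mathcal{L}_X\omega = \mathcal{L}_X\hat\omega + \mathcal{L}_X(\omega-\hat\omega)$ where the first term equals $2n\omega^T$ and the second is $O(t^{-2})$ with derivatives — hence $\mathcal{L}_X\omega \in C^\infty_{X,1}(M)$ (the leading term $2n\omega^T$ has $|\widehat\nabla^i\mathcal{L}_X^{(j)}(\omega^T)|_{\hat g} = O(t^{-1-i/2-j})$ by Proposition \ref{basis} and the fact that $\mathcal{L}_X\omega^T = 0$). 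Next I would argue that the same is true for $\omega(s) = \psi_s^*\omega$ uniformly in $s\in[-S_0,S_0]$: since $\psi_s$ is generated by $\tfrac12 X = 2\partial_t$, it shifts $t$ by $4s$ (at least asymptotically, up to bounded corrections by Lemma \ref{tuxedo}/\eqref{covid19}), so $\psi_s^*$ preserves the function spaces $C^\infty_{X,\beta}(M)$ with uniformly comparable norms on compact $s$-intervals; more precisely $\psi_s^*\hat\omega - \hat\omega$ can be computed explicitly (it is $i\partial\bar\partial$ of a quadratic-in-$t$ shift, giving a term in $C^\infty_{X,1}$) and $\psi_s^*(\omega-\hat\omega)$ stays in $C^\infty_{X,1}$ since the weight $t$ changes by a bounded factor. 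Therefore $\mathcal{L}_X\omega(s) \in C^\infty_{X,1}(M)$ with norm bounded uniformly for $s\in[-S_0,S_0]$.

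Then I would integrate: $\omega(s) - \omega = \tfrac12\int_0^s \mathcal{L}_X\omega(u)\,du$, and since the integrand lies in $C^\infty_{X,1}(M)$ with uniformly bounded $C^{2k,2\alpha}_{X,1}$-norm on $[0,s]$ (or $[s,0]$), the integral again lies in $C^\infty_{X,1}(M)$. This gives the claim. Alternatively — and perhaps more cleanly — one can avoid the uniformity-in-$s$ bookkeeping by observing directly that $\omega(s) - \omega$ is exact (both represent the same class since $\psi_s$ is homotopic to the identity), so by Lemma \ref{deldelbar} we may write $\omega(s) - \omega = i\partial\bar\partial u_s$, and then use that $\psi_s^*\hat\omega - \hat\omega$ is an explicit $i\partial\bar\partial$ of something in $C^\infty_{X,1}$ together with $\psi_s^*(\pi_*\omega - \hat\omega)$ having the same decay as $\pi_*\omega - \hat\omega$ up to a bounded change of the weight variable; subtracting, $\pi_*\omega(s) - \pi_*\omega \in C^\infty_{X,1}(M)$ follows from the triangle inequality and the asymptotic model being invariant under the torus action.

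The main obstacle I anticipate is making the statement ``$\psi_s^*$ preserves $C^\infty_{X,\beta}(M)$ with uniformly bounded norms'' precise: the diffeomorphism $\psi_s$ is \emph{not} exactly the coordinate shift $t\mapsto t+4s$ on $M$ (it is only asymptotic to it, and only along the cone end), so one must check that pulling back by $\psi_s$ commutes appropriately with $\widehat\nabla$ and $\mathcal{L}_X$ up to controlled errors — this uses $[\psi_s, \text{flow of }X] = 0$ (since both are generated by multiples of $X$) to handle the $\mathcal{L}_X^{(j)}$ factors exactly, and uses the uniform equivalence of $\psi_s^*\hat g$ with $\hat g$ on compact $s$-intervals (together with uniform bounds on $\widehat\nabla^i$ of the transition) for the covariant-derivative factors. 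Once this functoriality is in hand, the weighted estimate for $\mathcal{L}_X\omega(s)$ and its integral is routine, so I would expect the write-up to be short, citing Proposition \ref{coro-asy-Cao-met}, Proposition \ref{basis}, and the refined asymptotics following Theorem \ref{main}.
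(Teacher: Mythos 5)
Your proposal follows essentially the same route as the paper: both proofs integrate the flow equation $\partial_s\omega(s)=\rho_{\omega(s)}$ (equivalently $\tfrac12\mathcal{L}_X\omega(s)$, since $\rho_{\omega(s)}=\psi_s^{*}\rho_{\omega}$) along the flow lines of $X$, and both exploit the fact that $t(\gamma_x(v))=4v+t(x)$ together with $|\Ric(g)|_{g}=O(t^{-1})$ to get $|\omega(s)-\omega|_{g}\leq C(s)\log(1+4s/t)=O(t^{-1})$. The one place where you stop short is exactly the obstacle you flag at the end: for the higher covariant derivatives, the natural decay estimates live on the moving connection $\nabla^{g(u)}$ (since $|\nabla^{g(u)}\Ric(g(u))|_{g(u)}(x)=|\nabla^{g}\Ric(g)|_{g}(\gamma_x(u))$), and one must transfer them to a fixed connection. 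The paper does not prove abstract functoriality of $C^{\infty}_{X,\beta}$ under $\psi_s^{*}$; instead it differentiates $|\nabla^{g}(g(u)-g)|^{2}_{g}$ in $u$, writes $\nabla^{g}-\nabla^{g(u)}$ in terms of $\nabla^{g}(g(u)-g)$ via the Christoffel formula, and closes a Gr\"onwall inequality whose source term is $|\nabla^{g}\Ric(g)|^{2}_{g}(\gamma_x(u))=O((4u+t)^{-3})$, yielding $|\nabla^{g}(g(s)-g)|_{g}=O(t^{-3/2})$ and then higher orders by induction; the Lie-derivative weights come from $\mathcal{L}_X(\omega(s)-\omega)=2(\rho_{\omega(s)}-\rho_{\omega})=-2i\partial\bar\partial\log(\omega(s)^n/\omega^n)$ and another induction. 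So your plan is sound and your diagnosis of the difficulty is accurate, but to make it a proof you would need to either carry out this Gr\"onwall-type comparison of connections or genuinely establish the uniform boundedness of $\psi_s^{*}$ on the weighted spaces; as written, that step is asserted rather than proved. (Minor point: $\mathcal{L}_X\hat\omega=4n\omega^{T}$, not $2n\omega^{T}$, though this does not affect anything.)
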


\begin{proof}
Let $g$ and $g(s)$ denote the K\"ahler metrics determined by $\omega$ and $\omega(s)$
respectively. It suffices to prove that
\begin{equation*}
\left|(\nabla^{g})^{i}(g(s)-g)\right|
_{g}\leq C_{k}t^{-1-\frac{i}{2}}\quad\textrm{for all $i\geq0$}
\end{equation*}
and that
\begin{equation*}
\left|(\nabla^{g})^{j}\left(\mathcal{L}_{X}^{(k)}(\omega(s)-\omega)\right)\right|
_{g}\leq C_{k}t^{-1-\frac{j}{2}-k}\quad\textrm{for all $j\geq0$ and $k\geq1$}.
\end{equation*}
To this end, we proceed as in the proof of \cite[Theorem $3.8$]{conlondez}.

For any $x\in M$, let $\gamma_{x}(v):=\psi_v(x)$ denote the flow of $\frac{X}{2}$ with $\gamma_{x}(0)=x$.
Then $$|\operatorname{Rm}(g(s))|_{g(s)}(x)=|\operatorname{Rm}(g)|_{g}(\gamma_{x}(s))$$
which is bounded above by some positive constant $K$, and so by integrating the backward Ricci flow equation,
it follows that
\begin{equation}\label{equiv}
e^{-Ks}g(x)\leq g(s)(x)\leq e^{Ks}g(x)\qquad\textrm{for all $x\in M$}.
\end{equation}
Thus, for any $x\in M\setminus E$,
\begin{equation*}
\begin{split}
|g(s)-g|_{g}(x)&\leq\int_{0}^{s}|\partial_{u}g(u)|_{g}(x)\,du\leq C\int_{0}^{s}|\Ric(g(u))|_{g}(x)\,du\leq C(s)\int_{0}^{s}|\Ric(g(u))|_{g(u)}(x)\,du\\
&=C(s)\int_{0}^{s}|\Ric(g)|_{g}(\gamma_{u}(x))\,du\leq C(s)\int_{0}^{s}(t(\gamma_{u}(x)))^{-1}\,du,\\
\end{split}
\end{equation*}
where we have used \eqref{equiv}. Now, since $$\frac{\partial}{\partial v}(t(\gamma_{x}(v)))=dt(X)=4,$$
we have that $t(\gamma_{x}(v))=4v+t(x).$ Hence if $t(x)$ is larger than $8s$ say, so that $t(\gamma_{x}(v))>0$ for all $|v|\leq s$, then
\begin{equation*}
\begin{split}
|g(s)-g|_{g}(x)&\leq C(s)\int_{0}^{s}(t(\gamma_{u}(x)))^{-1}\,du=C(s)\int_{0}^{s}(4v+t(x))^{-1}\,du\\
&=C(s)\ln\left(\frac{4s+t(x)}{t(x)}\right)=C(s)\ln\left(1+\frac{4s}{t(x)}\right)\leq Ct(x)^{-1}.
\end{split}
\end{equation*}

As for the covariant derivative, we must work slightly harder. Recall that if $T$ is a tensor on $M$, then $\nabla^{g(s)}T
=\nabla^gT+g(s)^{-1}\ast\nabla^g(g(s)-g)\ast T$ since at the level of Christoffel symbols, one has that
$$\Gamma(g(s))_{ij}^k=\Gamma(g)_{ij}^k+\frac{1}{2}g(s)^{km}\left(\nabla_i^g(g(s)-g)_{jm}+\nabla_j^g(g(s)-g)_{im}-\nabla_m^g(g(s)-g)_{ij}\right).$$
In light of this, we have for all $x\in M\setminus E$,
\begin{equation*}
\begin{split}
\partial_u&\left(|\nabla^{g}(g(u)-g)|^2_{g}\right)(x)\leq C|\nabla^g\Ric(g(u))|_g(x)|\nabla^g(g(u)-g)|_g(x)\\
&\leq C\left(|\nabla^{g(u)}\Ric(g(u))|_g(x)+\left(\left|\left(\nabla^g-\nabla^{g(u)}\right)\Ric(g(u))\right|_g(x)\right)\right)|\nabla^g(g(u)-g)|_g(x)\\
&\leq C\left(|\nabla^{g(u)}\Ric(g(u))|_{g(u)}(x)+\left(\left|\left(\nabla^g-\nabla^{g(u)}\right)\Ric(g(u))\right|_g(x)\right)\right)|\nabla^g(g(u)-g)|_g(x)\\
&=C\left(|\nabla^{g}\Ric(g)|_g(\gamma_{x}(u))+\left|\left(\nabla^g-\nabla^{g(u)}\right)\Ric(g(u))\right|_g(x)\right)|\nabla^g(g(u)-g)|_g(x)\\
&\leq C\left(|\nabla^{g}\Ric(g)|_g(\gamma_{x}(u))+|\nabla^g(g(u)-g)|_g(x)|\Ric(g(u))|_g(x)\right)|\nabla^g(g(u)-g)|_g(x)\\
&\leq C\left(|\nabla^{g}\Ric(g)|_g(\gamma_{x}(u))+|\nabla^g(g(u)-g)|_g(x)|\Ric(g(u))|_{g(u)}(x)\right)|\nabla^g(g(u)-g)|_g(x)\\
&\leq C\left(|\nabla^{g}\Ric(g)|_g(\gamma_{x}(u))+|\nabla^g(g(u)-g)|_g(x)|\Ric(g)|_{g}(\gamma_{x}(u))\right)|\nabla^g(g(u)-g)|_g(x)\\
&\leq C\left(|\nabla^{g}\Ric(g)|_g(\gamma_{x}(u))+|\nabla^g(g(u)-g)|_g(x)\right)|\nabla^g(g(u)-g)|_g(x)\\
&\leq C|\nabla^g(g(u)-g)|^2(x)+C|\nabla^{g}\Ric(g)|^{2}_g(\gamma_{x}(u)),
\end{split}
\end{equation*}
where throughout $C$ denotes a positive constant depending on $s$ that may vary from line to line and where Young's inequality has been used in the last line.
This inequality may be rewritten as
$$\partial_u\left(e^{-Cu}|\nabla^{g}(g(u)-g)|^2_{g}\right)(x)\leq
Ce^{-Cu}|\nabla^{g}\Ric(g)|^{2}_g(\gamma_{x}(u)),$$
which, upon integrating over $[0,\,s]$, yields the fact that
$$e^{-Cs}|\nabla^{g}(g(s)-g)|^2_{g}(x)
-|\nabla^{g}(g(0)-g)|^2_{g}(x)\leq C\int_{0}^{s}e^{-Cu}|\nabla^{g}\Ric(g)|^{2}_g(\gamma_{x}(u))\,du.$$
Since $|\nabla^{g}(g(0)-g)|^2_{g}(x)=0$, we deduce that
\begin{equation*}
\begin{split}
|\nabla^{g}(g(s)-g)|^2_{g}(x)&\leq Ce^{Cs}\int_{0}^{s}e^{-Cu}|\nabla^{g}\Ric(g)|^{2}_g(\gamma_{x}(u))\,du\\
&\leq C(s)\int_{0}^{s}|\nabla^{g}\Ric(g)|^{2}_g(\gamma_{x}(u))\,du\\
&\leq C(s)\int_{0}^{s}(t(\gamma_{x}(u)))^{-3}\,du\\
&\leq C(s)\int_{0}^{s}(4u+t(x))^{-3}\,du\\
&\leq C(s)\left(\frac{1}{(4s+t(x))^{2}}-\frac{1}{t(x)^{2}}\right)\\
&\leq C(s)t(x)^{-3}
\end{split}
\end{equation*}
so that
$$|\nabla^{g}(g(s)-g)|_{g}(x)\leq Ct(x)^{-\frac{3}{2}}.$$
The cases $i\geq 2$ are proved by induction.

Next, concerning the Lie derivatives, we know that
$$\mathcal{L}_{X}(\omega(s)-\omega)=2\rho_{\omega(s)}-2\rho_{\omega}.$$
Since $|(\nabla^{g})^{j}(\omega(s)-\omega)|_{g}=O(t^{-\frac{j}{2}})$ for all $j\geq0$, we then find that
$$|(\nabla^{g})^{j}\mathcal{L}_{X}(\omega(s)-\omega)|_g=2|(\nabla^{g})^{j}(\rho_{\omega(s)}-\rho_{\omega})|_{g}=O\left(t^{-2-\frac{j}{2}}\right)
\qquad\textrm{for all $j\geq0$}.$$
The conclusion now follows from another induction argument using the fact that
$$\rho_{\omega(s)}-\rho_{\omega}=-i\partial\bar{\partial}\log\left(\frac{\omega(s)^{n}}{\omega^{n}}\right)$$
so that
$$|(\nabla^{g})^{j}\mathcal{L}^{(k)}_{X}(\omega(s)-\omega)|_{g}
\leq C\left|(\nabla^{g})^{\,j+2}X^{(k-1)}\cdot\log\left(\frac{\omega(s)^{n}}{\omega^{n}}\right)\right|_{g}
\qquad\textrm{for all $j\geq0$ and $k\geq1$}.$$
\end{proof}

Applying Claim \ref{hotwire} with $\omega(s)$ in place of $\nu$, we see accordingly that there exists a constant $c_{s}$ (depending on $s$) such that
\begin{equation}\label{wtf}
\omega(s)-\omega-c_{s}i\partial\bar{\partial}f=i\partial\bar{\partial}\phi_{s}
\end{equation}
for some $\phi_{s}\in C^{\infty}_{X,\,\delta_{s}}(M)$, where $\delta_{s}>0$. Next note:

\begin{claim}
$$|\omega(s)-\omega-s\rho_{\omega}|_{g}=O\left(t^{-2}\right).$$
\end{claim}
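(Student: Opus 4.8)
The plan is to write the difference as an integral along the backward K\"ahler-Ricci flow and then exploit a sharper decay for $\rho_{\omega(u)}-\rho_{\omega}$. Since $\partial_{u}\omega(u)=\rho_{\omega(u)}$ with $\omega(0)=\omega$, we have the exact identity
\[
\omega(s)-\omega-s\rho_{\omega}=\int_{0}^{s}\left(\rho_{\omega(u)}-\rho_{\omega}\right)du=-\int_{0}^{s}i\partial\bar{\partial}\log\left(\frac{\omega(u)^{n}}{\omega^{n}}\right)du,
\]
so it suffices to prove that $\left|i\partial\bar{\partial}\log\left(\omega(u)^{n}/\omega^{n}\right)\right|_{g}=O(t^{-2})$ with a bound uniform in $u\in[0,\,s]$; the asserted cancellation $\omega(s)-\omega\approx s\rho_{\omega}$ is then automatic, even though $\rho_{\omega}$ itself is only $O(t^{-1})$.

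First I would invoke the previous claim, which gives $\beta_{u}:=\omega(u)-\omega\in C_{X,\,1}^{\infty}(M)$ for each $u$, with constants that may be taken uniform on the compact interval $[0,\,s]$ (by tracking the $s$-dependence in the proof of that claim). Expanding
\[
\frac{\omega(u)^{n}}{\omega^{n}}=1+\sum_{k\,=\,1}^{n}\binom{n}{k}\frac{\omega^{n-k}\wedge\beta_{u}^{k}}{\omega^{n}},
\]
each summand is a $g^{-1}$-contraction of $\beta_{u}^{\otimes k}$ and hence lies in $C_{X,\,k}^{\infty}(M)\subset C_{X,\,1}^{\infty}(M)$; thus $\omega(u)^{n}/\omega^{n}-1\in C_{X,\,1}^{\infty}(M)$ and, in particular, tends to $0$ together with all of its derivatives. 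Composing with the function $\log(1+\cdot)$, which is smooth near $0$, then yields $\log\left(\omega(u)^{n}/\omega^{n}\right)\in C_{X,\,1}^{\infty}(M)$, again uniformly in $u\in[0,\,s]$.

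It remains to record the elementary fact that if $\chi\in C_{X,\,1}^{\infty}(M)$ then $|i\partial\bar{\partial}\chi|_{g}=O(t^{-2})$: since $g$ is K\"ahler, the mixed Christoffel symbols vanish, so $i\partial\bar{\partial}\chi$ coincides with the $(1,\,1)$-part of the covariant Hessian $\widehat{\nabla}^{2}\chi$ modulo lower-order terms controlled by the bounded geometry of $\hat{g}$, and $|\widehat{\nabla}^{2}\chi|_{\hat{g}}=O(t^{-2})$ by the very definition of $C_{X,\,1}^{\infty}(M)$; the equivalence of $g$ and $\hat{g}$ with derivatives converts this into a $|\cdot|_{g}$-bound. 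Applying this with $\chi=\log\left(\omega(u)^{n}/\omega^{n}\right)$ and integrating over $u\in[0,\,s]$ gives $|\omega(s)-\omega-s\rho_{\omega}|_{g}\leq s\cdot O(t^{-2})=O(t^{-2})$, as claimed.

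The only genuinely delicate point is the bookkeeping needed to ensure that the weighted bounds supplied by the previous claim --- together with their stability under contractions and under composition with $\log(1+\cdot)$ --- hold with constants independent of $u\in[0,\,s]$, so that the integral estimate goes through; all of the geometric content of the cancellation is already encoded in the displayed identity together with the decay $\rho_{\omega(u)}-\rho_{\omega}=O(t^{-2})$.
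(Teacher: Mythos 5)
Your proof is correct, but it takes a genuinely different route from the paper's. You reduce everything to the single time-integral $\omega(s)-\omega-s\rho_{\omega}=\int_{0}^{s}(\rho_{\omega(u)}-\rho_{\omega})\,du$ together with the K\"ahler identity $\rho_{\omega(u)}-\rho_{\omega}=-i\partial\bar{\partial}\log\left(\omega(u)^{n}/\omega^{n}\right)$, and then feed in the previous claim ($\omega(u)-\omega\in C^{\infty}_{X,\,1}(M)$, with constants uniform for $u\in[0,\,s]$) to conclude that the logarithm of the volume ratio lies in $C^{\infty}_{X,\,1}(M)$ and hence has $i\partial\bar{\partial}$ of size $O(t^{-2})$. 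The paper instead integrates \emph{twice} in the flow parameter, writing $\omega(s)-\omega-s\rho_{\omega}=\int_{0}^{s}(s-v)\bigl(-\tfrac{1}{2}\Delta_{\omega(v)}\rho_{\omega(v)}+\operatorname{Rm}(\omega(v))\ast\rho_{\omega(v)}\bigr)\,dv$ via the evolution equation of the Ricci form along the (backward) K\"ahler-Ricci flow, and then uses diffeomorphism invariance to translate the integrand along flow lines and the curvature decay $|\nabla^{2}\Rm|+|\Rm|^{2}=O(t^{-2})$ to integrate $(4v+t(x))^{-2}$. The trade-off: the paper's argument is essentially Riemannian and only needs the curvature asymptotics of $\omega$ plus the zeroth-order equivalence $e^{-Ks}g\leq g(s)\leq e^{Ks}g$, whereas yours exploits the K\"ahler potential-theoretic identity for the difference of Ricci forms and leans more heavily on the preceding claim (you need its conclusion up to two covariant derivatives, uniformly in $u\in[0,\,s]$ --- which, as you note, does follow since all constants there are of the form $C(s)$, monotone in $s$). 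Your multiplicative and composition steps ($g^{-1}$-contractions of $\beta_{u}^{\otimes k}$ staying in $C^{\infty}_{X,\,k}\subset C^{\infty}_{X,\,1}$, and $\log(1+\cdot)$ preserving $C^{\infty}_{X,\,1}$ since the volume ratio stays bounded away from $0$) are all consistent with the product estimate \eqref{mult-inequ-holder} used elsewhere in the paper, so the argument goes through.
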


\begin{proof}
For any point $x$ on the complement of the exceptional set of $M$, we have that
\begin{equation*}
\begin{split}
(\omega(s)-\omega-s\rho_{\omega})(x)&=\int_{0}^{s}\partial_{u}(\omega(u)-\omega-u\rho_{\omega})(x)\,du\\
&=\int_{0}^{s}(\rho_{\omega(u)}-\rho_{\omega})(x)\,du\\
&=\int_{0}^{s}\int_{0}^{u}\frac{\partial\rho_{\omega(v)}}{\partial v}(x)\,dv\,du\\
&=\int_{0}^{s}\int_{0}^{u}\left(-\frac{1}{2}\Delta_{\omega(v)}\rho_{\omega(v)}+\operatorname{Rm}(\omega(v))\ast\rho_{\omega(v)}\right)(x)\,dv\,du\\
&=\int_{0}^{s}\int_{v}^{s}\left(-\frac{1}{2}\Delta_{\omega(v)}\rho_{\omega(v)}+\operatorname{Rm}(\omega(v))\ast\rho_{\omega(v)}\right)(x)\,du\,dv\\
&=\int_{0}^{s}(s-v)\left(-\frac{1}{2}\Delta_{\omega(v)}\rho_{\omega(v)}+\operatorname{Rm}(\omega(v))\ast\rho_{\omega(v)}\right)(x)\,dv,
\end{split}
\end{equation*}
where $\Delta_{\omega(v)}$ denotes the usual rough Laplacian. Here, we have used the evolution equation satisfied by the Ricci curvature $\rho_{\omega(s)}$ along the (backward) K\"ahler-Ricci flow in the fourth line; see \cite[Chapter $3$, Section $3.2.6$]{Bou-Eys-Gue}. In particular, we obtain the bound
\begin{equation*}
\begin{split}
|\omega(s)-\omega-s\rho_{\omega}|_{g}(x)&\leq\int_{0}^{s}|s-v|\left|-\frac{1}{2}\Delta_{\omega(v)}\rho_{\omega(v)}+\operatorname{Rm}(\omega(v))\ast\rho_{\omega(v)}\right|_{g}(x)\,dv\\
&\leq C(s)\int_{0}^{s}\left|-\frac{1}{2}\Delta_{\omega(v)}\rho_{\omega(v)}+\operatorname{Rm}(\omega(v))\ast\rho_{\omega(v)}\right|_{g}(x)\,dv\\
&\leq C(s)\int_{0}^{s}e^{Kv}\left|-\frac{1}{2}\Delta_{\omega(v)}\rho_{\omega(v)}+\operatorname{Rm}(\omega(v))\ast\rho_{\omega(v)}\right|_{g(v)}(x)\,dv\\
&\leq C(s)\int_{0}^{s}\left|-\frac{1}{2}\Delta_{\omega(v)}\rho_{\omega(v)}+\operatorname{Rm}(\omega(v))\ast\rho_{\omega(v)}\right|_{g(v)}(x)\,dv\\
&=C(s)\int_{0}^{s}\left|-\frac{1}{2}\Delta_{\omega}\rho_{\omega}+\operatorname{Rm}(\omega)\ast\rho_{\omega}\right|_{g}(\gamma_{v}(x))\,dv\\
&\leq C(s)\int_{0}^{s}t(\gamma_{v}(x))^{-2}\,dv\\
&\leq C(s)\int_{0}^{s}(4v+t(x))^{-2}\,dv\\
&\leq Ct(x)^{-2},
\end{split}
\end{equation*}
as desired.
\end{proof}

Now on one hand, we see from \eqref{wtf} that
$$|s-c_{s}||\rho_{\omega}|_{g}=|s-c_{s}||i\partial\bar{\partial}f|_{g}=
|\omega(s)-\omega-s\rho_{\omega}-i\partial\bar{\partial}\phi_{s}|_{g}\leq
Ct^{-1-\delta_{s}}$$
for some constant $C>0$, whereas on the other hand, from Lemma \ref{low-pos-bd-scal} we see that
$$|\rho_{\omega}|_{g}\geq C|\RR_{g}|\geq C|\underbrace{|\RR_{\tilde{g}}|}_{\geq\,Ct^{-1}}-\underbrace{
|\RR_{g}-\RR_{\tilde{g}}|}_{\leq\,Ct^{-2}}|\geq Ct^{-1}$$
for another constant $C>0$ and for $t$ sufficiently large. We must therefore have $c_{s}=s$, and so
\begin{equation}\label{wtf2}
\omega(s)=\omega+si\partial\bar{\partial}f+i\partial\bar{\partial}\phi_{s}
\end{equation}
for some $\phi_{s}\in C^{\infty}_{X,\,\delta_{s}}(M)$ with $\delta_{s}>0$.

Finally, combining \eqref{wtf2} with Claim \ref{hotwire}, we conclude that
$$\nu-\omega(c)=i\partial\bar{\partial}u,$$
where now $u\in C_{X,\,\hat{\delta}}^{\infty}(M)$ for some $\hat{\delta}>0$.
After noting the $JX$-invariance of $u$, $\nu$, and $\omega(c)$, Lemma \ref{cma} followed by an application of Lemma \ref{pluri}(ii) yields the equation
\begin{equation*}
\log\left(\frac{(\omega(c)+i\partial\bar{\partial}u)^{n}}{\omega(c)^{n}}\right)
+\frac{X}{2}\cdot u=C
\end{equation*}
satisfied by $u$ for some constant $C$. Since $u\in C_{X,\,\hat{\delta}}^{\infty}(M)$, $C$ must be equal to zero.
The strong maximum principle of Hopf now implies that $u$ is a constant so that $\nu=\omega(c)$ for some $c\in\mathbb{R}$, as required.

\newpage
\appendix

\section{The model metric $\hat{g}$}\label{appendixa}

Let $(C_{0},\,g_{0})$ be a Calabi-Yau cone of complex dimension $n\geq2$ with link $S$, radial function $r$, complex structure $J_{0}$, transverse metric $g^{T}$,
and set $\eta=d^{c}\log(r)$ and  $r^{2}=e^{t}$. We define a K\"ahler form $\hat{\omega}$ on $C_{0}$ by
$$\hat{\omega}:=\frac{i}{2}\partial\bar{\partial}\left(\frac{nt^{2}}{2}\right).$$
One can check that the corresponding K\"ahler metric $\hat{g}$ on $C_{0}$ takes the form
$$\hat{g}:=n\left(\frac{1}{4}dt^{2}+\eta^{2}+tg^{T}\right).$$
We denote the Levi-Civita connection of $\hat{g}$ by $\widehat{\nabla}$. In this appendix, we analyse how the norm and covariant derivatives measured with respect to $\hat{g}$ of various tensors behave as
$t\to+\infty$.

Let $\theta_{1},\ldots,\theta_{2n-2}$ be a local basic orthonormal coframe
of $g^{T}$ on $S$ with $\theta_{i}\circ J_{0}=-\theta_{i+1}$ for $i$ odd and let
$(\omega_{ij})_{1\,\leq\, i,\,j\,\leq\,2n-2}$ denote the matrix of connection one-forms
of $g^{T}$. Then each $\omega_{ij}$ is a basic one-form on $S$ and
$(\omega_{ij})$ solves the Cartan structure equations
\begin{equation*}
\left\{ \begin{array}{ll}
d{\theta}_{i}=\sum_{j\,=\,1}^{2n-2}{\omega}_{ji}\wedge\theta_{j}\\
{\omega}_{ij}+{\omega}_{ji}=0.
\end{array} \right.
\end{equation*}
Next set
$$\hat{\theta}_{i}:=\sqrt{nt}\theta_{i}\qquad\textrm{for $i=1,\ldots,2n-2$},\qquad\hat{\theta}_{2n-1}:=\frac{\sqrt{n}}{2}dt,\qquad\textrm{and}\qquad
\hat{\theta}_{2n}:=\eta\sqrt{n}.$$
The matrix of connection one-forms $(\hat{\omega}_{ij})_{1\,\leq\, i,\,j\,\leq\, 2n}$ of $\hat{g}$ with respect to this coframe is given by
\begin{equation*}
\begin{split}
\hat{\omega}_{ji}&=\left\{ \begin{array}{ll}
\omega_{ji}+\delta_{j,\,i+1}\frac{1}{t\sqrt{n}}\hat{\theta}_{2n},\qquad\textrm{$1\leq i\leq 2n-2$ odd,\qquad$1\leq j\leq 2n-2$},\\
\omega_{ji}-\delta_{j,\,i-1}\frac{1}{t\sqrt{n}}\hat{\theta}_{2n},\qquad\textrm{$1\leq i\leq 2n-2$ even,\qquad$1\leq j\leq 2n-2$},\\
\end{array} \right.\\
\hat{\omega}_{2n-1,\,i}&=-\frac{1}{t\sqrt{n}}\hat{\theta}_{i},\qquad 1\leq i\leq 2n-2,\\
\hat{\omega}_{2n-1,\,2n}&=0,\\
\hat{\omega}_{2n,\,i}&=\left\{ \begin{array}{ll}
\frac{1}{t\sqrt{n}}\hat{\theta}_{i+1},\qquad\textrm{$1\leq i\leq 2n-2$ odd},\\
-\frac{1}{t\sqrt{n}}\hat{\theta}_{i-1},\qquad\textrm{$1\leq i\leq 2n-2$ even}.\\
\end{array} \right.\\
\end{split}
\end{equation*}
With $i=1,\ldots,2n-2$, we have the expressions
\begin{equation}\label{cov}
\begin{split}
\widehat{\nabla}\hat{\theta}_{i}&=\sum_{k\,=\,1}^{2n}\hat{\omega}_{ki}\otimes\hat{\theta}_{k}
=\sum_{k\,=\,1}^{2n-2}\left(\omega_{ki}\pm\delta_{k,\,i+1}\frac{1}{t\sqrt{n}}\hat{\theta}_{2n}\right)\otimes\hat{\theta}_{k}
-\frac{1}{t\sqrt{n}}\hat{\theta}_{i}\otimes\hat{\theta}_{2n-1}\pm\frac{1}{t\sqrt{n}}\hat{\theta}_{i\pm1}\otimes\hat{\theta}_{2n},\\
\widehat{\nabla}\hat{\theta}_{2n-1}&=\sum_{k\,=\,1}^{2n}\hat{\omega}_{k,\,2n-1}\otimes\hat{\theta}_{k}
=\sum_{k\,=\,1}^{2n-2}\frac{1}{t\sqrt{n}}\hat{\theta}_{k}\otimes\hat{\theta}_{k},\\
\widehat{\nabla}\hat{\theta}_{2n}&=\sum_{k\,=\,1}^{2n}\hat{\omega}_{k,\,2n}\otimes\hat{\theta}_{k}=
\sum_{k\,=\,1}^{2n-2}\pm\frac{1}{t\sqrt{n}}\hat{\theta}_{k\pm1}\otimes\hat{\theta}_{k},\\
\end{split}
\end{equation}
where throughout it is understood that $i\pm1,\,k\pm1\,\in\,\{1,\ldots,2n-2\}$.

Let $\zeta$ be any basic one-form on $S$. Then we may write
$$\zeta=\sum_{k\,=\,1}^{2n-2}f_{k}\theta_{k}$$ for some basic functions
$f_{k}$ on $S$ as $\{\theta_{i}\}_{i\,=\,1}^{2n-2}$ has been chosen to be a basic orthonormal coframe of $g^{T}$. It is then easy to see that
$$\zeta=\sum_{k\,=\,1}^{2n-2}\frac{f_{k}}{\sqrt{nt}}\hat{\theta}_{k}$$
so that $$|\zeta|_{\hat{g}}=O\left(t^{-\frac{1}{2}}\right).$$
This in turn implies that
\begin{equation}\label{tea}
|\widehat{\nabla}\hat{\theta}_{i}|_{\hat{g}}=O\left(t^{-\frac{1}{2}}\right)\qquad\textrm{for $1\leq i\leq 2n-2$}.
\end{equation}
Next, from \eqref{cov} it is clear that
\begin{equation}\label{verybasic}
\begin{split}
\widehat{\nabla}\zeta&=\sum_{k\,=\,1}^{2n-2}\left(\frac{1}{{\sqrt{nt}}}df_{k}\otimes\hat{\theta}_{k}-
\frac{f_{k}}{2\sqrt{n}t^{\frac{3}{2}}}dt\otimes\hat{\theta}_{k}+\frac{f_{k}}{\sqrt{nt}}\widehat{\nabla}\hat{\theta}_{k}\right)\\
&=\frac{1}{{\sqrt{nt}}}\sum_{k\,=\,1}^{2n-2}\left(df_{k}\otimes\hat{\theta}_{k}-
\frac{f_{k}}{\sqrt{n}t}\hat{\theta}_{2n-1}\otimes\hat{\theta}_{k}+f_{k}\widehat{\nabla}\hat{\theta}_{k}\right).
\end{split}
\end{equation}
Since the exterior derivative of a basic function is basic, we know that
$$|df_{j}|_{\hat{g}}=O\left(t^{-\frac{1}{2}}\right).$$
Consequently, in light of \eqref{tea}, we must have that
$$|\widehat{\nabla}\zeta|_{\hat{g}}=O\left(t^{-1}\right).$$
By inspection it is also clear that
\begin{equation*}
\begin{split}
|\widehat{\nabla}\hat{\theta}_{2n-1}|_{\hat{g}}&=O\left(t^{-1}\right),\\
|\widehat{\nabla}\hat{\theta}_{2n}|_{\hat{g}}&=O\left(t^{-1}\right),\\
|\widehat{\nabla}^{k}(t^{-1})|_{\hat{g}}&=O\left(t^{-1-k}\right)\qquad\textrm{for $k=0,\,1,\,2.$}\\
\end{split}
\end{equation*}
Collecting all of these estimates together, in summary we have that
\begin{equation}\label{start}
\begin{split}
|\widehat{\nabla}^{k}(t^{-1})|_{\hat{g}}&=O\left(t^{-1-k}\right)\qquad\textrm{for $k=0,\,1,\,2,$}\\
|\widehat{\nabla}^{k}\zeta|_{\hat{g}}&=O\left(t^{-\frac{1}{2}-\frac{k}{2}}\right)\qquad\textrm{for $k=0,\,1,$}\\
|\widehat{\nabla}\hat{\theta}_{i}|_{\hat{g}}&=O\left(t^{-\frac{1}{2}}\right)\qquad\textrm{for $1\leq i\leq 2n-2$},\\
|\widehat{\nabla}\hat{\theta}_{2n-1}|_{\hat{g}}&=O\left(t^{-1}\right),\\
|\widehat{\nabla}\hat{\theta}_{2n}|_{\hat{g}}&=O\left(t^{-1}\right).
\end{split}
\end{equation}
We now derive the following estimates.
\begin{prop}\label{basis}
In the above situation, the following holds true:
\begin{equation*}
\begin{split}
|\widehat{\nabla}^{1+k}(t^{-1})|_{\hat{g}}&=O\left(t^{-3-\frac{(k-1)}{2}}\right)\qquad\textrm{for all $k\geq1$},\\
|\widehat{\nabla}^{k}\zeta|_{\hat{g}}&=O\left(t^{-\frac{1}{2}-\frac{k}{2}}\right)\qquad\textrm{for any basic one-form $\zeta$ on $S$},\\
|\widehat{\nabla}^{k}\hat{\theta}_{i}|_{\hat{g}}&=O\left(t^{-\frac{k}{2}}\right)\qquad\textrm{for $1\leq i\leq 2n-2$ and for all $k\geq1$},\\
|\widehat{\nabla}^{k}\hat{\theta}_{2n-1}|_{\hat{g}}&=O\left(t^{-1-\frac{(k-1)}{2}}\right)\qquad\textrm{for all $k\geq1$},\\
|\widehat{\nabla}^{k}\hat{\theta}_{2n}|_{\hat{g}}&=O\left(t^{-1-\frac{(k-1)}{2}}\right)\qquad\textrm{for all $k\geq1$}.
\end{split}
\end{equation*}
\end{prop}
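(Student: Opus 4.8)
The plan is to establish all five estimates together by a single induction on the order $m$ of covariant differentiation. The base cases $m=0$ and $m=1$ are contained in \eqref{start} (for $m=0$ one also uses that the $\hat{\theta}_a$ form an orthonormal coframe of $\hat{g}$, so $|\hat{\theta}_a|_{\hat{g}}=1$; the first estimate of the proposition with $k=1$ is precisely the $m=2$ statement $|\widehat{\nabla}^2(t^{-1})|_{\hat{g}}=O(t^{-3})$ in \eqref{start}, consistent with the line $|\widehat{\nabla}^k(t^{-1})|_{\hat{g}}=O(t^{-1-k})$ there). Alongside the five families I would carry in the induction an auxiliary family of estimates for negative powers of $t$, namely
\[
|\widehat{\nabla}^m(t^{-p})|_{\hat{g}}=O(t^{-p-m})\ \ (m\le 2),\qquad |\widehat{\nabla}^m(t^{-p})|_{\hat{g}}=O(t^{-p-1-\frac{m}{2}})\ \ (m\ge 2),
\]
valid for every half-integer $p\ge\frac12$; only finitely many values of $p$ ever occur, since each differentiation raises $p$ by $1$ starting from $p\in\{\frac12,1,\frac32\}$, and the $p=1$ line is the first estimate of the proposition together with \eqref{start}.

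The engine of the induction is the pair of structural identities \eqref{cov} and \eqref{verybasic}, which express the first covariant derivative of each object in question — $t^{-1}$, an arbitrary basic one-form $\zeta=\sum_k f_k\theta_k=\sum_k\frac{f_k}{\sqrt{nt}}\hat{\theta}_k$ on $S$, the transverse coframe elements $\hat{\theta}_i$ with $1\le i\le 2n-2$, and the radial elements $\hat{\theta}_{2n-1},\hat{\theta}_{2n}$ — as a finite sum of tensor products whose factors are again of these types, together with the connection one-forms $\omega_{ki}$ and the differentials $df_k$ (which are themselves basic one-forms on $S$) and the bounded basic functions $f_k$ (for which $|\widehat{\nabla}^m f_k|_{\hat{g}}=O(t^{-m/2})$ for $m\ge 1$, since $df_k$ is basic). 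To pass from order $m=k$ to order $m=k+1$ one writes $\widehat{\nabla}^{k+1}(\cdot)=\widehat{\nabla}^{k}\bigl(\widehat{\nabla}(\cdot)\bigr)$, expands the right side by the Leibniz rule into a finite sum of terms $\widehat{\nabla}^{a_1}(\cdot)\ast\cdots\ast\widehat{\nabla}^{a_r}(\cdot)$ with $a_1+\cdots+a_r=k$, bounds each factor (each of order $\le k$) by the inductive hypothesis, and adds up the exponents of $t$. The induction is genuinely simultaneous because, via \eqref{cov}, the estimate for $\widehat{\nabla}^{k+1}\hat{\theta}_i$ calls on $\widehat{\nabla}^{m}\omega_{ki}$ (an instance of the basic one-form estimate) and on $\widehat{\nabla}^{m}\hat{\theta}_{2n-1},\widehat{\nabla}^m\hat{\theta}_{2n}$ at orders $m\le k$, while the estimate for $\widehat{\nabla}^{k+1}\zeta$ calls on $\widehat{\nabla}^m\hat{\theta}_k$, and so on; but since each item at step $k+1$ uses only items at orders $\le k$, the order in which the five are treated within a step is irrelevant.

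The reason the five rates differ — $O(t^{-k/2})$ for the transverse $\hat{\theta}_i$ and $O(t^{-\frac12-\frac{k}{2}})$ for basic one-forms, versus the extra half-powers in the $t^{-1}$, $\hat{\theta}_{2n-1}$ and $\hat{\theta}_{2n}$ estimates — is the special shape of $\widehat{\nabla}\hat{\theta}_{2n-1}=\frac{1}{t\sqrt n}\sum_k\hat{\theta}_k\otimes\hat{\theta}_k$ and $\widehat{\nabla}\hat{\theta}_{2n}$ in \eqref{cov}, and hence of $\widehat{\nabla}(t^{-1})=-\frac{2}{\sqrt n}t^{-2}\hat{\theta}_{2n-1}$: each derivative applied to a radial object produces an extra factor $t^{-1}$ in the radial slot rather than the $t^{-1/2}$ that differentiating a transverse object costs. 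Propagating this through the Leibniz sums is what produces the half-power gains, and it is also what makes the bookkeeping delicate: in $\widehat{\nabla}^{m}(t^{-1})=c\,\widehat{\nabla}^{m-1}(t^{-2}\hat{\theta}_{2n-1})$ one cannot afford the lossy uniform bound $|\widehat{\nabla}^a(t^{-2})|_{\hat{g}}\le Ct^{-2-a/2}$ — one needs the sharp $O(t^{-3-\frac{a}{2}})$ for $a\ge 2$ to absorb the $b=0$ term of the Leibniz expansion — and the extremal term sits at different ends of the range of $(a_1,\dots,a_r)$ depending on the parity of $k$ and on which object is being differentiated.

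I expect this last point — setting up the simultaneous induction so that it closes, and checking in each of the finitely many types of Leibniz sums that the worst term meets the claimed rate using the sharp (not lossy) lower-order estimates — to be the only real work. Everything else is immediate from \eqref{cov}, \eqref{verybasic}, \eqref{start}, the product rule, and the orthonormality $|\hat{\theta}_a|_{\hat{g}}=1$.
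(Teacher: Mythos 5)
Your plan is correct and is essentially the paper's own argument: a simultaneous induction on the derivative order whose hypothesis carries all five families at once, driven by the structural identities \eqref{cov} and \eqref{verybasic}, Leibniz expansions of $\widehat{\nabla}^{k+1}=\widehat{\nabla}^{k}\circ\widehat{\nabla}$, and the sharp (non-lossy) lower-order bounds on the radial terms $t^{-1}$, $\hat{\theta}_{2n-1}$, $\hat{\theta}_{2n}$ to capture the extra half-powers. The only cosmetic difference is that you carry a uniform auxiliary family $\widehat{\nabla}^{m}(t^{-p})$ over half-integer $p$, whereas the paper treats $t^{-1}$ directly via $\widehat{\nabla}^{m+2}(t^{-1})\sim\widehat{\nabla}^{m+1}(t^{-2}\hat{\theta}_{2n-1})$ and recovers the $t^{-1/2}$ estimates afterwards by a triangle-inequality comparison with $\widehat{\nabla}^{k}(t^{-1})$ — the bookkeeping is the same either way.
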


\begin{proof}
We will prove the proposition by induction, where our induction hypothesis $P(m)$ is that
for every $1\leq k\leq m$,
\begin{equation*}
\begin{split}
|\widehat{\nabla}^{1+k}(t^{-1})|_{\hat{g}}&=O\left(t^{-3-\frac{(k-1)}{2}}\right),\\
|\widehat{\nabla}^{k}\zeta|_{\hat{g}}&=O\left(t^{-\frac{1}{2}-\frac{k}{2}}\right)\qquad\textrm{for any basic one-form $\zeta$ on $S$},\\
|\widehat{\nabla}^{k}\hat{\theta}_{i}|_{\hat{g}}&=O\left(t^{-\frac{k}{2}}\right),\\
|\widehat{\nabla}^{k}\hat{\theta}_{2n-1}|_{\hat{g}}&=O\left(t^{-1-\frac{(k-1)}{2}}\right),\\
|\widehat{\nabla}^{k}\hat{\theta}_{2n}|_{\hat{g}}&=O\left(t^{-1-\frac{(k-1)}{2}}\right).\\
\end{split}
\end{equation*}
$P(1)$ is true by virtue of \eqref{start}. So assume that $P(m)$ holds true for some $m\geq1$.
Then beginning with $\hat{\theta}_{2n-1}$, we have that
\begin{equation}\label{coffee}
\begin{split}
|\widehat{\nabla}^{m+1}\hat{\theta}_{2n-1}|_{\hat{g}}&\leq C\sum_{\substack{i+j+k\,=\,m \\ 1\,\leq\, l\, \leq\, 2n-2}}|\widehat{\nabla}^{i}(t^{-1})|_{\hat{g}}
\underbrace{|\widehat{\nabla}^{j}\hat{\theta}_{l}|_{\hat{g}}|\widehat{\nabla}^{k}\hat{\theta}_{l}|_{\hat{g}}}_{=\,O\left(t^{-\frac{(j+k)}{2}}\right)}\\
&\leq C\sum_{i\,=\,0}^{m}t^{-\frac{(m-i)}{2}}|\widehat{\nabla}^{i}(t^{-1})|_{\hat{g}}\\
&\leq Ct^{-\frac{m}{2}}\Biggl(t^{-1}+t^{\frac{1}{2}}t^{-2}+
\sum_{i\,=\,2}^{m}t^{\frac{i}{2}}\underbrace{|\widehat{\nabla}^{i}(t^{-1})|_{\hat{g}}}_{=\,O\left(t^{-3-\frac{(i-2)}{2}}\right)}\Biggl)\\
&\leq Ct^{-\frac{m}{2}}\left(t^{-1}+t^{-\frac{3}{2}}+t^{-2}\right)\\
&\leq Ct^{-1-\frac{m}{2}}.
\end{split}
\end{equation}
In a similar fashion, one can show that $|\widehat{\nabla}^{m+1}\hat{\theta}_{2n}|_{\hat{g}}\leq Ct^{-1-\frac{m}{2}}$.

Next, we consider the covariant derivatives of $t^{-1}$. We have just shown that\linebreak $|\widehat{\nabla}^{m+1}\hat{\theta}_{2n-1}|_{\hat{g}}
\leq Ct^{-1-\frac{m}{2}}$. Using this together with $P(m)$, we compute that
\begin{equation*}
\begin{split}
|\widehat{\nabla}^{m+2}(t^{-1})|_{\hat{g}}&\leq C
|\widehat{\nabla}^{m+1}((t^{-1})^{2}\hat{\theta}_{2n-1})|_{\hat{g}}\\
&\leq C\sum_{i+j+k\,=\,m+1}|\widehat{\nabla}^{i}(t^{-1})|_{\hat{g}}
|\widehat{\nabla}^{j}(t^{-1})|_{\hat{g}}|\widehat{\nabla}^{k}\hat{\theta}_{2n-1}|_{\hat{g}}\\
&\leq C\Biggl(\sum_{i+j\,=\,m+1}|\widehat{\nabla}^{i}(t^{-1})|_{\hat{g}}
|\widehat{\nabla}^{j}(t^{-1})|_{\hat{g}}|\hat{\theta}_{2n-1}|_{\hat{g}}
+\sum_{\substack{i+j+k\,=\,m+1 \\ k\,\geq\,1}}|\widehat{\nabla}^{i}(t^{-1})|_{\hat{g}}
|\widehat{\nabla}^{j}(t^{-1})|_{\hat{g}}\underbrace{|\widehat{\nabla}^{k}\hat{\theta}_{2n-1}|_{\hat{g}}}_{=\,O\left(t^{-1-\frac{(k-1)}{2}}\right)}\Biggl)\\
&\leq C\Biggl(\sum_{i+j\,=\,m+1}|\widehat{\nabla}^{i}(t^{-1})|_{\hat{g}}
|\widehat{\nabla}^{j}(t^{-1})|_{\hat{g}}
+\sum_{\substack{i+j+k\,=\,m+1 \\ k\,\geq\,1}}t^{-1-\frac{(k-1)}{2}}
|\widehat{\nabla}^{i}(t^{-1})|_{\hat{g}}|\widehat{\nabla}^{j}(t^{-1})|_{\hat{g}}\Biggl)\\
&\leq C\Biggl(\sum_{i+j\,=\,m+1}|\widehat{\nabla}^{i}(t^{-1})|_{\hat{g}}
|\widehat{\nabla}^{j}(t^{-1})|_{\hat{g}}
+t^{-1-\frac{m}{2}}\sum_{0\,\leq\,i+j\,\leq\,m}t^{\frac{i+j}{2}}
|\widehat{\nabla}^{i}(t^{-1})|_{\hat{g}}|\widehat{\nabla}^{j}(t^{-1})|_{\hat{g}}\Biggl)\\
&\leq C\Biggl(\sum_{\substack{i+j\,=\,m+1 \\ i\,\leq\,j}}|\widehat{\nabla}^{i}(t^{-1})|_{\hat{g}}
|\widehat{\nabla}^{j}(t^{-1})|_{\hat{g}}
+t^{-1-\frac{m}{2}}\sum_{\substack{0\,\leq\,i+j\,\leq\,m \\ i\,\leq\,j}}t^{\frac{i+j}{2}}
|\widehat{\nabla}^{i}(t^{-1})|_{\hat{g}}|\widehat{\nabla}^{j}(t^{-1})|_{\hat{g}}\Biggl)\\
&\leq  C\Biggl(t^{-1}|\widehat{\nabla}^{m+1}(t^{-1})|_{\hat{g}}
+|\widehat{\nabla}(t^{-1})|_{\hat{g}}|\widehat{\nabla}^{m}(t^{-1})|_{\hat{g}}
+\sum_{\substack{i+j\,=\,m+1 \\ 2\,\leq\,i\,\leq j}}\underbrace{|\widehat{\nabla}^{i}(t^{-1})|_{\hat{g}}
|\widehat{\nabla}^{j}(t^{-1})|_{\hat{g}}}_{=\,O\left(t^{-4-\frac{(m+1)}{2}}\right)}\\
&\qquad+t^{-1-\frac{m}{2}}\Biggl(t^{-2}+t^{\frac{1}{2}}|\widehat{\nabla}(t^{-1})|_{\hat{g}}t^{-1}
+t|\widehat{\nabla}^{2}(t^{-1})|_{\hat{g}}t^{-1}
+t|\widehat{\nabla}(t^{-1})|_{\hat{g}}^{2}\\
&\qquad+\sum_{\substack{3\,\leq\,i+j\,\leq\,m \\ i\,\leq\,j}}t^{\frac{i+j}{2}}
|\widehat{\nabla}^{i}(t^{-1})|_{\hat{g}}|\widehat{\nabla}^{j}(t^{-1})|_{\hat{g}}\Biggl)\Biggl)\\
&\leq C\Biggl(t^{-\frac{7}{2}-\frac{m}{2}}+\underbrace{t^{-2}|\widehat{\nabla}^{m}(t^{-1})|_{\hat{g}}}_{=\,O\left(t^{-4-\frac{m}{2}}\right)}
+t^{-\frac{9}{2}-\frac{m}{2}}+t^{-1-\frac{m}{2}}\Biggl(t^{-2}+t^{-\frac{5}{2}}+t^{-3}\\
&\qquad+\sum_{\substack{3\,\leq\,i+j\,\leq\,m \\ i\,\leq\,j}}t^{\frac{i+j}{2}}
|\widehat{\nabla}^{i}(t^{-1})|_{\hat{g}}|\widehat{\nabla}^{j}(t^{-1})|_{\hat{g}}\Biggl)\Biggl)\\
&\leq C\Biggl(t^{-3-\frac{m}{2}}+t^{-1-\frac{m}{2}}\Biggl(\sum_{\substack{3\,\leq\,i+j\,\leq\,m \\ i\,\leq\,j}}t^{\frac{i+j}{2}}
|\widehat{\nabla}^{i}(t^{-1})|_{\hat{g}}|\widehat{\nabla}^{j}(t^{-1})|_{\hat{g}}\Biggl)\Biggl)\\
&\leq C\Biggl(t^{-3-\frac{m}{2}}+t^{-1-\frac{m}{2}}\Biggl(\underbrace{\sum_{\substack{3\,\leq\,j\,\leq\,m \\ 0\,\leq\,j}}t^{\frac{j}{2}}
t^{-1}|\widehat{\nabla}^{j}(t^{-1})|_{\hat{g}}}_{=\,O\left(t^{-3}\right)}+\underbrace{\sum_{\substack{3\,\leq\,1+j\,\leq\,m \\ 1\,\leq\,j}}t^{\frac{1+j}{2}}
|\widehat{\nabla}(t^{-1})|_{\hat{g}}|\widehat{\nabla}^{j}(t^{-1})|_{\hat{g}}}_{=\,O\left(t^{-\frac{7}{2}}\right)}\\
&\qquad+\underbrace{\sum_{\substack{3\,\leq\,i+j\,\leq\,m \\ 2\,\leq\,i\,\leq\,j}}t^{\frac{i+j}{2}}
|\widehat{\nabla}^{i}(t^{-1})|_{\hat{g}}|\widehat{\nabla}^{j}(t^{-1})|_{\hat{g}}}_{=\,O\left(t^{-4}\right)}\Biggl)\Biggl)\\
&\leq Ct^{-3-\frac{m}{2}}.
\end{split}
\end{equation*}

As for $\hat{\theta}_{i}$ with $1\leq i\leq 2n-2$, we find that
\begin{equation*}
\begin{split}
|\widehat{\nabla}^{m+1}\hat{\theta}_{i}|_{\hat{g}}&\leq
C\Biggl(\sum_{\substack{p+q\,=\,m \\ 1\,\leq\, k\, \leq\, 2n-2}}
\underbrace{|\widehat{\nabla}^{p}\omega_{ik}|_{\hat{g}}|\widehat{\nabla}^{q}\hat{\theta}_{k}|_{\hat{g}}}_{=\,O\left(t^{-\frac{1}{2}-\frac{m}{2}}\right)}
+\sum_{\substack{p+q+r\,=\,m \\ 1\,\leq\, k\, \leq\, 2n-2}}|\widehat{\nabla}^{p}(t^{-1})|_{\hat{g}}
|\widehat{\nabla}^{q}\hat{\theta}_{2n}|_{\hat{g}}|\widehat{\nabla}^{r}\hat{\theta}_{k}|_{\hat{g}}\\
&\qquad+\sum_{\substack{p+q+r\,=\,m}}
|\widehat{\nabla}^{p}(t^{-1})|_{\hat{g}}
|\widehat{\nabla}^{q}\hat{\theta}_{2n-1}|_{\hat{g}}|\widehat{\nabla}^{r}\hat{\theta}_{i}|_{\hat{g}}
+\sum_{\substack{p+q+r\,=\,m}}|\widehat{\nabla}^{p}(t^{-1})|_{\hat{g}}
|\widehat{\nabla}^{q}\hat{\theta}_{2n}|_{\hat{g}}|\widehat{\nabla}^{r}\hat{\theta}_{i\pm1}|_{\hat{g}}\Biggl)\\
&\leq C\Biggl(t^{-\frac{1}{2}-\frac{m}{2}}
+\sum_{l\,=\,2n-1}^{2n}\Biggl(\sum_{p+q+r\,=\,m}
|\widehat{\nabla}^{p}(t^{-1})|_{\hat{g}}
t^{-\frac{r}{2}}|\widehat{\nabla}^{q}\hat{\theta}_{l}|_{\hat{g}}\Biggl)\Biggl)\\
&\leq C\Biggl(t^{-\frac{1}{2}-\frac{m}{2}}
+\sum_{p+r\,=\,m}
|\widehat{\nabla}^{p}(t^{-1})|_{\hat{g}}
t^{-\frac{r}{2}}+\sum_{l\,=\,2n-1}^{2n}\Biggl(\sum_{\substack{p+q+r\,=\,m \\ q\,\geq\,1}}|\widehat{\nabla}^{p}(t^{-1})|_{\hat{g}}
t^{-\frac{r}{2}}|\widehat{\nabla}^{q}\hat{\theta}_{l}|_{\hat{g}}
\Biggl)\Biggl)\\
&\leq C\Biggl(t^{-\frac{1}{2}-\frac{m}{2}}
+\sum_{p\,=\,0}^{m}
|\widehat{\nabla}^{p}(t^{-1})|_{\hat{g}}
t^{-\frac{(m-p)}{2}}+\sum_{\substack{p+q+r\,=\,m \\ q\,\geq\,1}}|\widehat{\nabla}^{p}(t^{-1})|_{\hat{g}}
t^{-\frac{r}{2}}t^{-1-\frac{(q-1)}{2}}\Biggl)\\
&\leq C\Biggl(t^{-\frac{1}{2}-\frac{m}{2}}
+\sum_{p\,=\,0}^{m}
|\widehat{\nabla}^{p}(t^{-1})|_{\hat{g}}
t^{-\frac{(m-p)}{2}}+t^{-\frac{1}{2}}\sum_{\substack{p+q+r\,=\,m \\ q\,\geq\,1}}t^{-\frac{(q+r)}{2}}|\widehat{\nabla}^{p}(t^{-1})|_{\hat{g}}\Biggl)\\
&\leq C\Biggl(t^{-\frac{1}{2}-\frac{m}{2}}
+\sum_{p\,=\,0}^{m}
|\widehat{\nabla}^{p}(t^{-1})|_{\hat{g}}
t^{-\frac{(m-p)}{2}}+t^{-\frac{1}{2}}\sum_{k\,=\,1}^{m}\sum_{\substack{p\,=\,m-k}}
t^{-\frac{k}{2}}|\widehat{\nabla}^{p}(t^{-1})|_{\hat{g}}\Biggl)\\
&\leq C\Biggl(t^{-\frac{1}{2}-\frac{m}{2}}
+\sum_{p\,=\,0}^{m}
|\widehat{\nabla}^{p}(t^{-1})|_{\hat{g}}
t^{-\frac{(m-p)}{2}}+t^{-\frac{1}{2}}\sum_{k\,=\,1}^{m}t^{-\frac{k}{2}}|\widehat{\nabla}^{m-k}(t^{-1})|_{\hat{g}}\Biggl)\\
&\leq C\Biggl(t^{-\frac{1}{2}-\frac{m}{2}}
+\sum_{p\,=\,0}^{m}
|\widehat{\nabla}^{p}(t^{-1})|_{\hat{g}}
t^{-\frac{(m-p)}{2}}+t^{-\frac{1}{2}}\sum_{p\,=\,0}^{m-1}t^{-\frac{(m-p)}{2}}|\widehat{\nabla}^{p}(t^{-1})|_{\hat{g}}\Biggl)\\
&\leq C\Biggl(t^{-\frac{1}{2}-\frac{m}{2}}
+\underbrace{\sum_{p\,=\,0}^{m}
|\widehat{\nabla}^{p}(t^{-1})|_{\hat{g}}
t^{-\frac{(m-p)}{2}}}_{=\,O\left(t^{-1-\frac{m}{2}}\right)\quad\textrm{as in \eqref{coffee}}}\Biggl)\\
&\leq Ct^{-\frac{1}{2}-\frac{m}{2}}=Ct^{-\frac{(m+1)}{2}}.
\end{split}
\end{equation*}

Finally, we consider a basic one-form $\zeta=\sum_{k\,=\,1}^{2n-2}f_{k}\theta_{k}$ on $S$,
where $\{f_{k}\}$ are basic functions. First note that by inspection it is clear that
$$|\widehat{\nabla}^{k}(t^{-\frac{1}{2}})|_{\hat{g}}=O\left(t^{-\frac{1}{2}-k}\right)\qquad\textrm{for $0\leq k\leq 2$},$$
and since
\begin{equation*}
\begin{split}
||2t^{-\frac{1}{2}}\widehat{\nabla}^{k}(t^{-\frac{1}{2}})|_{\hat{g}}-|\widehat{\nabla}^{k}(t^{-1})|_{\hat{g}}|
&=||2t^{-\frac{1}{2}}\widehat{\nabla}^{k}(t^{-\frac{1}{2}})|_{\hat{g}}-|\widehat{\nabla}^{k}(t^{-\frac{1}{2}}t^{-\frac{1}{2}})|_{\hat{g}}|\\
&\leq |2t^{-\frac{1}{2}}\widehat{\nabla}^{k}(t^{-\frac{1}{2}})-\widehat{\nabla}^{k}(t^{-\frac{1}{2}}t^{-\frac{1}{2}})|_{\hat{g}}\\
&\leq \sum_{\substack{p+q\,=\,k \\ p,\,q\,\leq\,k-1}}
|\widehat{\nabla}^{p}(t^{-\frac{1}{2}})|_{\hat{g}}|\widehat{\nabla}^{q}(t^{-\frac{1}{2}})|_{\hat{g}}\\
\end{split}
\end{equation*}
so that
\begin{equation*}
\begin{split}
t^{\frac{1}{2}}\Biggl(-|\widehat{\nabla}^{k}(t^{-1})|_{\hat{g}}+ \sum_{\substack{p+q\,=\,k \\ p,\,q\,\leq\,k-1}}
|\widehat{\nabla}^{p}(t^{-\frac{1}{2}})|_{\hat{g}}|\widehat{\nabla}^{q}(t^{-\frac{1}{2}})|_{\hat{g}}\Biggl)&\leq
2 |\widehat{\nabla}^{k}(t^{-\frac{1}{2}})|_{\hat{g}} \\
& \leq t^{\frac{1}{2}}\Biggl(|\widehat{\nabla}^{k}(t^{-1})|_{\hat{g}}+ \sum_{\substack{p+q\,=\,k \\ p,\,q\,\leq\,k-1}}
|\widehat{\nabla}^{p}(t^{-\frac{1}{2}})|_{\hat{g}}|\widehat{\nabla}^{q}(t^{-\frac{1}{2}})|_{\hat{g}}\Biggl),
\end{split}
\end{equation*}
an induction argument assuming $P(m)$ shows that
$$|\widehat{\nabla}^{1+k}(t^{-\frac{1}{2}})|_{\hat{g}}=O\left(t^{-\frac{5}{2}-\frac{(k-1)}{2}}\right)\qquad\textrm{for all $1\leq k\leq m$}.$$
Using this together with \eqref{verybasic} and the fact that each $df_{k}$ is basic and that $|\widehat{\nabla}^{m+1}\hat{\theta}_{i}|_{\hat{g}}\leq Ct^{-\frac{(m+1)}{2}}$
for $1\leq i\leq 2n-2$, we derive the following estimate:
\begin{equation*}
\begin{split}
|\widehat{\nabla}^{m+1}\zeta|_{\hat{g}}&
\leq C\Biggl(\sum_{\substack{p+q+r\,=\,m \\ 1\,\leq\,k\,\leq \,2n-2}}|\widehat{\nabla}^{p}(t^{-\frac{1}{2}})|_{\hat{g}}
\underbrace{|\widehat{\nabla}^{q}df_{k}|_{\hat{g}}|\widehat{\nabla}^{r}\hat{\theta}_{k}|_{\hat{g}}}_{=\,O\left(t^{-\frac{1}{2}-\frac{(q+r)}{2}}\right)}\\
&\qquad+\sum_{\substack{p+q+r+s+l\,=\,m \\ 1\,\leq\,k\,\leq\, 2n-2}}|\widehat{\nabla}^{p}(t^{-\frac{1}{2}})|_{\hat{g}}
\underbrace{|\widehat{\nabla}^{q}f_{k}|_{\hat{g}}|\widehat{\nabla}^{r}\hat{\theta}_{k}|_{\hat{g}}}_{=\,O\left(t^{-\frac{(q+r)}{2}}\right)}
|\widehat{\nabla}^{s}(t^{-1})|_{\hat{g}}|\widehat{\nabla}^{l}\hat{\theta}_{2n-1}|_{\hat{g}}\\
&\qquad+\sum_{\substack{p+q+r\,=\,m \\ 1\,\leq\,k\,\leq\, 2n-2}}|\widehat{\nabla}^{p}(t^{-\frac{1}{2}})|_{\hat{g}}
\underbrace{|\widehat{\nabla}^{q}f_{k}|_{\hat{g}}|\widehat{\nabla}^{r+1}\hat{\theta}_{k}|_{\hat{g}}}_{=\,O\left(t^{-\frac{(q+r+1)}{2}}\right)}\Biggl)\\
&\leq C\Biggl(\sum_{p+q+r+s+l\,=\,m}t^{-\frac{(q+r)}{2}}|\widehat{\nabla}^{p}(t^{-\frac{1}{2}})|_{\hat{g}}
|\widehat{\nabla}^{s}(t^{-1})|_{\hat{g}}|\widehat{\nabla}^{l}\hat{\theta}_{2n-1}|_{\hat{g}}\\
&\qquad+t^{-\frac{1}{2}}t^{-\frac{(m+1)}{2}}+|\widehat{\nabla}(t^{-\frac{1}{2}})|_{\hat{g}}t^{-\frac{m}{2}}
+\sum_{\substack{p+q+r\,=\,m \\ p\,\geq\,2}}
t^{-\frac{(q+r+1)}{2}}\underbrace{|\widehat{\nabla}^{p}(t^{-\frac{1}{2}})|_{\hat{g}}}_{=\,O\left(t^{-\frac{5}{2}-\frac{(p-2)}{2}}\right)}\Biggl)\\
&\leq C\Biggl(\sum_{p+q+r+s+l\,=\,m}t^{-\frac{(q+r)}{2}}|\widehat{\nabla}^{p}(t^{-\frac{1}{2}})|_{\hat{g}}
|\widehat{\nabla}^{s}(t^{-1})|_{\hat{g}}|\widehat{\nabla}^{l}\hat{\theta}_{2n-1}|_{\hat{g}}+
t^{-1-\frac{m}{2}}+t^{-\frac{3}{2}-\frac{m}{2}}+t^{-2-\frac{m}{2}}\Biggl)\\
&\leq C\Biggl(t^{-1-\frac{m}{2}}+\sum_{p+q+r+s+l\,=\,m}t^{-\frac{(q+r)}{2}}|\widehat{\nabla}^{p}(t^{-\frac{1}{2}})|_{\hat{g}}
|\widehat{\nabla}^{s}(t^{-1})|_{\hat{g}}|\widehat{\nabla}^{l}\hat{\theta}_{2n-1}|_{\hat{g}}\Biggl).\\
\end{split}
\end{equation*}
Now, the sum in this last expression can be estimated as follows:
\begin{equation*}
\begin{split}
&\sum_{p+q+r+s+l\,=\,m}t^{-\frac{(q+r)}{2}}|\widehat{\nabla}^{p}(t^{-\frac{1}{2}})|_{\hat{g}}
|\widehat{\nabla}^{s}(t^{-1})|_{\hat{g}}|\widehat{\nabla}^{l}\hat{\theta}_{2n-1}|_{\hat{g}}\\
&\leq C\Biggl(\sum_{p+q+r+s\,=\,m}t^{-\frac{(q+r)}{2}}|\widehat{\nabla}^{p}(t^{-\frac{1}{2}})|_{\hat{g}}
|\widehat{\nabla}^{s}(t^{-1})|_{\hat{g}}
+\sum_{\substack{p+q+r+s+l\,=\,m \\ l\,\geq\,1}}t^{-\frac{(q+r)}{2}}|\widehat{\nabla}^{p}(t^{-\frac{1}{2}})|_{\hat{g}}
|\widehat{\nabla}^{s}(t^{-1})|_{\hat{g}}\underbrace{|\widehat{\nabla}^{l}\hat{\theta}_{2n-1}|_{\hat{g}}}_{=\,O\left(t^{-1-\frac{(l-1)}{2}}\right)}\Biggl)\\
&\leq C\Biggl(\sum_{p+q+r+s\,=\,m}t^{-\frac{(q+r)}{2}}|\widehat{\nabla}^{p}(t^{-\frac{1}{2}})|_{\hat{g}}
|\widehat{\nabla}^{s}(t^{-1})|_{\hat{g}}
+t^{-\frac{1}{2}}\sum_{\substack{p+q+r+s+l\,=\,m \\ l\,\geq\,1}}t^{-\frac{(l+q+r)}{2}}|\widehat{\nabla}^{p}(t^{-\frac{1}{2}})|_{\hat{g}}
|\widehat{\nabla}^{s}(t^{-1})|_{\hat{g}}\Biggl)\\
&\leq C\Biggl(\sum_{k\,=\,0}^{m}\sum_{p+s\,=\,m-k}t^{-\frac{k}{2}}|\widehat{\nabla}^{p}(t^{-\frac{1}{2}})|_{\hat{g}}
|\widehat{\nabla}^{s}(t^{-1})|_{\hat{g}}
+t^{-\frac{1}{2}}\sum_{k\,=\,1}^{m}\sum_{\substack{p+s\,=\,m-k}}t^{-\frac{k}{2}}|\widehat{\nabla}^{p}(t^{-\frac{1}{2}})|_{\hat{g}}
|\widehat{\nabla}^{s}(t^{-1})|_{\hat{g}}\Biggl)\\
&\leq C\Biggl(\sum_{k\,=\,0}^{m}\sum_{p+s\,=\,m-k}t^{-\frac{k}{2}}|\widehat{\nabla}^{p}(t^{-\frac{1}{2}})|_{\hat{g}}
|\widehat{\nabla}^{s}(t^{-1})|_{\hat{g}}\Biggl)\\
&\leq C\Biggl(\sum_{r\,=\,0}^{m}\sum_{p+s\,=\,r}t^{-\frac{(m-r)}{2}}|\widehat{\nabla}^{p}(t^{-\frac{1}{2}})|_{\hat{g}}
|\widehat{\nabla}^{s}(t^{-1})|_{\hat{g}}\Biggl)\\
&\leq Ct^{-\frac{m}{2}}\Biggl(t^{-\frac{1}{2}}t^{-1}+t^{\frac{1}{2}}t^{-\frac{5}{2}}+t^{-\frac{7}{2}}t+
\sum_{r\,=\,3}^{m}\sum_{p+s\,=\,r}t^{\frac{r}{2}}|\widehat{\nabla}^{p}(t^{-\frac{1}{2}})|_{\hat{g}}
|\widehat{\nabla}^{s}(t^{-1})|_{\hat{g}}\Biggl)\\
&\leq Ct^{-\frac{m}{2}}\Biggl(t^{-\frac{3}{2}}
+\sum_{r\,=\,3}^{m}\sum_{s\,=\,r}\underbrace{t^{\frac{r}{2}}t^{-\frac{1}{2}}
|\widehat{\nabla}^{s}(t^{-1})|_{\hat{g}}}_{=\,O\left(t^{-\frac{5}{2}+\frac{(r-s)}{2}}\right)}
+\sum_{r\,=\,3}^{m}\sum_{1+s\,=\,r}\underbrace{t^{\frac{r}{2}}|\widehat{\nabla}(t^{-\frac{1}{2}})|_{\hat{g}}
|\widehat{\nabla}^{s}(t^{-1})|_{\hat{g}}}_{_{=\,O\left(t^{-\frac{7}{2}+\frac{(r-s)}{2}}\right)}}\\
&\qquad+\sum_{r\,=\,3}^{m}\sum_{\substack{p+s\,=\,r \\ p\,\geq\,2}}t^{\frac{r}{2}}|\widehat{\nabla}^{p}(t^{-\frac{1}{2}})|_{\hat{g}}
|\widehat{\nabla}^{s}(t^{-1})|_{\hat{g}}\Biggl)\\
&\leq Ct^{-\frac{m}{2}}\Biggl(t^{-\frac{3}{2}}+t^{-\frac{5}{2}}+t^{-3}+
\sum_{r\,=\,3}^{m}\sum_{\substack{p\,=\,r \\ p\,\geq\,2}}\underbrace{t^{\frac{r}{2}}|\widehat{\nabla}^{p}(t^{-\frac{1}{2}})|_{\hat{g}}
t^{-1}}_{=\,O\left(t^{-\frac{5}{2}+\frac{(r-p)}{2}}\right)}+\sum_{r\,=\,3}^{m}\sum_{\substack{p+1\,=\,r \\ p\,\geq\,2}}\underbrace{t^{\frac{r}{2}}|\widehat{\nabla}^{p}(t^{-\frac{1}{2}})|_{\hat{g}}
|\widehat{\nabla}(t^{-1})|_{\hat{g}}}_{=\,O\left(t^{-\frac{7}{2}+\frac{(r-p)}{2}}\right)}\\
&\qquad+\sum_{r\,=\,3}^{m}\sum_{\substack{p+s\,=\,r \\ p,\,s\,\geq\,2}}\underbrace{t^{\frac{r}{2}}|\widehat{\nabla}^{p}(t^{-\frac{1}{2}})|_{\hat{g}}
|\widehat{\nabla}^{s}(t^{-1})|_{\hat{g}}}_{=\,O\left(t^{-\frac{7}{2}+\frac{(r-p-s)}{2}}\right)}\Biggl)\\
&\leq Ct^{-\frac{3}{2}-\frac{m}{2}}.
\end{split}
\end{equation*}
Hence, in light of the above, we arrive at the fact that
$$|\widehat{\nabla}^{m+1}\zeta|_{\hat{g}}\leq Ct^{-1-\frac{m}{2}}=Ct^{-\frac{1}{2}-\frac{(m+1)}{2}},$$
as required. This completes the induction step.
\end{proof}

\newpage
\bibliographystyle{amsalpha}

\bibliography{ref2}

\providecommand{\bysame}{\leavevmode\hbox to3em{\hrulefill}\thinspace}
\providecommand{\MR}{\relax\ifhmode\unskip\space\fi MR }
\providecommand{\MRhref}[2]{%
  \href{http://www.ams.org/mathscinet-getitem?mr=#1}{#2}
}
\providecommand{\href}[2]{#2}
\begin{thebibliography}{EKASH85}

\bibitem[Agm82]{Agm-Boo}
S.~Agmon, \emph{Lectures on exponential decay of solutions of second-order
  elliptic equations: bounds on eigenfunctions of {$N$}-body {S}chr\"{o}dinger
  operators}, Mathematical Notes, vol.~29, Princeton University Press,
  Princeton, NJ; University of Tokyo Press, Tokyo, 1982. \MR{745286}

\bibitem[And90]{anderson}
M.~Anderson, \emph{On the topology of complete manifolds of nonnegative {R}icci
  curvature}, Topology \textbf{29} (1990), no.~1, 41--55. \MR{1046624}

\bibitem[Aub84]{Aub-red-Cas-Pos}
T.~Aubin, \emph{R\'{e}duction du cas positif de l'\'{e}quation de
  {M}onge-{A}mp\`ere sur les vari\'{e}t\'{e}s k\"{a}hl\'{e}riennes compactes
  \`a la d\'{e}monstration d'une in\'{e}galit\'{e}}, J. Funct. Anal.
  \textbf{57} (1984), no.~2, 143--153. \MR{749521}

\bibitem[BEG13]{Bou-Eys-Gue}
S.~Boucksom, P.~Eyssidieux, and V.~Guedj, \emph{An introduction to the
  {K}\"ahler-{R}icci flow}, Lecture Notes in Mathematics, vol. 2086, Springer,
  Cham, 2013, pp.~viii+333. \MR{3202578}

\bibitem[BG08]{book:Boyer}
C.~Boyer and K.~Galicki, \emph{Sasakian geometry}, Oxford Mathematical
  Monographs, Oxford University Press, Oxford, 2008. \MR{2382957}

\bibitem[{B}\l05]{Blo-Uni-CY}
Z.~{B}\l{ocki}, \emph{On uniform estimate in {C}alabi-{Y}au theorem}, Sci.
  China Ser. A \textbf{48} (2005), no.~suppl., 244--247. \MR{2156505}

\bibitem[BM87]{Ban-Mab-Uni}
S.~Bando and T.~Mabuchi, \emph{Uniqueness of {E}instein {K}\"{a}hler metrics
  modulo connected group actions}, Algebraic geometry, {S}endai, 1985, Adv.
  Stud. Pure Math., vol.~10, North-Holland, Amsterdam, 1987, pp.~11--40.
  \MR{946233}

\bibitem[BM17]{olivier}
O.~Biquard and H.~Macbeth, \emph{Steady {K}\"ahler-{R}icci solitons on crepant
  resolutions of finite quotients of $\mathbb{C}^{n}$}, arXiv:1711.02019
  (2017).

\bibitem[Bre13]{Bre-Rot}
S.~Brendle, \emph{Rotational symmetry of self-similar solutions to the {R}icci
  flow}, Invent. Math. \textbf{194} (2013), no.~3, 731--764. \MR{3127066}

\bibitem[BT76]{Bed-Tay-Dir}
E.~Bedford and B.~Taylor, \emph{The {D}irichlet problem for a complex
  {M}onge-{A}mp\`ere equation}, Invent. Math. \textbf{37} (1976), no.~1, 1--44.
  \MR{445006}

\bibitem[Cao96]{Cao-KR-sol}
H.-D. Cao, \emph{Existence of gradient {K}\"ahler-{R}icci solitons}, Elliptic
  and parabolic methods in geometry ({M}inneapolis, {MN}, 1994), A K Peters,
  Wellesley, MA, 1996, pp.~1--16. \MR{1417944}

\bibitem[Car97]{carron}
G.~Carron, \emph{In\'{e}galit\'{e}s de {H}ardy sur les vari\'{e}t\'{e}s
  riemanniennes non-compactes}, J. Math. Pures Appl. (9) \textbf{76} (1997),
  no.~10, 883--891.

\bibitem[CD20]{con-der}
R.~J. Conlon and A.~Deruelle, \emph{Expanding {K}\"ahler-{R}icci solitons
  coming out of {K}\"ahler cones}, J. Differential Geom. \textbf{115} (2020),
  303--365.

\bibitem[CDS19]{conlondez}
R.~J. Conlon, A.~Deruelle, and S.~Sun, \emph{Classification results for
  expanding and shrinking gradient {K}\"ahler-{R}icci solitons},
  arXiv:1904.00147 (2019).

\bibitem[CH13]{Conlon}
R.~J. Conlon and H.-J. Hein, \emph{Asymptotically conical {C}alabi-{Y}au
  manifolds, {I}}, Duke Math. J. \textbf{162} (2013), no.~15, 2855--2902.
  \MR{3161306}

\bibitem[CK04]{Chowchow}
B.~Chow and D.~Knopf, \emph{The {R}icci flow: an introduction}, Mathematical
  Surveys and Monographs, vol. 110, American Mathematical Society, Providence,
  RI, 2004. \MR{2061425}

\bibitem[CLN06]{Cho-Lu-Ni-I}
B.~Chow, P.~Lu, and L.~Ni, \emph{Hamilton's {R}icci flow}, Graduate Studies in
  Mathematics, vol.~77, American Mathematical Society, Providence, RI; Science
  Press, New York, 2006. \MR{2274812}

\bibitem[DW11]{Wang}
A.~Dancer and M.~Wang, \emph{On {R}icci solitons of cohomogeneity one}, Ann.
  Global Anal. Geom. \textbf{39} (2011), no.~3, 259--292. \MR{2769300
  (2012a:53124)}

\bibitem[EKA90]{ElKacimi}
A.~El~Kacimi-Alaoui, \emph{Op\'erateurs transversalement elliptiques sur un
  feuilletage riemannien et applications}, Compositio Math. \textbf{73} (1990),
  no.~1, 57--106. \MR{1042454 (91f:58089)}

\bibitem[EKAH86]{hector:1986}
A.~El~Kacimi-Alaoui and G.~Hector, \emph{D\'ecomposition de {H}odge basique
  pour un feuilletage riemannien}, Ann. Inst. Fourier (Grenoble) \textbf{36}
  (1986), no.~3, 207--227. \MR{865667 (87m:57029)}

\bibitem[EKASH85]{Hector:1985}
A.~El~Kacimi-Alaoui, V.~Sergiescu, and G.~Hector, \emph{La cohomologie basique
  d'un feuilletage riemannien est de dimension finie}, Math. Z. \textbf{188}
  (1985), no.~4, 593--599. \MR{774559 (86j:53051)}

\bibitem[GKK10]{Greb:10}
D.~Greb, S.~Kebekus, and S.~Kov{\'a}cs, \emph{Extension theorems for
  differential forms and {B}ogomolov-{S}ommese vanishing on log canonical
  varieties}, Compos. Math. \textbf{146} (2010), no.~1, 193--219. \MR{2581247
  (2011c:14054)}

\bibitem[God73]{Godement}
R.~Godement, \emph{Topologie alg\'ebrique et th\'eorie des faisceaux}, Hermann,
  Paris, 1973, Troisi{\`e}me {\'e}dition revue et corrig{\'e}e, Publications de
  l'Institut de Math{\'e}matique de l'Universit{\'e} de Strasbourg, XIII,
  Actualit{\'e}s Scientifiques et Industrielles, No. 1252. \MR{0345092 (49
  \#9831)}

\bibitem[Got12]{goto}
R.~Goto, \emph{Calabi-{Y}au structures and {E}instein-{S}asakian structures on
  crepant resolutions of isolated singularities}, J. Math. Soc. Japan
  \textbf{64} (2012), no.~3, 1005--1052. \MR{2965437}

\bibitem[GT01]{gilbarg}
D.~Gilbarg and N.~Trudinger, \emph{Elliptic partial differential equations of
  second order}, Classics in Mathematics, Springer-Verlag, Berlin, 2001,
  Reprint of the 1998 edition. \MR{1814364}

\bibitem[Ham82]{hamilton}
R.~Hamilton, \emph{Three-manifolds with positive {R}icci curvature},
  J.~Differ.~Geom. \textbf{17} (1982), no.~2, 255--306. \MR{664497 (84a:53050)}

\bibitem[Ham88]{hammy}
\bysame, \emph{The {R}icci flow on surfaces}, Mathematics and general
  relativity ({S}anta {C}ruz, {CA}, 1986), Contemp. Math., vol.~71, Amer. Math.
  Soc., Providence, RI, 1988, pp.~237--262. \MR{954419}

\bibitem[Hei10]{Hein}
H.-J. Hein, \emph{Complete {C}alabi-{Y}au metrics from
  {${\mathbb{P}}^{2}\#9\bar{\mathbb{P}}^{2}$}}, arXiv:1003.2646 (2010).

\bibitem[HHN15]{Nordstrom}
M.~Haskins, H.-J. Hein, and J.~Nordstr\"om, \emph{Asymptotically cylindrical
  {C}alabi-{Y}au manifolds}, J.~Differential Geom. \textbf{101} (2015),
  213--265.

\bibitem[HL97]{Han-Lin}
Q.~Han and F.~Lin, \emph{Elliptic partial differential equations}, Courant
  Lecture Notes in Mathematics, vol.~1, New York University, Courant Institute
  of Mathematical Sciences, New York; American Mathematical Society,
  Providence, RI, 1997. \MR{1669352}

\bibitem[HZZ11]{Hua-Zha-Zha}
G.~Huang, C.~Zhang, and J.~Zhang, \emph{Liouville-type theorem for the drifting
  {L}aplacian operator}, Arch. Math. (Basel) \textbf{96} (2011), no.~4,
  379--385. \MR{2794093}

\bibitem[Ive93]{Ivey}
T.~Ivey, \emph{Ricci solitons on compact three-manifolds}, Differential Geom.
  Appl. \textbf{3} (1993), no.~4, 301--307. \MR{1249376}

\bibitem[Joy00]{Joyce}
D.~Joyce, \emph{Compact manifolds with special holonomy}, Oxford Mathematical
  Monographs, Oxford University Press, Oxford, 2000. \MR{1787733 (2001k:53093)}

\bibitem[Kol98]{Kol-Cx-MA}
S.~Kolodziej, \emph{The complex {M}onge-{A}mp\`ere equation}, Acta Math.
  \textbf{180} (1998), no.~1, 69--117. \MR{1618325}

\bibitem[Kol07]{kollar}
J.~Koll{\'a}r, \emph{Lectures on resolution of singularities}, Annals of
  Mathematics Studies, vol. 166, Princeton University Press, Princeton, NJ,
  2007. \MR{2289519}

\bibitem[Li10]{lii}
C.~Li, \emph{On rotationally symmetric {K}\"ahler-{R}icci solitons},
  arXiv:1004.4049 (2010).

\bibitem[Li12]{Li-Pet-Boo}
P.~Li, \emph{Geometric analysis}, Cambridge Studies in Advanced Mathematics,
  vol. 134, Cambridge University Press, Cambridge, 2012. \MR{2962229}

\bibitem[LR02]{ken}
J.~Lee and K.~Richardson, \emph{Lichnerowicz and {O}bata theorems for
  foliations}, Pacific J. Math. \textbf{206} (2002), no.~2, 339--357.
  \MR{1926781}

\bibitem[MSY08]{Yau22}
D.~Martelli, J.~Sparks, and S.-T. Yau, \emph{Sasaki-{E}instein manifolds and
  volume minimisation}, Comm. Math. Phys. \textbf{280} (2008), no.~3, 611--673.
  \MR{2399609 (2009d:53054)}

\bibitem[MW11]{Mun-Wan-Non-Neg}
O.~Munteanu and J.~Wang, \emph{Smooth metric measure spaces with non-negative
  curvature}, Comm. Anal. Geom. \textbf{19} (2011), no.~3, 451--486.
  \MR{2843238}

\bibitem[PR96]{ken2}
E.~Park and K.~Richardson, \emph{The basic {L}aplacian of a {R}iemannian
  foliation}, Amer. J. Math. \textbf{118} (1996), no.~6, 1249--1275.
  \MR{1420923}

\bibitem[PSS07]{Pho-Ses-Stu}
D.~Phong, N.~Sesum, and J.~Sturm, \emph{Multiplier ideal sheaves and the
  {K}\"ahler-{R}icci flow}, Comm. Anal. Geom. \textbf{15} (2007), no.~3,
  613--632. \MR{2379807}

\bibitem[SC92]{Sal-Cos-Uni-Ell}
L.~Saloff-Coste, \emph{Uniformly elliptic operators on {R}iemannian manifolds},
  J. Differential Geom. \textbf{36} (1992), no.~2, 417--450. \MR{1180389
  (93m:58122)}

\bibitem[Sch20]{schafer}
J.~Sch\"afer, \emph{Existence and uniqueness of ${S}^{1}$-invariant
  {K}\"ahler-{R}icci solitons}, arXiv:2001.09858 (2020).

\bibitem[Shi89]{shi}
W.-X. Shi, \emph{Deforming the metric on complete {R}iemannian manifolds}, J.
  Differential Geom. \textbf{30} (1989), no.~1, 223--301. \MR{1001277}

\bibitem[Sie13]{siepmann}
M.~Siepmann, \emph{Ricci flows of {R}icci flat cones}, Ph.D. thesis, ETH
  Z\"urich, 2013, available at
  \url{http://e-collection.library.ethz.ch/eserv/eth:7556/eth-7556-02.pdf}.

\bibitem[Tak85]{Grauertriemannschneider}
K.~Takegoshi, \emph{Relative vanishing theorems in analytic spaces}, Duke Math.
  J. \textbf{52} (1985), no.~1, 273--279. \MR{791302 (86i:32049)}

\bibitem[Tia00]{Tian-Can-Met-Boo}
G.~Tian, \emph{Canonical metrics in {K}\"{a}hler geometry}, Lectures in
  Mathematics ETH Z\"{u}rich, Birkh\"{a}user Verlag, Basel, 2000, Notes taken
  by Meike Akveld. \MR{1787650}

\bibitem[TY90]{Tian3}
G.~Tian and S.-T. Yau, \emph{Complete {K}\"ahler manifolds with zero {R}icci
  curvature. {I}}, J. Amer. Math. Soc. \textbf{3} (1990), no.~3, 579--609.
  \MR{1040196 (91a:53096)}

\bibitem[TY91]{Tian}
\bysame, \emph{Complete {K}\"ahler manifolds with zero {R}icci curvature.
  {II}}, Invent. Math. \textbf{106} (1991), no.~1, 27--60. \MR{1123371
  (92j:32028)}

\bibitem[TZ00a]{Tian-Zhu-I}
G.~Tian and X.~Zhu, \emph{Uniqueness of {K}\"ahler-{R}icci solitons}, Acta
  Math. \textbf{184} (2000), no.~2, 271--305. \MR{1768112}

\bibitem[TZ00b]{tianzhu1}
\bysame, \emph{Uniqueness of {K}\"ahler-{R}icci solitons}, Acta Math.
  \textbf{184} (2000), no.~2, 271--305. \MR{1768112}

\bibitem[vC10]{vanC2}
C.~van Coevering, \emph{Ricci-flat {K}\"ahler metrics on crepant resolutions of
  {K}\"ahler cones}, Math. Ann. \textbf{347} (2010), no.~3, 581--611.
  \MR{2640044}

\bibitem[vC11]{vanC4}
\bysame, \emph{Examples of asymptotically conical {R}icci-flat {K}\"ahler
  manifolds}, Math. Z. \textbf{267} (2011), 465--496.

\bibitem[WZ04]{zhuu}
X.-J. Wang and X.~Zhu, \emph{K\"{a}hler-{R}icci solitons on toric manifolds
  with positive first {C}hern class}, Adv. Math. \textbf{188} (2004), no.~1,
  87--103. \MR{2084775}

\bibitem[Yan12]{Boo}
B.~Yang, \emph{A characterization of noncompact {K}oiso-type solitons},
  Internat. J. Math. \textbf{23} (2012), no.~5, 1250054, 13. \MR{2914656}

\bibitem[Yau78]{Calabiconj}
S.-T. Yau, \emph{On the {R}icci curvature of a compact {K}\"ahler manifold and
  the complex {M}onge-{A}mp\`ere equation. {I}}, Comm. Pure Appl. Math.
  \textbf{31} (1978), no.~3, 339--411. \MR{480350}

\bibitem[Zha09]{Zhang-Com-Ricci-Sol}
Z.-H. Zhang, \emph{On the completeness of gradient {R}icci solitons}, Proc.
  Amer. Math. Soc. \textbf{137} (2009), no.~8, 2755--2759. \MR{2497489}

\bibitem[Zha11]{zhang}
Q.~Zhang, \emph{Sobolev inequalities, heat kernels under {R}icci flow, and the
  {P}oincar\'{e} conjecture}, CRC Press, Boca Raton, FL, 2011. \MR{2676347}

\bibitem[Zhu00]{Zhu-KRS-C1}
X.~Zhu, \emph{K\"{a}hler-{R}icci soliton typed equations on compact complex
  manifolds with {$C_1(M)>0$}}, J. Geom. Anal. \textbf{10} (2000), no.~4,
  759--774. \MR{1817785}

\end{thebibliography}

\end{document}